\def\cA{\mathcal A}
\def\cE{\mathcal E}
\def\cI{\mathcal I}
\def\cN{\mathcal N}
\def\cP{\mathcal P}
\def\cQ{\mathcal Q}
\def\cR{\mathcal R}
\def\cS{\mathcal S}
\def\cT{\mathcal T}
\def\cU{\mathcal U}
\newcommand{\bA}{{\bf A}}
\newcommand{\bD}{{\bf D}}
\newcommand{\bF}{{\bf F}}
\newcommand{\bI}{{\bf I}}
\newcommand{\bT}{{\bf T}}
\newcommand{\bV}{{\bf V}}
\newcommand{\bX}{{\bf X}}
\newcommand{\bY}{{\bf Y}}
\newcommand{\bbE}{{\mathbb E}}
\newcommand{\bbN}{{\mathbb N}}
\newcommand{\bbP}{{\mathbb P}}
\newcommand{\bbR}{{\mathbb R}}
\newcommand{\Var}{\mbox{{\rm Var}}}
\newcommand{\bSigma}{\bm{\Sigma}}
\newcommand{\bOmega}{\bm{\Omega}}
\newcommand{\iid}{{i.i.d.}}
\newcommand{\bc}{\begin{center}}
\newcommand{\ec}{\end{center}}
\newcommand{\be}{\begin{equation}}
\newcommand{\ee}{\end{equation}}
\newcommand{\ba}{\begin{array}}
\newcommand{\ea}{\end{array}}
\newcommand{\bean}{\setlength\arraycolsep{1pt}\begin{eqnarray*}}
\newcommand{\eean}{\end{eqnarray*}}
\newcommand{\bea}{\setlength\arraycolsep{1pt}\begin{eqnarray}}
\newcommand{\eea}{\end{eqnarray}}
\newcommand{\ben}{\begin{enumerate}}
\newcommand{\een}{\end{enumerate}}
\newcommand{\bed}{\begin{itemize}}
\newcommand{\eed}{\end{itemize}}
\DeclareMathOperator*{\argmax}{argmax}
\DeclareMathOperator*{\argmin}{argmin}
\newcommand{\rmd}{\mathrm{d}}
\newcommand{\thetaMAP}[1][t]{\widehat{\theta}_{#1}}
\newcommand{\thetaBest}[1][t]{\theta_{#1}^{\ast}}
\newcommand{\fullthetaBest}[1][t]{\theta_{1:#1}^{\ast}}
\newcommand{\fullthetaMLE}[1][t]{\widehat{\theta}_{1:#1}^{{\textnormal {\rm \texttt{ML}}}}}
\newcommand{\fullthetapMLE}[1][t]{\widehat{\theta}_{1:#1}}
\newcommand{\FisherMAP}[1][t]{\widetilde{\bF}_{#1, \thetaMAP[#1]}}
\newcommand{\FisherBest}[1][t]{\widetilde{\bF}_{#1, \thetaBest[#1]}}
\newcommand{\FisherTilde}[2][t]{\widetilde{\bF}_{#1, #2}}
\newcommand{\Fisher}[2][t]{\bF_{#1, #2}}
\newcommand{\FullFisher}[2][t]{\bF_{1:#1, #2}}
\newcommand{\FullFisherTilde}[2][t]{\widetilde{\bF}_{1:#1, #2}}
\newcommand{\LA}{\textnormal {\rm \texttt{LA}}}
\newcommand{\TV}{\textnormal {\rm \texttt{TV}}}
\newcommand{\KL}{\textnormal {\rm \texttt{KL}}}
\newcommand{\local}{{\rm local}}
\newcommand{\tail}{{\rm tail}}
\newcommand{\symmPD}{\cS_{\texttt{++}}^{p}}
\newcommand{\eff}{{\rm eff}}
\newcommand{\est}{{\rm est}}
\newcommand{\bias}{{\rm bias}}
\newcommand{\ML}{\textnormal {\rm \texttt{ML}}}
\newcommand{\scrE}{\bm{\mathscr{E}}}
\begin{document}

\title{Online Bernstein--von Mises theorem}

\author{\name Jeyong Lee \email jylee1024@postech.ac.kr \\
       \addr Department of Industrial and Management Engineering\\
       Pohang University of Science and Technology\\
       Pohang, Gyeongbuk 37673, South Korea
       \AND
       \name Junhyeok Choi \email cjunh4810@postech.ac.kr \\
       \addr Department of Industrial and Management Engineering\\
       Pohang University of Science and Technology\\
       Pohang, Gyeongbuk 37673, South Korea
       \AND
       \name Minwoo Chae\thanks{Corresponding author.} \email mchae@postech.ac.kr \\
       \addr Department of Industrial and Management Engineering\\
       Pohang University of Science and Technology\\
       Pohang, Gyeongbuk 37673, South Korea       
       }

\editor{Daniel Roy}

\maketitle

\begin{abstract}
Online learning is an inferential paradigm in which parameters are updated incrementally from sequentially available data, in contrast to batch learning, where the entire dataset is processed at once. In this paper, we assume that mini-batches from the full dataset become available sequentially. The Bayesian framework, which updates beliefs about unknown parameters after observing each mini-batch, is naturally suited for online learning. At each step, we update the posterior distribution using the current prior and new observations, with the updated posterior serving as the prior for the next step. However, this recursive Bayesian updating is rarely computationally tractable unless the model and prior are conjugate. When the model is regular, the updated posterior can be approximated by a normal distribution, as justified by the Bernstein--von Mises theorem. We adopt a variational approximation at each step and investigate the frequentist properties of the final posterior obtained through this sequential procedure. Under mild assumptions, we show that the accumulated approximation error becomes negligible once the mini-batch size exceeds a threshold depending on the parameter dimension. As a result, the sequentially updated posterior is asymptotically indistinguishable from the full posterior.
\end{abstract}

\begin{keywords}
Bernstein--von Mises theorem,
Laplace approximation,
Bayesian online learning,
variational approximation
\end{keywords}

\section{Introduction} \label{sec:intro}

Online learning is an inferential paradigm in which parameters are updated sequentially as new data arrive. Unlike batch learning, which processes a fixed dataset all at once, online learning incrementally adjusts parameters with each new observation. This approach is particularly well-suited for analyzing streaming data, where data become available sequentially. Moreover, even for fixed datasets, online learning can be an advantageous approach, as its algorithms are often significantly more computationally efficient than batch learning methods. Over the past few decades, substantial progress has been made in the development of online learning techniques for various statistical models, such as 
topic models \citep{hoffman2010online, wang2011online, kim2016online}, matrix factorization \citep{mairal2010online}, survival analysis \citep{xue2020online, wu2021online, choi2025online} and quantile regression \citep{chen2019quantile, lee2024fast}.
We provide a comprehensive review of online learning methodologies in Section \ref{sec:online_learning_review}.

The Bayesian philosophy, which updates beliefs about an unknown parameter after observing data, aligns well with the online learning paradigm. As new data arrive, one can update the posterior distribution using the current prior and the new data, with the updated posterior serving as the new prior. However, unless the model-prior pair is conjugate, this straightforward Bayesian approach is rarely computationally tractable, particularly when dealing with complex hierarchical models. Consequently, Bayesian approaches to online learning typically include an additional step to approximate the updated posterior with a simpler one, ensuring computational feasibility for subsequent updates \citep{opper1999bayesian, solla1999online}. A common choice for this additional step is variational approximation \citep{broderick2013streaming, lin2013online, nguyen2018variational}. 

In the Bayesian online learning framework described above, the posterior distribution must be repeatedly approximated as new data arrive. While the approximation error at each step may be negligible, the cumulative error over multiple updates may not be. Thus, a central concern is whether the accumulated approximation error remains small. In this paper, we provide a rigorous theoretical analysis of this issue by investigating the frequentist properties of the sequentially updated and approximated posterior distributions.

To set the scene, let $\cP = \{ \bbP_{\theta} : \theta \in \Theta \}$ be a statistical model indexed by $\theta \in \Theta$, where $\Theta \subset \bbR^p$ is the parameter space. Let $\bD = (Y_1, \ldots, Y_N)$ be the complete set of observations, and let $\Pi \in \cQ$ denote the (initial) prior on $\Theta$, where $\cQ$ is a class of priors. The corresponding posterior distribution is $\Pi(\cdot \mid \bD)$, which we refer to as the full posterior. We assume that the data become available sequentially in the order $\bD_1, \bD_2, \ldots, \bD_T$, where $\bD_t = (Y_{N_{t-1}+1}, Y_{N_{t-1}+2}, \ldots, Y_{N_t})$ represents the $t$-th mini-batch, with $N_0 = 0$. For convenience, we assume that each mini-batch has the same size, denoted by $n$, so that $N_t = nt$ for every $t$. 

With $\Pi_0 = \Pi$, the Bayesian online learning framework considered in this paper consists of the following inductive steps. For $t \geq 1$, let $\widetilde\Pi_t(\cdot \mid \bD_t)$ be the posterior distribution obtained by updating the prior $\Pi_{t-1}$ with the data $\bD_t$ using Bayes' formula:
\begin{align} \label{eq:online-Bayes}
    \widetilde{\Pi}_{t}( \cA \mid \bD_t) = \dfrac{
    \int_{\cA} \exp\left\{ L_{t}(\theta) \right\} \rmd \Pi_{t-1} (\theta)
    }{
    \int_{\Theta} \exp\left\{ L_{t}(\theta) \right\} \rmd \Pi_{t-1} (\theta)
    }, \quad \text{for any measurable } \ \cA \subset \Theta,
\end{align}
where $L_t(\cdot)$ is the log-likelihood corresponding to the $t$-th mini-batch $\bD_t$. Next, we approximate the updated posterior $\widetilde\Pi_t(\cdot \mid \bD_t)$ by projecting it onto $\cQ$, the space of prior distributions. Since we use a variational approximation, the approximated posterior is given by
\begin{align} \label{eq:KL-projection}
    \Pi_{t} = \argmin_{Q \in \cQ} K\left( Q; \: \widetilde{\Pi}_{t}(\cdot \mid \bD_t) \right),
\end{align}
where $K(P; Q)$ denotes the Kullback--Leibler (KL) divergence, defined as
\begin{align} \label{def:KL_divergence}
    K(P; Q) = \int \log\left( \dfrac{\rmd P}{\rmd Q} \right) \rmd P.
\end{align}
Thus, we obtain the sequence $(\Pi_t)_{t\leq T}$ of approximated posterior distributions, where $\Pi_t \in \cQ$ for every $t$.

The main goal of this paper is to provide sufficient conditions under which the accumulated approximation error in the online learning process remains negligible, ensuring that the sequentially updated posterior distribution $\Pi_T(\cdot)$ is nearly identical to the full posterior distribution $\Pi( \cdot \mid \bD)$. Although this is intuitively obvious when $\cQ$ is sufficiently large, a mathematically rigorous analysis is challenging even for $T=2$.

We assume that $Y_1, \ldots, Y_N$ are independent but not necessarily identically distributed, and there is the true parameter $\theta_0 \in \Theta$ generating data. This is a frequentist assumption that is commonly adopted in the Bayesian asymptotics literature. We also assume that the parametric model $\cP$ is regular in the sense that the log-likelihood function is locally approximately quadratic around $\theta_0$. Under this regularity condition, the full posterior distribution $\Pi(\cdot \mid \bD)$ is approximately normal, centered around the linear efficient estimator, with variance given by the inverse Fisher information matrix, as stated by the celebrated Bernstein--von Mises (BvM) theorem. Accordingly, the class of normal distributions serves as a natural choice for $\cQ$.

Our main theorem guarantees, under suitable assumptions, that the total variation distance between the full posterior $\Pi(\cdot \mid \bD)$ and the sequentially updated posterior $\Pi_T(\cdot)$ is sufficiently small with high probability. When the dimension $p$ of $\theta_0$ is fixed, the required condition boils down to $n \gg (\log N)^4$. The main results are formulated in a non-asymptotic framework and also cover the case where $p$ diverges at a polynomial rate with respect to the sample size $N$. In particular, when $\log p \asymp \log N$, $n \gg p^3$ is sufficeint; see Theorem \ref{thm:onlineBvM} for detailed statements. We refer to this result as the \textit{online BvM theorem}. It ensures the frequentist validity and asymptotic efficiency of statistical inferences, such as point estimation and uncertainty quantification, based on $\Pi_T$.

To prove the main results, sharp non-asymptotic results on the quadratic approximation of the log-likelihood function are crucial, for which substantial progress has been made in recent studies; see \cite{spokoiny2012parametric}, \cite{spokoiny2017penalized} and \cite{katsevich2024approximation}. In addition to these techniques, we develop several novel techniques, including bounding the KL divergence between the updated posterior and the approximated variational posterior, as well as handling accumulated errors. It is also important to note that, while the prior considered in the standard BvM theorem is typically a flat prior, the priors used in the intermediate steps of the online learning process are highly informative.

To the best of our knowledge, no theoretical work has studied the theory of Bayesian online learning with the level of rigor presented in this paper. In contrast, rigorous theoretical frameworks have recently been developed for frequentist online learning based on empirical risk minimization using one-pass stochastic gradient descent (SGD) algorithms. Specifically, \citet{toulis2017SGD} demonstrated that the \textit{implicit SGD} estimator is asymptotically efficient, admitting a limiting distribution with optimal variance. In addition, \citet{chen2020SGD} proposed the \textit{batch-means SGD} estimator, which yields asymptotically valid confidence intervals.

Since the one-pass SGD approaches described above process a single datum at each step, it is natural to ask whether similar results can be achieved with $T = N$. We conjecture that the online BvM theorem does not hold in this case. Although we do not have a formal proof, our numerical experiments in Section \ref{sec:numerical_experiments} provide supporting evidence. Specifically, we observe that the relative efficiency of the point estimator---the posterior mean obtained from the sequentially updated $\Pi_t$--- when $T = N$, compared to a batch estimator, is significantly greater than 1, indicating that it is not asymptotically efficient. We believe the online BvM theorem fails to hold because our results rely on the quadratic approximation of the log-likelihood, which is valid only when the mini-batch sample size $n$ is much larger than the parameter dimension $p$. Given the positive results of one-pass algorithms in \citet{toulis2017SGD} and \citet{chen2020SGD}, it would be interesting to develop an online Bayesian procedure with appropriate algorithmic modifications that ensure the validity of the online BvM theorem.

The remainder of this paper is organized as follows. In the next subsections, we provide a comprehensive review of online learning methods and introduce elementary notations. Section \ref{sec:setup} describes the basic setup and key definitions related to Bayesian online learning. Section \ref{sec:overview} provides a high-level overview of the online BvM theorem and presents a motivating example illustrating where our theory applies.
Sections \ref{sec:variational_posterior} and \ref{sec:pMLE} present the variational approximation of the sequentially updated posterior and the penalized M-estimation, respectively. Section \ref{sec:eigenvalue_analysis} provides a non-asymptotic analysis of several regularity quantities. Section \ref{sec:full_posterior} establishes the BvM theorem for the full posterior. Our main results concerning the online BvM theorem are presented in Section \ref{sec:online_variational_posterior}. Numerical results supporting our theory are provided in Section \ref{sec:numerical_experiments}. Concluding remarks follow in Section \ref{sec:discussion}, and all proofs and additional technical details are deferred to the Appendix.

\subsection{Related works} \label{sec:online_learning_review}
\subsubsection{Frequentist methods}
In this sub-section, we provide a brief introduction to the recent advancements in online statistical inference by surveying theoretical investigations of SGD-type estimators. Due to its computational advantages, the SGD estimator \citep{robbins1951stochastic} and its variants have been extensively studied in the frequentist online learning literature. As pioneering works, \citet{ruppert1988efficient} and \citet{polyak1992acceleration} independently proposed averaging of SGD iterates, with \citet{polyak1992acceleration} establishing the asymptotic normality of the averaged SGD estimator. More recently, \citet{toulis2017SGD} introduced the implicit SGD estimator, which exhibits stable performance in finite samples and is asymptotically normal under a suitably specified learning rate. 

However, in the online learning setting, the asymptotic normality alone does not guarantee that confidence intervals can be constructed. This is because the asymptotic covariance matrix is typically computed using the entire dataset, which is not available when data arrives sequentially. For example, if our interest lies in the Fisher information matrix evaluated at an estimator $\widehat{\theta}_{T}$, we would need to evaluate $\sum_{t=1}^{T} \partial^{2}L_{t}(\theta)/ \partial \theta \partial \theta^{\top}$ at $\widehat{\theta}_{T}$, where $L_{t}(\theta)$ denotes the log-likelihood function for the $t$-th mini-batch $\bD_{t}$. 
In a batch learning setup, the computation of this matrix is straightforward. However, in an online learning setting, the early mini-batches $\bD_{1}, ..., \bD_{T-1}$ are discarded once $\bD_{T}$ arrives, while $\widehat{\theta}_{T}$ is only available at time $T$. This limitation hinders the applicability of the conventional batch learning approach.

As a result, constructing tractable and asymptotically valid confidence intervals has become an important topic in online learning literature. To address this challenge, \citet{chen2020SGD} proposed a \textit{batch-means} method that aims to estimate the limiting covariance matrix, and \citet{zhu2023online} improved upon this method by eliminating the need for a priori knowledge of the total sample size $N$. 
In another line of work, \cite{lee2022fast} developed computationally efficient confidence intervals by applying a functional central limit theorem to SGD iterates, which further extended to quantile regression \citep{lee2024fast}. Alternatively, instead of directly estimating the limiting covariance matrix, \citet{fang2018online} constructed tractable confidence intervals by employing a bootstrap resampling procedure based on randomly perturbed SGD updates.

\subsubsection{Bayesian methods}

In recent years, significant methodological advancements have been made in Bayesian online learning. Several studies have proposed various approximation methods for updating the posterior distribution sequentially. For example, \citet{broderick2013streaming} introduced a general framework for large-scale and streaming data, and \citet{nguyen2018variational} adapted the online VB algorithm for neural networks. In the realm of nonparametric models, \citet{lin2013online} developed a VB algorithm for Dirichlet process mixture models, while \citet{jeong2023online} employed an assumed density filtering (ADF) approach for similar tasks. More recently, \citet{lambert2022recursive}\footnote{The procedure proposed in \citet{lambert2022recursive} coincides with the setup that will be introduced in Section \ref{sec:setup_detail}.} and \citet{lambert2023limited} proposed computationally efficient online VB approximations using the Gaussian variational family.

These works demonstrate that projecting onto a tractable class $\cQ$ of distributions is a popular strategy due to its computational feasibility. Among the proposed methods, both ADF and VB are prominent. However, ADF requires a moment-matching step for each mini-batch, which can be computationally costly for certain hierarchical models (e.g., topic models), whereas VB can avoid this step \citep{broderick2013streaming}. Consequently, VB and its variants have emerged as the preferred approaches in recent Bayesian online learning literature \citep{lin2013online, bui2017streaming, nguyen2018variational, lambert2022recursive, choi2025online}.


\subsection{Notations}

\begin{table}[hbt!]
\centering
\caption{Important notations}
\label{table:notations_main}
\renewcommand{\arraystretch}{1.3} 
\begin{tabular}{c c || c c }
\hline
\textbf{Notation} & \textbf{Location} & \textbf{Notation} & \textbf{Location} \\
\hline\hline
$\Theta (\theta, \bF, r), \ \Theta (\bF, r)$ 
  & \eqref{def:local_set}, \eqref{def:local_set_around_zero}
  & $\Fisher[t]{\theta}, \ \FisherTilde[t]{\theta}$ 
  & \eqref{def:Fisher_info_quantities} \\
$r_{\eff, t}, \ \widetilde{r}_{\eff, 1:t}, \ r_{\eff, 1:t}$
  & \eqref{def:estimation_quantities}, \eqref{def:batch_quantity}
  & $\scrE_{\est, 1}, \ \scrE_{\est, 2}$
  & \eqref{def:Gamma_n_event}, \eqref{def:Gamma_n2_event} \\
$p_{\eff, t}, \ \lambda_{t}$
  & \eqref{def:estimation_quantities}
  & $r_{\LA}, \ \widehat{\tau}_{3, t}, \ \widehat{\tau}_{4, t}$
  & \eqref{def:hat_tau_t_radius} \\
$M_n$
  & \eqref{assume:A1_2}, \eqref{assume:A1ast_2}
  & $\widehat{\tau}_{3, t, r}$
  & \eqref{def:tau_3tr} \\
$p_{\ast}$
  & (\textbf{EX})
  & $\tau_{3, t}^{\ast}$
  & \eqref{def:tau_3ast} \\
$d_{V}(\cdot,\cdot), \ K(\cdot \ ;\cdot)$
  & \eqref{def:TV_distance}, \eqref{def:KL_divergence}
  & $\epsilon_{n, t, \TV}, \ \epsilon_{n, t, \KL}$
  & Theorem \ref{thm:LA_TV}, \ref{thm:LA_KL} \\
$\thetaMAP, \ \thetaBest$
  & \eqref{def:maximizer_quantities_minibatch}
  & $K_{\min}, \ K_{\max}$
  & \eqref{assume:A2_1}, \eqref{assume:A2_2}, \eqref{assume:A2_3} \\
$\fullthetapMLE, \ \fullthetaMLE, \ \fullthetaBest$
  & \eqref{def:batch_maximizer_quantities}
  & $K_{\rm low}, \ K_{\rm up}$
  & Proposition \ref{prop:eigenvalue_order_main} \\
\hline
\end{tabular}
\end{table}

For two real numbers $a$ and $b$, $a \vee b$ and $a \wedge b$ denote the maximum and minimum of $a$ and $b$, respectively.
For two positive sequences $(a_n)$ and $(b_n)$, $a_n \lesssim b_n$ (or $a_n = O(b_n)$) means that $a_n \leq C b_n$ for some constant $C \in (0, \infty)$.
Also, $a_n \asymp b_n$ indicates that $a_n \lesssim b_n$ and $b_n \lesssim a_n$.
The notation $a_n \ll b_n$ (or $a_n = o(b_n)$) implies that $a_n / b_n \rightarrow 0$ as $n \rightarrow \infty$. We use the standard $O_P$ and $o_p$ notation for stochastic order symbols; see \citet{van2000asymptotic}.
With a slight abuse of notation, for $\nu_n \in \bbR^p$ satisfying $\| \nu_n \|_2 = o(1)$, we write $\nu_n = o(1)$.
For $1 \leq q \leq \infty$, $\|\cdot\|_{q}$ indicates the $\ell_q$-norm of a vector. 
For $m \in \bbN$, let $[m] = \{1, 2, ..., m\}$.

For $z = (z_j)_{j \in [p]} \in \bbR^{p}$ and $k \geq 2$, let
\begin{align*}
     z^{\otimes k} = \left( z_{i_1} \times ... \times z_{i_k} \right)_{i_1, ..., i_k \in [p]} \in \bbR^{p^{k}}.
\end{align*}
For two $k$-order tensors $\bA = (A_{i_1, ..., i_k})_{i_1, ..., i_k \in [p]} \in \bbR^{p^{k}}$ and $\mathbf{B} = (B_{i_1, ..., i_k})_{i_1, ..., i_k \in [p]} \in \bbR^{p^{k}}$, let
\begin{align*}
    \langle \bA, \mathbf{B} \rangle = \sum_{i_1, ..., i_k \in [p]} A_{i_1, ..., i_k} B_{i_1, ..., i_k}.
\end{align*}

Let $\cS_{\texttt{++}}^{p}$ denote the set of all $p \times p$-dimensional symmetric positive definite matrices. Let $\bI_{p} \in \bbR^{p \times p}$ denote the identity matrix.
For a matrix $\mathbf{A} = (a_{ij}) \in \bbR^{n \times p}$, let $\lambda_{\operatorname{min}}(\bA)$ and $\lambda_{\operatorname{max}}(\bA)$ denote the smallest and largest singular values of $\bA$, respectively.
For simplicity, $\| \bA \|_{2}$ will often be used interchangeably with $\lambda_{\operatorname{max}}(\bA)$.
Let $\| \bA \|_{\rm F} = (\sum_{ij} a_{ij}^{2})^{1/2}$ be the Frobenius norm.
For two distinct matrices $\bA, \mathbf{B} \in \bbR^{n \times n}$, $\bA \succeq \mathbf{B}$ means $\bA - \mathbf{B}$ is positive semi-definite matrix.
For a $k$-th order tensor $\bA = (A_{i_1, ..., i_k})_{i_1, ..., i_k \in [p]} \in \bbR^{p^{k}}$, define the operator norm of $\bA$ by
\begin{align*}
    \left\| \bA \right\|_{\rm op} = \sup_{u_{1}, ..., u_{k} \in \cU} \left| \langle \bA, u_{1} \otimes ... \otimes u_{k} \rangle \right|,
\end{align*}
where $\cU = \{ u \in \bbR^{p} : \| u \|_{2} = 1 \}$.

For $\sigma^{2} \geq 0$, a random vector $X \in \bbR^{p}$ is said to be $\text{SubG}(\sigma^2)$ if
\begin{align*}
    \log \bbE \exp \big( \alpha^{\top} (X - \bbE X) \big) \leq \sigma^{2} \left\| \alpha \right\|_{2}^{2}/2, \quad \forall \alpha \in \bbR^{p}
\end{align*}
With the convention, we consider that $\inf \emptyset = \infty$ in this paper.
Throughout our paper, the constants $c_1, c_2, ...$ may vary depending on the context.

\section{Preliminaries} \label{sec:setup}

\subsection{Setup for online learning} \label{sec:setup_detail}

In this subsection, we precisely formulate the Bayesian online learning procedure briefly introduced in the introduction. Suppose the entire dataset $\bD = (Y_i)_{i \in [N]}$ consists of independent (not necessarily identically distributed) observations $Y_1, \ldots, Y_N$. The dataset is partitioned into $T$ mini-batches of equal size $n$, so that the total sample size up to the $t$-th mini-batch is given by $N_t = nt$ for all $t \in \{0, 1, 2, \ldots, T\}$. The $t$-th mini-batch and the collection of all samples up to the $t$-th mini-batch are denoted as
\begin{align*}
    \bD_t = (Y_{N_{t-1} + 1}, Y_{N_{t-1} + 2}, ..., Y_{N_{t}}), 
    \quad \text{and} \quad
    \bD_{1:t} = (\bD_1, \bD_2, ..., \bD_{t}) = (Y_i)_{i \in [N_{t}]}.
\end{align*}

For $i \in [N]$, let $p_{\theta, i}(\cdot)$ be the probability density function for $Y_i$ parametrized by $\theta \in \Theta \subset \bbR^p$, and let $\ell_{\theta, i}(y) = \log p_{\theta, i} (y)$ be the log density. 
For $t \in [T]$, let 
\begin{align*}
    L_{t}(\theta) = L_{t}(\theta; \bD_t) = \sum_{i = N_{t-1} + 1}^{N_t} \ell_{\theta, i}(Y_i).
\end{align*}
Let $\bbP_{\theta}^{(N)}$ denote the joint probability measure corresponding to the product density function $(y_1, y_2, ..., y_N)$ $\mapsto \prod_{i = 1}^{N} p_{\theta, i}(y_i)$. 
We assume that the model is well-specified; that is, $\bD$ is generated from $\bbP_{\theta_0}^{(N)}$ for some \textit{true parameter} $\theta_0 \in \Theta$. 
Let $\bbP_{0, t}(\cdot)$ denote the joint probability measure corresponding to the product density 
\begin{align*}
(y_{N_{t-1} + 1}, y_{N_{t-1} + 2}, ..., y_{N_{t}}) \mapsto \prod_{i = N_{t-1} + 1}^{N_t} p_{\theta_0, i}(y_i),
\end{align*}
and let $\bbE_{t}$ denote the expectation under $\bbP_{0, t}$. Since the model is assumed to be well-specified, we have $\theta_0 = \argmax_{\theta \in \Theta} \bbE_{t} L_{t}(\theta)$ for all $t \in [T]$.

Now, we introduce our online learning procedure.
Let $\cQ$ be the collection of all Gaussian measures with nonsingular covariance matrices.
Given an initial prior $\Pi_0$ and the log-likelihood $L_{t}(\theta)$ for the $t$-th mini-batch data $\bD_t$, we iteratively define the posterior distribution $\widetilde{\Pi}_{t}(\cdot \mid \bD_t)$ and the corresponding variational approximation $\Pi_t(\cdot)$ as described in \eqref{eq:online-Bayes} and \eqref{eq:KL-projection}.
We denote by $\widetilde{\pi}_{t}(\cdot \mid \bD_t)$ and $\pi_{t}(\cdot)$ the density functions of $\widetilde{\Pi}_{t}(\cdot \mid \bD_t)$ and $\Pi_{t}(\cdot)$, respectively.

In particular, we consider a normal distribution $\cN(\mu_0, \bOmega_0^{-1})$ as the initial prior $\Pi_0$, where $\mu_0 \in \bbR^{p}$ and $\bOmega_0 \in \symmPD$. For $t \in \left\{ 0, 1, ..., T \right\}$, we denote 
\begin{align} \label{def:VB_param}
    \Pi_{t} = \cN(\mu_t, \bOmega_t^{-1})
\end{align}
for $\mu_t \in \bbR^{p}$ and $\bOmega_t \in \symmPD$.

\subsection{Definitions}

For a four times differentiable function $f : \bbR^{p} \rightarrow \bbR$ with $\theta \mapsto f(\theta)$, let
\begin{align*}
    &\nabla f(\theta) = \left( \dfrac{\partial}{\partial \theta_{i_1}} f(\theta) \right)_{i_1 \in [p]} \in \bbR^{p},  \quad 
    \nabla^2 f(\theta) = \left( \dfrac{\partial^2}{\partial \theta_{i_1} \partial \theta_{i_2}} f(\theta) \right)_{i_1, i_2 \in [p]} \in \bbR^{p \times p}, \\
    &\nabla^3 f(\theta) = \left( \dfrac{\partial^3}{\partial \theta_{i_1} \partial \theta_{i_2} \partial \theta_{i_3} } f(\theta) \right)_{i_1, i_2, i_3 \in [p]} \in \bbR^{p \times p \times p},  \\
    &\nabla^4 f(\theta) = \left( \dfrac{\partial^4}{\partial \theta_{i_1} \partial \theta_{i_2} \partial \theta_{i_3} \partial \theta_{i_4} } f(\theta) \right)_{i_1, i_2, i_3, i_4 \in [p]} \in \bbR^{p \times p \times p \times p}.
\end{align*}

To ensure an accurate approximation of the posterior, we will impose several smoothness conditions on the log-likelihood $L_{t}$, requiring that it is at least four times continuously differentiable with probability 1. 
For such a differentiable $L_t(\cdot)$, we introduce some notations used for posterior analysis. 
For $t \in [T]$ and $\theta \in \Theta$, define
\begin{align} \label{def:Fisher_info_quantities}
    \bF_{t, \theta} =  -\nabla^2 L_{t}(\theta) \in \bbR^{p \times p}, \quad 
    \widetilde{\bF}_{t, \theta} =  \bOmega_{t-1} +  \bF_{t, \theta},
\end{align}
where $\bOmega_{t-1}$ is defined in \eqref{def:VB_param}. 
For $\theta_{\rm c} \in \bbR^{p}$, $r \geq 0$ and $\bF \in \symmPD$, let 
\begin{align} \label{def:local_set}
    \Theta \left( \theta_{\rm c}, \bF, r \right) = \left\{ \theta \in \Theta : \big\| \bF^{1/2} \left( \theta - \theta_{\rm c} \right) \big\|_{2} \leq r  \right\}
\end{align}
denote the local elliptical vicinity of $\theta_{\rm c}$. For notational simplicity, let
\begin{align} \label{def:local_set_around_zero}
    \Theta \left(\bF, r \right) = \Theta \left( 0, \bF, r \right). 
\end{align}

Given the prior distribution $\Pi_{t-1}$, which reflects the information in $\bD_{1:t-1}$, we define the penalized log-likelihood function as
\begin{align} \label{def:penalized_likelihood}
    \widetilde{L}_{t}(\theta) = L_{t}(\theta) - \dfrac{1}{2} \left\| \bOmega_{t-1}^{1/2} \left( \theta - \mu_{t-1} \right) \right\|_{2}^{2}.
\end{align}
For each $t \in [T]$, the penalized maximum likelihood estimator (pMLE) and its population version are defined as
\begin{align} \label{def:maximizer_quantities_minibatch}
    \thetaMAP = \argmax_{\theta \in \Theta} \widetilde{L}_{t}(\theta), \quad  \thetaBest = \argmax_{\theta \in \Theta} \bbE_{t} \widetilde{L}_{t}(\theta).
\end{align}
From the standard M-estimation theory, it is expected that $\thetaMAP$ is close to $\thetaBest$; this will be addressed in Section \ref{sec:pMLE}. It is noteworthy that $\thetaMAP$ need not converge to $\theta_0$ because, in general, $\thetaBest \neq \theta_0$ due to the penalization term. Therefore, for $\thetaMAP$ to be a reliable estimator of $\theta_0$, the \textit{bias} $\| \thetaBest - \theta_0 \|_2$ should be sufficiently small. Roughly speaking, if $\| \bOmega_{t-1}^{1/2} (\theta_0  - \mu_{t-1}) \|_2$ is not too large, one can expect that $\| \thetaBest - \theta_0 \|_2$ will also be small. The magnitude of this bias will be addressed in Proposition \ref{prop:eigenvalue_order_main}, followed by a refined analysis in \eqref{eqn:refined_bias}.

\subsection{Smoothness condition} \label{sec:smoothness}
In this subsection, we introduce an essential tool for characterizing the smoothness structure of (concave) functions---namely, \textit{self-concordance}-type condition. For a simple illustration, consider a $3$-times differentiable function $f : \bbR \rightarrow \bbR$ satisfying
\begin{align*}
    | f'''(\theta) | \leq 2 f''(\theta)^{3/2}, \quad  \forall \theta \in \bbR,
\end{align*}
where $f''$ and $f'''$ denote the second and third derivative of $f(\cdot)$, respectively. Then, one can prove that
\begin{align*}
    \big( 1 - \delta(\theta, \eta) \big) f''(\theta) 
    \leq 
    f''(\eta) 
    \leq 
    1/\big( 1 - \delta(\theta, \eta) \big)^2 f''(\theta), \quad 
    \forall \theta, \eta \in \bbR \ \text{ with } \ \delta(\theta, \eta) \leq 1
\end{align*}
where $\delta(\theta, \eta) = |f''(\theta)^{1/2} (\eta - \theta)|$; see Theorem 5.1.7 in \citet{nesterov2018lectures} for a general statement.
Intuitively, this condition ensures that the second derivative of $f$ does not change too abruptly, which in turn facilitates the analysis of local quadratic approximations.

Recently, the self-concordance condition has been invoked in the statistical literature. Originally introduced in the context of convex optimization problems \citep{nesterov1994interior}, it has since been adapted for statistical applications \citep{bach2010self, ostrovskii2021finite, spokoiny2025inexact}. These conditions facilitate non-asymptotic quadratic approximation theory, leading to sharp theoretical analyses. For this purpose, we rely heavily on the smoothness structures induced by the self-concordance condition.

Now, we formalize the notion of smoothness.
Let $f : \Theta \rightarrow \bbR$ be a four-times differentiable function.
For some $\bF \in \symmPD$ and $\tau_3, r \geq 0$, we say that $f$ satisfies the \textit{third order smoothness} at $\theta \in \Theta$ with parameters $(\tau_3, \bF, r)$ if 
\begin{align} \label{def:3_smooth} 
    \sup_{u \in \Theta (\bF, r)} \sup_{z \in \bbR^{p}} 
    \dfrac{
    \left| \langle \nabla^3 f(\theta + u), z^{\otimes 3} \rangle \right|
    }{
    \left\| \bF^{1/2} z \right\|_{2}^{3}
    }
    \leq \tau_3.
\end{align}
Similarly, we say that $f$ satisfies the \textit{fourth order smoothness} at $\theta \in \Theta$ with parameters $(\tau_4, \bF, r)$ if
\begin{align} \label{def:4_smooth} 
    \sup_{u \in \Theta (\bF, r)} \sup_{z \in \bbR^{p}} 
    \dfrac{
    \left| \langle \nabla^4 f(\theta + u), z^{\otimes 4} \rangle \right|
    }{
    \left\| \bF^{1/2} z \right\|_{2}^{4}
    }
    \leq \tau_4.
\end{align}
By Theorem 2.1 in \cite{zhang2012best}, each left-hand side in \eqref{def:3_smooth} and \eqref{def:4_smooth} is equal to the following expression:
\begin{align} \label{def:3_4_smooth_equiv}  
    \sup_{u \in \Theta (\bF, r)} 
    \sup_{z_{1}, ..., z_{k} \in \bbR^{p}} 
    \dfrac{
    \left| \langle \nabla^{k} f(\theta + u), z_{1} \otimes \cdots \otimes z_{k} \rangle \right|
    }{
    \left\| \bF^{1/2} z_{1} \right\|_{2} 
    \times \cdots \times
    \left\| \bF^{1/2} z_{k} \right\|_{2}
    }, 
    \quad k \in \{3, 4\}.
\end{align}

It is noteworthy that $\tau_3$ and $\tau_4$ can typically be chosen to be sufficiently small. More specifically, for $\theta \in \Theta$, $\bF \in \symmPD$ and $r \geq 0$, we prove in Lemma \ref{lemma:tech_smooth_tau_bound} that \eqref{def:3_smooth} and \eqref{def:4_smooth} hold with 
\begin{align} \label{eqn:tau_bound_theory}
    \tau_{3} = \lambda_{\min}^{-3/2} ( \bF )  \sup_{u \in \Theta ( \bF, r )} \left\| \nabla^{3} f(\theta + u) \right\|_{\rm op} \text{ and } \
    \tau_{4} = \lambda_{\min}^{-2} ( \bF )  \sup_{u \in \Theta ( \bF, r )} \left\| \nabla^{4} f(\theta + u) \right\|_{\rm op},
\end{align}
respectively. To aid understanding, suppose that $f = L_{t}$, $\bF = -\nabla^2 L_{t}(\theta)$ with 
\begin{align*} 
    \lambda_{\min}(\bF) \asymp n, \quad 
    \sup_{u \in \Theta ( \bF, r )} \big( \left\| \nabla^{3} L_{t}(\theta + u) \right\|_{\rm op} \vee \left\| \nabla^{4} L_{t}(\theta + u) \right\|_{\rm op} \big) \asymp n.
\end{align*}
Then it follows that $\tau_3 \asymp n^{-1/2}$ and $\tau_4 \asymp n^{-1}$. Many of the subsequent analyses in this paper rely on the assumption that $\tau_3$ and $\tau_4$ are small enough. This can be achieved when $\lambda_{\min}(\bF)$ is large enough relative to the local third and fourth operator norms of $L_t$ as described in \eqref{eqn:tau_bound_theory}. In this sense, $\lambda_{\min}(\bF)$ behaves as an \textit{effective sample size} in our analysis. A detailed discussion of this point is deferred to Section \ref{sec:eigenvalue_analysis}.

\section{High-level overview of the online BvM theorem} \label{sec:overview}

In this section, we provide a high-level overview of the online BvM theorem and provide a motivating example illustrating the applicability of our theory.

\subsection{Proof framework and our contributions}
For $t \in [T]$, we define the $t$-th \textit{full posterior} by 
\begin{align} \label{def:full_posterior}
    \Pi( \cA \mid \bD_{1:t}) = \dfrac{
    \int_{\cA} \exp\left\{ L_{1:t}(\theta) \right\} \rmd \Pi_{0} (\theta)
    }{
    \int_{\Theta} \exp\left\{ L_{1:t}(\theta) \right\} \rmd \Pi_{0} (\theta)
    } \quad  \text{ for any measurable } \cA \subset \Theta,
\end{align}
where $L_{1:t}(\theta) = \sum_{s=1}^{t} L_{s}(\theta)$. 
As mentioned in Section \ref{sec:intro}, we aim to show that
\begin{align} \label{eqn:intro_claim}
    d_{V} \Big(  
    \Pi_{t},
    \Pi(\cdot \mid \bD_{1:t}) \big)
    \Big) 
    = o_P(1), \quad \forall t \in [T],
\end{align}
which we call the \textit{online BvM theorem}, stated in Theorem \ref{thm:onlineBvM}. 
To this end, we introduce a normal distribution $\widehat Q_{t}$ satisfying
\begin{align} \label{eqn:overview_assume}
    d_{V} \Big(  \widehat Q_{t}, \Pi(\cdot \mid \bD_{1:t})  \Big) = o_P(1)
\end{align}
for all $t \in [T]$. In Section \ref{sec:full_posterior}, $\widehat Q_{t}$ will be specified in Theorem \ref{thm:batch_posterior_LA}.

For regular parametric models, it is well known that the posterior distribution can be approximated by a normal distribution centered at an efficient estimator with variance equal to the inverse Fisher information matrix \citep{van2000asymptotic}. However, this normal distribution involves the Fisher information matrix, which depends on the unknown true parameter $\theta_{0}$. This motivates approximating the posterior by fully data-dependent distributions $\widehat Q_{t}$, a topic of significant interest in the Bayesian literature.

Substantial theoretical progress has been made in this direction, including Laplace approximation (LA) \citep{spokoiny2023dimension, spokoiny2025inexact, katsevich2023laplace} and VB methods \citep{katsevich2024approximation}. Accordingly, one may take such data-dependent distributions as $\widehat Q_{t}$. In Section \ref{sec:full_posterior}, we will provide rigorous justifications for these arguments. In the remainder of this section, we focus on conveying the high-level ideas.

Now we present our high-level proof strategy.
For $t \in [T]$, note that
\begin{align*}
    d_{V} \big( \Pi_{t}, \Pi(\cdot \mid \bD_{1:t}) \big)
    &\leq 
    d_{V} \big( \Pi_{t}, \widehat Q_{t} \big)
    +
    d_{V} \big( \widehat Q_{t}, \Pi(\cdot \mid \bD_{1:t}) \big) \\
    &=
    d_{V} \big( \Pi_{t}, \widehat Q_{t} \big)
    +
    o_P(1).
\end{align*}
Therefore, it suffices to bound the first term $d_{V} \big( \Pi_{t}, \widehat Q_{t} \big)$. 

To this end, we introduce an important formula. For two normal distributions $Q_1 = \cN(m_1, \bV_1^{-1})$ and $Q_2 = \cN(m_2, \bV_2^{-1})$ with sufficiently small $\| \bV_2^{-1/2} \bV_1 \bV_2^{-1/2} - \bI_p \|_{2}$, it holds that
\begin{align} \label{eqn:TV_error_normals_intro}
    d_{V}\left( Q_1, Q_2 \right) 
    \lesssim 
    \left\| \bV_1^{1/2} \big( m_1 - m_2 \big) \right\|_2 \vee \left\| \bV_2^{-1/2} \bV_1 \bV_2^{-1/2} - \bI_p \right\|_{\rm F}.
\end{align}
Suppose that $\widehat Q_{t} = \cN( \theta_t, \bF_t^{-1})$ for some $\theta_t \in \Theta$ and $\bF_t \in \symmPD$.\footnote{Here, $\theta_t$ and $\bF_t$ are written in simplified form. More precise notations and a rigorous construction of these quantities will be introduced in Section \ref{sec:full_posterior}.} 
By \eqref{eqn:TV_error_normals_intro}, the proof of \eqref{eqn:intro_claim} boils down to the problem of obtaining sharp upper bounds for the following quantities:
\begin{align} \label{eqn:intro_claim_2}
    \Big\| \bOmega_{t}^{1/2} \big( \theta_t - \mu_{t}  \big) \Big\|_{2} \ \text{ and } \
    \Big\| \bF_t^{-1/2} \bOmega_{t} \bF_t^{-1/2} - \bI_{p} \Big\|_{\rm F}.
\end{align}
Our main technical contribution lies in deriving explicit non-asymptotic bounds for these two terms in \eqref{eqn:intro_claim_2}.
To the best of our knowledge, no prior theoretical work has provided a rigorous non-asymptotic analysis of \eqref{eqn:intro_claim_2}. Specifically, Propositions \ref{prop:similar_MAP} and \ref{prop:similar_variance} in Section \ref{sec:online_variational_posterior} address the first and second terms, respectively. We will revisit this issue in Section \ref{sec:online_variational_posterior}.

\subsection{Example: logistic regression under random design}

This subsection summarizes our main results in the context of the logistic regression model.
Let $\bY = (Y_i)_{i \in [N]} \in \bbR^{N}$ be the response vector and $\bX = (X_{ij})_{i \in [N], j \in [p]} \in \bbR^{N \times p}$ be the design matrix.

To illustrate that our theory applies to logistic regression with a ``well-posed'' random design, we consider the random matrix setup where each entry of the design matrix $\bX$ is an i.i.d. standard normal, i.e., $X_{ij} \overset{\iid}{\sim} \cN(0, 1)$. For simplicity, we take the covariance matrix to be the identity matrix $\bI_p$; the analysis can be easily extended to a general covariance $\bSigma$ satisfying
\begin{align*}
    C^{-1} \leq \lambda_{\min} \big( \bSigma \big) \leq \lambda_{\max} \big( \bSigma \big) \leq C
\end{align*}
for some constant $C > 0$. 
With slight abuse of notation, hereafter, let $\bbP$ and $\bbE$ denote the joint probability measure and expectation corresponding to $(\bX, \bY)$, respectively. For a detailed description, see Appendix \ref{sec:proof_logit_example}.

To prove the online BvM theorem for the logistic regression model, we impose the following conditions.

\bed
\item[(\textbf{EX})] 
The true parameter $\theta_0$ and the initial prior parameters $\mu_0$ and $\bOmega_0$ satisfy
\begin{align*} 
    \left\| \theta_0 \right\|_{2} \leq K_1, \quad 
    \left\| \bOmega_{0}^{1/2} \big( \theta_0 - \mu_0 \big) \right\|_{2} 
    \leq K_2 p_{\ast}^{1/2}, \quad 
    \left\| \bOmega_{0} \right\|_{2} \leq K_3 p_{\ast},
\end{align*}
where $p_{\ast} = p \vee \log n \vee \log T$ and $K_1, K_2, K_3 > 0$ are some universal constants.
Furthermore, for a large enough constant $C = C(K_1, K_2, K_3) > 0$,
\begin{align*} 
        n \geq C \bigg[ \Big( p \log^6 (T \vee n) \log^{6}\left( 2n/p \right)  \Big) \vee \Big( p^{2} \log^{4}T \Big) \bigg].
\end{align*}
\eed
We now state that the online BvM theorem holds for the logistic regression model.
\begin{proposition} \label{prop:logit_intro_statement}
    Suppose that (\textbf{EX}) holds.
    Then, with $\bbP$-probability at least $1 - 5n^{-1} - 10e^{-n/72} -4(Np)^{-1}$, the following inequality holds uniformly for all $t \in [T]$:
    \begin{align*}
        d_{V} \Big( \Pi_{t}, \Pi(\cdot \mid \bD_{1:t}) \Big) 
        \leq  
        C  \left( \dfrac{p_{\ast}^{3}}{n} \right)^{1/2},
    \end{align*}
    where $C = C(K_1)$.
\end{proposition} 
Technical statements and proofs are deferred to Appendix \ref{sec:logit_example_app}; see Proposition \ref{prop:logit_fin_statement} for the precise statements.

\section{Variational approximation} \label{sec:variational_posterior}

In this section, we demonstrate that the posterior distribution $\widetilde{\Pi}_{t}\left(\cdot \mid \bD_{t} \right)$ is well approximated by its variational approximation $\Pi_t$. Before establishing the precise relation between $\Pi_t$ and $\widetilde{\Pi}_{t}\left(\cdot \mid \bD_{t} \right)$, we first show that the posterior can be accurately approximated by its Laplace approximation, defined as
\begin{align*}
    \Pi_{t}^{\LA} = \cN \left( \thetaMAP[t], \FisherMAP[t]^{-1} \right), \quad \forall t \in [T], 
\end{align*}
which admits a density function $\pi_{t}^{\LA}(\cdot)$.
Although our primary focus is on $\Pi_t$ rather than $\Pi_{t}^{\LA}$, understanding the concentration behavior of the pair $(\thetaMAP[t], \FisherMAP[t])$ allows us to facilitate the theoretical analysis needed to establish $\Pi_T \approx \Pi(\cdot \mid \bD)$. We will revisit this issue in Section \ref{sec:online_variational_posterior}.

Our main results and the corresponding proofs in this section are largely inspired by those in \citet{spokoiny2023dimension}. 
Notably, for the total variation (TV) metric, we refine the dimension dependency of Theorem 2.4 in \citet{spokoiny2023dimension} by leveraging advanced analysis in \citet{spokoiny2024estimation}. 
Moreover, for the KL divergence, we significantly improve the convergence rate presented in Theorem 2.6 of \cite{spokoiny2023dimension}; see the discussion following Theorem \ref{thm:LA_KL}.
Since the variational posterior $\Pi_t$ minimizes the KL divergence, obtaining a sharp upper bound for the KL divergence between $\Pi_{t}^{\LA}$ and $\widetilde{\Pi}_{t} (\cdot \mid \bD_{t} )$ is a crucial step in bounding $K(\Pi_t; \widetilde{\Pi}_{t} (\cdot \mid \bD_{t} ) )$. 
A detailed proof of this section is provided in Appendix \ref{sec:appendix_LA_VB_app}, where we present self-contained non-asymptotic proofs adapted to the online learning framework.

As discussed in Section \ref{sec:smoothness}, to accurately approximate $\widetilde{\Pi}_{t} (\cdot \mid \bD_{t} )$, we adopt self-concordance-type smoothness conditions. 
To this end, we impose the following assumption:
\bed
\item[(\textbf{A0})] 
For every $t \in [T]$, assume that the map $\theta \mapsto L_{t}(\theta)$ is concave on $\Theta$ and at least four times continuously differentiable with probability 1.
\eed
For the remainder of this paper, every event is treated as the intersection with the event under which assumption (\textbf{A0}) holds, without further explicit restatement. 

Let 
\begin{align}
\begin{aligned} \label{def:hat_tau_t_radius}
    r_{\LA} &= 2\sqrt{p} + \sqrt{2\log N} \\
    \widehat{\tau}_{3, t} 
    &= \inf \left\{ 
        \tau \in \bbR_{+} : 
        \sup_{u \in \Theta (\FisherMAP, 4r_{\LA})} \sup_{z \in \bbR^{p}} 
        \dfrac{
        \left| \langle \nabla^3 \widetilde{L}_{t}(\thetaMAP + u), z^{\otimes 3} \rangle \right|
        }{
        \left\| \FisherMAP^{1/2} z \right\|_{2}^{3}
        }
        \leq 
        \tau
    \right\}, \\
    \widehat{\tau}_{4, t} 
    &= \inf \left\{ 
        \tau \in \bbR_{+} : 
        \sup_{u \in \Theta (\FisherMAP, 4r_{\LA})} \sup_{z \in \bbR^{p}} 
        \dfrac{
        \left| \langle \nabla^4 \widetilde{L}_{t}(\thetaMAP + u), z^{\otimes 4} \rangle \right|
        }{
        \left\| \FisherMAP^{1/2} z \right\|_{2}^{4}
        }
        \leq 
        \tau
    \right\}.
\end{aligned}
\end{align}
Then, with probability 1, $\widetilde{L}_{t}(\cdot)$ satisfies the third and fourth order smoothness at $\thetaMAP$ with parameters $(\widehat{\tau}_{3, t}, \FisherMAP, 4r_{\LA})$ and $(\widehat{\tau}_{4, t}, \FisherMAP, 4r_{\LA})$, respectively. 
Note that $\widehat{\tau}_{3, t}$ and $\widehat{\tau}_{4, t}$ are finite almost surely under (\textbf{A0}) because $\widetilde{L}_{t}$ is four times continuously differentiable and $\FisherMAP \in \symmPD$.

For two probability distributions $P$ and $Q$, the total variation (TV) distance is defined as
\begin{align} \label{def:TV_distance}
    d_{V} \left( P,  Q \right) = \sup_{\cA} \left| P(\cA) - Q(\cA) \right|,
\end{align}
where the supremum is taken over all measurable sets $\cA$. The following theorem provides an upper bound on the total variation between $\Pi_{t}^{\LA}$ and $\widetilde{\Pi}_{t} (\cdot \mid \bD_{t} )$ in terms of $\widehat{\tau}_{3, t}$ and $\widehat{\tau}_{4, t}$.
We also note that Theorem \ref{thm:LA_TV} can be obtained from Theorem 2.2 in \citet{katsevich2025improved}. Although the form of (2.1) in that paper differs slightly from \eqref{def:hat_tau_t_radius}, the leading asymptotic orders are the same, and the proof techniques are similar.

\begin{theorem}[Laplace approximation: TV distance] \label{thm:LA_TV}
Suppose that (\textbf{A0}) holds. Then, with probability 1, we have
    \begin{align} \label{claim:LA_TV}
        d_{V} \left( \Pi_{t}^{\LA}(\cdot),  \widetilde{\Pi}_{t}\left(\cdot \mid \bD_{t} \right) \right) \leq K \epsilon_{n, t, \TV}, \quad  \forall t \in [T],
    \end{align}
    where
    \begin{align*}
        \epsilon_{n, t, \TV} 
        = 
            \left( \widehat{\tau}_{4, t} + \widehat{\tau}_{3, t}^2 \right) p^2 
            +
            \widehat{\tau}_{3, t} p
            +
            \widehat{\tau}_{3, t}^{3} \log^{3} N
            +
            e^{-8\log N - 8p}
    \end{align*}
    and $K > 0$ is a universal constant.
\end{theorem}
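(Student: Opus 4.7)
The plan is to follow the standard localization strategy for Laplace approximation, refined as in \citet{spokoiny2024estimation} to exploit Gaussian symmetry for the cubic Taylor term. Writing $u = \theta - \thetaMAP$ and $\cB = \Theta(\thetaMAP, \FisherMAP, r_{\LA})$, I bound
\begin{align*}
d_V\!\big(\Pi_t^{\LA},\widetilde{\Pi}_t(\cdot \mid \bD_t)\big) \leq \Pi_t^{\LA}(\Theta\setminus\cB) + \widetilde{\Pi}_t(\Theta\setminus\cB \mid \bD_t) + \tfrac{1}{2}\int_{\cB}\big|\widetilde{\pi}_t(\theta \mid \bD_t) - \pi_t^{\LA}(\theta)\big|\,\rmd\theta
\end{align*}
and treat the three pieces separately; since the argument is pointwise in $\omega$, assumption (\textbf{A0}) only needs to be invoked to guarantee that $\thetaMAP$ and $\FisherMAP$ are well defined with $\widehat{\tau}_{3,t},\widehat{\tau}_{4,t}<\infty$.

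For the two tail terms, the Gaussian mass $\Pi_t^{\LA}(\Theta\setminus\cB)$ is controlled by a standard $\chi^2_p$ deviation inequality, and the choice $r_{\LA} = 2\sqrt{p} + \sqrt{2\log N}$ is calibrated precisely so that this probability is at most $e^{-8\log N - 8p}$ up to a universal constant. For the posterior mass on $\cB^{c}$, I would use concavity of $\widetilde{L}_{t}$ from (\textbf{A0}) and the fact that $\nabla\widetilde{L}_t(\thetaMAP)=0$ to extend the local quadratic upper bound into a globally valid envelope, then divide by the denominator $\int\exp(\widetilde{L}_t)$, which is lower bounded by integrating the same quadratic envelope over a small elliptical neighborhood of $\thetaMAP$. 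This produces a tail decay of the same $e^{-8\log N - 8p}$ order, absorbed in the last term of $\epsilon_{n,t,\TV}$.

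On the local region I expand
\begin{align*}
\widetilde{L}_t(\thetaMAP+u) - \widetilde{L}_t(\thetaMAP) = -\tfrac{1}{2}\|\FisherMAP^{1/2}u\|_2^2 + \delta_t^{(3)}(u) + \delta_t^{(4)}(u),
\end{align*}
where $\delta_t^{(3)}(u) = \tfrac{1}{6}\langle \nabla^3\widetilde{L}_t(\thetaMAP), u^{\otimes 3}\rangle$ and, by \eqref{def:4_smooth}, $|\delta_t^{(4)}(u)| \leq \tfrac{\widehat{\tau}_{4,t}}{24}\|\FisherMAP^{1/2}u\|_2^4$ on $\cB$. The density ratio on $\cB$ equals $(Z_{\LA}/Z_{\mathrm{post}})\exp(\delta_t^{(3)}(u)+\delta_t^{(4)}(u))$, and the crucial observation is that $\bbE_{\Pi_t^{\LA}}[\delta_t^{(3)}(U)] = 0$ by oddness of the cubic form under the zero-mean Gaussian. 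I would then split $e^{\delta}-1 = \delta + (e^\delta-1-\delta)$ with $|e^\delta-1-\delta|\leq \tfrac{\delta^2}{2}e^{|\delta|}$, so that the leading contributions reduce to the Gaussian moments $\bbE[\delta_t^{(3)}(U)^2]$ and $\bbE[|\delta_t^{(4)}(U)|]$. Using Isserlis' identity together with the tensor-contraction estimates associated with \eqref{def:3_4_smooth_equiv}, these yield $\widehat{\tau}_{3,t}^2 p^2$ and $\widehat{\tau}_{4,t} p^2$ respectively, while the bias between the posterior mean and $\thetaMAP$ induced by the surviving linear-in-$\delta_t^{(3)}$ term in the normalization ratio accounts for the $\widehat{\tau}_{3,t} p$ contribution. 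Finally, on the subset of $\cB$ where $|\delta_t^{(3)}(u)|>1$ the Taylor estimate $|e^\delta-1-\delta|\leq \tfrac{\delta^2}{2}e^{|\delta|}$ must be replaced by the crude bound $|\delta_t^{(3)}(u)|^3 \leq \widehat{\tau}_{3,t}^3 r_{\LA}^6$ weighted by a restricted Gaussian probability, producing the $\widehat{\tau}_{3,t}^3 \log^3 N$ term.

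The main obstacle will be the simultaneous accounting of the normalization ratio $Z_{\LA}/Z_{\mathrm{post}}$ and the in-integrand discrepancy: $Z_{\LA}/Z_{\mathrm{post}} - 1$ itself is of order $\widehat{\tau}_{3,t}^2 p^2 + \widehat{\tau}_{4,t}p^2 + \widehat{\tau}_{3,t}p$, and preserving the symmetry-induced cancellation of the otherwise dominant $\widehat{\tau}_{3,t} p^{3/2}$ term requires cancelling these two sources of error consistently. A secondary difficulty is that the cubic tensor $\nabla^3\widetilde{L}_t(\thetaMAP)$ is controlled via an operator-type norm in \eqref{def:3_4_smooth_equiv}, while the Isserlis expansion naturally pairs it against itself via Frobenius-type contractions; the sharp $p^2$ (rather than $p^3$) scaling ultimately follows from the tensor-moment bounds developed in \citet{spokoiny2024estimation}, which I would invoke as a black box after whitening to $W = \FisherMAP^{1/2}u\sim\cN(0,\bI_p)$.
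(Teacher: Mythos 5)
Your proposal follows essentially the same route as the paper: localize onto an elliptical ball around $\thetaMAP$, bound both tail contributions via a Gaussian deviation inequality and the concavity-induced global quadratic envelope of $\widetilde{L}_t$, and cancel the odd cubic Taylor term by Gaussian symmetry before estimating the remaining second moments. The one gap worth flagging: your expansion $|e^\delta - 1 - \delta| \leq \tfrac{\delta^2}{2}e^{|\delta|}$ together with your plan to treat $\{|\delta_t^{(3)}|>1\}$ by a crude cubic bound cannot be controlled as stated, because $e^{|\delta_t^{(3)}(u)|}$ can be as large as $e^{O(\widehat{\tau}_{3,t}r_{\LA}^3)}$ on the local ball; the paper first reduces to the regime $(\widehat{\tau}_{3,t}r_{\LA}^2)\vee(\widehat{\tau}_{4,t}p^2)\leq \delta$ for a small universal $\delta$ (noting that the TV distance is trivially $\leq 2$, so the claimed bound holds automatically otherwise), and then, rather than splitting off a large-$\delta^{(3)}$ set, applies a centered MGF-remainder inequality (Lemma \ref{lemma:tech_mgf_approximation}, adapted from \citet{spokoiny2024estimation}) that bounds $\bbE\bigl[(e^X-1-X-\tfrac{X^2}{2})G\bigr]$ by $O(\widehat{\tau}_{3,t}^3 r_{\LA}^6)$ uniformly over $|G|\leq 1$, which is the actual source of the $\widehat{\tau}_{3,t}^3\log^3 N$ term. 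You would need to add this reduction explicitly before your local Taylor estimates become licit.
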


Theorem \ref{thm:LA_TV} implies that an accurate approximation in total variation can be achieved provided that $( \widehat{\tau}_{3, t} r_{\LA}^{2} ) \vee ( \widehat{\tau}_{4, t} p^{2} )$ is sufficiently small.
In particular, this condition guarantees that the quadratic approximation is accurate on the local region $\Theta (\thetaMAP, \FisherMAP, 4r_{\LA})$. In Proposition \ref{prop:eigenvalue_order_main}, we provide sufficient conditions under which the following asymptotic bounds hold on a given event:
\begin{align} \label{eqn:tau_bound}
    \widehat{\tau}_{3, t} = O(t^{-3/2} n^{-1/2}), \quad 
    \widehat{\tau}_{4, t} = O(t^{-2} n^{-1}). 
\end{align}
Based on these bounds, $n t^2 \gg (p + \log N)^2$ is sufficient to ensure $( \widehat{\tau}_{3, t} r_{\LA}^{2} ) \vee ( \widehat{\tau}_{4, t} p^{2}) = o(1)$. Furthermore, combining \eqref{eqn:tau_bound} with conditions $n t^3 \gg p^{-1} \log^3 N$ and $nt \gg p^2$, one can simplify the leading order of $\epsilon_{n, t, \TV}$ as
\begin{align} \label{eqn:TV_error}
    \epsilon_{n, t, \TV} = O(\widehat{\tau}_{3, t} p) = O \left( t^{-3/2} (p^2/n)^{1/2} \right)
\end{align}
For the batch learning setup ($t = 1$), this bound coincides with the sharp bounds established in recent studies \citep{katsevich2024laplace, katsevich2024approximation, spokoiny2024estimation}. In particular, \citet[][Section 2.4]{katsevich2023laplace} showed that the total variation error is lower bounded by a constant multiple of $(p^2/n)^{1/2}$. Therefore, the rate derived in Theorem \ref{thm:LA_TV} is optimal, at least in terms of its dependence on the sample size and parameter dimension. In this sense, for a fixed sample size $n$, the order $p = o(n^{1/2})$ is regarded as the \textit{critical dimension} for the validity of the Laplace approximation.

Note that the critical dimension of order $p = o(n^{1/2})$ established in the above literature is derived for a certain class of models, including logistic regression. However, it is important to emphasize that the critical dimension for the Laplace approximation---or more broadly, the asymptotic normality of the posterior distribution (i.e., the BvM assertion)---depends on the specific statistical model under consideration. For instance, \citet[][Section 4.1]{panov2015finite} demonstrated that the condition $p = o(n^{1/3})$ cannot be relaxed for the BvM assertion in a particular model. Similarly, \citet{chae2023adaptive} established the asymptotic normality of the posterior distribution in the current status model under the assumption $p = o(n^{1/3})$. Although there is no formal proof that $p = o(n^{1/3})$ is the critical dimension in this setting, the bound appears to be tight based on the corresponding frequentist theory \citep{tang2012likelihood} for the same model. On the other hand, \citet{yano2020frequentist} showed that asymptotic normality of the posterior holds under the condition $p^2 \log^3 n = o(n)$ in linear regression with an unknown error distribution. These examples illustrate that the critical dimension for the Laplace approximation generally depends on the underlying statistical model.


To approximate the posterior in terms of the KL divergence, an additional step is required to carefully control 
the behavior of the posterior density in the tail region. In particular, the log-likelihood ratio $\theta \mapsto \log (\pi_{t}^{\LA}(\theta)/ \widetilde{\pi}_{t}(\theta \mid \bD_t))$ on $\Theta^{\rm c} (\thetaMAP, \FisherMAP, 4r_{\LA})$ should not grow too rapidly. To quantify this growth rate, we introduce the following quantity:
\begin{align}
\begin{aligned} \label{def:tau_3tr}
    \widehat{\tau}_{3, t, r} 
    = \inf \left\{ 
        \tau \in \bbR_{+} : 
        \sup_{u \in \Theta (\FisherMAP, r)} \sup_{z \in \bbR^{p}} 
        \dfrac{
        \left| \langle \nabla^3 \widetilde{L}_{t}(\thetaMAP + u), z^{\otimes 3} \rangle \right|
        }{
        \left\| \FisherMAP^{1/2} z \right\|_{2}^{3}
        }
        \leq 
        \tau
    \right\}, \quad r > 4r_{\LA}.
\end{aligned}
\end{align}  
This quantity plays a key role in bounding the KL divergence $K ( \Pi_{t}^{\LA}(\cdot); \ \widetilde{\Pi}_{t}\left(\cdot \mid \bD_{t} \right) )$.
To ensure that this divergence remains sufficiently small, we impose the following regularity condition.

\bed
\item[(\textbf{KL})] 
Suppose that $N \geq 2$. Assume also that, on an event $\scrE_{1}$ the following inequalities hold uniformly for all $t \in [T]$: 
\begin{align} 
    \begin{aligned} \label{assume:KL}
        &\left( \widehat{\tau}_{3, t} r_{\LA}^{2} \right) \vee \left( \widehat{\tau}_{4, t} p^{2} \right) \leq \frac{1}{8}, \\         
        &\widehat{\tau}_{3 ,t, r} 
        \leq N e^{8p} \exp\bigg( \left[ \sqrt{p} + \sqrt{2 \log N} - 3 \right] r \bigg), \quad \forall r > 4r_{\LA}.
    \end{aligned}
\end{align}  
\eed

Assumption (\textbf{KL}) is very mild. In the proof of Proposition \ref{prop:eigenvalue_order_main}, we show that \eqref{assume:KL} holds under mild regularity conditions. Here, we provide a simple sufficient condition for \eqref{assume:KL}. According to the discussion following \eqref{eqn:tau_bound}, the condition $nt^2 \gg (p + \log N)^2$ is sufficient to ensure the first part of \eqref{assume:KL}.
Furthermore, by Lemma \ref{lemma:tech_smooth_tau_bound}, the second condition in \eqref{assume:KL} holds when
\begin{align*}
    \sup_{\theta \in \Theta (\thetaMAP, \FisherMAP, r)}
    \left\| \nabla^{3} L_{t}(\theta) \right\|_{\rm op} 
    \leq 
    \lambda_{\min}^{3/2}\left( \FisherMAP \right) N e^{8p} \exp\bigg( \left[ \sqrt{p} + \sqrt{2 \log N} - 3 \right] r \bigg).
\end{align*}
Note that the last display is rather mild and holds in many examples.
For instance, under the logistic regression with a simple random design setup, one can prove that
\begin{align*}
    \lambda_{\min} \big( \FisherMAP \big) \asymp nt, \quad 
    \max_{t \in [T]} \sup_{\theta \in \Theta} \left\| \nabla^{k} L_{t}(\theta) \right\|_{\rm op} \lesssim \ n, \quad  \forall k \in \left\{ 3, 4 \right\}   
\end{align*}
provided that $p^{2} \log^{12} (T \vee n) = o(n)$ (see Propositions \ref{prop:eigenvalues_logit} and \ref{prop:operator_norm_logit} for the precise statements).
Combining the last two displays, one easily checks that $n \gg p^2 \log^{12} (T \vee n)$ is sufficient for \eqref{assume:KL}.


\begin{theorem}[Laplace approximation: KL divergence] \label{thm:LA_KL}
    Suppose that (\textbf{A0}) and (\textbf{KL}) hold on some event $\scrE_{1}$.
    Then, on $\scrE_{1}$, the following inequality holds uniformly for all $t \in [T]$:
    \begin{align*}
        K \left( \Pi_{t}^{\LA}(\cdot); \ \widetilde{\Pi}_{t}\left(\cdot \mid \bD_{t} \right) \right) \leq K \epsilon_{n, t, \KL}^{2},
    \end{align*}
    where $K > 0$ is a universal constant and 
    \begin{align*}
        \epsilon_{n, t, \KL} = 
        \left( 
            \left[ \widehat{\tau}_{4, t} + \widehat{\tau}_{3, t}^2 \right] p^2 
            +
            \widehat{\tau}_{3, t}^{3} \log^{3} N
            +
            e^{- 7\log N}
        \right)^{1/2}.
    \end{align*}    
\end{theorem}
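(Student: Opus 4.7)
The starting point is the exact identity
\begin{align*}
K\!\left( \Pi_{t}^{\LA};\ \widetilde{\Pi}_{t}(\cdot \mid \bD_{t}) \right)
= \log M_{t} - \bbE_{\Pi_{t}^{\LA}}\!\left[ \Delta_{t}(\theta) \right],
\qquad M_{t} = \bbE_{\Pi_{t}^{\LA}}\!\left[ \exp \Delta_{t}(\theta) \right],
\end{align*}
where $\Delta_{t}(\theta) = \widetilde{L}_{t}(\theta) - \widetilde{L}_{t}(\thetaMAP) + \tfrac{1}{2} \| \FisherMAP^{1/2}(\theta - \thetaMAP) \|_{2}^{2}$ is the remainder of $\widetilde{L}_{t}$ past its quadratic approximation at $\thetaMAP$. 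This identity follows by writing the normalising constant $Z_{t}$ of $\widetilde{\Pi}_{t}(\cdot \mid \bD_{t})$ via a Laplace-type change of variables, so that the quadratic part of $\widetilde{L}_{t}$ cancels exactly against the Gaussian log-density of $\Pi_{t}^{\LA}$. Centring further reduces the target to $\log \bbE_{\Pi_{t}^{\LA}}\!\left[\exp\{\Delta_{t} - \bbE_{\Pi_{t}^{\LA}}[\Delta_{t}]\}\right]$, which I would expand using $e^{y} = 1 + y + y^{2}/2 + O(|y|^{3} e^{|y|})$ combined with $\log(1+x) \leq x$, so that the leading contribution to $K$ is $\tfrac{1}{2}\mathrm{Var}_{\Pi_{t}^{\LA}}(\Delta_{t})$.

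The next step is to split the Gaussian integration on $\theta \sim \cN(\thetaMAP, \FisherMAP^{-1})$ into the local ellipsoid $\Omega_{\local} = \Theta(\thetaMAP, \FisherMAP, 4 r_{\LA})$ and its complement. On $\Omega_{\local}$, Taylor's theorem decomposes $\Delta_{t} = \Delta_{t}^{(3)} + \Delta_{t}^{(4)}$, where $\Delta_{t}^{(3)}(\theta) = \tfrac{1}{6} \langle \nabla^{3} \widetilde{L}_{t}(\thetaMAP), (\theta - \thetaMAP)^{\otimes 3} \rangle$ is a zero-mean odd Hermite form and $|\Delta_{t}^{(4)}(\theta)| \leq \tfrac{\widehat{\tau}_{4,t}}{24} \|\FisherMAP^{1/2}(\theta - \thetaMAP)\|_{2}^{4}$ by fourth-order smoothness. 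After the whitening $z = \FisherMAP^{1/2}(\theta - \thetaMAP) \sim \cN(0, \bI_{p})$, Wick's formula yields $|\bbE \Delta_{t}| \lesssim \widehat{\tau}_{4,t} p^{2}$, $\mathrm{Var}(\Delta_{t}) \lesssim (\widehat{\tau}_{3,t}^{2} + \widehat{\tau}_{4,t}^{2}) p^{2}$, and, after truncating to the high-probability event $\{\|z\|_{\infty} \lesssim \sqrt{\log N}\}$, the third absolute central moment bound $\bbE [\, |\Delta_{t}-\bbE \Delta_{t}|^{3}\, ] \lesssim \widehat{\tau}_{3,t}^{3} \log^{3} N$ needed to close the $\log$-expansion. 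On $\Omega_{\local}^{\rm c}$, the log-concavity of $\widetilde{L}_{t}$ from \textbf{(A0)} combined with the first clause of \textbf{(KL)} produces $\widetilde{L}_{t}(\theta^{\ast}) - \widetilde{L}_{t}(\thetaMAP) \leq - c\, r_{\LA}^{2}$ uniformly on $\partial \Omega_{\local}$, and propagation along radial rays from $\thetaMAP$ forces linear decay $\widetilde{L}_{t}(\theta) - \widetilde{L}_{t}(\thetaMAP) \lesssim - r\, r_{\LA}$ at $\FisherMAP$-radius $r > 4 r_{\LA}$. The second clause of \textbf{(KL)} provides a uniform control on $\widehat{\tau}_{3,t,r}$ over concentric annuli, so that the pointwise Taylor bound $|\Delta_{t}| \leq \widehat{\tau}_{3,t,r} r^{3}/6$ remains compatible with the Gaussian-weighted integrand; since $r_{\LA} = 2\sqrt{p} + \sqrt{2 \log N}$, polar integration then produces a tail contribution of order $e^{-c(p + \log N)}$, accounting for the $e^{-7\log N}$ summand of $\epsilon_{n,t,\KL}^{2}$.

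The main obstacle is the sharp moment estimate $\bbE[\langle T, z^{\otimes 3}\rangle^{2}] \lesssim p^{2} \|T\|_{\rm op}^{2}$ for the whitened symmetric $3$-tensor $T$ obtained from $\nabla^{3}\widetilde{L}_{t}(\thetaMAP)$ by contracting each mode with $\FisherMAP^{-1/2}$, since a naive Frobenius comparison $\|T\|_{\rm F}^{2} \leq p^{3} \|T\|_{\rm op}^{2}$ would produce a spurious factor of $p$ and violate the claimed rate. Following the refinement of \citet{spokoiny2024estimation}, the fifteen Wick pairings of $\bbE[\langle T, z^{\otimes 3}\rangle^{2}]$ decompose into six ``cross'' pairings contributing $\|T\|_{\rm F}^{2}$ and nine ``within'' pairings contributing $\|V\|_{2}^{2}$ for the contracted vector $V_{k} = \sum_{i} T_{iik}$. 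The trace identification $V_{k} = \mathrm{tr}(T(\cdot, \cdot, e_{k}))$ together with the restriction inequality $\|T(\cdot, \cdot, e_{k})\|_{\rm op} \leq \|T\|_{\rm op}$ yields $\|V\|_{2}^{2} \leq p^{2} \|T\|_{\rm op}^{2}$, and the full symmetry of $T$ (inherited from $\widetilde{L}_{t}$ being four-times differentiable) supplies the parallel $\|T\|_{\rm F}^{2} \lesssim p^{2} \|T\|_{\rm op}^{2}$ bound; together these deliver the $(\widehat{\tau}_{4,t} + \widehat{\tau}_{3,t}^{2}) p^{2}$ contribution and complete the bound on $K(\Pi_{t}^{\LA};\widetilde{\Pi}_{t}(\cdot \mid \bD_{t}))$.
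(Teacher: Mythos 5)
Your starting identity and overall plan match the paper's proof structure: the decomposition $K\bigl( \Pi_{t}^{\LA};\ \widetilde{\Pi}_{t}(\cdot \mid \bD_{t}) \bigr) = \log \bbE_{\Pi_{t}^{\LA}}\bigl[e^{\Delta_{t}-\bbE\Delta_{t}}\bigr]$ is exactly the paper's $W_{n,t} + \bbE_{\widetilde{Z}}\bigl[-\cR_{t,3}(\widetilde Z)\bigr]$, and your local/tail split, the Wick-formula moment bounds (the paper's Lemma \ref{lemma:tech_Gaussian_Tensor_expectation}), and the radial-concavity tail estimate under the second clause of \textbf{(KL)} (the paper's Lemmas \ref{lemma:LA_tail_integral}--\ref{lemma:LA_tail_integral2}) are all sound. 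The genuine gap is in how you close the exponential-moment expansion on the local ball.

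On $\Omega_{\local} = \Theta(\FisherMAP, 4r_{\LA})$, the centred variable $X = \Delta_t - c$ (with $c$ the conditional mean of $\Delta_t$ on $\Omega_{\local}$) has supremum of order $\widehat{\tau}_{3,t}r_{\LA}^3$, while assumption \textbf{(KL)} controls only $\widehat{\tau}_{3,t}r_{\LA}^2 \leq 1/8$. Thus $\sup_{\Omega_{\local}}|X|$ can be of order $r_{\LA}/8 = \Theta(\sqrt{p}+\sqrt{\log N})$, so the pointwise remainder $|y|^3 e^{|y|}/6$ carries an exponential factor that diverges with $p$ and $N$: the naive estimate $\bbE\bigl[|X|^3 e^{|X|}\mathds{1}_{\Omega_{\local}}\bigr] \leq \bigl(\sup_{\Omega_{\local}}e^{|X|}\bigr) \cdot \bbE|X|^3 \leq e^{c(\sqrt{p}+\sqrt{\log N})}\cdot\bbE|X|^3$ destroys the claimed rate. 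The paper circumvents this via Lemma \ref{lemma:tech_mgf_approximation} (imported from \citet{spokoiny2024estimation}), which delivers $\bigl|\bbE\bigl[(e^{X}-1-X-X^{2}/2)G\bigr]\bigr| \leq \tfrac{5}{3}\epsilon^{3}e^{\epsilon^{2}}$ with $\epsilon \asymp \widehat{\tau}_{3,t}r_{\LA}^{2}$ --- one factor of $r_{\LA}$ sharper than the pointwise range --- by tracking the sub-Gaussian-type concentration of the centred cubic chaos rather than its supremum. Without an estimate of comparable strength, your crude Taylor expansion does not yield $\epsilon_{n,t,\KL}^{2}$.

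A secondary error: the claim that truncating to $\{\|z\|_{\infty}\lesssim\sqrt{\log N}\}$ gives $\bbE|X|^{3}\lesssim\widehat{\tau}_{3,t}^{3}\log^{3}N$ is not supported. On that event $\|z\|_{2}=O(\sqrt{p\log N})$, so the pointwise cubic bound gives $\widehat{\tau}_{3,t}^{3}(p\log N)^{9/2}$, while hypercontractivity on the full chaos gives $\bbE|X|^{3}\lesssim(\widehat{\tau}_{3,t}p)^{3}$; neither is $\widehat{\tau}_{3,t}^{3}\log^{3}N$. In the paper, the $\widehat{\tau}_{3,t}^{3}\log^{3}N$ summand in $\epsilon_{n,t,\KL}^{2}$ arises from $\widehat{\tau}_{3,t}^{3}r_{\LA}^{6}\lesssim\widehat{\tau}_{3,t}^{3}(p^{3}+\log^{3}N)$, with the $p^{3}$ piece absorbed into $\widehat{\tau}_{3,t}^{2}p^{2}$ via the \textbf{(KL)}-implied $\widehat{\tau}_{3,t}p\lesssim 1$, not from an $\ell^{\infty}$-truncation of $z$.
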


According to the bounds established in \eqref{eqn:tau_bound}, the leading order term of $\epsilon_{n, t, \KL}$ satisfies
\begin{align*}
    \epsilon_{n, t, \KL}^2 
    \lesssim ( \widehat{\tau}_{4, t} p^2) \vee (\widehat{\tau}_{3, t}^{3} \log^{3} N)
    \lesssim \left( \dfrac{p^2}{n t^2} \right) \vee \left( \dfrac{\log^2 N}{ n t^{3}} \right)^{3/2},
\end{align*}
which simplifies to $p^2/n$ when $n \gg p^{-4} \log^6 N$ and $t = 1$. 
As a comparable result for $t = 1$, \citet{spokoiny2023dimension} demonstrated, under certain conditions, that $K ( \Pi_{t}^{\LA}(\cdot); \ \widetilde{\Pi}_{t}\left(\cdot \mid \bD_{t} \right) )$ is bounded by $(p_{\eff}^3 / n)^{1/2}$ up to a constant factor. Here, $p_{\eff}$ denotes an effective dimension satisfying $p_{\eff} \leq p$, and we have $p_{\eff} \asymp p$ unless $\lambda_{\min}(\bOmega_0) \gg 1$. In this sense, our result represents a substantial improvement over existing ones because $p^2/n \ll (p_{\eff}^3 / n)^{1/2}$ provided that $n \gg p$ and $p_{\eff} \asymp p$.

Extending the results in Theorem \ref{thm:LA_KL} to the VB case is straightforward.
By the definition of $\Pi_t$ specified in \eqref{eq:KL-projection}, we have
\begin{align*}
    K \left( \Pi_{t}(\cdot); \ \widetilde{\Pi}_{t}\left(\cdot \mid \bD_{t} \right) \right)
    \leq 
    K \left( \Pi_{t}^{\LA}(\cdot); \ \widetilde{\Pi}_{t}\left(\cdot \mid \bD_{t} \right) \right)
    \lesssim
    \epsilon_{n, t, \KL}^2,
\end{align*}
which, by Pinsker's inequality, further implies that
\begin{align*}
    d_{V}\left( \Pi_{t}(\cdot),  \widetilde{\Pi}_{t}\left(\cdot \mid \bD_{t} \right) \right)
    \leq 
    \sqrt{ \dfrac{1}{2} K \left( \Pi_{t}^{\LA}(\cdot); \ \widetilde{\Pi}_{t}\left(\cdot \mid \bD_{t} \right) \right) }
    \lesssim   
    \epsilon_{n, t, \KL}.
\end{align*}
For a simplified comparison, if we take $\epsilon_{n, 1, \KL} = O(\sqrt{p^2/n})$, our result for the TV metric aligns with the rate demonstrated in \citet[][Corollary 2.1]{katsevich2024approximation}.

\begin{theorem}[Variational approximation] \label{thm:VB_KL}
    Suppose that (\textbf{A0}) and (\textbf{KL}) hold on some event $\scrE_{1}$. Then, on $\scrE_{1}$, the following inequalities hold uniformly for all $t \in [T]$:
    \begin{align*}
        &K \left( \Pi_{t}(\cdot); \ \widetilde{\Pi}_{t}\left(\cdot \mid \bD_{t} \right) \right) \leq K \epsilon_{n, t, \KL}^{2}, \quad 
        d_{V}\left( \Pi_{t}(\cdot),  \widetilde{\Pi}_{t}\left(\cdot \mid \bD_{t} \right) \right) \leq K \epsilon_{n, t, \KL},
    \end{align*}
    where $K > 0$ is a universal constant.
\end{theorem}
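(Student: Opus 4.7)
The plan is to obtain Theorem \ref{thm:VB_KL} as a direct corollary of Theorem \ref{thm:LA_KL}, exploiting the variational characterization of $\Pi_t$ combined with Pinsker's inequality. This is the strategy already hinted at in the discussion immediately preceding the theorem statement, so the main work has effectively been done in establishing the Laplace-approximation KL bound.

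First, I would verify that the Laplace approximation $\Pi_t^{\LA} = \cN(\thetaMAP[t], \FisherMAP[t]^{-1})$ belongs to the Gaussian class $\cQ$. Under assumption (\textbf{A0}), the penalized log-likelihood $\widetilde L_t$ is strictly concave (since $\bOmega_{t-1} \in \symmPD$ and $L_t$ is concave), so $\thetaMAP[t]$ is well defined and $\FisherMAP[t] = \bOmega_{t-1} - \nabla^2 L_t(\thetaMAP[t]) \in \symmPD$. Hence $\Pi_t^{\LA}$ has nonsingular covariance and is a valid element of $\cQ$.

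Next, by the defining property \eqref{eq:KL-projection} of the variational posterior, $\Pi_t$ minimizes $K(\,\cdot\,; \widetilde{\Pi}_t(\cdot \mid \bD_t))$ over $\cQ$. Using $\Pi_t^{\LA}$ as a competitor gives
\begin{equation*}
K\bigl(\Pi_t(\cdot);\; \widetilde{\Pi}_t(\cdot \mid \bD_t)\bigr) \;\leq\; K\bigl(\Pi_t^{\LA}(\cdot);\; \widetilde{\Pi}_t(\cdot \mid \bD_t)\bigr).
\end{equation*}
Applying Theorem \ref{thm:LA_KL} on the event $\scrE_1$ bounds the right-hand side by $K \epsilon_{n,t,\KL}^2$ uniformly in $t \in [T]$, which yields the first claim. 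The total-variation bound then follows immediately from Pinsker's inequality: $d_V(\Pi_t, \widetilde{\Pi}_t(\cdot \mid \bD_t)) \leq \sqrt{K(\Pi_t; \widetilde{\Pi}_t(\cdot\mid\bD_t))/2} \leq K' \epsilon_{n,t,\KL}$ after absorbing constants.

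There is no substantive obstacle in this proof, since all of the heavy lifting (the non-asymptotic tail control of $\widetilde{\Pi}_t(\cdot \mid \bD_t)$ outside $\Theta(\thetaMAP[t], \FisherMAP[t], 4r_{\LA})$, the sharp quadratic expansion of $\widetilde L_t$ via $\widehat{\tau}_{3,t}$, $\widehat{\tau}_{4,t}$, and $\widehat{\tau}_{3,t,r}$, and the consequent Gaussian comparison) has been carried out in Theorem \ref{thm:LA_KL}. The only point that warrants care is ensuring that the variational optimum is actually attained in $\cQ$, or at least that an infimizing sequence can be replaced by $\Pi_t^{\LA}$; but since $\Pi_t^{\LA} \in \cQ$ gives a finite competitor, the inequality $K(\Pi_t; \widetilde{\Pi}_t(\cdot \mid \bD_t)) \leq K(\Pi_t^{\LA}; \widetilde{\Pi}_t(\cdot \mid \bD_t))$ holds regardless of whether the minimum is attained, so even the existence of $\Pi_t$ as a minimizer is not strictly needed for the stated bounds, provided $\Pi_t$ is interpreted as any $\epsilon$-minimizer with $\epsilon$ negligible relative to $\epsilon_{n,t,\KL}^2$.
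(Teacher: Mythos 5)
Your proof is correct and follows exactly the same route as the paper: use the variational optimality of $\Pi_t$ against the competitor $\Pi_t^{\LA}\in\cQ$ to reduce to Theorem \ref{thm:LA_KL}, then apply Pinsker's inequality for the total-variation bound. The remark about $\epsilon$-minimizers is a reasonable aside, but the paper simply takes $\Pi_t$ to be the exact minimizer and the argument goes through unchanged.
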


In the online setting, the sequential updates quickly yield a highly \textit{informative prior}. Specifically, we typically have $\lambda_{\min}(\bOmega_t) \gg 1$ and $\| \mu_t - \theta_0 \|_2 \ll 1$ for $t > 1$, and both quantities naturally scale with the sample size $N_t = nt$. To quantify the strength of such priors, we adopt the notion of \textit{effective sample size}, originally introduced by \citet{spokoiny2017penalized}, defined as $n_{\eff, t} = \lambda_{\min}(-\nabla^2 \widetilde{L}_{t}(\thetaBest))$. A strong prior tends to inflate the effective sample size. We will show in Section \ref{sec:eigenvalue_analysis} that $n_{\eff, t} \asymp N_t$ holds under suitable conditions.

A subtle but important phenomenon emerges in the rate of $\epsilon_{n, t, \TV}$. 
One might initially expect the TV error based on $N_t$ observations to scale as $(p^2 / N_t)^{1/2}$, matching the order $(p^2/n)^{1/2}$ at $t = 1$. In contrast, $\epsilon_{n, t, \TV} = O( t^{-1} [p^2/N_t]^{1/2} )$ in \eqref{eqn:TV_error} is strictly sharper in terms of its polynomial dependence on $t$. Notably, this improvement arises from the use of informative normal priors.

To provide more intuition behind this improvement, note that, by \eqref{eqn:tau_bound_theory},
\begin{align*}
     \widehat{\tau}_{k, t} & \leq \lambda_{\min}^{-k/2} ( \FisherMAP ) \sup_{\theta \in \Theta ( \thetaMAP, \FisherMAP, 4r_{\LA})} \big\| \nabla^{k} \widetilde{L}_{t}(\theta) \big\|_{\rm op}, \quad k \in \{3, 4\}.
\end{align*}
For a simple illustration, we assume that
\begin{align*}
    \lambda_{\min} ( \FisherMAP ) \asymp nt
    \quad \text{and} \quad 
    \sup_{\theta \in \Theta ( \thetaMAP, \FisherMAP, 4r_{\LA})} \left\| \nabla^{k} L_{t}(\theta) \right\|_{\rm op} \asymp n.
\end{align*}
Then, the asymptotic bound \eqref{eqn:tau_bound} holds, which leads to the improved result \eqref{eqn:TV_error}. Note that the normal prior affects $\widetilde{L}_{t}(\cdot)$ only through (up to) second-order terms, whereas the key regularity of our model is governed by the ratio between the third (or fourth) and second derivatives. By using informative normal priors, we can effectively increase the effective sample size (as $\lambda_{\min} ( \FisherMAP ) \asymp nt$) without altering the higher-order properties of the log-likelihood, since $\nabla^{k} \widetilde{L}_{t} = \nabla^{k} L_{t}$ for $k > 2$. In this sense, the normal prior effectively leverages the available prior information.



\section{Penalized M-estimation} \label{sec:pMLE}
This section presents theoretical results for the penalized M-estimator $\thetaMAP$, which has been well-studied in the literature.
Recently, \cite{spokoiny2012parametric, spokoiny2017penalized, spokoiny2024estimation}  developed advanced techniques for analyzing statistical behaviors of M-estimators with finite-sample guarantees.
Accordingly, we adopt these techniques with modifications to adapt them to our framework.

We say the model is \textit{stochastically linear} if the map $\theta \mapsto \zeta_{t}(\theta) = L_{t}(\theta) - \bbE_{t} L_{t}(\theta)$ is linear for every $t$; that is, $\zeta_{t}(\theta) = a^\top \theta + b$ for some (random) quantities $a \in \bbR^p$ and $b \in \bbR$. Note that stochastic linearity implies that $\nabla^{k} L_{t}(\theta)$ is non-random for all $t \in [T]$ and $k \in \{ 2, 3, 4 \}$. Also, since $\nabla \zeta_t(\theta)$ does not depend on $\theta$, we hereafter denote this random vector by $\nabla \zeta_t$. The stochastically linear framework encompasses many important statistical models, such as the logistic regression, Poisson regression, nonparametric regression \citep{spokoiny2025accuracy}, nonlinear inverse problem \citep{spokoiny2019bayesian}, and covariance estimation \citep{puchkin2025sharper}. See Section 1.3 in \citet{spokoiny2024estimation} for a detailed discussion.

We now introduce the assumptions and notations used in the estimation procedure.
\bed
\item[(\textbf{A1})] 
The model is stochastically linear. Also, $\bF_{t, \theta}$ is nonsingular for every $t \in [T]$ and $\theta \in \Theta(\theta_{0}, \bI_{p}, 1/2)$. Furthermore, there exist $\{ \bV_t : t \in [T] \} \subset \symmPD$ and $M_n > 1$ such that
\begin{align} \label{assume:A1_1}
    \bbP_{0, t} \bigg( \left\| \FisherBest^{-1/2} \nabla \zeta_{t} \right\|_{2} \geq r_{\eff, t} \bigg) &\leq e^{-(\log n + \log T)}, \\
    \label{assume:A1_2}
    \max_{t \in [T]} \sup_{\theta \in \Theta \left(\theta_0, \bI_{p}, 1/2 \right)} \left\| \Fisher[t]{\theta}^{-1} \bV_{t} \right\|_{2} &\leq \dfrac{M_n^2}{9},
\end{align}
where
\begin{align}
\begin{aligned} \label{def:estimation_quantities}
    & r_{\eff, t} = p_{\eff, t}^{1/2} + \sqrt{2\lambda_{t} (\log n + \log T) }, \\ 
    & p_{\eff, t} = \operatorname{tr}\left( \FisherBest^{-1} \bV_{t} \right), \quad
    \lambda_{t} = \left\| \FisherBest^{-1} \bV_{t} \right\|_{2}.    
\end{aligned}
\end{align}
\eed

As discussed in Section \ref{sec:smoothness}, we employ the self-concordance condition on $\bbE_t \widetilde{L}_t(\cdot)$ over the local set $\Theta(\thetaBest, \FisherBest, 4r_{\eff, t})$. 
For this purpose, we define the following quantity:
\begin{align} \label{def:tau_3ast}
    \tau_{3, t}^{\ast}
    = \inf \left\{ 
        \tau_3 \in \bbR_{+} : 
        \sup_{u \in \Theta (\FisherBest, 4r_{\eff, t})} \sup_{z \in \bbR^{p}} 
        \dfrac{
        \left| \langle \nabla^3 \bbE_{t} \widetilde{L}_{t}(\thetaBest + u), z^{\otimes 3} \rangle \right|
        }{
        \left\| \FisherBest^{1/2} z \right\|_{2}^{3}
        }
        \leq 
        \tau_3
    \right\}.
\end{align} 
Note that $\tau_{3, t}^{\ast}$ is random because both $\thetaBest$ and $\bOmega_{t-1}$ depend on $(\bD_1, ..., \bD_{t-1})$.
To prove the convergence of the pMLE $\thetaMAP$, we further impose the following assumption.

\bed
\item[(\textbf{Est})] On an event $\scrE_{2}$, the following inequality holds uniformly for all $t \in [T]$:
\begin{align} \label{assume:est}
    \tau_{3, t}^{\ast} r_{\eff, t} \leq 1/16.
\end{align}
\eed

Assumptions (\textbf{A1}) and (\textbf{Est}) ensure that the pMLE $\thetaMAP$ converges to $\thetaBest$ at an appropriate rate. More specifically, Theorem \ref{thm:penalized_estimation} shows that
\begin{align*}
    \thetaMAP \in \Theta \big( \thetaBest, \FisherBest, 4r_{\eff, t} \big), \quad \forall t \in [T]
\end{align*}
with high probability. Before stating the theorem, we provide a detailed discussion of the above assumptions.

In many examples, one can choose $\bV_t = c \Var(\nabla \zeta_t)$ for a sufficiently large constant $c > 0$. Other choices are also possible. For instance, if $\nabla \zeta_t \sim \text{subG}(\sigma_n^2)$ for some $\sigma_n^2 \geq 0$, then \eqref{assume:A1_1} holds with the choice $\bV_t = \sigma_n^2 \bI_p$. In this case, since $\nabla \zeta_t$ is the sum of $n$ independent random variables, we roughly have $\sigma_n^2 \asymp n$. Note that $\nabla \zeta_t$ can be sub-exponential rather than sub-Gaussian in certain important examples, such as the Poisson regression model. Even in sub-exponential cases, however, one can still verify \eqref{assume:A1_1} under suitable conditions; see Appendix A in \citet{spokoiny2017penalized} and Lemma B.2 in \citet{lee2025advances}.


The condition \eqref{assume:A1_2} is used to derive an appropriate bound for the radius $r_{\eff, t}$. In the batch learning setting, one may choose $r_{\eff, 1} \asymp \sqrt{p} + \sqrt{\log n}$ so that $\thetaMAP[1] \in \Theta ( \thetaBest[1], \FisherBest[1], 4r_{\eff, 1} )$ with high probability. In contrast, in the online learning setting, the order of $r_{\eff, t}$ exhibits an interesting behavior. In particular, note that the effective dimension $p_{\eff, t}$ decreases with $t$ because $\FisherBest$ and $\bV_t$ are of orders proportional to the sample sizes $nt$ and $n$, respectively. By combining \eqref{assume:A1_2} with additional conditions, one can derive the following bounds:
\begin{align} \label{eqn:effective_dim}
    p_{\eff, t} = O \big( M_n^2 t^{-1} p \big), \quad 
    r_{\eff, t} = O \big( M_n \sqrt{(p \vee \log n \vee \log T)/t} \big).
\end{align}
The results in \eqref{eqn:effective_dim} follow directly from Lemma~\ref{lemma:radius_upper_bound} and Proposition~\ref{prop:eigenvalue_order_main}.

The assumptions in (\textbf{A1}) are mild and hold in many examples (e.g., when the score function is sub-Gaussian). Statistical models with the sub-Gaussian score function encompass a wide range of examples; a logistic regression model is one of the popular cases. In Appendix \ref{sec:logit_example_app}, we present a theoretical verification of (\textbf{A1}) under a simple random design setting. For the logistic regression model, one can show \eqref{assume:A1_1} and \eqref{assume:A1_2} are satisfied with $\bV_t = \bX_{t}^{\top} \bX_{t}/4$ and $M_n = O_P(1)$, where $\bX_{t} \in \bbR^{n \times p}$ is the design matrix for the $t$-th mini-batch. 

The nonsingularity of $\Fisher[t]{\theta}$ on $\Theta(\theta_0, \bI, 1/2)$ can be verified when $L_t(\cdot)$ is strictly concave; this includes, for example, the logistic regression model. The specific choice of radius $1/2$ in \eqref{assume:A1_2} does not carry inherent meaning and can be replaced by a decaying sequence $r_n$, with some additional technical effort.

Assumption \eqref{assume:est} ensures that the effective sample size $n_{\eff, t}$ is sufficiently large relative to the effective dimension $p_{\eff, t}$, thereby guaranteeing the convergence of $\thetaMAP$ to $\thetaBest$. One can show that $\tau_{3, t}^{\ast}$ is of the order given in \eqref{eqn:tau_bound}. Combining this with \eqref{eqn:effective_dim}, we find that for a fixed $t \in [T]$, a sufficient condition for \eqref{assume:est} is 
\begin{align*}
    M_n^2 \big( p \vee \log T \big) = o( nt^{4} ).
\end{align*}
In the batch learning setup where $T = t = 1$, this condition reduces to the well-known requirement $p = o(n)$, provided that $M_n = O(1)$.

Note that the additional term $\log T$ in the definition of $r_{\eff, t}$ can be interpreted as the cost of requiring uniformity over $\{ 1, 2, ..., T \}$. 
From \eqref{assume:A1_1}, we have
\begin{align*}
    \bbP_{0} \bigg( \left\| \FisherBest^{-1/2} \nabla \zeta_{t} \right\|_{2} \geq r_{\eff, t} \: \text{ for some } t \in [T] \bigg) 
    &\leq T \cdot \max_{t \in [T]} \bbP_{0, t} \bigg( \left\| \FisherBest^{-1/2} \nabla \zeta_{t} \right\|_{2} \geq r_{\eff, t} \bigg) \\
    &\leq e^{-\log n} = n^{-1}.
\end{align*}

To state the main theorem in this section, let $\scrE_{\est, 1}$ be the event on which the following inequality holds:
\begin{align} \label{def:Gamma_n_event}
    \left\| \FisherBest^{-1/2} \nabla \zeta_{t} \right\|_{2} \leq r_{\eff, t} \quad \text{for all } t \in [T].
\end{align}
Then, $\bbP_0^{(N)}(\scrE_{\est, 1}) \geq 1 - n^{-1}$ under (\textbf{A1}). The event $\scrE_{\est, 1}$ plays a crucial role throughout this paper, as it ensures the statistical validity of the online estimation procedure.

\begin{theorem} \label{thm:penalized_estimation}
    Suppose that (\textbf{A0}), (\textbf{A1}) and (\textbf{Est}) hold.
    Then, on $\scrE_{\est, 1} \cap \scrE_{2}$, the following inequality holds uniformly for all $t \in [T]$:
    \begin{align*}
         \left\| \FisherBest^{1/2} \big( \thetaMAP - \thetaBest \big) \right\|_{2} \leq 4r_{\eff, t}, \quad \forall t \in [T].
    \end{align*}
\end{theorem}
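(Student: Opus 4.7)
The plan is to run a classical localization argument for M-estimators, with quantitative bookkeeping driven by the self-concordance parameter $\tau_{3,t}^{\ast}$ from \eqref{def:tau_3ast}. First, I would exploit the stochastic linearity assumed in (\textbf{A1}): because $\zeta_t(\theta)$ is linear in $\theta$, every derivative of $\widetilde{L}_t$ of order $\geq 2$ is deterministic and coincides with the corresponding derivative of $\bbE_t \widetilde{L}_t$. Combined with $\nabla \bbE_t \widetilde{L}_t(\thetaBest) = 0$ (first-order condition), this yields $\nabla \widetilde{L}_t(\thetaBest) = \nabla \zeta_t$ and $\nabla^2 \widetilde{L}_t(\thetaBest) = -\FisherBest$. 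Hence, on $\scrE_{\est,1}$, the definition \eqref{def:Gamma_n_event} gives
\begin{align*}
\bigl\| \FisherBest^{-1/2} \nabla \widetilde{L}_t(\thetaBest) \bigr\|_2 \leq r_{\eff,t}, \quad \forall t \in [T].
\end{align*}

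Next, I would perform a third-order Taylor expansion of $\widetilde{L}_t$ around $\thetaBest$. Writing $v = \FisherBest^{1/2}(\theta - \thetaBest)$, for any $\theta \in \Theta(\thetaBest, \FisherBest, 4r_{\eff,t})$,
\begin{align*}
\widetilde{L}_t(\theta) - \widetilde{L}_t(\thetaBest) = \bigl\langle \FisherBest^{-1/2} \nabla \zeta_t, v \bigr\rangle - \tfrac{1}{2} \|v\|_2^2 + R_{3,t}(\theta),
\end{align*}
where the third-order smoothness of $\bbE_t \widetilde{L}_t$ encoded in $\tau_{3,t}^{\ast}$ (via the equivalent form \eqref{def:3_4_smooth_equiv}) and the integral form of the remainder give $|R_{3,t}(\theta)| \leq \tfrac{1}{6} \tau_{3,t}^{\ast} \|v\|_2^3$. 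On the boundary sphere $\|v\|_2 = 4 r_{\eff,t}$, applying Cauchy--Schwarz to the linear term and invoking (\textbf{Est})---which yields $\tfrac{1}{3} \tau_{3,t}^{\ast} \|v\|_2 \leq 1/12$---I obtain
\begin{align*}
\widetilde{L}_t(\theta) - \widetilde{L}_t(\thetaBest) \leq 4 r_{\eff,t}^2 - \tfrac{1}{2} (4 r_{\eff,t})^2 \cdot \tfrac{11}{12} = -\tfrac{10}{3} r_{\eff,t}^2 < 0.
\end{align*}

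To finish, I would invoke concavity. Assumption (\textbf{A0}) makes $L_t$ concave, and the quadratic penalty in \eqref{def:penalized_likelihood} is strictly concave since $\bOmega_{t-1} \in \symmPD$, so $\widetilde{L}_t$ is strictly concave and coercive; in particular $\thetaMAP$ exists and is unique. By concavity along every ray emanating from $\thetaBest$, once $\widetilde{L}_t$ dips below $\widetilde{L}_t(\thetaBest)$ at the sphere $\|v\|_2 = 4 r_{\eff,t}$, it cannot recover outside the ball. Thus the strict boundary inequality forces $\thetaMAP$ to lie in the interior of $\Theta(\thetaBest, \FisherBest, 4 r_{\eff,t})$. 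Because $\scrE_{\est,1}$ and $\scrE_2$ are already formulated uniformly in $t$, this yields the claim simultaneously for all $t \in [T]$.

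I do not anticipate a serious obstacle: the technical heart is the quantitative use of the self-concordance bound on the cubic remainder, which is exactly what (\textbf{Est}) is calibrated to control. A minor subtlety is verifying that the ball $\Theta(\thetaBest, \FisherBest, 4 r_{\eff,t})$ lies inside $\Theta$ so that the Taylor expansion is valid throughout; this is implicit in the finiteness of $\tau_{3,t}^{\ast}$ and is supported by the eigenvalue estimates developed in Section \ref{sec:eigenvalue_analysis}.
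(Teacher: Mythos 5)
Your proof is correct and follows essentially the same route as the paper's: localize by showing $\widetilde{L}_{t}(\theta) < \widetilde{L}_{t}(\thetaBest)$ on the sphere $\|\FisherBest^{1/2}(\theta - \thetaBest)\|_2 = 4r_{\eff,t}$, then extend to the exterior by concavity. The only cosmetic differences are that the paper works with an auxiliary function $g_t(\theta) = \bbE_t \widetilde{L}_t(\theta) + \langle \nabla\zeta_t, \theta\rangle$ (which, by stochastic linearity, equals $\widetilde{L}_t$ up to an additive constant, as you implicitly recognize), and uses a quadratic Taylor expansion with a mean-value Hessian controlled through Lemma \ref{lemma:tech_Fisher_smooth}, whereas you expand to third order around the exact Hessian $-\FisherBest$ and bound the cubic remainder directly via $\tau_{3,t}^{\ast}$; both boil down to the same self-concordance estimate, yielding $-2r_{\eff,t}^2$ in the paper versus your $-\tfrac{10}{3}r_{\eff,t}^2$ on the boundary.
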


Theorem \ref{thm:penalized_estimation} provides a sharp upper bound for $\| \thetaMAP - \thetaBest \|_{2}$, which corresponds to the variance term in the well-known bias–variance decomposition: 
\begin{align} \label{eqn:decomposition_pMLE}
    \big\| \thetaMAP - \theta_0 \big\|_2 
    \leq 
    \big\| \thetaMAP - \thetaBest \big\|_2 + \big\| \thetaBest - \theta_0 \big\|_2.
\end{align}
By \eqref{eqn:effective_dim} and Theorem \ref{thm:penalized_estimation}, we obtain
\begin{align*}
    \big\| \thetaMAP - \thetaBest \big\|_{2} = O \left( M_n \left[ p \vee \log n \vee \log T \right]^{1/2} t^{-1} n^{-1/2} \right)
\end{align*}
provided that $\lambda_{\min}(\FisherBest) \gtrsim nt$. Ignoring the logarithmic factor, the quantity $\| \thetaMAP - \thetaBest \|_{2}$ decreases at the rate $t^{-1} n^{-1/2}$, rather than the standard rate $(nt)^{-1/2}$. As discussed at the end of Section~\ref{sec:variational_posterior}, this improvement arises from the use of informative quadratic penalization.

Note that Theorem~\ref{thm:penalized_estimation} does not address $\| \thetaBest - \theta_0 \|_{2}$, which corresponds to the bias term in the decomposition \eqref{eqn:decomposition_pMLE}. In Sections~\ref{sec:eigenvalue_analysis} and \ref{sec:online_variational_posterior}, we show that $\| \thetaBest - \theta_0 \|_{2}$ decreases at the rate $(nt)^{-1/2}$. Given that this convergence rate cannot be improved in general, our result may be seen as a natural consequence.

\section{Analysis of eigenvalues and remainder terms} \label{sec:eigenvalue_analysis}

This section presents informative bounds on important quantities such as $\widehat{\tau}_{3, t}$, $\widehat{\tau}_{4, t}$, and $\tau_{3, t}^{\ast}$. A crucial step in analyzing these quantities is to ensure that $\thetaMAP$ and $\thetaBest$ lie within a local neighborhood of $\theta_0$, say $\Theta(\theta_0, \bI_p, 1/2)$. This localization guarantees an accurate quadratic approximation. Specifically, the quality of the quadratic approximation relies heavily on the magnitude of the smallest eigenvalues, such as $\lambda_{\min}(\FisherBest)$ and $\lambda_{\min}(\FisherMAP)$. Intuitively, these values serve as the \textit{effective sample size}. Therefore, we need to show that the effective sample size is proportional to the accumulated (actual) sample size $nt$. Once this step is established, it is not too difficult to bound the other quantities (e.g., $\widehat{\tau}_{3, t}$, $\widehat{\tau}_{4, t}$ and $\tau_{3, t}^{\ast}$).

However, unless the log-likelihood $L_t(\cdot)$ is strongly concave in the sense that $\lambda_{\min}(\bF_{t, \theta})$ is uniformly bounded below by a multiple of $n$ for all $\theta \in \Theta$, it is not straightforward to ensure that $\lambda_{\min}(\FisherBest) \wedge \lambda_{\min}(\FisherMAP) \gtrsim nt$.
If we assume the strong concavity of $L_t(\cdot)$, the analysis can be significantly simplified because the \textit{localization} step (e.g., $\thetaBest \in \Theta(\theta_0, \bI_p, 1/2)$) can be omitted, and it is possible to obtain $\lambda_{\min}(\FisherBest) \gtrsim nt$ directly.  For ease of analysis, this strong concavity assumption has been often adopted in the existing online learning literature \citep{chen2020SGD, zhu2023online}.  However, many important statistical models (e.g., GLMs) may not exhibit strong concavity when $\Theta = \bbR^p$. For instance, the logistic regression model---which is our main statistical application--- is not strongly concave but strictly concave.
To accommodate these models, we impose some regularity conditions in a local vicinity of $\theta_0$. The precise statements of the local regularity conditions are as follows:

\bed
\item[(\textbf{A2})] There exist constants $K_{\min} > 0$ and $K_{\max} \geq 1$ such that
\begin{align} 
\begin{aligned}\label{assume:A2_1}
    \min_{t \in [T]} \inf_{\theta \in \Theta (\theta_0, \bI_p, 1/2)} \lambda_{\min} \left( \Fisher[t]{\theta} \right) &\geq K_{\min}n, \\
    \max_{t \in [T]} \sup_{\theta \in \Theta (\theta_0, \bI_p, 1/2)} \lambda_{\max} \left( \Fisher[t]{\theta} \right) &\leq K_{\max}n, 
\end{aligned} 
\end{align}
\begin{align} 
\begin{aligned} \label{assume:A2_2}
    \max_{t \in [T]} \sup_{\theta \in \Theta (\theta_0, \bI_p, 1/2)} \left\| \nabla^{3} L_{t}(\theta) \right\|_{\rm op} &\leq K_{\max} n, \\
    \max_{t \in [T]} \sup_{\theta \in \Theta (\theta_0, \bI_p, 1/2)} \left\| \nabla^{4} L_{t}(\theta) \right\|_{\rm op} &\leq K_{\max} n,
\end{aligned} 
\end{align}
\begin{align} 
\begin{aligned} \label{assume:A2_3} 
    \max_{t \in [T]} \sup_{\theta \in \Theta (\theta_0, \bI_p, 1/2 + r)} \left\| \nabla^{3} L_{t}(\theta) \right\|_{\rm op} 
    &\leq K_{\max} N e^{8p} 
    e^{ ( \sqrt{p} + \sqrt{2 \log N} - 3 ) r }, \quad \forall r > 0.
\end{aligned} 
\end{align}
\eed

In Appendix \ref{sec:logit_example_app}, we show that assumption (\textbf{A2}) holds with high probability under the logistic regression model with random design. For precise statements, see Propositions \ref{prop:eigenvalues_logit} and \ref{prop:operator_norm_logit}.

For high-dimensional or nonparametric models, the effect of the prior may remain non-negligible even as the sample size increases \citep{cox1993analysis, freedman1999wald, johnstone2010high}. Since we allow the dimension to diverge, i.e., $p = p_n \to \infty$ as $n \to \infty$, we impose non-asymptotic conditions on the initial prior parameters $\mu_0$ and $\bOmega_0$.
Before we state specific conditions, we recall the following notation:
\begin{align*}
    p_{\ast} = p \vee \log n \vee \log T.
\end{align*}
\bed
\item[(\textbf{P})] The initial prior parameters $\mu_0$ and $\bOmega_0$ satisfy 
\begin{align*} 
    \left\| \bOmega_{0}^{1/2} \big( \theta_0 - \mu_0 \big) \right\|_{2} \leq \delta n^{1/2}, \quad
    \left\| \bOmega_{0} \right\|_{2} \leq K_{\max} p_{\ast}
\end{align*}
for a small enough constant $\delta = \delta(K_{\min}, K_{\max}) > 0$.
\eed

\bed
\item[(\textbf{S})] For a large enough constant $C = C(K_{\min}, K_{\max}) > 0$,
\begin{align*}
\begin{aligned} 
    n \geq C \big( \log^{2} T \vee M_n^2 \big) p_{\ast}^2.
\end{aligned}
\end{align*}
\eed

We now present the main results of this section.

\begin{proposition} \label{prop:eigenvalue_order_main}
    Suppose that (\textbf{A0}), (\textbf{A1}), (\textbf{A2}), (\textbf{S}), and (\textbf{P}) hold.
    Then, on $\scrE_{\est, 1}$, the following inequalities hold uniformly for all $t \in [T]$:
    \begin{align*} 
    \begin{aligned}
        \left\| \FisherBest[t]^{1/2} \big( \theta_0 - \thetaBest \big) \right\|_{2} &\leq K_{\rm up} M_n \sqrt{t p_{\ast}},  &
        \lambda_{\min} \big( \FisherBest \big) \wedge \lambda_{\min} \big( \FisherMAP \big) &\geq K_{\rm low} nt, \\  
        \left\| \FisherBest[t]^{1/2} \big( \thetaMAP - \thetaBest \big) \right\|_{2} &\leq K_{\rm up} M_n \sqrt{t^{-1} p_{\ast}},  &
        \lambda_{\max} \big( \FisherBest \big)  \vee  \lambda_{\max} \big( \FisherMAP \big) &\leq K_{\rm up} nt, 
    \end{aligned}
    \end{align*}
    and
    \begin{align*}
    \widehat{\tau}_{3, t} \vee \tau_{3, t}^{\ast}
    \leq
    K_{\rm up} t^{-3/2} n^{-1/2}, \quad 
    \widehat{\tau}_{4, t}
    \leq
    K_{\rm up} t^{-2} n^{-1}, \quad 
    \epsilon_{n, t, \KL} 
    \leq K_{\rm up} t^{-1} n^{-1/2} p_{\ast},
    \end{align*}        
    where $K_{\rm up} = K_{\rm up} (K_{\min}, K_{\max})$ and $K_{\rm low} = K_{\rm low} (K_{\min})$.
\end{proposition}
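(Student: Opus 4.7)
The plan is to proceed by induction on $t = 1, 2, \ldots, T$, carrying forward a joint set of bounds on the pair $(\mu_{t-1}, \bOmega_{t-1})$ that determines $\FisherBest$ and $\FisherMAP$. At each step, the sequence of sub-claims to verify is: (i) eigenvalue bounds on $\FisherBest, \FisherMAP$; (ii) bias bound $\|\FisherBest^{1/2}(\thetaBest - \theta_0)\|_2$; (iii) pMLE bound $\|\FisherBest^{1/2}(\thetaMAP - \thetaBest)\|_2$; (iv) the $\widehat{\tau}, \tau^{\ast}$ smoothness bounds. Once (i)--(iii) are established at time $t$, the smoothness bounds (iv) and the bound on $\epsilon_{n,t,\KL}$ fall out of \eqref{eqn:tau_bound_theory} combined with assumption (A2), Lemma \ref{lemma:tech_smooth_tau_bound}, and elementary algebra on the expression defining $\epsilon_{n,t,\KL}$; under (S), the $\widehat{\tau}_{3,t}^3 \log^3 N$ and $e^{-7\log N}$ terms are dominated by $\widehat{\tau}_{4,t} p^2 \asymp p^2/(nt^2)$, matching the claimed bound $(t^{-1} n^{-1/2} p_\ast)^2$.

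For the base case $t=1$, assumption (P) provides the initial control $\|\bOmega_0\|_2 \le K_{\max} p_\ast$ and $\|\bOmega_0^{1/2}(\theta_0 - \mu_0)\|_2 \le \delta n^{1/2}$. First-order optimality gives $\nabla \bbE_1 L_1(\thetaBest[1]) = \bOmega_0(\thetaBest[1] - \mu_0)$; Taylor-expanding around $\theta_0$ (where $\nabla \bbE_1 L_1$ vanishes) and using (A2) with a contraction/fixed-point argument yields $\thetaBest[1] \in \Theta(\theta_0, \bI_p, 1/2)$. Assumption (A2) then gives $\lambda_{\min}(\FisherBest[1]) \geq K_{\min} n$ and, using (P) and (S), $\lambda_{\max}(\FisherBest[1]) \lesssim n$. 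Rearranging the optimality condition yields the bias bound of the required order, and Theorem \ref{thm:penalized_estimation} (whose hypothesis (\textbf{Est}) is verified from the $\tau_{3,1}^\ast$ estimate) supplies the pMLE bound; local Lipschitz continuity of the Hessian (controlled by the $\nabla^3$ bound in (A2)) transfers the eigenvalue bounds from $\FisherBest[1]$ to $\FisherMAP[1]$.

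For the inductive step, assume all bounds at times $1, \ldots, t-1$. The essential new ingredient is passing from bounds on the Laplace parameters $(\thetaMAP[t-1], \FisherMAP[t-1])$ to the variational parameters $(\mu_{t-1}, \bOmega_{t-1})$. By optimality of $\Pi_{t-1}$ and Theorem \ref{thm:LA_KL},
\[
K(\Pi_{t-1}; \widetilde\Pi_{t-1}) \le K(\Pi_{t-1}^{\LA}; \widetilde\Pi_{t-1}) \lesssim \epsilon_{n,t-1,\KL}^2,
\]
so a triangle-type argument in KL (on the Gaussian family) followed by the explicit Gaussian KL formula translates this scalar bound into quantitative control of $\|\bOmega_{t-1}^{1/2}(\mu_{t-1} - \thetaMAP[t-1])\|_2$ and of the spectral ratio between $\bOmega_{t-1}$ and $\FisherMAP[t-1]$. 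Under (S), those perturbations are negligible relative to the leading order $nt$, so $\lambda_{\min}(\bOmega_{t-1}) \asymp \lambda_{\max}(\bOmega_{t-1}) \asymp n(t-1)$ and $\|\bOmega_{t-1}^{1/2}(\theta_0-\mu_{t-1})\|_2 \lesssim M_n\sqrt{t p_\ast}$. Plugging into the optimality equation $\nabla \bbE_t L_t(\thetaBest) = \bOmega_{t-1}(\thetaBest - \mu_{t-1})$ and Taylor-expanding around $\theta_0$ produces
\[
\thetaBest - \theta_0 = -(\bF_{t,\theta_0} + \bOmega_{t-1})^{-1} \bOmega_{t-1}(\mu_{t-1}-\theta_0) + \text{(higher order)},
\]
so $\|\FisherBest^{1/2}(\thetaBest-\theta_0)\|_2 \lesssim \|\bOmega_{t-1}^{1/2}(\mu_{t-1}-\theta_0)\|_2 \lesssim M_n\sqrt{t p_\ast} \ll n^{1/2}$ (by (S)). Hence $\thetaBest \in \Theta(\theta_0, \bI_p, 1/2)$, (A2) activates, the eigenvalue bounds on $\FisherBest$ follow, and Theorem \ref{thm:penalized_estimation} (with $r_{\eff,t} \lesssim M_n\sqrt{p_\ast/t}$ obtained from (A1), (A2), and the just-proved eigenvalue bounds) gives the pMLE bound. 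Local Hessian Lipschitzness then transfers eigenvalue bounds from $\FisherBest$ to $\FisherMAP$.

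The main obstacle is Step 2, the Laplace-to-variational transfer: from a single scalar KL bound we must extract \emph{spectral} (relative) bounds on $\bOmega_{t-1}$ versus $\FisherMAP[t-1]$ that hold uniformly in $t$, using only the Gaussian nature of $\Pi_{t-1}$ and near-Gaussianity of $\widetilde\Pi_{t-1}$. Secondary bookkeeping difficulties are twofold: one must verify assumption (\textbf{KL}) (the hypothesis of Theorem \ref{thm:LA_KL}) at each step from the preceding step's bounds, and one must ensure the cumulative perturbation does not drive $\thetaBest$ or $\thetaMAP$ outside $\Theta(\theta_0, \bI_p, 1/2)$ at any $t \le T$. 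Both dictate that (S) include the factors $M_n^2$ and $\log^2 T$, explaining their appearance.
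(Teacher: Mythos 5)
Your high-level strategy matches the paper's: an induction on $t$ with the Laplace-to-variational transfer as the crux, which you correctly flag as the main obstacle. However, the resolution you sketch for that step---a ``triangle-type argument in KL'' followed by the Gaussian KL formula---has a genuine gap. KL is not a metric, so from $K(\Pi_{t-1}; \widetilde\Pi_{t-1}) \leq K(\Pi_{t-1}^{\LA}; \widetilde\Pi_{t-1}) \lesssim \epsilon_{n,t-1,\KL}^2$ you cannot triangle-inequality your way to a bound on $K(\Pi_{t-1}; \Pi_{t-1}^{\LA})$; nor is there a Pythagorean decomposition for the reverse-KL projection onto the (non-convex in the distributional sense) Gaussian family. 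Moreover, the explicit Gaussian KL formula applies only when both arguments are Gaussian, while $\widetilde\Pi_{t-1}$ is not. The paper's route (Corollary \ref{coro:VB_KL_Delta}) is essentially forced: apply Pinsker to convert both KL bounds into TV bounds, use the triangle inequality \emph{in TV} to obtain $d_V(\Pi_{t-1}, \Pi_{t-1}^{\LA}) \lesssim \epsilon_{n,t-1,\KL}$, and then invoke a nontrivial result of \citet{arbas2023polynomial} (Lemma \ref{lemma:tech_Gaussian_comparison_lower}) that \emph{lower}-bounds the TV distance between two nearby Gaussians by the parameter distance $\Delta_{t-1}$. That lower bound is the missing ingredient: without it the scalar divergence bound does not yield the spectral and mean control $\|\bOmega_{t-1}^{1/2}(\mu_{t-1}-\thetaMAP[t-1])\|_2 \lesssim \epsilon_{n,t-1,\KL}$ and $\|\bOmega_{t-1}^{-1/2}\FisherMAP[t-1]\bOmega_{t-1}^{-1/2}-\bI_p\|_{\rm F} \lesssim \epsilon_{n,t-1,\KL}$ that drive the rest of the induction.

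The remainder of your proposal is consistent with the paper's argument: the bias bound proceeds via the localization and contraction of Lemma \ref{lemma:tech_pre_bias_bound}, the pMLE bound via Theorem \ref{thm:penalized_estimation}, and the smoothness quantities via Lemma \ref{lemma:tech_smooth_tau_bound} together with (\textbf{A2}). You also correctly identify why the $\log^2 T$ factor in (\textbf{S}) is needed, though the actual mechanism is slightly more delicate than ``cumulative perturbation'': the paper's Step~5 controls $\lambda_{\min}(\FisherBest)$ and $\lambda_{\max}(\FisherBest)$ through the telescoping products $\prod_{s} (1 \pm \Delta_{s})$, and the uniform-in-$t$ bound requires $\sum_{s} \Delta_{s} \leq 1/4$, which holds because $\Delta_s \lesssim s^{-1} n^{-1/2} p_\ast$ sums to $O(\log T \cdot n^{-1/2} p_\ast)$.
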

\begin{proof}
    See the proof of Proposition \ref{prop:eigenvalue_order}; Proposition \ref{prop:eigenvalue_order_main} is a special case of Proposition \ref{prop:eigenvalue_order}.
\end{proof}

From the results in Proposition \ref{prop:eigenvalue_order_main}, we can quantify the relation between pMLE $(\thetaMAP, \FisherMAP)$ and variational parameters $(\mu_t, \bOmega_t)$. Recall that
\begin{align*}
    d_{V}\Big( \Pi_{t}(\cdot),  \Pi_{t}^{\LA}(\cdot) \Big)
    \leq 
    d_{V}\Big( \Pi_{t}(\cdot),  \widetilde{\Pi}_{t}\left(\cdot \mid \bD_{t} \right) \Big) + 
    d_{V}\Big( \widetilde{\Pi}_{t}\left(\cdot \mid \bD_{t} \right),  \Pi_{t}^{\LA}(\cdot) \Big) 
    \lesssim 
    \epsilon_{n, t, \KL},
\end{align*}
where the inequality holds by Theorem \ref{thm:VB_KL} and Pinsker's inequality. If two normal distributions $Q_1 = \cN(m_1, \bV_1^{-1})$ and $Q_2 = \cN(m_2, \bV_2^{-1})$ with $d_{V}(Q_1, Q_2) \leq \epsilon$ for sufficiently small $\epsilon > 0$, \citet{arbas2023polynomial} showed in Theorem 1.8 that 
\begin{align*}
    \left\| \bV_1^{1/2} \big( m_1 - m_2 \big) \right\|_2 \vee \left\| \bV_2^{-1/2} \bV_1 \bV_2^{-1/2} - \bI_p \right\|_{\rm F} = O(\epsilon).
\end{align*}
Based on this fact, it follows from Proposition \ref{prop:eigenvalue_order_main} that on $\scrE_{\est, 1}$
\begin{align} \label{eqn:VB_param_bounds}
    \left\| \FisherMAP^{1/2} \big( \mu_t - \thetaMAP \big) \right\|_{2} \vee
    \left\| \bOmega_{t}^{-1/2} \FisherMAP \bOmega_{t}^{-1/2} - \bI_{p} \right\|_{\rm F} 
    \lesssim
    \epsilon_{n, t, \KL} = O(t^{-1} n^{-1/2} p_{\ast}).
\end{align}
Note that, due to the symmetry of the total variation distance, the above bound remains invariant under interchange of $(\thetaMAP, \FisherMAP)$ and $(\mu_t, \bOmega_t)$.
Importantly, \eqref{eqn:VB_param_bounds} plays a crucial role in establishing the online BvM assertion, namely that $d_{V}(\Pi_T, \Pi(\cdot \mid \bD)) \to 0$ in probability.

We conclude this section by discussing the bias term $\| \thetaBest - \theta_0 \|_2$. By Proposition \ref{prop:eigenvalue_order_main}, we have 
\begin{align} \label{eqn:loose_bound_bias}
    \left\| \thetaBest - \theta_0 \right\|_2 \lesssim \lambda_{\min}^{-1/2}\big( \FisherBest \big) M_n \sqrt{t p_{\ast}} \lesssim \left( \dfrac{p_{\ast}}{n} \right)^{1/2}.
\end{align}
Although this bound is sufficient to ensure that $\thetaBest \in \Theta(\theta_0, \bI_p, 1/2)$, the corresponding rate is not sharp enough to guarantee that $\| \thetaMAP - \theta_0 \|_2$ decreases at the optimal rate $(nt)^{-1/2}$ under the standard decomposition analysis in \eqref{eqn:decomposition_pMLE}.
Instead, we will obtain an appropriate rate of $\| \thetaMAP - \theta_0 \|_2$ via a batch learning estimator. We revisit this issue in Proposition \ref{prop:similar_MAP}.


\section{Bernstein--von Mises theorem for full posterior} \label{sec:full_posterior}
This section presents the BvM theorem of full posterior under the batch learning setting.
Recall the definition of \textit{full posterior} in \eqref{def:full_posterior}.
Also, we introduce some notations for analyzing $t$-th full posterior $\Pi( \cdot \mid \bD_{1:t})$.
For $t \in [T]$ and $\theta \in \Theta$, let 
\begin{align}
\begin{aligned} \label{def:batch_quantities}
    \widetilde{L}_{1:t}(\theta) &= -\dfrac{1}{2} \left\| \bOmega_{0}^{1/2} \big( \theta - \mu_0 \big) \right\|_{2}^{2} + L_{1:t}(\theta), \quad &
    \bF_{1:t, \theta} &= -\nabla^2 L_{1:t}(\theta) \\
    \zeta_{1:t}(\theta) &= L_{1:t}(\theta) - \bbE L_{1:t}(\theta), \quad &
    \FullFisherTilde[t]{\theta} &= \bOmega_0 + \FullFisher[t]{\theta}.
\end{aligned}
\end{align}
and
\begin{align} \label{def:batch_maximizer_quantities}
    \fullthetapMLE[t] = \argmax_{\theta \in \Theta} \widetilde{L}_{1:t}(\theta), \quad 
    \fullthetaMLE[t] = \argmax_{\theta \in \Theta} L_{1:t}(\theta), \quad 
    \fullthetaBest[t] = \argmax_{\theta \in \Theta} \bbE \widetilde{L}_{1:t}(\theta).
\end{align}
By applying similar approximation techniques in Section \ref{sec:variational_posterior}, one can prove that the LA would provide an accurate approximation of $\Pi\left(\cdot \mid \bD_{1:t} \right)$ in TV sense. Specifically, since we impose several regularity conditions on the local neighborhood of $\theta_0$ in (\textbf{A2}), we need to establish the concentration behaviors of $\fullthetapMLE$ and $\fullthetaMLE$ for suitable localization steps (e.g., $\fullthetapMLE, \fullthetaMLE \in \Theta(\theta_0, \bI_p, 1/2)$). For this purpose, we impose the following conditions, which are a version of (\textbf{A1}) adapted to the batch learning setup.

\bed
\item[(\textbf{A1$\ast$})] 
Assume that (\textbf{A1}) holds. Also, there exists $\{ \bV_{1:t} : t \in [T] \} \subset \symmPD$ such that
\begin{align} 
\begin{aligned} \label{assume:A1ast_1}
    \bbP_{0}^{(N)} \Bigg( 
    \left\| \FullFisherTilde[t]{\fullthetaBest[t]}^{-1/2} 
    \nabla \zeta_{1:t} \right\|_{2} \geq \widetilde{r}_{\eff, 1:t}
    \text{ for some } t \in [T] \Bigg) &\leq n^{-1}, \\
    \bbP_{0}^{(N)} \Bigg( 
    \left\| \FullFisher[t]{\theta_0}^{-1/2} 
    \nabla \zeta_{1:t} \right\|_{2} \geq r_{\eff, 1:t} \ 
    \text{ for some } t \in [T] \Bigg) &\leq n^{-1},
\end{aligned}    
\end{align}
and
\begin{align}
\begin{aligned}
    \label{assume:A1ast_2}
    \max_{t \in [T]} \sup_{\theta \in \Theta \left(\theta_0, \bI_{p}, 1/2 \right)} \left\| \FullFisher[t]{\theta}^{-1} \bV_{1:t} \right\|_{2} &\leq \dfrac{M_n^2}{9},
\end{aligned}
\end{align}
where 
\begin{align*}
    \widetilde{\lambda}_{1:t} = \left\| \FullFisherTilde[t]{\fullthetaBest[t]}^{-1} \bV_{1:t} \right\|_{2}, \quad
    \lambda_{1:t} = \left\| \FullFisher[t]{\theta_0}^{-1} \bV_{1:t} \right\|_{2},
\end{align*}
and
\begin{align} 
\begin{aligned} \label{def:batch_quantity}
    &\widetilde{r}_{\eff, 1:t} = \widetilde{p}_{\eff, 1:t}^{1/2} + \sqrt{2 \widetilde{\lambda}_{1:t} (\log n + \log T)}, &
    &\widetilde{p}_{\eff, 1:t} = \operatorname{tr}\left( \FullFisherTilde[t] {\fullthetaBest[t]}^{-1} \bV_{1:t} \right), \\
    &r_{\eff, 1:t} = p_{\eff, 1:t}^{1/2} + \sqrt{2\lambda_{1:t} (\log n + \log T)}, &
    &p_{\eff, 1:t} = \operatorname{tr}\left( \FullFisher[t]{\theta_0}^{-1} \bV_{1:t} \right), 
\end{aligned}    
\end{align}
and $M_n$ is the quantity specified in $(\textbf{A1})$.
\eed

Let $\scrE_{\est, 2}$ be the event on which the following inequalities hold: 
\begin{align} \label{def:Gamma_n2_event}
    \left\| \FullFisherTilde[t]{\fullthetaBest[t]}^{-1/2} 
    \nabla \zeta_{1:t} \right\|_{2} \leq \widetilde{r}_{\eff, 1:t}, \quad  
    \left\| \FullFisher[t]{\theta_0}^{-1/2} 
    \nabla \zeta_{1:t} \right\|_{2} \leq r_{\eff, 1:t} \quad 
    \text{ for all } t \in [T].
\end{align}
Then, by \eqref{assume:A1ast_1}, $\bbP_{0}^{(N)} \left( \scrE_{\est, 2} \right) \geq 1 - 2n^{-1}$ under (\textbf{A2}). On $\scrE_{\est, 2}$, we can prove that the following inequalities hold uniformly for all $t \in [T]$:
\begin{align}
\begin{aligned} \label{eqn:batch_bound}
    \left\| \FullFisherTilde[t]{\fullthetaBest}^{1/2} \big( \fullthetapMLE - \theta_0 \big) \right\|_2
    &\lesssim
    \left\| \FullFisherTilde[t]{\fullthetaBest}^{-1/2} \bOmega_0 \big( \theta_0 - \mu_0 \big) \right\|_2
    + M_n p_{\ast}^{1/2} \\
    \left\| \FullFisher[t]{\theta_0}^{1/2} \big( \fullthetaMLE - \theta_0 \big) \right\|_2
    &\lesssim
    M_n p_{\ast}^{1/2}.
\end{aligned}    
\end{align}
More detailed statements for the above inequalities are deferred to Appendix \ref{sec:Appendix_full_posterior}. Here, $\| \FullFisherTilde[t]{\fullthetaBest}^{-1/2}$ $\bOmega_0 ( \theta_0 - \mu_0 ) \|_2$ denotes the bias term arising from the quadratic penalization. Based on these results, we can prove Theorem \ref{thm:batch_posterior_LA}.
Note that the following theorem is the full posterior version of Theorem 1 in Section 3; hence, Theorem 2.2 in \citet{katsevich2025improved} yields an identical result.

\begin{theorem} \label{thm:batch_posterior_LA}
    Suppose that (\textbf{A0}), (\textbf{A1$\ast$}), (\textbf{A2}), (\textbf{S}) and (\textbf{P}) hold.
    Then, on $\scrE_{\est, 2}$, the following inequality holds uniformly for all $t \in [T]$:
    \begin{align*}
        d_{V} \bigg( \cN\left(\fullthetapMLE[t], \FullFisherTilde[t]{\fullthetapMLE[t]}^{-1} \right),  \Pi\left(\cdot \mid \bD_{1:t} \right) \bigg) 
        \leq K \left( \dfrac{p_{\ast}^2}{nt} \right)^{1/2}, 
    \end{align*}
    where $K = K(K_{\min}, K_{\max})$.
\end{theorem}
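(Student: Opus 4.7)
The plan is to view the full posterior $\Pi(\cdot \mid \bD_{1:t})$ as a single-update posterior with prior $\Pi_0 = \cN(\mu_0, \bOmega_0^{-1})$ and log-likelihood $L_{1:t}(\cdot)$, and to apply the batch analog of Theorem~\ref{thm:LA_TV}. The proof of Theorem~\ref{thm:LA_TV} never uses the specific structure of the $t$-th mini-batch---only the four-times differentiability of the penalized log-likelihood and the self-concordance constants evaluated at the pMLE. Hence, the same argument with $(\widetilde{L}_t, \thetaMAP, \FisherMAP)$ replaced by $(\widetilde{L}_{1:t}, \fullthetapMLE, \FullFisherTilde[t]{\fullthetapMLE})$ yields
\begin{equation*}
    d_V\Big(\cN\big(\fullthetapMLE, \FullFisherTilde[t]{\fullthetapMLE}^{-1}\big),\ \Pi(\cdot \mid \bD_{1:t})\Big) \;\lesssim\; \big(\widehat{\tau}_{4, 1:t} + \widehat{\tau}_{3, 1:t}^2\big)p^2 + \widehat{\tau}_{3, 1:t}\, p + \widehat{\tau}_{3, 1:t}^3 \log^3 N + e^{-8\log N - 8p},
\end{equation*}
where $\widehat{\tau}_{3, 1:t}$ and $\widehat{\tau}_{4, 1:t}$ are the batch self-concordance constants of $\widetilde{L}_{1:t}$ at $\fullthetapMLE$ in the ellipsoid $\Theta(\fullthetapMLE, \FullFisherTilde[t]{\fullthetapMLE}, 4r_{\LA})$. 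The task reduces to bounding these constants.

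First I would carry out the localization. Using the first line of \eqref{eqn:batch_bound} on $\scrE_{\est, 2}$, the prior bound $\|\bOmega_0^{1/2}(\theta_0 - \mu_0)\|_2 \leq \delta n^{1/2}$ from (\textbf{P}), and an eigenvalue bound $\lambda_{\min}(\FullFisherTilde[t]{\fullthetaBest}) \gtrsim nt$ obtained exactly as in Proposition~\ref{prop:eigenvalue_order_main} via (\textbf{A2}), I would show $\fullthetapMLE \in \Theta(\theta_0, \bI_p, 1/2)$ under (\textbf{S}). I would then extend this to $\Theta(\fullthetapMLE, \FullFisherTilde[t]{\fullthetapMLE}, 4r_{\LA}) \subset \Theta(\theta_0, \bI_p, 1/2 + \varepsilon)$ for a small $\varepsilon > 0$, which reduces to $r_{\LA}/\sqrt{nt} \ll 1$ and is again guaranteed by (\textbf{S}). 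Since (\textbf{A2}) holds on this slightly enlarged neighborhood (the radius in \eqref{assume:A2_3} accommodates any $r > 0$), this legalizes applying the derivative bounds in the self-concordance step.

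Next I would bound the smoothness constants. Because $\nabla^k \widetilde{L}_{1:t} = \nabla^k L_{1:t}$ for $k \geq 3$, Lemma~\ref{lemma:tech_smooth_tau_bound} together with (\textbf{A2}) and the triangle inequality over the $t$ mini-batches yields
\begin{equation*}
    \widehat{\tau}_{3, 1:t} \leq \lambda_{\min}^{-3/2}\big(\FullFisherTilde[t]{\fullthetapMLE}\big) \cdot K_{\max} nt \lesssim (nt)^{-1/2}, \qquad \widehat{\tau}_{4, 1:t} \lesssim (nt)^{-1}.
\end{equation*}
Substituting these into the displayed TV bound gives a total error of order $p^2/(nt) + p/\sqrt{nt} + (\log N)^3/(nt)^{3/2} + e^{-8\log N - 8p}$. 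Under (\textbf{S}) we have $nt \gtrsim p_*^2 \gtrsim p^2$, so the term $p/\sqrt{nt} = (p^2/(nt))^{1/2}$ dominates; the remaining terms are absorbed into the constant upon replacing $p$ by $p_* = p \vee \log n \vee \log T$. This yields the desired bound.

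The main obstacle is the initial localization step, particularly controlling the bias contribution $\|\FullFisherTilde[t]{\fullthetaBest}^{-1/2} \bOmega_0 (\theta_0 - \mu_0)\|_2$ arising from the normal prior in \eqref{eqn:batch_bound}. Handling it requires the combined use of $\|\bOmega_0\|_2 \leq K_{\max} p_*$ from (\textbf{P}), the eigenvalue lower bound $\lambda_{\min}(\FullFisherTilde[t]{\fullthetaBest}) \gtrsim nt$, and the sample-size condition (\textbf{S}); the algebra here is essentially a batch-learning variant of the estimation analysis underlying Proposition~\ref{prop:eigenvalue_order_main}, and most of the proof is a careful bookkeeping exercise once this step is in place.
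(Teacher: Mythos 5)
Your overall strategy coincides with the paper's: treat $\Pi(\cdot\mid\bD_{1:t})$ as a single batch penalized posterior, localize $\fullthetapMLE$ via the batch bias and estimation bounds on $\scrE_{\est,2}$ (Lemmas \ref{lemma:full_posterior_optimal_parameter} and \ref{lemma:full_posterior_parameter}, which are the formal versions of \eqref{eqn:batch_bound}), bound the batch self-concordance constants via Lemma \ref{lemma:tech_smooth_tau_bound} and (\textbf{A2}), and feed them into the argument behind Theorem \ref{thm:LA_TV}.

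The one place where your plan as written would break is the localization step. You propose to show only $\Theta(\fullthetapMLE, \FullFisherTilde[t]{\fullthetapMLE}, 4r_{\LA}) \subset \Theta(\theta_0, \bI_p, 1/2 + \varepsilon)$ and to appeal to \eqref{assume:A2_3} to justify the derivative bounds on the enlarged set. But \eqref{assume:A2_3} controls only $\|\nabla^3 L_t\|_{\rm op}$ on $\Theta(\theta_0,\bI_p,1/2+r)$, and it does so with a constant of order $N e^{8p}e^{(\sqrt{p}+\sqrt{2\log N})r}$ rather than $K_{\max}n$; there is no analog for $\nabla^4 L_t$ at all. So you would lose the clean bound needed for the fourth-order smoothness constant $\widehat{\tau}_{4,1:t}$, and the $\nabla^3$ bound would be too weak. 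The remedy is to tighten the first step: under (\textbf{S}) and (\textbf{P}), the bounds you quote in fact give $\|\fullthetapMLE - \theta_0\|_2 \leq 1/4$ and $\sup_{\theta\in\Theta(\fullthetapMLE,\FullFisherTilde[t]{\fullthetapMLE},4r_{\LA})}\|\theta-\fullthetapMLE\|_2\leq 1/4$, so the Laplace ellipsoid lands inside $\Theta(\theta_0,\bI_p,1/2)$ exactly and \eqref{assume:A2_1}--\eqref{assume:A2_2} apply directly with the $K_{\max}n$ constants you need. This is exactly what the paper does; the sample-size margin you already invoke is more than enough, so no new ingredient is required.
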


Although we conclude from Theorem \ref{thm:batch_posterior_LA} that $\cN (\fullthetapMLE[t], \FullFisherTilde[t]{\fullthetapMLE[t]}^{-1} ) \approx \Pi \left(\cdot \mid \bD_{1:t} \right)$, the relationship with $\cN ( \fullthetaMLE, \FullFisher[t]{\theta_0}^{-1} )$ remains unresolved. A standard BvM theorem states that $\cN ( \fullthetaMLE, \FullFisher[t]{\theta_0}^{-1} ) \approx \Pi \left(\cdot \mid \bD_{1:t} \right)$ in the TV sense. Importantly, $\cN ( \fullthetaMLE, \FullFisher[t]{\theta_0}^{-1} )$ does not exhibit the prior effects. Hence, we impose an additional assumption (\textbf{P$\ast$}) for $(\mu_0, \bOmega_0)$ so that the prior effects become asymptotically negligible. 

\bed
\item[(\textbf{P$\ast$})] Assume that $(\textbf{P})$ holds. Also, the initial prior parameters $\mu_0$ and $\bOmega_0$ satisfy 
\begin{align*} 
    \left\| \bOmega_{0}^{1/2} \big( \theta_0 - \mu_0 \big) \right\|_{2} \leq K_{\max} M_n p_{\ast}^{1/2}.
\end{align*}
\eed

\begin{theorem} \label{thm:batch_posterior_BvM}
    Suppose that (\textbf{A0}), (\textbf{A1$\ast$}), (\textbf{A2}), (\textbf{S}) and (\textbf{P$\ast$}) hold.
    Then, on $\scrE_{\est, 2}$, the following inequalities hold uniformly for all $t \in [T]$:
    \begin{align}
    \begin{aligned} \label{eqn:batch_posterior_BvM_claim}
    d_{V} \bigg( 
    \cN\left( \fullthetaMLE, \FullFisher[t]{\theta_0}^{-1} \right),  
    \cN\left( \fullthetapMLE, \FullFisherTilde[t]{\fullthetapMLE[t]}^{-1} \right) \bigg) 
    &\leq K M_n \left( \dfrac{p_{\ast}^{2}}{nt} \right)^{1/2}, \\
    d_{V} \bigg( 
    \cN\left( \fullthetaMLE, \FullFisher[t]{\theta_0}^{-1} \right),  
    \Pi\left(\cdot \mid \bD_{1:t} \right) \bigg) 
    &\leq K M_n \left( \dfrac{p_{\ast}^2}{nt} \right)^{1/2},
    \end{aligned}
    \end{align}
    where $K = K(K_{\min}, K_{\max})$.
\end{theorem}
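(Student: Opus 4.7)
The plan is to derive the first inequality in \eqref{eqn:batch_posterior_BvM_claim} by a direct Gaussian comparison, and then obtain the second inequality from it by the triangle inequality with Theorem~\ref{thm:batch_posterior_LA}. I work throughout on $\scrE_{\est, 2}$, where \eqref{eqn:batch_bound} gives $\|\FullFisher[t]{\theta_0}^{1/2}(\fullthetaMLE[t] - \theta_0)\|_{2} \lesssim M_n p_{\ast}^{1/2}$. Under the strengthened prior condition (\textbf{P$\ast$}), the bias term in the first line of \eqref{eqn:batch_bound} obeys $\|\FullFisherTilde[t]{\fullthetaBest[t]}^{-1/2} \bOmega_0(\theta_0 - \mu_0)\|_{2} \leq \|\bOmega_0^{1/2}(\theta_0 - \mu_0)\|_{2} \lesssim M_n p_{\ast}^{1/2}$ (using $\FullFisherTilde[t]{\fullthetaBest[t]} \succeq \bOmega_0$), so $\|\FullFisherTilde[t]{\fullthetaBest[t]}^{1/2}(\fullthetapMLE[t] - \theta_0)\|_{2} \lesssim M_n p_{\ast}^{1/2}$ as well. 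By (\textbf{A2}) and (\textbf{S}), both $\fullthetapMLE[t]$ and $\fullthetaMLE[t]$ therefore lie well inside $\Theta(\theta_0, \bI_p, 1/2)$, on which the eigenvalue and derivative bounds required below are directly supplied by (\textbf{A2}).

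For the Gaussian comparison, I compute the KL divergence between two multivariate Gaussians and invoke Pinsker's inequality to obtain the standard forward bound
\begin{align*}
d_V\bigl(\cN(m_1, \bV_1^{-1}),\, \cN(m_2, \bV_2^{-1})\bigr) \lesssim \bigl\|\bV_2^{1/2}(m_1 - m_2)\bigr\|_{2} + \bigl\|\bV_2^{-1/2} \bV_1 \bV_2^{-1/2} - \bI_p \bigr\|_{\rm F},
\end{align*}
valid whenever the right-hand side is bounded (this is the converse of the Arbas et al.\ statement cited after Proposition~\ref{prop:eigenvalue_order_main}). With $m_1 = \fullthetaMLE[t]$, $\bV_1 = \FullFisher[t]{\theta_0}$, $m_2 = \fullthetapMLE[t]$, and $\bV_2 = \FullFisherTilde[t]{\fullthetapMLE[t]}$, the first inequality in \eqref{eqn:batch_posterior_BvM_claim} reduces to showing that the mean gap $\|\FullFisher[t]{\theta_0}^{1/2}(\fullthetapMLE[t] - \fullthetaMLE[t])\|_{2}$ and the Frobenius covariance gap $\|\FullFisherTilde[t]{\fullthetapMLE[t]}^{-1/2} \FullFisher[t]{\theta_0} \FullFisherTilde[t]{\fullthetapMLE[t]}^{-1/2} - \bI_p\|_{\rm F}$ are both of order $M_n (p_{\ast}^2/(nt))^{1/2}$.

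For the mean gap, the first-order conditions $\nabla L_{1:t}(\fullthetaMLE[t]) = 0$ and $\nabla L_{1:t}(\fullthetapMLE[t]) = \bOmega_0(\fullthetapMLE[t] - \mu_0)$, together with a Taylor expansion of $\nabla L_{1:t}$ along the segment joining the two estimators, give $\fullthetaMLE[t] - \fullthetapMLE[t] = \FullFisher[t]{\bar\theta}^{-1} \bOmega_0 (\fullthetapMLE[t] - \mu_0)$ for some intermediate $\bar\theta$ in the local neighborhood. Then $\|\FullFisher[t]{\theta_0}^{-1/2} \bOmega_0^{1/2}\|_{2} \lesssim (p_{\ast}/(nt))^{1/2}$ follows from (\textbf{P}) and (\textbf{A2}), while $\|\bOmega_0^{1/2}(\fullthetapMLE[t] - \mu_0)\|_{2} \lesssim M_n p_{\ast}^{1/2}$ follows from $\|\bOmega_0^{1/2} \FullFisherTilde[t]{\fullthetaBest[t]}^{-1/2}\|_{2} \leq 1$, the pMLE concentration above, and (\textbf{P$\ast$}). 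Using $\FullFisher[t]{\theta_0} \asymp \FullFisher[t]{\bar\theta}$ on the local set, one concludes $\|\FullFisher[t]{\theta_0}^{1/2}(\fullthetapMLE[t] - \fullthetaMLE[t])\|_{2} \lesssim M_n (p_{\ast}^2/(nt))^{1/2}$. For the covariance gap, I split $\FullFisherTilde[t]{\fullthetapMLE[t]} - \FullFisher[t]{\theta_0} = \bOmega_0 + (\FullFisher[t]{\fullthetapMLE[t]} - \FullFisher[t]{\theta_0})$; the prior piece contributes $\|\FullFisher[t]{\theta_0}^{-1/2} \bOmega_0 \FullFisher[t]{\theta_0}^{-1/2}\|_{\rm F} \lesssim \sqrt{p}\, p_{\ast}/(nt) \lesssim (p_{\ast}^2/(nt))^{1/2}$, and the Hessian-difference piece is controlled by Taylor-expanding $\theta \mapsto \nabla^2 L_{1:t}(\theta)$ and applying the tensor-equivalent form \eqref{def:3_4_smooth_equiv} of third-order smoothness with $\tau_3 \lesssim (nt)^{-1/2}$ (from \eqref{eqn:tau_bound_theory} and (\textbf{A2})) to yield the operator-norm bound $\lesssim \tau_3 \|\FullFisher[t]{\theta_0}^{1/2}(\fullthetapMLE[t] - \theta_0)\|_{2} \lesssim M_n (p_{\ast}/(nt))^{1/2}$, hence the Frobenius bound $\lesssim M_n (p_{\ast}^2/(nt))^{1/2}$. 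A routine perturbation step then transfers the estimate to the sandwich form since $\FullFisherTilde[t]{\fullthetapMLE[t]} \asymp \FullFisher[t]{\theta_0}$.

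The main obstacle is the careful bookkeeping of the prior contribution. Under only (\textbf{P}), the shift $\|\bOmega_0^{1/2}(\theta_0 - \mu_0)\|_{2}$ could be as large as $\delta \sqrt{n}$, which, through the factor $\|\bOmega_0^{1/2}(\fullthetapMLE[t] - \mu_0)\|_{2}$ in the mean-gap estimate, would inflate the rate to something like $\delta\, p_{\ast}^{1/2}/\sqrt{t}$ and destroy the parametric scaling. The strengthened assumption (\textbf{P$\ast$}) forces $\|\bOmega_0^{1/2}(\theta_0 - \mu_0)\|_{2} \lesssim M_n p_{\ast}^{1/2}$, i.e.\ the prior displacement is matched to the stochastic score fluctuation; this is precisely the balance needed to render the prior influence asymptotically negligible and to recover the $M_n (p_{\ast}^2/(nt))^{1/2}$ rate in both claims.
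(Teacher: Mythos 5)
Your proposal is correct and follows essentially the same route as the paper: reduce to the Gaussian comparison lemma, decompose into a mean-gap term and a Frobenius covariance-gap term, split the covariance gap into a prior piece ($\bOmega_0$) and a Hessian-difference piece controlled by third-order smoothness, and invoke $(\textbf{P}^\ast)$ to balance the prior displacement against the score fluctuation; the second claim then follows by triangle inequality with Theorem~\ref{thm:batch_posterior_LA}. The one place you deviate is the mean-gap bound: you invert the two first-order conditions directly via a Taylor expansion of $\nabla L_{1:t}$ to write $\fullthetaMLE - \fullthetapMLE = \overline{\bF}^{-1}\bOmega_0(\fullthetapMLE - \mu_0)$ (where $\overline{\bF}$ should really be the integrated Hessian $\int_0^1 \FullFisher[t]{\fullthetaMLE + s(\fullthetapMLE - \fullthetaMLE)}\,\rmd s$, not a single intermediate point, since $\nabla L_{1:t}$ is vector-valued), whereas the paper again applies its localization lemma (Lemma~\ref{lemma:tech_pre_bias_bound}) with $\theta = \fullthetapMLE$, $\widetilde{\theta} = \fullthetaMLE$, $\beta = \bOmega_0(\fullthetaMLE - \mu_0)$. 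Both arguments rest on the same localization (that $\fullthetapMLE$, $\fullthetaMLE$, and the segment joining them lie in $\Theta(\theta_0, \bI_p, 1/2)$) and the same eigenvalue and smoothness controls, and both give $\|\FullFisherTilde[t]{\fullthetapMLE}^{1/2}(\fullthetaMLE - \fullthetapMLE)\|_2 \lesssim M_n(p_\ast^2/(nt))^{1/2}$; your Taylor-inversion step is slightly more elementary but must be stated in the integral form of the mean-value theorem to be airtight, while the paper's use of Lemma~\ref{lemma:tech_pre_bias_bound} is more uniform with its other estimation arguments.
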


We briefly introduce the key idea in our proof of Theorem \ref{thm:batch_posterior_BvM}. Our proof strategy is similar to that in \citet{spokoiny2025inexact} and \citet{katsevich2025improved} whose key idea is based on
\begin{align} 
\begin{aligned} \label{eqn:TV_triangular_ineq}
    &d_{V} \Big( 
    \cN\big( \fullthetaMLE, \FullFisher[t]{\theta_0}^{-1} \big),  
    \Pi\left(\cdot \mid \bD_{1:t} \right) \Big) \\
    &\leq 
    d_{V} \Big( \cN\big(\fullthetapMLE[t], \FullFisherTilde[t]{\fullthetapMLE[t]}^{-1} \big),  \Pi\left(\cdot \mid \bD_{1:t} \right) \Big)
    +
    d_{V} \Big( 
    \cN\big( \fullthetaMLE, \FullFisher[t]{\theta_0}^{-1} \big),  
    \cN\big( \fullthetapMLE, \FullFisherTilde[t]{\fullthetapMLE[t]}^{-1} \big) \Big).
\end{aligned}    
\end{align}
Recall the formula \eqref{eqn:TV_error_normals_intro} in Section \ref{sec:overview}.
If the initial prior assigns sufficient probability mass to a neighborhood of $\theta_0$ as described in (\textbf{P$\ast$}), it can be shown that
\begin{align} \label{eqn:efficient_bound}
    \left\| \FullFisherTilde[t]{\fullthetapMLE}^{1/2} \big( \fullthetaMLE - \fullthetapMLE \big) \right\|_2
    \vee
    \left\| \FullFisher[t]{\theta_0}^{-1/2}  \FullFisherTilde[t]{\fullthetapMLE} \FullFisher[t]{\theta_0}^{-1/2} - \bI_p \right\|_{\rm F}
    = 
    O\left( M_n \left( \dfrac{p_\ast^2}{nt} \right)^{1/2} \right),
\end{align}
which yields the first inequality in \eqref{eqn:batch_posterior_BvM_claim}.
Combining this with the result in Theorem \ref{thm:batch_posterior_LA}, one can easily check that the second inequality in \eqref{eqn:batch_posterior_BvM_claim} holds via \eqref{eqn:TV_triangular_ineq}. 

Since we assume the \textit{stochastic linear model} in Section \ref{sec:pMLE}, $\bF_{1:t, \theta_0}$ is non-random; indeed, it coincides with the Fisher information matrix. In more general cases, however, $\bF_{1:t, \theta_0}$ may exhibit randomness. Logistic regression with a random design serves as a representative example. 

To analyze such cases, it is natural to compare $\bF_{1:t, \theta_0}$ with its population counterpart.
For simplicity, suppose that $N_{t}^{-1} \mathbb{E}_{\bX}[\bF_{1:t,\theta_0}]$ is the same for all $t \in [T]$, and define $I_{\theta_0} = N_{t}^{-1} \mathbb{E}_{\bX}[\bF_{1:t,\theta_0}]$. Here, $\bbE_{\bX}$ denotes the expectation over the randomness of $\bX$. Under suitable regularity conditions (e.g. sub-Gaussian random design), we can obtain, for any ${\rm x} > 0$,
\begin{align} \label{eqn:Fisher}
\begin{aligned}
    \left\| \dfrac{1}{N_t} \bF_{1:t,\theta_0} - I_{\theta_0} \right\|_2 &\leq K \sqrt{\dfrac{p + {\rm x}}{N_t}}, \quad
    \left\| \dfrac{1}{N_t} \bF_{1:t,\theta_0} - I_{\theta_0} \right\|_{\rm F} &\leq K \sqrt{\dfrac{p(p + {\rm x})}{N_t}}
\end{aligned}       
\end{align}
with probability at least $1 - e^{-{\rm x}}$, where $K > 0$ is a quantity depending on some regularity conditions (e.g. sub-Gaussian parameter of each row of $\bX$). These bounds are standard; see \citep{tropp2012user, vershynin2018high}. 

This discrepancy is negligible in terms of the final TV convergence rate.
Specifically, combining \eqref{eqn:Fisher} with \eqref{eqn:TV_error_normals_intro},
it is not difficult to see that with probability at least $1 - e^{-{\rm x}}$
\begin{align*}
    d_V \left( \cN \big( \fullthetaMLE, \bF_{1:t, \theta_0}^{-1} \big), \cN \big( \fullthetaMLE,  [ N_tI_{\theta_0} ]^{-1} \big) \right) \lesssim K \sqrt{\dfrac{p(p + {\rm x})}{N_t}}
\end{align*}
provided that $\lambda_{\min}(I_{\theta_0}) \geq C$ for some constant $C > 0$.


\section{Online Bernstein--von Mises theorem} \label{sec:online_variational_posterior}
In this section, we present the main result of this paper, namely the \textit{online BvM theorem}. 
We elaborate on the high-level proof strategy in Section \ref{sec:overview}. 
For clarity, we now introduce explicit definition for $\widehat Q_t$ appearing in \eqref{eqn:overview_assume}. Let $Q_{\LA, t} = \cN( \fullthetapMLE, \FullFisherTilde[t]{\fullthetapMLE}^{-1})$.
By \eqref{eqn:intro_claim_2}, our claim reduces to obtaining upper bounds for the following two quantities:
\begin{align} \label{eqn:fin_claim_2}
    \Big\| \bOmega_{t}^{1/2} \big( \fullthetapMLE - \mu_{t}  \big) \Big\|_{2} \ \text{ and } \
    \Big\| \FullFisherTilde[t]{\fullthetapMLE[t]}^{-1/2} \bOmega_{t} \FullFisherTilde[t]{\fullthetapMLE[t]}^{-1/2} - \bI_{p} \Big\|_{\rm F}.
\end{align}
Propositions \ref{prop:similar_MAP} and \ref{prop:similar_variance} address the first and second terms, respectively.

To bound the first term in \eqref{eqn:fin_claim_2}, we need to ensure that the accumulated approximation errors are asymptotically ignorable. To quantify these errors, we introduce some quantity.
For $\theta \in \Theta$ and $t \in [T]$, define
\begin{align} \label{def:eta_approx}
    \eta_{t}(\theta) = \widetilde{L}_{t}(\theta) + \dfrac{1}{2} \left\| \bOmega_{t}^{1/2} \left( \theta - \mu_{t} \right) \right\|_{2}^{2}, \quad 
    \nabla \eta_{t}(\theta) = \nabla \widetilde{L}_{t}(\theta) + \bOmega_{t}\left( \theta - \mu_{t} \right).
\end{align}
Here, $\eta_{t}(\cdot)$ serves as the error term induced by the $t$-th variational approximation.
Specifically, for $\theta \in \Theta$ and $t \in [T]$, it is not difficult to see that
\begin{align} \label{eqn:eta_equation}
    \widetilde{L}_{1:t}(\theta) = \widetilde{L}_{t}(\theta) + \sum_{s=1}^{t - 1} \eta_{s}(\theta).
\end{align}

Based on \eqref{eqn:eta_equation}, we briefly present key ideas to bound the first term in \eqref{eqn:fin_claim_2}.
Recall that, the definitions of $\fullthetapMLE$ and $\thetaMAP$ imply the following equations
\begin{align*}
    \nabla \widetilde{L}_{1:t}(\fullthetapMLE) = 0, \quad \nabla \widetilde{L}_{t}(\thetaMAP) = 0, \quad  \forall t \in [T].
\end{align*}
By these two score equations and \eqref{eqn:eta_equation}, we have
\begin{align*}
    0 
    &= \nabla \widetilde{L}_{1:t}(\fullthetapMLE) 
    = \nabla \widetilde{L}_{t}(\fullthetapMLE) + \sum_{s=1}^{t - 1} \nabla \eta_{s}(\fullthetapMLE)
    = \nabla \widetilde{L}_{t}(\fullthetapMLE) - \nabla\widetilde{L}_{t}(\thetaMAP) + \sum_{s=1}^{t - 1} \nabla \eta_{s}(\fullthetapMLE) \\
    &= -\widetilde \bF_t(\fullthetapMLE, \thetaMAP) \big( \fullthetapMLE - \thetaMAP \big) + 
    \sum_{s=1}^{t - 1} \nabla \eta_{s}(\fullthetapMLE)
\end{align*}
for some $\widetilde \bF_t(\fullthetapMLE, \thetaMAP)$, depending on $\fullthetapMLE$ and $\thetaMAP$, by Taylor's theorem. If $\fullthetapMLE$ and $\thetaMAP$ are sufficiently close, one can replace $\widetilde \bF_t(\fullthetapMLE, \thetaMAP)$ with $\FisherMAP$ in the last display. Hence, we can obtain the following relation: 
\begin{align} \label{eqn:Taylor_approx_claim}
    \FisherMAP \big( \fullthetapMLE - \thetaMAP \big) \approx
    \widetilde \bF_t(\fullthetapMLE, \thetaMAP) \big( \fullthetapMLE - \thetaMAP \big) =
    \sum_{s=1}^{t - 1} \nabla \eta_{s}(\fullthetapMLE).
\end{align}
From this approximation, we have
\begin{align*}
    \left\| \FisherMAP^{1/2} \big( \fullthetapMLE - \thetaMAP \big) \right\|_{2}
    \approx
    \left\| \sum_{s=1}^{t - 1} \FisherMAP^{-1/2} \nabla \eta_{s}(\fullthetapMLE) \right\|_{2}
    \overset{\rm def}{=}
    \varrho_{n, t}.
\end{align*}
Also, recall from \eqref{eqn:VB_param_bounds} that on $\scrE_{\est, 1}$
\begin{align*}
    \left\| \FisherMAP^{1/2} \big( \thetaMAP - \mu_t \big) \right\|_{2} 
    \vee 
    \left\| \bOmega_{t}^{-1/2} \FisherMAP \bOmega_{t}^{-1/2} - \bI_{p} \right\|_{\rm F}
    \lesssim \epsilon_{n, t, \KL} = o(1)
\end{align*}
Combining these two facts, we can obtain the following rough relations:
\begin{align*}
    \left\| \bOmega_{t}^{1/2} \big( \fullthetapMLE - \mu_{t}  \big) \right\|_{2}
    \approx
    \left\| \FisherMAP^{1/2} \big( \fullthetapMLE - \mu_t \big) \right\|_{2}
    \approx
    \left\| \FisherMAP^{1/2} \big( \fullthetapMLE - \thetaMAP \big) \right\|_{2}
    \approx
    \left\| \sum_{s=1}^{t - 1} \FisherMAP^{-1/2} \nabla \eta_{s}(\fullthetapMLE) \right\|_{2}.
\end{align*}

The main challenge is establishing that $\varrho_{n, t} = o_P(1)$ for all $t \in [T]$. In Proposition \ref{prop:similar_MAP}, we prove that on $\scrE_{\est, 1} \cap \scrE_{\est, 2}$
\begin{align*} 
    \varrho_{n, t} = \left\| \sum_{s=1}^{t - 1} \FisherMAP^{-1/2} \nabla \eta_{s}(\fullthetapMLE) \right\|_{2} = O\left( M_n^{2} \sqrt{\dfrac{p_{\ast}^3}{n}} \right), \quad \forall t \in [T].
\end{align*}
We now formally state the results addressing the first term in \eqref{eqn:fin_claim_2}. 

\begin{proposition} \label{prop:similar_MAP}
    Suppose that (\textbf{A0}), (\textbf{A1$\ast$}), $(\textbf{A2})$, $(\textbf{S})$ and $(\textbf{P})$ hold. 
    Then, on $\scrE_{\est, 1} \cap \scrE_{\est, 2}$, the following inequalities hold uniformly for all $t \in [T]$:
    \begin{align} 
    \begin{aligned} \label{eqn:similar_MAP_claim}
        \left\| \FisherMAP[t]^{1/2} \big( \fullthetapMLE - \thetaMAP[t]  \big) \right\|_{2}
            \vee 
        \left\| \bOmega_{t}^{1/2} \big( \fullthetapMLE - \mu_{t}  \big) \right\|_{2}
            &\leq 
        K M_n^{2} \left( \dfrac{p_{\ast}^{3}}{n} \right)^{1/2}, \\
        \left\| \FisherMAP[t]^{1/2} \big( \theta_0 - \thetaMAP[t] \big) \right\|_{2}
            &\leq 
        K M_n \sqrt{p_{\ast}},
    \end{aligned}
    \end{align}
    where $K = K(K_{\rm min}, K_{\rm max})$.
\end{proposition}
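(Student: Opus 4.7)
The plan is to establish the first bound in \eqref{eqn:similar_MAP_claim} by induction on $t$, and then to deduce the other two bounds as essentially immediate consequences. The base case $t=1$ is trivial since $\widetilde L_{1:1} = \widetilde L_1$, so $\fullthetapMLE[1] = \thetaMAP[1]$ and all three quantities on the left-hand side vanish. For the inductive step, I would suppose the three bounds are known for every $s \leq t-1$; in particular, the third bound then gives $B_s := \|\FisherMAP[s]^{1/2}(\theta_0 - \thetaMAP[s])\|_2 \lesssim M_n\sqrt{p_\ast}$ at every earlier index.

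The central identity is already sketched in the paragraph preceding the proposition: combining the score equations $\nabla \widetilde L_{1:t}(\fullthetapMLE[t]) = 0$ and $\nabla \widetilde L_t(\thetaMAP[t]) = 0$ with $\widetilde L_{1:t}(\theta) = \widetilde L_t(\theta) + \sum_{s<t}\eta_s(\theta)$ yields $\bar{\bF}_t (\fullthetapMLE[t] - \thetaMAP[t]) = \sum_{s=1}^{t-1} \nabla \eta_s(\fullthetapMLE[t])$, where $\bar{\bF}_t = \int_0^1 \FisherTilde[t]{\thetaMAP[t] + u(\fullthetapMLE[t] - \thetaMAP[t])}\, \rmd u$. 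A bootstrap localization---combining the coarse control on $\|\FisherMAP[t]^{1/2}(\thetaMAP[t] - \theta_0)\|_2$ from Proposition \ref{prop:eigenvalue_order_main} with the batch estimate $\|\FullFisherTilde[t]{\fullthetaBest[t]}^{1/2}(\fullthetapMLE[t] - \theta_0)\|_2 \lesssim M_n\sqrt{p_\ast}$ from \eqref{eqn:batch_bound}---places $\fullthetapMLE[t]$ inside the self-concordance region of $\thetaMAP[t]$, so that $\bar{\bF}_t \asymp \FisherMAP[t]$. It therefore suffices to bound $\|\FisherMAP[t]^{-1/2}\sum_{s=1}^{t-1}\nabla \eta_s(\fullthetapMLE[t])\|_2$ by a constant multiple of $M_n^2\sqrt{p_\ast^3/n}$.

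For each $s < t$, I would use $\nabla \widetilde L_s(\thetaMAP[s]) = 0$ and a Taylor expansion to decompose
\begin{align*}
\nabla \eta_s(\fullthetapMLE[t]) = (\bOmega_s - \FisherMAP[s])(\fullthetapMLE[t] - \thetaMAP[s]) + \bOmega_s(\thetaMAP[s] - \mu_s) + R_s(\fullthetapMLE[t]),
\end{align*}
where $R_s$ is the higher-order Taylor remainder. Three ingredients handle the three summands: the variational-accuracy estimates \eqref{eqn:VB_param_bounds} control the first two (introducing factors of $\epsilon_{n,s,\KL}$), while the self-concordance parameter $\widehat{\tau}_{3,s} \lesssim s^{-3/2} n^{-1/2}$ from Proposition \ref{prop:eigenvalue_order_main} controls $R_s$. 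The induction hypothesis enters through the triangle inequality $\|\FisherMAP[s]^{1/2}(\fullthetapMLE[t] - \thetaMAP[s])\|_2 \leq \|\FisherMAP[s]^{1/2}(\fullthetapMLE[t] - \theta_0)\|_2 + B_s \lesssim M_n\sqrt{p_\ast}$, using the improved $B_s$ bound and the eigenvalue comparison $\|\FisherMAP[s]^{1/2}\FisherMAP[t]^{-1/2}\|_{\rm op} \lesssim \sqrt{s/t}$ on the first summand. Combining these with the further eigenvalue comparison on the outer $\FisherMAP[t]^{-1/2}$ factor, and invoking the elementary bounds $\sum_{s<t} (st)^{-1/2} \lesssim 1$ and $\sum_{s<t} s^{-1}/\sqrt{t} \lesssim 1$, the three contributions come out to orders $M_n\sqrt{p_\ast^3/n}$, $p_\ast/\sqrt{n}$, and $M_n^2 p_\ast/\sqrt{n}$, respectively---all dominated by the target $M_n^2\sqrt{p_\ast^3/n}$ since $M_n \geq 1$ and $p_\ast \geq 1$.

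Once the first bound is proved at step $t$, the second follows at once by triangle inequality through $\thetaMAP[t]$ together with $\|\bOmega_t^{1/2}(\thetaMAP[t] - \mu_t)\|_2 \lesssim \epsilon_{n,t,\KL}$ and $\|\bOmega_t^{1/2}\FisherMAP[t]^{-1/2}\|_{\rm op} \lesssim 1$ (both from \eqref{eqn:VB_param_bounds}), while the third follows from $\|\FisherMAP[t]^{1/2}(\theta_0 - \thetaMAP[t])\|_2 \leq \|\FisherMAP[t]^{1/2}(\theta_0 - \fullthetapMLE[t])\|_2 + \|\FisherMAP[t]^{1/2}(\fullthetapMLE[t] - \thetaMAP[t])\|_2$, where the first term is $\lesssim M_n\sqrt{p_\ast}$ by \eqref{eqn:batch_bound} and an eigenvalue comparison between $\FisherMAP[t]$ and $\FullFisherTilde[t]{\fullthetaBest[t]}$, closing the induction. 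The main obstacle is the self-referential nature of the decomposition: each $\nabla \eta_s(\fullthetapMLE[t])$ depends on how far $\fullthetapMLE[t]$ sits from $\thetaMAP[s]$, so to invoke the self-concordance at $\thetaMAP[s]$ one needs $\fullthetapMLE[t]$ to lie inside its valid radius, which only follows from the improved inductive bound $B_s \lesssim M_n\sqrt{p_\ast}$---the coarser $B_s \lesssim M_n\sqrt{s p_\ast}$ from Proposition \ref{prop:eigenvalue_order_main} would push $\fullthetapMLE[t]$ outside this radius for large $s$, so the first and third bounds must be interlocked through the induction on $t$.
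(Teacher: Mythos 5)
Your proposal is essentially the paper's proof: the same induction on $t$ with the $\|\FisherMAP[s]^{1/2}(\theta_0-\thetaMAP[s])\|_2 \lesssim M_n\sqrt{p_\ast}$ bound as the inductive hypothesis, the same score-equation identity via $\eta_s$, the same three-piece decomposition of $\nabla\eta_s(\fullthetapMLE[t])$ (Taylor remainder, $(\bOmega_s-\FisherMAP[s])$ mismatch, $\bOmega_s(\thetaMAP[s]-\mu_s)$), the same eigenvalue-comparison factors $\sqrt{s/t}$, and the same summation estimates. The paper establishes the local inversion $\bar\bF_t\asymp\FisherMAP[t]$ via the concavity argument of Lemma \ref{lemma:tech_pre_bias_bound} rather than literally integrating the Hessian, but these are interchangeable; the three per-term orders you compute also match the paper's Steps 3--5.

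One small but worth-noting inaccuracy in your closing remark: with the coarse bound $B_s \lesssim M_n\sqrt{sp_\ast}$, the localization $\fullthetapMLE[t]\in\Theta(\theta_0,\bI_p,1/2)$ and the validity of the self-concordance condition at $\thetaMAP[s]$ would in fact still hold (one can check $\widehat\tau_{3,s}\cdot M_n\sqrt{sp_\ast}\lesssim M_n\sqrt{p_\ast}/(s\sqrt{n})$ remains small under \textbf{(S)}). What actually forces the sharper inductive hypothesis is that the Taylor-remainder contribution (i) is \emph{quadratic} in $\|\FisherMAP[s]^{1/2}(\fullthetapMLE[t]-\thetaMAP[s])\|_2$; with the coarse bound its summand becomes $\asymp s^{-1/2}n^{-1/2}M_n^2 p_\ast$, and after including the $\sqrt{s/t}$ weighting the sum grows like $\sqrt{t}\cdot n^{-1/2}M_n^2 p_\ast$, so the accumulated error diverges in $t$. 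The interlocked induction is therefore needed to keep the \emph{sum} of remainders bounded, not to keep each individual point inside a trust region.
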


From Proposition \ref{prop:similar_MAP}, we have $\| \thetaMAP - \theta_0 \|_2 \lesssim M_n (p_{\ast}/ N_t)^{1/2}$, where $N_t = nt$. Since Proposition \ref{prop:eigenvalue_order_main} implies that $\| \thetaMAP - \thetaBest \|_2 \lesssim M_n t^{-1/2} (p_{\ast}/ N_t)^{1/2}$, we have
\begin{align} \label{eqn:refined_bias}
    \left\| \thetaBest - \theta_0 \right\|_2 \lesssim 
    \| \thetaMAP - \thetaBest \|_2 \vee \| \thetaMAP - \theta_0 \|_2
    = O \left( M_n  (p_{\ast}/ N_t)^{1/2} \right),
\end{align}
This significantly improves the bound in \eqref{eqn:loose_bound_bias}. 

Under the setting in Proposition \ref{prop:similar_MAP}, one can argue the efficiency of the estimators $\mu_t$ and $\thetaMAP$. More precisely, we can show that
\begin{align*}
    \FullFisher[t]{\theta_0}^{1/2} \big( \thetaMAP - \theta_0 \big) &= \FullFisher[t]{\theta_0}^{-1/2} \nabla L_{1:t}(\theta_0) + o(1), \\
    \FullFisher[t]{\theta_0}^{1/2} \big( \mu_t - \theta_0 \big) &= \FullFisher[t]{\theta_0}^{-1/2} \nabla L_{1:t}(\theta_0) + o(1)
\end{align*}
with high probability. 
Therefore, $\thetaMAP$ and $\mu_t$ serve as asymptotically efficient estimators in the sense of \citet[][Section 8]{van2000asymptotic}. See Corollary \ref{coro:efficient_estimator} for detailed statements.

\begin{proposition} \label{prop:similar_variance}
    Suppose that (\textbf{A0}), (\textbf{A1$\ast$}), $(\textbf{A2})$, $(\textbf{S})$ and $(\textbf{P})$ hold.
    Then, on $\scrE_{\est, 1} \cap \scrE_{\est, 2}$, the following inequality holds uniformly for all $t \in [T]$:
    \begin{align} \label{eqn:claim_similar_variance}
        \left\| \FullFisherTilde[t]{\fullthetapMLE[t]}^{-1/2} \bOmega_{t} \FullFisherTilde[t]{\fullthetapMLE[t]}^{-1/2} - \bI_{p} \right\|_{\rm F} 
        \leq 
        K M_{n} \left( \dfrac{p_{\ast}^{2}}{n} \right)^{1/2}, 
    \end{align}
    where $K = K(K_{\min}, K_{\max})$.
\end{proposition}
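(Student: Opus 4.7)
The plan is to express $\bOmega_t - \FullFisherTilde[t]{\fullthetapMLE[t]}$ as a telescoping sum and bound each piece in the $\FullFisherTilde[t]{\fullthetapMLE[t]}$-rescaled Frobenius norm. Using $\FisherMAP[s] = \bOmega_{s-1} + \bF_{s,\thetaMAP[s]}$ and $\FullFisherTilde[t]{\fullthetapMLE[t]} = \bOmega_0 + \sum_{s=1}^t \bF_{s, \fullthetapMLE[t]}$, a direct recursion yields
\begin{align*}
    \bOmega_t - \FullFisherTilde[t]{\fullthetapMLE[t]}
    = \underbrace{\sum_{s=1}^{t} \bigl(\bOmega_s - \FisherMAP[s]\bigr)}_{(\mathrm{I})} + \underbrace{\sum_{s=1}^{t} \bigl(\bF_{s, \thetaMAP[s]} - \bF_{s, \fullthetapMLE[t]}\bigr)}_{(\mathrm{II})}.
\end{align*}
Both sums will be controlled term-by-term in the $\FisherMAP[s]$-normalized norm, then transported to the $\FullFisherTilde[t]{\fullthetapMLE[t]}$-normalized norm via the submultiplicative inequality $\|C^\top B C\|_{\rm F} \leq \|C\|_2^{2}\|B\|_{\rm F}$ with $C = \FisherMAP[s]^{1/2}\FullFisherTilde[t]{\fullthetapMLE[t]}^{-1/2}$, whose spectral norm satisfies $\|C\|_2^{2} \lesssim s/t$ by Proposition \ref{prop:eigenvalue_order_main}.

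For (I), Theorem \ref{thm:VB_KL} together with the moment-matching derivation leading to \eqref{eqn:VB_param_bounds} gives $\|\FisherMAP[s]^{-1/2}(\bOmega_s - \FisherMAP[s])\FisherMAP[s]^{-1/2}\|_{\rm F} \lesssim \epsilon_{n, s, \KL} \lesssim s^{-1} n^{-1/2} p_{\ast}$. The transfer step yields a per-summand bound of $(s/t) \cdot s^{-1} n^{-1/2} p_{\ast} = n^{-1/2} p_{\ast}/t$, which sums to $O((p_{\ast}^2/n)^{1/2})$. For (II), I invoke the third-order self-concordance of $\widetilde L_s$: applying the definition in \eqref{def:3_smooth} along the segment from $\thetaMAP[s]$ to $\fullthetapMLE[t]$, in the spirit of Theorem 5.1.7 of \citet{nesterov2018lectures}, yields
\begin{align*}
    \|\FisherMAP[s]^{-1/2}(\bF_{s, \thetaMAP[s]} - \bF_{s, \fullthetapMLE[t]})\FisherMAP[s]^{-1/2}\|_{2} \leq \widehat\tau_{3,s}\, \|\FisherMAP[s]^{1/2}(\fullthetapMLE[t] - \thetaMAP[s])\|_2,
\end{align*}
valid whenever $\fullthetapMLE[t] \in \Theta(\thetaMAP[s], \FisherMAP[s], 4r_{\LA})$. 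By the triangle inequality and Proposition \ref{prop:similar_MAP}, $\|\FisherMAP[s]^{1/2}(\fullthetapMLE[t] - \thetaMAP[s])\|_2 \lesssim M_n \sqrt{p_{\ast}}$, which also verifies the local-containment condition since $r_{\LA} \gtrsim \sqrt{p_{\ast}}$. Using $\widehat\tau_{3,s} \lesssim s^{-3/2} n^{-1/2}$ from Proposition \ref{prop:eigenvalue_order_main}, and converting the operator-norm bound to Frobenius norm at the cost of a $\sqrt{p}$ factor, the per-summand contribution is $(s/t) \sqrt{p} \cdot s^{-3/2} n^{-1/2} \cdot M_n \sqrt{p_{\ast}} = M_n t^{-1} s^{-1/2} n^{-1/2} \sqrt{p\cdot p_{\ast}}$; summing via $\sum_{s=1}^t s^{-1/2} \lesssim \sqrt{t}$ produces $O(M_n n^{-1/2} \sqrt{p\cdot p_{\ast}}) = O(M_n (p_{\ast}^2/n)^{1/2})$, matching the claim.

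The main obstacle is term (II), since telescoping introduces $t$ Hessian-perturbation terms whose naive sum could grow with $t$. Two structural ingredients rescue the bound: first, the prior-induced effective-sample-size enlargement makes $\widehat\tau_{3,s}$ decay at rate $s^{-3/2} n^{-1/2}$ rather than merely $n^{-1/2}$, so after combining with the norm-conversion factor $s/t$ and summing $s^{-1/2}$, the $t$-dependence is eliminated; second, the localization argument must ensure that $\fullthetapMLE[t]$ lies in the local self-concordance region around \emph{every} intermediate $\thetaMAP[s]$ with $s \leq t$, and this uniform-in-$s$ localization is precisely what Proposition \ref{prop:similar_MAP} supplies at the efficient rate $(nt)^{-1/2}$, transported through $\lambda_{\max}(\FisherMAP[s]) \asymp sn$ to yield an $s$-independent radius of order $M_n \sqrt{p_{\ast}}$.
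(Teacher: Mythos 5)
Your decomposition is algebraically equivalent to the paper's \eqref{eqn:decomposition_Fisher}, your treatment of term (I) via \eqref{eqn:VB_param_bounds} and the transfer factor $s/t$ matches the paper's, and the overall arithmetic for term (II) recovers the right rate. But there is a real gap in the localization step for term (II): you justify $\fullthetapMLE[t] \in \Theta(\thetaMAP[s], \FisherMAP[s], 4r_{\LA})$ by noting $r_{\LA} \gtrsim \sqrt{p_\ast}$, yet Proposition \ref{prop:similar_MAP} only gives $\|\FisherMAP[s]^{1/2}(\fullthetapMLE[t]-\thetaMAP[s])\|_2 \leq K M_n\sqrt{p_\ast}$ with a constant $K = K(K_{\min},K_{\max},K_{\rm low},K_{\rm up})$ that has no reason to be small, and $M_n$ need not be bounded. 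Since $4r_{\LA}\asymp\sqrt{p_\ast}$ with a fixed universal constant, the containment in the $4r_{\LA}$-ball can fail, so the quantity $\widehat\tau_{3,s}$ (defined only over $\Theta(\FisherMAP[s],4r_{\LA})$) is not licensed for the Hessian comparison you invoke.

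The fix is exactly what the paper does: rather than rely on $\widehat\tau_{3,s}$, work with the \emph{unregularized} Hessian $\Fisher[s]{\thetaMAP[s]}$ and the larger ball $\Theta(\thetaMAP[s],\Fisher[s]{\thetaMAP[s]}, c\,M_n\sqrt{p_\ast})$; check, using $(\textbf{S})$ and the localization bounds already established, that this ball sits inside $\Theta(\theta_0,\bI_p,1/2)$; then invoke Lemma \ref{lemma:tech_smooth_tau_bound} together with $(\textbf{A2})$ to manufacture a fresh third-order smoothness parameter $\tau_3 \lesssim n^{-1/2}$ on that entire ball, and feed it into Lemma \ref{lemma:tech_Fisher_smooth}. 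This yields $\|\Fisher[s]{\thetaMAP[s]}^{-1/2}\Fisher[s]{\fullthetapMLE[t]}\Fisher[s]{\thetaMAP[s]}^{-1/2}-\bI_p\|_2 \lesssim M_n(p_\ast/n)^{1/2}$ with a transfer factor $\lambda_{\min}^{-1}(\FisherMAP[t])\lambda_{\max}(\Fisher[s]{\thetaMAP[s]}) \lesssim t^{-1}$, and the $\sqrt{p}$ conversion to Frobenius norm then gives the same per-summand bound and the same conclusion as yours, but with the localization actually verified. (Note also: the bound $\|\FisherMAP[s]^{1/2}(\fullthetapMLE[t]-\thetaMAP[s])\|_2 \lesssim M_n\sqrt{p_\ast}$ requires chaining through $\theta_0$ and changing norms from $\FisherMAP[t]$ to $\FisherMAP[s]$ using Proposition \ref{prop:similar_MAP} and Proposition \ref{prop:eigenvalue_order_main}; it is not a direct readout of Proposition \ref{prop:similar_MAP}, which only controls distances at the matching index.)
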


We present a sketch of the proof of Proposition \ref{prop:similar_variance}.
Provided that $\lambda_{\min}(\FullFisherTilde[t]{\fullthetapMLE}) \asymp \lambda_{\max}(\FisherMAP)$, we can obtain the following bound:
\begin{align*}
    \left\| \FullFisherTilde[t]{\fullthetapMLE[t]}^{-1/2} \bOmega_{t} \FullFisherTilde[t]{\fullthetapMLE[t]}^{-1/2} - \bI_{p} \right\|_{\rm F} 
    &\lesssim
    \left\| \FisherMAP^{-1/2} \bOmega_{t} \FisherMAP^{-1/2} - \bI_{p} \right\|_{\rm F} 
    +
    \left\| \FisherMAP^{-1/2} \FullFisherTilde[t]{\fullthetapMLE[t]} \FisherMAP^{-1/2} - \bI_{p} \right\|_{\rm F} \\
    &\lesssim
    t^{-1} n^{-1/2} p_{\ast}
    +
    \left\| \FisherMAP^{-1/2} \FullFisherTilde[t]{\fullthetapMLE[t]} \FisherMAP^{-1/2} - \bI_{p} \right\|_{\rm F},
\end{align*}
where the second inequality holds by \eqref{eqn:VB_param_bounds}.
Hence, we only need to verify $\FullFisherTilde[t]{\fullthetapMLE[t]} \approx \FisherMAP$ in a certain sense. From the definitions in \eqref{def:Fisher_info_quantities} and \eqref{def:batch_quantities}, one can obtain the following equality:
\begin{align} \label{eqn:decomposition_Fisher}
    \FullFisherTilde[t]{\fullthetapMLE} - \FisherMAP 
    =  
    \sum_{s=1}^{t-1} \big( \FisherMAP[s] - \bOmega_s \big)
    +
    \sum_{s=1}^{t} \big( \Fisher[s]{\fullthetapMLE} - \Fisher[s]{\thetaMAP[s]} \big).
\end{align}
Then, under some conditions, we can show that
\begin{align*}
    \left\| \FisherMAP^{-1/2} \FullFisherTilde[t]{\fullthetapMLE[t]} \FisherMAP^{-1/2} - \bI_{p} \right\|_{\rm F}
    &\lesssim 
    t^{-1} \sum_{s=1}^{t-1} \big( s \cdot \epsilon_{s, 1} \big)
    +
    t^{-1} \sum_{s=1}^{t}  \epsilon_{s, 2}
    \lesssim
    \left( \dfrac{p_{\ast}^2 }{n} \right)^{1/2} + \max_{s \in [t]} \epsilon_{s, 2},
\end{align*}
where the second inequality holds by \eqref{eqn:VB_param_bounds}, and
\begin{align*}
    \epsilon_{s, 1} = \left\| \FisherMAP[s]^{-1/2} \bOmega_s \FisherMAP[s]^{-1/2} - \bI_p \right\|_{\rm F}, \quad 
    \epsilon_{s, 2} = \left\| \Fisher[s]{\thetaMAP[s]}^{-1/2} \Fisher[s]{\fullthetapMLE} \Fisher[s]{\thetaMAP[s]}^{-1/2} - \bI_p \right\|_{\rm F}.
\end{align*}

Bounding $\epsilon_{s, 2}$ requires that the map $\theta \mapsto \Fisher[s]{\theta}$ is sufficiently smooth. If $\fullthetapMLE$ is sufficiently close to $\thetaMAP[s]$, then we can expect that $\Fisher[s]{\fullthetapMLE} \approx \Fisher[s]{\thetaMAP[s]}$; see Lemma \ref{lemma:tech_Fisher_smooth} for a precise statement.   
Indeed, under some conditions, we can derive from \eqref{eqn:batch_bound} and \eqref{eqn:similar_MAP_claim} that 
\begin{align*}
    \big\| \fullthetapMLE - \thetaMAP[s] \big\|_2 
    \leq \big\| \fullthetapMLE - \theta_0 \big\|_2 + \big\| \thetaMAP[s] - \theta_0 \big\|_2 
    = O(M_n (p_{\ast}/ N_s)^{1/2} ).
\end{align*}
By applying Lemma \ref{lemma:tech_Fisher_smooth}, therefore, we can obtain $\max_{s \in [t]} \epsilon_{s, 2} = O(M_n p_{\ast} n^{-1/2})$, which results in \eqref{eqn:claim_similar_variance}.

\begin{theorem}[Online Bernstein--von Mises Theorem] \label{thm:onlineBvM}
    Suppose that (\textbf{A0}), (\textbf{A1$\ast$}), $(\textbf{A2})$, $(\textbf{S})$ and $(\textbf{P})$ hold.
    Then, 
    \begin{align*}
        \bbP_{0}^{(N)} \left(
        d_{V} \bigg( \Pi_{t}(\cdot),  \Pi\left(\cdot \mid \bD_{1:t} \right) \bigg) \leq K M_n^{2} \left( \dfrac{p_{\ast}^{3}}{n} \right)^{1/2} 
        \ \text{for all } t \in [T]
        \right)
        \geq 1 - 3n^{-1}.
    \end{align*}    
    If, in addition, (\textbf{P$\ast$}) holds, then
    \begin{align*}
        \bbP_0^{(N)} \left(
        d_{V} \bigg( \Pi_{t}, \cN\left( \fullthetaMLE, \FullFisher[t]{\theta_0}^{-1} \right) \bigg) 
        \leq  
        K M_n^{2} \left( \dfrac{p_{\ast}^{3}}{n} \right)^{1/2} \ \text{for all } t \in [T]
        \right)
        \geq 1 - 3n^{-1}.
    \end{align*}
    Here, $K = K(K_{\min}, K_{\max})$.
\end{theorem}


When $p$ is fixed, Theorem \ref{thm:onlineBvM} guarantees that the online BvM theorem holds even with a very small mini-batch size $n$. Specifically, if $n \gg (\log N)^4$ (as required by condition (\textbf{S})), the online BvM theorem holds. Moreover, Theorem \ref{thm:onlineBvM} further ensures that credible sets based on $\Pi_T$ yield asymptotically valid frequentist confidence sets. To be specific, for $\alpha \in (0, 1)$, consider the following Wald-type confidence and credible sets:
\begin{align*}
    \widehat{C}_{N}(\alpha) &= \left\{ \theta \in \Theta : \left\| \FullFisher[T]{\theta_0}^{1/2} \big( \theta - \fullthetaMLE[T] \big) \right\|_2^2 \leq \chi_{p, \alpha}^2 \right\},
    \\
    \widehat{C}_{n, T}(\alpha) &= \left\{ \theta \in \Theta : \left\| \bOmega_{T}^{1/2} \big( \theta - \mu_T \big) \right\|_2^2 \leq \chi_{p, \alpha}^2 \right\},
\end{align*}
where $\chi_{p, \alpha}^2$ denotes the $(1-\alpha)$ quantile of the $\chi_p^2$ distribution. Here, $\widehat{C}_{N}(\alpha)$ represents the standard frequentist confidence set based on batch MLE, while $\widehat{C}_{n, T}(\alpha)$ is the credible set based on the sequentially updated posterior $\Pi_T$.
Under the setting in Theorem \ref{thm:onlineBvM}, we have on $\scrE_{\est, 1} \cap \scrE_{\est, 2}$
\begin{align*}
    N \big\| \bOmega_T^{-1} - \FullFisher[t]{\theta_0}^{-1} \big\|_{\rm F} &= O\left( M_n \left( p_{\ast}^2 / n\right)^{1/2}  \right), \\
    \big\| \FullFisher[T]{\theta_0}^{1/2} \big( \theta_0 - \fullthetaMLE[T] \big) \big\|_2
    &=
    \big\| \bOmega_{T}^{1/2} \big( \theta_0 - \mu_T \big) \big\|_2
    + 
    O\left( M_n \left( p_{\ast}^3 / n\right)^{1/2}  \right).    
\end{align*}
See Corollaries \ref{coro:variance_consistency} and \ref{coro:CI_consistency} for precise statements.
Therefore, we can conclude that on $\scrE_{\est, 1} \cap \scrE_{\est, 2}$ the following inclusions hold:
\begin{align*}
  \widehat{C}_{N}(\alpha - \epsilon_n)
  \subset
  \widehat{C}_{n, T}(\alpha)
  \subset
  \widehat{C}_{N}(\alpha + \epsilon_n),
\end{align*}
where $\epsilon_n = C M_n \left( p_{\ast}^3 / n\right)^{1/2}$ for some constant $C > 0$. Hence, as long as $\widehat{C}_N$ is a valid confidence set, the credible set $\widehat{C}_{n, T}$ also provides valid frequentist coverage.

Our final bound in the online BvM theorem requires the condition 
$n \gg M_n^4 p_{\ast}^3$, which is stronger than the requirement $n \gg (M_n^2 \vee \log^2 T)\, p_{\ast}^2$ specified in the condition (\textbf{S}). This apparent discrepancy arises from a technical artifact in our proof. Specifically, in Proposition \ref{prop:similar_MAP}, we bound the spectral norm by the Frobenius norm:
    \begin{align*}
        \left\| \FisherTilde[t]{\thetaMAP[t]}^{-1/2} \bOmega_t \FisherTilde[t]{\thetaMAP[t]}^{-1/2} - \bI_p  \right\|_{2} 
        \leq 
        \left\| \FisherTilde[t]{\thetaMAP[t]}^{-1/2} \bOmega_t \FisherTilde[t]{\thetaMAP[t]}^{-1/2} - \bI_p  \right\|_{\rm F} 
        \lesssim
        \left( \dfrac{p_{\ast}^2}{nt^2} \right)^{1/2},
    \end{align*}
which introduces an additional factor of $p_{\ast}$ in the final bound.

A sharper analysis is possible by considering LA instead of VB. Under LA, we have $\mu_t = \thetaMAP[t]$ and $\bOmega_t = \FisherTilde[t]{\thetaMAP[t]}$, and thus
\begin{align*}
    \left\| \FisherTilde[t]{\thetaMAP[t]}^{-1/2} \bOmega_t \FisherTilde[t]{\thetaMAP[t]}^{-1/2} - \bI_p  \right\|_{2} 
    = 0.
\end{align*}
Consequently, the bounds in Proposition \ref{prop:similar_MAP} and Theorem \ref{thm:onlineBvM} can be improved from $M_n^2 (p_{\ast}^3 /n)^{1/2}$ to $M_n^2 (p_{\ast}^2 /n)^{1/2}$. 

However, it should not be interpreted as implying that LA-based online updates are strictly superior to VB-based ones. Importantly, this discrepancy does not reflect an inherent limitation of VB. Rather, the weaker dimension dependence arises from our technical analysis. Although we do not provide a formal proof, we believe that a refined analysis could also improve the dimension dependence under VB. We plan to investigate this direction in future work, especially in connection with one-pass algorithms and sharper dimension requirements.


\section{Numerical experiments} \label{sec:numerical_experiments}

In this section, we present small-scale numerical experiments to complement our theoretical findings. In particular, we observe that the Bayesian online estimator performs comparably to the batch estimator when the mini-batch size $n$ exceeds a certain threshold, while its performance deteriorates when $n$ falls below this threshold. Moreover, the performance gap between the online and batch estimators becomes larger as the number of mini-batches $T$ increases when $n$ is small, whereas the gap remains negligible for sufficiently large $n$ regardless of $T$.

For the simulation study, we consider a sequence of observations $Y_i \overset{\iid}{\sim} \operatorname{Bernoulli}(1/2)$ for all $i \in [N]$, which corresponds to one-dimensional logistic regression with the true parameter $\theta_0 = 0$. We generate $N = 1000$ samples and partition them into mini-batches of size $n = 1, 2, 4, 6, 8, 10, 20, 50, 200$, and $1000$.

For the online Bayesian updates, we solve the minimization problem \eqref{eq:KL-projection}. It is well known that minimizing the KL divergence in \eqref{eq:KL-projection} is equivalent to maximizing the evidence lower bound (ELBO), defined as
\begin{align} \label{max_ELBO}
    \max_{Q \in \cQ } \left\{ \mathbb{E}_{Q} \big[ L_t(\theta)\big] - K \big(Q ;\, \Pi_{t-1}\big) \right\},
\end{align}
where $\bbE_{Q}$ denotes the expectation with respect to the normal distribution $Q = \cN(\mu, \bOmega^{-1})$. Although the ELBO for logistic regression is concave in $(\mu, \bOmega)$, the expectation in \eqref{max_ELBO} is not available in closed form. We approximate it via Monte Carlo integration using $10^3$ samples, and then optimize \eqref{max_ELBO} using a gradient ascent algorithm with a suitably chosen learning rate. The initial prior is set as $\Pi_0 = \cN(0, 3^2)$.

\begin{figure}[t!] 
\centering
\begin{minipage}{0.58\textwidth} 
    \centering
    \vspace{1.8em}
    \includegraphics[width=\linewidth]{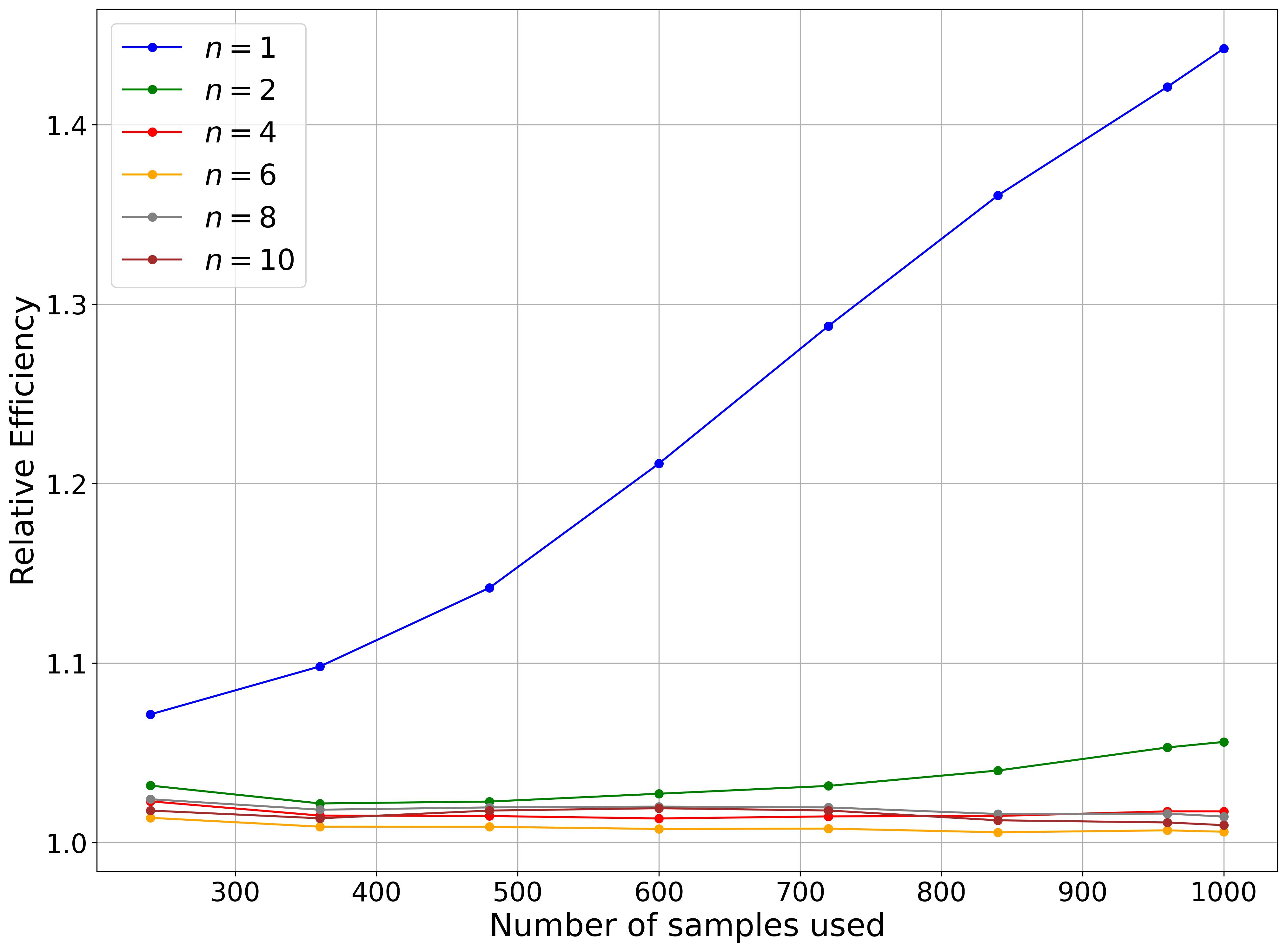}
    \vspace{-1.0em}
    \caption{Relative efficiency in terms of MSE across varying mini-batch size.}
    \label{fig:re_eff}
\end{minipage}
\hfill
\begin{minipage}{0.38\textwidth} 
    \centering
    \captionof{table}{Coverage probability (CP) and length of 95\% CIs over $M=500$ repetitions.}
    \renewcommand{\arraystretch}{1.15} 
    \begin{tabular}{c|cc}
        \hline
        \textbf{Method} & \textbf{CP} & \textbf{Length} \\
        \hline\hline
        $\widehat{\theta}_{1:t}^{\ML}$ & 0.944 & 0.248 \\
        $\mu_t$, $n=1$   & 0.986 & 0.398 \\
        $\mu_t$, $n=2$   & 0.974 & 0.301 \\
        $\mu_t$, $n=4$   & 0.956 & 0.269 \\
        $\mu_t$, $n=6$   & 0.950 & 0.259 \\
        $\mu_t$, $n=8$   & 0.948 & 0.255 \\
        $\mu_t$, $n=10$  & 0.942 & 0.253 \\
        $\mu_t$, $n=20$  & 0.942 & 0.250 \\
        $\mu_t$, $n=50$  & 0.938 & 0.248 \\
        $\mu_t$, $n=200$ & 0.938 & 0.248 \\
        $\mu_t$, $n=1000$& 0.942 & 0.248 \\
        \hline
    \end{tabular}
    \label{tab:comparison_transposed}
\end{minipage}
\end{figure}

We first compare online and batch learning in terms of point estimation performance. Specifically, we compare the online estimator $\mu_t$ with the batch estimator $\widehat{\theta}_{1:t}^{\ML}$ using the mean squared error (MSE) as the evaluation metric. The experiment was repeated $M = 500$ times, and for each replicate, we obtained an online estimator $\mu_{t, m}$ and a batch estimator $\widehat{\theta}_{1:t,m}^{\ML}$ for $m = 1, \ldots, M$. In all cases (even when $n = 1$), we observed that $|\mu_t - \theta_0| \to 0$ as $t \to \infty$, indicating that the point estimators are consistent regardless of the mini-batch size $n$. For further comparison, we also computed the \textit{relative efficiency}, defined as
\begin{align*}
    \operatorname{RE}_{t} = \frac{\sum_{m=1}^{M}|\mu_{t,m} - \theta_0 |^2}{\sum_{m=1}^{M}|\widehat{\theta}_{1:t,m}^{\ML} - \theta_0 |^2}.
\end{align*}

Figure \ref{fig:re_eff} shows the relative efficiency. For small mini-batch size (e.g., $n = 1$), the performance of the Bayesian online learning method deteriorates as the number of mini-batches $T$ increases. In this case, the accumulated error becomes non-negligible and the online estimator suffers a significant loss of statistical efficiency relative to the MLE. A similar, though less severe, pattern is observed for $n = 2$. However, as $n$ exceeds a certain threshold, the relative efficiency approaches 1, indicating that the online estimators perform comparably to the MLE.

We also assess the coverage probability of the credible/confidence intervals (CI) for $\theta_0$. 
Table \ref{tab:comparison_transposed} summarizes the coverage probability and average length of 95\% CIs over 500 repetitions for various mini-batch sizes, alongside those based on the MLE. As the mini-batch size increases, both the coverage probability and CI length tend to align with those of the MLE.

When the mini-batch size is small, however, our numerical results reveal an intriguing phenomenon: the online variational posterior exhibits conservative coverage. This behavior is not immediately apparent from standard intuition. The variance underestimation of mean-field VB is well known in the literature \citep{blei2017variational}.
Even when a full Gaussian variational family is employed, variance underestimation may still occur in small-sample regimes due to the directionality of the KL divergence, $\min_Q K(Q, \Pi_t)$, although this effect vanishes asymptotically (see Section \ref{sec:variational_posterior}). Furthermore, the posterior mean exhibits a non-negligible bias when the mini-batch size is small (e.g., $n=1,2$), as illustrated in Figure \ref{fig:re_eff}. Such bias would ordinarily lead to under-coverage of credible sets.

Contrary to this expectation, the empirical coverage becomes conservative in this small-$n$ regime.
Although a rigorous explanation is currently lacking, these findings suggest that online VB may implicitly incorporate a form of safety effect. We conjecture that variance inflation induced by sequential updates offsets the usual variance underestimation associated with KL-based variational approximations. A theoretical justification of this implicit regularization effect is deferred to future work.

\section{Discussion} \label{sec:discussion}

There are many papers on Bayesian online learning, but a large portion of them are based on heuristic ideas. Although our theoretical analysis is limited to regular parametric models, the online BvM theorem established in this paper provides strong theoretical justification for online Bayesian procedures and offers a useful mathematical tool for studying further theoretical properties in more complex models.

Remarkably, the online BvM theorem holds with a very small mini-batch size; specifically, $n \gg (\log N)^4$ is sufficient when $p$ is fixed. On the other hand, our simulation results indicate that if the mini-batch size is too small---say, $n = 1$---then although the online procedure still yields a consistent estimator, it loses statistical efficiency. We believe that this is due to the particular choice of approximation in our online learning procedure, which is based on variational approximation. Given that frequentist one-pass algorithms can yield efficient estimators, we believe that it is possible to develop an online Bayesian procedure with some algorithmic modifications that also achieves frequentist efficiency. We leave this as a direction for future work.


\acks{This work was supported by the National Research Foundation of Korea (NRF) grant funded by the Korea government (MSIT) (No. RS-2023-00240861), a Korea Institute for Advancement of Technology (KIAT) grant funded by the Korea Government (MOTIE) (RS-2024-00409092, 2024 HRD Program for Industrial Innovation), and Institute of Information \& Communications Technology Planning \& Evaluation(IITP)-Global Data-X Leader HRD program grant funded by the Korea government (MSIT) (IITP-2024-RS-2024-00441244).
}


\vskip 0.2in
\bibliography{Online_BvM}

\appendix

\section{Notations}

\renewcommand{\arraystretch}{1.2} 
\begin{table}[hbt!]
\caption{Table of Notations} \label{table:notations_app}
\centering
\begin{tabular}{c c c p{5cm}} 
\hline
Notation & Location \\ [0.5ex] 
\hline\hline
$D_{t}$, \ $N_{t,1}(\cdot), \ N_{t, 2}(\cdot), \ N_{t, 3}(\cdot)$ & \eqref{def:approximation_quantities} \\
$\Delta_{\local, t}(\cdot), \ \Delta_{\tail, \widetilde{\Pi}, t}(\cdot), \ \Delta_{\tail, \LA, t}(\cdot)$ & \eqref{def:integral_quantities} \\ 
$b_{n, t}$ & \eqref{def:bias_quantity} \\
$\tau_{3, t, \bias}$, \ $\tau_{4, t, \bias}$ & \eqref{def:bias_quantity} \\
$\Delta_{t}$ & \eqref{def:VB_quantities} \\
\hline
\end{tabular}
\end{table}

For $k \geq 2$, we say that a $k$-th order tensor $\bA = (A_{i_1, ..., i_k})_{i_1, ..., i_k \in [p]} \in \bbR^{p^{k}}$ is symmetric if $A_{i_1, ..., i_k} = A_{\sigma(i_1, ..., i_k)}$ for any permutation map $\sigma$ of the tuple $(i_1, ..., i_k)$.
For a symmetric $3$-order tensor $\bA = (A_{ijk})_{i,j,k \in [p]} \in \bbR^{p \times p \times p}$, $\mathbf{B} = (B_{jk})_{j, k \in [p]} \in \symmPD$ and $x \in \bbR^{p}$, let
\begin{align*}
    \langle \bA, \mathbf{B} \rangle = \left( \sum_{j, k \in [p]} A_{ijk} B_{jk} \right)_{i \in [p]} \in \bbR^{p}, \quad 
    \langle \bA, x \rangle = \left( \sum_{k \in [p]} A_{ijk} x_{k} \right)_{i, j \in [p]} \in \bbR^{p \times p}.
\end{align*}

For a function $f : \Theta \rightarrow \bbR$, let $\| f \|_{\infty} = \sup_{\theta \in \Theta} |f(\theta)|$.
For $A \subseteq \Theta$, let $\mathds{1}_{A}(\theta) : \Theta \rightarrow \{0, 1\}$ be the indicator function, defined as $1$ if $\theta \in A$ and $0$ otherwise.

\section{Proofs for Section \ref{sec:variational_posterior}} \label{sec:appendix_LA_VB_app}

Throughout this section, let $\Theta_{n, t} = \Theta (\FisherMAP, 4r_{\LA} )$. (Note that the notation $\Theta_{n,t}$ is reserved for defining other local sets in subsequent sections.)

\subsection{Laplace approximation}

For a function $g : \Theta \rightarrow \bbR$, let 
\begin{align} \label{def:TV_quantities}
\begin{aligned}
    \cI_{\widetilde{\Pi}, t}(g) 
    &\coloneqq 
    \dfrac{ \displaystyle \int g(\theta) \exp \left[ \widetilde{L}_{t}(\theta) \right] \rmd \theta  }{ \displaystyle \int \exp \left[ \widetilde{L}_{t}(\theta') \right] \rmd \theta' }
    =
    \dfrac{ \displaystyle \int g(u) e^{f_{t}(u; \thetaMAP)} \rmd u }{\displaystyle \int e^{f_{t}(u'; \thetaMAP)} \rmd u' }, 
    \quad \\    
    \cI_{\LA, t}(g) 
    &\coloneqq 
    \dfrac{\displaystyle \int g(\theta) \exp\left[  - \dfrac{1}{2} \left\| \FisherMAP[t]^{1/2} \left( \theta - \thetaMAP[t] \right) \right\|_{2}^{2} \right] \rmd \theta }
    {\displaystyle \int \exp\left[ - \dfrac{1}{2}\left\| \FisherMAP[t]^{1/2} \left( \theta' - \thetaMAP[t] \right) \right\|_{2}^{2} \right] \rmd \theta' } \\
    &=
    \dfrac{\displaystyle \int g(\theta) \exp\left[  - \dfrac{1}{2}\left\| \FisherMAP[t]^{1/2}u \right\|_{2}^{2} \right] \rmd u }
    {\displaystyle \int \exp\left[ - \dfrac{1}{2}\left\| \FisherMAP[t]^{1/2} u' \right\|_{2}^{2} \right] \rmd u' },
\end{aligned}
\end{align}
where $\int f(u) du = \int_{\Theta} f(u) du$ and
\begin{align} \label{def:f_t}
    f_{t}(u; \thetaMAP) 
    = \widetilde{L}_{t}(\thetaMAP + u) - \widetilde{L}_{t}(\thetaMAP)
    = \widetilde{L}_{t}(\thetaMAP + u) - \widetilde{L}_{t}(\thetaMAP) - \langle \nabla \widetilde{L}_{t}(\thetaMAP), u \rangle.
\end{align}
For $t \in [T]$, the total variation distanace between $\Pi_{t}^{\LA}$ and $\widetilde{\Pi}_{t}\left(\cdot \mid \bD_{t} \right)$ can be represented by
\begin{align*}
        d_{V} \left( \Pi_{t}^{\LA}(\cdot),  \widetilde{\Pi}_{t}\left(\cdot \mid \bD_{t} \right) \right) 
        = 
        \sup_{\|g\|_\infty \leq 1} \left| \cI_{\widetilde{\Pi}, t}(g) - \cI_{\LA, t}(g) \right|,        
\end{align*}
where the supremum is taken over every measurable function $g$ with $\| g \|_{\infty} \leq 1$.
Define the following quantities:
\begin{align} 
\begin{aligned} \label{def:integral_quantities}
    &\Delta_{\local, t}(g) = \left| \dfrac{N_{t, 1}(g)}{D_{t}} \right|, \quad 
    \Delta_{\tail, \widetilde{\Pi}, t}(g) = \left| \dfrac{N_{t, 2}(g)}{D_{t}} \right|, \quad  
    \Delta_{\tail, \LA, t}(g) = \left| \dfrac{N_{t, 3}(g)}{D_{t}} \right|, \\
    &\Delta_{\tail, t}(g) = \Delta_{\tail, \widetilde{\Pi}, t}(g) \vee \Delta_{\tail, \LA, t}(g),    
\end{aligned}
\end{align}
where
\begin{align} 
\begin{aligned} \label{def:approximation_quantities}
    D_{t} &= \int_{\Theta} \exp\left( -\dfrac{1}{2} \left\| \FisherMAP^{1/2} u \right\|_{2}^{2} \right) \rmd u, \\
    N_{t, 1}(g) &= \int_{\Theta_{n, t}} g(u) 
    \left[ \exp\left( f_{t}(u, \thetaMAP) \right) - \exp\left( -\dfrac{1}{2} \left\| \FisherMAP^{1/2} u \right\|_{2}^{2} \right) \right] \rmd u, \\
    N_{t, 2}(g) &= \int_{ \Theta_{n, t}^{\rm c} } |g(u)| \exp\left( f_{t}(u, \thetaMAP) \right) \rmd u, \\
    N_{t, 3}(g) &= \int_{ \Theta_{n, t}^{\rm c} } |g(u)| \exp\left( -\dfrac{1}{2} \left\| \FisherMAP^{1/2} u \right\|_{2}^{2} \right) \rmd u.
\end{aligned}
\end{align}

\begin{lemma} \label{lemma:LA_TV_integral}
    Suppose that (\textbf{A0}) holds.
    Also, assume that $\Delta_{\local, t}(\mathds{1}_{\Theta}) + \Delta_{\tail, t}(\mathds{1}_{\Theta}) \leq 1/2$ for all $t \in [T]$ on an event $\scrE$.
    Let $g : \Theta \rightarrow \bbR$ be a function satisfying $\| g \|_{\infty} \leq 1$. 
    Then, for all $t \in [T]$,
    \begin{align*}
        \left| \cI_{\widetilde{\Pi}, t}(g) - \cI_{\LA, t}(g) \right| \leq 
        2\bigg( \Delta_{\local, t}(g) + \Delta_{\local, t}(\mathds{1}_{\Theta}) + 2\Delta_{\tail, t}(g) + 2\Delta_{\tail, t}(\mathds{1}_{\Theta}) \bigg)
    \end{align*}
    on $\scrE$.
\end{lemma}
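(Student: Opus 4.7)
The natural strategy is to write $\cI_{\widetilde{\Pi}, t}(g) = A_t/Z_t$ and $\cI_{\LA, t}(g) = B_t/D_t$, where $A_t$ and $B_t$ are the (change-of-variables) numerators in \eqref{def:TV_quantities} and $Z_t$ is the denominator of $\cI_{\widetilde{\Pi}, t}$, and then apply the standard quotient identity
\begin{align*}
    \frac{A_t}{Z_t} - \frac{B_t}{D_t} = \frac{A_t - B_t}{Z_t} + \frac{B_t}{D_t}\cdot\frac{D_t - Z_t}{Z_t}.
\end{align*}
Both numerators $A_t - B_t$ and denominators $Z_t - D_t$ will be controlled by splitting the integration domain into the local region $\Theta_{n,t}$ and its complement, which is exactly the decomposition encoded in \eqref{def:approximation_quantities}.

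\textbf{Step 1: control of $|A_t - B_t|$.} Splitting $A_t - B_t$ over $\Theta_{n,t}\cup\Theta_{n,t}^c$, the local part equals $N_{t,1}(g)$ by definition, and each of the two tail pieces is dominated in absolute value by $N_{t,2}(g)$ and $N_{t,3}(g)$ respectively, since $|g(u)\exp(f_t)|\le |g(u)|\exp(f_t)$ and similarly for the Gaussian factor. Dividing by $D_t$ yields
\begin{align*}
    \frac{|A_t - B_t|}{D_t} \le \Delta_{\local,t}(g) + \Delta_{\tail,\widetilde{\Pi},t}(g) + \Delta_{\tail,\LA,t}(g) \le \Delta_{\local,t}(g) + 2\Delta_{\tail,t}(g).
\end{align*}

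\textbf{Step 2: lower bound on $Z_t$.} Applying the same decomposition with $g\equiv 1$ gives $Z_t - D_t = N_{t,1}(\mathds{1}_\Theta) + N_{t,2}(\mathds{1}_\Theta) - N_{t,3}(\mathds{1}_\Theta)$. The sharper elementary inequality $|a-b| \le \max(a,b)$ for $a,b\ge 0$, applied to the two nonnegative tail integrals, avoids doubling the tail term and yields
\begin{align*}
    \frac{|Z_t - D_t|}{D_t} \le \Delta_{\local,t}(\mathds{1}_\Theta) + \Delta_{\tail,t}(\mathds{1}_\Theta) \le \tfrac12,
\end{align*}
so that $Z_t \ge D_t/2$ on $\scrE$. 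This is the step where the stated hypothesis is used in a tight way, and recognising that one should not split $|N_{t,2}-N_{t,3}|$ as $N_{t,2}+N_{t,3}$ here is the main subtlety of the argument; using the cruder bound would force a stronger hypothesis than $\Delta_{\local,t}(\mathds{1}_\Theta) + \Delta_{\tail,t}(\mathds{1}_\Theta) \le 1/2$.

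\textbf{Step 3: assembly.} Combining the identity with Steps 1--2, together with the trivial bound $|B_t|/D_t \le 1$ (since $\|g\|_\infty\le 1$) and $D_t/Z_t \le 2$, gives
\begin{align*}
    \left|\cI_{\widetilde{\Pi},t}(g) - \cI_{\LA,t}(g)\right|
    \le 2\bigl[\Delta_{\local,t}(g) + 2\Delta_{\tail,t}(g)\bigr] + 2\bigl[\Delta_{\local,t}(\mathds{1}_\Theta) + \Delta_{\tail,t}(\mathds{1}_\Theta)\bigr],
\end{align*}
which is stronger than, and in particular implies, the stated bound. No further regularity of $\widetilde{L}_t$ beyond (\textbf{A0}) is needed for this lemma, since all smoothness content has been packaged into the quantities $\Delta_{\local,t}$ and $\Delta_{\tail,t}$ themselves; those quantities will be controlled in subsequent lemmas. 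The whole argument is purely algebraic, and the only nontrivial point is the careful handling of signs in Step 2.
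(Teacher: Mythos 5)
Your proof is correct and follows essentially the same route as the paper: write the difference as $(A_t - B_t)/Z_t + (B_t/D_t)(D_t - Z_t)/Z_t$, control $|A_t - B_t|/D_t$ and $|Z_t - D_t|/D_t$ by the $\Delta$-quantities, and use the hypothesis to get $D_t/Z_t \le 2$. The only place where your argument departs from the paper's is the lower bound on $Z_t$: you invoke $|N_{t,2}(\mathds{1}_\Theta)-N_{t,3}(\mathds{1}_\Theta)|\le \max(N_{t,2},N_{t,3})$, whereas the paper instead simply drops the nonnegative tail integral $\int_{\Theta_{n,t}^c}e^{f_t(u;\thetaMAP)}\,\rmd u$, giving $Z_t \ge \bigl(1-\Delta_{\tail,\LA,t}(\mathds{1}_\Theta)-\Delta_{\local,t}(\mathds{1}_\Theta)\bigr)D_t$; since $\Delta_{\tail,\LA,t}\le \Delta_{\tail,t}$, the hypothesis again yields $Z_t\ge D_t/2$. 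So the ``main subtlety'' you identify is real in the sense that a naive $|a-b|\le a+b$ split would require a stronger hypothesis, but the paper avoids it by a different (equally elementary) observation rather than by the $\max$ trick. Your final constant is slightly sharper (one factor of $\Delta_{\tail,t}(\mathds{1}_\Theta)$ instead of two), which is harmless since the stated bound follows a fortiori.
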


\begin{proof}
In this proof, we work on the event $\scrE$ without explicitly referring to it.
Let
\begin{align*}
({\rm i}) &= 
\dfrac{ \displaystyle \int  g(u) e^{f_{t}(u; \thetaMAP)} \rmd u  }{ \displaystyle \int  e^{f_{t}(u'; \thetaMAP)} \rmd u' }
-
\dfrac{ \displaystyle \int  g(u) \exp\left[ -\dfrac{1}{2}\left\| \FisherMAP[t]^{1/2}u \right\|_{2}^{2} \right]  \rmd u }{ \displaystyle \int  e^{f_{t}(u'; \thetaMAP)} \rmd u' }, \\
({\rm ii}) &=    
\dfrac{ \displaystyle \int  g(u) \exp\left[ -\dfrac{1}{2}\left\| \FisherMAP[t]^{1/2}u \right\|_{2}^{2} \right]  \rmd u }{ \displaystyle \int  e^{f_{t}( u'; \thetaMAP)} \rmd u' }
- \dfrac{ \displaystyle \int  g(u) \exp\left[ -\dfrac{1}{2}\left\| \FisherMAP[t]^{1/2}u \right\|_{2}^{2} \right]  \rmd u }
    { \displaystyle \int  \exp\left[ -\dfrac{1}{2}\left\| \FisherMAP[t]^{1/2}u' \right\|_{2}^{2} \right]  \rmd u' }.
\end{align*}
Note that
\begin{align} \label{eqn:TV_integral_eq1}
&\left| \cI_{\widetilde{\Pi}, t}(g) - \cI_{\LA, t}(g) \right|
\leq  \left| ({\rm i}) \right| + \left| ({\rm ii}) \right|.
\end{align}

Firstly, we will obtain an upper bound of $(\rm i)$. Note that
\begin{align*}
    \left| ({\rm i}) \right|
    \leq
    \left| \displaystyle \int e^{f_{t}(u; \thetaMAP)} \rmd u \right|^{-1} \times
    \left| \displaystyle \int g(u) \left[ e^{f_{t}(u; \thetaMAP)} - \exp\left( -\dfrac{1}{2}\left\| \FisherMAP[t]^{1/2}u \right\|_{2}^{2} \right) \right] \rmd u \right|.
\end{align*}
By the definitions in \eqref{def:integral_quantities}, note that
\begin{align*}
    D_{t} \Delta_{\local, t}(\mathds{1}_{\Theta}) 
    &= |N_{t, 1}(\mathds{1}_{\Theta})|
    = \left| \int_{\Theta_{n, t}} 
    \left[ \exp\left( f_{t}(u, \thetaMAP) \right) - \exp\left( -\dfrac{1}{2} \left\| \FisherMAP^{1/2} u \right\|_{2}^{2} \right) \right] \rmd u \right| \\
    &\geq 
    \int_{\Theta_{n, t}} 
    \exp\left( -\dfrac{1}{2} \left\| \FisherMAP^{1/2} u \right\|_{2}^{2} \right) \rmd u
    -
    \int_{\Theta_{n, t}} 
    \exp\left( f_{t}(u, \thetaMAP) \right)  \rmd u,
\end{align*}
which implies that
\begin{align*}
    &\int_{\Theta_{n, t}} 
    \exp\left( f_{t}(u, \thetaMAP) \right)  \rmd u \\
    &\geq 
    \int_{\Theta_{n, t}} 
    \exp\left( -\dfrac{1}{2} \left\| \FisherMAP^{1/2} u \right\|_{2}^{2} \right) \rmd u
    -
    D_{t} \Delta_{\local, t}(\mathds{1}_{\Theta}) \\
    &=
    \int_{\Theta_{n, t}} 
    \exp\left( -\dfrac{1}{2} \left\| \FisherMAP^{1/2} u \right\|_{2}^{2} \right) \rmd u
    -
    \Delta_{\local, t}(\mathds{1}_{\Theta})
    \int
    \exp\left( -\dfrac{1}{2} \left\| \FisherMAP^{1/2} u \right\|_{2}^{2} \right) \rmd u.
\end{align*}
It follows that
\begin{align}
\begin{aligned} \label{eqn:TV_integral_eq2}
    &\displaystyle \int   e^{f_{t}(u; \thetaMAP)} \rmd u 
    \geq 
    \displaystyle \int  _{\Theta_{n, t} } e^{f_{t}(u; \thetaMAP)} \rmd u \\
    &\geq 
    \displaystyle \int  _{\Theta_{n, t} } \exp\left( -\dfrac{1}{2}\left\| \FisherMAP[t]^{1/2}u \right\|_{2}^{2} \right) \rmd u
    - \Delta_{\local, t}(\mathds{1}_{\Theta}) \displaystyle \int   \exp\left( -\dfrac{1}{2}\left\| \FisherMAP[t]^{1/2}u \right\|_{2}^{2} \right) \rmd u \\
    &= 
    \bigg( 1 - \Delta_{\tail, \LA, t}(\mathds{1}_{\Theta}) - \Delta_{\local, t}(\mathds{1}_{\Theta}) \bigg) \displaystyle \int   \exp\left( -\dfrac{1}{2}\left\| \FisherMAP[t]^{1/2}u \right\|_{2}^{2} \right) \rmd u
\end{aligned}
\end{align}
and
\begin{align}
\begin{aligned} \label{eqn:TV_integral_eq3}
&\left| \displaystyle \int   g(u) \left[ e^{f_{t}(u; \thetaMAP)} - \exp\left( -\dfrac{1}{2}\left\| \FisherMAP[t]^{1/2}u \right\|_{2}^{2} \right) \right] \rmd u \right| \\
&\leq 
\left| \displaystyle \int  _{\Theta_{n, t} } g(u) \left[ e^{f_{t}(u; \thetaMAP)} - \exp\left( -\dfrac{1}{2}\left\| \FisherMAP[t]^{1/2}u \right\|_{2}^{2} \right) \right] \rmd u \right| \\
&\qquad +
\left| \displaystyle \int  _{\Theta_{n, t}^{\rm c} } g(u) \left[ e^{f_{t}(u; \thetaMAP)} - \exp\left( -\dfrac{1}{2}\left\| \FisherMAP[t]^{1/2}u \right\|_{2}^{2} \right) \right] \rmd u \right| \\
&\leq 
\bigg( \Delta_{\local, t}(g) + \Delta_{\tail, \widetilde{\Pi}, t}(g) + \Delta_{\tail, \LA, t}(g) \bigg) \displaystyle \int   \exp\left( -\dfrac{1}{2}\left\| \FisherMAP[t]^{1/2}u \right\|_{2}^{2} \right) \rmd u.
\end{aligned}
\end{align}
It follows that 
\begin{align} \label{eqn:TV_integral_eq4}
\begin{aligned}
    \left| ({\rm i}) \right| 
    &\leq
    \bigg( 1 - \Delta_{\tail, \LA, t}(\mathds{1}_{\Theta}) - \Delta_{\local, t}(\mathds{1}_{\Theta}) \bigg)^{-1} 
    \\
    &\qquad \times \bigg( \Delta_{\local, t}(g) + \Delta_{\tail, \widetilde{\Pi}, t}(g) + \Delta_{\tail, \LA, t}(g) \bigg).    
\end{aligned}    
\end{align}

Next, we will obtain an upper bound of $(\rm ii)$.
Let $\widetilde{Z} \sim \cN(0, \FisherMAP^{-1})$ and $\bbE_{\widetilde{Z}}$ denote the expectation with respect to the law of $\widetilde{Z}$.
Note that
\begin{align*}
    |({\rm ii})| 
    &\leq 
    \left| \dfrac{ \displaystyle \int  g(u) \exp\left[ -\dfrac{1}{2}\left\| \FisherMAP[t]^{1/2}u \right\|_{2}^{2} \right]  \rmd u }
    { \displaystyle \int  \exp\left[ -\dfrac{1}{2}\left\| \FisherMAP[t]^{1/2}u' \right\|_{2}^{2} \right]  \rmd u' } \right| 
    \times
    \left| 
    \dfrac{ \displaystyle \int  \exp\left[ -\dfrac{1}{2}\left\| \FisherMAP[t]^{1/2}u \right\|_{2}^{2} \right]  \rmd u }{ \displaystyle \int  e^{f_{t}( u'; \thetaMAP)} \rmd u'}   -  1
    \right| \\
    &=
    \left| \bbE_{\widetilde{Z}} g(\widetilde{Z}) \right| \times
    \left| 
    \dfrac{ \displaystyle \int  \exp\left[ -\dfrac{1}{2}\left\| \FisherMAP[t]^{1/2}u \right\|_{2}^{2} \right]  \rmd u }{ \displaystyle \int  e^{f_{t}( u'; \thetaMAP)} \rmd u'}   -  1
    \right| 
    \leq 
    \left| 
    \dfrac{ \displaystyle \int  \exp\left[ -\dfrac{1}{2}\left\| \FisherMAP[t]^{1/2}u \right\|_{2}^{2} \right]  \rmd u }{ \displaystyle \int  e^{f_{t}( u'; \thetaMAP)} \rmd u'}   -  1
    \right| \\
    &\leq
    \bigg( 1 - \Delta_{\tail, \LA, t}(\mathds{1}_{\Theta}) - \Delta_{\local, t}(\mathds{1}_{\Theta}) \bigg)^{-1} \bigg( \Delta_{\local, t}(\mathds{1}_{\Theta}) + \Delta_{\tail, \widetilde{\Pi}, t}(\mathds{1}_{\Theta})+ \Delta_{\tail, \LA, t}(\mathds{1}_{\Theta}) \bigg),        
\end{align*}
where the last inequality holds by \eqref{eqn:TV_integral_eq2} and \eqref{eqn:TV_integral_eq3}.
Combining the last display with \eqref{eqn:TV_integral_eq4}, the right hand side of \eqref{eqn:TV_integral_eq1} is bounded by
\begin{align*}
&\dfrac{
    \left[ \Delta_{\local, t}(g) + \Delta_{\tail, \widetilde{\Pi}, t}(g) + \Delta_{\tail, \LA, t}(g) \right]
    +
    \left[ \Delta_{\local, t}(\mathds{1}_{\Theta}) + \Delta_{\tail, \widetilde{\Pi}, t}(\mathds{1}_{\Theta})+ \Delta_{\tail, \LA, t}(\mathds{1}_{\Theta}) \right]
}
{\bigg( 1 - \Delta_{\tail, \LA, t}(\mathds{1}_{\Theta}) - \Delta_{\local, t}(\mathds{1}_{\Theta}) \bigg)}  \\       
&\leq 
2\left[
\Delta_{\local, t}(g) + \Delta_{\local, t}(\mathds{1}_{\Theta}) + 2\Delta_{\tail, t}(g) + 2\Delta_{\tail, t}(\mathds{1}_{\Theta})
\right],
\end{align*}
where the inequality holds by the assumption $\Delta_{\local, t}(\mathds{1}_{\Theta}) + \Delta_{\tail, t}(\mathds{1}_{\Theta}) \leq 1/2$. This completes the proof.
\end{proof}

\begin{lemma} \label{lemma:LA_tail_integral}
    Suppose that (\textbf{A0}) holds, and $N \geq 2$.
    Also, assume that $\widehat{\tau}_{3, t} r_{\LA} \leq 1/8$ for all $t \in [T]$ on an event $\scrE$. Then, on $\scrE$,
    \begin{align} \label{eqn:tail_posterior}
        \sup_{\| g \|_{\infty} \leq  1} \Delta_{\tail, \widetilde{\Pi}, t}(g) 
            &\leq 2e^{-8\log N - 8p} \quad {\rm and} \\
        \label{eqn:tail_LA}            
        \sup_{\| g \|_{\infty} \leq  1} \Delta_{\tail, \LA, t} (g)
            &\leq e^{-16\log N - 49p/2} \quad \forall t \in [T].
    \end{align}
\end{lemma}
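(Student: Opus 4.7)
The plan is to handle the two bounds separately, since \eqref{eqn:tail_LA} is a pure Gaussian-tail estimate while \eqref{eqn:tail_posterior} first requires establishing at-least-linear decay of $f_t(u,\thetaMAP)$ outside $\Theta_{n,t}$ via concavity of $\widetilde{L}_{t}$, after which the tail integral can be estimated.

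For \eqref{eqn:tail_LA}, I would change variables $v = \FisherMAP^{1/2} u$ so that the Jacobian cancels with the matching factor in $D_{t}$, reducing $\sup_{\|g\|_\infty \leq 1} \Delta_{\tail,\LA,t}(g)$ to $\bbP(\|Z\|_{2} \geq 4 r_{\LA}) = \bbP(\chi_p^2 \geq 16 r_{\LA}^2)$ for $Z \sim \cN(0, \bI_p)$. Using $(a+b)^2 \geq a^2 + b^2$ for nonnegative $a,b$, the definition $r_{\LA} = 2\sqrt{p} + \sqrt{2\log N}$ gives $16 r_{\LA}^2 \geq 64 p + 32 \log N$, and the Laurent--Massart bound $\bbP(\chi_p^2 \geq p + 2\sqrt{pt} + 2t) \leq e^{-t}$ with $t = 49 p/2 + 16 \log N$ yields the target $e^{-49p/2 - 16 \log N}$.

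For \eqref{eqn:tail_posterior}, the key step is to prove $f_t(u,\thetaMAP) \leq -c\, r_{\LA} \|\FisherMAP^{1/2} u\|_{2}$ for all $u \in \Theta_{n,t}^{\rm c}$, with an explicit constant $c > 0$. First, for $u_0$ on the boundary $\|\FisherMAP^{1/2} u_0\|_{2} = 4 r_{\LA}$, the entire segment $[\thetaMAP, \thetaMAP + u_0]$ lies in the local set over which $\widehat{\tau}_{3,t}$ was defined; combining $\nabla \widetilde{L}_{t}(\thetaMAP) = 0$ with Taylor's formula with integral remainder, and bounding the remainder via the third-order smoothness parameter, gives $f_t(u_0,\thetaMAP) \leq -\tfrac{1}{2}\|\FisherMAP^{1/2}u_0\|_{2}^2 + \tfrac{\widehat{\tau}_{3,t}}{6}\|\FisherMAP^{1/2}u_0\|_{2}^3 \leq -C_0 r_{\LA}^2$ once $\widehat{\tau}_{3,t} r_{\LA} \leq 1/8$ is used. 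For a general $u \in \Theta_{n,t}^{\rm c}$ I would write $u = \alpha u_0$ with $\alpha = \|\FisherMAP^{1/2} u\|_{2}/(4 r_{\LA}) > 1$ and invoke concavity of $\widetilde{L}_{t}$ from (\textbf{A0}): since $s \mapsto f_t(s u_0,\thetaMAP)$ is concave with maximum value $0$ at $s=0$, the standard secant inequality yields $f_t(\alpha u_0, \thetaMAP) \leq \alpha f_t(u_0,\thetaMAP) \leq -(C_0/4) r_{\LA}\|\FisherMAP^{1/2} u\|_{2}$. Substituting this bound into $N_{t,2}(g)$, taking $|g|\leq 1$ and changing variables $v = \FisherMAP^{1/2} u$ reduces $N_{t,2}(g)/D_{t}$ to a spherical integral of $\exp(-c r_{\LA}\|v\|_{2})$ over $\{\|v\|_{2} > 4 r_{\LA}\}$; the resulting incomplete-Gamma bound is dominated by the exponential factor $e^{-4 c r_{\LA}^2}$, which beats $e^{-8p - 8\log N}$ provided $c$ is large enough, since $r_{\LA}^2 \geq 4 p + 2\log N$.

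The main obstacle is quantitative rather than conceptual: the numerical constants $8p + 8\log N$ and $49p/2 + 16\log N$ are tight, so careful bookkeeping is needed when choosing $t$ in the Laurent--Massart step and when converting the Taylor remainder into the linear-decay coefficient $c$. In particular, extracting a coefficient $c$ that survives both the boundary Taylor bound and the concavity extrapolation using only the hypothesis $\widehat{\tau}_{3,t} r_{\LA} \leq 1/8$---and that comfortably dominates the polynomial Stirling factor $S_{p-1}/(2\pi)^{p/2} \cdot r_{\LA}^{p-1}$ left over after spherical integration---is the delicate piece. Everything else reduces to standard Gaussian integration and concavity-of-maximizer manipulations.
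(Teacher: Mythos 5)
Your strategy matches the paper's at the conceptual level (establish linear decay of $f_t$ outside $\Theta_{n,t}$, then bound the tail integral), but you make two genuinely different choices: you derive the linear-decay bound via the concavity secant inequality $h(\alpha)\le\alpha h(1)$ applied to $h(s)=f_t(su^\circ;\thetaMAP)$, rather than the paper's tangent-line argument through the boundary point $u^\circ$, and you pass to spherical coordinates plus a Laurent--Massart $\chi^2_p$ bound, rather than the paper's representation as a Gaussian expectation followed by integration by parts. Both alternatives are valid in spirit; indeed your concavity argument gives a slightly sharper decay coefficient ($\tfrac{5}{3}r_{\LA}$ instead of the paper's $r_{\LA}$). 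However, as written the proposal has a gap and a small error.

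For \eqref{eqn:tail_LA}, the chain you describe does not close: you drop the cross term and claim $16r_{\LA}^2\ge 64p+32\log N$, then apply Laurent--Massart with $t=49p/2+16\log N$. But $p+2t=50p+32\log N$, so you need $64p+32\log N\ge p+2t+2\sqrt{pt}$, i.e. $14p\ge 2\sqrt{pt}$, i.e. $49p/2\ge 16\log N$ — which fails when $\log N$ is large relative to $p$. The fix is easy (retain the $64\sqrt{2p\log N}$ cross term, as the paper implicitly does via its Lemma \ref{lemma:tech_Gaussian_Chaos} formulation), but you must actually carry it through. For \eqref{eqn:tail_posterior}, your reduction to the spherical integral $\frac{1}{(2\pi)^{p/2}}\int_{\|v\|>4r_{\LA}}e^{-cr_{\LA}\|v\|}\,dv$ is correct, but the step you flag as ``delicate'' — showing that the prefactor $\frac{S_{p-1}}{(2\pi)^{p/2}}$ times the incomplete-Gamma integral does not destroy the exponential gain — is in fact the heart of the lemma. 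The factor $\frac{S_{p-1}}{(2\pi)^{p/2}}(4r_{\LA})^{p-1}$ is itself exponentially large in $p$ (of order $(e(4r_{\LA})^2/p)^{p/2}\ge(64e)^{p/2}$), so whether the exponential $e^{-4cr_{\LA}^2}$ wins requires a careful inequality that the proposal never supplies. The paper sidesteps this by writing the ratio as $\bbE_Z\bigl[\exp(-r_{\LA}\|Z\|_2+\tfrac12\|Z\|_2^2)\mathds{1}\{\|Z\|_2\ge 4r_{\LA}\}\bigr]$ and integrating by parts against the survival function of $\|Z\|_2$, which eliminates explicit Stirling factors. Without carrying out that bookkeeping (or an equivalent of it), your argument is a plan rather than a proof of the stated constants $2e^{-8p-8\log N}$.
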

\begin{proof}
In this proof, we work on the event $\scrE$ without explicitly referring to it.    
Note that
\begin{align} 
\begin{aligned} \label{eqn:tail_integral_proof_eq0}
\sup_{\| g \|_{\infty} \leq  1} \Delta_{\tail, \widetilde{\Pi}, t}(g) 
= 
\Delta_{\tail, \widetilde{\Pi}, t}(\mathds{1}_{\Theta})
=  
\dfrac{ \displaystyle \int_{\Theta_{n, t}^{\rm c} } e^{f_{t}(u; \thetaMAP)} \rmd u }
{ \displaystyle \int  \exp \left( -\dfrac{1}{2} \left\| \FisherMAP[t]^{1/2}u \right\|_{2}^{2} \right)  \rmd u }.       
\end{aligned}
\end{align}

To obtain an upper bound of the right-hand side in the last display, we will obtain an upper bound of $f_{t}(u; \thetaMAP)$ on $\Theta_{n, t}^{\rm c}$.
Let $u \in \bbR^{p}$ with $u \notin \Theta_{n, t}$. Then, we have
\begin{align} \label{eqn:tail_integral_proof_eq1}
\begin{aligned}
    u^{\circ} \overset{\rm def}{=} 
    4r_{\LA} \left\| \FisherMAP^{1/2} u \right\|_{2}^{-1}u \in \partial \Theta_{n, t}
    \overset{\rm def}{=} 
    \left\{ \theta \in \Theta : \left\| \FisherMAP^{1/2} \theta \right\|_{2} =  4r_{\LA} \right\}.    
\end{aligned}
\end{align}
By Taylor's theorem, note that
\begin{align} \label{eqn:tail_integral_proof_eq2}
\begin{aligned}
    &\widetilde{L}_{t}( \thetaMAP + u^{\circ} ) - \widetilde{L}_{t} ( \thetaMAP ) - \langle \nabla \widetilde{L}_{t} ( \thetaMAP ), u^{\circ} \rangle 
    \leq 
    \sup_{ \overline{u} \in \Theta_{n, t}} \bigg[ -\dfrac{1}{2} \left\| \FisherTilde[t]{\thetaMAP + \overline{u}}^{1/2} u^{\circ} \right\|_{2}^{2} \bigg]  \\
    &\leq
    -\dfrac{1}{2} \big( 1 - 4\widehat{\tau}_{3, t} r_{\LA} \big) \left\| \FisherMAP^{1/2} u^{\circ} \right\|_{2}^{2},    
\end{aligned}
\end{align}
where the second inequality holds by Lemma \ref{lemma:tech_Fisher_smooth}.
Also, Taylor's theorem gives
\begin{align} 
\begin{aligned} \label{eqn:tail_integral_proof_eq3}
    &\langle \nabla \widetilde{L}_{t} ( \thetaMAP + u^{\circ} ) - \nabla \widetilde{L}_{t}(\thetaMAP) , u - u^{\circ} \rangle 
    \leq 
    \sup_{ \overline{u} \in \Theta_{n, t} } 
    \left[ -\langle \FisherTilde[t]{\thetaMAP + \overline{u}} u^{\circ} , u - u^{\circ} \rangle \right]  \\
    \overset{\eqref{eqn:tail_integral_proof_eq1}}&{=}
    -\left( \left[ 4r_{\LA} \right]^{-1} \left\| \FisherMAP^{1/2} u \right\|_{2} - 1 \right)
    \inf_{ \overline{u} \in \Theta_{n, t}} 
    \langle \FisherTilde[t]{\thetaMAP + \overline{u}} u^{\circ} , u^{\circ} \rangle \\
    &\leq 
    -\left( \left[ 4r_{\LA} \right]^{-1} \left\| \FisherMAP^{1/2} u \right\|_{2} - 1 \right)
    \big( 1 - 4\widehat{\tau}_{3, t} r_{\LA} \big)
    \langle \FisherMAP u^{\circ} , u^{\circ} \rangle  \\
    &= 
    -\big( 1 - 4\widehat{\tau}_{3, t} r_{\LA} \big)
    \langle \FisherMAP u^{\circ} , u - u^{\circ} \rangle,     
\end{aligned}
\end{align}
where the second inequality holds by Lemma \ref{lemma:tech_Fisher_smooth}.

The concavity of $\widetilde{L}_{t}(\cdot)$ implies that:
\begin{align} \label{eqn:tail_integral_proof_eq4}
    \widetilde{L}_{t}(\thetaMAP + u) 
    \leq     
    \widetilde{L}_{t}(\thetaMAP + u^{\circ}) + \langle \nabla \widetilde{L}_{t} ( \thetaMAP + u^{\circ} ), u - u^{\circ} \rangle.
\end{align}
Also,
\begin{align*}
&f_{t}(u; \thetaMAP) 
= 
\widetilde{L}_{t}( \thetaMAP + u) - \widetilde{L}_{t} ( \thetaMAP ) - \langle \nabla \widetilde{L}_{t} ( \thetaMAP ), u \rangle  \\    
&=
\left[
\widetilde{L}_{t}( \thetaMAP + u) - \widetilde{L}_{t}(\thetaMAP + u^{\circ}) - \langle \nabla \widetilde{L}_{t} ( \thetaMAP + u^{\circ} ), u - u^{\circ} \rangle
\right] \\
&\qquad +
\widetilde{L}_{t}( \thetaMAP + u^{\circ} ) - \widetilde{L}_{t} ( \thetaMAP ) - \langle \nabla \widetilde{L}_{t} ( \thetaMAP ), u^{\circ} \rangle 
+ \langle \nabla \widetilde{L}_{t} ( \thetaMAP + u^{\circ} ) - \nabla \widetilde{L}_{t}(\thetaMAP) , u - u^{\circ} \rangle \\
\overset{\eqref{eqn:tail_integral_proof_eq4}}&{\leq}
\widetilde{L}_{t}( \thetaMAP + u^{\circ} ) - \widetilde{L}_{t} ( \thetaMAP ) - \langle \nabla \widetilde{L}_{t} ( \thetaMAP ), u^{\circ} \rangle +
\langle \nabla \widetilde{L}_{t} ( \thetaMAP + u^{\circ} ) - \nabla \widetilde{L}_{t}(\thetaMAP) , u - u^{\circ} \rangle \\
\overset{ \substack{\eqref{eqn:tail_integral_proof_eq2} \\ \eqref{eqn:tail_integral_proof_eq3} }  }&{\leq}
-\dfrac{1}{2} \big( 1 - 4\widehat{\tau}_{3, t} r_{\LA} \big) \left\| \FisherMAP^{1/2} u^{\circ} \right\|_{2}^{2} 
-\big( 1 - 4\widehat{\tau}_{3, t} r_{\LA} \big) \langle \FisherMAP u^{\circ} , u - u^{\circ} \rangle \\
\overset{\eqref{eqn:tail_integral_proof_eq1}}&{=}
-\dfrac{1}{2} \big( 1 - 4\widehat{\tau}_{3, t} r_{\LA} \big) \left\| \FisherMAP^{1/2} u^{\circ} \right\|_{2}^{2} 
-\big( 1 - 4\widehat{\tau}_{3, t} r_{\LA} \big) \left\| \FisherMAP^{1/2} u^{\circ} \right\|_{2} \left\| \FisherMAP^{1/2} u \right\|_{2} \\
&\qquad+ \big( 1 - 4\widehat{\tau}_{3, t} r_{\LA} \big) \left\| \FisherMAP^{1/2} u^{\circ} \right\|_{2}^{2} \\
&=
\dfrac{1}{2} \big( 1 - 4\widehat{\tau}_{3, t} r_{\LA} \big) \left\| \FisherMAP^{1/2} u^{\circ} \right\|_{2}^{2} 
-\big( 1 - 4\widehat{\tau}_{3, t} r_{\LA} \big) \left\| \FisherMAP^{1/2} u^{\circ} \right\|_{2} \left\| \FisherMAP^{1/2} u \right\|_{2} \\
&\leq 
-\dfrac{1}{2} \big( 1 - 4\widehat{\tau}_{3, t} r_{\LA} \big) \left\| \FisherMAP^{1/2} u^{\circ} \right\|_{2} \left\| \FisherMAP^{1/2} u \right\|_{2}
=
-2\big( 1 - 4\widehat{\tau}_{3, t} r_{\LA} \big) r_{\LA}  \left\| \FisherMAP^{1/2} u \right\|_{2},
\end{align*}
where the last line holds because $\| \FisherMAP^{1/2} u \|_{2} > \| \FisherMAP^{1/2} u^{\circ} \|_{2} = 4r_{\LA}$.
It follows that
\begin{align*}
&\displaystyle \int  _{\Theta_{n, t}^{\rm c} } e^{f_{t}(u; \thetaMAP)} \rmd u 
\leq 
\displaystyle \int  _{\Theta_{n, t}^{\rm c} } \exp\left[ -2\big( 1 - 4\widehat{\tau}_{3, t} r_{\LA} \big) r_{\LA}  \left\| \FisherMAP^{1/2} u \right\|_{2} \right]  \rmd u \\
&= 
\displaystyle \int  _{\Theta_{n, t}^{\rm c} } 
\exp\bigg[ -2\big( 1 - 4\widehat{\tau}_{3, t} r_{\LA} \big) r_{\LA}  \left\| \FisherMAP^{1/2} u \right\|_{2} 
+\dfrac{1}{2} \left\| \FisherMAP^{1/2} u \right\|_{2}^{2}  
\bigg] \exp \left( -\dfrac{1}{2} \left\| \FisherMAP^{1/2} u \right\|_{2}^{2} \right) \rmd u.
\end{align*}
Combining with the last display, the right-hand side of \eqref{eqn:tail_integral_proof_eq0} is bounded by
\begin{align*}
&\displaystyle \int  _{\Theta_{n, t}^{\rm c} } 
\exp\Bigg[ -2\big( 1 - 4\widehat{\tau}_{3, t} r_{\LA} \big) r_{\LA}  \left\| \FisherMAP^{1/2} u \right\|_{2} 
+\dfrac{1}{2} \left\| \FisherMAP^{1/2} u \right\|_{2}^{2}  
\Bigg]  
\dfrac{\exp \left( -\dfrac{1}{2} \left\| \FisherMAP^{1/2} u \right\|_{2}^{2} \right)}{ \displaystyle \int  \exp \left( -\dfrac{1}{2} \left\| \FisherMAP^{1/2} u' \right\|_{2}^{2} \right) \rmd u' } \rmd u  \\
&= 
\bbE_{Z} \Bigg( \exp \left[ -2\big( 1 - 4\widehat{\tau}_{3, t} r_{\LA} \big) r_{\LA}  \left\| Z \right\|_{2} 
+\dfrac{1}{2} \left\| Z \right\|_{2}^{2}  
\right] \mathds{1}\left\{ \left\| Z \right\|_{2} \geq 4r_{\LA} \right\}  \Bigg) \\
&\leq 
\bbE_{Z} \Bigg( \exp \left[ -r_{\LA}  \left\| Z \right\|_{2} 
+\dfrac{1}{2} \left\| Z \right\|_{2}^{2}  
\right] \mathds{1}\left\{ \left\| Z \right\|_{2} \geq 4r_{\LA} \right\}  \Bigg),
\end{align*}
where $Z \sim \cN(0, \bI_p)$ and the inequality holds by the assumption $\widehat{\tau}_{3, t} r_{\LA} \leq 1/8$. 
The right-hand side of the last display is equal to
\be \label{eqn:int}
\int_{4 r_{\LA}}^{\infty} \exp \left[ -r_{\LA} \omega + \dfrac{1}{2} \omega^{2} \right] p(\omega) \rmd \omega 
\ee
where $p(\cdot)$ denotes the density function of $\| Z \|_{2}$.
Note that $p(\cdot)$ is the derivative of the map $\omega \mapsto S(\omega) = - \bbP(\| Z \|_{2} > \omega)$ and
\bean
  \lim_{\omega \to \infty} \exp \left[ -r_{\LA} \omega + \dfrac{1}{2} \omega^{2} \right] 
 S(\omega) = 0
\eean
because we have, for any $\omega \geq \sqrt{p}$,
\begin{align*}
    \exp \left[ -r_{\LA} \omega + \dfrac{1}{2} \omega^{2} \right]
    \bbP\left( \| Z \|_{2} > \omega \right) 
    &\leq 
    \exp \left[ -r_{\LA} \omega + \dfrac{1}{2} \omega^{2} \right]
    \exp\left[ -\dfrac{(\omega - \sqrt{p})^{2}}{2} \right] \\
    &=
    \exp \left[ (\sqrt{p} - r_{\LA})\omega - \dfrac{p}{2} \right]
    =
    \exp \left[ -(\sqrt{p} + \sqrt{2\log N})\omega - \dfrac{p}{2} \right],
\end{align*}
where the inequality holds by the application of Lemma \ref{lemma:tech_Gaussian_Chaos} with $\bA = \mathbf{B} = \bI_{p}$.
Hence, integration by parts implies that \eqref{eqn:int} is equal to
\begin{align*}
\exp (4r_{\LA}^{2} )
\bbP \left( \left\| Z \right\|_{2} > 4r_{\LA} \right) 
+ 
\int_{4r_{\LA}}^{\infty} \left( \omega - r_{\LA} \right) \exp\left(  -r_{\LA} \omega  + \dfrac{1}{2} \omega^{2}  \right)
\bbP \left( \left\| Z \right\|_{2} > \omega \right) \rmd \omega.
\end{align*}
By Lemma \ref{lemma:tech_Gaussian_Chaos} with $\bA = \mathbf{B} = \bI_{p}$,
\begin{align*}
\begin{aligned} 
    \bbP \left( \left\| Z \right\|_{2} > 4r_{\LA} \right) 
    &\leq \exp\left( - \dfrac{1}{2} \left[ 7\sqrt{p} + 4\sqrt{2 \log N} \right]^{2} \right),
\end{aligned}    
\end{align*}
which implies that
\begin{align}
\begin{aligned} \label{eqn:tail_integral_proof_eq5}
    &\exp (4r_{\LA}^{2} ) \bbP \left( \left\| Z \right\|_{2} > 4r_{\LA} \right) \\
    &\leq 
    \exp\left( - \dfrac{1}{2} \left[ 7\sqrt{p} + 4\sqrt{2 \log N} \right]^{2} +  
    4\left[ 2\sqrt{p} + \sqrt{2 \log N} \right]^{2}
    \right) \\
    &\leq 
    \exp\left( - \dfrac{17}{2}p - 8 \log N \right).
\end{aligned}    
\end{align}
At the end of this proof, we will show that
\begin{align}
\begin{aligned} \label{eqn:tail_integral_proof_eq6}
\int_{4r_{\LA}}^{\infty} \left( \omega - r_{\LA} \right) \exp\left(  -r_{\LA} \omega + \dfrac{1}{2} \omega^{2}  \right)
\bbP \left( \left\| Z \right\|_{2} > \omega \right) \rmd \omega
\leq \exp\left( - 8p - 8\log N \right).
\end{aligned}    
\end{align}
By \eqref{eqn:tail_integral_proof_eq5} and \eqref{eqn:tail_integral_proof_eq6}, we have
\begin{align*}
\sup_{\| g \|_{\infty} \leq  1} \Delta_{\tail, \widetilde{\Pi}, t}(g) 
\leq 
2\exp\left( - 8p - 8\log N \right),
\end{align*}
which completes the proof of \eqref{eqn:tail_posterior}.

Similarly, one can bound the left hand side of \eqref{eqn:tail_LA} as
\begin{align*}
&\sup_{\| g \|_{\infty} \leq  1} \Delta_{\tail, \LA, t} (g)
=
\Delta_{\tail, \LA, t}(\mathds{1}_{\Theta})
= \displaystyle \int_{\Theta_{n, t}^{\rm c} } 
\dfrac{\exp \left( -\dfrac{1}{2} \left\| \FisherMAP^{1/2} u \right\|_{2}^{2} \right)}{ \displaystyle \int  \exp \left( -\dfrac{1}{2} \left\| \FisherMAP^{1/2} u' \right\|_{2}^{2} \right) \rmd u' } \rmd u \\
&=
\bbP \left( \left\| Z \right\|_{2} > 4r_{\LA} \right) 
\leq
\exp\left( - \dfrac{1}{2} \left[ 7\sqrt{p} + 4\sqrt{2 \log N} \right]^{2} \right)
\leq
\exp\left( - \dfrac{49}{2}p - 16\log N \right).
\end{align*}

To complete the proof, we only need to prove \eqref{eqn:tail_integral_proof_eq6}. Note that
\begin{align*}
&\int_{4r_{\LA}}^{\infty} \left( \omega - r_{\LA} \right) \exp\left(  -r_{\LA}  \omega + \dfrac{1}{2} \omega^{2}  \right)
\bbP \left( \left\| Z \right\|_{2} > \omega \right) \rmd \omega \\
\overset{\text{Lemma \ref{lemma:tech_Gaussian_Chaos}}}&{\leq} 
\int_{4r_{\LA}}^{\infty}  \left( \omega - r_{\LA} \right) \exp \left[  -r_{\LA}  \omega + \dfrac{1}{2} \omega^{2} 
- \dfrac{\left( \omega - \sqrt{p} \right)^2}{2}
\right] \rmd \omega \\
&=
\int_{0}^{\infty}  (\omega + 3r_{\LA}) \exp \left[  -r_{\LA} \left( \omega + 4r_{\LA} \right) + \dfrac{\left( \omega + 4r_{\LA} \right)^2}{2} 
- \dfrac{\left( \omega + 4r_{\LA} - \sqrt{p} \right)^2}{2}
\right] \rmd \omega \\
&=
\exp \left[ -4r_{\LA} \left( r_{\LA} - \sqrt{p} \right)  - \dfrac{p}{2} \right]
\int_{0}^{\infty}  (\omega + 3r_{\LA}) \exp \left[ -\left( r_{\LA} - \sqrt{p} \right)\omega \right] \rmd \omega \\
&= 
\exp \left[ -4r_{\LA} \left( r_{\LA} - \sqrt{p} \right)  - \dfrac{p}{2} \right]
\left( r_{\LA} - \sqrt{p} \right)^{-1} \big( \bbE(W) + 3 r_{\LA} \big),
\end{align*}
where $W \sim \operatorname{exp}(r_{\LA} - \sqrt{p})$ and density function of $\exp(\lambda)$ is given by $x \mapsto \lambda e^{-\lambda x}$.
By $\bbE W = (r_{\LA} - \sqrt{p})^{-1}$, the right-hand side of the last display is equal to
\begin{align*}
&\exp \bigg[ -4r_{\LA} \left( r_{\LA} - \sqrt{p} \right)  - \dfrac{p}{2} \bigg]
\bigg[ \dfrac{1}{(r_{\LA} - \sqrt{p})^{2}} + \dfrac{3r_{\LA}}{(r_{\LA} - \sqrt{p})} \bigg] \\
\overset{\eqref{def:hat_tau_t_radius}}&{=}
\exp \bigg[ -4 \left( 2\sqrt{p} + \sqrt{2\log N} \right) \left( \sqrt{p} + \sqrt{2\log N} \right)  - \dfrac{p}{2} \bigg] \\
&\qquad \times
\left[ \dfrac{1}{(\sqrt{p} + \sqrt{2\log N})^{2}} + \dfrac{6\sqrt{p} + 3\sqrt{2\log N} }{\sqrt{p} + \sqrt{2\log N}}  \right]  \\
&\leq 
\exp \left[ -4 \left( 2\sqrt{p} + \sqrt{2\log N} \right) \left( \sqrt{p} + \sqrt{2\log N} \right) \right]
e^{-p/2} \left( 1 + 6 \right)  \\
&= 
\exp \big( -8p - 8\log N  \big) 
\exp \big( -12\sqrt{2 p \log N} - p/2 + \log 7 \big) 
\leq
\exp \left( - 8p - 8\log N \right),
\end{align*}
where the last inequality holds by the assumption $N \geq 2$.
This completes the proof.
\end{proof}

\begin{lemma} \label{lemma:LA_tail_integral2}
    Suppose that conditions in Lemma \ref{lemma:LA_tail_integral} hold. 
    Then, for $0 \leq A_{n} \leq \sqrt{p} + \sqrt{2 \log N}$,
    \begin{align} \label{eqn:LA_tail_integral2_claim}
    \displaystyle \int_{ \Theta_{n, t}^{\rm c} } 
    \exp \left( A_n \left\| \FisherMAP^{1/2} u \right\|_{2} \right)
    \dfrac{\exp \left( -\dfrac{1}{2} \left\| \FisherMAP^{1/2} u \right\|_{2}^{2} \right)}{ \displaystyle \int \exp \left( -\dfrac{1}{2} \left\| \FisherMAP^{1/2} u' \right\|_{2}^{2} \right) \rmd u' } \rmd u 
    \leq 
    2e^{-8p - 8\log N}
    \end{align}
    on $\scrE$, where $\scrE$ is the event specified in Lemma \ref{lemma:LA_tail_integral}.
\end{lemma}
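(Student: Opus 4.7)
The plan is to reduce the integral to a Gaussian expectation and then follow the same integration-by-parts strategy as in Lemma \ref{lemma:LA_tail_integral}, carefully tracking the effect of the extra exponential weight $e^{A_n \|\FisherMAP^{1/2} u\|_2}$. First I would perform the change of variables $Z = \FisherMAP^{1/2} u$, $Z \sim \cN(0, \bI_p)$, which rewrites the left-hand side of \eqref{eqn:LA_tail_integral2_claim} as
\begin{align*}
\bbE_Z\left[ e^{A_n \|Z\|_2} \mathds{1}\{\|Z\|_2 \geq 4 r_{\LA}\} \right]
= \int_{4 r_{\LA}}^{\infty} e^{A_n \omega}\, p(\omega)\, d\omega,
\end{align*}
where $p$ is the density of $\|Z\|_2$. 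This mirrors \eqref{eqn:tail_integral_proof_eq0} in the proof of Lemma \ref{lemma:LA_tail_integral}.

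Next, using Lemma \ref{lemma:tech_Gaussian_Chaos} (with $\bA = \mathbf{B} = \bI_p$) to bound $\bbP(\|Z\|_2 > \omega) \leq \exp\bigl(-(\omega - \sqrt{p})^2/2\bigr)$ for $\omega \geq \sqrt{p}$, together with the hypothesis $A_n \leq \sqrt{p} + \sqrt{2\log N}$, one checks that $A_n \omega - (\omega - \sqrt{p})^2/2 \to -\infty$, so the boundary term at infinity vanishes. Integration by parts then yields
\begin{align*}
\int_{4 r_{\LA}}^{\infty} e^{A_n \omega} p(\omega)\, d\omega
= e^{4 A_n r_{\LA}}\, \bbP(\|Z\|_2 > 4 r_{\LA}) + A_n \int_{4 r_{\LA}}^{\infty} e^{A_n \omega}\, \bbP(\|Z\|_2 > \omega)\, d\omega.
\end{align*}

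For the boundary term I would use $\bbP(\|Z\|_2 > 4 r_{\LA}) \leq \exp\bigl( -(7\sqrt{p} + 4\sqrt{2\log N})^2/2 \bigr)$ (cf.\ \eqref{eqn:tail_integral_proof_eq5}) and $4 A_n r_{\LA} \leq 4(\sqrt{p} + \sqrt{2\log N})(2\sqrt{p} + \sqrt{2\log N})$, combining them to see that the exponent is bounded above by $-33p/2 - 16\sqrt{2 p \log N} - 8\log N$, which is comfortably below $-8p - 8\log N$. For the integrated term I would apply $\bbP(\|Z\|_2 > \omega) \leq \exp(-(\omega - \sqrt{p})^2/2)$ and complete the square, writing $A_n \omega - (\omega - \sqrt{p})^2/2 = -(\omega - \sqrt{p} - A_n)^2/2 + A_n \sqrt{p} + A_n^2/2$. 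The remaining tail integral can be handled exactly as in the exponential-random-variable calculation at the end of the proof of Lemma \ref{lemma:LA_tail_integral}, producing a bound of the same order. Summing the two contributions gives the stated $2 e^{-8p - 8\log N}$.

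The main obstacle is the algebraic bookkeeping: one must verify that, even when $A_n$ is as large as $\sqrt{p} + \sqrt{2\log N}$, the quadratic decay $e^{-\omega^2/2}$ on the tail $\omega \geq 4 r_{\LA} = 8\sqrt{p} + 4\sqrt{2\log N}$ still dominates the added linear growth $A_n \omega$ by a margin sufficient to preserve the $e^{-8p - 8\log N}$ rate. The choice $4 r_{\LA}$ in the definition of the local set provides precisely this buffer, and the verification reduces to elementary but somewhat tedious manipulations with the parameters $p$, $\log N$, and $A_n$.
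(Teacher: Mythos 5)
Your proposal is correct and tracks the paper's own proof closely: change of variables to a standard Gaussian, integration by parts, bound the boundary term via the Gaussian-chaos tail estimate of Lemma~\ref{lemma:tech_Gaussian_Chaos}, and complete the square in the exponent for the integrated term. The only inaccuracy is cosmetic: after completing the square the remainder integral is a shifted Gaussian tail $\int_{4r_{\LA}}^{\infty}\exp\bigl(-\tfrac{1}{2}(\omega-\sqrt{p}-A_n)^2\bigr)\,\rmd\omega$, which the paper handles with a direct Gaussian tail bound (absorbing the prefactor $A_n$ into an extra $e^{A_n\sqrt{p}}$), rather than via the exponential-random-variable device used in Lemma~\ref{lemma:LA_tail_integral}. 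Both routes yield the same order of magnitude, so the substance of your argument is sound.
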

\begin{proof}
In this proof, we work on the event $\scrE$ without explicitly referring to it.
Let $\widetilde{Z} = \FisherMAP^{-1/2} Z$, where $Z \sim \cN(0, \bI_{p})$. With slight abuse of notations, let $\bbE$ be the corresponding expectation for $Z$.
Note that
\begin{align*}
\displaystyle
&\int_{\Theta_{n, t}^{\rm c} } 
\exp\left[ A_n \left\| \FisherMAP^{1/2} u \right\|_{2}
\right]  
\dfrac{\exp \left( -\dfrac{1}{2} \left\| \FisherMAP^{1/2} u \right\|_{2}^{2} \right)}{ \displaystyle \int  \exp \left( -\dfrac{1}{2} \left\| \FisherMAP^{1/2} u' \right\|_{2}^{2} \right) \rmd u' } \rmd u  \\
&= 
\bbE \Bigg( \exp \left[ A_n \left\| \FisherMAP^{1/2} \widetilde{Z} \right\|_{2}  
\right] \mathds{1}\left\{ \left\| Z \right\|_{2} \geq 4r_{\LA} \right\}  \Bigg) 
=
\bbE \Bigg( e^{ A_n \left\| Z \right\|_{2} }
\mathds{1}\left\{ \left\| Z \right\|_{2} \geq 4r_{\LA} \right\}  \Bigg).
\end{align*}
As in the proof of Lemma \ref{lemma:LA_tail_integral}, let $p(\cdot)$ be the density of $\|Z\|_2$, which is the derivative of the map $\omega \mapsto S(\omega) = - \bbP(\|Z\|_2 > \omega)$.
Then, the last display equals
\begin{align*}
\int_{4r_{\LA}}^\infty e^{ A_n \omega} p(\omega) \rmd \omega
=
\exp ( 4\delta_{n} r_{\LA}  )
\bbP \left( \left\| Z \right\|_{2} > 4r_{\LA} \right) 
+ 
\int_{4r_{\LA}}^{\infty} A_n \exp\left( A_n \omega  \right)
\bbP \left( \left\| Z \right\|_{2} > \omega \right) \rmd \omega
\end{align*}
by the integration by parts.
Note that
\begin{align*}
    \bbP \left( \left\| Z \right\|_{2} > 4r_{\LA} \right) 
    &= \bbP \left( \left\| Z \right\|_{2} > 8\sqrt{p} + 4\sqrt{2 \log N} \right) \\
    &\leq \bbP \left( \left\| Z \right\|_{2} > \sqrt{p} + \sqrt{\left[ 7\sqrt{p} + 4\sqrt{2 \log N} \right]^{2} } \right) \\
    \overset{\text{Lemma \ref{lemma:tech_Gaussian_Chaos}}}&{\leq} 
    \exp\left( - \dfrac{1}{2} \left[ 7\sqrt{p} + 4\sqrt{2 \log N} \right]^{2}  \right),
\end{align*}
which implies that
\begin{align}
\begin{aligned} \label{eqn:tail_integral2_proof_eq1}
    &\exp ( 4 A_{n} r_{\LA}  ) 
    \bbP \left( \left\| Z \right\|_{2} > 4r_{\LA} \right) \\
    &\leq 
    \exp\left( - \dfrac{1}{2} \left[ 7\sqrt{p} + 4\sqrt{2 \log N} \right]^{2} +  
    4 A_{n} \left[ 2\sqrt{p} + \sqrt{2 \log N} \right]
    \right) \\
    &\leq 
    \exp\left( - \dfrac{33}{2}p - 8 \log N \right),
\end{aligned}    
\end{align}
where the last inequality holds by the assumption $A_{n} \leq \sqrt{p} + \sqrt{2 \log N}$.
At the end of this proof, we will show that
\begin{align}
\begin{aligned} \label{eqn:tail_integral2_proof_eq2}
\int_{4r_{\LA}}^{\infty} A_n \exp\left(  A_n \omega  \right) \bbP \left( \left\| Z \right\|_{2} > \omega \right) \rmd \omega
\leq 
\exp\left( -8p - 8\log N \right).
\end{aligned}    
\end{align}
Therefore, \eqref{eqn:LA_tail_integral2_claim} holds by \eqref{eqn:tail_integral2_proof_eq1} and \eqref{eqn:tail_integral2_proof_eq2}.

To complete the proof, we only need to prove \eqref{eqn:tail_integral2_proof_eq2}. Note that
\begin{align*}
&\int_{4r_{\LA}}^{\infty} A_n \exp\left(  A_n \omega  \right) \bbP \left( \left\| Z \right\|_{2} > \omega \right) \rmd \omega \\
\overset{\text{Lemma \ref{lemma:tech_Gaussian_Chaos}}}&{\leq} 
\int_{4r_{\LA}}^{\infty}  
A_n \exp \left[  A_n \omega - \dfrac{\left( \omega - \sqrt{p} \right)^2}{2} \right] \rmd \omega 
=
\int_{4r_{\LA}}^{\infty}  
A_n \exp \left[  A_n \omega - \dfrac{\omega^2 -2\sqrt{p}\omega + p}{2} \right] \rmd \omega \\
&=
e^{A_n^{2}/2 + A_n\sqrt{p}} A_n
\int_{4r_{\LA}}^{\infty}  
\exp \bigg[  
-\dfrac{1}{2} \bigg( \omega - \left[ \sqrt{p} + A_n \right] \bigg)^{2} 
\bigg] \rmd \omega \\ 
&\leq 
e^{A_n^{2}/2 + 2A_n\sqrt{p}}
\int_{4r_{\LA}}^{\infty}  
\exp \bigg[  
-\dfrac{1}{2} \bigg( \omega - \left[ \sqrt{p} + A_n \right] \bigg)^{2} 
\bigg] \rmd \omega \\
&=
e^{A_n^{2}/2 + 2A_n\sqrt{p} + \log ( \sqrt{2\pi} )}
\int_{6\sqrt{p} + 3\sqrt{2 \log N}}^{\infty} \dfrac{1}{\sqrt{2\pi}} e^{-\omega^2/2} 
\rmd \omega \\
&\leq 
\exp \bigg[ 
\dfrac{A_n^{2}}{2} + 2A_n\sqrt{p} + \log ( \sqrt{2\pi} )
-\dfrac{1}{2} \bigg( 6\sqrt{p} + 3\sqrt{2 \log N} \bigg)^{2}
\bigg] \\
&\leq 
\exp \bigg[ 
-\dfrac{31}{2}p - 8 \log N - 15 \sqrt{2p \log N} + \log(\sqrt{2\pi})
\bigg] \\
&\leq 
\exp \left( -8p - 8\log N  \right),
\end{align*}
where the fourth inequality holds by $A_{n} \leq \sqrt{p} + \sqrt{2 \log N}$.
This completes the proof.
\end{proof}

For $\theta, u \in \Theta$, let
\begin{align} 
\begin{aligned} \label{def:remainder_3_4}
\cR_{t, 3} (\theta, u) 
    &= 
    \widetilde{L}_{t}(\theta + u) - \widetilde{L}_{t}(\theta) 
    - \langle \nabla \widetilde{L}_{t}(\theta), u \rangle 
    - \dfrac{ \langle \nabla^2 \widetilde{L}_{t}(\theta), u^{\otimes 2} \rangle }{2}, \\
\cR_{t, 4} (\theta, u) 
    &= 
    \cR_{t, 3} (\theta, u) 
    - \dfrac{ \langle \nabla^3 \widetilde{L}_{t}(\theta), u^{\otimes 3} \rangle }{6}. 
\end{aligned}
\end{align}
For simplicity, we often use the notations $\cR_{t, 3}(u) = \cR_{t, 3}(\thetaMAP, u)$ and $\cR_{t, 4}(u) = \cR_{t, 4}(\thetaMAP, u)$.

\begin{lemma} \label{lemma:LA_local_integral}
    Suppose that (\textbf{A0}) holds. Also, assume that
    \begin{align} \label{assume:LA_local_intgral}
        (\widehat{\tau}_{3, t} r_{\LA}^{2}) \vee (\widehat{\tau}_{4, t} p^{2}) \leq 1, \quad \forall t \in [T]
    \end{align}
    on an event $\scrE$.
    Then, on $\scrE$,
    \begin{align} \label{eqn:LA_error_general_g}
        \sup_{\| g \|_{\infty} \leq  1}
        \Delta_{\local, t}(g) 
        &\leq 
        K 
        \bigg( 
            \left( \widehat{\tau}_{4, t} + \widehat{\tau}_{3, t}^2 \right) p^2 
            +
            \widehat{\tau}_{3, t} p
            +
            \widehat{\tau}_{3, t}^{3} r_{\LA}^{6}
        \bigg) \\
        \label{eqn:LA_error_symm_g}
        \Delta_{\local, t}(\mathds{1}_{\Theta}) 
        &\leq 
        K 
        \bigg( 
            \left( \widehat{\tau}_{4, t} + \widehat{\tau}_{3, t}^2 \right) p^2 
            +
            \widehat{\tau}_{3, t}^{3} r_{\LA}^{6}
        \bigg)
    \end{align}
    for all $t \in [T]$, where $K > 1$ is a universal constant.
\end{lemma}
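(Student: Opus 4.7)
The plan is to first rewrite $\Delta_{\local, t}(g)$ as a Gaussian expectation via the change of variable $Z = \FisherMAP^{1/2} u$, so that $Z \sim \cN(0, \bI_p)$ under the Gaussian weight in $D_t$ and $\widetilde{Z} = \FisherMAP^{-1/2} Z$. Since $\nabla \widetilde{L}_t(\thetaMAP) = 0$ and $\nabla^2 \widetilde{L}_t(\thetaMAP) = -\FisherMAP$, Taylor's theorem gives $f_t(u, \thetaMAP) = -\|\FisherMAP^{1/2} u\|_2^2/2 + \cR_{t, 3}(u)$, hence
\begin{align*}
    \Delta_{\local, t}(g) = \Big| \bbE\Big[ g(\widetilde{Z}) \big( e^{\cR_{t, 3}(\widetilde{Z})} - 1 \big) \mathds{1}_{\{\|Z\|_2 \leq 4 r_{\LA}\}} \Big] \Big|.
\end{align*}
I would then expand $e^x - 1 = x + x^2/2 + R(x)$ with $|R(x)| \leq |x|^3 e^{|x|}/6$ at $x = \cR_{t, 3}(\widetilde{Z})$ and split $\cR_{t, 3}(u) = T_3(u)/6 + \cR_{t, 4}(u)$ with $T_3(u) = \langle \nabla^3 \widetilde{L}_t(\thetaMAP), u^{\otimes 3} \rangle$, yielding three pieces: linear in $\cR_{t, 3}$, quadratic in $\cR_{t, 3}$, and a cubic remainder weighted by $e^{|\cR_{t, 3}|}$.

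For the linear piece $\bbE[g(\widetilde{Z})\, \cR_{t, 3}(\widetilde{Z})\, \mathds{1}_\cdot]$, the $T_3$-contribution vanishes identically when $g = \mathds{1}_\Theta$ because $T_3(\widetilde{Z})$ is odd in $Z$ while both the Gaussian density and the truncation $\|Z\|_2 \leq 4 r_{\LA}$ are even; this parity cancellation is precisely what removes the $\widehat{\tau}_{3, t}\, p$ term in \eqref{eqn:LA_error_symm_g} relative to \eqref{eqn:LA_error_general_g}. For arbitrary $g$ with $\|g\|_\infty \leq 1$ the same piece is bounded by $\bbE[|T_3(\widetilde{Z})|] \leq (\bbE[T_3(\widetilde{Z})^2])^{1/2} \lesssim \widehat{\tau}_{3, t}\, p$ via a sharp Gaussian chaos inequality, while the $\cR_{t, 4}$ part is controlled by $|\cR_{t, 4}(u)| \leq \widehat{\tau}_{4, t}\, \|\FisherMAP^{1/2} u\|_2^4/24$ combined with $\bbE[\|Z\|_2^4] \lesssim p^2$. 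The quadratic piece is handled via $\bbE[\cR_{t, 3}(\widetilde{Z})^2\, \mathds{1}_\cdot]/2 \lesssim \bbE[T_3^2]/72 + \bbE[\cR_{t, 4}^2]/2$, where the sharp tensor bound $\bbE[T_3^2] \lesssim \widehat{\tau}_{3, t}^2\, p^2$ produces the $\widehat{\tau}_{3, t}^2\, p^2$ term and the second piece is absorbed into $\widehat{\tau}_{4, t}\, p^2$ under \eqref{assume:LA_local_intgral}. For the cubic remainder I would use the pointwise bound $|\cR_{t, 3}(u)| \leq \widehat{\tau}_{3, t}\, \|\FisherMAP^{1/2} u\|_2^3/6 + \widehat{\tau}_{4, t}\, \|\FisherMAP^{1/2} u\|_2^4/24$ together with the localization $\|Z\|_2 \leq 4 r_{\LA}$ and \eqref{assume:LA_local_intgral}, which imply $|\cR_{t, 3}(\widetilde{Z})| \lesssim \|Z\|_2^2 / r_{\LA}$ so that the factor $e^{|\cR_{t, 3}|}$ is absorbed into a residual Gaussian weight such as $e^{-\|Z\|_2^2/4}$; then Gaussian hypercontractivity applied to $|T_3(\widetilde{Z})|^3$ using the sharp $L^2$ bound yields $\bbE[|T_3(\widetilde{Z})|^3] \lesssim \widehat{\tau}_{3, t}^3\, p^3 \lesssim \widehat{\tau}_{3, t}^3\, r_{\LA}^6$.

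The main technical obstacle is the sharp estimate $\bbE[T_3(\widetilde{Z})^2] \lesssim \widehat{\tau}_{3, t}^2\, p^2$: the naive bound from $|T_3(u)| \leq \widehat{\tau}_{3, t}\, \|\FisherMAP^{1/2} u\|_2^3$ and $\bbE[\|Z\|_2^6] \asymp p^3$ only gives $\widehat{\tau}_{3, t}^2\, p^3$, which is insufficient for the linear, quadratic, and (through hypercontractivity) cubic pieces. To recover the correct $p^2$ scaling, one exploits the symmetry of the rescaled tensor $A$ (satisfying $\|A\|_{\op} \leq \widehat{\tau}_{3, t}$) and expands $\bbE[\langle A, Z^{\otimes 3}\rangle^2]$ via Wick/Isserlis into a sum over pair partitions of six indices: the cross-pair contractions yield $\|A\|_F^2 \leq p^2\, \|A\|_{\op}^2$, since $\|A\|_F^2 = \sum_k \|A_{\cdot\, \cdot\, k}\|_F^2 \leq p \sum_k \|A_{\cdot\, \cdot\, k}\|_{\op}^2 \leq p^2\, \|A\|_{\op}^2$, and the within-tensor contractions yield $\sum_k \big(\sum_i A_{iik}\big)^2 \leq p^2\, \|A\|_{\op}^2$ via the decomposition $\langle A, \bI_p \otimes w \rangle = \sum_i \langle A, e_i \otimes e_i \otimes w \rangle$ with each term bounded by $\|A\|_{\op}\, \|w\|_2$. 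These tensor-chaos bounds align with the techniques of \citet{spokoiny2024estimation} and \citet{katsevich2024approximation}; assembling them with the three-term decomposition above produces both \eqref{eqn:LA_error_general_g} and \eqref{eqn:LA_error_symm_g}.
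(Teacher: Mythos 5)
Your proposal follows the same skeleton as the paper — change of variables to a truncated Gaussian expectation, the split $\cR_{t,3} = T_3/6 + \cR_{t,4}$, the parity cancellation of $T_3$ for symmetric $g$, and the sharp chaos estimate $\bbE[T_3(\widetilde{Z})^2] \lesssim \widehat{\tau}_{3,t}^2 p^2$ (which the paper imports as Lemma~\ref{lemma:tech_Gaussian_Tensor_expectation}; your Wick/Isserlis derivation via $\|A\|_{\rm F}^2 \leq p^2\|A\|_{\op}^2$ and $\|M\|_2^2 \leq p^2\|A\|_{\op}^2$ is a correct independent proof of that bound). The linear and quadratic pieces go through as you describe.

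The genuine gap is in the cubic remainder. You expand $e^x - 1 = x + x^2/2 + R(x)$ at $x = \cR_{t,3}$ with $|R(x)| \leq |x|^3 e^{|x|}/6$ and claim that the localized pointwise bound $|\cR_{t,3}(\widetilde{Z})| \lesssim \|Z\|_2^2/r_{\LA}$ lets you absorb $e^{|\cR_{t,3}|}$ into a residual Gaussian weight $e^{-\|Z\|_2^2/4}$. This does not work: replacing the density factor $e^{-\|z\|^2/2}$ by $e^{-\beta\|z\|^2}$ with $\beta < 1/2$ costs a normalization factor $(2\beta)^{-p/2}$, which is exponential in $p$ (e.g.\ $2^{p/2}$ for $\beta = 1/4$, and still $e^{O(p/r_{\LA})} = e^{O(\sqrt{p})}$ for the sharper $\beta = 1/2 - O(1/r_{\LA})$). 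That factor cannot be dominated by the target $\widehat{\tau}_{3,t}^3 r_{\LA}^6 \lesssim \widehat{\tau}_{3,t}^3 p^3$. Hypercontractivity does not rescue this either, since it controls moments with respect to the standard Gaussian measure, not the tilted one. More fundamentally, the pointwise envelope $|T_3(\widetilde{Z})| \leq \widehat{\tau}_{3,t}\|Z\|^3 \asymp \widehat{\tau}_{3,t}p^{3/2}$ is a factor $\sqrt{p}$ larger than the typical value $\widehat{\tau}_{3,t}p$ implied by $\bbE[T_3^2]^{1/2}$, so any argument that folds $e^{|\cR_{t,3}|}$ into a pointwise estimate gives away exactly the chaos concentration that makes the $\widehat{\tau}_{3,t}^3 r_{\LA}^6$ rate possible. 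The paper's route avoids this: it first centers to $\overline{\cR}_{t,3} = \cR_{t,3} - \bbE_{\Theta_{n,t}}\cR_{t,3}$ (the mean contributes only $\bbE_{\Theta_{n,t}}\cR_{t,4} = O(\widehat{\tau}_{4,t}p^2)$ by parity), and then applies Lemma~\ref{lemma:tech_mgf_approximation}, a dedicated MGF-approximation result that bounds $\bbE[(e^{\overline{\cR}_{t,3}} - 1 - \overline{\cR}_{t,3} - \overline{\cR}_{t,3}^2/2)G]$ by $\tfrac{5}{3}\epsilon^3 e^{\epsilon^2}$ with $\epsilon = 8\widehat{\tau}_{3,t}r_{\LA}^2$, using the sub-Weibull tail structure of the truncated centered chaos rather than a pointwise envelope. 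To repair your proof you would need to replace the absorption step by this (or an equivalent) concentration argument; without it, the cubic remainder cannot be brought down to $\widehat{\tau}_{3,t}^3 r_{\LA}^6$.
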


\begin{proof}
In this proof, we work on the event $\scrE$ without explicitly referring to it.
By the definitions,
\begin{align*}
f_{t}(u; \thetaMAP) 
= \cR_{t, 3}(u) - \dfrac{1}{2} \left\| \FisherMAP^{1/2} u \right\|_{2}^{2}
= \cR_{t, 4}(u) + \dfrac{1}{6} \langle \nabla^3 \widetilde{L}_{t} (\thetaMAP), u^{\otimes 3}  \rangle  - \dfrac{1}{2} \left\| \FisherMAP^{1/2} u \right\|_{2}^{2}.
\end{align*}
It follows that
\begin{align} \label{eqn:local_integral_eq0}
f_{t}(u; \thetaMAP) + \dfrac{1}{2} \left\| \FisherMAP^{1/2} u \right\|_{2}^{2}
= \cR_{t, 3}(u) = \cR_{t, 4}(u) + \dfrac{1}{6} \langle \nabla^3 \widetilde{L}_{t} (\thetaMAP), u^{\otimes 3} \rangle.
\end{align}
Hence,
\begin{align*}
\Delta_{\local, t}(g) 
&= \Bigg[ \int \exp \left( -\dfrac{1}{2} \left\| \FisherMAP^{1/2} u \right\|_{2}^{2}  \right) \rmd u  \Bigg]^{-1}     
\left| \int_{\Theta_{n, t}} g(u) \left[ e^{f_{t}(\thetaMAP, u)} - \exp \left( -\dfrac{1}{2} \left\| \FisherMAP^{1/2} u \right\|_{2}^{2}  \right)  \right] \rmd u  \right| \\
&=
\left| 
\displaystyle \int_{\Theta_{n, t}} g(u) \left[  e^{\cR_{t, 3}(u)} - 1 \right] 
\dfrac{\exp \left( -\dfrac{1}{2} \left\| \FisherMAP^{1/2} u \right\|_{2}^{2}  \right)}
{
\displaystyle \int  \exp \left( -\dfrac{1}{2} \left\| \FisherMAP^{1/2} u' \right\|_{2}^{2} \right) \rmd u'
}
\rmd u
\right|.
\end{align*}
Let $\widetilde{Z} = \FisherMAP^{-1/2} Z$, where $Z \sim \cN(0, \bI_{p})$. Let $\bbE_{\widetilde{Z}}$ be the corresponding expectation for $\widetilde{Z}$.
With slight abuse of notations, to simplify the notations, we further denote $\bbE_{A}(\cdot) = \bbE (\cdot \mathds{1}\{ \widetilde{Z} \in A \} )$ for a measurable set $A \subseteq \Theta$.
Then, the right-hand side of the last display is equal to
\begin{align}
\begin{aligned} \label{eqn:local_integral_eq0_2}
&\left| 
    \bbE_{\Theta_{n, t}} 
    \left[ 
    g(\widetilde{Z}) \bigg( e^{\cR_{t, 3}(\widetilde{Z})} - 1 \bigg)
    \right] 
\right| \\
&= 
\left|
    e^{\bbE_{\Theta_{n, t}} \cR_{t, 3}(\widetilde{Z}) } 
    \bbE_{\Theta_{n, t}} 
    \left[ 
    g(\widetilde{Z}) e^{\cR_{t, 3}(\widetilde{Z}) - \bbE_{\Theta_{n, t}} \cR_{t, 3}(\widetilde{Z}) } 
    \right] 
    -
    \bbE_{\Theta_{n, t}} 
    \left[ 
    g(\widetilde{Z}) 
    \right] 
\right| \\
&\leq 
\left|
    e^{ \bbE_{\Theta_{n, t}} \cR_{t, 3}(\widetilde{Z}) } 
    \bbE_{\Theta_{n, t}} 
    \left[ g(\widetilde{Z}) \bigg( e^{ \cR_{t, 3}(\widetilde{Z}) - \bbE_{\Theta_{n, t}} \cR_{t, 3}(\widetilde{Z}) }  -1 \bigg) \right] 
\right| \\
&\qquad +
\left|
    \bigg( e^{\bbE_{\Theta_{n, t}} \cR_{t, 3}(\widetilde{Z}) } -1 \bigg)
    \bbE_{\Theta_{n, t}} \left[ g(\widetilde{Z}) \right] 
\right| \\
&=
\left|
    e^{\bbE_{\Theta_{n, t}} \cR_{t, 3}(\widetilde{Z}) } 
    \bbE_{\Theta_{n, t}} 
    \left[ g(\widetilde{Z}) \bigg( e^{ \overline{\cR}_{t, 3}(\widetilde{Z}) }  -1 \bigg) \right] 
\right| \\
&\qquad +
\left|
    \bigg( e^{\bbE_{\Theta_{n, t}} \cR_{t, 3}(\widetilde{Z}) } -1 \bigg)
    \bbE_{\Theta_{n, t}} \left[ g(\widetilde{Z}) \right] 
\right| \\
&\leq 
\left|
    e^{\bbE_{\Theta_{n, t}} \cR_{t, 3}(\widetilde{Z}) } 
    \bbE_{\Theta_{n, t}} 
    \left[ g(\widetilde{Z}) \bigg( e^{ \overline{\cR}_{t, 3}(\widetilde{Z}) }  -1 \bigg) \right] 
\right|
+
\left|
    e^{\bbE_{\Theta_{n, t}} \cR_{t, 3}(\widetilde{Z}) } -1
\right|,     
\end{aligned}
\end{align}
where $\overline{\cR}_{t, 3}(\cdot) = \cR_{t, 3}(\cdot) - \bbE_{\Theta_{n, t}} \cR_{t, 3}(\widetilde{Z})$, and the last inequality holds by $\| g \|_{\infty} \leq 1$.
To prove \eqref{eqn:LA_error_general_g}, therefore, we only need to bound the following quantities:
\begin{align*}
    ({\rm i}) = \bbE_{\Theta_{n, t}} \cR_{t, 3}(\widetilde{Z}), \quad 
    ({\rm ii}) = \bbE_{\Theta_{n, t}} \left[ g(\widetilde{Z}) \bigg( e^{ \overline{\cR}_{t, 3}(\widetilde{Z}) }  -1 \bigg) \right].
\end{align*}

Firstly, we will obtain an upper bound of $({\rm i})$. 
Note that
\begin{align} \label{eqn:local_integral_eq1}
    \bbE_{ \Theta_{n, t} } \left[ \langle  \nabla^{3} \widetilde{L}_{t}(\thetaMAP), \widetilde{Z}^{\otimes 3} \rangle \right] = 0
\end{align}
by the symmetry of $\Theta_{n, t}$.
It follows that
\begin{align}
\begin{aligned} \label{eqn:local_integral_eq2}
({\rm i}) 
&= 
\bbE_{ \Theta_{n, t} } \left[ \cR_{3, t} ( \widetilde{Z} ) \right]
\overset{\eqref{def:remainder_3_4}}{=}
\bbE_{ \Theta_{n, t} } \left[ \dfrac{1}{6} \langle \nabla^3 \widetilde{L}_{t}(\thetaMAP), \widetilde{Z}^{\otimes 3} \rangle +  \cR_{4, t} ( \widetilde{Z} ) \right] 
\\
& \overset{\eqref{eqn:local_integral_eq1}}{=}
\bbE_{ \Theta_{n, t} } \left[ \cR_{4, t} ( \widetilde{Z} ) \right]
\leq
\sqrt{ \bbE_{ \Theta_{n, t} } \cR_{4, t}^{2} ( \widetilde{Z} ) }.    
\end{aligned}
\end{align}
By Taylor's theorem, for $u \in \Theta_{n, t}$, there exists $\widetilde{u} \in \Theta_{n, t}$ such that
\begin{align*}
\cR_{4, t}( u ) 
= \dfrac{1}{24} \langle \nabla^4 \widetilde{L}_{t}(\thetaMAP + \widetilde{u}), u^{\otimes 4}  \rangle
\overset{\eqref{def:hat_tau_t_radius}}&{\leq }
\dfrac{\widehat{\tau}_{4, t}}{24} \left\| \FisherMAP^{1/2} u \right\|_{2}^{4}.
\end{align*}
Hence,
\begin{align} 
\begin{aligned} \label{eqn:local_integral_eq2_2}
\bbE_{ \Theta_{n, t} } \cR_{4, t}^{2} ( \widetilde{Z} )
&\leq 
\bbE_{ \Theta_{n, t} } \left[  \dfrac{\widehat{\tau}_{4, t}^{2}}{24^{2}} \left\| \FisherMAP^{1/2} \widetilde{Z} \right\|_{2}^{8} \right]
=
\bbE_{ \Theta_{n, t} } \left[  \dfrac{\widehat{\tau}_{4, t}^{2}}{24^{2}} \left\| Z \right\|_{2}^{8} \right] \\
&\leq
\dfrac{\widehat{\tau}_{4, t}^{2}}{24^{2}} \bigg[ \operatorname{tr}(\bI_{p}) + 3 \| \bI_{p} \|_{2} \bigg]^{4}
= 
\dfrac{\widehat{\tau}_{4, t}^{2}}{24^{2}} \left( p + 3 \right)^{4},    
\end{aligned}
\end{align}
where the second inequality holds by Lemma \ref{lemma:tech_Gaussian_Moments}. 
Combining with \eqref{eqn:local_integral_eq2}, the last display implies that
\begin{align} \label{eqn:local_integral_eq2_3}
    ({\rm i})
    \leq  \sqrt{ \bbE_{ \Theta_{n, t} } \cR_{4, t}^{2} ( \widetilde{Z} ) } 
    \leq  \dfrac{\widehat{\tau}_{4, t}}{24} \left( p + 3 \right)^{2}.
\end{align}

Next, we will obtain an upper bound of $|({\rm ii})|$. By Lemma \ref{lemma:tech_mgf_approximation} with $X = \overline{\cR}_{t, 3}(\widetilde{Z})$ and $\epsilon = 8 \widehat{\tau}_{3, t} r_{\LA}^{2}$, we have
\begin{align*}
\left| 
    \bbE_{ \Theta_{n, t} }
    \left[
    \left( e^{\overline{\cR}_{t, 3}(\widetilde{Z})} - 1 - \overline{\cR}_{t, 3}(\widetilde{Z}) - \dfrac{\overline{\cR}_{t, 3}^{2}(u) }{2}  \right)
    g(\widetilde{Z})
    \right]
\right| 
&\leq 
\dfrac{5}{3} \left( 8 \widehat{\tau}_{3, t} r_{\LA}^{2} \right)^{3} e^{ 64 \widehat{\tau}_{3, t}^{2} r_{\LA}^{4} } \\
\overset{\eqref{assume:LA_local_intgral}}&{\leq}
c_1 \widehat{\tau}_{3, t}^{3} r_{\LA}^{6},
\end{align*}
for some universal constant $c_1 > 0$. It follows that
\begin{align*}
&\left| \bbE_{\Theta_{n, t}} \left[ g(\widetilde{Z}) \bigg( e^{ \overline{\cR}_{t, 3}(\widetilde{Z}) }  -1 \bigg) \right] \right| \\
&\leq 
\left| 
    \bbE_{ \Theta_{n, t} }
    \left[
    \left( \overline{\cR}_{t, 3}(\widetilde{Z}) + \dfrac{\overline{\cR}_{t, 3}^{2}(\widetilde{Z}) }{2}  \right)
    g(\widetilde{Z})
    \right]
\right| + c_1 \widehat{\tau}_{3, t}^{3} r_{\LA}^{6} \\
&\leq 
\bbE_{ \Theta_{n, t} }
\left|
    \overline{\cR}_{t, 3}(\widetilde{Z}) 
    g(\widetilde{Z})
\right| + 
\dfrac{1}{2} 
\bbE_{ \Theta_{n, t} }
\left|
    \overline{\cR}_{t, 3}^{2}(\widetilde{Z}) 
    g(\widetilde{Z})
\right| + 
c_1 \widehat{\tau}_{3, t}^{3} r_{\LA}^{6} \\
&\leq 
\bbE_{ \Theta_{n, t} }
\left|
    \overline{\cR}_{t, 3}(\widetilde{Z})
\right| + 
\dfrac{1}{2} 
\bbE_{ \Theta_{n, t} }
    \left[ \overline{\cR}_{t, 3}^{2}(\widetilde{Z}) \right]
+ 
c_1 \widehat{\tau}_{3, t}^{3} r_{\LA}^{6},
\end{align*}
where the last inequality holds by $\|g \|_{\infty} \leq 1$. Let
\begin{align*}
    ({\rm iii}) = \bbE_{ \Theta_{n, t} } \left| \overline{\cR}_{t, 3}(\widetilde{Z}) \right|, \quad
    ({\rm iv}) = \bbE_{ \Theta_{n, t} } \left[ \overline{\cR}_{t, 3}^{2}(\widetilde{Z}) \right].
\end{align*}
To bound $|({\rm ii})|$, we need to obtain upper bounds for $({\rm iii})$ and $({\rm iv})$. Firstly, we bound $({\rm iii})$.
Let $\overline{\cR}_{t, 4}(\cdot) = \cR_{t, 4}(\cdot) - \bbE_{\Theta_{n, t}} \cR_{t, 4}(\widetilde{Z})$. 
By \eqref{eqn:local_integral_eq0} and \eqref{eqn:local_integral_eq1}, we have 
\begin{align*}
    \bbE_{\Theta_{n, t}} \cR_{t, 4}(\widetilde{Z}) = \bbE_{\Theta_{n, t}} \cR_{t, 3}(\widetilde{Z}),
\end{align*}
which implies that, for every $u \in \Theta$, 
\begin{align} \label{eqn:local_integral_eq3}
    \overline{\cR}_{t, 3}(u) - \dfrac{1}{6} \langle \nabla^3 \widetilde{L}_{t} (\thetaMAP), u^{\otimes 3} \rangle = \overline{\cR}_{t, 4}(u).
\end{align}
It follows that
\begin{align*}
    \bbE_{ \Theta_{n, t} } \left| \overline{\cR}_{t, 3}(\widetilde{Z}) - \dfrac{1}{6} \langle \nabla^3 \widetilde{L}_{t} (\thetaMAP), \widetilde{Z}^{\otimes 3} \rangle \right|
    = 
    \bbE_{ \Theta_{n, t} } \left| \overline{\cR}_{t, 4}(\widetilde{Z})  \right|,
\end{align*}
which further implies that
\begin{align*} 
    ({\rm iii}) 
    \leq 
    \bbE_{ \Theta_{n, t} } \left| \overline{\cR}_{t, 4}(\widetilde{Z}) \right| 
    + \bbE_{ \Theta_{n, t} } \left| \dfrac{1}{6} \langle \nabla^3 \widetilde{L}_{t} (\thetaMAP), \widetilde{Z}^{\otimes 3} \rangle \right|.
\end{align*}
Note that
\begin{align*}
    \bbE_{ \Theta_{n, t} } \left| \overline{\cR}_{t, 4}(\widetilde{Z}) \right|
    \leq 
    \sqrt{\bbE_{ \Theta_{n, t} } \overline{\cR}_{t, 4}^{2}(\widetilde{Z})} 
    \leq 
    \sqrt{\bbE_{ \Theta_{n, t} } \cR_{t, 4}^{2}(\widetilde{Z})} 
    \overset{\eqref{eqn:local_integral_eq2_2}}{\leq}
    \dfrac{\widehat{\tau}_{4, t}}{24} \left( p + 3 \right)^{2}.
\end{align*}
Also,
\begin{align} 
\begin{aligned} \label{eqn:local_integral_eq4}
    &\bbE_{ \Theta_{n, t} } \left| \dfrac{1}{6} \langle \nabla^3 \widetilde{L}_{t} (\thetaMAP), \widetilde{Z}^{\otimes 3} \rangle \right|
    \leq
    \dfrac{1}{6} \sqrt{\bbE_{ \Theta_{n, t} } \left| \langle \nabla^3 \widetilde{L}_{t} (\thetaMAP), \widetilde{Z}^{\otimes 3} \rangle \right|^{2} } \\
    \overset{\text{Lemma \ref{lemma:tech_Gaussian_Tensor_expectation} }}&{\leq}
    \dfrac{1}{6} \sqrt{ 15 \widehat{\tau}_{3, t}^{2} \left\| \bI_{p} \right\|_{2} \operatorname{tr}^{2}(\bI_{p}) }
    = \dfrac{1}{6} \sqrt{ 15 \widehat{\tau}_{3, t}^{2} p^{2} }
    = \dfrac{\sqrt{15}}{6} \widehat{\tau}_{3, t} p.    
\end{aligned}
\end{align}
Combining the last three displays, we have
\begin{align*}
    ({\rm iii}) \leq \dfrac{\widehat{\tau}_{4, t}}{24} \left( p + 3 \right)^{2} + \dfrac{\sqrt{15}}{6} \widehat{\tau}_{3, t} p.
\end{align*}
To bound (iv), note that
\begin{align}
\begin{aligned} \label{eqn:local_integral_eq4_2}
({\rm iv}) 
&= 
\bbE_{ \Theta_{n, t} } \left[ \overline{\cR}_{t, 3}^{2}(\widetilde{Z}) \right]
\overset{\eqref{eqn:local_integral_eq3}}{=} 
\bbE_{ \Theta_{n, t} } \left[ \bigg( \dfrac{1}{6} \langle \nabla^3 \widetilde{L}_{t} (\thetaMAP), \widetilde{Z}^{\otimes 3} \rangle + \overline{\cR}_{t, 4}(\widetilde{Z}) \bigg)^{2} \right] \\
&\leq 
\dfrac{1}{18} \bbE_{ \Theta_{n, t} } \left[ \langle \nabla^3 \widetilde{L}_{t} (\thetaMAP), \widetilde{Z}^{\otimes 3} \rangle^{2} \right] +
2\bbE_{ \Theta_{n, t} } \left[ \overline{\cR}_{t, 4}^{2}(\widetilde{Z}) \right] \\
\overset{\substack{ \eqref{eqn:local_integral_eq2_2} \\ \eqref{eqn:local_integral_eq4} }}&{\leq }
\dfrac{15}{18} \widehat{\tau}_{3, t}^{2} p^{2} + 2\dfrac{\widehat{\tau}_{4, t}^{2}}{24^{2}} \left( p + 3 \right)^{4} 
\leq
\dfrac{5}{6}\widehat{\tau}_{3, t}^{2} p^{2} + \dfrac{8}{9} \widehat{\tau}_{4, t}^{2} p^{4},    
\end{aligned}
\end{align}
where the last inequality holds by $(p + 3)^{4} \leq 2^{8}p^{4}$.
Combining the last two displays, we have
\begin{align}
\begin{aligned} \label{eqn:local_integral_eq5}
|({\rm ii})| 
&\leq    
|({\rm iii})| + \dfrac{1}{2} |({\rm iv})| + c_1 \widehat{\tau}_{3, t}^{3} r_{\LA}^{6}  \\
&\leq 
\dfrac{\widehat{\tau}_{4, t}}{24} \left( p + 3 \right)^{2} 
+ 
\dfrac{\sqrt{15}}{6} \widehat{\tau}_{3, t} p 
+ 
\dfrac{5}{12}\widehat{\tau}_{3, t}^{2} p^{2}
+
\dfrac{4}{9} \widehat{\tau}_{4, t}^{2} p^{4}
+ 
c_1 \widehat{\tau}_{3, t}^{3} r_{\LA}^{6} \\
&\leq 
c_2 \bigg[ 
\left( \widehat{\tau}_{3, t}^{2} + \widehat{\tau}_{4, t} \right) p^{2}
+ \widehat{\tau}_{3, t} p
+ \widehat{\tau}_{3, t}^{3} r_{\LA}^{6}
\bigg]
\end{aligned}
\end{align}
for some universal constant $c_2 = c_{2}(c_{1}) > 0$, where the last inequality holds by $\widehat{\tau}_{4, t} p^{2} \leq 1$.

Now, we are ready to prove \eqref{eqn:LA_error_general_g}.
By \eqref{eqn:local_integral_eq2_3} and \eqref{eqn:local_integral_eq5}, the right-hand side of \eqref{eqn:local_integral_eq0_2} is bounded by
\begin{align*} 
&\left|
    e^{ \widehat{\tau}_{4, t} (p + 3)^{2}/24 } 
    \bbE_{\Theta_{n, t}} 
    \left[ g(\widetilde{Z}) \bigg( e^{ \overline{\cR}_{t, 3}(\widetilde{Z}) }  -1 \bigg) \right] 
\right|
+
\left|
    e^{ \widehat{\tau}_{4, t} (p + 3)^{2}/24 } -1
\right| \\
&\leq 
\left|
    e^{ 2\widehat{\tau}_{4, t} p^{2}/3 } 
    \bbE_{\Theta_{n, t}} 
    \left[ g(\widetilde{Z}) \bigg( e^{ \overline{\cR}_{t, 3}(\widetilde{Z}) }  -1 \bigg) \right] 
\right|
+
\left|
    e^{ 2\widehat{\tau}_{4, t} p^{2}/3 } -1
\right| \\
&\lesssim
    \left| \bbE_{\Theta_{n, t}} \bigg[ g(\widetilde{Z}) \bigg( e^{ \overline{\cR}_{t, 3}(\widetilde{Z}) }  -1 \bigg) \bigg] \right|
    +
    \widehat{\tau}_{4, t} p^{2} \\
\overset{\eqref{eqn:local_integral_eq5}}&{\leq}
    c_{2} \bigg[ 
    \left( \widehat{\tau}_{3, t}^{2} + \widehat{\tau}_{4, t} \right) p^{2}
    + \widehat{\tau}_{3, t} p
    + \widehat{\tau}_{3, t}^{3} r_{\LA}^{6}
    \bigg]
    + \widehat{\tau}_{4, t} p^{2} \\
&\leq  
c_{3}
\bigg[ 
\left( \widehat{\tau}_{3, t}^{2} + \widehat{\tau}_{4, t} \right) p^{2}
+ \widehat{\tau}_{3, t} p
+ \widehat{\tau}_{3, t}^{3} r_{\LA}^{6}
\bigg]
\end{align*}
for some positive constants $c_3 = c_3(c_2)$, where the second inequality holds because $\widehat{\tau}_{4, t} p^{2} \leq 1$ and $e^{x} \leq 1 +  2x$ for $x \leq 1.256$.
Therefore, we have
\begin{align*}
    \sup_{\| g \|_{\infty} \leq  1} \Delta_{\local, t}(g)
    &\leq 
    c_{3} \bigg[ 
    \widehat{\tau}_{4, t}^{2} p^{4} 
    + \widehat{\tau}_{3, t} \widehat{\tau}_{4, t} p^3 
    + \left( \widehat{\tau}_{3, t}^{2} + \widehat{\tau}_{4, t} \right) p^{2}
    + \widehat{\tau}_{3, t} p
    + \widehat{\tau}_{3, t}^{3} r_{\LA}^{6}
    \bigg],    
\end{align*}
which completes the proof of \eqref{eqn:LA_error_general_g}.

The proof of \eqref{eqn:LA_error_symm_g} is similar. However, there are some differences in obtaining the upper bound of $({\rm ii})$. 
Consider the case where $g(\cdot)$ is symmetric, meaning $g(u) = g(-u)$ for any $u \in \Theta_{n, t}$. 
Note that
\begin{align}
\begin{aligned} \label{eqn:local_integral_eq6}
&\left| \bbE_{\Theta_{n, t}} \left[ g(\widetilde{Z}) \bigg( e^{ \overline{\cR}_{t, 3}(\widetilde{Z}) }  -1 \bigg) \right] \right| \\
&\leq 
\left| 
    \bbE_{ \Theta_{n, t} }
    \left[
    \left( \overline{\cR}_{t, 3}(\widetilde{Z}) + \dfrac{\overline{\cR}_{t, 3}^{2}(\widetilde{Z}) }{2}  \right)
    g(\widetilde{Z})
    \right]
\right| + c_{1} \widehat{\tau}_{3, t}^{3} r_{\LA}^{6} \\
\overset{\eqref{eqn:local_integral_eq3}}&{=} 
\left| 
    \bbE_{ \Theta_{n, t} }
    \left[
    \dfrac{1}{6} \langle \nabla^3 \widetilde{L}_{t} (\thetaMAP), \widetilde{Z}^{\otimes 3} \rangle g(\widetilde{Z})
    + \overline{\cR}_{t, 4}(\widetilde{Z})g(\widetilde{Z})
    + \dfrac{\overline{\cR}_{t, 3}^{2}(\widetilde{Z}) }{2}g(\widetilde{Z})
    \right]
\right| + c_{1} \widehat{\tau}_{3, t}^{3} r_{\LA}^{6} \\
&=
\left| 
    \bbE_{ \Theta_{n, t} }
    \left[
    \overline{\cR}_{t, 4}(\widetilde{Z})g(\widetilde{Z})
    + \dfrac{ \overline{\cR}_{t, 3}^{2}(\widetilde{Z}) }{2}g(\widetilde{Z})
    \right]
\right| + c_{1} \widehat{\tau}_{3, t}^{3} r_{\LA}^{6} \\
&\leq 
    \bbE_{ \Theta_{n, t} }
    \left| 
    \overline{\cR}_{t, 4}(\widetilde{Z}) 
    \right|
    +
    \dfrac{1}{2}
    \bbE_{ \Theta_{n, t} }
    \overline{\cR}_{t, 3}^{2}(\widetilde{Z})
    + 
    c_1 \widehat{\tau}_{3, t}^{3} r_{\LA}^{6} \\
&\leq 
    \sqrt{\bbE_{ \Theta_{n, t} }
    \overline{\cR}_{t, 4}^{2}(\widetilde{Z}) 
    }
    +
    \dfrac{1}{2}
    \bbE_{ \Theta_{n, t} }
    \overline{\cR}_{t, 3}^{2}(\widetilde{Z})
    + 
    c_1 \widehat{\tau}_{3, t}^{3} r_{\LA}^{6} \\
\overset{\substack{\eqref{eqn:local_integral_eq2_2} \\ \eqref{eqn:local_integral_eq4_2}}}&{\lesssim}
    \widehat{\tau}_{4, t} p^{2} 
    + 
    \widehat{\tau}_{3, t}^{2} p^{2} 
    + 
    c_1 \widehat{\tau}_{3, t}^{3} r_{\LA}^{6} 
\leq 
c_{4} \bigg[ 
\left( \widehat{\tau}_{3, t}^{2} + \widehat{\tau}_{4, t} \right) p^{2}
+
\widehat{\tau}_{3, t}^{3} r_{\LA}^{6}
\bigg] 
\end{aligned}
\end{align}
for some universal constant $c_{4} = c_{4}(c_{1}) > 0$, where the second equality holds by the symmetry of $\Theta_{n, t}$ and $g(\cdot)$. 
For any symmetric $g(\cdot)$, therefore, we have
\begin{align*}
&\Delta_{\local, t}(g)    
\overset{\eqref{eqn:local_integral_eq0_2}}{\leq}
    \left|
        e^{\bbE_{\Theta_{n, t}} \cR_{t, 3}(\widetilde{Z}) } 
    \right|
    \left|    
        \bbE_{\Theta_{n, t}} 
        \left[ g(\widetilde{Z}) \bigg( e^{ \overline{\cR}_{t, 3}(\widetilde{Z}) }  -1 \bigg) \right] 
    \right|
    +
    \left|
        e^{\bbE_{\Theta_{n, t}} \cR_{t, 3}(\widetilde{Z}) } -1
    \right| \\
\overset{\eqref{eqn:local_integral_eq2_3}}&{\leq}
    \left|
        e^{ 2\widehat{\tau}_{4, t} p^{2}/3 } 
    \right|
    \left|  
        \bbE_{\Theta_{n, t}} 
        \left[ g(\widetilde{Z}) \bigg( e^{ \overline{\cR}_{t, 3}(\widetilde{Z}) }  -1 \bigg) \right] 
    \right|
    +
    \left|
        e^{ 2\widehat{\tau}_{4, t} p^{2}/3 } -1
    \right| \\
\overset{\eqref{eqn:local_integral_eq6}}&{\lesssim}
    c_{4} \bigg[ 
    \left( \widehat{\tau}_{3, t}^{2} + 
    \widehat{\tau}_{4, t} \right) p^{2} +
    \widehat{\tau}_{3, t}^{3} r_{\LA}^{6}
    \bigg] +
    \widehat{\tau}_{4, t} p^{2} 
\leq 
    c_{5} 
    \bigg[ 
    \left( \widehat{\tau}_{3, t}^{2} + 
    \widehat{\tau}_{4, t} \right) p^{2} +
    \widehat{\tau}_{3, t}^{3} r_{\LA}^{6}
    \bigg]
\end{align*}
for some universal constants $c_{5} = c_{5}(c_{4}) > 0$.  
This completes the proof of \eqref{eqn:LA_error_symm_g}.
\end{proof}

\subsection{Proofs of Theorem \ref{thm:LA_TV}, \ref{thm:LA_KL}}

\begin{proof}[Proof of Theorem \ref{thm:LA_TV}]
Since the total variation distance is bounded by $2$, note that the inequality in \eqref{claim:LA_TV} always holds provided that $(\widehat{\tau}_{3, t} r_{\LA}^{2}) \vee (\widehat{\tau}_{4, t} p^{2}) \geq C$ for some universal constant $C > 0$ and $K = K(C)$ is large enough.
Hence, we may assume that 
\begin{align}
\begin{aligned} \label{assume:LA_TV_acc}
    \left( \widehat{\tau}_{3, t} r_{\LA}^{2} \right)
    \vee
    \left( \widehat{\tau}_{4, t} p^{2} \right)
    \leq \delta, \quad \forall t \in [T]
\end{aligned}
\end{align}  
for some small enough constant $\delta$ on an event $\scrE_{1}$.
In this proof, we work on the event $\scrE_{1}$ without explicitly referring to it. 
By \eqref{assume:LA_TV_acc} and $r_{\LA} \geq 1$, we have
\begin{align*}
   \widehat{\tau}_{3, t} r_{\LA} \leq \widehat{\tau}_{3, t} r_{\LA}^{2} \leq \delta \leq 1/8, \quad N \geq 2,
\end{align*}
which allows us to utilize Lemma \ref{lemma:LA_tail_integral}.
By Lemma \ref{lemma:LA_tail_integral}, we have
\begin{align*}
    \sup_{\| g \|_{\infty} \leq  1} \Delta_{\tail, \widetilde{\Pi}, t}(g) 
        &\leq 2e^{-8\log N - 8p}, \\
    \sup_{\| g \|_{\infty} \leq  1} \Delta_{\tail, \LA, t} (g)
        &\leq e^{-16\log N - 49p/2},
\end{align*}
which, combining with $p \geq 1$, further implies that
\begin{align*}
    \Delta_{\tail, \widetilde{\Pi}, t}(\mathds{1}_{\Theta})\vee \Delta_{\tail, \LA, t}(\mathds{1}_{\Theta}) \leq 1/4.
\end{align*}
Also, by \eqref{assume:LA_TV_acc}, Lemma \ref{lemma:LA_local_integral} implies that
\begin{align*}
    \Delta_{\local, t}(\mathds{1}_{\Theta}) 
    &\leq 
    K
    \bigg( 
        \left[ \widehat{\tau}_{4, t} + \widehat{\tau}_{3, t}^2 \right] p^2 
        +
        \widehat{\tau}_{3, t}^{3} r_{\LA}^{6}
    \bigg) 
    \overset{\eqref{assume:LA_TV_acc}}{\leq}
    5 K_{1} \delta
    \leq 
    \dfrac{1}{4},
\end{align*}
where $K_{1}$ denotes the constant $K$ in Lemma \ref{lemma:LA_local_integral} and $\delta = \delta(K_{1})$ is small enough constant.
By Lemma \ref{lemma:LA_TV_integral}, it follows that
\begin{align*}
    &d_{V} \left( \Pi_{t}^{\LA}(\cdot),  \widetilde{\Pi}_{t}\left(\cdot \mid \bD_{t} \right) \right) 
    = 
    \sup_{\| g \|_{\infty} \leq 1} \left| \cI_{\widetilde{\Pi}, t}(g) - \cI_{\LA, t}(g) \right| \\
\overset{\text{Lemma \ref{lemma:LA_TV_integral}}}&{\leq}
    2 \sup_{\| g \|_{\infty} \leq 1} \bigg\{ \Delta_{\local, t}(g) + \Delta_{\local, t}(\mathds{1}_{\Theta}) + 2\Delta_{\tail, t}(g) + 2\Delta_{\tail, t}(\mathds{1}_{\Theta}) \bigg\} \\
\overset{\text{Lemma \ref{lemma:LA_tail_integral}}}&{\leq}
    \sup_{\| g \|_{\infty} \leq 1} \bigg\{ 2\Delta_{\local, t}(g) \bigg\} + 2\Delta_{\local, t}(\mathds{1}_{\Theta}) + 16e^{-8\log N - 8p} \\
\overset{\text{Lemma \ref{lemma:LA_local_integral}}}&{\leq}
        4K_{1}
        \bigg( 
            \left[ \widehat{\tau}_{4, t} + \widehat{\tau}_{3, t}^2 \right] p^2 
            +
            \widehat{\tau}_{3, t} p
            +
            \widehat{\tau}_{3, t}^{3} r_{\LA}^{6}
        \bigg)
        + 16e^{-8\log N - 8p} \\
&\leq 
        \left( 4K_{1} \vee 16 \right)
        \bigg( 
            \left[ \widehat{\tau}_{4, t} + \widehat{\tau}_{3, t}^2 \right] p^2 
            +
            \widehat{\tau}_{3, t} p
            +
            \widehat{\tau}_{3, t}^{3} r_{\LA}^{6}
            +
            e^{-8\log N - 8p}
        \bigg) \\
&\lesssim
        \left( 4K_{1} \vee 16 \right)
        \bigg( 
            \left[ \widehat{\tau}_{4, t} + \widehat{\tau}_{3, t}^2 \right] p^2 
            +
            \widehat{\tau}_{3, t} p
            +
            \widehat{\tau}_{3, t}^{3}\log^{3}N
            +
            e^{-8\log N - 8p}
        \bigg),
\end{align*}
which completes the proof.
\end{proof}

\begin{remark}
    The indicator function $\mathds{1}_{\Theta}$ in \eqref{eqn:LA_error_symm_g} can be replaced by any symmetric function $g$
\end{remark}

\begin{proof}[Proof of Theorem \ref{thm:LA_KL}]
In this proof, we will work on the event $\scrE_{1}$ without explicitly referring to it.
Let $\widetilde{Z} = \FisherMAP^{-1/2} Z$, where $Z \sim \cN(0, \bI_{p})$, and $\bbE_{\widetilde{Z}}$ be the corresponding expectation.
As in the proof of Lemma \ref{lemma:LA_local_integral}, we also use the notation $\bbE_{A}(\cdot) = \bbE_{\widetilde{Z}} (\cdot \mathds{1}\{ \widetilde{Z} \in A \} )$ for a measurable set $\bA \subseteq \Theta$.

For $u \in \Theta$, note that
\begin{align*}
&\log (\pi_{t}^{\LA}(\thetaMAP + u)/ \widetilde{\pi}_{t}(\thetaMAP + u \mid \bD_t)) \\
&= 
    -\log \left[ \dfrac{e^{\widetilde{L}_{t}(\thetaMAP + u)}}{\int e^{\widetilde{L}_{t}(\thetaMAP + u')} \rmd u' } \right]
    - \dfrac{1}{2} \left\| \FisherMAP^{1/2}\left( \thetaMAP + u - \thetaMAP \right) \right\|_{2}^{2} \\
    &\qquad - \log \left[ \int \exp \left( -\dfrac{1}{2} \left\| \FisherMAP^{1/2} \left( \thetaMAP + u - \thetaMAP \right) \right\|_{2}^{2} \right) \rmd u \right] \\
\overset{\eqref{def:f_t}}&{=}
    - \log \left[ \dfrac{e^{f_{t}(u; \thetaMAP)}}{\int e^{f_{t}(u'; \thetaMAP)} \rmd u' } \right]
    - \dfrac{1}{2} \left\| \FisherMAP^{1/2} u \right\|_{2}^{2}
    - \log \left[ \int \exp \left( -\dfrac{1}{2} \left\| \FisherMAP^{1/2} u \right\|_{2}^{2} \right) \rmd u \right] \\
&=
    - f_{t}(u; \thetaMAP)
    - \dfrac{1}{2} \left\| \FisherMAP^{1/2} u \right\|_{2}^{2}
    + \log \left[ \int e^{f_{t}(u; \thetaMAP)} \rmd u \right]
    - \log \left[ \int \exp \left( -\dfrac{1}{2} \left\| \FisherMAP^{1/2} u \right\|_{2}^{2} \right) \rmd u \right] \\
\overset{\eqref{def:remainder_3_4}}&{=}
    - \cR_{t, 3}(\thetaMAP, u)
    + \log \left[ \int e^{f_{t}(u; \thetaMAP)} \rmd u \right]
    - \log \left[ \int \exp \left( -\dfrac{1}{2} \left\| \FisherMAP^{1/2} u \right\|_{2}^{2} \right) \rmd u \right] \\
&= 
- \cR_{t, 3}(\thetaMAP, u) + W_{n, t},
\end{align*}
where 
\begin{align*}
W_{n, t} 
= \log \left[ \int e^{f_{t}(u; \thetaMAP)} \rmd u \right] - \log \left[ \int \exp \left( -\dfrac{1}{2} \left\| \FisherMAP^{1/2} u \right\|_{2}^{2} \right) \rmd u \right].     
\end{align*}
Hence,
\begin{align*}
K \left( \Pi_{t}^{\LA}(\cdot); \ \widetilde{\Pi}_{t}\left(\cdot \mid \bD_{t} \right) \right)
&= \displaystyle \int \log \left[ \dfrac{\pi_{t}^{\LA}(\thetaMAP + u)}{\widetilde{\pi}_{t}(\thetaMAP + u \mid \bD_t)} \right] \pi_{t}^{\LA}(\thetaMAP + u) \rmd u \\
&= \bbE_{\widetilde{Z}} \left[ - \cR_{t, 3}(\widetilde{Z}) \right] + W_{n, t}.
\end{align*}
We will obtain upper bounds for the following quantities:
\begin{align*}
    ({\rm i}) = \bbE_{\widetilde{Z}} \left[ - \cR_{t, 3}(\widetilde{Z}) \right], \quad 
    ({\rm ii}) = W_{n, t}. 
\end{align*}

Firstly, we will obtain an upper bound of $({\rm i})$. 
Note that
\begin{align*}
\bbE_{\widetilde{Z}} \left[ - \cR_{t, 3}(\widetilde{Z}) \right]
&= 
\bbE_{ \Theta_{n, t} } \left[ - \cR_{t, 3}(\widetilde{Z}) \right] + \bbE_{ \Theta_{n, t}^{\rm c} } \left[ - \cR_{t, 3}(\widetilde{Z}) \right] \\
\overset{\eqref{eqn:local_integral_eq0}}&{=}
\bbE_{ \Theta_{n, t} } \left[ - \cR_{t, 4}(\widetilde{Z}) \right] + \bbE_{ \Theta_{n, t}^{\rm c} } \left[ - \cR_{t, 3}(\widetilde{Z}) \right]
\end{align*}
For the first term in the right-hand side of the last display, note that
\begin{align} 
\begin{aligned} \label{eqn:LA_KL_error_eq1}
\bbE_{ \Theta_{n, t} } \left[ - \cR_{t, 4}(\widetilde{Z}) \right]
\leq 
\bbE_{ \Theta_{n, t} } \left| \cR_{t, 4}(\widetilde{Z}) \right|
\leq 
\sqrt{\bbE_{ \Theta_{n, t} } \cR_{t, 4}^{2}(\widetilde{Z})} 
\overset{\eqref{eqn:local_integral_eq2_3}}{\leq}
\dfrac{1}{24} \widehat{\tau}_{4, t} (p + 3)^2.    
\end{aligned}
\end{align}
Also, for $u \in \Theta_{n, t}^{\rm c}$, we have
\begin{align*}
- \cR_{t, 3}(u)
&=
- \left[ \widetilde{L}_{t}(\thetaMAP + u) - \widetilde{L}_{t}(\thetaMAP) - \langle \nabla \widetilde{L}_{t}(\thetaMAP), u \rangle + \dfrac{1}{2} \left\| \FisherMAP^{1/2} u \right\|_{2}^{2} \right] \\
&= 
\dfrac{1}{2} \left\| \FisherTilde[t]{\theta_{t}^{\circ}}^{1/2} u \right\|_{2}^{2} - 
\dfrac{1}{2} \left\| \FisherMAP^{1/2} u \right\|_{2}^{2} 
\leq 
\dfrac{1}{2} \left\| \FisherTilde[t]{\theta_{t}^{\circ}}^{1/2} u \right\|_{2}^{2}
\end{align*}
for some $\theta_{t}^{\circ} = \theta_{t}^{\circ}(\thetaMAP, u) \in \Theta (\thetaMAP, \FisherMAP, \| \FisherMAP^{1/2} u \|_{2})$ by Taylor's theorem.
Note that $\theta_{t}^{\circ}$ may not be located in $\Theta (\thetaMAP, \FisherMAP, 4r_{\LA})$.
Let $r = \| \FisherMAP^{1/2} u \|_{2} > 4 r_{\LA} > 1$.
Then,
\begin{align*}
\dfrac{1}{2} \left\| \FisherTilde[t]{\theta_{t}^{\circ}}^{1/2} u \right\|_{2}^{2}
&= \dfrac{1}{2} u^\top \FisherMAP^{1/2} \left(\bI_p + \FisherMAP^{-1/2} \FisherTilde[t]{\theta_{t}^{\circ}} \FisherMAP^{-1/2} - \bI_p \right) \FisherMAP^{1/2}  u
\\
&\leq 
\dfrac{1}{2} \left( 1 +  \left\| \FisherMAP^{-1/2} \FisherTilde[t]{\theta_{t}^{\circ}} \FisherMAP^{-1/2} - \bI_{p} \right\|_{2} \right) \left\| \FisherMAP^{1/2} u \right\|_{2}^{2} \\
\overset{\text{Lemma \ref{lemma:tech_Fisher_smooth}}}&{\leq}
\dfrac{1}{2} \left( 1 +  \widehat{\tau}_{3, t, r} \left\| \FisherMAP^{1/2} u \right\|_{2} \right) \left\| \FisherMAP^{1/2} u \right\|_{2}^{2} \\
&\leq 
\left( 1  \vee \widehat{\tau}_{3, t, r} \right) \left\| \FisherMAP^{1/2} u \right\|_{2}^{3}
=
\left( 1  \vee \widehat{\tau}_{3, t, r} \right) \exp \left( 3 \log \left\| \FisherMAP^{1/2} u \right\|_{2} \right) \\
&\leq 
\left( 1  \vee \widehat{\tau}_{3, t, r} \right) \exp \left( 3 \left\| \FisherMAP^{1/2} u \right\|_{2} \right) \\
&\leq 
N e^{8p} \exp\bigg( \left( \sqrt{p} + \sqrt{2 \log N} - 3 \right) \left\| \FisherMAP^{1/2} u \right\|_{2} \bigg)
\exp \left( 3 \left\| \FisherMAP^{1/2} u \right\|_{2} \right)
\end{align*}
where the last inequality holds by (\textbf{KL}).
It follows that
\begin{align}
\begin{aligned} \label{eqn:LA_KL_error_eq2}
&\bbE_{ \Theta_{n, t}^{\rm c} } \left[ - \cR_{t, 3}(\widetilde{Z}) \right]
\leq 
\bbE_{ \Theta_{n, t}^{\rm c} } \bigg[ 
    \dfrac{1}{2} \left\| \FisherTilde[t]{\theta_{t}^{\circ}}^{1/2} \widetilde{Z} \right\|_{2}^{2}
\bigg] \\
&\leq
N e^{8p}  \bbE_{ \Theta_{n, t}^{\rm c} } \Bigg[
\exp\bigg\{ \left( \sqrt{p} + \sqrt{2 \log N} - 3 \right) \left\| \FisherMAP^{1/2} \widetilde{Z} \right\|_{2} \bigg\}
\exp\left( 3 \left\| \FisherMAP^{1/2} \widetilde{Z} \right\|_{2} \right) \Bigg] \\
&=
N e^{8p} \bbE_{ \Theta_{n, t}^{\rm c} } \Bigg[
\exp\bigg( \left[ \sqrt{p} + \sqrt{2 \log N} \right] \left\| \FisherMAP^{1/2} \widetilde{Z} \right\|_{2} \bigg) \Bigg] \\
\overset{\substack{ \text{Lemma \ref{lemma:LA_tail_integral2}}}}&{\leq}
N e^{8p} e^{-8p - 8\log N} 
=
e^{- 7\log N}.
\end{aligned}
\end{align}
Therefore, \eqref{eqn:LA_KL_error_eq1} and \eqref{eqn:LA_KL_error_eq2} imply that
\begin{align} \label{eqn:LA_KL_error_eq3}
    ({\rm i}) = 
    \bbE_{ \Theta_{n, t} } \left[ - \cR_{t, 4}(\widetilde{Z}) \right] + \bbE_{ \Theta_{n, t}^{\rm c} } \left[ - \cR_{t, 3}(\widetilde{Z}) \right]
    \leq
    \dfrac{1}{24} \widehat{\tau}_{4, t} (p + 3)^2 + e^{- 7\log N}  
\end{align}

Next, we will obtain an upper bound of $({\rm ii})$. Note that
\begin{align*} 
&\left| e^{W_{n, t}} - 1 \right|
= 
    \left| \dfrac{ \displaystyle \int  \bigg( e^{f_{t}(u; \thetaMAP)} - \exp\left[ -\dfrac{1}{2}\left\| \FisherMAP[t]^{1/2}u \right\|_{2}^{2} \right]  \bigg)   \rmd u }{ \displaystyle \int  \exp\left[ -\dfrac{1}{2}\left\| \FisherMAP[t]^{1/2}u \right\|_{2}^{2} \right]  \rmd u }  \right| \\
&\leq 
    \left| \dfrac{ \displaystyle \int_{\Theta_{n, t}} \bigg( e^{f_{t}(u; \thetaMAP)} - \exp\left[ -\dfrac{1}{2}\left\| \FisherMAP[t]^{1/2}u \right\|_{2}^{2} \right]  \bigg)   \rmd u }{ \displaystyle \int  \exp\left[ -\dfrac{1}{2}\left\| \FisherMAP[t]^{1/2}u \right\|_{2}^{2} \right]  \rmd u }  \right|
    +
    \left| \dfrac{ \displaystyle \int_{\Theta_{n, t}^{\rm c}} e^{f_{t}(u; \thetaMAP)}  \rmd u }{ \displaystyle \int  \exp\left[ -\dfrac{1}{2}\left\| \FisherMAP[t]^{1/2}u \right\|_{2}^{2} \right]  \rmd u }  \right| \\
    &\qquad \qquad  +
    \left| \dfrac{ \displaystyle \int_{\Theta_{n, t}^{\rm c}} \exp\left[ -\dfrac{1}{2}\left\| \FisherMAP[t]^{1/2}u \right\|_{2}^{2} \right]  \rmd u }{ \displaystyle \int  \exp\left[ -\dfrac{1}{2}\left\| \FisherMAP[t]^{1/2}u \right\|_{2}^{2} \right]  \rmd u }  \right| \\
&\leq 
    \Delta_{\local, t}(\mathds{1}_{\Theta}) + \Delta_{\tail, \widetilde{\Pi}, t}(\mathds{1}_{\Theta})+ \Delta_{\tail, \LA, t}(\mathds{1}_{\Theta}) \\
&\leq 
    K_{1} 
        \bigg( 
            \left[ \widehat{\tau}_{4, t} + \widehat{\tau}_{3, t}^2 \right] p^2 
            +
            \widehat{\tau}_{3, t}^{3} r_{\LA}^{6}
        \bigg)   
    + 3e^{-8\log N - 8p},    
\end{align*}
where the last inequality holds by Lemmas \ref{lemma:LA_tail_integral} and \ref{lemma:LA_local_integral}, and $K_{1}$ denotes the constant $K$ in Lemma \ref{lemma:LA_local_integral}.
By $1 + x \leq e^{x}$ for $x \in \bbR$, it follows that
\begin{align} \label{eqn:LA_KL_error_eq4}
    ({\rm ii}) = W_{n, t} 
    \leq  
    K_{1} 
        \bigg( 
            \left[ \widehat{\tau}_{4, t} + \widehat{\tau}_{3, t}^2 \right] p^2 
            +
            \widehat{\tau}_{3, t}^{3} r_{\LA}^{6}
        \bigg)   
    + 3e^{-8\log N - 8p}.
\end{align}

By \eqref{eqn:LA_KL_error_eq3} and \eqref{eqn:LA_KL_error_eq4}, therefore, we have 
\begin{align*}
    &K \left( \Pi_{t}^{\LA}(\cdot); \ \widetilde{\Pi}_{t}\left(\cdot \mid \bD_{t} \right) \right) \\
    &\leq 
    \dfrac{1}{24} \widehat{\tau}_{4, t} (p + 3)^2 + e^{- 7\log N} 
    + K_{1} 
        \bigg( 
            \left[ \widehat{\tau}_{4, t} + \widehat{\tau}_{3, t}^2 \right] p^2 
            +
            \widehat{\tau}_{3, t}^{3} r_{\LA}^{6}
        \bigg)   
    + 3e^{-8\log N - 8p} \\
    &\leq  
    K_{2} 
        \bigg( 
            \left[ \widehat{\tau}_{4, t} + \widehat{\tau}_{3, t}^2 \right] p^2 
            +
            \widehat{\tau}_{3, t}^{3} r_{\LA}^{6}
            +
            e^{- 7\log N}
        \bigg)  \\
    &\lesssim
        K_{2}
        \bigg( 
            \left[ \widehat{\tau}_{4, t} + \widehat{\tau}_{3, t}^2 \right] p^2 
            +
            \widehat{\tau}_{3, t}^{3} \log^{3} N
            +
            e^{- 7\log N}
        \bigg),
\end{align*}
for some universal constant $K_{2} = K_{2}(K_{1}) > 0$.
This completes the proof.
\end{proof}

\subsection{Proof of Theorem \ref{thm:VB_KL}}

\begin{proof}[Proof of Theorem \ref{thm:VB_KL}]
In this proof, we will work on the event $\scrE_{1}$ without explicitly referring to it.
By Theorems \ref{thm:LA_TV} and \ref{thm:LA_KL}, we have
\begin{align*}
    d_{V} \left( \Pi_{t}^{\LA}(\cdot),  \widetilde{\Pi}_{t}\left(\cdot \mid \bD_{t} \right) \right) \leq K_1 \epsilon_{n, t, \TV}, \quad
    K \left( \Pi_{t}^{\LA}(\cdot),  \widetilde{\Pi}_{t}\left(\cdot \mid \bD_{t} \right) \right) \leq K_2 \epsilon_{n, t, \KL}^{2},
\end{align*}
where $K_1$ and $K_2$ denote the constant $K$ in Theorems \ref{thm:LA_TV} and \ref{thm:LA_KL}, respectively.
By the definition of $\Pi_{t}(\cdot)$, it follows that
\begin{align*}
    K \left( \Pi_{t}(\cdot); \ \widetilde{\Pi}_{t}\left(\cdot \mid \bD_{t} \right) \right) 
    \leq 
    K \left( \Pi_{t}^{\LA}(\cdot); \ \widetilde{\Pi}_{t}\left(\cdot \mid \bD_{t} \right) \right) 
    \leq K_2 \epsilon_{n, t, \KL}^{2}.
\end{align*}
By Pinsker's inequality, we have
\begin{align*}
    &d_{V}\left( \Pi_{t}(\cdot),  \widetilde{\Pi}_{t}\left(\cdot \mid \bD_{t} \right) \right)
    \leq 
    \sqrt{ \dfrac{1}{2} K \left( \Pi_{t}(\cdot),  \widetilde{\Pi}_{t}\left(\cdot \mid \bD_{t} \right) \right)}
    \leq 
    \sqrt{\dfrac{K_2}{2}} \epsilon_{n, t, \KL}, \\
    &d_{V}\left( \Pi_{t}^{\LA}(\cdot),  \widetilde{\Pi}_{t}\left(\cdot \mid \bD_{t} \right) \right)
    \leq 
    \sqrt{ \dfrac{1}{2} K \left( \Pi_{t}^{\LA}(\cdot),  \widetilde{\Pi}_{t}\left(\cdot \mid \bD_{t} \right) \right)}
    \leq 
    \sqrt{\dfrac{K_2}{2}} \epsilon_{n, t, \KL}.
\end{align*}
By taking the constant $K$ in Theorem \ref{thm:VB_KL} as $K = K_2 \vee \sqrt{K_2/2}$, we complete the proof.
\end{proof}

For $t \in [T]$, let
\begin{align}
\begin{aligned} \label{def:VB_quantities}
    \Delta_{t} &= \left\| \bOmega_{t}^{1/2} \left( \mu_t - \thetaMAP \right) \right\|_{2} \vee
    \left\| \FisherMAP^{1/2} \left( \mu_t - \thetaMAP \right) \right\|_{2} \\
    &\qquad \vee
    \left\| \bOmega_{t}^{-1/2} \FisherMAP \bOmega_{t}^{-1/2} - \bI_{p} \right\|_{\rm F} \vee
    \left\| \FisherMAP^{-1/2} \bOmega_{t} \FisherMAP^{-1/2} - \bI_{p} \right\|_{\rm F}.
\end{aligned}
\end{align}

\begin{corollary} \label{coro:VB_KL_Delta}
    Suppose that conditions in Theorem \ref{thm:VB_KL} hold. Also, assume that
    \begin{align*}
        \epsilon_{n, t, \KL} \leq (1200 K)^{-1}, \quad  \forall t \in [T]
    \end{align*}
    on $\scrE_{1}$ defined in Theorem \ref{thm:VB_KL}, where $K$ is the universal constant specified in Theorem \ref{thm:VB_KL}.
    Then, on $\scrE_{1}$, 
    \begin{align*}
        \Delta_{t} \leq 400 K \epsilon_{n, t, \KL}, \quad \forall t \in [T].
    \end{align*}
\end{corollary}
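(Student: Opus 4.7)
The plan is to compare the two Gaussians $\Pi_t = \cN(\mu_t, \bOmega_t^{-1})$ and $\Pi_t^{\LA} = \cN(\thetaMAP, \FisherMAP^{-1})$ directly in total variation, and then invoke the fact that two Gaussians close in TV must have comparable mean and covariance parameters in the precise sense captured by the four terms defining $\Delta_t$. Passing through TV is the natural route here because the KL divergence does not satisfy a triangle inequality, whereas the hypotheses in the statement are expressed through the KL-type bound $\epsilon_{n,t,\KL}$.

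First I would combine Theorems~\ref{thm:LA_KL} and \ref{thm:VB_KL} with Pinsker's inequality to obtain, on $\scrE_1$,
\begin{align*}
d_V\!\left(\Pi_t^{\LA},\, \widetilde{\Pi}_t(\cdot \mid \bD_t)\right) \vee d_V\!\left(\Pi_t,\, \widetilde{\Pi}_t(\cdot \mid \bD_t)\right) \leq \sqrt{K/2}\,\epsilon_{n,t,\KL}.
\end{align*}
The triangle inequality then yields $d_V(\Pi_t, \Pi_t^{\LA}) \leq \sqrt{2K}\,\epsilon_{n,t,\KL}$. The hypothesis $\epsilon_{n,t,\KL} \leq (1200K)^{-1}$ forces this TV distance to be bounded by a small absolute constant, placing us safely in the regime where the quantitative Gaussian comparison of Theorem~1.8 of \citet{arbas2023polynomial}, already invoked after \eqref{eqn:VB_param_bounds}, applies: for $Q_i = \cN(m_i, \bV_i^{-1})$ with $d_V(Q_1,Q_2)\le \epsilon$ small,
\begin{align*}
\bigl\|\bV_1^{1/2}(m_1 - m_2)\bigr\|_2 \vee \bigl\|\bV_2^{-1/2}\bV_1\bV_2^{-1/2} - \bI_p\bigr\|_{\rm F} \lesssim \epsilon.
\end{align*}

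Applying this comparison with $(Q_1, Q_2) = (\Pi_t, \Pi_t^{\LA})$ controls the first and fourth terms of $\Delta_t$, namely $\|\bOmega_t^{1/2}(\mu_t - \thetaMAP)\|_2$ and $\|\FisherMAP^{-1/2}\bOmega_t \FisherMAP^{-1/2} - \bI_p\|_{\rm F}$. Swapping the roles $(Q_1, Q_2) = (\Pi_t^{\LA}, \Pi_t)$ controls the remaining two quantities. Taking the maximum and substituting the TV bound above yields $\Delta_t \lesssim \sqrt{2K}\,\epsilon_{n,t,\KL}$, and tracking the universal constant in the Gaussian comparison together with the Pinsker factor $\sqrt{K/2}$ gives the stated bound $\Delta_t \leq 400K\,\epsilon_{n,t,\KL}$.

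The main technical nuisance, rather than a deep obstacle, is the bookkeeping required to pin down the explicit factors $400$ and $1200$. Since the constant in Theorem~1.8 of \citet{arbas2023polynomial} is left unspecified in the abstract form used in the paper, the cleanest way to secure the explicit numerical bound is a self-contained expansion of
$2K(Q_1;Q_2) = \operatorname{tr}(\bV_2\bV_1^{-1}) - p - \log\det(\bV_2\bV_1^{-1}) + \|\bV_2^{1/2}(m_1-m_2)\|_2^2$
around the identity, using an elementary inequality such as $x - 1 - \log x \geq (x-1)^2/(2(1+|x-1|))$ to extract the Frobenius-norm term, combined with Pinsker in the reverse direction to pass from KL to TV. The smallness assumption $\epsilon_{n,t,\KL} \leq (1200K)^{-1}$ is exactly what allows the resulting quadratic inequality in the four quantities composing $\Delta_t$ to be absorbed into a linear bound.
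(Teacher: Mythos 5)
Your proposal follows the same route as the paper's proof: pass from the two KL/TV bounds against $\widetilde{\Pi}_{t}(\cdot\mid\bD_t)$ to $d_V(\Pi_t,\Pi_t^{\LA}) \lesssim \epsilon_{n,t,\KL}$ by the triangle inequality, then invert via the two-Gaussian TV comparison to control all four quantities in $\Delta_t$. The ``bookkeeping nuisance'' you flag is already resolved in the paper by Lemma~\ref{lemma:tech_Gaussian_comparison_lower}, which is precisely the quantitative version of \citet{arbas2023polynomial} with explicit constants $1/600$ and $1/200$; these are what produce the factors $1200$ and $400$ (the paper bounds $d_V(\Pi_t,\Pi_t^{\LA})\le 2K\epsilon_{n,t,\KL}$, checks $2K\epsilon_{n,t,\KL}\le 1/600$, and concludes $\Delta_t\le 200\cdot 2K\epsilon_{n,t,\KL}$), so the self-contained KL expansion you sketch as a fallback is not needed.
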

\begin{proof}
    From the proof of Theorem \ref{thm:VB_KL}, we have
    \begin{align*}
        d_{V}\left( \Pi_{t}(\cdot),  \widetilde{\Pi}_{t}\left(\cdot \mid \bD_{t} \right) \right) \
        &\leq
        K_1 \epsilon_{n, t, \KL}, \\  
        d_{V}\left( \Pi_{t}^{\LA}(\cdot), \widetilde{\Pi}_{t}\left(\cdot \mid \bD_{t} \right)  \right) 
        &\leq 
        K_1 \epsilon_{n, t, \KL},
    \end{align*}
    where $K_1$ denotes the constant $K$ in Theorem \ref{thm:VB_KL}. 
    It follows that
    \begin{align*}
    d_{V}\left( \Pi_{t}(\cdot),  \Pi_{t}^{\LA}(\cdot) \right)
    &\leq 
    d_{V}\left( \Pi_{t}(\cdot),  \widetilde{\Pi}_{t}\left(\cdot \mid \bD_{t} \right) \right) + d_{V}\left( \widetilde{\Pi}_{t}\left(\cdot \mid \bD_{t} \right),  \Pi_{t}^{\LA}(\cdot) \right) \\
    &\leq 
    K_1 \epsilon_{n, t, \KL} + K_1 \epsilon_{n, t, \KL} 
    \leq 
    2K_1 \epsilon_{n, t, \KL}.
    \end{align*}
    Since $2K_1 \epsilon_{n, t, \KL} \leq 1/600$, we can apply Lemma \ref{lemma:tech_Gaussian_comparison_lower}.
    By Lemma \ref{lemma:tech_Gaussian_comparison_lower} and symmetry of $d_{V}(\cdot, \cdot)$, therefore, we have
    \begin{align*}
        \dfrac{\Delta_{t}}{200} 
        \leq 
        d_{V}\left( \Pi_{t}(\cdot),  \Pi_{t}^{\LA}(\cdot) \right)
        \leq 
        2K_1 \epsilon_{n, t, \KL},
    \end{align*}
    which completes the proof.
\end{proof}


\section{Proofs for Section \ref{sec:pMLE}}

\begin{proof}[Proof of Theorem \ref{thm:penalized_estimation}]
In this proof, we work on the event $\scrE_{2} \cap \scrE_{\est, 1}$ without explicitly referring to it.
Let $t \in [T]$. 
For simplicity in notations, let $\Theta_{n, t} = \Theta (\thetaBest, \FisherBest, 4r_{\eff, t})$, $\overline{\Theta}_{n, t} = \Theta_{n, t}(\FisherBest, 4r_{\eff, t})$ in this proof, and 
\begin{align*}
    \partial \Theta_{n, t} = \left\{ \theta \in \Theta : \left\| \FisherBest^{1/2} \left( \theta - \thetaBest \right) \right\|_{2} = 4 r_{\eff, t} \right\}.
\end{align*}
For any $\theta \in \Theta$, let
\begin{align} \label{eqn:pen_estimation_eq0}
    g_{t}(\theta) = \bbE_{t} \widetilde{L}_{t}(\theta) + \langle \nabla \zeta_{t}, \theta \rangle = \bbE_{t} \widetilde{L}_{t}(\theta) + \langle \nabla \widetilde{L}_{t}(\thetaMAP) - \nabla \bbE_{t} \widetilde{L}_{t}(\thetaMAP), \theta \rangle.     
\end{align}
By the right-hand side of the last display and the strong concavity of $\theta \mapsto \bbE_{t} \widetilde{L}_{t}(\theta)$, note that $\nabla g_{t}(\thetaMAP) = 0$ and $\theta \mapsto g_{t}(\theta)$ is concave.
Hence, we only need to prove that the first-order stationary point $\thetaMAP$ of $g_{t}(\cdot)$ is located in $\Theta_{n, t}$.

By the concavity of $g_{t}(\cdot)$, for any $\theta \in \Theta_{n, t}^{\rm c}$, we have
\begin{align} \label{eqn:pen_estimation_eq1}
    g_{t}(\overline{\theta}) \geq \omega g_{t}(\theta) + (1 - \omega)g_{t}(\thetaBest),
\end{align}
where $\overline{\theta} = \omega \theta + (1- \omega)\thetaBest$ and $\omega = 4 r_{\eff, t} \| \FisherBest^{1/2}(\theta - \thetaBest) \|_{2}^{-1} \in (0, 1 )$.
At the end of this proof, we will show that 
\begin{align} \label{eqn:pen_estimation_eq2}
    \sup_{\theta^{\circ} \in \partial \Theta_{n, t}} g_{t}(\theta^{\circ}) - g_{t}(\thetaBest) \leq -2 r_{\eff, t}^{2} < 0.
\end{align}
It follows that, for any $\theta \in \Theta_{n, t}^{\rm c}$,
\begin{align*}
    0 > -2 r_{\eff, t}^{2} 
    \geq  \sup_{\theta^{\circ} \in \partial \Theta_{n, t}} g_{t}(\theta^{\circ}) - g_{t}(\thetaBest)
    \overset{\eqref{eqn:pen_estimation_eq1}}{\geq} \omega \bigg[ g_{t}(\theta) - g_{t}(\thetaBest) \bigg]
    \geq g_{t}(\theta) - g_{t}(\thetaBest),
\end{align*}
which implies that $\thetaMAP \in \Theta_{n, t}$. 

To complete the proof, we only need to prove \eqref{eqn:pen_estimation_eq2}. 
Let $\theta^{\circ} \in \partial \Theta_{n, t}$ and $u = \theta^{\circ} - \thetaBest$.  
By Taylor's theorem, there exists some $\widetilde{u} \in \overline{\Theta}_{n, t}$ such that, on $\scrE_{\est, 1}$,
\begin{align*}
    &g_{t}(\theta^{\circ}) - g_{t}(\thetaBest) 
    = \nabla g_{t}(\thetaBest)^{\top} u + \dfrac{1}{2} \langle \nabla^{2} g_{t}(\thetaBest + \widetilde{u}), u^{\otimes 2} \rangle \\
    &= \left[ \nabla \bbE_{t} \widetilde{L}_{t}(\thetaBest) + \nabla \zeta_{t} \right]^{\top} u + \dfrac{1}{2} \langle \nabla^{2} \bbE_{t} \widetilde{L}_{t}(\thetaBest + \widetilde{u}), u^{\otimes 2}  \rangle \\
    &= \nabla \zeta_{t}^{\top} u + \dfrac{1}{2} \langle \nabla^{2} \bbE_{t} \widetilde{L}_{t}(\thetaBest + \widetilde{u}), u^{\otimes 2}  \rangle \\
    &= \left[\FisherBest^{-1/2} \nabla \zeta_{t} \right]^{\top} \FisherBest^{1/2} u - \dfrac{1}{2} \langle \FisherTilde[t]{\thetaBest + \widetilde{u}}, u^{\otimes 2} \rangle \\
    \overset{\text{Lemma \ref{lemma:tech_Fisher_smooth}}}&{\leq}
    \bigg( \left\| \FisherBest^{-1/2} \nabla \zeta_{t} \right\|_{2}  - \dfrac{1}{2} \left( 1 -  4 \tau_{3, t}^{\ast} r_{\eff, t} \right) \left\| \FisherBest^{1/2} u \right\|_{2}  \bigg) 
    \left\| \FisherBest^{1/2} u \right\|_{2} \\
    \overset{(\textbf{A1})}&{\leq} 
    \left[ r_{\eff, t}  - 2\left( 1 -  4 \tau_{3, t}^{\ast} r_{\eff, t} \right) r_{\eff, t} \right] \times 4r_{\eff, t}, \quad (\because \left\| \FisherBest^{1/2} u \right\|_{2} = 4 r_{\eff, t})  \\
    &\leq - 2 r_{\eff, t}^{2},
\end{align*}
where the last inequality holds by $\tau_{3, t}^{\ast} r_{\eff, t} \leq 1/16$. This completes the proof.
\end{proof}

\begin{lemma} \label{lemma:radius_upper_bound}
    Suppose that (\textbf{A0}), (\textbf{A1}) hold. Also, assume that $\| \thetaBest - \theta_0 \|_{2} \leq 1/2$ for all $t \in [T]$ on an event $\scrE$. Then, 
    \begin{align*}
        r_{\eff, t} 
        \leq M_n \left[ \left\{ \dfrac{\lambda_{\max}\left( \Fisher[t]{\thetaBest} \right)}{\lambda_{\min}\big( \FisherBest \big)} \wedge 1 \right\} p_{\ast} \right]^{1/2}, \quad \forall t \in [T]
    \end{align*}
    on $\scrE$, where $p_{\ast} = p \vee \log n \vee \log T$.
\end{lemma}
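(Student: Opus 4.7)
The plan is to bound the two ingredients of $r_{\eff,t}$, namely $p_{\eff,t}^{1/2}$ and $\sqrt{2\lambda_t(\log n+\log T)}$, separately, and then assemble them with the elementary inequalities $\log n \leq p_\ast$, $\log T\leq p_\ast$ and $p\leq p_\ast$.

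The first step is the observation that under the hypothesis of the lemma, $\thetaBest\in\Theta(\theta_0,\bI_p,1/2)$, so (\textbf{A1}) gives $\|\Fisher[t]{\thetaBest}^{-1}\bV_t\|_2\leq M_n^2/9$. Next, since $\FisherBest=\bOmega_{t-1}+\Fisher[t]{\thetaBest}$ with $\bOmega_{t-1}\in\symmPD$ (the variational covariances are non-singular Gaussians, and $\bOmega_0\in\symmPD$ by construction), we have the Loewner ordering $\FisherBest\succeq \Fisher[t]{\thetaBest}$, hence $\FisherBest^{-1}\preceq\Fisher[t]{\thetaBest}^{-1}$. This immediately yields
\[
\lambda_t=\|\FisherBest^{-1}\bV_t\|_2\leq \|\Fisher[t]{\thetaBest}^{-1}\bV_t\|_2\leq \frac{M_n^2}{9},\qquad p_{\eff,t}=\operatorname{tr}(\FisherBest^{-1}\bV_t)\leq p\cdot\lambda_t \leq \frac{M_n^2 p}{9},
\]
which is the "$\wedge 1$" branch of the desired bound.

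For the other branch, use submultiplicativity to write $\lambda_t\leq \lambda_{\min}^{-1}(\FisherBest)\cdot\lambda_{\max}(\bV_t)$, and then exploit the identity $\lambda_{\max}(\bV_t)=\lambda_{\max}\bigl(\Fisher[t]{\thetaBest}^{1/2}(\Fisher[t]{\thetaBest}^{-1/2}\bV_t\Fisher[t]{\thetaBest}^{-1/2})\Fisher[t]{\thetaBest}^{1/2}\bigr)$ together with $\|\Fisher[t]{\thetaBest}^{-1/2}\bV_t\Fisher[t]{\thetaBest}^{-1/2}\|_2=\|\Fisher[t]{\thetaBest}^{-1}\bV_t\|_2\leq M_n^2/9$ to obtain $\lambda_{\max}(\bV_t)\leq M_n^2\lambda_{\max}(\Fisher[t]{\thetaBest})/9$. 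Combining gives $\lambda_t\leq (M_n^2/9)\cdot \lambda_{\max}(\Fisher[t]{\thetaBest})/\lambda_{\min}(\FisherBest)$, and $p_{\eff,t}\leq p\cdot \lambda_t$ inherits the same ratio. Taking the minimum of the two ceilings produces the factor $\gamma_t:=\{\lambda_{\max}(\Fisher[t]{\thetaBest})/\lambda_{\min}(\FisherBest)\}\wedge 1$.

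Finally, plug these into the definition of $r_{\eff,t}$. Using $\sqrt{\log n+\log T}\leq \sqrt{2p_\ast}$ and $\sqrt{p}\leq\sqrt{p_\ast}$, a short arithmetic computation gives
\[
r_{\eff,t} \leq \frac{M_n}{3}\sqrt{p\,\gamma_t}+\sqrt{\frac{2M_n^2\gamma_t}{9}(\log n+\log T)}\leq \frac{M_n}{3}\sqrt{\gamma_t p_\ast}+\frac{2M_n}{3}\sqrt{\gamma_t p_\ast}=M_n\sqrt{\gamma_t p_\ast},
\]
which is exactly the target inequality. There is no substantive obstacle here; the only point that deserves a careful line in the write-up is the Loewner comparison $\FisherBest^{-1}\preceq\Fisher[t]{\thetaBest}^{-1}$, since it is what converts the assumption of (\textbf{A1}) on the unpenalised Fisher information into a usable bound on $\lambda_t$ and $p_{\eff,t}$.
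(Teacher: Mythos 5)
Your argument is correct and reaches the bound by essentially the same route as the paper: you isolate $\lambda_t$ via the factorization through $\Fisher[t]{\thetaBest}$, invoke \eqref{assume:A1_2} for the factor $M_n^2/9$, and close with the elementary comparisons $p\vee\log n\vee\log T\leq p_\ast$. The only cosmetic difference is that the paper bounds the single factor $\|\FisherBest^{-1}\Fisher[t]{\thetaBest}\|_2$ by $1\wedge\lambda_{\max}(\Fisher[t]{\thetaBest})/\lambda_{\min}(\FisherBest)$ to capture both branches of the minimum at once, whereas you derive the two branches separately (the ``$\wedge 1$'' branch from the Loewner ordering $\FisherBest^{-1}\preceq\Fisher[t]{\thetaBest}^{-1}$, the ratio branch from submultiplicativity together with a bound on $\lambda_{\max}(\bV_t)$); both routes rely on the same structural fact $\FisherBest=\bOmega_{t-1}+\Fisher[t]{\thetaBest}\succeq\Fisher[t]{\thetaBest}$.
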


\begin{proof}
Note that
\begin{align*}
    r_{\eff, t} 
    &= 
    p_{\eff, t}^{1/2} + \sqrt{2\lambda_{t} (\log n + \log T)}
    =
    \sqrt{\operatorname{tr}\left( \FisherBest^{-1} \bV_t \right)} + \sqrt{2\lambda_{t} (\log n + \log T)} \\
    &\leq
    \sqrt{ \left\| \FisherBest^{-1} \bV_t \right\|_2 p  } + \sqrt{2\lambda_{t} (\log n + \log T)} 
    =
    \lambda_{t}^{1/2} \left(  p^{1/2} + \sqrt{2 (\log n + \log T)} \right) \\
    &\leq 
    3 \lambda_{t}^{1/2} p_{\ast}^{1/2}.
\end{align*}
By the assumption $\| \thetaBest - \theta_0 \|_{2} \leq 1/2$ on $\scrE$, we have
\begin{align*}
    \lambda_{t} 
    &= \left\| \FisherBest^{-1} \bV_{t} \right\|_{2} 
    = \left\| \FisherBest^{-1} \Fisher[t]{\thetaBest} \Fisher[t]{\thetaBest}^{-1} \bV_{t} \right\|_{2} 
    \leq \left\| \FisherBest^{-1} \Fisher[t]{\thetaBest} \right\|_{2}  \left\| \Fisher[t]{\thetaBest}^{-1} \bV_{t} \right\|_{2}  \\
    \overset{(\textbf{A1})}&{\leq} 
    \dfrac{M_n^{2}}{9} \left\| \FisherBest^{-1} \Fisher[t]{\thetaBest} \right\|_{2}  
    \leq \dfrac{M_n^{2}}{9} \left[ 1 \wedge \dfrac{\lambda_{\max}\left( \Fisher[t]{\thetaBest} \right)}{\lambda_{\min} \big( \FisherBest \big)}\right].
\end{align*}
Combining the last two displays, we complete the proof.
\end{proof}



\section{Proofs for Section \ref{sec:eigenvalue_analysis}}

For $t \in [T]$, let
\begin{align}
\begin{aligned} \label{def:bias_quantity}
    b_{n, t} 
        &= \left\| \FisherBest^{-1/2} \bOmega_{t-1} \left( \theta_0 - \mu_{t-1} \right) \right\|_{2}, \\    
    \tau_{3, t, \bias} 
        &= \inf \left\{ 
            \tau_3 \in \bbR_{+} : 
            \sup_{u \in \Theta (\FisherBest, 4b_{n, t})} \sup_{z \in \bbR^{p}} 
            \dfrac{
            \left| \langle \nabla^3 \bbE_{t} \widetilde{L}_{t}(\thetaBest + u), z^{\otimes 3} \rangle \right|
            }{
            \left\| \FisherBest^{1/2} z \right\|_{2}^{3}
            }
            \leq 
            \tau_3
        \right\}, \\
    \tau_{4, t, \bias}
        &= \inf \left\{ 
            \tau_4 \in \bbR_{+} : 
            \sup_{u \in \Theta (\FisherBest, 4b_{n, t})} \sup_{z \in \bbR^{p}} 
            \dfrac{
            \left| \langle \nabla^4 \bbE_{t} \widetilde{L}_{t}(\thetaBest + u), z^{\otimes 4} \rangle \right|
            }{
            \left\| \FisherBest^{1/2} z \right\|_{2}^{4}
            }
            \leq 
            \tau_4
        \right\}.
\end{aligned}
\end{align}

\begin{lemma} \label{lemma:penalized_bias}
    Suppose that (\textbf{A0}) and (\textbf{A1}) hold. 
    Also, assume that $\tau_{3, t, \bias} \: b_{n, t} \leq 1/16$ on an event $\scrE$.
    Then, on $\scrE$,
    \begin{align} \label{eqn:penalized_bias_claim}
        \left\| \FisherBest^{1/2} \left( \theta_0 - \thetaBest \right) \right\|_{2} \leq {\rm (a)} + {\rm (b)} + {\rm (c)},
    \end{align}
    where
    \begin{align*}
        {\rm (a)} &= \dfrac{1}{6} \tau_{4, t, \bias} \bigg( 1 - 4\tau_{3, t, \bias} \: b_{n, t} \bigg)^{-1}
        \bigg( 1 + \dfrac{1}{2}\tau_{3, t, \bias} \: b_{n, t} \bigg)^3 b_{n, t}^3, \\
        {\rm (b)} &= \dfrac{1}{2} \tau_{3, t, \bias}^2\bigg( 1 - 4\tau_{3, t, \bias} \: b_{n, t} \bigg)^{-1}
        \bigg( 1 + \dfrac{1}{2}\tau_{3, t, \bias}b_{n, t} \bigg) b_{n, t}^3, \\
        {\rm (c)} &= \bigg( 1 + \dfrac{1}{2}\tau_{3, t, \bias}b_{n, t} \bigg) b_{n, t}.
    \end{align*}
\end{lemma}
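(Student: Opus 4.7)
The plan is to exploit the first-order optimality conditions at both $\theta_0$ and $\thetaBest$. Under (\textbf{A1}) the model is well-specified, so $\nabla \bbE_t L_t(\theta_0) = 0$ and hence $\nabla \bbE_t \widetilde{L}_t(\theta_0) = -\bOmega_{t-1}(\theta_0 - \mu_{t-1})$, while the stationarity of $\thetaBest$ gives $\nabla \bbE_t \widetilde{L}_t(\thetaBest) = 0$. Stochastic linearity of $L_t$ implies that $\nabla^2 \bbE_t \widetilde{L}_t(\thetaBest) = -\FisherBest$. Writing $u = \theta_0 - \thetaBest$ and performing a third-order Taylor expansion of $\nabla \bbE_t \widetilde{L}_t$ around $\thetaBest$, these identities combine into
\begin{align*}
    \FisherBest\, u \;=\; \bOmega_{t-1}(\theta_0 - \mu_{t-1}) + \tfrac{1}{2}\langle \nabla^3 \bbE_t \widetilde{L}_t(\thetaBest),\, u^{\otimes 2}\rangle + R_4,
\end{align*}
where $R_4$ is the integral form of the fourth-order remainder. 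Pre-multiplying by $\FisherBest^{-1/2}$, taking the $\ell_2$-norm, and invoking the equivalent operator-norm characterization \eqref{def:3_4_smooth_equiv} of $\tau_{3,t,\bias}$ and $\tau_{4,t,\bias}$ would yield the self-consistent inequality
\begin{align*}
    \rho \;\leq\; b_{n,t} + \tfrac{1}{2}\tau_{3,t,\bias}\,\rho^2 + \tfrac{1}{6}\tau_{4,t,\bias}\,\rho^3,
    \qquad \rho \coloneqq \left\| \FisherBest^{1/2}(\theta_0 - \thetaBest) \right\|_2.
\end{align*}

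The subtlety, and the main obstacle, is that the smoothness parameters in \eqref{def:bias_quantity} are only available inside $\Theta(\FisherBest, 4 b_{n,t})$, so the above Taylor expansion is legitimate only once one knows a priori that the segment $[\thetaBest, \theta_0]$ lies in this region, i.e., $\rho \leq 4 b_{n,t}$. I would establish this localization through a preliminary two-term Taylor expansion with integral remainder: with $H = \int_0^1 \FisherTilde[t]{\thetaBest + s u}\,\rmd s$, the score identity becomes $H u = \bOmega_{t-1}(\theta_0 - \mu_{t-1})$. A continuity/bootstrap argument along the rescaled path $\omega \mapsto \omega u$ combined with Lemma~\ref{lemma:tech_Fisher_smooth} yields $H \succeq (1 - 4\tau_{3,t,\bias} b_{n,t})\FisherBest$, whence $\rho \leq (1 - 4\tau_{3,t,\bias} b_{n,t})^{-1} b_{n,t}$. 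The hypothesis $\tau_{3,t,\bias}\, b_{n,t} \leq 1/16$ then drives the right-hand side below $4 b_{n,t}/3$, validating the localization and simultaneously producing the $(1 - 4\tau_{3,t,\bias} b_{n,t})^{-1}$ factor that will reappear in the claim.

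Once $\rho \leq 4 b_{n,t}$ is secured, I extract the three summands $(a)$, $(b)$, $(c)$ by a two-pass substitution into the self-consistent inequality. Feeding the crude upper bound $\overline{\rho} = (1 - 4\tau_{3,t,\bias} b_{n,t})^{-1}(1 + \tfrac{1}{2}\tau_{3,t,\bias} b_{n,t})\, b_{n,t}$ into the cubic remainder term yields the fourth-order contribution $(a)\propto \tau_{4,t,\bias} b_{n,t}^3$; splitting $\tfrac{1}{2}\tau_{3,t,\bias}\rho^2$ into a leading piece $\tfrac{1}{2}\tau_{3,t,\bias} b_{n,t}^2$ (which, together with the raw $b_{n,t}$, furnishes $(c) = (1 + \tfrac{1}{2}\tau_{3,t,\bias} b_{n,t}) b_{n,t}$) and a $\tau_{3,t,\bias}^2 b_{n,t}^3$ correction recovers $(b)$. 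Summing the three contributions delivers \eqref{eqn:penalized_bias_claim}. Beyond the localization, what remains is routine but careful bookkeeping of the $(1 - 4\tau_{3,t,\bias} b_{n,t})^{-1}$ and $(1 + \tfrac{1}{2}\tau_{3,t,\bias} b_{n,t})$ factors through each substitution.
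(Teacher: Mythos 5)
Your overall skeleton is right — expand the stationarity conditions at $\theta_0$ and $\thetaBest$, localize via a bootstrap, and argue from a self-consistent inequality — and your self-consistent inequality
\begin{align*}
\rho \leq b_{n,t} + \tfrac{1}{2}\tau_{3,t,\bias}\,\rho^2 + \tfrac{1}{6}\tau_{4,t,\bias}\,\rho^3,\qquad \rho=\|\FisherBest^{1/2}(\theta_0-\thetaBest)\|_2,
\end{align*}
is indeed a correct consequence of the Taylor expansion of $\nabla g_t$. The localization $\rho\leq(1-4\tau_{3,t,\bias}b_{n,t})^{-1}b_{n,t}$ you sketch via the integrated Hessian and Lemma~\ref{lemma:tech_Fisher_smooth} is also sound (and is in fact sharper than what Lemma~\ref{lemma:tech_pre_bias_bound} delivers).

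The genuine gap is in the final ``two-pass substitution.'' The claimed bound $(a)+(b)+(c)$ has leading structure $b_{n,t}\bigl(1+\tfrac{1}{2}\tau_{3,t,\bias}b_{n,t}\bigr)+O(\tau_{3,t,\bias}^2b_{n,t}^3+\tau_{4,t,\bias}b_{n,t}^3)$, which is essentially the \emph{exact} fixed point of the self-consistent equation. A single (or even double) pass of substituting the crude localization radius into $\rho^2$ and $\rho^3$ cannot reach it: feeding $\rho\leq(1-4\tau_{3,t,\bias}b_{n,t})^{-1}b_{n,t}$ into the quadratic term already produces a $\tau_{3,t,\bias}b_{n,t}^2$-coefficient of $\tfrac{1}{2}(1-4\tau_{3,t,\bias}b_{n,t})^{-2}$, which at $\tau_{3,t,\bias}b_{n,t}=1/16$ is roughly $0.89$, strictly larger than the $0.5+O(\tau_{3,t,\bias}b_{n,t})$ needed for $(c)+(b)$. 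The same over-count appears in the power of $(1-4\tau_{3,t,\bias}b_{n,t})^{-1}$ attached to $\tau_{4,t,\bias}b_{n,t}^3$ (your substitution yields $(1-4\tau_{3,t,\bias}b_{n,t})^{-3}$ where $(a)$ has only $(1-4\tau_{3,t,\bias}b_{n,t})^{-1}$). You would end up with a weaker inequality, not the one stated.

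The missing idea, which the paper's proof uses, is to compare $\theta_0$ not to $\thetaBest$ directly but to a second-order-corrected Newton step
\begin{align*}
\thetaBest + \phi_t,\qquad \phi_t = \FisherBest^{-1}\Bigl[\varphi_t + \tfrac{1}{2}\langle\nabla^3\widetilde{L}_t(\thetaBest),(\FisherBest^{-1}\varphi_t)^{\otimes 2}\rangle\Bigr],\quad \varphi_t=\bOmega_{t-1}(\theta_0-\mu_{t-1}).
\end{align*}
One shows $\|\FisherBest^{1/2}\phi_t\|_2\leq\bigl(1+\tfrac{1}{2}\tau_{3,t,\bias}b_{n,t}\bigr)b_{n,t}$, which \emph{is} term $(c)$ exactly, and bounds the residual $\|\FisherBest^{1/2}(\theta_0-\thetaBest-\phi_t)\|_2$ via the gradient gap $\|\FisherBest^{-1/2}\nabla g_t(\thetaBest+\phi_t)\|_2$ times $(1-4\tau_{3,t,\bias}b_{n,t})^{-1}$. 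That gradient gap splits cleanly into a genuine fourth-order Taylor remainder (yielding $(a)$) plus the mismatch between $\langle\nabla^3\widetilde{L}_t(\thetaBest),\phi_t^{\otimes 2}\rangle$ and $\langle\nabla^3\widetilde{L}_t(\thetaBest),(\FisherBest^{-1}\varphi_t)^{\otimes 2}\rangle$ (yielding $(b)$). The $\phi_t$ device is what removes the spurious $\tau_{3,t,\bias}b_{n,t}^2$ contamination of the remainder and keeps $(1-4\tau_{3,t,\bias}b_{n,t})^{-1}$ at the first power; a direct iteration of the raw self-consistent inequality cannot do this.
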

\begin{proof}
In this proof, we work on the event $\scrE$ without explicitly referring to it.
Let $t \in [T]$. 
For simplicity in notations, let $\Theta_{n, t} = \Theta (\thetaBest, \FisherBest, 4b_{n, t})$, $\overline{\Theta}_{n, t} = \Theta (\FisherBest, 4b_{n, t})$ in this proof.
For $\theta \in \Theta$, let 
\begin{align} \label{eqn:penalized_bias_eq1}
    g_{t}(\theta) = \bbE_{t} \widetilde{L}_{t}(\theta) + \langle \bOmega_{t-1} \left( \theta_0 - \mu_{t-1} \right), \theta \rangle.
\end{align}
It follows that
\begin{align*}
    \nabla g_{t}(\theta_0) 
    \overset{\eqref{eqn:penalized_bias_eq1}}{=}
    \nabla \bbE_{t} \widetilde{L}_{t}(\theta_0) + \bOmega_{t-1} \left( \theta_0 - \mu_{t-1} \right)
    = \nabla \bbE_{t} L_{t}(\theta_0) = 0.
\end{align*}
Let 
\begin{align} \label{eqn:penalized_bias_eq1_2}
    \varphi_{t} = \bOmega_{t-1} \left( \theta_0 - \mu_{t-1} \right), \quad 
    \phi_{t} = \FisherBest^{-1} \left[ \varphi_{t} + \dfrac{1}{2} \left\langle \nabla^{3} \widetilde{L}_{t}(\thetaBest),  \left( \FisherBest^{-1} \varphi_{t} \right)^{\otimes 2} \right \rangle  \right].
\end{align}
By Taylor's theorem, we have
\begin{align}
\begin{aligned} \label{eqn:penalized_bias_eq2}
    &\FisherBest^{-1/2} \big[ \nabla g_{t} (\thetaBest + \phi_{t}) - \nabla g_{t} (\theta_0) \big] \\
    &=
    \left[ \int_{0}^{1} \FisherBest^{-1/2} \nabla^{2} g_{t} \left( \theta_0 + s\left\{ \thetaBest + \phi_{t} - \theta_0 \right\} \right) \FisherBest^{-1/2} \rmd s \right] \FisherBest^{1/2}\left( \thetaBest + \phi_{t} -\theta_0 \right) \\
    \overset{\eqref{eqn:penalized_bias_eq1}}&{=}
    \left[ \int_{0}^{1} \FisherBest^{-1/2} \nabla^{2} \bbE_{t} \widetilde{L}_{t} \left( \theta_0 + s\left\{ \thetaBest + \phi_{t} - \theta_0 \right\} \right) \FisherBest^{-1/2} \rmd s \right] \FisherBest^{1/2}\left( \thetaBest + \phi_{t} -\theta_0 \right).      
\end{aligned}
\end{align}
Later, we will prove that
\begin{align} \label{eqn:penalized_bias_eq2_2} 
    \left\| \FisherBest^{1/2} \left( \theta_0 - \thetaBest \right) \right\|_{2} \vee \left\| \FisherBest^{1/2} \phi_{t} \right\|_{2} \leq 4 b_{n, t},
\end{align}
which implies that $\theta_0, \thetaBest + \phi_{t} \in \Theta_{n, t}$. 
By \eqref{eqn:penalized_bias_eq2} and Lemma \ref{lemma:tech_Fisher_smooth} with $f(\cdot) = \bbE_t \widetilde{L}_{t}(\cdot)$ and $x = \thetaBest$, we have
\begin{align*}
    -\int_{0}^{1} \FisherBest^{-1/2} \nabla^{2} \bbE_{t} \widetilde{L}_{t} \left( \theta_0 + s\left\{ \thetaBest + \phi_{t} - \theta_0 \right\} \right) \FisherBest^{-1/2} \rmd s
    &\succeq 
    \int_{0}^{1} \FisherBest^{-1/2} \left[ \left( 1 - 4\tau_{3, t, \bias} b_{n, t} \right) \FisherBest \right] \FisherBest^{-1/2} \rmd s \\
    &= 
    \left( 1 - 4\tau_{3, t, \bias} b_{n, t} \right) \bI_{p},
\end{align*}
which implies that
\begin{align*}
    \left\| \FisherBest^{-1/2} \left[ \nabla g_{t} (\thetaBest + \phi_{t}) - \nabla g_{t} (\theta_0) \right] \right\|_{2}
    \geq 
    \left( 1 - 4\tau_{3, t, \bias} b_{n, t} \right) \left\| \FisherBest^{1/2}\left( \thetaBest + \phi_{t} -\theta_0 \right) \right\|_{2}.
\end{align*}
It follows that
\begin{align} 
\begin{aligned} \label{eqn:penalized_bias_eq3} 
    &\left\| \FisherBest^{1/2}\left( \thetaBest -\theta_0 \right) \right\|_{2} \\
    &\leq
    \left( 1 - 4\tau_{3, t, \bias} b_{n, t} \right)^{-1}
    \left\| \FisherBest^{-1/2} \left[ \nabla g_{t} (\thetaBest + \phi_{t}) - \nabla g_{t} (\theta_0) \right] \right\|_{2}
    +
    \left\| \FisherBest^{1/2}\phi_{t} \right\|_{2} \\
    &= 
    \left( 1 - 4\tau_{3, t, \bias} b_{n, t} \right)^{-1}
    ({\rm ii}) + 
    ({\rm i}),    
\end{aligned}
\end{align}
where
\begin{align*}
    ({\rm i}) = \left\| \FisherBest^{1/2}\phi_{t} \right\|_{2}, \quad     
    ({\rm ii}) = \left\| \FisherBest^{-1/2} \left[ \nabla g_{t} (\thetaBest + \phi_{t}) - \nabla g_{t} (\theta_0) \right] \right\|_{2}.
\end{align*}

In the remainder of this proof, we will prove \eqref{eqn:penalized_bias_eq2_2}, and obtain upper bounds of $({\rm i})$ and $({\rm ii})$. 
Firstly, we will prove \eqref{eqn:penalized_bias_eq2_2}, which encompasses an upper bound of $({\rm i})$. 
By Lemma \ref{lemma:tech_pre_bias_bound} with
\begin{align*}
    \tau_{3} = \tau_{3, t, \bias}, \quad 
    f(\cdot) = \bbE_{t} \widetilde{L}_{t} (\cdot), \quad 
    \theta = \thetaBest, \quad 
    \widetilde{\theta} = \theta_0, \quad 
    \beta = \varphi_{t}, \quad 
    r = b_{n, t},
\end{align*}
the condition $\tau_{3, t, \bias} b_{n, t} \leq 1/16$ implies that
\begin{align*}
     \left\| \FisherBest^{1/2} \left( \theta_0 - \thetaBest \right) \right\|_{2} \leq 4 b_{n, t}.
\end{align*}
Also, 
\begin{align}
\begin{aligned} \label{eqn:penalized_bias_eq3_2} 
    &\left\| \FisherBest^{1/2} \phi_{t} - \FisherBest^{-1/2} \varphi_{t} \right\|_{2}
    \overset{\eqref{eqn:penalized_bias_eq1_2}}{=} 
    \left\| \FisherBest^{-1/2} \times \dfrac{1}{2} \left\langle \nabla^{3} \widetilde{L}_{t}(\thetaBest),  \left( \FisherBest^{-1} \varphi_{t} \right)^{\otimes 2} \right \rangle \right\|_{2} \\
    &= 
    \sup_{u \in \bbR^{p} : \| u \|_{2} = 1}
    \dfrac{1}{2} \left| \left\langle \nabla^{3} \widetilde{L}_{t}(\thetaBest),  \left( \FisherBest^{-1} \varphi_{t} \right)^{\otimes 2} \otimes \left( \FisherBest^{-1/2}u \right)  \right \rangle  \right|  \\
    \overset{\eqref{def:3_4_smooth_equiv}}&{\leq}
    \sup_{u \in \bbR^{p} : \| u \|_{2} = 1}
    \dfrac{1}{2} \tau_{3, t, \bias} \left\| \FisherBest^{1/2} \FisherBest^{-1} \varphi_{t} \right\|_{2}^{2} \left\| \FisherBest^{1/2} \FisherBest^{-1/2} u \right\|_{2} 
    = \dfrac{1}{2} \tau_{3, t, \bias} b_{n, t}^{2}    
\end{aligned}
\end{align}
It follows that
\begin{align} \label{eqn:penalized_bias_eq4} 
    ({\rm i}) = \left\| \FisherBest^{1/2} \phi_{t} \right\|_{2} \leq \left( 1 + \dfrac{1}{2} \tau_{3, t, \bias} b_{n, t} \right) b_{n, t} \leq 4 b_{n, t},
\end{align}
which completes the proof of \eqref{eqn:penalized_bias_eq2_2}.

Next, we will obtain an upper bound of $({\rm ii})$. Since $\nabla g_{t}(\theta_0) = 0$, it suffices to obtain an upper bound of $\| \FisherBest^{-1/2} \nabla g_{t} (\thetaBest + \phi_{t}) \|_{2}$.
By \eqref{eqn:penalized_bias_eq1_2}, we have
\begin{align} \label{eqn:penalized_bias_eq5} 
    \FisherBest^{-1/2} \varphi_{t}
    =
    \FisherBest^{1/2} \phi_{t} 
    -
    \dfrac{1}{2} \FisherBest^{-1/2} \left\langle \nabla^{3} \widetilde{L}_{t}(\thetaBest),  \left( \FisherBest^{-1} \varphi_{t} \right)^{\otimes 2} \right \rangle.
\end{align}
Also,
\begin{align*}
    &\left\| \FisherBest^{-1/2} \nabla g_{t} (\thetaBest + \phi_{t}) \right\|_{2} 
    \overset{\eqref{eqn:penalized_bias_eq1}}{=}
    \left\| \FisherBest^{-1/2} \nabla \bbE_{t} \widetilde{L}_{t} (\thetaBest + \phi_{t}) + \FisherBest^{-1/2} \varphi_{t} \right\|_{2} \\
    \overset{\eqref{eqn:penalized_bias_eq5}}&{=}
    \left\| 
        \FisherBest^{-1/2} \nabla \bbE_{t} \widetilde{L}_{t} (\thetaBest + \phi_{t}) + \FisherBest^{1/2} \phi_{t} 
        -
        \dfrac{1}{2} \FisherBest^{-1/2} \left\langle \nabla^{3} \widetilde{L}_{t}(\thetaBest),  \left( \FisherBest^{-1} \varphi_{t} \right)^{\otimes 2} \right \rangle 
    \right\|_{2} \\
    &\leq 
    \left\| \FisherBest^{-1/2} \nabla \bbE_{t} \widetilde{L}_{t} (\thetaBest + \phi_{t}) + \FisherBest^{1/2} \phi_{t}  
    - \dfrac{1}{2} \FisherBest^{-1/2} \left\langle \nabla^{3} \widetilde{L}_{t}(\thetaBest),  \left( \phi_{t} \right)^{\otimes 2} \right \rangle   \right\|_{2} \\
    &\qquad +
    \left\| 
        \dfrac{1}{2} \FisherBest^{-1/2} \left\langle \nabla^{3} \widetilde{L}_{t}(\thetaBest),  \left( \phi_{t} \right)^{\otimes 2} \right \rangle
        - \dfrac{1}{2} \FisherBest^{-1/2} \left\langle \nabla^{3} \widetilde{L}_{t}(\thetaBest),  \left( \FisherBest^{-1} \varphi_{t} \right)^{\otimes 2} \right \rangle 
    \right\|_{2}.
\end{align*}
To bound $({\rm ii})$, we need to obtain an upper bounds of the following quantities:
\begin{align*} 
\begin{aligned} 
    ({\rm iii}) &= \left\| \FisherBest^{-1/2} \nabla \bbE_{t} \widetilde{L}_{t} (\thetaBest + \phi_{t}) + \FisherBest^{1/2} \phi_{t} - \dfrac{1}{2} \FisherBest^{-1/2} \left\langle \nabla^{3} \widetilde{L}_{t}(\thetaBest),  \left( \phi_{t} \right)^{\otimes 2} \right \rangle   \right\|_{2}, \\
    ({\rm iv}) &= \left\| 
        \dfrac{1}{2} \FisherBest^{-1/2} \left\langle \nabla^{3} \widetilde{L}_{t}(\thetaBest),  \left( \phi_{t} \right)^{\otimes 2} \right \rangle
        - \dfrac{1}{2} \FisherBest^{-1/2} \left\langle \nabla^{3} \widetilde{L}_{t}(\thetaBest),  \left( \FisherBest^{-1} \varphi_{t} \right)^{\otimes 2} \right \rangle 
    \right\|_{2}.    
\end{aligned}
\end{align*}
At the end of this proof, we will prove the following inequalities:
\begin{align} 
\begin{aligned} \label{eqn:penalized_bias_eq6}
    ({\rm iii}) &\leq \dfrac{1}{6} \tau_{4, t, \bias} \left( 1 + \dfrac{1}{2} \tau_{3, t, \bias} b_{n, t} \right)^{3} b_{n, t}^{3}
    \\ 
    ({\rm iv}) &\leq \dfrac{1}{2} \tau_{3, t, \bias}^{2} \left( 1 + \dfrac{1}{2} \tau_{3, t, \bias} b_{n, t} \right)b_{n, t}^{3}.    
\end{aligned}
\end{align}
By \eqref{eqn:penalized_bias_eq3}, we have
\begin{align*}
    \left\| \FisherBest^{1/2}\left( \thetaBest -\theta_0 \right) \right\|_{2}
    \leq
    \left( 1 - 4\tau_{3, t, \bias} b_{n, t} \right)^{-1} \big[ ({\rm iii}) + ({\rm iv}) \big] + ({\rm i}).
\end{align*}
By \eqref{eqn:penalized_bias_eq4} and \eqref{eqn:penalized_bias_eq6}, we complete the proof of \eqref{eqn:penalized_bias_claim}.

To complete the proof, we only need to prove \eqref{eqn:penalized_bias_eq6}.
First, we will prove the first inequality in \eqref{eqn:penalized_bias_eq6}.
By \eqref{eqn:penalized_bias_eq4}, note that $\thetaBest + \phi_{t} \in \Theta_{n, t}$. Also, the stochastic linearity in (\textbf{A1}) and the definition of $\thetaBest[t]$ imply that for $k \in \{2 ,3, 4\}$
\begin{align*}
    \nabla^{k} \bbE_{t} \widetilde{L}_{t}(\cdot) = \nabla^{k} \widetilde{L}_{t}(\cdot), \quad 
    \nabla \bbE_{t} \widetilde{L}_{t} (\thetaBest) = 0.
\end{align*}
By Taylor's theorem, we have
\begin{align*}
&\left\| \FisherBest^{-1/2} \nabla \bbE_{t} \widetilde{L}_{t} (\thetaBest + \phi_{t}) + \FisherBest^{1/2} \phi_{t} - \dfrac{1}{2} \FisherBest^{-1/2} \left\langle \nabla^{3} \widetilde{L}_{t}(\thetaBest),  \left( \phi_{t} \right)^{\otimes 2} \right \rangle   \right\|_{2}  \\
&=
\left\| \FisherBest^{-1/2} 
\left[
    \nabla \bbE_{t} \widetilde{L}_{t} (\thetaBest + \phi_{t}) 
    - \nabla \bbE_{t} \widetilde{L}_{t} (\thetaBest) 
    - \langle \nabla^{2} \bbE_{t} \widetilde{L}_{t} (\thetaBest), \phi_{t} \rangle
    - \dfrac{1}{2} \left\langle \nabla^{3} \bbE_{t} \widetilde{L}_{t}(\thetaBest),  \left( \phi_{t} \right)^{\otimes 2} \right \rangle  
\right]
\right\|_{2} \\
&\leq
\sup_{\widetilde{u} \in \overline{\Theta}_{n, t}}
\left\| \FisherBest^{-1/2} 
    \dfrac{1}{6} \left\langle \nabla^{4} \bbE_{t} \widetilde{L}_{t}(\thetaBest + \widetilde{u}),  \left( \phi_{t} \right)^{\otimes 3} \right \rangle  
\right\|_{2} \\
&= 
\sup_{\widetilde{u} \in \overline{\Theta}_{n, t}}
\left\| \FisherBest^{-1/2} 
    \dfrac{1}{6} \left\langle \nabla^{4} \widetilde{L}_{t}(\thetaBest + \widetilde{u}),  \left( \phi_{t} \right)^{\otimes 3} \right \rangle  
\right\|_{2}  \\
&=
\sup_{\widetilde{u} \in \overline{\Theta}_{n, t}}
\sup_{u \in \bbR^{p} : \| u \|_{2} = 1}
\dfrac{1}{6} 
\left\langle \nabla^{4} \widetilde{L}_{t}(\thetaBest + \widetilde{u}),  \left( \phi_{t} \right)^{\otimes 3} \otimes \left(\FisherBest^{-1/2} u \right) \right \rangle  \\
\overset{\eqref{def:3_4_smooth_equiv}}&{\leq}
\dfrac{1}{6} 
\tau_{4, t, \bias} \left\| \FisherBest^{1/2}\phi_{t} \right\|_{2}^{3}
\overset{\eqref{eqn:penalized_bias_eq4}}{\leq}
\dfrac{1}{6}
\tau_{4, t, \bias} \left( 1 + \dfrac{1}{2} \tau_{3, t, \bias} b_{n, t} \right)^{3} b_{n, t}^{3},
\end{align*}
which completes the proof of the first inequality in \eqref{eqn:penalized_bias_eq6}.

Next, we will prove the second inequality in \eqref{eqn:penalized_bias_eq6}. 
Let 
\begin{align*}
    \cT = (\cT_{ijk})_{i,j,k \in [p]} = \nabla^{3} \widetilde{L}_{t}(\thetaBest)/6 \in \bbR^{p \times p \times p}.
\end{align*}
With a slight abuse of notation, let $\cT: \bbR^p \to \bbR$ be the function defined as
\begin{align*}
    \cT(u) = \langle \cT, u^{\otimes 3} \rangle.
\end{align*}
Then, we have
\begin{align*}
    \nabla \cT(u) = \left( 3 \sum_{j, k}^{p} \cT_{ijk} u_{j} u_{k} \right)_{i \in [p]} =  3\left( \langle \cT_i, u^{\otimes 2} \rangle \right)_{i \in [p]}, \quad 
    \nabla^{2} \cT(u) = 6 \sum_{i = 1}^{p} u_{i} \cT_{i},
\end{align*}
where $\cT_{i} = (\cT_{ijk})_{j, k \in [p]} \in \bbR^{p \times p}$.
By using the above definitions, note that
\begin{align*}
    &\dfrac{1}{2}\left\| 
    \FisherBest^{-1/2} \left\langle \nabla^{3} \widetilde{L}_{t}(\thetaBest),  \left( \phi_{t} \right)^{\otimes 2} \right \rangle
    - \FisherBest^{-1/2} \left\langle \nabla^{3} \widetilde{L}_{t}(\thetaBest),  \left( \FisherBest^{-1} \varphi_{t} \right)^{\otimes 2} \right \rangle 
    \right\|_{2} \\
    &= 
    \left\| 
    \FisherBest^{-1/2} \nabla \cT \left( \phi_{t} \right) 
    -
    \FisherBest^{-1/2} \nabla \cT \left( \FisherBest^{-1} \varphi_{t} \right) 
    \right\|_{2} \\
    &\leq 
    \sup_{s \in [0, 1]}
    \left\| 
    \FisherBest^{-1/2} \nabla^{2} \cT \bigg( s\phi_{t} + (1-s) \FisherBest^{-1} \varphi_{t} \bigg) \left( \phi_{t} - \FisherBest^{-1} \varphi_{t} \right)
    \right\|_{2} \\    
    &\leq 
    \sup_{s \in [0, 1]}
    \left\| 
    \FisherBest^{-1/2} \nabla^{2} \cT \bigg( s\phi_{t} + (1-s) \FisherBest^{-1} \varphi_{t} \bigg) \FisherBest^{-1/2}
    \right\|_{2} 
    \left\| 
    \FisherBest^{1/2} \left( \phi_{t} - \FisherBest^{-1} \varphi_{t} \right)
    \right\|_{2}. 
\end{align*}
Therefore, we only need to bound the following quantities:
\begin{align*}
    \sup_{s \in [0, 1]}
    \left\| 
    \FisherBest^{-1/2} \nabla^{2} \cT \bigg( s\phi_{t} + (1-s) \FisherBest^{-1} \varphi_{t} \bigg) \FisherBest^{-1/2}
    \right\|_{2}, \quad 
    \left\| 
    \FisherBest^{1/2} \left( \phi_{t} - \FisherBest^{-1} \varphi_{t} \right)
    \right\|_{2}.
\end{align*}
For $u \in \Theta$, note that
\begin{align*}
    &\left\| \FisherBest^{-1/2} \nabla^{2} \cT (u) \FisherBest^{-1/2} \right\|_{2}
    = 
    \sup_{\widetilde{u} \in \bbR^{p} : \| \widetilde{u} \|_{2} = 1}
    6 \left| \left\langle \cT,  \left( \FisherBest^{-1/2} \widetilde{u} \right)^{\otimes 2} \otimes u \right\rangle \right| \\
    &= 
    \sup_{\widetilde{u} \in \bbR^{p} : \| \widetilde{u} \|_{2} = 1}
    \left| \left\langle \nabla^{3} \widetilde{L}_{t}(\thetaBest),  \left( \FisherBest^{-1/2} \widetilde{u} \right)^{\otimes 2} \otimes u \right\rangle \right|
    \overset{\eqref{def:3_4_smooth_equiv}}{\leq}
    \tau_{3, t, \bias} \left\| \FisherBest^{1/2} u \right\|_{2}.
\end{align*}
Also, for $u = (u_i)_{i \in [p]} \in \Theta$,
\begin{align*}
    u \mapsto \nabla^{2} \cT (u) = 6 \sum_{i = 1}^{p} u_{i} \cT_{i} 
\end{align*}
is a linear map. By the last two displays, we have
\begin{align*}
    &\sup_{s \in [0, 1]}
    \left\| 
    \FisherBest^{-1/2} \nabla^{2} \cT \bigg( s\phi_{t} + (1-s) \FisherBest^{-1} \varphi_{t} \bigg) \FisherBest^{-1/2}
    \right\|_{2} \\
    &=
    \sup_{s \in [0, 1]}
    \left\| 
    \FisherBest^{-1/2} \nabla^{2} \cT \left( (1-s) \FisherBest^{-1} \varphi_{t} \right) \FisherBest^{-1/2}
    +
    \FisherBest^{-1/2} \nabla^{2} \cT \left( s\phi_{t} \right) \FisherBest^{-1/2}
    \right\|_{2} \\
    &= 
    \left\| \FisherBest^{-1/2} \nabla^{2} \cT \left( \FisherBest^{-1} \varphi_{t} \right) \FisherBest^{-1/2} \right\|_{2}
    \vee
    \left\| \FisherBest^{-1/2} \nabla^{2} \cT \left( \phi_{t} \right) \FisherBest^{-1/2} \right\|_{2} \\
    &\leq 
    \tau_{3, t, \bias} \bigg[
    \left\|  \FisherBest^{-1/2} \varphi_{t}  \right\|_{2}  \vee   \left\|  \FisherBest^{1/2} \phi_{t} \right\|_{2}     
    \bigg]
    = 
    \tau_{3, t, \bias} \bigg[
    b_{n, t}  \vee   \left\|  \FisherBest^{1/2} \phi_{t} \right\|_{2}     
    \bigg] \\
    \overset{\eqref{eqn:penalized_bias_eq4}}&{\leq}
    \tau_{3, t, \bias} \left( 1 + \dfrac{1}{2} \tau_{3, t, \bias} b_{n, t} \right) b_{n, t}.
\end{align*}
Also, \eqref{eqn:penalized_bias_eq3_2} implies that
\begin{align*}
    \left\| 
    \FisherBest^{1/2} \left( \phi_{t} - \FisherBest^{-1} \varphi_{t} \right)
    \right\|_{2}
    \leq 
    \dfrac{1}{2} \tau_{3, t, \bias} b_{n, t}^{2}.
\end{align*}
By the last two displays, we complete the proof of the second inequality in \eqref{eqn:penalized_bias_eq6}.
\end{proof}

The quantities $(\widehat{\tau}_{3, t}, \widehat{\tau}_{4, t})$, $\widehat{\tau}_{3, t, r}$, $\tau_{3, t}^{\ast}$ and $(\tau_{3, t, \bias}, \tau_{4, t, \bias})$, which appear in the following lemma, are defined in \eqref{def:hat_tau_t_radius}, \eqref{def:tau_3tr}, \eqref{def:estimation_quantities} and \eqref{def:bias_quantity}, respectively.
By Lemma \ref{lemma:tech_smooth_tau_bound}, these quantities can be characterized by the smallest eigenvalue of $\FisherBest$ and $\FisherMAP$.

\begin{lemma} \label{lemma:penalized_bias_recursive}
    Suppose that (\textbf{A0}) and (\textbf{A1}) hold, and $N \geq 2$. Let $\alpha \in (0, 1]$ and $t \in \{1, 2, ..., T-1 \}$.
    Assume that 
    \begin{align} \label{assume:penalized_bias_recursive_C1}
        r_{\eff, t} &\leq C_1 M_n \sqrt{t^{-1} p_{\ast}}, \quad 
        \left\| \FisherBest[t]^{1/2} \left( \theta_0 - \thetaBest \right) \right\|_{2} \leq C_{2} M_n t^{\alpha} \sqrt{p_{\ast}}
    \end{align}
    on an event $\scrE$, where $C_1, C_2 > 0$ are constants.
    Also, assume that  
    \begin{align}
    \begin{aligned} \label{assume:penalized_bias_recursive_delta}
        \bigg[ \widehat{\tau}_{3, t} r_{\LA}^{2} \bigg] \vee \bigg[ \widehat{\tau}_{4, t} p^{2} \bigg] 
        \vee \bigg[ \tau_{3, t}^{\ast} r_{\eff, t} \bigg] \vee \bigg[ \epsilon_{n, t, \KL} \bigg]
        \leq \delta, 
    \end{aligned} \\
    \begin{aligned} \label{assume:penalized_bias_recursive_delta_2}
        \tau_{3, t+1, \bias} \big( M_n t^{1/2 + 2\alpha} \sqrt{p_{\ast}} \big) &\leq \delta, \\
        \big( \epsilon_{n, t, \KL} \vee \big[ \tau_{3, t}^{\ast} r_{\eff, t} \big] \big) t^{1/2 + \alpha} &\leq \delta, \\
        \left(  \tau_{4, t+1, \bias} \vee \tau_{3, t+1, \bias}^{2} \right) M_n^{2} t^{1/2 + 3\alpha} p_{\ast} &\leq \delta,
    \end{aligned}
    \end{align}
    on $\scrE$, where $\delta = \delta(C_1, C_2, \alpha) > 0$ is a small enough constant.
    Assume further that
    \begin{align} \label{assume:penalized_bias_recursive_tail}
        \widehat{\tau}_{3, t, r} \leq N e^{8p} \exp \left( \big[ \sqrt{p} + \sqrt{2 \log N} - 3 \big] r \right), \quad  \forall r > 4r_{\LA}
    \end{align}
    on $\scrE$.
    Then, on $\scrE_{\est, 1} \cap \scrE$, 
    \begin{align}
    \begin{aligned} \label{claim:penalized_bias_recursive}
        \left\| \FisherBest[t + 1]^{1/2} \left( \theta_0 - \thetaBest[t+1] \right) \right\|_{2} 
        \leq 
        K
        M_n (t^{1/2 - \alpha}+ t)^{\alpha} \sqrt{p_{\ast}},
    \end{aligned}
    \end{align}
    where $K = K(C_1, C_2, \alpha)$.
\end{lemma}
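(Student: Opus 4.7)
My plan is to apply Lemma \ref{lemma:penalized_bias} at index $t+1$, reducing the conclusion to a bound on
\begin{align*}
b_{n,t+1} = \bigl\| \FisherBest[t+1]^{-1/2} \bOmega_t(\theta_0 - \mu_t) \bigr\|_2,
\end{align*}
and then to estimate $b_{n,t+1}$ directly. Under the smallness conditions in \eqref{assume:penalized_bias_recursive_delta_2}, the terms (a) and (b) of Lemma \ref{lemma:penalized_bias}, which carry the factors $\tau_{4,t+1,\bias}\, b_{n,t+1}^3$ and $\tau_{3,t+1,\bias}^2\, b_{n,t+1}^3$, will be strictly lower order than $b_{n,t+1}$; likewise the prefactor $1 + \tfrac{1}{2}\tau_{3,t+1,\bias}\, b_{n,t+1}$ in (c) will be close to $1$, and the hypothesis $\tau_{3,t+1,\bias}\, b_{n,t+1} \le 1/16$ required by the lemma will be met. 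So the conclusion collapses to $\| \FisherBest[t+1]^{1/2}(\theta_0 - \thetaBest[t+1]) \|_2 \lesssim b_{n,t+1} + \mathrm{(lower~order)}$.

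The key observation for bounding $b_{n,t+1}$ is that, by definition, $\FisherBest[t+1] = \bOmega_t + \Fisher[t+1]{\thetaBest[t+1]} \succeq \bOmega_t$, so $\| \FisherBest[t+1]^{-1/2} \bOmega_t^{1/2} \|_2 \le 1$ \emph{exactly}, giving
\begin{align*}
b_{n,t+1} \le \bigl\| \bOmega_t^{1/2}(\theta_0 - \mu_t) \bigr\|_2.
\end{align*}
From here I would carry out a three-step substitution. First, Corollary \ref{coro:VB_KL_Delta} (applicable since \eqref{assume:penalized_bias_recursive_delta} gives $\epsilon_{n,t,\KL} \le \delta$ and \eqref{assume:penalized_bias_recursive_tail} supplies the tail part of (\textbf{KL})) lets me replace $\mu_t$ by $\thetaMAP[t]$ and $\bOmega_t$ by $\FisherMAP[t]$ at total cost $O(\epsilon_{n,t,\KL})$. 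Second, Lemma \ref{lemma:tech_Fisher_smooth} centered at $\thetaBest[t]$, combined with the concentration $\| \FisherBest[t]^{1/2}(\thetaMAP[t] - \thetaBest[t]) \|_2 \le 4 r_{\eff,t}$ from Theorem \ref{thm:penalized_estimation} on $\scrE_{\est,1}$, lets me replace $\FisherMAP[t]$ by $\FisherBest[t]$ at multiplicative cost $1 + O(\tau_{3,t}^{\ast}\, r_{\eff,t})$. Third, the triangle inequality gives
\begin{align*}
\bigl\| \FisherBest[t]^{1/2}(\theta_0 - \thetaMAP[t]) \bigr\|_2 \le \bigl\| \FisherBest[t]^{1/2}(\theta_0 - \thetaBest[t]) \bigr\|_2 + \bigl\| \FisherBest[t]^{1/2}(\thetaBest[t] - \thetaMAP[t]) \bigr\|_2 \le C_2 M_n t^\alpha \sqrt{p_\ast} + 4 C_1 M_n \sqrt{t^{-1} p_\ast},
\end{align*}
using \eqref{assume:penalized_bias_recursive_C1} and Theorem \ref{thm:penalized_estimation} once more.

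Combining the three substitutions yields $b_{n,t+1} \lesssim M_n(t^\alpha + t^{-1/2})\sqrt{p_\ast}$ plus cross terms of the forms $\epsilon_{n,t,\KL}\cdot M_n t^\alpha \sqrt{p_\ast}$, $(\tau_{3,t}^{\ast}\, r_{\eff,t}) \cdot M_n t^\alpha \sqrt{p_\ast}$, and the Lemma \ref{lemma:penalized_bias} residuals involving $\tau_{4,t+1,\bias}$ and $\tau_{3,t+1,\bias}^2$. Each of these is absorbed into $M_n (t^{1/2-\alpha}+t)^\alpha \sqrt{p_\ast}$ by the scalings in \eqref{assume:penalized_bias_recursive_delta_2} (the exponents $t^{1/2+\alpha}$, $t^{1/2+2\alpha}$, and $t^{1/2+3\alpha}$ appear to be tuned precisely for this accounting), yielding \eqref{claim:penalized_bias_recursive}.

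The main obstacle will be bookkeeping rather than any single deep estimate: the argument chains four approximations (Lemma \ref{lemma:penalized_bias}, variational substitution via Corollary \ref{coro:VB_KL_Delta}, Fisher smoothness via Lemma \ref{lemma:tech_Fisher_smooth}, and pMLE concentration via Theorem \ref{thm:penalized_estimation}), and each contributes a cross term whose scaling in $M_n$, $t$, and $p_\ast$ has to be matched against the target rate. The conceptually crucial ingredient that keeps the recursion tight enough to iterate is the \emph{exact} monotonicity $\FisherBest[t+1] \succeq \bOmega_t$: it passes $b_{n,t+1}$ to $\| \bOmega_t^{1/2}(\theta_0 - \mu_t) \|_2$ with no multiplicative slack, and without it the bias error would be amplified at every step.
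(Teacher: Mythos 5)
Your proposal follows essentially the same route as the paper: apply Lemma \ref{lemma:penalized_bias} at index $t+1$, reduce to bounding $b_{n,t+1}$, exploit the operator inequality $\FisherBest[t+1] \succeq \bOmega_t$ to pass to $\|\bOmega_t^{1/2}(\theta_0 - \mu_t)\|_2$, then chain the triangle inequality with Theorem \ref{thm:penalized_estimation}, Lemma \ref{lemma:tech_Fisher_smooth} and Corollary \ref{coro:VB_KL_Delta}. Your identification of the operator monotonicity as the crucial structural fact is exactly right, and your observation that the exponents $t^{1/2+\alpha}$, $t^{1/2+2\alpha}$, $t^{1/2+3\alpha}$ in \eqref{assume:penalized_bias_recursive_delta_2} are tuned to the bookkeeping is correct.

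The one place where "absorbed into $M_n(t^{1/2-\alpha}+t)^\alpha\sqrt{p_\ast}$" glosses over a genuine step: the conclusion must hold with a constant $K = K(C_1, C_2, \alpha)$ that does \emph{not} grow with $t$, because this lemma is iterated over $t=1,\dots,T$ inside Lemma \ref{lemma:iterative_theta_consistency} and Proposition \ref{prop:eigenvalue_order}. A bound of the form "$b_{n,t+1} \lesssim M_n t^\alpha\sqrt{p_\ast}$ plus lower-order terms" is not, by itself, enough: each pass through the recursion could inflate the implied constant, and the induction would not close. The paper's proof collects all the cross terms into a single quantity $\rho_{n,t}$ (involving $\tau_{3,t+1,\bias} b_{n,t+1}$, $\tau_{4,t+1,\bias} b_{n,t+1}^2$, $\epsilon_{n,t,\KL}$, $\tau_{3,t}^\ast r_{\eff,t}$, and $C_1 K^{-1} t^{-1/2-\alpha}$), shows that \eqref{assume:penalized_bias_recursive_delta_2} forces $2^{1/\alpha}\rho_{n,t} t^{1/2+\alpha} \leq 1$, and then invokes the elementary inequality $(1+x)^{1/\alpha} \leq 1 + 2^{1/\alpha} x$ for $x\in(0,1)$ to get
\begin{align*}
\left(1+\rho_{n,t}\right) K M_n t^\alpha \sqrt{p_\ast}
\leq
\left(1 + t^{-1/2-\alpha}\right)^\alpha K M_n t^\alpha \sqrt{p_\ast}
=
K M_n \left(t + t^{1/2-\alpha}\right)^\alpha \sqrt{p_\ast},
\end{align*}
i.e.\ the exact form in \eqref{claim:penalized_bias_recursive} with the same $K$. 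Combined with $(t + t^{1/2-\alpha})^\alpha \leq (t+1)^\alpha$ for $\alpha \geq 1/2$, this is what lets the inductive constant be taken as a fixed function of $(K_{\min}, K_{\max})$ downstream. You should make this algebraic passage explicit; otherwise the accounting you describe only establishes a bound with an implicit constant that could compound.
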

\begin{proof}
In this proof, we work on the event $\scrE \cap \scrE_{\est, 1}$ without explicitly referring to it, and assume that $\delta = \delta(C_1, C_2, \alpha)$ is small enough constant.

By Lemma \ref{lemma:penalized_bias}, if $\tau_{3, t+1, \bias} b_{n, t+1} \leq 1/16$, then
\begin{align} \label{eqn:penalized_bias_recursive_eq1}
        \left\| \FisherBest[t+1]^{1/2} \left( \theta_0 - \thetaBest[t+1] \right) \right\|_{2} \leq {\rm (a)} + {\rm (b)} + {\rm (c)},
\end{align}
where
\begin{align*}
    {\rm (a)} &= \dfrac{1}{6} \tau_{4, t+1, \bias} \bigg( 1 - 4\tau_{3, t+1, \bias} \: b_{n, t+1} \bigg)^{-1}
    \bigg( 1 + \dfrac{1}{2}\tau_{3, t+1, \bias} \: b_{n, t+1} \bigg)^3 b_{n, t+1}^3, \\
    {\rm (b)} &= \dfrac{1}{2} \tau_{3, t+1, \bias}^2\bigg( 1 - 4\tau_{3, t+1, \bias} \: b_{n, t+1} \bigg)^{-1}
    \bigg( 1 + \dfrac{1}{2}\tau_{3, t+1, \bias}b_{n, t+1} \bigg) b_{n, t+1}^3, \\
    {\rm (c)} &= \bigg( 1 + \dfrac{1}{2}\tau_{3, t+1, \bias}b_{n, t+1} \bigg) b_{n, t+1}.
\end{align*}
We will obtain an upper bound of $b_{n, t+1}$ first, and then bound ${\rm (a)} + {\rm (b)} + {\rm (c)}$.

By $N \geq 2$, \eqref{assume:penalized_bias_recursive_tail} and  \eqref{assume:penalized_bias_recursive_delta} with sufficiently small $\delta$, we can utilize the results in Theorem \ref{thm:penalized_estimation} and Corollary \ref{coro:VB_KL_Delta}.
Hence, we have
\begin{align}
\begin{aligned} \label{eqn:penalized_bias_recursive_eq1_2}
    &\left\| \FisherBest^{1/2} \big( \thetaBest - \thetaMAP \big) \right\|_{2} \overset{ \text{Theorem \ref{thm:penalized_estimation}} }{\leq} 4r_{\eff, t}, \\
    &\left\| \FisherBest^{-1/2} \FisherMAP \FisherBest^{-1/2} - \bI_{p} \right\|_{2} 
    \overset{ \text{Lemma \ref{lemma:tech_Fisher_smooth}} }{\leq} 4\tau_{3, t}^{\ast} r_{\eff, t}, \\
    &\left\| \FisherMAP^{-1/2} \bOmega_{t} \FisherMAP^{-1/2} - \bI_{p} \right\|_{2} \leq \left\| \FisherMAP^{-1/2} \bOmega_{t} \FisherMAP^{-1/2} - \bI_{p} \right\|_{\rm F} 
    \overset{ \text{Corollary \ref{coro:VB_KL_Delta}} }{\leq} c_1 \epsilon_{n, t, \KL}, \\
    &\left\| \bOmega_{t}^{1/2} \big( \thetaMAP - \mu_{t} \big) \right\|_{2} 
    \overset{ \text{Corollary \ref{coro:VB_KL_Delta}} }{\leq} c_1 \epsilon_{n, t, \KL}, 
\end{aligned}
\end{align}
for some universal constant $c_1 > 0$. 
Note that
\begin{align*}
    b_{n, t+1} 
        &= \left\| \FisherBest[t+1]^{-1/2} \bOmega_{t} \left( \theta_0 - \mu_{t} \right) \right\|_{2}
        \leq \left\| \bOmega_{t}^{-1/2} \bOmega_{t} \left( \theta_0 - \mu_{t} \right) \right\|_{2}
        = \left\| \bOmega_{t}^{1/2} \left( \theta_0 - \mu_{t} \right) \right\|_{2} \\
    &\leq 
        \left\| \bOmega_{t}^{1/2} \left( \theta_0 - \thetaBest \right) \right\|_{2} 
        + \left\| \bOmega_{t}^{1/2} \left( \thetaBest - \thetaMAP \right) \right\|_{2} 
        + \left\| \bOmega_{t}^{1/2} \left( \thetaMAP - \mu_{t} \right) \right\|_{2} \\
    &\leq
        \left\| \bOmega_{t}^{1/2} \FisherMAP^{-1/2} \right\|_{2}
        \left\| \FisherMAP^{1/2} \FisherBest^{-1/2}  \right\|_{2}
        \bigg(
        \left\| \FisherBest^{1/2} \left( \theta_0 - \thetaBest \right) \right\|_{2} 
        + \left\| \FisherBest^{1/2} \left( \thetaBest - \thetaMAP \right) \right\|_{2} \bigg) \\
        &\qquad + \left\| \bOmega_{t}^{1/2} \left( \thetaMAP - \mu_{t} \right) \right\|_{2} \\
    &\leq
        \left( 1 + \left\| \FisherMAP^{-1/2} \bOmega_{t} \FisherMAP^{-1/2} - \bI_{p} \right\|_{2} \right)^{1/2}
        \left( 1 + \left\| \FisherBest^{-1/2} \FisherMAP \FisherBest^{-1/2} - \bI_{p}  \right\|_{2}
        \right)^{1/2} \\
        &\qquad \times
        \bigg(
        \left\| \FisherBest^{1/2} \left( \theta_0 - \thetaBest \right) \right\|_{2} 
        + \left\| \FisherBest^{1/2} \left( \thetaBest - \thetaMAP \right) \right\|_{2} \bigg) 
        + \left\| \bOmega_{t}^{1/2} \left( \thetaMAP - \mu_{t} \right) \right\|_{2} \\
    \overset{\substack{\eqref{assume:penalized_bias_recursive_C1} \\ \eqref{eqn:penalized_bias_recursive_eq1_2}}}&{\leq}  
        \sqrt{\left( 1 + c_1 \epsilon_{n, t, \KL} \right)
        \left( 1 + 4\tau_{3, t}^{\ast} r_{\eff, t} \right)}
        \big( C_2 M_n t^{\alpha} \sqrt{p_{\ast}} + 4C_{1} M_n \sqrt{t^{-1} p_{\ast}} \big) 
        + c_1 \epsilon_{n, t, \KL} \\
    &\leq  
        \left( 1 + c_1 \epsilon_{n, t, \KL} \right)
        \left( 1 + 4\tau_{3, t}^{\ast} r_{\eff, t} \right)
        \big( C_2 M_n t^{\alpha} \sqrt{p_{\ast}} + 4C_{1} M_n \sqrt{t^{-1} p_{\ast}} + c_1 \epsilon_{n, t, \KL} \big).
\end{align*}
It follows that
\begin{align*}
    b_{n, t+1} 
    &\leq
    \left( 1 + c_1 \epsilon_{n, t, \KL} \right)
    \left( 1 + 4\tau_{3, t}^{\ast} r_{\eff, t} \right)
    \big( 
    C_2 M_n t^{\alpha} \sqrt{p_{\ast}} 
    + 4C_{1} M_n \sqrt{t^{-1} p_{\ast}} 
    + c_1 \epsilon_{n, t, \KL} \big) \\
    &\leq
    \left( 1 + c_1 \delta \right) \left( 1 + 4 \delta \right) 
    \big( C_2 M_n t^{\alpha} \sqrt{p_{\ast}} + 4C_{1} M_n \sqrt{t^{-1} p_{\ast}} + c_1 \delta \big) \\
    &\lesssim
    M_n t^{\alpha} \sqrt{p_{\ast}},
\end{align*}
where the second inequality holds by \eqref{assume:penalized_bias_recursive_delta}.
Consequently, we have
\begin{align} \label{eqn:penalized_bias_recursive_eq1_3_2}
    \tau_{3, t+1, \bias} b_{n, t+1} 
    \overset{\eqref{assume:penalized_bias_recursive_delta_2}}{\leq}
    1/16,
\end{align}
which implies that \eqref{eqn:penalized_bias_recursive_eq1} holds.

Next, we will obtain an upper bound of ${\rm (a)} + {\rm (b)} + {\rm (c)}$.
Since $(1-x)^{-1} \leq 1 + 2x$ for $x \leq 1/2$ and $(1+x)^{3} \leq 1 + 4x$ for $x \in [0, 0.3]$,
\begin{align*}
    {\rm (a)} 
    &= 
    \dfrac{1}{6} \tau_{4, t+1, \bias} \bigg( 1 - 4\tau_{3, t+1, \bias} \: b_{n, t+1} \bigg)^{-1} \bigg( 1 + \dfrac{1}{2}\tau_{3, t+1, \bias} \: b_{n, t+1} \bigg)^3 b_{n, t+1}^3 \\
    &\leq 
    \dfrac{1}{6} \tau_{4, t+1, \bias} \bigg( 1 + 8\tau_{3, t+1, \bias} \: b_{n, t+1} \bigg) \bigg( 1 + 2 \tau_{3, t+1, \bias} \: b_{n, t+1} \bigg) b_{n, t+1}^3 \\
    \overset{\eqref{eqn:penalized_bias_recursive_eq1_3_2}}&{\leq}
    \dfrac{1}{6} \tau_{4, t+1, \bias} \bigg( 1 + \dfrac{1}{2} \bigg) \bigg( 1 + \dfrac{1}{8} \bigg) b_{n, t+1}^3
    \leq 
    \tau_{4, t+1, \bias} b_{n, t+1}^3.
\end{align*}
Similarly, we have
\begin{align*}
    {\rm (b)} 
    &= 
    \dfrac{1}{2} \tau_{3, t+1, \bias}^2\bigg( 1 - 4\tau_{3, t+1, \bias} \: b_{n, t+1} \bigg)^{-1} \bigg( 1 + \dfrac{1}{2}\tau_{3, t+1, \bias}b_{n, t+1} \bigg) b_{n, t+1}^3 \\
    &\leq 
    \dfrac{1}{2} \tau_{3, t+1, \bias}^2\bigg( 1 + 8\tau_{3, t+1, \bias} \: b_{n, t+1} \bigg) \bigg( 1 + \dfrac{1}{2}\tau_{3, t+1, \bias}b_{n, t+1} \bigg) b_{n, t+1}^3 \\
    \overset{\eqref{eqn:penalized_bias_recursive_eq1_3_2}}&{\leq}
    \dfrac{1}{2} \tau_{3, t+1, \bias}^2 \bigg( 1 + \dfrac{1}{2} \bigg) \bigg( 1 + \dfrac{1}{32} \bigg) b_{n, t+1}^3 
    \leq 
    \tau_{3, t+1, \bias}^{2} b_{n, t+1}^3.
\end{align*}
By \eqref{assume:penalized_bias_recursive_delta}, \eqref{assume:penalized_bias_recursive_delta_2}, and the inequality $b_{n, t+1} \lesssim M_n t^{\alpha} \sqrt{p_{\ast}}$, we have
\begin{align}
\begin{aligned} \label{eqn:penalized_bias_recursive_eq1_4}
    \big( \tau_{3, t+1, \bias} b_{n, t+1} \big) \vee
    \big( 2\tau_{4, t+1, \bias} b_{n, t+1}^2 \big) \vee
    \big( 2\tau_{3, t+1, \bias}^{2} b_{n, t+1}^2 \big) 
    &< 1/6, \\
    \big( 4 c_1 \epsilon_{n, t, \KL} \big) \vee 
    \big( 8\tau_{3, t}^{\ast} r_{\eff, t} \big) \vee
    \big( 8C_1 K^{-1} \big)
    &< 1/6,
\end{aligned}
\end{align}
for some large enough constant $K = K(C_1, C_2, \alpha) > 0$. (Here, $K$ can be chosen depending only on $C_1$, but later, it will be taken as a larger constant that depends on $C_2$ and $\alpha$.) Also, we can further bound $b_{n, t+1}$ as follows:
\begin{align} 
\begin{aligned} \label{eqn:penalized_bias_recursive_eq1_3}
    &b_{n, t+1} \\
    &\leq 
    \left( 1 + c_1 \epsilon_{n, t, \KL} \right)
    \left( 1 + 4\tau_{3, t}^{\ast} r_{\eff, t} \right)
    \big( C_2 M_n t^{\alpha} \sqrt{p_{\ast}} + 4C_{1} M_n \sqrt{t^{-1} p_{\ast}} + c_1 \epsilon_{n, t, \KL} \big) \\
    &\leq
    \left( 1 + c_1 \epsilon_{n, t, \KL} \right)
    \left( 1 + 4\tau_{3, t}^{\ast} r_{\eff, t} \right)
    \big( K M_n t^{\alpha} \sqrt{p_{\ast}} + 4C_{1} M_n \sqrt{t^{-1} p_{\ast}} + c_1 \epsilon_{n, t, \KL} \big) \\
    &= 
    \left( 1 + c_1 \epsilon_{n, t, \KL} \right)
    \left( 1 + 4\tau_{3, t}^{\ast} r_{\eff, t} \right) \\
    &\qquad \times
    \big( 1 + 4 C_1 K^{-1} t^{-1/2-\alpha} + K^{-1} M_n^{-1} t^{-\alpha} p_{\ast}^{-1/2} c_1 \epsilon_{n, t, \KL} \big) 
    K M_n t^{\alpha} \sqrt{p_{\ast}} \\
    &\leq
    \left( 1 + c_1 \epsilon_{n, t, \KL} \right)
    \left( 1 + 4\tau_{3, t}^{\ast} r_{\eff, t} \right)
    \big( 1 + 4 C_1 K^{-1} t^{-1/2-\alpha} + c_1 \epsilon_{n, t, \KL} \big) 
    K M_n t^{\alpha} \sqrt{p_{\ast}},
\end{aligned}
\end{align}
where the second and last inequalities hold by taking sufficiently large $K$.
Let 
\begin{align*}
    \rho_{n, t} 
    &= \tau_{3, t+1, \bias}b_{n, t+1} 
    + 2\tau_{4, t+1, \bias} b_{n, t+1}^2 
    + 2\tau_{3, t+1, \bias}^{2} b_{n, t+1}^2 \\
    &\qquad + 4c_1 \epsilon_{n, t, \KL} 
    + 8\tau_{3, t}^{\ast} r_{\eff, t} 
    + 8C_1 K^{-1} t^{-1/2-\alpha} 
    \overset{\eqref{eqn:penalized_bias_recursive_eq1_4}}{<}
    1.
\end{align*}
Therefore, from \eqref{eqn:penalized_bias_recursive_eq1} and the bounds for (a) and (b) above, we have
\begin{align*}
&\left\| \FisherBest[t+1]^{1/2} \left( \theta_0 - \thetaBest[t+1] \right) \right\|_{2}  
\leq
\tau_{4, t+1, \bias} b_{n, t+1}^{3}
+ \tau_{3, t+1, \bias}^2 b_{n, t+1}^{3}
+ \left( 1 + \dfrac{1}{2}\tau_{3, t+1, \bias}b_{n, t+1} \right) b_{n, t+1} \\
&=
    \bigg( 1 + \dfrac{1}{2}\tau_{3, t+1, \bias}b_{n, t+1} + \tau_{4, t+1, \bias} b_{n, t+1}^2 + \tau_{3, t+1, \bias}^{2} b_{n, t+1}^2 \bigg) b_{n, t+1} \\
\overset{\eqref{eqn:penalized_bias_recursive_eq1_3}}&{\leq}
    \bigg( 1 + \dfrac{1}{2} \tau_{3, t+1, \bias}b_{n, t+1} + \tau_{4, t+1, \bias} b_{n, t+1}^2 + \tau_{3, t+1, \bias}^{2} b_{n, t+1}^2 \bigg)
    \left( 1 + c_1 \epsilon_{n, t, \KL} \right) \left( 1 + 4\tau_{3, t}^{\ast} r_{\eff, t} \right)
    \\
    &\qquad \times 
    \big( 1 + 4 C_1 K^{-1} t^{-1/2-\alpha} + c_1 \epsilon_{n, t, \KL} \big) 
    K M_n t^{\alpha} \sqrt{p_{\ast}} \\
&\leq
    \big( 1 
    + \tau_{3, t+1, \bias}b_{n, t+1} 
    + 2\tau_{4, t+1, \bias} b_{n, t+1}^2 
    + 2\tau_{3, t+1, \bias}^{2} b_{n, t+1}^2
    + 4c_1 \epsilon_{n, t, \KL} 
    + 8\tau_{3, t}^{\ast} r_{\eff, t} 
    + 8C_1 K^{-1} t^{-1/2-\alpha}
    \big) \\
    &\qquad \times
    K M_n t^{\alpha} \sqrt{p_{\ast}}, \\
&=
    \big( 1 + \rho_{n, t} \big) 
    K M_n t^{\alpha} \sqrt{p_{\ast}},
\end{align*}
where the third inequality holds because 
\begin{align*}
    (1 + x_{1})(1 + x_{2})(1 + x_{3})(1 + x_{4}) 
    \leq
    e^{x_1 + x_2 + x_3 + x_4}
    \leq
    1 + 2x_{1} + 2x_{2} + 2x_{3} + 2x_{4}
\end{align*}
for $x_{1}, x_{2},  x_{3}, x_{4} \in \bbR_{+}$ with $x_{1} + x_{2} + x_{3} + x_{4} \in (0, 1)$.

Suppose that the following inequality holds:
\begin{align} \label{eqn:claim_proof_penalized_bias_recursive}
    2^{1/\alpha} \cdot \rho_{n, t} \cdot t^{1/2+\alpha}
    \leq 
    1.
\end{align}
Since
\begin{align*}
   (1 + x)^{\omega} \leq 1 + 2^{\omega}x 
\end{align*}
for $x \in (0, 1)$ and $\omega \geq 1$, we have
\begin{align*} 
    \left( 1 + \rho_{n, t} \right)^{1/\alpha} 
    \leq 
    1 + 2^{1/\alpha} \rho_{n, t} 
    \overset{\eqref{eqn:claim_proof_penalized_bias_recursive}}{\leq}
    1 + t^{-1/2 - \alpha}. 
\end{align*}
It follows that
\begin{align*}
    \big( 1 + \rho_{n, t} \big) K M_n t^{\alpha} \sqrt{p_{\ast}} 
    &\leq
    \big( 1 + t^{-1/2 - \alpha} \big)^{\alpha} K M_n t^{\alpha} \sqrt{p_{\ast}} \\
    &= 
    \left( t \big[ 1 + t^{-1/2 - \alpha} \big] \right)^{\alpha} K M_n \sqrt{p_{\ast}} 
    =
    \big( t + t^{1/2 - \alpha} \big)^{\alpha} K M_n \sqrt{p_{\ast}},
\end{align*}
which completes the proof of \eqref{claim:penalized_bias_recursive}.
Therefore, we only need to prove \eqref{eqn:claim_proof_penalized_bias_recursive}.

By \eqref{eqn:penalized_bias_recursive_eq1_3} and \eqref{assume:penalized_bias_recursive_delta}, with a large enough $K$ and small enough $\delta$, we have
\begin{align*}
    b_{n, t+1} \leq 2K M_n t^{\alpha} \sqrt{p_{\ast}}.
\end{align*}
It follows that
\begin{align*}
   2^{1/\alpha} \big( \tau_{3, t+1, \bias} b_{n, t+1} \big) t^{1/2+\alpha}
   \leq c_2 \big( M_n t^{1/2 + 2\alpha} \sqrt{p_{\ast}} \big) \tau_{3, t+1, \bias}  
   \overset{\eqref{assume:penalized_bias_recursive_delta_2}}&{\leq} 
   1/6,
   \\
   2^{1/\alpha} \big( 2\tau_{4, t+1, \bias} b_{n, t+1}^2 \big) t^{1/2+\alpha}
   \leq c_2 \big( M_n^{2} t^{1/2 + 3\alpha} p_{\ast} \big) \tau_{4, t+1, \bias}
   \overset{\eqref{assume:penalized_bias_recursive_delta_2}}&{\leq}
   1/6,
   \\
   2^{1/\alpha} \big( 2\tau_{3, t+1, \bias}^{2} b_{n, t+1}^2 \big) t^{1/2+\alpha}
   \leq c_2 \big( M_n^{2} t^{1/2 + 3\alpha} p_{\ast} \big) \tau_{3, t+1, \bias}^{2}
   \overset{\eqref{assume:penalized_bias_recursive_delta_2}}&{\leq}
   1/6,
\end{align*}
where $c_2 = c_2(\alpha, K) > 0$.
Also, 
\begin{align*}
   2^{1/\alpha} \big( 4c_1 \epsilon_{n, t, \KL} \big) t^{1/2+\alpha}
   \leq c_3 t^{1/2+\alpha} \epsilon_{n, t, \KL}  
   \overset{\eqref{assume:penalized_bias_recursive_delta_2}}&{\leq} 
   1/6,
   \\
   2^{1/\alpha} \big( 8\tau_{3, t}^{\ast} r_{\eff, t} \big) t^{1/2+\alpha}
   \leq c_3 t^{1/2+\alpha} \big( \tau_{3, t}^{\ast} r_{\eff, t} \big)
   \overset{\eqref{assume:penalized_bias_recursive_delta_2}}&{\leq}
   1/6,
   \\
   2^{1/\alpha} \big( 8C_1 K^{-1} t^{-1/2-\alpha} \big) t^{1/2+\alpha}
   = 2^{1/\alpha} \big( 8C_1 K^{-1} \big) 
   &\leq
   1/6,
\end{align*}
where $c_3 = c_3(\alpha, K)$, and the last inequality holds by taking a sufficiently large $K$.
By the last two displays, we complete the proof of \eqref{eqn:claim_proof_penalized_bias_recursive}. 
\end{proof}

\begin{remark} 
    Note that the constant $K$ in \eqref{claim:penalized_bias_recursive} can be chosen as 
    $$
        K =  \big[ (2^{1/\alpha}) 48 C_1 \big] \vee C_2 \vee 1.
    $$
\end{remark}

\begin{lemma} \label{lemma:iterative_theta_consistency}
    Suppose that (\textbf{A0})-(\textbf{A2}) hold. Let $\alpha \in [1/2, 1]$ and $t \in \{1, 2, ..., T-1 \}$. Also, on an event $\scrE$, assume that 
    \begin{align} \label{assume:iterative_theta_consistency}
    \begin{aligned}
        \lambda_{\min}\left( \FisherBest \right) \wedge \lambda_{\min}\left( \FisherMAP \right) &\geq C_3 n t, \\
        \lambda_{\max}\left( \FisherBest \right) \vee \lambda_{\max}\left( \FisherMAP \right) &\leq C_4 n t, \\ 
        \left\| \FisherBest[t]^{1/2} \left( \theta_0 - \thetaBest[t] \right)\right\|_{2} &\leq C_5 M_n t^{\alpha} \sqrt{p_{\ast}}
    \end{aligned}
    \end{align}        
    for some constants $C_3$, $C_4$ and $C_5$ with
    \begin{align*}
        C_3 \in [K_{\min}/2, K_{\min}], \quad 
        C_4 \in [K_{\max}, 2K_{\max}], \quad 
        C_5 \geq (192 C_{3}^{-1/2} K_{\max}^{1/2}) \vee 1.
    \end{align*}
    Assume further that 
    \begin{align}
    \begin{aligned} \label{assume:iterative_theta_consistency_sample}
        &n \geq C M_n^{2} t^{4\alpha - 2} p_{\ast}^2
    \end{aligned}        
    \end{align}
    for a large enough constant $C = C(K_{\min}, K_{\max}, \alpha, C_5)$.
    Then, on $\scrE_{\est, 1} \cap \scrE$, 
    \begin{align}
    \begin{aligned} \label{eqn:iterative_theta_consistency_result1}
        \left\| \FisherBest^{1/2} \big( \thetaMAP - \thetaBest \big) \right\|_{2} 
        &\leq K M_n \sqrt{t^{-1} p_{\ast}}, \\
        \tau_{3, t}^{\ast} \vee \widehat{\tau}_{3, t} 
        &\leq K t^{-3/2} n^{-1/2}, \\ 
        \widehat{\tau}_{4, t} 
        &\leq K t^{-2} n^{-1}, \\
        \Delta_{t} \vee \epsilon_{n, t, \KL} 
        &\leq K t^{-1} n^{-1/2} p_{\ast},  \\
        \thetaMAP[t], \ \thetaBest[t] &\in \Theta \big( \theta_0, \bI_p, 1/2 \big),
    \end{aligned}
    \end{align}
    where $K = K(K_{\min}, K_{\max})$ is a large enough constant, and
    \begin{align}
    \begin{aligned} \label{eqn:iterative_theta_consistency_result2}
        \lambda_{\min}\left( \FisherBest[t+1] \right) \wedge \lambda_{\min}\left( \FisherMAP[t+1] \right)
        &\geq 
        C_3 (1-\Delta_t) n(t+1), \\ 
        \lambda_{\max}\left( \FisherBest[t+1] \right) \vee \lambda_{\max}\left( \FisherMAP[t+1] \right)
        &\leq 
        C_4 (1+\Delta_t) n(t+1), \\ 
        \thetaMAP[t+1], \ \thetaBest[t+1] &\in \Theta \big( \theta_0, \bI_p, 1/2 \big), \\
        \left\| \FisherBest[t+1]^{1/2} \left( \theta_0 - \thetaBest[t+1] \right)\right\|_{2} 
        &\leq 
        C_5 M_n (t+1)^{\alpha} \sqrt{p_{\ast}}.        
    \end{aligned}
    \end{align}    
    Furthermore, if $C_3 (1 - \Delta_{t}) \geq K_{\min}/2$ and $C_4 (1 + \Delta_{t}) \leq 2K_{\max}$, then
    \begin{align}
    \begin{aligned} \label{eqn:iterative_theta_consistency_result3}
        \left\| \FisherBest[t+1]^{1/2} \big( \thetaMAP[t+1] - \thetaBest[t+1] \big) \right\|_{2}
        &\leq K M_n \sqrt{(t+1)^{-1} p_{\ast}}, \\
        \tau_{3, t+1}^{\ast} \vee \widehat{\tau}_{3, t+1} &\leq K (t+1)^{-3/2} n^{-1/2}, \\
        \widehat{\tau}_{4, t+1} &\leq  K (t+1)^{-2} n^{-1}, \\
        \Delta_{t+1} \vee \epsilon_{n, t+1, \KL} &\leq K (t+1)^{-1} n^{-1/2} p_{\ast}
    \end{aligned}
    \end{align}
    on $\scrE_{\est, 1} \cap \scrE$.
\end{lemma}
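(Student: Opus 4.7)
The proof is organized into three stages corresponding to the three conclusions \eqref{eqn:iterative_theta_consistency_result1}, \eqref{eqn:iterative_theta_consistency_result2}, and \eqref{eqn:iterative_theta_consistency_result3}. We work throughout on the event $\scrE_{\est, 1} \cap \scrE$ and repeatedly use the sample size condition \eqref{assume:iterative_theta_consistency_sample} to ensure the required smallness.

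For Stage 1, the plan is as follows. First, we localize $\thetaBest[t]$ by Cauchy--Schwarz: the hypothesized bias bound combined with $\lambda_{\min}(\FisherBest[t]) \geq C_3 nt$ yields $\| \thetaBest[t] - \theta_0 \|_2 \leq C_5 C_3^{-1/2} M_n t^{\alpha - 1/2} \sqrt{p_{\ast}/n} \leq 1/2$, the relevant exponent $t^{\alpha-1/2}$ being controlled by $t^{4\alpha-2}$ in \eqref{assume:iterative_theta_consistency_sample} since $\alpha \geq 1/2$. Applying Lemma \ref{lemma:radius_upper_bound} with the ratio $\lambda_{\max}(\Fisher[t]{\thetaBest[t]})/\lambda_{\min}(\FisherBest[t]) \leq K_{\max}/(C_3 t)$ gives $r_{\eff, t} \lesssim M_n \sqrt{p_{\ast}/t}$. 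Bounds on $\tau_{3,t}^{\ast}$, $\widehat{\tau}_{3,t}$, $\widehat{\tau}_{4,t}$ of the claimed orders then follow from Lemma \ref{lemma:tech_smooth_tau_bound} combined with the eigenvalue and operator-norm bounds in (\textbf{A2}); the condition $\tau_{3,t}^{\ast} r_{\eff,t} \leq 1/16$ of (\textbf{Est}) is immediate from these bounds, so Theorem \ref{thm:penalized_estimation} applies and delivers $\| \FisherBest[t]^{1/2}(\thetaMAP[t] - \thetaBest[t]) \|_2 \leq 4 r_{\eff, t}$, which localizes $\thetaMAP[t]$ by the triangle inequality. We then verify (\textbf{KL}): the size conditions $(\widehat{\tau}_{3,t} r_{\LA}^2) \vee (\widehat{\tau}_{4,t} p^2) \leq 1/8$ follow from the $\tau$ bounds together with \eqref{assume:iterative_theta_consistency_sample}, while the tail condition on $\widehat{\tau}_{3,t,r}$ is handled by Lemma \ref{lemma:tech_smooth_tau_bound} combined with the exponential growth bound \eqref{assume:A2_3}. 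Theorem \ref{thm:VB_KL} then gives $\epsilon_{n,t,\KL} \lesssim t^{-1} n^{-1/2} p_{\ast}$, and Corollary \ref{coro:VB_KL_Delta} converts this into the $\Delta_t$ bound, completing \eqref{eqn:iterative_theta_consistency_result1}.

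For Stage 2, the bound on $\Delta_t$ yields $(1 - \Delta_t) \FisherMAP[t] \preceq \bOmega_t \preceq (1 + \Delta_t) \FisherMAP[t]$; combined with $\Fisher[t+1]{\cdot} \succeq 0$ (concavity of $L_{t+1}$), this gives the a priori lower bound $\lambda_{\min}(\FisherBest[t+1]) \geq (1-\Delta_t) C_3 nt$, sufficient to invoke Lemma \ref{lemma:penalized_bias_recursive}. Its hypotheses \eqref{assume:penalized_bias_recursive_C1}--\eqref{assume:penalized_bias_recursive_tail} are verified using the Stage 1 bounds, the preliminary eigenvalue control, and \eqref{assume:A2_3}; its conclusion yields $\| \FisherBest[t+1]^{1/2} (\theta_0 - \thetaBest[t+1]) \|_2 \leq K M_n (t^{1/2 - \alpha} + t)^{\alpha} \sqrt{p_{\ast}}$, which under $\alpha \geq 1/2$ is bounded by $K M_n (t+1)^{\alpha} \sqrt{p_{\ast}}$, and in turn by $C_5 M_n (t+1)^{\alpha} \sqrt{p_{\ast}}$ thanks to the choice $C_5 \geq 192 C_3^{-1/2} K_{\max}^{1/2}$ (which absorbs the constant $K$ identified in the remark following Lemma \ref{lemma:penalized_bias_recursive}). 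Localization of $\thetaBest[t+1]$ then follows by Cauchy--Schwarz as in Stage 1. With $\thetaBest[t+1] \in \Theta(\theta_0, \bI_p, 1/2)$, (\textbf{A2}) supplies $\lambda_{\min}(\Fisher[t+1]{\thetaBest[t+1]}) \geq K_{\min} n$, and Weyl's inequality gives $\lambda_{\min}(\FisherBest[t+1]) \geq (1-\Delta_t) C_3 nt + K_{\min} n \geq C_3(1-\Delta_t) n(t+1)$ (using $C_3 \leq K_{\min}$); the upper bound and the corresponding bounds for $\FisherMAP[t+1]$ are analogous, and localization of $\thetaMAP[t+1]$ is deferred to Stage 3.

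Finally, Stage 3 re-runs the Stage 1 argument at time $t+1$, with $C_3$ and $C_4$ replaced by $C_3(1-\Delta_t)$ and $C_4(1+\Delta_t)$. The extra conditions $C_3(1-\Delta_t) \geq K_{\min}/2$ and $C_4(1+\Delta_t) \leq 2 K_{\max}$ ensure that the shifted constants remain in the admissible intervals required by Stage 1, while $C_5$ is unchanged, so Stage 1 applies verbatim at $t+1$ and yields \eqref{eqn:iterative_theta_consistency_result3}; localization of $\thetaMAP[t+1]$ emerges as part of this re-application. The main obstacle I anticipate lies in Stage 2: the quantities $\tau_{3, t+1, \bias}$ and $\tau_{4, t+1, \bias}$ appearing in the hypotheses of Lemma \ref{lemma:penalized_bias_recursive} are defined via $\FisherBest[t+1]$ at the a priori unknown point $\thetaBest[t+1]$, so they must be controlled using only the preliminary bound $\FisherBest[t+1] \succeq \bOmega_t$ together with the global growth condition \eqref{assume:A2_3}, and ensuring that the resulting bounds satisfy the small-$\delta$ requirements \eqref{assume:penalized_bias_recursive_delta}--\eqref{assume:penalized_bias_recursive_delta_2} uniformly in $t$ requires careful bookkeeping of constants against the sample size threshold $M_n^2 t^{4\alpha - 2} p_{\ast}^2$ in \eqref{assume:iterative_theta_consistency_sample}.
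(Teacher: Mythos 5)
Your Stage 1 and Stage 3 outlines track the paper's Steps 1--4 and the $t+1$ re-application closely, and are essentially correct. The issue, as you yourself suspect, is Stage 2, and the gap there is genuine rather than a matter of bookkeeping.

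You propose to invoke Lemma \ref{lemma:penalized_bias_recursive} directly from the preliminary eigenvalue bound $\lambda_{\min}(\FisherBest[t+1]) \geq (1-\Delta_t)C_3 nt$, verifying its hypotheses via the global growth bound \eqref{assume:A2_3}, and \emph{then} localizing $\thetaBest[t+1]$ from the resulting bias bound. This is circular. The hypotheses \eqref{assume:penalized_bias_recursive_delta_2} require $\tau_{3,t+1,\bias}$ and $\tau_{4,t+1,\bias}$ to be small ($\tau_{3,t+1,\bias}\cdot M_n t^{1/2+2\alpha}\sqrt{p_\ast}\leq\delta$, etc.). By definition \eqref{def:bias_quantity}, these are suprema of normalized third/fourth derivatives over $\Theta(\thetaBest[t+1], \FisherBest[t+1], 4b_{n,t+1})$, a set centered at the \emph{unknown} point $\thetaBest[t+1]$. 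Bounding them via Lemma \ref{lemma:tech_smooth_tau_bound} requires controlling $\sup_\theta \|\nabla^3 \bbE_{t+1}\widetilde L_{t+1}(\theta)\|_{\op}$ over that set. The bound \eqref{assume:A2_2}, which gives $\lesssim K_{\max} n$, is only available on $\Theta(\theta_0,\bI_p,1/2)$, and you cannot invoke it without first knowing $\thetaBest[t+1]\in\Theta(\theta_0,\bI_p,1/4)$ or so. The global bound \eqref{assume:A2_3} you suggest as a substitute permits growth like $e^{(\sqrt{p}+\sqrt{2\log N})r}$ in the distance $r$ from $\theta_0$; without an a priori localization of $\thetaBest[t+1]$, this exponential cannot be beaten by the polynomial sample-size condition \eqref{assume:iterative_theta_consistency_sample}, so the required smallness of $\tau_{3,t+1,\bias}$ cannot be established.

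What the paper does — and what your proposal is missing — is a separate \emph{pre-localization} step: Lemma \ref{lemma:ignorable_update2} is applied \emph{before} anything about $\tau_{3,t+1,\bias}$ is attempted. Its hypotheses involve only time-$t$ quantities (the Stage 1 bounds on $\thetaBest[t]$, $\thetaMAP[t]$, $\bOmega_t$, $\mu_t$), so there is no circularity; it yields $\|\bF_{t+1,\thetaBest[t]}^{1/2}(\thetaBest[t+1]-\thetaBest[t])\|_2 \lesssim M_n t^{\alpha-1/2}\sqrt{p_\ast}$, which combined with $\lambda_{\min}(\bF_{t+1,\thetaBest[t]})\geq K_{\min}n$ and the localization of $\thetaBest[t]$ gives $\|\thetaBest[t+1]-\theta_0\|_2\leq 1/4$. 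Only \emph{after} this crude localization does the paper bound $\tau_{3,t+1,\bias}$, $\tau_{4,t+1,\bias}$ using \eqref{assume:A2_2} (not \eqref{assume:A2_3}) and Lemma \ref{lemma:tech_smooth_tau_bound}, then verifies the hypotheses of Lemma \ref{lemma:penalized_bias_recursive} and applies it for the sharp bias bound. In short: the dependency order is crude localization (Lemma \ref{lemma:ignorable_update2}) $\to$ $\tau$-bounds via \eqref{assume:A2_2} $\to$ sharp bias bound (Lemma \ref{lemma:penalized_bias_recursive}), not the reverse as your Stage 2 proposes.
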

\begin{proof}
In this proof, we work on the event $\scrE \cap \scrE_{\est, 1}$ without explicitly referring to it.
Also, the dependency on $C_3$ and $C_4$ is not explicitly expressed, as it can be replaced by the dependence on $K_{\min}$ and $K_{\max}$.
The constant $C = C(K_{\min}, K_{\max}, \alpha, C_5)$ in \eqref{assume:iterative_theta_consistency_sample} will be assumed to be large enough if necessary.
The proof is divided into several steps.

\noindent \textbf{Step 1: } $\tau_{3, t}^{\ast}$, $r_{\eff, t}$ \\
By \eqref{assume:iterative_theta_consistency}, we have
\begin{align}
\begin{aligned} \label{eqn:iterative_theta_eq1}
    \left\| \theta_0 - \thetaBest \right\|_{2} 
    &\leq 
    \lambda_{\min}^{-1/2}(\FisherBest) C_5 M_n t^{\alpha} \sqrt{p_{\ast}}
    \overset{\eqref{assume:iterative_theta_consistency}}{\leq} 
    \left( C_3 n t \right)^{-1/2} C_5 M_n t^{\alpha} \sqrt{p_{\ast}} \\
    &= 
    \big( C_{3}^{-1/2} C_5 \big) M_n t^{\alpha - 1/2} p_{\ast}^{1/2} n^{-1/2}
    \overset{\eqref{assume:iterative_theta_consistency_sample}}{\leq} 1/8.
\end{aligned}
\end{align}
Also,
\begin{align} 
\begin{aligned} \label{eqn:iterative_theta_eq1_1_2_1}
    r_{\eff, t} 
    \overset{\substack{\eqref{eqn:iterative_theta_eq1} \\ \text{Lemma \ref{lemma:radius_upper_bound}}}}&{\leq} 
    M_n  \lambda_{\min}^{-1/2}(\FisherBest) \lambda_{\max}^{1/2}(\Fisher[t]{\thetaBest})  p_{\ast}^{1/2} \\
    \overset{\substack{\eqref{assume:iterative_theta_consistency} \\ \eqref{eqn:iterative_theta_eq1}, (\textbf{A2})}}&{\leq} 
    M_n \left( C_3 n t \right)^{-1/2} ( K_{\max} n )^{1/2} \sqrt{p_{\ast}} 
    =  \big( C_3^{-1/2} K_{\max}^{1/2} \big) M_n \sqrt{t^{-1} p_{\ast}}.    
\end{aligned}
\end{align}
For $\theta \in \Theta (\thetaBest, \FisherBest, 4r_{\eff, t})$, we have
\begin{align}
\begin{aligned} \label{eqn:iterative_theta_eq1_1_2}
    \left\| \theta - \thetaBest \right\|_{2} 
    &\leq \lambda_{\min}^{-1/2}(\FisherBest) 4r_{\eff, t}
    \overset{\eqref{assume:iterative_theta_consistency}}{\leq} \left( C_3 n t \right)^{-1/2} 4r_{\eff, t} \\
    \overset{\eqref{eqn:iterative_theta_eq1_1_2_1}}&{\leq} 
    (4 C_3^{-1} K_{\max}^{1/2}) M_n p_{\ast}^{1/2} n^{-1/2}t^{-1} 
    \overset{\eqref{assume:iterative_theta_consistency_sample}}{\leq} 1/8.    
\end{aligned}
\end{align}
By \eqref{eqn:iterative_theta_eq1} and \eqref{eqn:iterative_theta_eq1_1_2}, 
we have
\begin{align*}
    \Theta (\thetaBest, \FisherBest, 4r_{\eff, t}) \subset \Theta (\theta_0, \bI_{p}, 1/2).
\end{align*}
Consequently, we have
\begin{align}
\begin{aligned} \label{eqn:1}
    \tau_{3, t}^{\ast} 
    \overset{ \substack{ (\textbf{A2}) \\ \text{Lemma \ref{lemma:tech_smooth_tau_bound}}  } }&{\leq}
    \left( K_{\max} n \right) \lambda_{\min}^{-3/2}(\FisherBest)
    \overset{ \eqref{assume:iterative_theta_consistency} }{\leq}
    \left( K_{\max} n \right) \left( C_3 n t \right)^{-3/2} \\
    &= \big( C_{3}^{-3/2} K_{\max} \big) t^{-3/2} n^{-1/2}. 
\end{aligned}    
\end{align}

\noindent \textbf{Step 2: } $\widehat{\tau}_{3, t}$, $\widehat{\tau}_{4, t}$ \\
By \eqref{eqn:iterative_theta_eq1_1_2_1} and \eqref{eqn:1}, we have
\begin{align}
\begin{aligned} \label{eqn:iterative_theta_eq1_2}
   \tau_{3, t}^{\ast} r_{\eff, t} 
   &\leq \left[ \dfrac{K_{\max}}{C_{3}^{3/2}}  t^{-3/2} n^{-1/2} \right] 
   \left[ \dfrac{K_{\max}^{1/2}}{C_3^{1/2}} M_n  \sqrt{t^{-1} p_{\ast}} \right]
   = \dfrac{ K_{\max}^{3/2}}{C_{3}^{2}} t^{-2} n^{-1/2} M_n p_{\ast}^{1/2} \\
   \overset{\eqref{assume:iterative_theta_consistency_sample}}&{\leq} 1/32.
\end{aligned}
\end{align}
Therefore, 
\begin{align} \label{eqn:3}
    \thetaMAP \in \Theta (\thetaBest, \FisherBest, 4r_{\eff, t})
\end{align}
by Theorem \ref{thm:penalized_estimation}.
It follows that
\begin{align*}
    \big\| \thetaMAP - \theta_0 \big\|_{2} 
    \leq \big\| \thetaMAP - \thetaBest \big\|_{2} + \big\| \thetaBest - \theta_0 \big\|_{2}
    \overset{ \substack{ \eqref{eqn:iterative_theta_eq1} \\ \eqref{eqn:iterative_theta_eq1_1_2} } }{\leq} 1/8 + 1/8 = 1/4.
\end{align*}
Also,
\begin{align} \label{eqn:2}
    r_{\LA} = 2\sqrt{p} + \sqrt{2 \log N} \leq 2\sqrt{p} + 2\sqrt{p_{\ast}} \leq 4\sqrt{p_{\ast}}.
\end{align}
For $\theta \in \Theta (\thetaMAP, \FisherMAP, 4r_{\LA})$, note that
\begin{align*}
    \big\| \theta - \thetaMAP \big\|_{2} 
    \overset{\eqref{assume:iterative_theta_consistency}}{\leq} \left( C_3 n t \right)^{-1/2} 4r_{\LA} 
    \leq \left( C_3 n t \right)^{-1/2} 16 p_{\ast}^{1/2}
    = \dfrac{16}{\sqrt{C_3}} \sqrt{\dfrac{p_{\ast}}{nt}}
    \overset{\eqref{assume:iterative_theta_consistency_sample}}{\leq} 1/4.
\end{align*}
It follows that $\Theta (\thetaMAP, \FisherMAP, 4r_{\LA}) \subset \Theta (\theta_0, \bI_{p}, 1/2)$.
By Lemma \ref{lemma:tech_smooth_tau_bound} and (\textbf{A2}), therefore, we have
\begin{align}
\begin{aligned} \label{eqn:iterative_theta_eq2}
    &\widehat{\tau}_{3, t}
    \leq \left( K_{\max} n \right) \lambda_{\min}^{-3/2}(\FisherMAP)
    \leq \left( K_{\max} n \right) \left( C_3 n t \right)^{-3/2}
    = \big( C_3^{-3/2} K_{\max} \big) t^{-3/2} n^{-1/2}, \\
    &\widehat{\tau}_{4, t}
    \leq \left( K_{\max} n \right) \lambda_{\min}^{-2}(\FisherMAP)
    \leq \left( K_{\max} n \right) \left( C_3 n t \right)^{-2}
    = \big( C_3^{-2} K_{\max} \big) t^{-2} n^{-1}. \\     
\end{aligned}  
\end{align}

\noindent \textbf{Step 3: } $\Delta_{t}, \epsilon_{n, t, \KL}$ \\
For $r > 4 r_{\LA}$ and $\theta \in \Theta (\thetaMAP, \FisherMAP, r)$, note that
\begin{align*}
    \big\| \theta - \theta_0 \big\|_{2} 
    \leq 
    \big\| \theta - \thetaMAP \big\|_{2} + \big\| \thetaMAP - \theta_0 \big\|_{2} 
    \overset{ \eqref{assume:iterative_theta_consistency} }{\leq}
    (C_3 nt)^{-1/2} r + \dfrac{1}{2}
    \overset{\eqref{assume:iterative_theta_consistency_sample}}{\leq} r + \dfrac{1}{2}.
\end{align*}
It follows that 
\begin{align*}
    \Theta (\thetaMAP, \FisherMAP, r) \subseteq \Theta (\theta_0, \bI_{p}, 1/2 + r).
\end{align*}
By \eqref{assume:iterative_theta_consistency}, Lemma \ref{lemma:tech_smooth_tau_bound} and \eqref{assume:A2_3} in (\textbf{A2}), the last display implies
\begin{align}
\begin{aligned} \label{eqn:8}
    \widehat{\tau}_{3, t, r}
    &\leq
    \left( C_3 n t \right)^{-3/2}
    K_{\max} N e^{8p} \exp\bigg( \left[ \sqrt{p} + \sqrt{2 \log N} - 3 \right] r \bigg) \\
    \overset{ \eqref{assume:iterative_theta_consistency_sample} }&{\leq}
    N e^{8p} \exp\bigg( \left[ \sqrt{p} + \sqrt{2 \log N} - 3 \right] r \bigg).
\end{aligned}
\end{align}
Also, 
\begin{align}
\begin{aligned} \label{eqn:9}
\widehat{\tau}_{3, t} r_{\LA}^{2} 
\overset{ \substack{ \eqref{eqn:2}, \eqref{eqn:iterative_theta_eq2}  }  }&{\leq}
\big( C_3^{-3/2} K_{\max} \big) t^{-3/2} n^{-1/2} (16 p_{\ast})
= 
\big( 16 C_3^{-3/2} K_{\max} \big) t^{-3/2} n^{-1/2} p_{\ast} \\
\overset{\eqref{assume:iterative_theta_consistency_sample}}&{\leq} 1/8,
\\
\widehat{\tau}_{4, t} p^{2} 
\overset{ \substack{ \eqref{eqn:iterative_theta_eq2}  }  }&{\leq}
\left( C_3^{-2} K_{\max} \right) t^{-2} n^{-1} p^{2}
\overset{\eqref{assume:iterative_theta_consistency_sample}}{\leq} 1/8.    
\end{aligned}
\end{align}
By the last two displays, we can apply Theorem \ref{thm:LA_KL}.
Recall that
\begin{align*}
\epsilon_{n, t, \KL}^{2} 
=       
\left[ \widehat{\tau}_{4, t} + \widehat{\tau}_{3, t}^2 \right] p^2 
+
\widehat{\tau}_{3, t}^{3} \log^{3} N
+
e^{- 7\log N}.
\end{align*}
By \eqref{eqn:iterative_theta_eq2}, $\epsilon_{n, t, \KL}^{2}$ is bounded by
\begin{align*}
    &\dfrac{K_{\max}}{C_3^{2}} t^{-2} n^{-1} p^2
    + \dfrac{K_{\max}^{2} }{C_3^{3} } t^{-3} n^{-1} p^2 
    + \dfrac{K_{\max}^{3} }{C_3^{9/2} } t^{-9/2} n^{-3/2} \log^{3} N
    + e^{- 7\log N} \\
    &\leq 
    c_1 \bigg[ 1 + t^{-1} + t^{-5/2} n^{-1/2} p_{\ast}^{-2} \log^{3}N  + e^{- 5\log N} \bigg] 
    t^{-2} n^{-1} p_{\ast}^{2} \\
    &\lesssim
    c_1 \bigg[ 1 + t^{-1} + t^{-5/2} n^{-1/2} p_{\ast}^{-2} (\log^{3}n + \log^{3}T)  + e^{- 5\log N} \bigg] t^{-2} n^{-1} p_{\ast}^{2} \\
    &\lesssim
    c_1 \bigg[ 1 + t^{-1} + t^{-5/2} n^{-1/2} (\log n + \log T)  + e^{- 5\log N} \bigg] t^{-2} n^{-1} p_{\ast}^{2} \\
    \overset{\eqref{assume:iterative_theta_consistency_sample}}&{\lesssim}
    c_1 t^{-2} n^{-1} p_{\ast}^{2}.
\end{align*}
for some constant $c_1 = c_1(K_{\min}, K_{\max}) > 0$.
It follows that
\begin{align*}
    \epsilon_{n, t, \KL} \overset{\eqref{assume:iterative_theta_consistency_sample}}{\leq} \delta
\end{align*}
for a small enough constant $\delta > 0$, which further implies that 
\begin{align} \label{eqn:iterative_theta_eq3}
    \Delta_{t} 
    \overset{ \text{ Corollary \ref{coro:VB_KL_Delta}} }{\leq} 
    c_1' \epsilon_{n, t, \KL} 
    \leq 
    c_2 t^{-1} n^{-1/2} p_{\ast},
\end{align}
where $c_1'$ is the universal constant in Corollary \ref{coro:VB_KL_Delta} and $c_2 = c_2(K_{\min}, K_{\max}) > 0$.
By the results in \textbf{Step 1-3}, we completes the proof of \eqref{eqn:iterative_theta_consistency_result1}.

\noindent \textbf{Step 4: } $\lambda_{\min}(\bOmega_{t})$, $\lambda_{\max}(\bOmega_{t})$ \\
By \eqref{eqn:iterative_theta_eq3}, we have
\begin{align*}
    \left\| \FisherMAP^{-1/2} \bOmega_{t} \FisherMAP^{-1/2} - \bI_{p} \right\|_{2} 
    \overset{ \eqref{def:VB_quantities} }{\leq} \Delta_{t}
    \leq c_2 t^{-1} n^{-1/2} p_{\ast}.
\end{align*}
It follows that
\begin{align}
\begin{aligned} \label{eqn:5}
    &\lambda_{\min}(\bOmega_{t}) 
    \geq \left( 1 - \Delta_{t} \right) \lambda_{\min}(\FisherMAP) 
    \overset{\eqref{assume:iterative_theta_consistency}}{\geq} C_3 \left( 1 - \Delta_{t} \right) n t, \\
    &\lambda_{\max}(\bOmega_{t}) 
    \leq \left( 1 + \Delta_{t} \right) \lambda_{\max}(\FisherMAP) 
    \overset{\eqref{assume:iterative_theta_consistency}}{\leq} C_4 \left( 1 + \Delta_{t} \right) n t.
\end{aligned}
\end{align}

\noindent \textbf{Step 5: } $\| \thetaBest[t+1] - \theta_0 \|_{2}$, $\lambda_{\min}(\FisherBest[t+1])$, $\lambda_{\max}(\FisherBest[t+1])$, $\lambda_{\min}(\FisherMAP[t+1])$, $\lambda_{\max}(\FisherMAP[t+1])$ \\
The results in \textbf{Step 1-4} and the assumptions can be summarized as follows:
\begin{align} 
\begin{aligned} \label{eqn:4}
    &\left\| \FisherBest[t]^{1/2} \left( \theta_0 - \thetaBest[t] \right)\right\|_{2} 
    \overset{ \eqref{assume:iterative_theta_consistency} }{\leq} 
    C_5 M_n t^{\alpha} \sqrt{p_{\ast}}, \\
    &\left\| \FisherBest[t]^{1/2} \left( \thetaMAP[t] - \thetaBest[t] \right)\right\|_{2}
    \overset{\eqref{eqn:3}}{\leq}
    4r_{\eff, t} 
    \overset{\eqref{eqn:iterative_theta_eq1_1_2_1}}{\leq}
    \big( 4 C_3^{-1/2} K_{\max}^{1/2} \big)  M_n \sqrt{t^{-1} p_{\ast}}, \\
    & \left\| \bOmega_{t}^{1/2} \left( \thetaMAP[t] - \mu_{t} \right)\right\|_{2}
    \vee \left\| \FisherMAP^{-1/2} \bOmega_{t} \FisherMAP^{-1/2} - \bI_{p}  \right\|_{2} 
    \overset{ \eqref{def:VB_quantities} }{\leq} 
    \Delta_{t} 
    \overset{ \eqref{eqn:iterative_theta_eq3} }{\leq} 
    c_2 t^{-1} n^{-1/2} p_{\ast} 
    \overset{ \eqref{assume:iterative_theta_consistency_sample} }{\leq} 
    1/4, \\
    &\lambda_{\min}\left( \FisherBest \right) \overset{\eqref{assume:iterative_theta_consistency}}{\geq} C_3 n t, \\
    &\left\| \FisherBest^{-1/2} \FisherMAP \FisherBest^{-1/2} - \bI_{p} \right\|_{2} 
    \overset{ \substack{ \eqref{eqn:3} \\ \text{Lemma \ref{lemma:tech_Fisher_smooth}} } }{\leq}
    4 \tau_{3, t}^{\ast} r_{\eff, t} 
    \overset{ \eqref{eqn:iterative_theta_eq1_2}, \eqref{assume:iterative_theta_consistency_sample} }{\leq}
    1/8, \\
    &\left\| \thetaBest - \theta_0 \right\|_{2} 
    \overset{ \eqref{eqn:iterative_theta_eq1} }{\leq}    
    1/8.
\end{aligned}
\end{align}
By the last display, we can apply Lemma \ref{lemma:ignorable_update2} with
\begin{align*}
    D_{1} = C_5, \quad 
    D_{2} = 4 C_3^{-1/2} K_{\max}^{1/2}, \quad 
    D_{3} = c_2, \quad
    D_{4} = C_{3},
\end{align*}
where the choice $D_{3} = c_2$ is justified by $p_{\ast} \leq n$.
By Lemma \ref{lemma:ignorable_update2}, we have
\begin{align*}
    \left\| \bF_{t+1, \thetaBest}^{1/2} \left( \thetaBest[t+1] - \thetaBest \right) \right\|_{2} \leq 
    C' M_n t^{\alpha - 1/2} \sqrt{p_{\ast}}.
\end{align*}
where $C' = C'(K_{\min}, K_{\max}, C_5) > 0$. 
Consequently, we have
\begin{align}
\begin{aligned} \label{eqn:6}
    \left\| \thetaBest[t+1] - \theta_0 \right\|_{2} 
    &\leq 
    \left\| \thetaBest[t+1] - \thetaBest[t] \right\|_{2} + \left\| \thetaBest[t] - \theta_0 \right\|_{2} \\
    \overset{ \substack{ \eqref{eqn:4}, (\textbf{A2}) } }&{\leq}
    \big( K_{\min} n \big)^{-1/2} C' M_n t^{\alpha - 1/2} \sqrt{p_{\ast}} + 1/8 \\
    &=
    \big( K_{\min}^{-1/2} C' \big)\big( M_n^2 t^{2\alpha - 1} p_{\ast} n^{-1} \big)^{1/2} + 1/8 
    \overset{ \eqref{assume:iterative_theta_consistency_sample} }{\leq}
    1/4.
\end{aligned}    
\end{align}
Since $\thetaBest[t+1] \in \Theta(\theta_0, \bI_{p}, 1/2)$, combining the results in \textbf{Step 4} and (\textbf{A2}), we have
\begin{align}
\begin{aligned} \label{eqn:7}
    \lambda_{\min} \big(\FisherBest[t+1] \big) 
    &\geq 
    \lambda_{\min}(\bOmega_{t}) + \lambda_{\min}\big( \Fisher[t+1]{\thetaBest[t+1]} \big)
    \overset{ \substack{ \eqref{eqn:5} \\ (\textbf{A2}) } }{\geq} 
    C_3 \left( 1 - \Delta_{t} \right) n t + K_{\min} n \\
    &\geq c_{3, t} n(t+1), \\
    \lambda_{\max} \big(\FisherBest[t+1] \big) 
    &\leq \lambda_{\max} (\bOmega_{t}) + \lambda_{\max}\big( \Fisher[t+1]{\thetaBest[t+1]} \big)
    \overset{ \substack{ \eqref{eqn:5} \\ (\textbf{A2}) } }{\leq} 
    C_4 \left( 1 + \Delta_{t} \right) n t + K_{\max} n \\
    &\leq c_{4, t} n(t+1),
\end{aligned}
\end{align}
where
\begin{align}
\begin{aligned} \label{eqn:19} 
    c_{3, t} &=  C_3\left( 1 - \Delta_{t} \right) \in \left[ 3K_{\min}/8, K_{\min} \right], \quad (\because \Delta_{t} \leq 1/4) \\ 
    c_{4, t} &=  C_4 \left( 1 + \Delta_{t} \right) \in \left[K_{\max}, 5K_{\max}/2 \right], \quad (\because \Delta_{t} \leq 1/4).
\end{aligned}    
\end{align}
The proofs of $\lambda_{\min}(\FisherMAP[t+1])$ and $\lambda_{\max}(\FisherMAP[t+1])$ are similar to those in \textbf{Step 1-2}. Hence, we give a sketch of the proof.
Since $c_{3, t} \geq 3K_{\min}/8$, we have
\begin{align*}
    r_{\eff, t+1} 
    \overset{\substack{\eqref{eqn:6} \\ \text{Lemma \ref{lemma:radius_upper_bound}}}}&{\leq} 
    M_n  \lambda_{\min}^{-1/2}(\FisherBest[t+1]) \lambda_{\max}^{1/2}(\Fisher[t+1]{\thetaBest[t+1]})  p_{\ast}^{1/2} \\
    \overset{\substack{\eqref{eqn:7} \\ \eqref{eqn:6}, (\textbf{A2})}}&{\leq} 
    M_n \big( c_{3, t} n [t+1] \big)^{-1/2} ( K_{\max} n )^{1/2} p_{\ast}^{1/2}
    =  \left( c_{3, t}^{-1/2} K_{\max}^{1/2} \right) M_n \sqrt{(t+1)^{-1} p_{\ast}}, \\
    \sup_{\theta \in \Theta_{n, t+1}}
    \left\| \theta - \theta_0  \right\|_2
    &\leq 
    \sup_{\theta \in \Theta_{n, t+1}}
    \left\| \theta - \thetaBest[t+1]  \right\|_2 + \left\| \thetaBest[t+1] - \theta_0 \right\|_2 
    \overset{\eqref{eqn:6}}{\leq}
    \lambda_{\min}^{-1/2}(\FisherBest[t+1]) 4r_{\eff, t+1} + 1/4 \\
    \overset{\eqref{eqn:7}}&{\leq} 
    (4 c_{3, t}^{-1} K_{\max}^{1/2}) M_n p_{\ast}^{1/2} n^{-1/2} (t+1)^{-1} + 1/4
    \overset{\eqref{assume:iterative_theta_consistency_sample}}{\leq} 1/2,    \\    
    \tau_{3, t+1}^{\ast} 
    \overset{ \substack{ (\textbf{A2}) \\ \text{Lemma \ref{lemma:tech_smooth_tau_bound}}  } }&{\leq}
    \left( K_{\max} n \right) \lambda_{\min}^{-3/2}(\FisherBest[t+1])
    \overset{ \eqref{assume:iterative_theta_consistency} }{\leq}
    \left( K_{\max} n \right) \left( c_{3, t} n [t+1] \right)^{-3/2} \\
    &= \big( c_{3, t}^{-3/2} K_{\max} \big) (t+1)^{-3/2} n^{-1/2}, \\
    \tau_{3, t+1}^{\ast} r_{\eff, t+1} 
    &\leq
    \big( c_{3, t}^{-2} K_{\max}^{3/2} \big)  M_n p_{\ast}^{1/2} (t+1)^{-2} n^{-1/2}
    \overset{\eqref{assume:iterative_theta_consistency_sample}}{\leq}
    1/16, \\
    \thetaMAP[t+1] \overset{ \text{Theorem \ref{thm:penalized_estimation}} }&{\in} \Theta_{n, t+1}, \quad 
    \big( \because \tau_{3, t+1}^{\ast} r_{\eff, t+1} \leq 1/16 \big)
\end{align*}
where $\Theta_{n, t+1} = \Theta (\thetaBest[t+1], \FisherBest[t+1], 4r_{\eff, t+1})$.
Since the last display gives
\begin{align*}
    \big\| \thetaMAP[t+1] - \theta_0 \big\|_{2} \leq 1/2,
\end{align*}
we have
\begin{align*}
    \lambda_{\min} \big(\FisherMAP[t+1] \big) 
    &\geq 
    \lambda_{\min}(\bOmega_{t}) + \lambda_{\min} \big( \Fisher[t+1]{\thetaMAP[t+1]} \big)
    \overset{ \substack{ \eqref{eqn:5} \\ (\textbf{A2}) } }{\geq} 
    C_3 \left( 1 - \Delta_{t} \right) n t + K_{\min} n \\
    &\geq c_{3, t} n(t+1), \\
    \lambda_{\max} \big(\FisherMAP[t+1] \big) 
    &\leq \lambda_{\max}(\bOmega_{t}) + \lambda_{\max} \big(\Fisher[t+1]{\thetaMAP[t+1]} \big)
    \overset{ \substack{ \eqref{eqn:5} \\ (\textbf{A2}) } }{\leq} 
    C_4 \left( 1 + \Delta_{t} \right) n t + K_{\max} n \\
    &\leq c_{4, t} n(t+1),
\end{align*}
which completes the proof of the first three assertions in \eqref{eqn:iterative_theta_consistency_result2}.

\noindent \textbf{Step 6: } $\tau_{3, t+1, \bias}$, $\tau_{4, t+1, \bias}$ \\
Note that
\begin{align*}
    &b_{n, t+1} 
    = 
    \left\| \FisherBest[t+1]^{-1/2} \bOmega_{t} \left( \theta_0 - \mu_{t} \right) \right\|_{2}
    \leq
    \left\| \bOmega_{t}^{-1/2} \bOmega_{t} \left( \theta_0 - \mu_{t} \right) \right\|_{2}
    = 
    \left\| \bOmega_{t}^{1/2} \left( \theta_0 - \mu_{t} \right) \right\|_{2} \\
    &\leq 
    \left\| \bOmega_{t}^{1/2} \left( \theta_0 - \thetaBest \right) \right\|_{2} + \left\| \bOmega_{t}^{1/2} \left( \thetaBest - \thetaMAP \right) \right\|_{2} + \left\| \bOmega_{t}^{1/2} \left( \thetaMAP - \mu_{t} \right) \right\|_{2} \\
    &\leq
        \left\| \bOmega_{t}^{1/2} \FisherMAP^{-1/2} \right\|_{2}
        \left\| \FisherMAP^{1/2} \FisherBest^{-1/2}  \right\|_{2}
        \bigg[
        \left\| \FisherBest^{1/2} \left( \theta_0 - \thetaBest \right) \right\|_{2} 
        + \left\| \FisherBest^{1/2} \left( \thetaBest - \thetaMAP \right) \right\|_{2} \bigg]
        + \left\| \bOmega_{t}^{1/2} \left( \thetaMAP - \mu_{t} \right) \right\|_{2} \\
    &\leq
        \left( 1 + \left\| \FisherMAP^{-1/2} \bOmega_{t} \FisherMAP^{-1/2} - \bI_{p} \right\|_{2} \right)^{1/2}
        \left( 1 + \left\| \FisherBest^{-1/2} \FisherMAP \FisherBest^{-1/2} - \bI_{p}  \right\|_{2}
        \right)^{1/2} \\
        &\qquad \times
        \bigg[
        \left\| \FisherBest^{1/2} \left( \theta_0 - \thetaBest \right) \right\|_{2} 
        + \left\| \FisherBest^{1/2} \left( \thetaBest - \thetaMAP \right) \right\|_{2} \bigg] 
        + \left\| \bOmega_{t}^{1/2} \left( \thetaMAP - \mu_{t} \right) \right\|_{2} \\
    \overset{ \eqref{eqn:4}, \eqref{assume:iterative_theta_consistency} }&{\leq}
    \left( 1 + \Delta_{t} \right)^{1/2}
    \left( 1 + 4\tau_{3, t}^{\ast} r_{\eff, t} \right)^{1/2}
    \big( C_5 M_n t^{\alpha} \sqrt{p_{\ast}} + 4r_{\eff, t} \big) 
    + \Delta_{t} \\
    \overset{\eqref{eqn:iterative_theta_eq1_1_2_1}, \eqref{eqn:4} }&{\leq} 
    2\left( C_5 M_n t^{\alpha} \sqrt{p_{\ast}} + \big( 4C_3^{-1/2} K_{\max}^{1/2} \big) M_n \sqrt{t^{-1} p_{\ast}} \right)
    + c_2 t^{-1} n^{-1/2} p_{\ast} \\    
    \overset{\eqref{assume:iterative_theta_consistency_sample} }&{\leq} 
    c_5 M_n t^{\alpha} \sqrt{p_{\ast}},
\end{align*}
where $c_5 = c_5(K_{\min}, K_{\max}, C_5) > 0$.
For $\theta \in \Theta (\thetaBest[t+1], \FisherBest[t+1], 4b_{n, t+1})$, we have
\begin{align*}
    \left\| \theta - \thetaBest[t+1] \right\|_{2} 
    \overset{ \eqref{eqn:7} }&{\leq}
    \big( c_{3, t} n (t+1) \big)^{-1/2} 4 b_{n, t+1}
    \leq \big( c_{3, t} n (t+1) \big)^{-1/2} 4c_5 M_n t^{\alpha} \sqrt{p_{\ast}} \\
    &\leq \big( 4c_{5} (c_{3, t})^{-1/2} \big) \sqrt{ M_n^{2} t^{2\alpha-1} p_{\ast} n^{-1}}  
    \overset{\eqref{assume:iterative_theta_consistency_sample}}{\leq} 1/4.
\end{align*}
Combining with \eqref{eqn:6}, we have
\begin{align*}
    \Theta_{n} \left( \thetaBest[t+1], \FisherBest[t+1], 4b_{n, t+1} \right) \subseteq \Theta (\theta_0, \bI_{p}, 1/2).
\end{align*}
By Lemma \ref{lemma:tech_smooth_tau_bound} and (\textbf{A2}), we have
\begin{align}
\begin{aligned} \label{eqn:10}
    \tau_{3, t+1, \bias}
    &\leq \left( K_{\max} n \right) \lambda_{\min}^{-3/2}(\FisherBest[t+1])
    \leq \left( K_{\max} n \right) \big( c_{3, t} n (t+1) \big)^{-3/2} \\
    &=  K_{\max} (c_{3, t})^{-3/2} (t+1)^{-3/2} n^{-1/2}. 
\end{aligned}    
\end{align}
Similarly,
\begin{align}
\begin{aligned} \label{eqn:11}
    \tau_{4, t+1, \bias}
    &\leq \left( K_{\max} n \right) \lambda_{\min}^{-2}(\FisherBest[t+1])
    \leq \left( K_{\max} n \right) \big( c_{3, t} n (t+1) \big)^{-2} \\
    &= K_{\max} (c_{3, t})^{-2}  (t+1)^{-2} n^{-1}.
\end{aligned}    
\end{align}

\noindent \textbf{Step 7: $\| \FisherBest[t+1]^{1/2} \left( \theta_0 - \thetaBest[t+1] \right)\|_{2}$}  \\
To complete the proof of the last inequality in \eqref{eqn:iterative_theta_consistency_result2}, we will show that the assumptions in Lemma \ref{lemma:penalized_bias_recursive} are satisfied.
By the results in \textbf{Step 1-6}, we have 
\begin{align*}
    \forall r > 4r_{\LA}, \quad 
    \widehat{\tau}_{3, t, r} 
    \overset{ \eqref{eqn:8} }&{\leq}
    N e^{8p} \exp\bigg( \left[ \sqrt{p} + \sqrt{2 \log N} - 3 \right] r \bigg),   \\
    \widehat{\tau}_{3, t} r_{\LA}^{2} 
    \overset{\eqref{eqn:9}}{\leq}
    c_6 t^{-3/2} n^{-1/2} p_{\ast}, \quad
    \widehat{\tau}_{4, t} p^{2} \overset{\eqref{eqn:9}}&{\leq} c_6 t^{-2} n^{-1} p^{2}, \\
    \tau_{3, t}^{\ast} r_{\eff, t} 
    \overset{ \eqref{eqn:iterative_theta_eq1_2} }{\leq}
    c_6 t^{-2} n^{-1/2} M_n p_{\ast}^{1/2}, \quad 
    \epsilon_{n, t, \KL} \vee \Delta_{t} 
    \overset{ \eqref{eqn:iterative_theta_eq3} }&{\leq}
    c_6 t^{-1} n^{-1/2} p_{\ast}, \\    
    \tau_{3, t+1, \bias} (M_n t^{1/2 + 2\alpha} \sqrt{p_{\ast}})
    \overset{ \eqref{eqn:10} }&{\leq}
    c_6 M_n t^{2\alpha - 1} \sqrt{p_{\ast}} n^{-1/2}, \\
    \tau_{4, t+1, \bias} (M_n^{2} t^{1/2 + 3\alpha} p_{\ast}) 
    \overset{ \eqref{eqn:11} }&{\leq}
    c_6 M_n^{2} t^{3\alpha - 3/2} p_{\ast} n^{-1}, \\
    \tau_{3, t+1, \bias}^{2} (M_n^{2} t^{1/2 + 3\alpha} p_{\ast}) 
    \overset{ \eqref{eqn:10} }&{\leq} 
    c_6 M_n^{2} t^{3\alpha - 5/2} p_{\ast} n^{-1}, \\
    \left( \epsilon_{n, t, \KL} \vee \big[ \tau_{3, t}^{\ast} r_{\eff, t} \big] \right) t^{1/2 + \alpha} \overset{ \substack{ \eqref{eqn:iterative_theta_eq1_2} \\ \eqref{eqn:iterative_theta_eq3} } }&{\leq}
    c_6 M_n t^{\alpha - 1/2} p_{\ast} n^{-1/2},  
\end{align*}
where $c_6 = c_6(K_{\min}, K_{\max})$.
By the last display and \eqref{assume:iterative_theta_consistency_sample},
the assumptions \eqref{assume:penalized_bias_recursive_delta}, \eqref{assume:penalized_bias_recursive_delta_2} and \eqref{assume:penalized_bias_recursive_tail} in Lemma \ref{lemma:penalized_bias_recursive} are satisfied. 
Also, we have
\begin{align*}
r_{\eff, t} 
\overset{ \eqref{eqn:iterative_theta_eq1_1_2_1} }{\leq} 
\big( C_3^{-1/2} K_{\max}^{1/2} \big) M_n \sqrt{t^{-1} p_{\ast}}, \quad 
\left\| \FisherBest[t]^{1/2} \left( \theta_0 - \thetaBest[t] \right)\right\|_{2} 
\overset{ \eqref{assume:iterative_theta_consistency} }{\leq}
C_5 M_n t^{\alpha} \sqrt{p_{\ast}}.
\end{align*}
Hence, the assumption \eqref{assume:penalized_bias_recursive_C1} in Lemma \ref{lemma:penalized_bias_recursive} are satisfied with $C_1 = C_3^{-1/2} K_{\max}^{1/2}$ and $C_2 = C_5$.
Recall that 
\begin{align*}
C_5 \geq 192 C_{3}^{-1/2} K_{\max}^{1/2} \geq (2^{1/\alpha})48(C_{3}^{-1/2} K_{\max}^{1/2}), \quad
C_5 \geq 1,
\end{align*}
which is chosen to be sufficiently large so that we can replace $K$ in \eqref{claim:penalized_bias_recursive} with $C_5$; see the remark following Lemma \ref{lemma:penalized_bias_recursive}.
By Lemma \ref{lemma:penalized_bias_recursive} with $K = C_5$, we have
\begin{align*}
    \left\| \FisherBest[t+1]^{1/2} \left( \theta_0 - \thetaBest[t+1] \right)\right\|_{2} 
    &\leq C_5 M_n (t^{1/2 - \alpha} + t)^{\alpha} \sqrt{p_{\ast}} \\
    &\leq C_5 M_n (t+1)^{\alpha} \sqrt{p_{\ast}}, \quad (\because \alpha \in [1/2, 1]),
\end{align*}
which completes the proof of \eqref{eqn:iterative_theta_consistency_result2}.

All remaining proofs for \eqref{eqn:iterative_theta_consistency_result3} are similar to those in \textbf{Step 1-4}, but replace $c_{3, t}$, $c_{4, t}$ and $t$ with $K_{\min}/2$, $2K_{\max}$ and $t+1$, respectively.
\end{proof}

\begin{proposition} \label{prop:eigenvalue_order}
    Suppose that (\textbf{A0}), (\textbf{A1}), (\textbf{A2}), (\textbf{S}) and (\textbf{P}) hold.
    Then, on $\scrE_{\est, 1}$, the following inequalities hold uniformly for all $t \in [T]$:
    \begin{align} \label{eqn:eigenvalue_order_claim}
    \begin{aligned}
        &\left\| \FisherBest[t]^{1/2} \left( \theta_0 - \thetaBest \right) \right\|_{2} \leq K_{\rm up} M_n \sqrt{t \ p_{\ast}},  &
        &\lambda_{\min} \left( \FisherBest \right) \wedge \lambda_{\min} \left( \FisherMAP \right) \geq K_{\rm low} nt, \\  
        &\left\| \FisherBest[t]^{1/2} \big( \thetaMAP - \thetaBest \big) \right\|_{2} \leq K_{\rm up} M_n \sqrt{t^{-1} p_{\ast}},  &
        &\lambda_{\max} \left( \FisherBest \right)  \vee  \lambda_{\max} \left( \FisherMAP \right) \leq K_{\rm up} nt, 
    \end{aligned}
    \end{align}    
    and
    \begin{align} 
    \begin{aligned} \label{eqn:eigenvalue_order_claim_2}
        \widehat{\tau}_{3, t} \vee \tau_{3, t}^{\ast} 
        &\leq
        K_{\rm up} t^{-3/2} n^{-1/2}, \\
        \widehat{\tau}_{4, t} 
        &\leq    
        K_{\rm up} t^{-2} n^{-1}, \\
        \Delta_{t} \vee \epsilon_{n, t, \KL} 
        &\leq K_{\rm up} t^{-1} n^{-1/2} p_{\ast},
    \end{aligned}
    \end{align}    
    where $K_{\rm up} = K_{\rm up}(K_{\min}, K_{\max})$ and $K_{\rm low} = K_{\rm low}(K_{\min})$.
\end{proposition}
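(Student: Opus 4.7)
The plan is to prove Proposition \ref{prop:eigenvalue_order} by induction on $t \in [T]$, with Lemma \ref{lemma:iterative_theta_consistency} supplying the inductive engine at the exponent $\alpha = 1/2$. This choice matches the target bias growth $\sqrt{t \cdot p_{\ast}}$ in \eqref{eqn:eigenvalue_order_claim}, and (\textbf{S}) is calibrated so that the induction's sample-size requirement \eqref{assume:iterative_theta_consistency_sample}, which at $\alpha = 1/2$ reduces to $n \gtrsim M_n^2 p_{\ast}^2$, is automatically satisfied. I would fix constants $C_3 \in [K_{\min}/2, K_{\min}]$, $C_4 \in [K_{\max}, 2K_{\max}]$, and $C_5 \geq (192 K_{\min}^{-1/2} K_{\max}^{1/2}) \vee 1$ once and maintain them throughout; the crucial structural feature of Lemma \ref{lemma:iterative_theta_consistency} is that $C_5$ propagates unchanged from $t$ to $t+1$, which is what enables the induction to close.

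For the inductive step, I would simply invoke Lemma \ref{lemma:iterative_theta_consistency} directly: its conclusion \eqref{eqn:iterative_theta_consistency_result1} yields precisely the bounds \eqref{eqn:eigenvalue_order_claim}--\eqref{eqn:eigenvalue_order_claim_2} at time $t$, while \eqref{eqn:iterative_theta_consistency_result2} produces the eigenvalue and bias bounds at time $t+1$ with multiplicative factors $c_{3,t} = C_3(1 - \Delta_t)$ and $c_{4,t} = C_4(1 + \Delta_t)$. Since $\Delta_t \lesssim t^{-1} n^{-1/2} p_{\ast}$ and (\textbf{S}) forces this to be arbitrarily small, $c_{3,t}$ and $c_{4,t}$ remain inside $[K_{\min}/2, K_{\min}]$ and $[K_{\max}, 2K_{\max}]$ respectively, so the inductive hypotheses at $t+1$ hold with the \emph{same} triple $(C_3, C_4, C_5)$, and \eqref{eqn:iterative_theta_consistency_result3} closes the step.

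For the base case $t=1$, I would first use (\textbf{A2}) together with the prior bound $\|\bOmega_0\|_2 \leq K_{\max} p_{\ast}$ and (\textbf{S}) to obtain two-sided eigenvalue bounds on $\FisherBest[1]$, conditional on $\thetaBest[1] \in \Theta(\theta_0, \bI_p, 1/2)$. The bias bound would then follow from Lemma \ref{lemma:penalized_bias}: under (\textbf{P}),
$b_{n,1} \leq \lambda_{\min}^{-1/2}(\FisherBest[1]) \|\bOmega_0\|_2^{1/2} \|\bOmega_0^{1/2}(\theta_0 - \mu_0)\|_2 \lesssim \sqrt{p_{\ast}/n} \cdot \delta n^{1/2} = O(\sqrt{p_{\ast}})$, while $\tau_{3,1,\bias}\, b_{n,1}$, $\tau_{4,1,\bias}\, b_{n,1}^2$, etc.\ are kept below $1/16$ by (\textbf{S}). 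Lemma \ref{lemma:penalized_bias} then yields $\|\FisherBest[1]^{1/2}(\theta_0 - \thetaBest[1])\|_2 \lesssim b_{n,1} \lesssim M_n \sqrt{p_{\ast}}$, which by (\textbf{S}) confirms $\thetaBest[1] \in \Theta(\theta_0, \bI_p, 1/2)$; Theorem \ref{thm:penalized_estimation} handles $\thetaMAP[1]$ analogously, verifying \eqref{assume:iterative_theta_consistency} at $t=1$.

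The hard part will be the base case rather than the induction, because of a mild circularity: the eigenvalue control on $\FisherBest[1]$ via (\textbf{A2}) is conditional on $\thetaBest[1] \in \Theta(\theta_0, \bI_p, 1/2)$, yet that localization itself requires a bias bound that apparently needs eigenvalue control. The resolution, which will require some care, is that the initial prior contribution $\bOmega_0$ already supplies enough lower-bound strength in $\FisherBest[1]$ so that Lemma \ref{lemma:penalized_bias} can be invoked on the local set $\Theta(\thetaBest[1], \FisherBest[1], 4b_{n,1})$ without first needing the sharper (\textbf{A2}) eigenvalue bound; the sharper bound and the localization $\thetaBest[1] \in \Theta(\theta_0, \bI_p, 1/2)$ then follow a posteriori, after which the inductive machinery takes over uniformly in $t \in [T]$.
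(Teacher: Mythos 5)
Your overall architecture matches the paper's: run Lemma \ref{lemma:iterative_theta_consistency} at $\alpha = 1/2$, establish a base case at $t=1$ via the prior bounds and a bias estimate, and iterate. The base-case discussion is also essentially right (the paper resolves the apparent circularity by applying Lemma \ref{lemma:pre_theta_consistency} to get a crude localization $\|\theta_0-\thetaBest[1]\|_2\leq 1/8$ using only the eigenvalue bounds of (\textbf{A2}) on $\Fisher[1]{\theta_0}$, not on $\FisherBest[1]$, and then bootstraps via Lemma \ref{lemma:tech_pre_bias_bound}). But the inductive step, as you have written it, does not close, and this is where the real work of the proposition lives.

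The gap is in the sentence ``Since $\Delta_t \lesssim t^{-1}n^{-1/2}p_\ast$ and (\textbf{S}) forces this to be arbitrarily small, $c_{3,t}$ and $c_{4,t}$ remain inside $[K_{\min}/2, K_{\min}]$ and $[K_{\max},2K_{\max}]$.'' Lemma \ref{lemma:iterative_theta_consistency} passes to step $t+1$ the shrunken constants $c_{3,t} = C_3(1-\Delta_t)$ and $c_{4,t} = C_4(1+\Delta_t)$; if you then feed those into the next application, the effective lower constant after $t_0$ steps is $C_3\prod_{s=1}^{t_0-1}(1-\Delta_s)$, which is controlled not by any individual $\Delta_s$ being small but by the \emph{sum} $\sum_{s=1}^{T-1}\Delta_s$ being small. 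Since $\Delta_s \lesssim s^{-1}n^{-1/2}p_\ast$, the relevant bound is the harmonic sum
\begin{align*}
    \sum_{s=1}^{T-1}\Delta_s \;\lesssim\; n^{-1/2}p_\ast \sum_{s=1}^{T-1} s^{-1} \;\lesssim\; n^{-1/2}p_\ast\,\log T,
\end{align*}
and forcing this below $1/4$ is precisely what the $\log^2 T$ factor inside condition (\textbf{S}) buys you. Your plan never invokes this cumulative control, so the constants can drift out of their admissible boxes over $T$ steps and the hypothesis of the second invocation of Lemma \ref{lemma:iterative_theta_consistency} is not verified. The paper (Step 5 of its proof) handles this by not chaining constants at all: it expands $\FisherBest[t_0]$ (and $\FisherMAP[t_0]$) as a telescoping sum of $\Fisher[s]{\thetaMAP[s]}$ terms multiplied by partial products $\prod_r(1-\Delta_{t_0-r})$, bounds those products below by $1-2\sum_s\Delta_s\geq 1/2$, and thereby proves the \emph{uniform} bounds $\lambda_{\min}(\FisherBest[t_0])\geq (K_{\min}/2)nt_0$ and $\lambda_{\max}(\FisherBest[t_0])\leq 2K_{\max}nt_0$ directly, closing the induction with a single fixed triple $(C_3,C_4,C_5)$.

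A minor secondary point: you identify the base case as the hard part, but as noted above the paper dispatches it in a few lines via Lemma \ref{lemma:pre_theta_consistency}; the substantive content of the proposition is exactly the accumulation argument above, and you should make it explicit rather than hiding it behind ``$\Delta_t$ is arbitrarily small.'' Also, your proposed $C_5 \geq 192 K_{\min}^{-1/2}K_{\max}^{1/2}$ is tuned for $C_3 = K_{\min}$; since the induction only guarantees $C_3 \geq K_{\min}/2$, the requisite constant from Lemma \ref{lemma:iterative_theta_consistency} is $C_5 \geq 192 C_3^{-1/2}K_{\max}^{1/2} = 192\sqrt{2}K_{\min}^{-1/2}K_{\max}^{1/2}$.
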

\begin{proof}
In this proof, we will work on the event $\scrE_{\est, 1}$ without explicitly mentioning it.
Also, although the value of $\alpha$ in Lemma \ref{lemma:iterative_theta_consistency} can range over the interval $[1/2, 1]$, we fix $\alpha = 1/2$ in this proof. 
Later, we specifiy $C_5$ in Lemma \ref{lemma:iterative_theta_consistency} as a constant depending only on $K_{\min}$ and $K_{\max}$.
Consequently, (\textbf{S}) implies the condition \eqref{assume:iterative_theta_consistency_sample} with $\alpha = 1/2$.

\noindent \textbf{Step 1: Proof strategy} \\
Suppose that the following inequalities hold for some $t \in [T]$:
\begin{align}
\begin{aligned} \label{eqn:23} 
    \lambda_{\min}\left( \FisherBest \right) \wedge \lambda_{\min}\left( \FisherMAP \right) &\geq \dfrac{K_{\min}}{2}  n t, \\
    \lambda_{\max}\left( \FisherBest \right) \vee \lambda_{\max}\left( \FisherMAP \right) &\leq 2 K_{\max} n t, \\ 
    \left\| \FisherBest[t]^{1/2} \left( \theta_0 - \thetaBest[t] \right)\right\|_{2} &\leq \big( 4 \vee [192\sqrt{2} K_{\min}^{-1/2} K_{\max}^{1/2}] \big) M_n \sqrt{t \ p_{\ast}}.
\end{aligned}    
\end{align}
By Lemma \ref{lemma:iterative_theta_consistency} with 
\begin{align*}
    C_3 = \dfrac{K_{\min}}{2}, \quad
    C_4 = 2 K_{\max} n, \quad
    C_5 = 4 \vee ( 192\sqrt{2} K_{\min}^{-1/2} K_{\max}^{1/2}),
\end{align*}
we have
\begin{align*}
    \left\| \FisherBest[t]^{1/2} \big( \thetaMAP[t] - \thetaBest[t] \big) \right\|_{2} 
    &\leq K_1 M_n \sqrt{t^{-1} p_{\ast}}, \\
    \tau_{3, t}^{\ast} \vee \widehat{\tau}_{3, t} 
    &\leq K_1  t^{-3/2} n^{-1/2}, \\
    \widehat{\tau}_{4, t} 
    &\leq K_1 t^{-2} n^{-1}, \\
    \Delta_t \vee \epsilon_{n, t, \KL} 
    &\leq K_1 t^{-1} n^{-1/2} p_{\ast},
\end{align*}
where $K_1 = K_1(K_{\min}, K_{\max})$ denotes the constant $K$ specified in Lemma \ref{lemma:iterative_theta_consistency}.
If \eqref{eqn:23} holds for all $t \in [T]$, then we complete the proof by taking
\begin{align*}
    K_{\rm up} = 2K_{\max} \vee K_1 \vee 4 \vee \big(192\sqrt{2} K_{\min}^{-1/2} K_{\max}^{1/2} \big), \quad
    K_{\rm low} = K_{\min}/2.
\end{align*}
Therefore, it suffices to show that \eqref{eqn:23} holds for all $t \in [T]$.
The proof is divided into several steps.
We first establish---through \textbf{Step 2-4}--- that \eqref{eqn:23} holds for $t = 1$. Subsequently, by using induction, we will prove in \textbf{Step 5} that \eqref{eqn:23} holds for all $t \in [T]$.

\noindent \textbf{Step 2: } $\lambda_{\min}(\FisherBest[1])$ and $\lambda_{\max}(\FisherBest[1])$ \\
By (\textbf{P}) with a small enough $\delta = \delta(K_{\min}, K_{\max})$, the assumption \eqref{assume:pre_theta_consistency_sample} in Lemma \ref{lemma:pre_theta_consistency} is satisfied for $t = 0$. By Lemma \ref{lemma:pre_theta_consistency}, we have
\begin{align*}
\left\| \Fisher[1]{\theta_0}^{1/2} \left( \theta_0 - \thetaBest[1] \right) \right\|_{2} 
\leq \sqrt{2} \left\| \bOmega_{0}^{1/2} \big( \theta_0 - \mu_0 \big) \right\|_{2}  
\overset{(\textbf{P})}{\leq}
( \sqrt{2} \delta ) n^{1/2},
\end{align*}
which further implies that
\begin{align}
\begin{aligned} \label{eqn:eigenvalue_order_eq1}
&\left\| \theta_0 - \thetaBest[1] \right\|_{2} 
\leq
\lambda_{\min}^{-1/2} \big( \Fisher[1]{\theta_0} \big) ( \sqrt{2} \delta ) n^{1/2} 
\overset{(\textbf{A2})}{\leq} 
(K_{\min} n)^{-1/2} ( \sqrt{2} \delta ) n^{1/2}
\overset{(\textbf{P})}{\leq} 1/8, \\
&\lambda_{\min}(\FisherBest[1]) 
\geq 
\lambda_{\min}(\Fisher[1]{\thetaBest[1]}) + \lambda_{\min}(\bOmega_0)
\overset{(\textbf{A2})}{\geq} K_{\min} n, \\
&\lambda_{\max}(\FisherBest[1]) 
\leq \lambda_{\max}(\bOmega_{0}) + \lambda_{\max}(\Fisher[1]{\thetaBest[1]}) 
\overset{(\textbf{A2}), (\textbf{P})}{\leq} 
K_{\max}p_{\ast} + K_{\max} n 
\overset{(\textbf{S})}{\leq} 
\dfrac{4}{3}K_{\max} n.
\end{aligned}
\end{align}

\noindent \textbf{Step 3: } $\| \FisherBest[1]^{1/2} \left( \theta_0 - \thetaBest[1] \right) \|_{2}$ \\
Note that
\begin{align} \label{eqn:12} 
\begin{aligned}
    b_{n, 1} 
    &= \left\| \FisherBest[1]^{-1/2} \bOmega_{0} \big( \theta_0 - \mu_0 \big) \right\|_{2} 
    \leq 
    \lambda_{\min}^{-1/2} \big( \FisherBest[1] \big)
    \left\| \bOmega_0 \right\|_{2}^{1/2}
    \left\| \bOmega_{0}^{1/2} \big( \theta_0 - \mu_0 \big) \right\|_{2} \\
    &\leq 
    \big( K_{\min} n \big)^{-1/2} 
    \big( K_{\max} p_{\ast} \big)^{1/2}
    \big( \delta n^{1/2} \big)
    =
    \big( K_{\min}^{-1/2} K_{\max}^{1/2} \delta \big) p_{\ast}^{1/2}
    \leq 
    M_n p_{\ast}^{1/2},
\end{aligned}    
\end{align}
where the last inequality holds by $M_n \geq 1$ and small enough $\delta$.
Also, for $\theta \in \Theta (\thetaBest[1], \FisherBest[1], 4b_{n, 1})$, we have
\begin{align*}
    \left\| \theta - \thetaBest[1] \right\|_{2} 
    \leq
    \lambda_{\min}^{-1/2} \big( \FisherBest[1] \big)
    4 b_{n, 1}
    \overset{\eqref{eqn:eigenvalue_order_eq1}, \eqref{eqn:12} }{\leq}
    (K_{\min} n)^{-1/2} 4 M_n \sqrt{p_{\ast}} 
    \overset{(\textbf{S})}{\leq} 1/4,
\end{align*}
which, combining with $\left\| \theta_0 - \thetaBest[1] \right\|_{2} \leq 1/8$, implies that $\Theta (\thetaBest[1], \FisherBest[1], 4b_{n, 1}) \subset \Theta (\theta_0, \bI_{p}, 1/2)$.
Hence, Lemma \ref{lemma:tech_smooth_tau_bound} implies that
\begin{align*}
    \tau_{3, 1, \bias} \overset{(\textbf{A2})}&{\leq} \left( K_{\max} n \right) \left( K_{\min} n \right)^{-3/2} = \big( K_{\max} K_{\min}^{-3/2} \big) n^{-1/2}, \\
    \tau_{4, 1, \bias} \overset{(\textbf{A2})}&{\leq} \left( K_{\max} n \right) \left( K_{\min} n \right)^{-2} = \big( K_{\max} K_{\min}^{-2} \big) n^{-1}.
\end{align*}
It follows that
\begin{align*}
    \tau_{3, 1, \bias} b_{n, 1} 
    \leq
    \left( \big( K_{\max} K_{\min}^{-3/2} \big) n^{-1/2} \right) \big( M_n \sqrt{p_{\ast}} \big)
    \overset{ (\textbf{S}) }{\leq} 1/16,
\end{align*}
which allows us to utilize Lemma \ref{lemma:tech_pre_bias_bound}.
By Lemma \ref{lemma:tech_pre_bias_bound} with 
\begin{align*}
    \tau_{3} = \tau_{3, 1, \bias}, \quad 
    f(\theta) = \bbE_{1} \widetilde{L}_{1} (\theta), \quad 
    \theta = \thetaBest[1], \quad 
    \widetilde{\theta} = \theta_0, \quad 
    \beta = \bOmega_{0} \big( \theta_0 - \mu_0 \big), \quad 
    r = b_{n, 1},
\end{align*}
we have
\begin{align} \label{eqn:eigenvalue_order_eq2}
    \left\| \FisherBest[1]^{1/2} \left( \theta_0 - \thetaBest[1] \right) \right\|_{2} 
    \leq 4 b_{n, 1}
    \overset{ \eqref{eqn:12} }{\leq} 4 M_n \sqrt{p_{\ast}}.
\end{align}

\noindent \textbf{Step 4: } $\lambda_{\min}(\FisherMAP[1])$ and $\lambda_{\max}(\FisherMAP[1])$ \\
Combining with $\| \theta_0 - \thetaBest[1] \|_2 \leq 1/8$, we have
\begin{align}
\begin{aligned} \label{eqn:13} 
    r_{\eff, 1}
    \overset{\text{Lemma \ref{lemma:radius_upper_bound}}}&{\leq}
    M_n \lambda_{\min}^{-1/2}\big( \FisherBest[1] \big) \lambda_{\max}^{1/2}\big( \Fisher[1]{\thetaBest[1]} \big) p_{\ast}^{1/2} \\
    \overset{ \eqref{eqn:eigenvalue_order_eq1}, (\textbf{A2}) }&{\leq}
    M_n \big( K_{\min} n \big)^{-1/2} \big( K_{\max} n \big)^{1/2} p_{\ast}^{1/2}
    = 
    \big( K_{\min}^{-1} K_{\max} \big)^{1/2} M_n p_{\ast}^{1/2}.
\end{aligned}    
\end{align}
Also, for $\theta \in \Theta (\thetaBest[1], \FisherBest[1], 4r_{\eff, 1})$,
\begin{align} 
\begin{aligned} \label{eqn:14}
    \left\| \theta - \thetaBest[1] \right\|_{2} 
    &\leq
    \lambda_{\min}^{-1/2} \big( \FisherBest[1] \big) 4r_{\eff, 1}
    \overset{ \eqref{eqn:eigenvalue_order_eq1}, \eqref{eqn:13} }{\leq} 
    (K_{\min} n)^{-1/2} 4 \big( K_{\min}^{-1} K_{\max} \big)^{1/2} M_n p_{\ast}^{1/2} \\
    &=
    \big( 4K_{\min}^{-1} K_{\max}^{1/2} \big) M_n p_{\ast}^{1/2} n^{-1/2} 
    \overset{(\textbf{S})}{\leq} 1/4.
\end{aligned}    
\end{align}
It follows that $\Theta (\thetaBest[1], \FisherBest[1], 4r_{\eff, 1}) \subset \Theta (\theta_0, \bI_{p}, 1/2)$. Hence, we have
\begin{align*}
    \tau_{3, 1}^{\ast} 
    \overset{ \substack{ \text{Lemma \ref{lemma:tech_smooth_tau_bound}} \\ (\textbf{A2}), \eqref{eqn:eigenvalue_order_eq1} }}&{\leq} 
    \big( K_{\min} n \big)^{-3/2}
    \big( K_{\max} n \big)
    \leq
    \big( K_{\max} K_{\min}^{-3/2} \big) n^{-1/2}, \\ 
    \tau_{3, 1}^{\ast} r_{\eff, 1} 
    \overset{\eqref{eqn:13} }&{\leq} 
    \big( K_{\max}^{3/2} K_{\min}^{-2} \big) M_n p_{\ast}^{1/2} n^{-1/2}
    \overset{(\textbf{S}) }{\leq} 
    1/16.
\end{align*}
By the last display, Theorem \ref{thm:penalized_estimation} implies that
\begin{align*}
    \left\| \FisherBest[1]^{1/2} \big( \thetaMAP[1] - \thetaBest[1] \big) \right\|_{2}
    \leq 4r_{\eff, 1}, \quad
    \left\| \thetaMAP[1] - \thetaBest[1] \right\|_{2}  
    \overset{\eqref{eqn:14}}{\leq} 1/4,
\end{align*}
which further implies that $\thetaMAP[1] \in \Theta(\theta_0, \bI_{p}, 1/2)$.
It follows that
\begin{align}
\begin{aligned} \label{eqn:eigenvalue_order_eq2_2}
&\lambda_{\min}(\FisherMAP[1]) 
\geq \lambda_{\min}(\Fisher[1]{\thetaMAP[1]}) + \lambda_{\min}(\bOmega_0)
\overset{(\textbf{A2})}{\geq} 
K_{\min} n, \\
&\lambda_{\max}(\FisherMAP[1]) 
\leq \lambda_{\max}(\bOmega_{0}) + \lambda_{\max}(\Fisher[1]{\thetaMAP[1]}) 
\overset{(\textbf{A2}), (\textbf{P})}{\leq} 
K_{\max}p_{\ast} + K_{\max} n 
\overset{(\textbf{S})}{\leq} 
\dfrac{4}{3}K_{\max} n.
\end{aligned}
\end{align}
Combining \eqref{eqn:eigenvalue_order_eq1}, \eqref{eqn:eigenvalue_order_eq2} and \eqref{eqn:eigenvalue_order_eq2_2}, one can check that \eqref{eqn:23} holds for $t = 1$.

\noindent \textbf{Step 5: Inductive argument}  \\
We complete this proof by employing an inductive argument.
Let $t_0 \in \{2, 3, ..., T\}$. Suppose that the following inequalities hold:
\begin{align}
\begin{aligned} \label{eqn:eigenvalue_order_induction}
    \lambda_{\min}(\FisherBest[s]) \wedge \lambda_{\min}(\FisherMAP[s]) &\geq c_{1} n s, \\
    \lambda_{\max}(\FisherBest[s]) \vee \lambda_{\max}(\FisherMAP[s]) &\leq c_{2} n s, \\
    \left\| \FisherBest[s]^{1/2} \left( \theta_0 - \thetaBest[s] \right) \right\|_{2} &\leq c_{3} M_n s^{1/2} \sqrt{p_{\ast}}
\end{aligned}
\end{align}
for all $s \in [t_0-1]$, where 
\begin{align*}
    c_{1} \in [K_{\min}/2, K_{\min}], \quad 
    c_{2} \in [K_{\max}, 2K_{\max}], \quad 
    c_{3} = 4 \vee \big( 192 \sqrt{2} K_{\min}^{-1/2} K_{\max}^{1/2} \big).
\end{align*}
By applying Lemma \ref{lemma:iterative_theta_consistency} with $\alpha = 1/2$, it follows from \eqref{eqn:iterative_theta_consistency_result2} that
\begin{align}
\begin{aligned} \label{eqn:15}
    \lambda_{\min}(\FisherBest[t_0]) \wedge \lambda_{\min}(\FisherMAP[t_0]) &\geq c_{t_0} n t_0, \\
    \lambda_{\max}(\FisherBest[t_0]) \vee \lambda_{\max}(\FisherMAP[t_0]) &\leq c'_{t_0} n t_0, \\
    \left\| \FisherBest[t_0]^{1/2} \left( \theta_0 - \thetaBest[t_0] \right) \right\|_{2} &\leq c_{3} M_n t_0^{1/2} \sqrt{p_{\ast}}
\end{aligned}    
\end{align}
for some constants $c_{t_0}, c'_{t_0} > 0$.
Next, we prove the following inequalities:
\begin{align} \label{eqn:eigenvalue_order_eq3}
    c_{t_0} \geq \dfrac{K_{\min}}{2}, \quad 
    c'_{t_0} \leq 2K_{\max}, \quad 
    \forall t_0 \in \{2, 3, ..., T \},
\end{align}
which, combining with the result in \textbf{Step 0}, completes the proof by induction.

By \eqref{eqn:iterative_theta_consistency_result1}, \eqref{eqn:iterative_theta_consistency_result2} in Lemma \ref{lemma:iterative_theta_consistency}, \eqref{eqn:eigenvalue_order_induction} implies that 
\begin{align}
\begin{aligned} \label{eqn:20}
    \lambda_{\min}\big( \Fisher[t]{\thetaMAP[t]} \big) \wedge \lambda_{\min} \big( \Fisher[t]{\thetaBest[t]} \big) 
    \overset{ (\textbf{A2}) }&{\geq} K_{\min} n, \quad &\forall t \in [t_0], \\
    \lambda_{\max}\big( \Fisher[t]{\thetaMAP[t]} \big) \wedge \lambda_{\max} \big( \Fisher[t]{\thetaBest[t]} \big) 
    \overset{ (\textbf{A2}) }&{\leq} K_{\max} n, \quad &\forall t \in [t_0], \\
    \Delta_{s} 
    \leq K_1 s^{-1} n^{-1/2}  p_{\ast} 
    \overset{ (\textbf{S}) }&{\leq} 1/4, \quad &\forall s \in [t_0 -1],
\end{aligned}    
\end{align}
where the first two inequalities hold because $\thetaBest[t], \thetaMAP[t] \in \Theta(\theta_0, \bI_p, 1/2)$ for all $t \in [t_0]$.
Also, for any $t \in [T]$, we have
\begin{align}
\begin{aligned} \label{eqn:24}
    \FisherMAP[t] 
    &= 
    \bOmega_{t-1} + \Fisher[t]{\thetaMAP[t]} \\
    &\succeq
    (1 - \Delta_{t-1}) \FisherMAP[t-1] + \Fisher[t]{\thetaMAP[t]} \\
    &\succeq
    (1 - \Delta_{t-2}) (1 - \Delta_{t-1}) \FisherMAP[t-2] 
    + (1 - \Delta_{t-1}) \Fisher[t-1]{\thetaMAP[t-1]} 
    + \Fisher[t]{\thetaMAP[t]} \\
    &\succeq
    \sum_{s = 1}^{t-1} \left(
    \bigg[ \prod_{r = 1}^{s} \left( 1 - \Delta_{t-r} \right) \bigg] \Fisher[t-s]{\thetaMAP[t-s]}
    \right)
    + \Fisher[t]{\thetaMAP[t]}, \\
    \FisherBest[t] 
    &= 
    \bOmega_{t-1} + \Fisher[t]{\thetaBest[t]} \\
    &\succeq
    \sum_{s = 1}^{t-1} \left(
    \bigg[ \prod_{r = 1}^{s} \left( 1 - \Delta_{t-r} \right) \bigg] \Fisher[t-s]{\thetaMAP[t-s]}
    \right)
    + \Fisher[t]{\thetaBest[t]}.
\end{aligned}    
\end{align}
It follows that
\begin{align*}
    &\lambda_{\min} \big( \FisherBest[t_0] \big) \wedge \lambda_{\min} \big( \FisherMAP[t_0] \big) \\
    \overset{ \substack{\eqref{eqn:20} \\  \eqref{eqn:24} } }&{\geq}
    \sum_{s = 1}^{t_0-1} \left( \left[ \prod_{r = 1}^{s} \left( 1 - \Delta_{t_0-r} \right) \right] K_{\min} n \right) + K_{\min} n \\
    &\geq 
    \sum_{s = 1}^{t_0-1} \left( \exp \left[ - 2\sum_{r = 1}^{s} \Delta_{t_0-r} \right] K_{\min} n \right) + K_{\min} n \\
    &\geq \sum_{s = 1}^{t_0-1} \left( \left[1 - 2\sum_{r = 1}^{s} \Delta_{t_0-r} \right] K_{\min} n \right) + K_{\min} n \\
    &\geq \sum_{s = 1}^{t_0-1} \left( \left[1 - 2\sum_{r = 1}^{t_0 -1} \Delta_{r} \right] K_{\min} n \right) + K_{\min} n, \quad ( \because \Delta_r \geq 0, \quad \forall r \in [t_0 - 1])
\end{align*}
where the second and third inequalities hold by $1 - x \geq e^{-2x}$ for $x \in [0, 0.795]$ and $e^{-x} \geq 1 - x$ for $x \in \bbR$, respectively. Suppose that the following inequality holds:
\begin{align} \label{eqn:25}
    \sum_{s = 1}^{t_0-1} \Delta_{s} \leq 1/4, \quad \forall t_0 \in \{2, 3, ..., T\}.
\end{align}
Then, for any $t_0 \in \{2, 3, ..., T\}$, we have
\begin{align*}
    \sum_{s = 1}^{t_0-1} \left( \left[1 - 2\sum_{r = 1}^{t_0 -1} \Delta_{t_0-r} \right] K_{\min} n \right) + K_{\min} n
    \geq
    \dfrac{K_{\min}}{2} n t_0.
\end{align*}
Hence, we need to show \eqref{eqn:25}.
Note that
\begin{align*}
    \sum_{s = 1}^{t_0-1} \Delta_{s}
    \overset{ \eqref{eqn:20} }&{\leq} 
    \sum_{s = 1}^{t_0-1} K_1 p_{\ast} n^{-1/2} s^{-1}
    = K_1 p_{\ast} n^{-1/2} \sum_{s = 1}^{t_0-1} s^{-1} 
    \leq K_1 p_{\ast} n^{-1/2} \sum_{s = 1}^{T} s^{-1} \\
    &\leq K_1 p_{\ast} n^{-1/2} \left( \log T + 1 \right)
    \leq 2K_1 p_{\ast} n^{-1/2} \log (T \vee 3) 
    \overset{(\textbf{S})}{\leq} 1/4.
\end{align*}

As in \eqref{eqn:24}, for any $t \in [T]$, we have
\begin{align*}
    \FisherMAP[t] 
    &\preceq
    \bigg[ \prod_{r = 1}^{t-1} \left( 1 + \Delta_{r} \right) \bigg] \bOmega_0 +
    \sum_{s = 1}^{t-1} \left(
    \bigg[ \prod_{r = 1}^{s} \left( 1 + \Delta_{t-r} \right) \bigg] \Fisher[t-s]{\thetaMAP[t-s]}
    \right)
    + \Fisher[t]{\thetaMAP[t]}, \\
    \FisherBest[t] 
    &\preceq
    \bigg[ \prod_{r = 1}^{t-1} \left( 1 + \Delta_{r} \right) \bigg] \bOmega_0 +
    \sum_{s = 1}^{t-1} \left(
    \bigg[ \prod_{r = 1}^{s} \left( 1 + \Delta_{t-r} \right) \bigg] \Fisher[t-s]{\thetaMAP[t-s]}
    \right)
    + \Fisher[t]{\thetaBest[t]}.
\end{align*}
Also, note that
\begin{align*}
    \left\| \bOmega_0 \right\|_2 
    \overset{(\textbf{P})}{\leq}
    K_{\max} p_{\ast} 
    \overset{(\textbf{S})}{\leq}
    \dfrac{K_{\max}}{3} n.
\end{align*}
It follows that
\begin{align*}
    &\lambda_{\max}(\FisherBest[t_0]) \vee \lambda_{\max}(\FisherMAP[t_0]) \\
    \overset{ \substack{ \eqref{eqn:20} } }&{\leq}
    \bigg[ \prod_{r = 1}^{t_0-1} \left( 1 + \Delta_{r} \right) \bigg] \dfrac{K_{\max}}{3}n + 
    \sum_{s = 1}^{t_0-1} \left( \left[ \prod_{r = 1}^{s} \left( 1 + \Delta_{t_0-r} \right) \right] K_{\max} n \right) + K_{\max} n \\
    &\leq 
    \exp \left( \sum_{r = 1}^{t_0-1} \Delta_{r} \right) \dfrac{K_{\max}}{3}n +
    \sum_{s = 1}^{t_0-1} \left( \exp \left[ \sum_{r = 1}^{s} \Delta_{t_0-r} \right] K_{\max} n \right) + K_{\max} n \\
    &\leq 
    \left( 1 + 2\sum_{r = 1}^{t_0-1} \Delta_{r} \right) \dfrac{K_{\max}}{3}n +
    \sum_{s = 1}^{t_0-1} \left( \left[1 + 2\sum_{r = 1}^{s} \Delta_{t_0-r} \right] K_{\max} n \right) + K_{\max} n \\
    &\leq 
    \left( 1 + 2\sum_{r = 1}^{t_0-1} \Delta_{r} \right) \dfrac{K_{\max}}{3}n +
    \sum_{s = 1}^{t_0-1} \left( \left[1 + 2\sum_{r = 1}^{t_0-1} \Delta_{r} \right] K_{\max} n \right) + K_{\max} n \\
    &\leq 
    \dfrac{3}{2} \cdot \dfrac{K_{\max}}{3}n
    + \dfrac{3}{2} K_{\max} n(t_0 - 1)
    + K_{\max}n 
    = \dfrac{3}{2} K_{\max} n t_0
    \\
    &\leq 2 K_{\max} n t_0,
\end{align*}
where the second and the third inequalities hold because $1+ x \leq e^{x} \leq 1 + 2x$ for $x \in [0, 1/4]$ and $\sum_{r= 1}^{t_0 - 1} \Delta_{r} \leq 1/4$.
This completes the proof of \eqref{eqn:eigenvalue_order_eq3}. 
\end{proof}


\section{Proofs for Section \ref{sec:full_posterior}} \label{sec:Appendix_full_posterior}

\begin{lemma} \label{lemma:full_posterior_optimal_parameter}
    Suppose that (\textbf{A0}), (\textbf{A1}), (\textbf{A2}), (\textbf{S}) and (\textbf{P}) hold.
    Then, for all $t \in [T]$,
    \begin{align} \label{eqn:full_posterior_optimal_parameter_claim}
        \left\| \FullFisherTilde[t]{\fullthetaBest[t]}^{1/2} \left( \theta_0 - \fullthetaBest[t] \right) \right\|_{2} 
        \leq 4\left\| \FullFisherTilde[t]{\fullthetaBest[t]}^{-1/2} \bOmega_{0} \big( \theta_0 - \mu_0 \big) \right\|_{2}, 
    \end{align}
    and
    \begin{align} \label{eqn:full_posterior_optimal_parameter_claim2}
    \lambda_{\min}(\FullFisherTilde[t]{\fullthetaBest[t]}) \geq K_{\min} nt, \quad 
    \lambda_{\max}(\FullFisherTilde[t]{\fullthetaBest[t]}) \leq \lambda_{\max}(\bOmega_0 ) + K_{\max} nt.
    \end{align}
\end{lemma}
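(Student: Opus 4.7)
The plan is to mimic the bias-estimation template used in Lemma \ref{lemma:penalized_bias} and Lemma \ref{lemma:iterative_theta_consistency}, but applied to the batch penalized expected log-likelihood $\bbE\widetilde{L}_{1:t}(\cdot)$ rather than to a single mini-batch. The argument has a bootstrap structure: (i) localize $\fullthetaBest[t]$ to $\Theta(\theta_0, \bI_p, 1/2)$ using only the prior penalty and global concavity; (ii) read off the eigenvalue bounds \eqref{eqn:full_posterior_optimal_parameter_claim2} from (\textbf{A2}) once localization is achieved; (iii) use the resulting local smoothness to upgrade the crude localization to the sharp bias bound \eqref{eqn:full_posterior_optimal_parameter_claim}.

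For step (i), the key observation is that by (\textbf{A0}) the map $\theta \mapsto \bbE\widetilde{L}_{1:t}(\theta)$ is strongly concave because of the quadratic prior term $-\tfrac{1}{2}\|\bOmega_0^{1/2}(\theta-\mu_0)\|_2^2$, with Hessian bounded below uniformly by $\bOmega_0$. Combining this with the fact that $\nabla\bbE L_{1:t}(\theta_0)=0$ and using a Taylor expansion around $\theta_0$, I can adapt Lemma \ref{lemma:pre_theta_consistency} (invoked for the online version in Proposition \ref{prop:eigenvalue_order}) to obtain the preliminary bound $\|\FullFisherTilde[t]{\theta_0}^{1/2}(\theta_0 - \fullthetaBest[t])\|_2 \lesssim \|\bOmega_0^{1/2}(\theta_0-\mu_0)\|_2$. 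Under (\textbf{P}) this is at most $O(\delta n^{1/2})$, and together with the lower bound $\lambda_{\min}(\FullFisherTilde[t]{\theta_0}) \geq \lambda_{\min}(\FullFisher[t]{\theta_0}) \geq K_{\min} n t$ (valid by (\textbf{A2}) since $\theta_0 \in \Theta(\theta_0,\bI_p,1/2)$), this yields $\|\fullthetaBest[t]-\theta_0\|_2 \leq 1/2$ for small enough $\delta$. Once $\fullthetaBest[t] \in \Theta(\theta_0,\bI_p,1/2)$, assertion \eqref{eqn:full_posterior_optimal_parameter_claim2} is immediate by summing the per-batch eigenvalue bounds of (\textbf{A2}) and adding $\bOmega_0$.

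For step (iii), set $b^{\ast} = \|\FullFisherTilde[t]{\fullthetaBest[t]}^{-1/2}\bOmega_0(\theta_0-\mu_0)\|_2$ and let
\begin{align*}
\tau_{3}^{\rm full} = \inf\Bigl\{\tau>0 : \sup_{u \in \Theta(\FullFisherTilde[t]{\fullthetaBest[t]},4b^{\ast})}\sup_{z\in\bbR^p} \tfrac{|\langle\nabla^3\bbE\widetilde{L}_{1:t}(\fullthetaBest[t]+u), z^{\otimes 3}\rangle|}{\|\FullFisherTilde[t]{\fullthetaBest[t]}^{1/2}z\|_2^3} \leq \tau\Bigr\}.
\end{align*}
The eigenvalue bound from step (ii) together with Lemma \ref{lemma:tech_smooth_tau_bound} and (\textbf{A2}) gives $\tau_{3}^{\rm full} \lesssim (nt)^{-1/2}$ on the local set. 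On the other hand, $b^{\ast} \leq \|\bOmega_0\|_2^{1/2}\lambda_{\min}^{-1/2}(\FullFisherTilde[t]{\fullthetaBest[t]}) \|\bOmega_0^{1/2}(\theta_0-\mu_0)\|_2 \lesssim \sqrt{p_\ast / t}$ under (\textbf{P}), so under (\textbf{S}) the product $\tau_{3}^{\rm full}\cdot b^{\ast}$ is at most $1/16$.

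With these ingredients in place, I can invoke Lemma \ref{lemma:tech_pre_bias_bound} with $f = \bbE\widetilde{L}_{1:t}$, $\theta = \fullthetaBest[t]$, $\widetilde{\theta} = \theta_0$, $\beta = \bOmega_0(\theta_0-\mu_0)$, and $r = b^{\ast}$; the first-order condition $\nabla\bbE\widetilde{L}_{1:t}(\fullthetaBest[t])=0$ together with $\nabla\bbE L_{1:t}(\theta_0)=0$ gives $\nabla f(\theta_0) = \bOmega_0(\theta_0-\mu_0) - \bOmega_0(\theta_0-\mu_0) \cdot 0 = -\beta + \beta$, so the relevant gradient equation matches the hypothesis of the lemma. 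This yields $\|\FullFisherTilde[t]{\fullthetaBest[t]}^{1/2}(\theta_0 - \fullthetaBest[t])\|_2 \leq 4b^{\ast}$, which is precisely \eqref{eqn:full_posterior_optimal_parameter_claim}.

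The main obstacle is step (i): we have to propagate a useful lower bound on the Hessian of $\bbE\widetilde{L}_{1:t}$ at $\theta_0$ before knowing that $\fullthetaBest[t]$ is localized. This is resolved by observing that (\textbf{A2}) gives local Hessian lower bounds on all of $\Theta(\theta_0,\bI_p,1/2)$, so an intermediate value / mean-value argument along the segment joining $\theta_0$ and $\fullthetaBest[t]$ — terminated as soon as the segment first exits the local set — produces a contradiction with (\textbf{P}) unless the segment remains inside, giving the required localization. The remaining steps are routine reductions to results already proved in the paper.
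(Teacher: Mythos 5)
Your proposal is correct and mirrors the paper's proof: localize $\fullthetaBest[t]$ via a boundary‑contradiction argument driven by the definition of $\fullthetaBest[t]$ as a maximizer, read off \eqref{eqn:full_posterior_optimal_parameter_claim2} from (\textbf{A2}) once localization is established, and then apply Lemma~\ref{lemma:tech_pre_bias_bound} with exactly the choice $f=\bbE\widetilde{L}_{1:t}$, $\theta=\fullthetaBest[t]$, $\widetilde\theta=\theta_0$, $\beta=\bOmega_0(\theta_0-\mu_0)$, $r=b^\ast$. One caution about your step (i): the opening claim that the quadratic prior term makes $\bbE\widetilde{L}_{1:t}$ strongly concave ``with Hessian bounded below uniformly by $\bOmega_0$'' is not load‑bearing and would not by itself suffice. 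Assumption (\textbf{P}) only bounds $\|\bOmega_0\|_2$ from above; it places no lower bound on $\lambda_{\min}(\bOmega_0)$, so the global estimate one gets from that observation — namely $\|\bOmega_0^{1/2}(\theta_0-\fullthetaBest[t])\|_2 \le \|\bOmega_0^{1/2}(\theta_0-\mu_0)\|_2$ — controls $\theta_0-\fullthetaBest[t]$ only in the (possibly degenerate) $\bOmega_0$‑norm and cannot yield the needed $\ell_2$ localization $\|\theta_0-\fullthetaBest[t]\|_2\le 1/2$. What actually carries step (i), as you correctly identify at the end, is the pairing of the function‑value inequality $\bbE L_{1:t}(\fullthetaBest[t]) - \bbE L_{1:t}(\theta_0) \ge -\tfrac12\|\bOmega_0^{1/2}(\theta_0-\mu_0)\|_2^2$ (from optimality) with the \emph{local} Hessian lower bound of (\textbf{A2}) on $\Theta(\theta_0,\bI_p,1/2)$, propagated to the boundary by concavity of $\bbE L_{1:t}$ — precisely the contradiction argument of Lemma~\ref{lemma:pre_theta_consistency}, here applied directly to the batch object. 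With that clarification the remaining steps of your outline match the paper's proof.
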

\begin{proof}
Let $t \in [T]$.
By the definition of $\fullthetaBest[t]$, we have 
\begin{align*}
    \bbE L_{1:t}(\theta_0) - \dfrac{1}{2} \left\| \bOmega_{0}^{1/2} \big( \theta_0 - \mu_0 \big) \right\|_{2}^{2} 
    &= \bbE \widetilde{L}_{1:t}(\theta_0)  
    \leq 
    \bbE \widetilde{L}_{1:t}(\fullthetaBest[t]) \\
    &= \bbE L_{1:t}(\fullthetaBest[t]) - \dfrac{1}{2} \left\| \bOmega_{0}^{1/2}  \big( \fullthetaBest[t] - \mu_0 \big)\right\|_{2}^{2}
    \leq 
    \bbE L_{1:t}(\fullthetaBest[t]).
\end{align*}
It follows that 
\begin{align} \label{eqn:full_posterior_optimal_parameter_eq1}
    \bbE L_{1:t}(\fullthetaBest[t]) - \bbE L_{1:t}(\theta_0) \geq - \dfrac{1}{2} \left\| \bOmega_{0}^{1/2} \big( \theta_0 - \mu_0 \big) \right\|_{2}^{2}.
\end{align}
In this proof, we denote $\Theta_{n, t} = \Theta (\theta_0, \FullFisher[t]{\theta_0}, \sqrt{2} \| \bOmega_{0}^{1/2} ( \theta_0 - \mu_0 ) \|_{2} )$. 
For $\theta \in \Theta_{n, t}$, we have
\begin{align*}
    \left\| \theta - \theta_0 \right\|_{2} 
    &\leq \lambda_{\min}^{-1/2} (\FullFisher[t]{\theta_0}) \sqrt{2}\left\| \bOmega_{0}^{1/2} \big( \theta_0 - \mu_0 \big) \right\|_{2} \\
    \overset{(\textbf{A2})}&{\leq}
    \left( K_{\min} nt \right)^{-1/2} \sqrt{2} \left\| \bOmega_{0}^{1/2} \big( \theta_0 - \mu_0 \big) \right\|_{2} 
    \overset{(\textbf{P})}{\leq}
    \big( K_{\min}^{-1/2} \sqrt{2} \delta \big) t^{-1/2} 
    \leq \dfrac{1}{4},
\end{align*}
where the last inequality holds by a small enough $\delta$.
It follows that
\begin{align} \label{eqn:full_posterior_optimal_parameter_eq2}
    \Theta_{n, t} \subset \left\{ \theta \in \Theta : \left\| \theta - \theta_0 \right\|_{2} \leq 1/2 \right\}.
\end{align}
By Lemma \ref{lemma:tech_smooth_tau_bound} and (\textbf{A2}), $\bbE L_{1:t}(\theta)$ satisfies the third order smoothness at $\theta_0$ with parameter
\begin{align*}
    \left( K_{\max} K_{\min}^{-3/2} (nt)^{-1/2}, \FullFisher[t]{\theta_0}, \: \sqrt{2} \left\| \bOmega_{0}^{1/2} \big( \theta_0 - \mu_0 \big) \right\|_{2} \right).
\end{align*}

By contradiction, we will prove that
\begin{align} \label{eqn:full_posterior_optimal_parameter_eq3}
    \left\| \FullFisher[t]{\theta_0}^{1/2} \left( \fullthetaBest[t] - \theta_0 \right) \right\|_{2} \leq \sqrt{2} \left\| \bOmega_{0}^{1/2} \big( \theta_0 - \mu_0 \big) \right\|_{2}.
\end{align}
Suppose $\fullthetaBest[t] \notin \Theta_{n, t}$.
Let 
\begin{align*}
    \partial \Theta_{n, t} = \left\{ \theta \in \Theta : \left\| \FullFisher[t]{\theta_0}^{1/2} (\theta - \theta_0) \right\|_{2}  = \sqrt{2} \left\| \bOmega_{0}^{1/2} \big( \theta_0 - \mu_0 \big) \right\|_{2} \right\}.
\end{align*}
For $\theta^{\circ} \in \partial \Theta_{n, t}$, we have
\begin{align*}
&\bbE L_{1:t}(\theta^{\circ}) - \bbE L_{1:t}(\theta_0) 
\leq \nabla \bbE L_{1:t}(\theta_0)  - 
\dfrac{1}{2} \inf_{\theta \in \Theta_{n, t}}  \left\| \FullFisher[t]{\theta}^{1/2} (\theta^{\circ} - \theta_0) \right\|_{2}^{2} \\
&= - \dfrac{1}{2} \inf_{\theta \in \Theta_{n, t}}  \left\| \FullFisher[t]{\theta}^{1/2} (\theta^{\circ} - \theta_0) \right\|_{2}^{2} \\
\overset{\text{Lemma \ref{lemma:tech_Fisher_smooth}}}&{\leq}
-\dfrac{1}{2} \left( 1 -  K_{\max} K_{\min}^{-3/2} (nt)^{-1/2} \sqrt{2} \left\| \bOmega_{0}^{1/2} \big( \theta_0 - \mu_0 \big) \right\|_{2} \right) 
\left\| \FullFisher[t]{\theta_0}^{1/2} (\theta^{\circ} - \theta_0) \right\|_{2}^{2} \\
&=
-\dfrac{1}{2} \left( 1 -  \big( \sqrt{2} K_{\max} K_{\min}^{-3/2} \big) (nt)^{-1/2}  \left\| \bOmega_{0}^{1/2} \big( \theta_0 - \mu_0 \big) \right\|_{2} \right) \cdot
2 \left\| \bOmega_{0}^{1/2} \big( \theta_0 - \mu_0 \big) \right\|_{2}^{2} \\
\overset{(\textbf{P})}&{\leq}
-\left( 1 -  \big( \sqrt{2} K_{\max} K_{\min}^{-3/2} \delta \big) t^{-1/2}  \right) 
\left\| \bOmega_{0}^{1/2} \big( \theta_0 - \mu_0 \big) \right\|_{2}^{2} \\
&<
-\dfrac{1}{2} \left\| \bOmega_{0}^{1/2} \big( \theta_0 - \mu_0 \big) \right\|_{2}^{2}.
\end{align*}
where the last inequality holds by a small enough $\delta$.
Consequently, 
\begin{align*}
\bbE L_{1:t}(\fullthetaBest[t]) - \bbE L_{1:t}(\theta_0) 
< 
-\dfrac{1}{2} \left\| \bOmega_{0}^{1/2} \big( \theta_0 - \mu_0 \big) \right\|_{2}^{2}
\end{align*}
by the concavity of the map $\theta \mapsto \bbE L_{1:t}(\theta)$, which contradicts to \eqref{eqn:full_posterior_optimal_parameter_eq1}. This completes the proof of \eqref{eqn:full_posterior_optimal_parameter_eq3}.

By \eqref{eqn:full_posterior_optimal_parameter_eq2} and \eqref{eqn:full_posterior_optimal_parameter_eq3}, we have
\begin{align*}
    &\lambda_{\min}(\FullFisherTilde[t]{\fullthetaBest[t]}) \geq \lambda_{\min}(\FullFisher[t]{\fullthetaBest[t]}) \geq K_{\min} nt, \\ 
    &\lambda_{\max}(\FullFisherTilde[t]{\fullthetaBest[t]}) \leq \lambda_{\max}(\bOmega_0 ) + \lambda_{\max}(\FullFisher[t]{\fullthetaBest[t]}) \leq \lambda_{\max}(\bOmega_0 ) + K_{\max} nt, 
\end{align*}
which completes the proof of \eqref{eqn:full_posterior_optimal_parameter_claim2}.

Next, we prove \eqref{eqn:full_posterior_optimal_parameter_claim}.
Let
\begin{align*}
    \rho_{n, t} = \left\| \FullFisherTilde[t]{\fullthetaBest[t]}^{-1/2} \bOmega_{0} \big( \theta_0 - \mu_0 \big) \right\|_{2}, \quad 
    \Theta_{t, \bias} = \Theta (\fullthetaBest[t], \FullFisherTilde[t]{\fullthetaBest[t]}, 4\rho_{n, t}).
\end{align*}
Note that
\begin{align}
\begin{aligned} \label{eqn:27} 
    \rho_{n, t} 
    &\leq 
    \lambda_{\min}^{-1/2} \big( \FullFisherTilde[t]{\fullthetaBest[t]} \big)
    \left\| \bOmega_0 \right\|_{2}^{1/2}
    \left\| \bOmega_0^{1/2} \big( \theta_0 - \mu_0 \big) \right\|_{2} \\
    \overset{(\textbf{P})}&{\leq}
    \big( K_{\min} nt \big)^{-1/2} \big( K_{\max} p_{\ast} \big)^{1/2} \big( \delta n^{1/2} \big) 
    =
    \big( K_{\min}^{-1/2} K_{\max}^{1/2} \delta \big) t^{-1/2} p_{\ast}^{1/2} \\
    &\leq 
    t^{-1/2} p_{\ast}^{1/2},
\end{aligned}    
\end{align}
where the last inequality holds by a small enough $\delta$.
For $\theta \in \Theta_{t, \bias}$, note that
\begin{align*}
    \left\| \theta - \fullthetaBest[t] \right\|_{2} \leq  (K_{\min} nt)^{-1/2} 4 \rho_{n, t} 
    \leq
    \big( 4 K_{\min}^{-1/2} \big) n^{-1/2} t^{-1} p_{\ast}^{1/2}
    \overset{(\textbf{S})}{\leq}
    \dfrac{1}{4}.
\end{align*}
Also, by $\fullthetaBest[t] \in \Theta_{n, t}$, we have $\| \fullthetaBest[t] - \theta_0 \| \leq 1/4$.
It follows that $\Theta_{t, \bias} \subseteq \Theta (\theta_0, \bI_{p}, 1/2)$, which, combining (\textbf{A2}) and Lemma \ref{lemma:tech_smooth_tau_bound}, implies that
$\bbE \widetilde{L}_{1:t}(\cdot)$ satisfies the third order smoothness at $\fullthetaBest[t]$ with 
\begin{align*}
\left( K_{\max} K_{\min}^{-3/2} (nt)^{-1/2}, \:  \FullFisherTilde[t]{\fullthetaBest[t]}, \: 4\rho_{n, t} \right). 
\end{align*}
Let $\tau_{n, t} = K_{\max} K_{\min}^{-3/2} (nt)^{-1/2}$.
Then, by (\textbf{S}), one can easily check that $\tau_{n, t} \rho_{n, t} \leq 1/16$.
By Lemma \ref{lemma:tech_pre_bias_bound} with
\begin{align*}
    \tau_{3} = \tau_{n, t}, \quad 
    f(\theta) = \bbE \widetilde{L}_{1:t} (\theta), \quad 
    \theta = \fullthetaBest[t], \quad 
    \widetilde{\theta} = \theta_0, \quad 
    \beta = \bOmega_{0} \big( \theta_0 - \mu_0 \big), \quad 
    r = \rho_{n, t},
\end{align*}
$\tau_{n, t} \rho_{n, t} \leq 1/16$ implies that
\begin{align*}
     \left\|  \FullFisherTilde[t]{\fullthetaBest[t]}^{1/2} \left( \theta_0 - \fullthetaBest[t] \right) \right\|_{2} \leq 4 \rho_{n, t},
\end{align*}
which completes the proof of \eqref{eqn:full_posterior_optimal_parameter_claim}.
\end{proof}

\begin{lemma} \label{lemma:full_posterior_parameter}
    Suppose that (\textbf{A0}), (\textbf{A1$\ast$}), (\textbf{A2}), (\textbf{S}) and (\textbf{P}) hold.
    Then, on $\scrE_{\est, 2}$,
    \begin{align*}
        \left\| \FullFisherTilde[t]{\fullthetaBest[t]}^{1/2} \left( \fullthetapMLE[t] - \fullthetaBest[t] \right) \right\|_{2} \leq 4M_n p_{\ast}^{1/2}, \quad \forall t \in [T].
    \end{align*}    
\end{lemma}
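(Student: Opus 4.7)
The plan is to adapt the argument used in the proof of Theorem \ref{thm:penalized_estimation} to the batch-learning setup. I will work on $\scrE_{\est, 2}$ throughout, and define the auxiliary concave function
\begin{align*}
    g_{1:t}(\theta) := \bbE \widetilde{L}_{1:t}(\theta) + \langle \nabla \zeta_{1:t}, \theta\rangle.
\end{align*}
By the stochastic linearity encoded in (\textbf{A1$\ast$}), $\nabla \zeta_{1:t}$ is a constant (in $\theta$) random vector, so $g_{1:t}$ inherits the strict concavity of $\theta \mapsto \bbE \widetilde{L}_{1:t}(\theta)$. Moreover, $\nabla g_{1:t}(\fullthetapMLE[t]) = \nabla \bbE \widetilde{L}_{1:t}(\fullthetapMLE[t]) + \nabla \zeta_{1:t} = \nabla \widetilde{L}_{1:t}(\fullthetapMLE[t]) = 0$, so $\fullthetapMLE[t]$ is the unique maximizer of $g_{1:t}$. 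Setting $R_{n,t} := 4 M_n p_{\ast}^{1/2}$ and the local elliptical vicinity $\Theta_{n,t}^{\circ} := \Theta(\fullthetaBest[t], \FullFisherTilde[t]{\fullthetaBest[t]}, R_{n,t})$, by concavity it suffices to establish the strict inequality $g_{1:t}(\theta^{\circ}) < g_{1:t}(\fullthetaBest[t])$ for every $\theta^{\circ}$ on the boundary $\partial \Theta_{n,t}^{\circ}$, which—as in the proof of Theorem \ref{thm:penalized_estimation}—forces the stationary point $\fullthetapMLE[t]$ inside $\Theta_{n,t}^{\circ}$.

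I will first localize. By Lemma \ref{lemma:full_posterior_optimal_parameter}, $\| \FullFisherTilde[t]{\fullthetaBest[t]}^{1/2}(\fullthetaBest[t] - \theta_0) \|_2 \leq 4 \| \FullFisherTilde[t]{\fullthetaBest[t]}^{-1/2} \bOmega_0 (\theta_0 - \mu_0) \|_2$, which combined with $\lambda_{\min}(\FullFisherTilde[t]{\fullthetaBest[t]}) \geq K_{\min} n t$, the bounds $\| \bOmega_0 \|_2 \leq K_{\max} p_{\ast}$ from (\textbf{P}), and the sample-size condition (\textbf{S}) yields $\| \fullthetaBest[t] - \theta_0 \|_2 \leq 1/4$. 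A similar calculation using $\lambda_{\min}(\FullFisherTilde[t]{\fullthetaBest[t]}) \geq K_{\min} nt$ shows that every $\theta \in \Theta_{n,t}^{\circ}$ satisfies $\| \theta - \fullthetaBest[t] \|_2 \leq 1/4$ under (\textbf{S}). Hence $\Theta_{n,t}^{\circ} \subset \Theta(\theta_0, \bI_p, 1/2)$. On this inclusion, Lemma \ref{lemma:tech_smooth_tau_bound} together with (\textbf{A2}) gives a third-order smoothness constant $\tau_{1:t}^{\ast} \leq K_{\max} K_{\min}^{-3/2} (nt)^{-1/2}$ for $\bbE \widetilde{L}_{1:t}$ at $\fullthetaBest[t]$ with respect to the metric $\FullFisherTilde[t]{\fullthetaBest[t]}$, and by (\textbf{S}) we have $\tau_{1:t}^{\ast} R_{n,t} \leq 1/16$.

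Next, I will upper-bound the effective radius $\widetilde{r}_{\eff, 1:t}$ by $M_n p_{\ast}^{1/2}$ up to a harmless absolute constant. Using the monotonicity $\FullFisherTilde[t]{\fullthetaBest[t]}^{-1} \preceq \FullFisher[t]{\fullthetaBest[t]}^{-1}$ together with the conjugation identity $\operatorname{tr}(\FullFisherTilde^{-1}\bV_{1:t}) = \operatorname{tr}(\bV_{1:t}^{1/2} \FullFisherTilde^{-1} \bV_{1:t}^{1/2})$ (and the analogous identity for the $\lambda_{\max}$), and invoking the bound $\|\FullFisher[t]{\fullthetaBest[t]}^{-1} \bV_{1:t}\|_2 \leq M_n^2/9$ from (\textbf{A1$\ast$}) (valid since $\fullthetaBest[t] \in \Theta(\theta_0, \bI_p, 1/2)$), one obtains $\widetilde{p}_{\eff, 1:t} \leq p M_n^2 / 9$ and $\widetilde{\lambda}_{1:t} \leq M_n^2 / 9$; combined with $p + \log n + \log T \leq 3 p_{\ast}$ this gives $\widetilde{r}_{\eff, 1:t}^2 \leq (10/9) M_n^2 p_{\ast}$.

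Finally, for $\theta^{\circ} \in \partial \Theta_{n,t}^{\circ}$, Taylor's theorem applied to the concave $g_{1:t}$ together with Lemma \ref{lemma:tech_Fisher_smooth} (applied to $\bbE \widetilde{L}_{1:t}$ on $\Theta_{n,t}^{\circ}$) yields, exactly as in the proof of Theorem \ref{thm:penalized_estimation},
\begin{align*}
    g_{1:t}(\theta^{\circ}) - g_{1:t}(\fullthetaBest[t])
    \leq \bigg( \widetilde{r}_{\eff, 1:t} - \tfrac{1}{2}\bigl( 1 - 4\tau_{1:t}^{\ast} R_{n,t}\bigr) R_{n,t} \bigg) R_{n,t},
\end{align*}
where $\|\FullFisherTilde[t]{\fullthetaBest[t]}^{-1/2}\nabla \zeta_{1:t}\|_2 \leq \widetilde{r}_{\eff, 1:t}$ on $\scrE_{\est, 2}$ by (\textbf{A1$\ast$}). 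Plugging in $\tau_{1:t}^{\ast} R_{n,t} \leq 1/16$, $R_{n,t} = 4 M_n p_{\ast}^{1/2}$ and $\widetilde{r}_{\eff, 1:t} \leq \sqrt{10}/3 \cdot M_n p_{\ast}^{1/2} < (15/8) M_n p_{\ast}^{1/2}$ makes the right-hand side strictly negative (indeed bounded above by $-c M_n^2 p_{\ast}$ for a universal $c > 0$), closing the argument. The main obstacle is the effective-radius bound $\widetilde{r}_{\eff, 1:t} \lesssim M_n p_{\ast}^{1/2}$: while the trace part is handled cleanly via the PSD-monotone conjugation above, one has to be careful that operator norms of products of two symmetric PSD matrices are not generally monotone under the inequality $\FullFisherTilde^{-1} \preceq \FullFisher^{-1}$, so I will have to route the argument through $\lambda_{\max}$ of the conjugated matrix $\bV_{1:t}^{1/2} \FullFisherTilde^{-1} \bV_{1:t}^{1/2}$ rather than attempting a direct operator-norm comparison.
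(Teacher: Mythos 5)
Your proof is correct and follows essentially the same route as the paper: localize $\fullthetaBest[t]$ and the elliptical vicinity inside $\Theta(\theta_0,\bI_p,1/2)$ via Lemma~\ref{lemma:full_posterior_optimal_parameter} and (\textbf{S}), extract the third-order smoothness constant from (\textbf{A2}) and Lemma~\ref{lemma:tech_smooth_tau_bound}, bound $\widetilde{r}_{\eff,1:t}\lesssim M_n p_\ast^{1/2}$ from (\textbf{A1}$\ast$), and close with the concavity/boundary argument---which is precisely the content of Lemma~\ref{lemma:tech_pre_bias_bound} that the paper invokes by name. Your remark about routing the $\widetilde\lambda_{1:t}$ comparison through $\lambda_{\max}(\bV_{1:t}^{1/2}\FullFisherTilde[t]{\fullthetaBest[t]}^{-1}\bV_{1:t}^{1/2})$ rather than a bare operator-norm monotonicity is a legitimate and welcome clarification of the paper's notational shortcut (the quantity being manipulated is the spectral radius of the PSD product, not its singular-value norm); one small slip is that with $R_{n,t}$ as the radius in Lemma~\ref{lemma:tech_Fisher_smooth} the factor should read $(1-\tau_{1:t}^\ast R_{n,t})$ rather than $(1-4\tau_{1:t}^\ast R_{n,t})$, though the numerics go through either way under your $\tau_{1:t}^\ast R_{n,t}\le 1/16$.
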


\begin{proof}
    In this proof, we work on the event $\scrE_{\est, 2}$ without explicitly mentioning it.
    The proof of this lemma is similar to that of Theorem \ref{thm:penalized_estimation}. Hence, we provide a sketch of the proof. 
    
    Let $t \in [T]$. 
    By Lemma \ref{lemma:full_posterior_optimal_parameter}, we have
    \begin{align*}
        \left\| \FullFisherTilde[t]{\fullthetaBest[t]}^{1/2} \left( \theta_0 - \fullthetaBest[t] \right) \right\|_{2} 
        &\leq 4 \left\| \FullFisherTilde[t]{\fullthetaBest[t]}^{-1/2} \bOmega_0 \big( \theta_0 - \mu_0 \big) \right\|_{2}
        \overset{ \eqref{eqn:27} }{\leq}
        4 t^{-1/2} p_{\ast}^{1/2}, \\ 
        \lambda_{\min} \big( \FullFisherTilde[t]{\fullthetaBest[t]} \big) &\geq K_{\min} (nt),
    \end{align*}
    which implies that
    \begin{align*}
        \left\| \theta_0 - \fullthetaBest \right\|_{2} 
        &\leq (K_{\min} nt)^{-1/2} 4t^{-1/2} p_{\ast}^{1/2}
        \overset{ (\textbf{S}) }{\leq}
        1/4, \\
        \widetilde{\lambda}_{1:t} 
        &= \left\| \FullFisherTilde[t]{\fullthetaBest[t]}^{-1} \bV_{1:t} \right\|_{2} 
        \leq \left\| \FullFisher[t]{\fullthetaBest[t]}^{-1} \bV_{1:t} \right\|_{2} \overset{(\textbf{A1$\ast$})}{\leq} 
        \dfrac{M_n^{2}}{9}, \\
        \widetilde{r}_{\eff, 1:t} 
        &\leq \widetilde{\lambda}_{1:t}^{1/2} \left[ p^{1/2} + \sqrt{2 (\log n + \log T)} \right] \leq 3 \widetilde{\lambda}_{1:t}^{1/2} p_{\ast}^{1/2}
        \leq M_n p_{\ast}^{1/2}.
    \end{align*}
    For all $\theta \in \Theta ( \fullthetaBest, \FullFisherTilde[t]{\fullthetaBest[t]}, 4\widetilde{r}_{\eff, 1:t} )$, it follows from the last display that
    \begin{align*}
        \left\| \theta - \fullthetaBest \right\|_{2} 
        \leq 
        (K_{\min} nt)^{-1/2} 4M_n p_{\ast}^{1/2} \overset{ (\textbf{S}) }{\leq}  1/4.
    \end{align*}
    Hence, we have
    \begin{align*}
        \Theta \left( \fullthetaBest, \FullFisherTilde[t]{\fullthetaBest[t]}, 4\widetilde{r}_{\eff, 1:t} \right) \subseteq \Theta \left( \theta_0, \bI_{p}, 1/2 \right),
    \end{align*}
    which, combining with Lemma \ref{lemma:tech_smooth_tau_bound}, implies that $\bbE \widetilde{L}_{1:t}(\theta)$ satisfies the third order smoothness at $\fullthetaBest$ with parameters
    \begin{align*}
        \bigg( K_{\max} K_{\min}^{-3/2} (nt)^{-1/2}, \: \FullFisherTilde[t]{\fullthetaBest[t]}, \: 4\widetilde{r}_{\eff, 1:t}  \bigg).
    \end{align*}
    Also,
    \begin{align*} 
        \left( K_{\max} K_{\min}^{-3/2} (nt)^{-1/2} \right) \widetilde{r}_{\eff, 1:t} 
        \leq \left( K_{\max} K_{\min}^{-3/2} (nt)^{-1/2} \right) \big( M_n p_{\ast}^{1/2} \big) 
        \overset{ (\textbf{S}) }{\leq} 1/16,
    \end{align*}
    which allows us to utilize Lemma \ref{lemma:tech_pre_bias_bound}.
    By Lemma \ref{lemma:tech_pre_bias_bound} with
    \begin{align*}
        \tau_{3} = K_{\max} K_{\min}^{-3/2} (nt)^{-1/2}, \quad 
        f(\theta) = \bbE \widetilde{L}_{1:t} (\theta), \quad 
        \theta = \fullthetaBest, \quad 
        \widetilde{\theta} = \fullthetapMLE, \quad 
        \beta = \nabla \zeta_{1:t}, \quad 
        r = \widetilde{r}_{\eff, 1:t},
    \end{align*}
    we have on $\scrE_{\est, 2}$
    \begin{align*}
         \left\|  \FullFisherTilde[t]{\fullthetaBest[t]}^{1/2} \left( \fullthetapMLE - \fullthetaBest \right) \right\|_{2} \leq 4 \widetilde{r}_{\eff, 1:t} \leq 4M_n p_{\ast}^{1/2}, \quad \forall t \in [T],
    \end{align*}        
    which completes the proof. 
\end{proof}

\begin{proof}[Proof of Theorem \ref{thm:batch_posterior_LA}]
    In this proof, we work on the event $\scrE_{\est, 2}$ without explicitly mentioning it. The proof of this theorem is similar to the proofs of Theorem \ref{thm:LA_TV} and Proposition \ref{prop:eigenvalue_order}. Hence, we provide a sketch of the proof.
    
    Let $t \in [T]$. 
    Note that
    \begin{align}
    \begin{aligned} \label{eqn:batch_posterior_LA_eq0}
        \left\| \FullFisherTilde[t]{\fullthetaBest[t]}^{1/2} \big( \theta_0 - \fullthetaBest[t] \big) \right\|_{2} 
        &\leq 4 \left\| \FullFisherTilde[t]{\fullthetaBest[t]}^{-1/2} \bOmega_0 \big( \theta_0 - \mu_0 \big) \right\|_{2}
        \overset{ \eqref{eqn:27} }{\leq}
        4 t^{-1/2} p_{\ast}^{1/2}, \\ 
        \left\| \FullFisherTilde[t]{\fullthetaBest[t]}^{1/2} \big( \fullthetapMLE - \fullthetaBest \big) \right\|_{2} 
        &\leq 4M_n p_{\ast}^{1/2},
    \end{aligned}
    \end{align}
    which, combining with (\textbf{S}), implies that
    \begin{align} \label{eqn:26}
        \big\| \theta_0 - \fullthetaBest[t] \big\|_{2} \leq 1/8, \quad 
        \big\| \fullthetapMLE - \fullthetaBest \big\|_{2} \leq 1/8.
    \end{align}
    From the last display, we have $\| \theta_0 - \fullthetapMLE \|_{2} \leq 1/4$, which implies that
    \begin{align} \label{eqn:batch_posterior_LA_eq1}
        \lambda_{\min} ( \FullFisherTilde[t]{\fullthetapMLE[t]} ) 
        \geq \lambda_{\min} ( \bOmega_0 ) + \lambda_{\min} ( \FullFisher[t]{\fullthetapMLE[t]} ) 
        \geq \lambda_{\min} ( \FullFisher[t]{\fullthetapMLE[t]} ) \overset{(\textbf{A2})}{\geq} K_{\min} nt.
    \end{align}
    Note that $r_{\LA} = 2\sqrt{p} + \sqrt{2 \log N} \leq  4 \sqrt{p_{\ast}}$.
    Also, we have
    \begin{align*}
        \sup_{\theta \in \Theta (\fullthetapMLE, \FullFisherTilde[t]{\fullthetapMLE[t]}, 4r_{\LA} )}
        \big\| \theta - \fullthetapMLE \big\|_{2} 
        \leq \lambda_{\min}^{-1/2} ( \FullFisherTilde[t]{\fullthetapMLE[t]} )  4 r_{\LA}
        \leq ( K_{\min} nt )^{-1/2} 16 \sqrt{p_{\ast}} 
        \overset{(\textbf{S})}{\leq} 1/4, 
    \end{align*}
    which implies that $\Theta (\fullthetapMLE, \FullFisherTilde[t]{\fullthetapMLE[t]}, 4r_{\LA} ) \subseteq \Theta (\theta_0, \bI_{p}, 1/2)$. 
    By Lemma \ref{lemma:tech_smooth_tau_bound} and (\textbf{A2}), $\widetilde{L}_{1:t}(\cdot)$ satisfies the third and fourth order smoothness at $\fullthetapMLE$ with parameter
    \begin{align*} 
        \left( K_{\max} K_{\min}^{-3/2} (nt)^{-1/2}, \FullFisherTilde[t]{\fullthetapMLE[t]}, 4 r_{\LA} \right) \quad \text{ and } \quad 
        \left( K_{\max} K_{\min}^{-2} (nt)^{-1}, \FullFisherTilde[t]{\fullthetapMLE[t]}, 4 r_{\LA} \right), 
    \end{align*}
    respectively.
    In this proof, let
    \begin{align*}
        \tau_{3, t} = K_{\max} K_{\min}^{-3/2} (nt)^{-1/2}, \quad 
        \tau_{4, t} = K_{\max} K_{\min}^{-2} (nt)^{-1}.
    \end{align*}
    Then, we can apply the proof strategy in Theorem \ref{thm:LA_TV}, which implies that
    \begin{align*}
        &d_{V} \bigg( \cN\left(\fullthetapMLE, \FullFisherTilde[t]{\fullthetapMLE[t]} \right),  \Pi\left(\cdot \mid \bD_{1:t} \right) \bigg) \\
        &\leq 
        c_1
        \bigg( 
            \left[ \tau_{4, t} + \tau_{3, t}^2 \right] p^2 
            +
            \tau_{3, t} p
            +
            \tau_{3, t}^{3} \log^{3}N
            +
            e^{-8\log N - 8p}
        \bigg) \\
        &\leq 
        c_2
        \bigg( 
            (nt)^{-1} p^2 
            +
            (nt)^{-1/2} p
            +
            (nt)^{-3/2} \log^{3} N
            +
            e^{-8\log N - 8p}
        \bigg) \\       
        \overset{(\textbf{S})}&{\leq}
        c_3 \sqrt{\dfrac{p_{\ast}^{2}}{nt}}
    \end{align*}
    for some constants $c_1, c_2, c_3 > 0$, depending only on $(K_{\min}, K_{\max})$.
\end{proof}

\begin{proof}[Proof of Theorem \ref{thm:batch_posterior_BvM}]
In this proof, we work on the event $\scrE_{\est, 2}$ without explicitly mentioning it.
Let $t \in [T]$.
To complete this proof, we utilize Lemma \ref{lemma:tech_Gaussian_comparison}.
Hence, we need to obtain upper bounds of the following quantities:
\begin{align*}
    {\rm (i)} = \left\| \FullFisherTilde[t]{\fullthetapMLE}^{1/2} \left( \fullthetaMLE - \fullthetapMLE \right) \right\|_{2}, \quad 
    {\rm (ii)} = \left\| \FullFisher[t]{\theta_{0}}^{-1/2} \FullFisherTilde[t]{\fullthetapMLE} \FullFisher[t]{\theta_{0}}^{-1/2} - \bI_{p} \right\|_{\rm F}. 
\end{align*}
\textbf{Step 1: ${\rm (i)}$} \\
Firstly, we will obtain an upper bound of ${\rm (i)}$. To complete the proof, we will show that
\begin{align*} 
     \left\|  \FullFisher[t]{\theta_0}^{1/2} \left( \fullthetaMLE - \theta_0 \right) \right\|_{2} 
     \leq 4M_n p_{\ast}^{1/2}
\end{align*} 
by utilizing Lemma \ref{lemma:tech_pre_bias_bound}. Note that required proof for the last display is similar to that of Theorem \ref{thm:penalized_estimation}. Hence, in \textbf{Step 1}, we provide a sketch of the proof.
Note that
\begin{align*}
        \left\| \FullFisher[t]{\theta_{0}}^{-1} \bV_{1:t} \right\|_{2} \overset{(\textbf{A1$\ast$})}&{\leq} \dfrac{M_n^{2}}{9}, \\ 
        \lambda_{\min}(\FullFisher[t]{\theta_0}) \overset{(\textbf{A2})}&{\geq} K_{\min} (nt) \\
        r_{\eff, 1:t} &\leq \lambda_{1:t}^{1/2}\left[ p^{1/2} + \sqrt{2 (\log n + \log T)} \right] \leq 3(\lambda_{1:t} p_{\ast})^{1/2} 
        \leq M_n p_{\ast}^{1/2},
\end{align*}
which, combining with (\textbf{S}), implies that
\begin{align*}
    \Theta \left( \theta_0, \FullFisher[t]{\theta_0}, 4r_{\eff, 1:t} \right) \subseteq \Theta \left( \theta_0, \bI_{p}, 1/2 \right).
\end{align*}
By Lemma \ref{lemma:tech_smooth_tau_bound}, the last display implies that $\bbE L_{1:t}(\theta)$ satisfies the third order smoothness at $\theta_0$ with parameters
\begin{align*}
    \left( K_{\max} K_{\min}^{-3/2} (nt)^{-1/2}, \: \FullFisher[t]{\theta_0}, \: 4r_{\eff, 1:t}  \right).
\end{align*}
Also,
\begin{align}
\begin{aligned} \label{eqn:batch_posterior_BvM_eq1}
    \big( K_{\max} K_{\min}^{-3/2} (nt)^{-1/2} \big) r_{\eff, 1:t} 
    &\leq 
    \big( K_{\max} K_{\min}^{-3/2} \big) \big( M_n p_{\ast}^{1/2} (nt)^{-1/2} \big) 
    \overset{(\textbf{S})}{\leq} 1/16, \\
    \left\| \bF_{1:t, \theta_0}^{-1/2} \nabla \zeta_{1:t} \right\|_{2} &\leq r_{\eff, 1:t},
\end{aligned}
\end{align}
where the second inequality holds on $\scrE_{\eff, 2}$ by the condition \eqref{assume:A1ast_1}.
Note that \eqref{eqn:batch_posterior_BvM_eq1} allows us to utilize Lemma \ref{lemma:tech_pre_bias_bound}.
By Lemma \ref{lemma:tech_pre_bias_bound} with
\begin{align*}
    \tau_{3} = K_{\max} K_{\min}^{-3/2} (nt)^{-1/2}, \quad 
    f(\theta) = \bbE L_{1:t} (\theta), \quad 
    \theta = \theta_0, \quad 
    \widetilde{\theta} = \fullthetaMLE, \quad 
    \beta = \nabla \zeta_{1:t}, \quad 
    r = r_{\eff, 1:t},
\end{align*}
we have
\begin{align} \label{eqn:batch_posterior_BvM_eq2} 
     \left\|  \FullFisher[t]{\theta_0}^{1/2} \big( \fullthetaMLE - \theta_0 \big) \right\|_{2} 
     \leq 4 r_{\eff, 1:t} 
     \leq 4M_n p_{\ast}^{1/2},
\end{align}        
which implies $\| \fullthetaMLE - \theta_0 \|_{2} \leq 1/2$ by (\textbf{S}).

By \eqref{eqn:full_posterior_optimal_parameter_claim2}, \eqref{eqn:batch_posterior_LA_eq1} and (\textbf{A2}), we have
\begin{align} \label{eqn:batch_posterior_BvM_eq3}
\lambda_{\min} (\FullFisherTilde[t]{\fullthetapMLE}) \wedge \lambda_{\min} (\FullFisherTilde[t]{\fullthetaBest}) \wedge \lambda_{\min} (\FullFisher[t]{\theta_0}) \wedge \lambda_{\min} (\FullFisher[t]{\fullthetaMLE})
\geq
K_{\min} (nt). 
\end{align}
It follows that
\begin{align}
\begin{aligned} \label{eqn:batch_posterior_BvM_eq3_2}
&\left\| \FullFisherTilde[t]{\fullthetapMLE}^{-1/2} \bOmega_{0} \big( \fullthetaMLE - \mu_0 \big) \right\|_{2} \\
&\leq 
    \left\| \FullFisherTilde[t]{\fullthetapMLE}^{-1/2} \bOmega_{0} \big( \fullthetaMLE - \theta_0 \big) \right\|_{2} 
    + \left\| \FullFisherTilde[t]{\fullthetapMLE}^{-1/2} \bOmega_{0} \big( \theta_0 - \mu_0 \big) \right\|_{2} \\
&=
    \left\| \FullFisherTilde[t]{\fullthetapMLE}^{-1/2} \bOmega_{0} \FullFisher[t]{\theta_0}^{-1/2}
    \FullFisher[t]{\theta_0}^{1/2} \big( \fullthetaMLE - \theta_0 \big) \right\|_{2} 
    + \left\| \FullFisherTilde[t]{\fullthetapMLE}^{-1/2} \bOmega_{0} \big( \theta_0 - \mu_0 \big) \right\|_{2} \\
&\leq  
    \left( K_{\min} nt \right)^{-1} 
    \left\| \bOmega_0 \right\|_{2} 
    \left\| \FullFisher[t]{\theta_0}^{1/2} \big( \fullthetaMLE - \theta_0 \big) \right\|_{2} \\
&\qquad + 
    \left( K_{\min} nt \right)^{-1/2} 
    \left\| \bOmega_0 \right\|_{2}^{1/2}
    \left\| \bOmega_{0}^{1/2} \big( \theta_0 - \mu_0 \big) \right\|_{2} \\
&\leq
    \left( K_{\min} nt \right)^{-1} 
    \left( K_{\max} p_{\ast} \right)
    \left( 4 M_n p_{\ast}^{1/2} \right) \\
    &\qquad + 
    \left( K_{\min} nt \right)^{-1/2} 
    \left( K_{\max} p_{\ast} \right)^{1/2}
    \big( K_{\max} M_n p_{\ast}^{1/2} \big)
    \\    
\overset{(\textbf{S})}&{\leq}
    \big( 1 +  K_{\min}^{-1/2} K_{\max}^{3/2} \big)
    M_n \left( \dfrac{p_{\ast}^{2}}{nt} \right)^{1/2} = c_1 M_n \left( \dfrac{p_{\ast}^{2}}{nt} \right)^{1/2},
\end{aligned}
\end{align}
where $c_1 = 1 +  K_{\min}^{-1/2} K_{\max}^{3/2}$, and the third inequality holds by \eqref{eqn:batch_posterior_BvM_eq2}, \eqref{eqn:batch_posterior_BvM_eq3} and (\textbf{P$\ast$}).
In this proof, let 
$$
b_{t} = \left\| \FullFisherTilde[t]{\fullthetapMLE}^{-1/2} \bOmega_{0} \big( \fullthetaMLE - \mu_0 \big) \right\|_{2}.
$$
Then, for $\theta \in \Theta ( \fullthetapMLE, \FullFisherTilde[t]{\fullthetapMLE}, 4b_{t})$, note that
\begin{align*}
    \big\| \theta - \theta_0 \big\|_{2} 
    &\leq \big\| \theta - \fullthetapMLE \big\|_{2} 
        + \big\| \fullthetapMLE - \theta_0 \big\|_{2} 
    \leq \lambda_{\min}^{-1/2} \big( \FullFisherTilde[t]{\fullthetapMLE} \big) 4 b_t
        + \big\| \fullthetapMLE - \theta_0 \big\|_{2} \\
    &\leq (K_{\min} nt)^{-1/2} \bigg[ 4c_1 M_n \left( \dfrac{p_{\ast}^{2}}{nt} \right)^{1/2} \bigg]
    + 1/4
    \overset{(\textbf{S})}{\leq} 1/2,
\end{align*}
where the third inequality holds by \eqref{eqn:26}, \eqref{eqn:batch_posterior_BvM_eq3} and \eqref{eqn:batch_posterior_BvM_eq3_2}. 
It follows that $\Theta ( \fullthetapMLE, \FullFisherTilde[t]{\fullthetapMLE}, 4b_{t}) \subseteq \Theta (\theta_0, \bI_{p}, 1/2)$. 
By Lemma \ref{lemma:tech_smooth_tau_bound}, $\widetilde{L}_{1:t} (\theta)$ satisfies the third order smoothness at $\fullthetapMLE$ with parameters
\begin{align*}
    \left( K_{\max} K_{\min}^{-3/2} (nt)^{-1/2}, \: \FullFisherTilde[t]{\fullthetapMLE}, \: 4b_{t} \right).
\end{align*}
Also,
\begin{align*} 
    \big( K_{\max} K_{\min}^{-3/2} (nt)^{-1/2} \big) b_{t} 
    \leq 
    K_{\max} K_{\min}^{-3/2} (nt)^{-1/2} \bigg[ c_1 M_n\left( \dfrac{p_{\ast}^{2}}{nt} \right)^{1/2} \bigg]   
    \overset{(\textbf{S})}{\leq}
    1/16,
\end{align*} 
which allows us to utilize Lemma \ref{lemma:tech_pre_bias_bound}.
By Lemma \ref{lemma:tech_pre_bias_bound} with
\begin{align*}
    &\tau_{3} = K_{\max} K_{\min}^{-3/2} (nt)^{-1/2}, \quad 
    f(\theta) = \widetilde{L}_{1:t} (\theta), \quad 
    \theta = \fullthetapMLE, \quad 
    \widetilde{\theta} = \fullthetaMLE, \\ 
    &\beta = \bOmega_{0} \big( \fullthetaMLE - \mu_0 \big), \quad 
    r = b_{t},
\end{align*}
we have
\begin{align} \label{eqn:batch_posterior_BvM_eq4_2_1}
     \left\| \FullFisherTilde[t]{\fullthetapMLE}^{1/2} \left( \fullthetaMLE - \fullthetapMLE \right) \right\|_{2} 
     \leq 
     4 b_{t} \leq 4c_1 M_n\left( \dfrac{p_{\ast}^{2}}{nt} \right)^{1/2}.
\end{align}     

\noindent \textbf{Step 2: ${\rm (ii)}$} \\
Next, we will obtain an upper bound of ${\rm (ii)}$. 
By Lemmas \ref{lemma:full_posterior_optimal_parameter} and \ref{lemma:full_posterior_parameter}, we have
\begin{align}
\begin{aligned} \label{eqn:batch_posterior_BvM_eq4_2}
    \left\| \FullFisherTilde[t]{\fullthetaBest}^{1/2} \big( \theta_0 - \fullthetaBest \big) \right\|_{2}
    &\leq 
    4 \left\| \FullFisherTilde[t]{\fullthetaBest}^{-1/2} \bOmega_{0} \big( \theta_0 - \mu_0 \big) \right\|_{2} 
    \overset{\eqref{eqn:27}}{\leq} 
    4 t^{-1/2} p_{\ast}^{1/2} \\
    &\leq 
    4 M_n p_{\ast}^{1/2}, \\
    \left\| \FullFisherTilde[t]{\fullthetaBest}^{1/2} \big( \fullthetapMLE - \fullthetaBest \big) \right\|_{2}
    &\leq 4 M_n p_{\ast}^{1/2}.
\end{aligned}
\end{align}
Combining (\textbf{S}), \eqref{eqn:batch_posterior_BvM_eq3} and \eqref{eqn:batch_posterior_BvM_eq4_2}, one can easily check that
\begin{align*}
    \Theta (\fullthetaBest, \FullFisherTilde[t]{\fullthetaBest}, 4M_n p_{\ast}^{1/2}) &\subseteq \Theta(\theta_{0}, \bI_{p}, 1/2), \\
    \Theta (\fullthetaBest, \FullFisher[t]{\fullthetaBest}, 10M_n p_{\ast}^{1/2}) &\subseteq \Theta(\theta_{0}, \bI_{p}, 1/2).
\end{align*}
Hence, by Lemma \ref{lemma:tech_smooth_tau_bound}, $\widetilde{L}_{1:t}(\cdot)$ satisfies the third order smoothness at $\fullthetaBest$ with parameters
\begin{align*}
    \left( K_{\max} K_{\min}^{-3/2} (nt)^{-1/2}, \: \FullFisherTilde[t]{\fullthetaBest}, \: 4M_n p_{\ast}^{1/2} \right).
\end{align*}
Since $\fullthetapMLE \in \Theta (\fullthetaBest, \FullFisherTilde[t]{\fullthetaBest}, 4M_n p_{\ast}^{1/2})$ by \eqref{eqn:batch_posterior_BvM_eq4_2}, we have
\begin{align*}
    &\left\| \FullFisherTilde[t]{\fullthetapMLE}^{1/2} \big( \theta_0 - \fullthetapMLE \big) \right\|_{2}
    \leq 
    \left\| \FullFisherTilde[t]{\fullthetapMLE}^{1/2} \FullFisherTilde[t]{\fullthetaBest}^{-1/2}  \right\|_{2}
    \left\| \FullFisherTilde[t]{\fullthetaBest}^{1/2} \big( \theta_0 - \fullthetapMLE \big) \right\|_{2} \\
    \overset{\text{Lemma \ref{lemma:tech_Fisher_smooth}}}&{\leq} 
    \bigg( 1 +  K_{\max} K_{\min}^{-3/2} (nt)^{-1/2} (4M_n p_{\ast}^{1/2}) \bigg)^{1/2}
    \left\| \FullFisherTilde[t]{\fullthetaBest}^{1/2} \big( \theta_0 - \fullthetapMLE \big) \right\|_{2} \\    
    \overset{\eqref{eqn:batch_posterior_BvM_eq4_2}}&{\leq}
    \bigg( 1 +  K_{\max} K_{\min}^{-3/2} (nt)^{-1/2} (4M_n p_{\ast}^{1/2}) \bigg)^{1/2} 
    8M_n p_{\ast}^{1/2}
    \overset{(\textbf{S})}{\leq} 10 M_n p_{\ast}^{1/2},
\end{align*}
which implies that
\begin{align*}
    \theta_0 
    \in 
    \Theta \left( \fullthetapMLE, \FullFisherTilde[t]{\fullthetapMLE}, 10 M_n p_{\ast}^{1/2} \right)
    \subset
    \Theta \left( \fullthetapMLE, \FullFisher[t]{\fullthetapMLE}, 10 M_n p_{\ast}^{1/2} \right)
    \subseteq
    \Theta \left( \theta_0, \bI_{p}, 1/2 \right).
\end{align*}
Consequently, by Lemma \ref{lemma:tech_smooth_tau_bound}, $L_{1:t}(\cdot)$ satisfies the third order smoothness at $\fullthetapMLE$ with parameters
\begin{align*}
    \left( K_{\max} K_{\min}^{-3/2} (nt)^{-1/2}, \: \FullFisher[t]{\fullthetapMLE}, \: 10 M_n p_{\ast}^{1/2} \right).
\end{align*}
Hence, combining with the last two displays, Lemma \ref{lemma:tech_Fisher_smooth} and (\textbf{A2}) give that
\begin{align} \label{eqn:batch_posterior_BvM_eq5}
\begin{aligned}
    \left\| \FullFisher[t]{\fullthetapMLE[t]}^{-1/2} \FullFisher[t]{\theta_0} \FullFisher[t]{\fullthetapMLE[t]}^{-1/2} - \bI_{p} \right\|_{2} 
    &\leq 
    K_{\max} K_{\min}^{-3/2} (nt)^{-1/2} (10 M_n p_{\ast}^{1/2}), \\
    \left\| \FullFisher[t]{\fullthetapMLE[t]} \right\|_{2} 
    &\leq 
    K_{\max} (nt).
\end{aligned}
\end{align}
Note that
\begin{align*}
    &\left\| \FullFisher[t]{\theta_0} - \FullFisherTilde[t]{\fullthetapMLE}  \right\|_{2}
    \leq 
    \left\| \bOmega_{0} \right\|_{2} + \left\| \FullFisher[t]{\theta_0} - \FullFisher[t]{\fullthetapMLE[t]}  \right\|_{2} \\
    &\leq 
    \left\| \bOmega_{0} \right\|_{2} + 
    \left\| \FullFisher[t]{\fullthetapMLE[t]} \right\|_{2} 
    \left\| \FullFisher[t]{\fullthetapMLE[t]}^{-1/2} \FullFisher[t]{\theta_0} \FullFisher[t]{\fullthetapMLE[t]}^{-1/2} - \bI_{p} \right\|_{2} \\
    \overset{\substack{\eqref{eqn:batch_posterior_BvM_eq5}, (\textbf{P$\ast$})}}&{\leq}
    K_{\max} p_{\ast} + 
    \left( K_{\max} nt \right) \left( K_{\max} K_{\min}^{-3/2} (nt)^{-1/2} ( 10 M_n p_{\ast}^{1/2}) \right) \\
    \overset{(\textbf{S})}&{\leq} 
    \big( 1 + 10 K_{\max}^{2} K_{\min}^{-3/2} \big) M_n (nt)^{1/2} p_{\ast}^{1/2}.
\end{align*}
It follows that
\begin{align*}
    \left\| \FullFisher[t]{\theta_0}^{-1/2} \FullFisherTilde[t]{\fullthetapMLE} \FullFisher[t]{\theta_0}^{-1/2} - \bI_{p} \right\|_{2}
    &\leq 
    \lambda_{\min}^{-1} (\FullFisher[t]{\theta_0})
    \left\| \FullFisher[t]{\theta_0} - \FullFisherTilde[t]{\fullthetapMLE} \right\|_{2} \\
    &\leq 
    (K_{\min} nt)^{-1}
    \bigg[ 1 + 10 K_{\max}^{2} K_{\min}^{-3/2} \bigg] M_n (nt)^{1/2} p_{\ast}^{1/2} \\
    &=
    \big( K_{\min}^{-1} + 10 K_{\max}^{2} K_{\min}^{-5/2} \big) M_n \left( \dfrac{p_{\ast}}{nt} \right)^{1/2},
\end{align*}
which further implies that
\begin{align}
\begin{aligned} \label{eqn:batch_posterior_BvM_eq6}
    \left\| \FullFisher[t]{\theta_0}^{-1/2} \FullFisherTilde[t]{\fullthetapMLE} \FullFisher[t]{\theta_0}^{-1/2} - \bI_{p} \right\|_{\rm F}
    &\leq 
    \sqrt{p} \left\| \FullFisher[t]{\theta_0}^{-1/2} \FullFisherTilde[t]{\fullthetapMLE} \FullFisher[t]{\theta_0}^{-1/2} - \bI_{p} \right\|_{2} \\
    &\leq 
    \big( K_{\min}^{-1} + 10 K_{\max}^{2} K_{\min}^{-5/2} \big) M_n \left( \dfrac{p_{\ast}^{2}}{nt} \right)^{1/2} \\
    &= 
    c_2 M_n \left( \dfrac{p_{\ast}^{2}}{nt} \right)^{1/2},
\end{aligned}
\end{align}
where $c_2 = K_{\min}^{-1} + 10 K_{\max}^{2} K_{\min}^{-5/2}$.

\noindent \textbf{Step 3: Applying Lemma \ref{lemma:tech_Gaussian_comparison}} \\
By (\textbf{S}), we have
\begin{align*}
    \left\| \FullFisher[t]{\theta_0}^{-1/2} \FullFisherTilde[t]{\fullthetapMLE} \FullFisher[t]{\theta_0}^{-1/2} - \bI_{p} \right\|_{2} 
    \leq 0.684.
\end{align*}
By Lemma \ref{lemma:tech_Gaussian_comparison}, we have
\begin{align*}
    &d_{V} \bigg( \cN\left( \fullthetaMLE, \FullFisher[t]{\theta_0}^{-1} \right),  
    \cN \left(\fullthetapMLE, \FullFisherTilde[t]{\fullthetapMLE[t]}^{-1} \right) \bigg) \\
    &\leq 
    \dfrac{1}{2}
    \bigg(
    \left\| \FullFisherTilde[t]{\fullthetapMLE}^{1/2} \left( \fullthetaMLE - \fullthetapMLE \right) \right\|_{2}^{2}
    +
    \left\| \FullFisher[t]{\theta_0}^{-1/2} \FullFisherTilde[t]{\fullthetapMLE} \FullFisher[t]{\theta_0}^{-1/2} - \bI_{p} \right\|_{\rm F}^{2}
    \bigg)^{1/2} \\
    \overset{\substack{ \eqref{eqn:batch_posterior_BvM_eq4_2_1} \\ \eqref{eqn:batch_posterior_BvM_eq6}}}&{\leq}
    \dfrac{1}{2} \big( 16c_1^2 + c_2^2 \big)^{1/2} 
    M_n \left( \dfrac{p_{\ast}^{2}}{nt} \right)^{1/2} 
    =
    c_3 
    M_n \left( \dfrac{p_{\ast}^{2}}{nt} \right)^{1/2}
\end{align*}
for some constant $c_3 = c_3(K_{\min}, K_{\max}) > 0$.
This completes the proof of the first assertion in \eqref{eqn:batch_posterior_BvM_claim}.

Combining with Theorem \ref{thm:batch_posterior_LA}, we have
\begin{align*}
&d_{V} \bigg( \cN\left(\fullthetaMLE, \FullFisher[t]{\theta_0}^{-1} \right),  \Pi\left(\cdot \mid \bD_{1:t} \right) \bigg) \\
&\leq 
d_{V} \bigg( \cN\left( \fullthetaMLE, \FullFisher[t]{\theta_0}^{-1} \right),  
\cN\left(\fullthetapMLE, \FullFisherTilde[t]{\fullthetapMLE[t]}^{-1} \right) \bigg)
+
d_{V} \bigg( \cN\left(\fullthetapMLE, \FullFisherTilde[t]{\fullthetapMLE[t]}^{-1} \right),  
\Pi\left(\cdot \mid \bD_{1:t} \right) \bigg) \\
&\leq 
c_3 M_n \left( \dfrac{p_{\ast}^{2}}{nt} \right)^{1/2}
+
c_4 \left( \dfrac{p_{\ast}^2}{nt} \right)^{1/2} 
\leq 
c_5 M_n \left( \dfrac{p_{\ast}^{2}}{nt} \right)^{1/2}
\end{align*}
for some constants $c_4, c_5 > 0$, depending only on $(K_{\min}, K_{\max})$.
This completes the proof of the second assertion in \eqref{eqn:batch_posterior_BvM_claim}.
\end{proof}


\section{Proofs for Section \ref{sec:online_variational_posterior}}

\begin{proof}[Proof of Proposition \ref{prop:similar_MAP}]
In this proof, we work on the event $\scrE_{\est, 1} \cap \scrE_{\est, 2}$ without explicitly referring to it. 
By Lemmas \ref{lemma:full_posterior_optimal_parameter}, \ref{lemma:full_posterior_parameter} and Proposition \ref{prop:eigenvalue_order}, we have
\begin{align}
\begin{aligned} \label{eqn:similar_MAP_regularity}
    &\left\| \FullFisherTilde[t]{\fullthetaBest[t]}^{1/2} \big( \theta_{0} - \fullthetaBest[t] \big) \right\|_{2} 
    \leq 4M_n p_{\ast}^{1/2}, &\quad
    &\lambda_{\min}(\FullFisherTilde[t]{\fullthetaBest[t]}) 
    \geq K_{\min} nt \\
    &\left\| \FullFisherTilde[t]{\fullthetaBest[t]}^{1/2} \big( \fullthetapMLE[t] - \fullthetaBest[t] \big) \right\|_{2} 
    \leq 4M_n p_{\ast}^{1/2}, &\quad
    &\lambda_{\max}(\FullFisherTilde[t]{\fullthetaBest[t]}) 
    \leq \dfrac{4}{3} K_{\max} nt, \\
    &\lambda_{\min}(\FisherMAP[t]) \wedge \lambda_{\min}(\FisherBest[t]) 
    \geq K_{\rm low} nt, &\quad
    &\lambda_{\max}(\FisherMAP[t]) \wedge \lambda_{\max}(\FisherBest[t]) 
    \leq K_{\rm up} nt, \\
    &\thetaMAP, \thetaBest, \fullthetaBest, \fullthetapMLE 
    \in \Theta(\theta_{0}, \bI_{p}, 1/4), &\quad
    &\Delta_{t} \vee \epsilon_{n, t, \KL} 
    \leq K_{\rm up} t^{-1} n^{-1/2} p_{\ast}
\end{aligned}
\end{align}
for all $t \in [T]$.

By utilizing Lemma \ref{lemma:tech_pre_bias_bound} and an inductive argument, we prove the following inequalities with some constants $D_{1}, D_{2} > 0$:
\begin{align} \label{eqn:similar_MAP_inner_claim_0}
    \left\| \FisherMAP[t]^{1/2} \big( \fullthetapMLE - \thetaMAP[t] \big) \right\|_{2}
    \leq D_{1} M_n^{2} \left( \dfrac{p_{\ast}^{3}}{n} \right)^{1/2}, \quad
    \left\| \FisherMAP[t]^{1/2} \big( \theta_0 - \thetaMAP[t] \big) \right\|_{2}
    \leq D_{2} M_n \sqrt{p_{\ast}}
\end{align}
for all $t \in [T]$. Based on \eqref{eqn:similar_MAP_inner_claim_0}, we subsequently prove the following inequality:
\begin{align*}
    \left\| \bOmega_{t}^{1/2} \big( \fullthetapMLE - \mu_{t} \big) \right\|_{2} \leq D_{3} M_n^{2} \left( \dfrac{p_{\ast}^{3}}{n} \right)^{1/2} \quad 
    \text{ for all } t \in [T]
\end{align*}
for some constant $D_{3} = D_{3}(D_{1})$.

\noindent \textbf{Step 1: Inductive argument} \\
We will first show that \eqref{eqn:similar_MAP_inner_claim_0} holds for $t = 1$.
Note that
\begin{align*}
    \left\| \FisherMAP[1]^{1/2} \big( \fullthetapMLE[1] - \thetaMAP[1] \big) \right\|_{2} = 0
\end{align*}
because $\fullthetapMLE[t] = \thetaMAP[t]$ at $t = 1$. 
Also, we have
\begin{align*}
    \left\| \FisherMAP[1]^{1/2} \big( \theta_0 - \thetaMAP[1] \big) \right\|_{2}
    &=
    \left\| \FisherMAP[1]^{1/2} \FullFisherTilde[1]{\fullthetaBest[1]}^{-1/2} \FullFisherTilde[1]{\fullthetaBest[1]}^{1/2} \big( \theta_0 - \fullthetapMLE[1] \big) \right\|_{2} \\
    &\leq 
    \left\| \FisherMAP[1]^{1/2} \FullFisherTilde[1]{\fullthetaBest[1]}^{-1/2} \right\|_{2}
    \Bigg(
    \left\| \FullFisherTilde[1]{\fullthetaBest[1]}^{1/2} \big( \theta_0 - \fullthetaBest[1] \big) \right\|_{2} 
    +
    \left\| \FullFisherTilde[1]{\fullthetaBest[1]}^{1/2} \big( \fullthetaBest[1] - \fullthetapMLE[1] \big) \right\|_{2} 
    \Bigg) \\
    \overset{ \eqref{eqn:similar_MAP_regularity} }&{\leq}
    (K_{\min} n)^{-1/2} (K_{\rm up} n)^{1/2} \big( 4M_n \sqrt{p_{\ast}} + 4M_n \sqrt{p_{\ast}} \big) \\
    &= 
    \big( 8 K_{\min}^{-1/2} K_{\rm up}^{1/2} \big) M_n \sqrt{p_{\ast}}.
\end{align*}
Hence, for $t = 1$, the inequalities in \eqref{eqn:similar_MAP_inner_claim_0} hold with $D_1 = 0$ and $D_2 = 8 K_{\min}^{-1/2} K_{\rm up}^{1/2}$. 

Let $t_0 \in \{ 2, 3, ..., T \}$.
To prove \eqref{eqn:similar_MAP_inner_claim_0} by induction, suppose that 
\begin{align}
\begin{aligned} \label{eqn:similar_MAP_induction}
    \left\| \FisherMAP[t]^{1/2} \big( \theta_0 - \thetaMAP[t] \big) \right\|_{2}
    \leq 
    \big( 8K_{\min}^{-1/2} K_{\rm up}^{1/2} + 1 \big) M_n \sqrt{p_{\ast}}, \quad 
    \forall t \in [t_0 - 1].
\end{aligned}    
\end{align}
Based on \eqref{eqn:similar_MAP_induction}, we will show that
\begin{align}
\begin{aligned} \label{eqn:similar_MAP_inner_claim}
    \left\| \FisherMAP[t_0]^{1/2} \big( \fullthetapMLE[t_0] - \thetaMAP[t_0] \big) \right\|_{2}
    &\leq 
    D_{1} M_n^{2} \left( \dfrac{p_{\ast}^{3}}{n} \right)^{1/2}, \\
    \left\| \FisherMAP[t_0]^{1/2} \big( \theta_0 - \thetaMAP[t_0] \big) \right\|_{2}
    &\leq 
    \big( 8K_{\min}^{-1/2} K_{\rm up}^{1/2} + 1 \big) M_n \sqrt{p_{\ast}},
\end{aligned}    
\end{align}
where $D_{1} = D_{1}(K_{\min}, K_{\max}, K_{\rm low}, K_{\rm up})$.
It then follows by induction that \eqref{eqn:similar_MAP_inner_claim_0} holds for all $t \in [T]$.

The proof is divided into several steps. In \textbf{Step 2}, we introduce the theoretical framework needed to prove \eqref{eqn:similar_MAP_inner_claim}. 
Then, in \textbf{Step 3-6}, we will prove that \eqref{eqn:similar_MAP_inner_claim} holds for every $t_0 \in \{ 2, 3, ..., T \}$.

\noindent \textbf{Step 2: Framework for applying Lemma \ref{lemma:tech_pre_bias_bound}} \\
To prove \eqref{eqn:similar_MAP_inner_claim}, we will utilize Lemma \ref{lemma:tech_pre_bias_bound}. In \textbf{Step 2}, therefore, we introduce some theoretical preliminaries needed to apply Lemma \ref{lemma:tech_pre_bias_bound}.

Recall the definition of $\nabla \eta_{t}(\theta)$ given in \eqref{def:eta_approx}.
For $\theta \in \Theta$ and $t \in [T]$, note that
\begin{align} \label{eqn:similar_MAP_eq1}
\begin{aligned}
&\nabla \eta_{t}(\theta) \\
&= \nabla \widetilde{L}_{t}(\theta) + \bOmega_{t}\left( \theta - \mu_{t} \right)
= \nabla \widetilde{L}_{t}(\theta) + \FisherMAP (\theta - \thetaMAP) - \FisherMAP (\theta - \thetaMAP) + \bOmega_{t}\left( \theta - \mu_{t} \right) \\
&= \nabla \widetilde{L}_{t}(\theta) - \nabla \widetilde{L}_{t}(\thetaMAP) + \FisherMAP (\theta - \thetaMAP) - \FisherMAP (\theta - \thetaMAP) + \bOmega_{t}\left( \theta - \mu_{t} \right) \\
&= \dot{\cR}_{t, 3}(\thetaMAP, \theta - \thetaMAP) - \FisherMAP (\theta - \thetaMAP)  + \bOmega_{t}\left( \theta - \mu_{t} \right) \\
&= \dot{\cR}_{t, 3}(\thetaMAP, \theta - \thetaMAP) + \left(\bOmega_{t} - \FisherMAP \right) \big(\theta - \thetaMAP \big)  + \bOmega_{t} \big( \thetaMAP - \mu_{t} \big),
\end{aligned}
\end{align}
where $\dot{\cR}_{t, 3}(\cdot, \cdot)$ is defined as
\begin{align*}
    \dot{\cR}_{t, 3}(\theta, u) 
    = \nabla \widetilde{L}_{t}(\theta + u) - \nabla \widetilde{L}_{t}(\theta) - \nabla^{2} \widetilde{L}_{t}(\theta) u, \quad \forall \theta, u \in \Theta.
\end{align*}
For $\theta \in \Theta$, define a linear perturbation version of $\widetilde{L}_{t_0}(\theta)$ by
\begin{align*}
    g_{n, t_0}(\theta) = \widetilde{L}_{t_0}(\theta) + \left\langle \sum_{t=1}^{t_0 - 1} \nabla \eta_{t}(\fullthetapMLE[t_0]), \theta \right\rangle.
\end{align*}
Note that
\begin{align*}
    \nabla g_{n, t_0} (\fullthetapMLE[t_0]) 
    =  \nabla \widetilde{L}_{t_0}(\fullthetapMLE[t_0]) + \sum_{t=1}^{t_0 - 1} \nabla \eta_{t}(\fullthetapMLE[t_0])
    \overset{\eqref{eqn:eta_equation}}{=} \nabla \widetilde{L}_{1:t_{0}} (\fullthetapMLE[t_0]) = 0.
\end{align*}
If $\widetilde{L}_{t_0}(\cdot)$ satisfies the third order smoothness at $\thetaMAP[t_0]$ with parameter 
\begin{align*}
    \left( \tau_{n, t_0}, \ \FisherMAP[t_0], \ 4 \left\| \sum_{t=1}^{t_0 - 1} \FisherMAP[t_0]^{-1/2} \nabla \eta_{t}(\fullthetapMLE[t_0]) \right\|_{2}  \right)
\end{align*}
for some $\tau_{n, t_0} \geq 0$ and 
\begin{align*}
    \tau_{n, t_0} \left\|  \sum_{t=1}^{t_0 - 1}  \FisherMAP[t_0]^{-1/2} \nabla \eta_{t}(\fullthetapMLE[t_0])  \right\|_{2} \leq \dfrac{1}{16},
\end{align*}
one can apply Lemma \ref{lemma:tech_pre_bias_bound} and then obtain the following inequality:
\begin{align*}
    \left\| \FisherMAP[t_{0}]^{1/2} \big( \fullthetapMLE[t_{0}] - \thetaMAP[t_{0}] \big) \right\|_{2}
    \leq 
    4 \left\| \sum_{t=1}^{t_0 - 1} \FisherMAP[t_0]^{-1/2} \nabla \eta_{t}(\fullthetapMLE[t_0]) \right\|_{2}.
\end{align*}
Later, we will show that $\tau_{n, t_0}$ can be chosen as $\tau_{n, t_0} = K_{\max} K_{\rm low}^{-3/2} t_{0}^{-3/2} n^{-1/2}$.
Therefore, to apply Lemma \ref{lemma:tech_pre_bias_bound}, we need to obtain an upper bound of 
$$
\left\| \sum_{t=1}^{t_0 - 1}  \FisherMAP[t_0]^{-1/2} \nabla \eta_{t}(\fullthetapMLE[t_0]) \right\|_{2}.
$$
By \eqref{eqn:similar_MAP_eq1}, we have
\begin{align*}
    &\left\|  \sum_{t=1}^{t_0 - 1}  \FisherMAP[t_0]^{-1/2} \nabla \eta_{t}(\fullthetapMLE[t_0])  \right\|_{2} \\
    &=
    \left\|  \sum_{t=1}^{t_0 - 1} \FisherMAP[t_0]^{-1/2} \bigg[ \dot{\cR}_{t, 3}(\thetaMAP, \fullthetapMLE[t_0] - \thetaMAP) + \left( \bOmega_{t} - \FisherMAP \right) \big( \fullthetapMLE[t_0] - \thetaMAP \big)  + \bOmega_{t} \big( \thetaMAP - \mu_{t} \big) \bigg]  \right\|_{2} \\
    &\leq 
    \left\|  \sum_{t=1}^{t_0 - 1} \FisherMAP[t_0]^{-1/2} \dot{\cR}_{t, 3}(\thetaMAP, \fullthetapMLE[t_0] - \thetaMAP) \right\|_{2} 
    +
    \left\|  \sum_{t=1}^{t_0 - 1} \FisherMAP[t_0]^{-1/2} \left( \bOmega_{t} - \FisherMAP \right) \big( \fullthetapMLE[t_0] - \thetaMAP \big)  \right\|_{2}  \\
    &\qquad \qquad +
    \left\|  \sum_{t=1}^{t_0 - 1} \FisherMAP[t_0]^{-1/2} \bOmega_{t}\big( \thetaMAP - \mu_{t} \big) \right\|_{2} \\
    &\leq 
    \sum_{t=1}^{t_0 - 1} \Bigg[
    \left\|  \FisherMAP[t_0]^{-1/2} \dot{\cR}_{t, 3}(\thetaMAP, \fullthetapMLE[t_0] - \thetaMAP) \right\|_{2} 
    +
    \left\|  \FisherMAP[t_0]^{-1/2} \left( \bOmega_{t} - \FisherMAP \right) \big( \fullthetapMLE[t_0] - \thetaMAP \big)  \right\|_{2}  \\
    &\qquad \qquad +
    \left\|  \FisherMAP[t_0]^{-1/2} \bOmega_{t}\big( \thetaMAP - \mu_{t} \big) \right\|_{2} \Bigg].
\end{align*}
For $t \in [t_0 - 1]$, let 
\begin{align*}
    {\rm (i)_{t}} &= \left\|  \FisherMAP[t_0]^{-1/2} \dot{\cR}_{t, 3}(\thetaMAP, \fullthetapMLE[t_0] - \thetaMAP) \right\|_{2}, \\
    {\rm (ii)_{t}} &= \left\|  \FisherMAP[t_0]^{-1/2} \left( \bOmega_{t} - \FisherMAP \right) \big( \fullthetapMLE[t_0] - \thetaMAP \big)  \right\|_{2}, \\  
    {\rm (iii)_{t}} &= \left\|  \FisherMAP[t_0]^{-1/2} \bOmega_{t}\big( \thetaMAP - \mu_{t} \big) \right\|_{2}.
\end{align*}
We will obtain upper bounds of these quantities throughout \textbf{Step 3-5}.

To bound ${\rm (i)_{t}}, {\rm (ii)_{t}}$ and ${\rm (iii)_{t}}$,
$\fullthetapMLE[t_0]$ should be located in a sufficiently small neighborhood of $\thetaMAP[t]$ for all $t \in [t_0-1]$.
Let $t \in [t_0-1]$. Note that
\begin{align}
\begin{aligned} \label{eqn:similar_MAP_consistent_estimator}
    &\left\| \FisherMAP[t]^{1/2} \big( \fullthetapMLE[t_0] - \thetaMAP[t] \big) \right\|_{2} \\
    &\leq 
    \left\| \FisherMAP[t]^{1/2} \big( \fullthetapMLE[t_0] - \theta_0 \big) \right\|_{2}
    +
    \left\| \FisherMAP[t]^{1/2} \big( \theta_0 - \thetaMAP[t] \big) \right\|_{2} \\
    &\leq
    \left\| \FisherMAP[t]^{1/2} \FullFisherTilde[t_0]{\fullthetaBest[t_0]}^{-1/2} \right\|_{2}
    \left\| \FullFisherTilde[t_0]{\fullthetaBest[t_0]}^{1/2} \big( \fullthetapMLE[t_0] - \theta_0 \big) \right\|_{2}
    +
    \left\| \FisherMAP[t]^{1/2} \big( \theta_0 - \thetaMAP[t] \big) \right\|_{2} \\    
    &\leq
    \left\| \FisherMAP[t]^{1/2} \FullFisherTilde[t_0]{\fullthetaBest[t_0]}^{-1/2} \right\|_{2}
    \Bigg[ 
    \left\| \FullFisherTilde[t_0]{\fullthetaBest[t_0]}^{1/2} \big( \fullthetapMLE[t_0] - \fullthetaBest[t_0] \big) \right\|_{2}
    +
    \left\| \FullFisherTilde[t_0]{\fullthetaBest[t_0]}^{1/2} \big( \fullthetaBest[t_0] - \theta_0 \big) \right\|_{2}
    \Bigg] \\
    &\qquad +
    \left\| \FisherMAP[t]^{1/2} \big( \theta_0 - \thetaMAP[t] \big) \right\|_{2} \\    
    &\leq
    (K_{\min} nt_0)^{-1/2} (K_{\rm up}nt)^{1/2} \big( 4M_n \sqrt{p_{\ast}} + 4M_n \sqrt{p_{\ast}} \big) + 
    \big( 8K_{\min}^{-1/2} K_{\rm up}^{1/2} + 1 \big) M_n \sqrt{p_{\ast}} \\
    &\leq 
    \big( 16K_{\min}^{-1/2} K_{\rm up}^{1/2} + 1 \big) M_n \sqrt{p_{\ast}}.
\end{aligned}
\end{align}
where the fourth inequality holds by \eqref{eqn:similar_MAP_regularity} and \eqref{eqn:similar_MAP_induction}.
It follows that
\begin{align*} 
    \fullthetapMLE[t_0] \in \Theta_{n, t} \coloneqq \Theta \left( \thetaMAP, \FisherMAP, \big[ 16K_{\min}^{-1/2} K_{\rm up}^{1/2} + 1 \big] M_n \sqrt{p_{\ast}} \right).
\end{align*}
For $\theta \in \Theta_{n, t}$, we have
\begin{align*}
    \big\| \theta - \thetaMAP \big\|_{2} 
    \overset{\eqref{eqn:similar_MAP_regularity}}{\leq}
    (K_{\rm low} nt)^{-1/2} \big( 16K_{\min}^{-1/2} K_{\rm up}^{1/2} + 1 \big) M_n \sqrt{p_{\ast}}
    \overset{(\textbf{S})}{\leq}
    1/4.
\end{align*}
It follows that $\Theta_{n, t} \subseteq \Theta (\theta_0, \bI_{p}, 1/2)$ because $\| \theta_0 - \thetaMAP \|_{2} \leq 1/4$ by \eqref{eqn:similar_MAP_regularity}.
Combining $(\textbf{A2})$ and Lemma \ref{lemma:tech_smooth_tau_bound}, $\widetilde{L}_{t}(\cdot)$ satisfies the third order smoothness at $\thetaMAP$ with parameters
\begin{align} \label{eqn:similar_MAP_eq3}
    \bigg( K_{\max} K_{\rm low}^{-3/2} t^{-3/2} n^{-1/2}, \: \FisherMAP, \: 
    \big( 16K_{\min}^{-1/2} K_{\rm up}^{1/2} + 1 \big) M_n \sqrt{p_{\ast}} \bigg).
\end{align}
Now, we are ready to obtain upper bounds of ${\rm (i)_{t}}$, ${\rm (ii)_{t}}$ and ${\rm (iii)_{t}}$.

\noindent \textbf{Step 3:}  ${\rm (i)_{t}}$ \\
Note that
\begin{align}
\begin{aligned} \label{eqn:similar_eq4}
\left\|  \FisherMAP[t_0]^{-1/2} \dot{\cR}_{t, 3}(\thetaMAP, \fullthetapMLE[t_0] - \thetaMAP) \right\|_{2}
&\leq 
\left\|  \FisherMAP[t_0]^{-1/2} \FisherMAP^{1/2} \right\|_{2}
\left\|  \FisherMAP^{-1/2} \dot{\cR}_{t, 3}(\thetaMAP, \fullthetapMLE[t_0] - \thetaMAP) \right\|_{2} \\
\overset{\eqref{eqn:similar_MAP_regularity}}&{\leq} 
\big( K_{\rm low}^{-1/2} K_{\rm up}^{1/2} \big) \sqrt{\dfrac{t}{t_0}}
\left\|  \FisherMAP^{-1/2} \dot{\cR}_{t, 3}(\thetaMAP, \fullthetapMLE[t_0] - \thetaMAP) \right\|_{2}.    
\end{aligned}
\end{align}
Also, by $\fullthetapMLE[t_0] \in \Theta_{n, t}$ and Taylor's theorem, we have
\begin{align*}
&\left\|  \FisherMAP^{-1/2} \dot{\cR}_{t, 3}(\thetaMAP, \fullthetapMLE[t_0] - \thetaMAP) \right\|_{2} \\
&\leq
\dfrac{1}{2}
\sup_{\widetilde{u} \in \overline \Theta_{n, t}}
\sup_{u \in \bbR^{p} : \| u \|_{2} = 1}
\left| 
\left\langle \nabla^{3} \widetilde{L}_{t} (\thetaMAP + \widetilde{u}), \: \left(\fullthetapMLE[t_0] - \thetaMAP \right)^{\otimes 2} \otimes \left(\FisherMAP^{-1/2} u \right) \right\rangle
\right| \\
\overset{\eqref{eqn:similar_MAP_eq3}}&{\leq} 
\dfrac{1}{2}
\big( K_{\max} K_{\rm low}^{-3/2} t^{-3/2} n^{-1/2} \big) 
\left\| \FisherMAP^{1/2} \left(\fullthetapMLE[t_0] - \thetaMAP \right) \right\|_{2}^{2} 
\sup_{u \in \bbR^{p} : \| u \|_{2} = 1}
\left\| \FisherMAP^{1/2} \FisherMAP^{-1/2} u \right\|_{2} \\
\overset{\eqref{eqn:similar_MAP_consistent_estimator}}&{\leq} 
\dfrac{1}{2}
\big( K_{\max} K_{\rm low}^{-3/2} t^{-3/2} n^{-1/2} \big) 
\big( 16K_{\min}^{-1/2} K_{\rm up}^{1/2} + 1 \big)^{2} 
M_n^{2} p_{\ast} \\
&=
\bigg( \dfrac{K_{\max}}{2 K_{\rm low}^{3/2}} \big( 16K_{\min}^{-1/2} K_{\rm up}^{1/2} + 1 \big)^{2} \bigg) 
M_n^{2} t^{-3/2} \left( \dfrac{p_{\ast}^{2}}{n} \right)^{1/2},
\end{align*}
where
\begin{align*}
    \overline \Theta_{n, t} = 
    \Theta \Big( \FisherMAP, \big[ 16K_{\min}^{-1/2} K_{\rm up}^{1/2} + 1 \big] M_n \sqrt{p_{\ast}} \Big).
\end{align*}
Combining with \eqref{eqn:similar_eq4}, therefore, we have
\begin{align*}
\left\|  \FisherMAP[t_0]^{-1/2} \dot{\cR}_{t, 3}(\thetaMAP, \fullthetapMLE[t_0] - \thetaMAP) \right\|_{2} 
\leq 
c_1 M_n^{2} t_{0}^{-1/2} t^{-1} \left( \dfrac{p_{\ast}^{2}}{n} \right)^{1/2}
\end{align*}
for some positive constant $c_1 = c_1(K_{\min}, K_{\max}, K_{\rm low}, K_{\rm up})$.
Consequently, we have
\begin{align*}
    &\sum_{t=1}^{t_0 - 1} 
    \left\| \FisherMAP[t_0]^{-1/2} \dot{\cR}_{t, 3}(\thetaMAP, \fullthetapMLE[t_0] - \thetaMAP) \right\|_{2} 
    \leq 
    \sum_{t=1}^{t_0 - 1} 
    c_1   
    M_n^{2} t_{0}^{-1/2} t^{-1} \left( \dfrac{p_{\ast}^{2}}{n} \right)^{1/2} \\
    &= 
    c_1   
    M_n^{2} t_{0}^{-1/2} \left( \dfrac{p_{\ast}^{2}}{n} \right)^{1/2} 
    \sum_{t=1}^{t_0 - 1} t^{- 1} 
    \leq 
    c_1 
    M_n^{2} \left( \log t_0 + 1 \right) t_{0}^{-1/2} \left( \dfrac{p_{\ast}^{2}}{n} \right)^{1/2} \\
    &\leq
    2 c_1 M_n^{2} \left( \dfrac{p_{\ast}^{2}}{n} \right)^{1/2},
\end{align*}
where the last two inequalities hold by $\sum_{t=1}^{t_0 - 1} t^{- 1} \leq \log t_0 + 1$ and $t_{0}^{-1/2}(\log t_0 + 1) \leq 2$.

\noindent \textbf{Step 4:}  ${\rm (ii)_{t}}$ \\
Next, we will obtain an upper bound of ${\rm (ii)_{t}}$. Note that
\begin{align*}
    &\left\|  \FisherMAP[t_0]^{-1/2} \left( \bOmega_{t} - \FisherMAP \right)( \fullthetapMLE[t_0] - \thetaMAP)  \right\|_{2}
    \leq 
    \left\|  \FisherMAP[t_0]^{-1/2} \left( \bOmega_{t} - \FisherMAP \right) \FisherMAP^{-1/2}  \right\|_{2}
    \left\|  \FisherMAP^{1/2} \left( \fullthetapMLE[t_0] - \thetaMAP \right)  \right\|_{2} \\
    &=
    \left\|  \FisherMAP[t_0]^{-1/2} \FisherMAP^{1/2} \left( \FisherMAP^{-1/2} \bOmega_{t} \FisherMAP^{-1/2} - \bI_{p} \right) \right\|_{2}
    \left\|  \FisherMAP^{1/2} \left( \fullthetapMLE[t_0] - \thetaMAP \right)  \right\|_{2} \\
    &\leq 
    \left\|  \FisherMAP[t_0]^{-1/2} \FisherMAP^{1/2} \right\|_{2} 
    \left\| \FisherMAP^{-1/2} \bOmega_{t} \FisherMAP^{-1/2} - \bI_{p} \right\|_{2}
    \left\|  \FisherMAP^{1/2} \left( \fullthetapMLE[t_0] - \thetaMAP \right)  \right\|_{2}.
\end{align*}
Note that
\begin{align*}
    \left\|  \FisherMAP[t_0]^{-1/2} \FisherMAP^{1/2} \right\|_{2} 
    \overset{\eqref{eqn:similar_MAP_regularity}}&{\leq} 
    \big( K_{\rm low}^{-1/2} K_{\rm up}^{1/2} \big) \sqrt{\dfrac{t}{t_0}}, \\
    \left\| \FisherMAP^{-1/2} \bOmega_{t} \FisherMAP^{-1/2} - \bI_{p} \right\|_{2}
    \overset{\eqref{eqn:similar_MAP_regularity}}&{\leq} 
    K_{\rm up} \left( \dfrac{p_{\ast}^{2}}{n t^{2}} \right)^{1/2}.
\end{align*}
Also,
\begin{align*}
    \left\|  \FisherMAP^{1/2} \big( \fullthetapMLE[t_0] - \thetaMAP \big) \right\|_{2}    
    \overset{\eqref{eqn:similar_MAP_consistent_estimator}}{\leq}
    \big( 16K_{\min}^{-1/2} K_{\rm up}^{1/2} + 1 \big) M_n \sqrt{p_{\ast}}.
\end{align*}
Combining the above three inequalities, we have
\begin{align*}
    &\left\|  \FisherMAP[t_0]^{-1/2} \left( \bOmega_{t} - \FisherMAP \right)( \fullthetapMLE[t_0] - \thetaMAP) \right\|_{2} \\
    &\leq 
    \bigg( K_{\rm low}^{-1/2} K_{\rm up}^{3/2} \big( 16K_{\min}^{-1/2} K_{\rm up}^{1/2} + 1 \big) \bigg) 
    M_n t^{-1/2} t_{0}^{-1/2} \left( \dfrac{p_{\ast}^{3}}{n} \right)^{1/2} \\
    &= c_2 M_n t^{-1/2} t_{0}^{-1/2} \left( \dfrac{p_{\ast}^{3}}{n} \right)^{1/2},
\end{align*}
where $c_2 = K_{\rm low}^{-1/2} K_{\rm up}^{3/2} ( 16K_{\min}^{-1/2} K_{\rm up}^{1/2} + 1 )$.
Consequently, we have
\begin{align*}
&\sum_{t=1}^{t_0 - 1} 
\left\|  \FisherMAP[t_0]^{-1/2} \left( \bOmega_{t} - \FisherMAP \right)( \fullthetapMLE[t_0] - \thetaMAP)  \right\|_{2} 
\leq 
\sum_{t=1}^{t_0 - 1} 
c_2
M_n t^{-1/2} t_0^{-1/2} \left( \dfrac{p_{\ast}^{3}}{n} \right)^{1/2} \\
&=
c_2
M_n t_0^{-1/2} \left( \dfrac{p_{\ast}^{3}}{n} \right)^{1/2}
\sum_{t=1}^{t_0 - 1} t^{-1/2}  
\leq 
c_2
M_n t_{0}^{-1/2} \left( \dfrac{p_{\ast}^{3}}{n} \right)^{1/2}
\left( 2 t_{0}^{1/2} - 1 \right)  \\
&\leq 
2c_2
M_n  \left( \dfrac{p_{\ast}^{3}}{n} \right)^{1/2}.
\end{align*}
\noindent \textbf{Step 5:}  ${\rm (iii)_{t}}$ \\
Next, we will obtain an upper bound of ${\rm (iii)_{t}}$. Note that
\begin{align*}
\left\|  \FisherMAP[t_0]^{-1/2} \bOmega_{t} \big( \thetaMAP - \mu_{t} \big) \right\|_{2} 
\leq 
\left\|  \FisherMAP[t_0]^{-1/2} \FisherMAP^{1/2} \right\|_{2} 
\left\|  \FisherMAP^{-1/2} \bOmega_{t}^{1/2} \right\|_{2} 
\left\|  \bOmega_{t}^{1/2} \big( \thetaMAP - \mu_{t} \big) \right\|_{2}. 
\end{align*}
Note that
\begin{align*}
    \left\|  \FisherMAP[t_0]^{-1/2} \FisherMAP^{1/2} \right\|_{2}
    \overset{\eqref{eqn:similar_MAP_regularity}}&{\leq} 
    \big( K_{\rm low}^{-1/2} K_{\rm up}^{1/2} \big) \sqrt{\dfrac{t}{t_0}}, \\
    \left\|  \FisherMAP^{-1/2} \bOmega_{t}^{1/2} \right\|_{2} 
    &\leq 
    \left( \left\|  \FisherMAP^{-1/2} \bOmega_{t} \FisherMAP^{-1/2} - \bI_{p} \right\|_{2} + 1 \right)^{1/2} 
    \overset{\eqref{eqn:similar_MAP_regularity}}{\leq} 
    \left( K_{\rm up} \left( \dfrac{p_{\ast}^{2}}{n t^{2}} \right)^{1/2} + 1 \right)^{1/2}
    \overset{(\textbf{S})}{\leq}
    2, \\
    \left\|  \bOmega_{t}^{1/2} \big( \thetaMAP - \mu_{t} \big) \right\|_{2}
    \overset{\eqref{eqn:similar_MAP_regularity}}&{\leq} 
    K_{\rm up} \left( \dfrac{p_{\ast}^{2}}{n t^{2}} \right)^{1/2}.
\end{align*}
By the last display, we have
\begin{align*}
    \left\|  \FisherMAP[t_0]^{-1/2} \bOmega_{t} \big( \thetaMAP - \mu_{t} \big) \right\|_{2}  
    \leq 
    \big( 2K_{\rm low}^{-1/2} K_{\rm up}^{3/2} \big) 
    t^{-1/2} t_{0}^{-1/2} \left( \dfrac{p_{\ast}^{2}}{n} \right)^{1/2}
    = 
    c_3 t^{-1/2} t_{0}^{-1/2} \left( \dfrac{p_{\ast}^{2}}{n} \right)^{1/2},
\end{align*}
where $c_3 = 2K_{\rm low}^{-1/2} K_{\rm up}^{3/2}$.
Consequently, we have
\begin{align*}
&\sum_{t=1}^{t_0 - 1} 
\left\|  \FisherMAP[t_0]^{-1/2} \bOmega_{t}\left( \thetaMAP - \mu_{t} \right) \right\|_{2}  
\leq 
\sum_{t=1}^{t_0 - 1} 
c_3
t^{-1/2} t_{0}^{-1/2} \left( \dfrac{p_{\ast}^{2}}{n} \right)^{1/2} \\
&=
c_3
t_{0}^{-1/2} \left( \dfrac{p_{\ast}^{2}}{n} \right)^{1/2} 
\sum_{t=1}^{t_0 - 1} t^{-1/2}  
\leq 
c_3
t_{0}^{-1/2}
\left( \dfrac{p_{\ast}^{2}}{n} \right)^{1/2} \left( 2 t_{0}^{1/2} - 1 \right)  
\leq 
2c_3  \left( \dfrac{p_{\ast}^{2}}{n} \right)^{1/2}.
\end{align*}

\noindent \textbf{Step 6:} Upper bound of $\| \sum_{t=1}^{t_0 - 1}  \FisherMAP[t_0]^{-1/2} \nabla \eta_{t}(\fullthetapMLE[t_0]) \|_{2}$ \\
Let
\begin{align*}
    \varrho_{n, t_0} = \left\|  \sum_{t=1}^{t_0 - 1}  \FisherMAP[t_0]^{-1/2} \nabla \eta_{t}(\fullthetapMLE[t_0])  \right\|_{2}.
\end{align*}
Combining the results in \textbf{Step 3-5}, note that
\begin{align}
\begin{aligned} \label{eqn:similar_MAP_eq5}
\varrho_{n, t_0}
&\leq 
2 c_1 M_n^{2} \left( \dfrac{p_{\ast}^{2}}{n} \right)^{1/2} + 
2c_2 M_n  \left( \dfrac{p_{\ast}^{3}}{n} \right)^{1/2} +
2c_3 \left( \dfrac{p_{\ast}^{2}}{n} \right)^{1/2} \\
&\leq
2\big( c_1 + c_2 + c_3 \big) M_n^{2} \left( \dfrac{p_{\ast}^{3}}{n} \right)^{1/2} 
= c_4 M_n^{2} \left( \dfrac{p_{\ast}^{3}}{n} \right)^{1/2}
\end{aligned}
\end{align}
where $c_4 = 2( c_1 + c_2 + c_3)$, and the second inequality holds by $M_n \wedge p_{\ast} \geq 1$.
In this proof, we denote 
\begin{align*}
    \widetilde{\Theta}_{n, t_0} = \Theta (\thetaMAP[t_0], \FisherMAP[t_0], 4\varrho_{n, t_0}).  
\end{align*}
By \eqref{eqn:similar_MAP_regularity}, we have $\| \thetaMAP[t_0] - \theta_0 \|_{2} \leq 1/4$.
Also, for $\theta \in \widetilde{\Theta}_{n, t_0}$, we have
\begin{align*}
    \big\| \theta - \thetaMAP[t_0] \big\|_{2} 
    \overset{ \eqref{eqn:similar_MAP_regularity} }&{\leq} 
    \left( K_{\rm low} n t_0 \right)^{-1/2} 4\varrho_{n, t_0} \\
    \overset{\eqref{eqn:similar_MAP_eq5}}&{\leq}
    \left( K_{\rm low} n t_0 \right)^{-1/2} 
    4c_4 M_n^{2} \left( \dfrac{p_{\ast}^{3}}{n} \right)^{1/2} \\ 
    &= 
    (4K_{\rm low}^{-1/2} c_4)
    M_n^{2} p_{\ast}^{3/2} n^{-1} t_{0}^{-1/2}
    \overset{(\textbf{S})}{\leq}
    1/4.
\end{align*}
It follows that $\widetilde{\Theta}_{n, t_0} \subseteq \Theta (\theta_0, \bI_{p}, 1/2)$.
Then, by Lemma \ref{lemma:tech_smooth_tau_bound} and \eqref{eqn:similar_MAP_regularity}, 
$\widetilde{L}_{t_0}(\cdot)$ satisfies the third order smoothness at $\thetaMAP[t_0]$ with parameters
\begin{align} 
    \left( K_{\max} K_{\rm low}^{-3/2} t_{0}^{-3/2} n^{-1/2}, \: \FisherMAP[t_0], \: 4\varrho_{n, t_0} \right).
\end{align}

\noindent \textbf{Step 7:} \eqref{eqn:similar_MAP_inner_claim} and \eqref{eqn:similar_MAP_claim} \\
By \textbf{Step 1-6}, we are ready to prove \eqref{eqn:similar_MAP_inner_claim}. 
Note that
\begin{align} \label{eqn:similar_MAP_eq6}
    K_{\max} K_{\rm low}^{-3/2} t_0^{-3/2} n^{-1/2} \varrho_{n, t_0} 
    \leq 
    (K_{\max} K_{\rm low}^{-3/2} c_4)
    M_n^{2} p_{\ast}^{3/2} n^{-1} t_0^{-3/2}
    \overset{(\textbf{S})}{\leq} 
    1/16,
\end{align}
which allows us to apply Lemma \ref{lemma:tech_pre_bias_bound}.
By Lemma \ref{lemma:tech_pre_bias_bound} with
\begin{align*}
    &\tau_{3} = K_{\max} K_{\rm low}^{-3/2} t_{0}^{-3/2} n^{-1/2}, \quad 
    f(\theta) = \widetilde{L}_{t_0} (\theta), \quad 
    \theta = \thetaMAP[t_0], \quad 
    \widetilde{\theta} = \fullthetapMLE[t_0], \\ 
    &\beta = \sum_{t=1}^{t_0 - 1} \nabla \eta_{t}(\fullthetapMLE[t_0]), \quad 
    r = \varrho_{n, t_0},
\end{align*}
we have
\begin{align}
\begin{aligned} \label{eqn:similar_MAP_K_app}
     \left\| \FisherMAP[t_0]^{1/2} \big( \fullthetapMLE[t_0] - \thetaMAP[t_0] \big) \right\|_{2}
     \leq 4\varrho_{n, t_0} 
     \leq 
     4 c_4 M_n^{2} \left( \dfrac{p_{\ast}^{3}}{n} \right)^{1/2},
\end{aligned}
\end{align}
which completes the proof of the first assertion in \eqref{eqn:similar_MAP_inner_claim}.
It follows that
\begin{align*}
    &\left\| \FisherMAP[t_0]^{1/2} \big( \theta_0 - \thetaMAP[t_0] \big) \right\|_{2}
    \leq 
    \left\| \FisherMAP[t_0]^{1/2} \big( \theta_0 - \fullthetapMLE[t_0]  \big) \right\|_{2}
    +
    \left\| \FisherMAP[t_0]^{1/2} \big( \fullthetapMLE[t_0] - \thetaMAP[t_0]  \big) \right\|_{2} \\
    &\leq 
    \left\| \FisherMAP[t_0]^{1/2} \FullFisherTilde[t_0]{\fullthetaBest[t_0]}^{-1/2} \right\|_{2}
    \left\| \FullFisherTilde[t_0]{\fullthetaBest[t_0]}^{1/2} \big( \theta_0 - \fullthetapMLE[t_0] \big) \right\|_{2}
    +
    \left\| \FisherMAP[t_0]^{1/2} \big( \fullthetapMLE[t_0] - \thetaMAP[t_0] \big) \right\|_{2} \\    
    \overset{\eqref{eqn:similar_MAP_regularity}}&{\leq}
    (K_{\min} n t_0 )^{-1/2} (K_{\rm up} n t_0)^{1/2} \big( 4M_n \sqrt{p_{\ast}} + 4M_n \sqrt{p_{\ast}} \big)
    + 
    4c_4 M_n^{2} \left( \dfrac{p_{\ast}^{3}}{n} \right)^{1/2} \\
    &= 
    \big( 8 K_{\min}^{-1/2} K_{\rm up}^{1/2} + 4c_4 M_n p_{\ast} n^{-1/2} \big) M_n \sqrt{p_{\ast}} \\
    \overset{(\textbf{S})}&{\leq}
    \big( 8 K_{\min}^{-1/2} K_{\rm up}^{1/2} + 1 \big) M_n \sqrt{p_{\ast}} = c_5 M_n \sqrt{p_{\ast}}
\end{align*}
for some constant $c_5 = 8 K_{\min}^{-1/2} K_{\rm up}^{1/2} + 1$.
This completes the proof of \eqref{eqn:similar_MAP_inner_claim}. 

It follows by induction that
\begin{align*}
    \left\| \FisherMAP^{1/2} \big( \fullthetapMLE - \thetaMAP \big) \right\|_{2}
    \leq 
    4c_4 M_n^{2} \left( \dfrac{p_{\ast}^{3}}{n} \right)^{1/2}, \quad 
    \left\| \FisherMAP^{1/2} \big( \theta_0 - \thetaMAP \big) \right\|_{2}
    \leq 
    c_5 M_n \sqrt{p_{\ast}}, \quad \forall t \in [T].
\end{align*}
Also, for any $t \in [T]$,
\begin{align*}
&\left\| \bOmega_{t}^{1/2} \big( \fullthetapMLE - \mu_{t}  \big) \right\|_{2}
\leq 
\left\| \bOmega_{t}^{1/2} \big( \fullthetapMLE - \thetaMAP[t]  \big) \right\|_{2}
+
\left\| \bOmega_{t}^{1/2} \big( \thetaMAP[t] - \mu_{t} \big) \right\|_{2} \\
\overset{\text{Corollary \ref{coro:VB_KL_Delta}}}&{\leq}
\big( 1 + \Delta_{t} \big)^{1/2} \left\| \FisherMAP[t]^{1/2} \big( \fullthetapMLE - \thetaMAP[t] \big) \right\|_{2} + \Delta_{t}
\leq
\big( 1 + \Delta_{t} \big) \left\| \FisherMAP[t]^{1/2} \big( \fullthetapMLE - \thetaMAP[t] \big) \right\|_{2} + \Delta_{t}
\\
\overset{\eqref{eqn:similar_MAP_regularity}}&{\leq}
\big( 1 + K_{\rm up} t^{-1} n^{-1/2} p_{\ast} \big)
4c_4 M_n^{2} \left( \dfrac{p_{\ast}^{3}}{n} \right)^{1/2}  
+
K_{\rm up} t^{-1} n^{-1/2} p_{\ast} \\
&=
\bigg[
4c_4\big( 1 + K_{\rm up} t^{-1} n^{-1/2} p_{\ast} \big) 
+
K_{\rm up} M_n^{-2} t^{-1} p_{\ast}^{-1/2}
\bigg]
M_n^{2} \left( \dfrac{p_{\ast}^{3}}{n} \right)^{1/2} \\
\overset{(\textbf{S})}&{\leq}
c_6 M_n^{2} \left( \dfrac{p_{\ast}^{3}}{n} \right)^{1/2} 
\end{align*}
for some positive constant $c_6 = c_6(c_4, K_{\rm up})$.
This completes the proof of \eqref{eqn:similar_MAP_claim}.
\end{proof}

\begin{proof}[Proof of Proposition \ref{prop:similar_variance}]
In this proof, we work on the event $\scrE_{\est, 1} \cap \scrE_{\est, 2}$ without explicitly referring to it. Given the conditions in Proposition \ref{prop:similar_MAP}, the conditions required for Lemmas \ref{lemma:full_posterior_optimal_parameter}, \ref{lemma:full_posterior_parameter} and Proposition \ref{prop:eigenvalue_order} are satisfied.
It follows that 
\begin{align}
\begin{aligned} \label{eqn:similar_variance_regularity}
    &\left\| \FullFisherTilde[t]{\fullthetaBest[t]}^{1/2} \big( \theta_{0} - \fullthetaBest[t] \big) \right\|_{2} 
    \leq 4M_n p_{\ast}^{1/2}, &\quad
    &\lambda_{\min}(\FullFisherTilde[t]{\fullthetaBest[t]}) 
    \geq K_{\min} nt \\
    &\left\| \FullFisherTilde[t]{\fullthetaBest[t]}^{1/2} \big( \fullthetapMLE[t] - \fullthetaBest[t] \big) \right\|_{2} 
    \leq 4M_n p_{\ast}^{1/2}, &\quad
    &\lambda_{\max}(\FullFisherTilde[t]{\fullthetaBest[t]}) 
    \leq \dfrac{4}{3} K_{\max} nt, \\
    &\lambda_{\min}(\FisherMAP[t]) \wedge \lambda_{\min}(\FisherBest[t]) 
    \geq K_{\rm low} nt, &\quad
    &\lambda_{\max}(\FisherMAP[t]) \wedge \lambda_{\max}(\FisherBest[t]) 
    \leq K_{\rm up} nt, \\
    &\thetaMAP, \thetaBest, \fullthetaBest, \fullthetapMLE 
    \in \Theta(\theta_{0}, \bI_{p}, 1/4), &\quad
    &\Delta_{t} \vee \epsilon_{n, t, \KL} 
    \leq K_{\rm up} t^{-1} n^{-1/2} p_{\ast}
\end{aligned}
\end{align}
for all $t \in [T]$.

\noindent \textbf{Step 1: Proof framework} \\
For $t \in [T]$, note that
\begin{align*}
\begin{aligned} 
&\left\| \FullFisherTilde[t]{\fullthetapMLE[t]}^{-1/2} \bOmega_{t} \FullFisherTilde[t]{\fullthetapMLE[t]}^{-1/2} - \bI_{p} \right\|_{\rm F}
\leq 
\lambda_{\min}^{-1} \left( \FullFisherTilde[t]{\fullthetapMLE[t]} \right)
\left\| \bOmega_{t} - \FullFisherTilde[t]{\fullthetapMLE[t]} \right\|_{\rm F} \\
&\leq 
\lambda_{\min}^{-1} \left( \FullFisherTilde[t]{\fullthetapMLE[t]} \right)
\bigg( \left\| \bOmega_{t} - \FisherMAP[t] \right\|_{\rm F} + \left\| \FisherMAP[t] - \FullFisherTilde[t]{\fullthetapMLE[t]} \right\|_{\rm F}  \bigg) \\
&\leq 
\lambda_{\min}^{-1} \left( \FullFisherTilde[t]{\fullthetapMLE[t]} \right)
\left\| \FisherMAP[t] \right\|_{2}
\Bigg[   \left\| \FisherMAP[t]^{-1/2} \bOmega_{t} \FisherMAP[t]^{-1/2} - \bI_{p} \right\|_{\rm F}  
+ \left\| \FisherMAP[t]^{-1/2} \FullFisherTilde[t]{\fullthetapMLE[t]} \FisherMAP[t]^{-1/2} - \bI_{p} \right\|_{\rm F} \Bigg]  \\
\overset{ \eqref{eqn:similar_variance_regularity} }&{\leq}
K_{\min}^{-1} K_{\rm up} 
\bigg( \left\| \FisherMAP[t]^{-1/2} \bOmega_{t} \FisherMAP[t]^{-1/2} - \bI_{p} \right\|_{\rm F} 
    + \left\| \FisherMAP[t]^{-1/2} \FullFisherTilde[t]{\fullthetapMLE[t]} \FisherMAP[t]^{-1/2} - \bI_{p} \right\|_{\rm F} \bigg) \\
\overset{ \eqref{eqn:similar_variance_regularity} }&{\leq}    
K_{\min}^{-1} K_{\rm up} 
\Bigg[ 
K_{\rm up} t^{-1} \left( \dfrac{p_{\ast}^{2}}{n} \right)^{1/2} + 
\left\| \FisherMAP[t]^{-1/2} \FullFisherTilde[t]{\fullthetapMLE[t]} \FisherMAP[t]^{-1/2} - \bI_{p} \right\|_{\rm F} \Bigg].    
\end{aligned}
\end{align*}
Hence, we only need to obtain an upper bound of 
\begin{align*}
    \left\| \FisherMAP[t]^{-1/2} \FullFisherTilde[t]{\fullthetapMLE[t]} \FisherMAP[t]^{-1/2} - \bI_{p} \right\|_{\rm F}.
\end{align*}
Note that
\begin{align*}
    \FullFisherTilde[t]{\fullthetapMLE}
    = 
    \FisherMAP[t] + 
    \sum_{s=1}^{t - 1} 
    \bigg[ \left( \FisherMAP[s] -  \bOmega_{s} \right) + \left( \Fisher[s]{\fullthetapMLE[t]} - \Fisher[s]{\thetaMAP[s]} \right) 
    \bigg] + 
    \left( \Fisher[t]{\fullthetapMLE[t]} -  \Fisher[t]{\thetaMAP[t]} \right)
\end{align*}
It follows that
\begin{align*}
    &\FisherMAP[t]^{-1/2} \FullFisherTilde[t]{ \fullthetapMLE } \FisherMAP[t]^{-1/2} - \bI_{p} \\
    &= 
    \sum_{s=1}^{t - 1} 
    \FisherMAP[t]^{-1/2} \bigg[ \left( \FisherMAP[s] -  \bOmega_{s} \right) + \left( \Fisher[s]{\fullthetapMLE[t]} - \Fisher[s]{\thetaMAP[s]} \right) 
    \bigg]
    \FisherMAP[t]^{-1/2} + 
    \FisherMAP[t]^{-1/2} 
    \left( \Fisher[t]{\fullthetapMLE} - \Fisher[t]{\thetaMAP[t]} \right) 
    \FisherMAP[t]^{-1/2} \\
    &= 
    \left( -\sum_{s=1}^{t - 1} \FisherMAP[t]^{-1/2} \FisherMAP[s]^{1/2} \bigg[ \FisherMAP[s]^{-1/2} \bOmega_{s} \FisherMAP[s]^{-1/2} - \bI_{p} \bigg] \FisherMAP[s]^{1/2} \FisherMAP[t]^{-1/2} \right)  \\
    &\qquad + \left( \sum_{s=1}^{t} \FisherMAP[t]^{-1/2} \Fisher[s]{\thetaMAP[s]}^{1/2} 
    \bigg[ \Fisher[s]{\thetaMAP[s]}^{-1/2} \Fisher[s]{\fullthetapMLE[t]} \Fisher[s]{\thetaMAP[s]}^{-1/2} - \bI_{p}  \bigg] \Fisher[s]{\thetaMAP[s]}^{1/2} \FisherMAP[t]^{-1/2} \right).    
\end{align*}
Let
\begin{align*}
    {\rm (i)} &= 
    \left\| \sum_{s=1}^{t - 1} \FisherMAP[t]^{-1/2} \FisherMAP[s]^{1/2} \bigg[ \FisherMAP[s]^{-1/2} \bOmega_{s} \FisherMAP[s]^{-1/2} - \bI_{p} \bigg] \FisherMAP[s]^{1/2} \FisherMAP[t]^{-1/2} \right\|_{\rm F}, \\
    {\rm (ii)} &= 
    \left\| \sum_{s=1}^{t} \FisherMAP[t]^{-1/2} \Fisher[s]{\thetaMAP[s]}^{1/2} 
    \bigg[ \Fisher[s]{\thetaMAP[s]}^{-1/2} \Fisher[s]{\fullthetapMLE[t]} \Fisher[s]{\thetaMAP[s]}^{-1/2} - \bI_{p}  \bigg] \Fisher[s]{\thetaMAP[s]}^{1/2} \FisherMAP[t]^{-1/2} \right\|_{\rm F}.
\end{align*}
Note that 
\begin{align*}
    \left\| \FisherMAP[t]^{-1/2} \FullFisherTilde[t]{ \fullthetapMLE[t] } \FisherMAP[t]^{-1/2} - \bI_{p} \right\|_{\rm F}
    \leq 
    {\rm (i)} + {\rm (ii)}.
\end{align*}

\noindent \textbf{Step 2:} ${\rm (i)}$ \\
Let $s \in [t-1]$. Note that
\begin{align*}
    &\left\| \FisherMAP[t]^{-1/2} \FisherMAP[s]^{1/2} \bigg[ \FisherMAP[s]^{-1/2} \bOmega_{s} \FisherMAP[s]^{-1/2} - \bI_{p} \bigg] \FisherMAP[s]^{1/2} \FisherMAP[t]^{-1/2} \right\|_{\rm F} \\
    &\leq 
    \lambda_{\min}^{-1} \big( \FisherMAP[t] \big)
    \lambda_{\max} \big( \FisherMAP[s] \big)
    \left\| \FisherMAP[s]^{-1/2} \bOmega_{s} \FisherMAP[s]^{-1/2} - \bI_{p} \right\|_{\rm F} \\
    \overset{ \substack{ \eqref{eqn:similar_variance_regularity} \\ \text{Corollary \ref{coro:VB_KL_Delta}} }}&{\leq}    
    \bigg[ \dfrac{K_{\rm up} s}{K_{\rm low} t} \bigg]  \bigg[ K_{\rm up} s^{-1} \left( \dfrac{p_{\ast}^{2}}{n} \right)^{1/2} \bigg]
    = 
    \big( K_{\rm up}^{2} K_{\rm low}^{-1} \big) t^{-1} \left( \dfrac{p_{\ast}^{2}}{n} \right)^{1/2}.
\end{align*}
By the last display, we have
\begin{align*}
    {\rm (i)} 
    \leq 
    \sum_{s=1}^{t - 1} \big( K_{\rm up}^{2} K_{\rm low}^{-1} \big)
    t^{-1} \left( \dfrac{p_{\ast}^{2}}{n} \right)^{1/2}
    = 
    \big( K_{\rm up}^{2} K_{\rm low}^{-1} \big)  t^{-1} \left( \dfrac{p_{\ast}^{2}}{n} \right)^{1/2}
    \sum_{s=1}^{t - 1} 1 
    \leq  
    c_1 \left( \dfrac{p_{\ast}^{2}}{n} \right)^{1/2},
\end{align*}
where $c_1 = K_{\rm up}^{2} K_{\rm low}^{-1}$.

\noindent \textbf{Step 3:} ${\rm (ii)}$ \\
Let $s \in [t]$. Note that
\begin{align} 
\begin{aligned} \label{eqn:similar_variance_eq1}
    &\left\| \FisherMAP[t]^{-1/2} \Fisher[s]{\thetaMAP[s]}^{1/2} 
    \bigg[ \Fisher[s]{\thetaMAP[s]}^{-1/2} \Fisher[s]{\fullthetapMLE[t]} \Fisher[s]{\thetaMAP[s]}^{-1/2} - \bI_{p}  \bigg] \Fisher[s]{\thetaMAP[s]}^{1/2} \FisherMAP[t]^{-1/2} \right\|_{\rm F} \\
    &\leq 
    \lambda_{\min}^{-1} \big( \FisherMAP[t] \big)
    \lambda_{\max} \big( \Fisher[s]{\thetaMAP[s]} \big)
    \left\| \Fisher[s]{\thetaMAP[s]}^{-1/2} \Fisher[s]{\fullthetapMLE[t]} \Fisher[s]{\thetaMAP[s]}^{-1/2} - \bI_{p} \right\|_{\rm F} \\
    &\leq
    \bigg( \dfrac{ K_{\max} }{K_{\rm low}} t^{-1} \bigg) 
    \sqrt{p} 
    \left\| \Fisher[s]{\thetaMAP[s]}^{-1/2} \Fisher[s]{\fullthetapMLE[t]} \Fisher[s]{\thetaMAP[s]}^{-1/2} - \bI_{p} \right\|_{2}. 
\end{aligned}
\end{align}
where the last inequality holds by \eqref{eqn:similar_variance_regularity} and (\textbf{A2}).
Also, by Proposition \ref{prop:similar_MAP}, we have
\begin{align*}
    \left\| \FisherMAP[t]^{1/2} \big( \fullthetapMLE[t] - \thetaMAP[t] \big) \right\|_{2}
    \leq  
    K M_n^{2} \left( \dfrac{p_{\ast}^3}{n} \right)^{1/2}, \quad 
    \left\| \FisherMAP[t]^{1/2} \big( \theta_0 - \thetaMAP[t] \big) \right\|_{2}
    \leq  
    K M_n p_{\ast}^{1/2}, \quad     
    \forall t \in [T],
\end{align*}
where $K = K(K_{\rm low}, K_{\rm up}, K_{\min}, K_{\max}) > 0$.
It follows that
\begin{align*}
    &\left\| \Fisher[s]{\thetaMAP[s]}^{1/2} \big( \fullthetapMLE[t] - \thetaMAP[s] \big) \right\|_{2} \\
    &\leq 
    \left\| \Fisher[s]{\thetaMAP[s]}^{1/2} \big( \fullthetapMLE[t] - \thetaMAP[t] \big) \right\|_{2}
    +
    \left\| \Fisher[s]{\thetaMAP[s]}^{1/2} \big( \thetaMAP[t] - \theta_0 \big) \right\|_{2}
    +
    \left\| \Fisher[s]{\thetaMAP[s]}^{1/2} \big( \theta_0 - \thetaMAP[s] \big) \right\|_{2} \\
    &\leq 
    \left\| \Fisher[s]{\thetaMAP[s]}^{1/2} \FisherMAP[t]^{-1/2} \right\|_{2}
    \left\| \FisherMAP[t]^{1/2} \big( \fullthetapMLE[t] - \thetaMAP[t] \big) \right\|_{2}
    +
    \left\| \Fisher[s]{\thetaMAP[s]}^{1/2} \FisherMAP[t]^{-1/2} \right\|_{2}
    \left\| \FisherMAP[t]^{1/2} \big( \thetaMAP[t] - \theta_0 \big) \right\|_{2} 
    + 
    \left\| \FisherTilde[s]{\thetaMAP[s]}^{1/2} \left( \theta_0 - \thetaMAP[s] \right) \right\|_{2} \\
    &\leq
    \left( \dfrac{K_{\max}}{ K_{\rm low} t} \right)^{1/2}\big( K M_n^{2} p_{\ast}^{3/2} n^{-1/2} \big)
    +
    \left( \dfrac{K_{\max}}{ K_{\rm low} t} \right)^{1/2}\big( K M_n p_{\ast}^{1/2} \big)
    + 
    K M_n p_{\ast}^{1/2} \\
    &= 
    \bigg( K K_{\max}^{1/2}K_{\rm low}^{-1/2}t^{-1/2} M_n p_{\ast} n^{-1/2} +  K_{\max}^{1/2} K_{\rm low}^{-1/2} t^{-1/2} K + K \bigg) M_n p_{\ast}^{1/2} \\
    \overset{(\textbf{S})}&{\leq}
    K\big( K_{\max}^{1/2}K_{\rm low}^{-1/2} + 2 \big)  M_n p_{\ast}^{1/2} 
    = c_2 M_n p_{\ast}^{1/2},
\end{align*}
where the third inequality holds by \eqref{eqn:similar_variance_regularity} and $\Fisher[s]{\thetaMAP[s]} \preceq \FisherTilde[s]{\thetaMAP[s]}$, and $c_2 = K\big( K_{\max}^{1/2}K_{\rm low}^{-1/2} + 2 \big)$.
It follows that
\begin{align*}
    &\big\| \theta_0 - \thetaMAP[s] \big\|_{2} 
    \overset{\eqref{eqn:similar_variance_regularity}}{\leq}
    1/4, \\ 
    &\big\| \theta - \thetaMAP[s] \big\|_{2} 
    \overset{(\textbf{A2})}{\leq}
    (K_{\min} n)^{-1/2} c_2 M_n p_{\ast}^{1/2}
    \overset{(\textbf{S})}{\leq} 1/4, \quad \forall \theta \in \Theta \big( \thetaMAP[s], \Fisher[s]{\thetaMAP[s]}, c_2 M_n p_{\ast}^{1/2} \big) \\
    &\fullthetapMLE[t] \in \Theta \big( \thetaMAP[s], \Fisher[s]{\thetaMAP[s]}, c_2 M_n p_{\ast}^{1/2} \big) 
    \subseteq \Theta (\theta_0, \bI_p, 1/2).
\end{align*}
which, combining with Lemma \ref{lemma:tech_smooth_tau_bound}, implies that $L_{s}(\cdot)$ satisfies the third order smoothness at $\thetaMAP[s]$ with parameters
\begin{align*}
    \left( K_{\max} K_{\min}^{-3/2} n^{-1/2}, \: \Fisher[s]{\thetaMAP[s]}, \: 
    c_2 M_n p_{\ast}^{1/2} \right).
\end{align*}
By Lemma \ref{lemma:tech_Fisher_smooth}, it follows that
\begin{align*}
    \left\| \Fisher[s]{\thetaMAP[s]}^{-1/2} \Fisher[s]{\fullthetapMLE} \Fisher[s]{\thetaMAP[s]}^{-1/2} - \bI_{p}  \right\|_{2}
    \leq 
    \big( K_{\max} K_{\min}^{-3/2} n^{-1/2} \big)
    \big( c_2 M_n p_{\ast}^{1/2} \big) 
    =
    c_3 M_n  \left( \dfrac{p_{\ast}}{n} \right)^{1/2},
\end{align*}
where $c_3 = K_{\max} K_{\min}^{-3/2} c_2$.
Hence, the right-hand side of \eqref{eqn:similar_variance_eq1} is bounded by
\begin{align*}
\bigg[ \dfrac{ K_{\max} }{K_{\rm low}} t^{-1} \bigg] \sqrt{p}
\left( c_3 M_n  \left( \dfrac{p_{\ast}}{n} \right)^{1/2} \right) 
\leq 
c_4
M_n t^{-1} \left( \dfrac{p_{\ast}^{2}}{n} \right)^{1/2}
\end{align*}
where $c_4 = K_{\rm low}^{-1} K_{\max} c_3$.
Consequently, we have
\begin{align*}
{\rm (ii)} 
\leq 
\sum_{s=1}^{t}
c_4 M_n t^{-1} \left( \dfrac{p_{\ast}^{2}}{n} \right)^{1/2} 
=
c_4 M_n \left( \dfrac{p_{\ast}^{2}}{n} \right)^{1/2}.
\end{align*}

\noindent \textbf{Step 4} \\
By \textbf{Step 2-3}, we have
\begin{align*}
    \left\| \FisherMAP[t]^{-1/2} \FullFisherTilde[t]{ \fullthetapMLE[t] } \FisherMAP[t]^{-1/2} - \bI_{p} \right\|_{\rm F}
    &\leq 
    {\rm (i)} + {\rm (ii)} 
    \leq 
    c_1 \left( \dfrac{p_{\ast}^{2}}{n} \right)^{1/2}
    + 
    c_4
    M_n \left( \dfrac{p_{\ast}^{2}}{n} \right)^{1/2} \\
    &\leq 
    \left( c_1 + c_4 \right) M_n \left( \dfrac{p_{\ast}^{2}}{n} \right)^{1/2}.
\end{align*}
By \textbf{Step 1} and the last display, we have
\begin{align*}
    &\left\| \FullFisherTilde[t]{\fullthetapMLE[t]}^{-1/2} \bOmega_{t} \FullFisherTilde[t]{\fullthetapMLE[t]}^{-1/2} - \bI_{p} \right\|_{\rm F} \\
    &\leq 
    K_{\min}^{-1} K_{\rm up} 
    \Bigg[ 
    K_{\rm up} t^{-1} \left( \dfrac{p_{\ast}^{2}}{n} \right)^{1/2} + 
    \left\| \FisherMAP[t]^{-1/2} \FullFisherTilde[t]{\fullthetapMLE[t]} \FisherMAP[t]^{-1/2} - \bI_{p} \right\|_{\rm F} \Bigg]  \\
    &\leq
    K_{\min}^{-1} K_{\rm up} 
    \Bigg[ 
    K_{\rm up} t^{-1} \left( \dfrac{p_{\ast}^{2}}{n} \right)^{1/2} + \left( c_1 + c_4 \right) M_n \left( \dfrac{p_{\ast}^{2}}{n} \right)^{1/2}
    \Bigg] \\
    &\leq 
    c_5 M_n \left( \dfrac{p_{\ast}^{2}}{n} \right)^{1/2},
\end{align*}
where $c_5 = c_5(K_{\min}, K_{\rm up}, c_1, c_4) > 0$. This completes the proof.
\end{proof}

\begin{proof}[Proof of Theorem \ref{thm:onlineBvM}]
In this proof, we work on the event $\scrE_{\est, 1} \cap \scrE_{\est, 2}$ without explicitly referring to it. 
Note that $\bbP_{0}^{(N)}(\scrE_{\est, 1} \cap \scrE_{\est, 2}) \geq 1 - 3n^{-1}$.
For all $t \in [T]$, we have
\begin{align*}
    &d_{V} \bigg( \Pi_{t}(\cdot),  \Pi\left(\cdot \mid \bD_{1:t} \right) \bigg) \\
    &\leq 
    d_{V} \bigg( \Pi_{t}(\cdot),  \cN\left(\fullthetapMLE, \FullFisherTilde[t]{\fullthetapMLE[t]}^{-1} \right) \bigg)
    +
    d_{V} \bigg( \cN\left(\fullthetapMLE, \FullFisherTilde[t]{\fullthetapMLE[t]}^{-1} \right),  \Pi\left(\cdot \mid \bD_{1:t} \right) \bigg) \\
    \overset{\text{Theorem \ref{thm:batch_posterior_LA}}}&{\leq}
    d_{V} \bigg( \Pi_{t}(\cdot),  \cN\left(\fullthetapMLE, \FullFisherTilde[t]{\fullthetapMLE[t]}^{-1} \right) \bigg) +
    K_1 \left( \dfrac{p_{\ast}^2}{nt} \right)^{1/2},
\end{align*}
where $K_1 = K_1(K_{\min}, K_{\max}) > 0$ is the constant $K$ specified in Theorem \ref{thm:batch_posterior_LA}.
Hence, we only need to obtain an upper bound of
\begin{align*}
    d_{V} \bigg( \Pi_{t}(\cdot),  \cN\left(\fullthetapMLE, \FullFisherTilde[t]{\fullthetapMLE[t]}^{-1} \right) \bigg).
\end{align*}
By Propositions \ref{prop:similar_MAP} and \ref{prop:similar_variance}, we have
\begin{align*}
    \left\| \bOmega_{t}^{1/2} \big( \fullthetapMLE - \mu_{t}  \big) \right\|_{2} 
    &\leq K_{2} M_n^{2} \left( \dfrac{p_{\ast}^{3}}{n} \right)^{1/2}, \\
    \left\| \FullFisherTilde[t]{\fullthetapMLE[t]}^{-1/2} \bOmega_{t} \FullFisherTilde[t]{\fullthetapMLE[t]}^{-1/2} - \bI_{p} \right\|_{\rm F} 
    &\leq K_{3} M_{n} \left( \dfrac{p_{\ast}^{2}}{n} \right)^{1/2},
\end{align*}
where $K_{2}$ and $K_{3}$, depending only on $(K_{\min}, K_{\max}, K_{\rm low}, K_{\rm up})$, are the constants specified in Propositions \ref{prop:similar_MAP} and \ref{prop:similar_variance}, respectively.
Also,
\begin{align*}
    \left\| \FullFisherTilde[t]{\fullthetapMLE[t]}^{-1/2} \bOmega_{t} \FullFisherTilde[t]{\fullthetapMLE[t]}^{-1/2} - \bI_{p} \right\|_{2} 
    \leq
    K_{3} M_{n} \left( \dfrac{p_{\ast}^{2}}{n} \right)^{1/2} 
    \overset{(\textbf{S})}{\leq}
    0.684,
\end{align*}
which, combining with Lemma \ref{lemma:tech_Gaussian_comparison}, implies that
\begin{align*}
    &d_{V} \bigg( \Pi_{t}(\cdot),  \cN\left(\fullthetapMLE, \FullFisherTilde[t]{\fullthetapMLE[t]}^{-1} \right) \bigg) \\
    &\leq 
    \dfrac{1}{2} \left( 
    \left\| \bOmega_{t}^{1/2} \big( \fullthetapMLE - \mu_{t}  \big) \right\|_{2}^{2}
    + 
    \left\| \FullFisherTilde[t]{\fullthetapMLE[t]}^{-1/2} \bOmega_{t} \FullFisherTilde[t]{\fullthetapMLE[t]}^{-1/2} - \bI_{p} \right\|_{\rm F}^{2} 
    \right)^{1/2} \\
    &\leq
    \dfrac{1}{2} \Bigg( 
    K_{2} M_n^{2} \left( \dfrac{p_{\ast}^{3}}{n} \right)^{1/2}
    + 
    K_{3} M_{n} \left( \dfrac{p_{\ast}^{2}}{n} \right)^{1/2}
    \Bigg) \\
    &\leq
    \dfrac{1}{2} \big( K_{2} + K_{3} \big) M_n^{2} \left( \dfrac{p_{\ast}^{3}}{n} \right)^{1/2},
\end{align*}
where the second inequality holds by $x^{2} + y^{2} \leq (x + y)^{2}$ for $x, y \geq 0$.
Therefore, we have
\begin{align*}
    d_{V} \bigg( \Pi_{t}(\cdot),  \Pi\left(\cdot \mid \bD_{1:t} \right) \bigg)
    &\leq 
    \dfrac{ K_{2} + K_{3} }{2} M_n^{2} \left( \dfrac{p_{\ast}^{3}}{n} \right)^{1/2} + 
    K_{1} \left( \dfrac{p_{\ast}^2}{nt} \right)^{1/2} \\
    &\leq 
    \dfrac{ K_{2} + K_{3} + 2K_{1} }{2} M_n^{2} \left( \dfrac{p_{\ast}^{3}}{n} \right)^{1/2}
    = 
    K_4 M_n^{2} \left( \dfrac{p_{\ast}^{3}}{n} \right)^{1/2},
\end{align*}
where $K_4 = (K_{2} + K_{3} + 2K_{1})/2$.

If we further assume (\textbf{P$\ast$}), we can employ Theorem \ref{thm:batch_posterior_BvM}.
For all $t \in [T]$, it holds that
\begin{align*}
    d_{V} \bigg( \cN\left(\fullthetaMLE, \FullFisher[t]{\theta_0}^{-1} \right),  \Pi \left(\cdot \mid \bD_{1:t} \right) \bigg) 
    &\leq K_5 M_n \left( \dfrac{p_{\ast}^2}{nt} \right)^{1/2}, 
\end{align*}
where $K_5 = K_5(K_{\min}, K_{\max})$ is the constant specified in Theorem \ref{thm:batch_posterior_BvM}.
It follows that
\begin{align*}
    d_{V} \left( \Pi_{t}, \cN\left( \fullthetaMLE, \FullFisher[t]{\theta_0}^{-1} \right) \right) 
    &\leq 
    d_{V} \Big( \Pi_{t}(\cdot),  \Pi\left(\cdot \mid \bD_{1:t} \right) \Big) 
    +
    d_{V} \Big( \cN\left(\fullthetaMLE, \FullFisher[t]{\theta_0}^{-1} \right),  \Pi \left(\cdot \mid \bD_{1:t} \right) \Big) \\
    &\leq
    K_4 M_n^{2} \left( \dfrac{p_{\ast}^{3}}{n} \right)^{1/2}
    +
    K_5 M_n \left( \dfrac{p_{\ast}^2}{nt} \right)^{1/2}  \\
    &\leq 
    \big( K_4 + K_5 \big) M_n^{2} \left( \dfrac{p_{\ast}^{3}}{n} \right)^{1/2},
\end{align*}
which completes the proof.
\end{proof}

\begin{corollary} \label{coro:MLE_estimation}
    Suppose that (\textbf{A0}), (\textbf{A1$\ast$}), (\textbf{A2}), (\textbf{S}) and (\textbf{P$\ast$}) hold.
    Then, on $\scrE_{\est, 2}$, for all $t \in [T]$, $\bbE L_{1:t}(\theta)$ satisfies the third order smoothness at $\theta_0$ with parameters
    \begin{align*}
    \left( K_{\max} K_{\min}^{-3/2} (nt)^{-1/2}, \: \FullFisher[t]{\theta_0}, \: 4r_{\eff, 1:t}  \right).
    \end{align*}
    Furthermore, the following inequalities hold uniformly for all $t \in [T]$: 
    \begin{align*} 
     \left\|  \FullFisher[t]{\theta_0}^{1/2} \big( \fullthetaMLE - \theta_0 \big) \right\|_{2} 
     &\leq 4r_{\eff, 1:t}
     \leq 4M_n p_{\ast}^{1/2}, \\
     \left\| \FullFisherTilde[t]{\fullthetapMLE}^{1/2} \big( \fullthetaMLE - \fullthetapMLE \big) \right\|_{2} 
     &\leq K M_n\left( \dfrac{p_{\ast}^{2}}{nt} \right)^{1/2},
    \end{align*}   
    where $K = K(K_{\min}, K_{\max})$.
\end{corollary}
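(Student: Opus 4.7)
The plan is to extract and repackage the estimates already established within the proof of Theorem \ref{thm:batch_posterior_BvM}, since all three assertions are essentially intermediate steps there. I will work throughout on $\scrE_{\est,2}$ without explicitly mentioning it. First, I will bound the effective radius: assumption (\textbf{A1$\ast$}) gives $\lambda_{1:t} \leq M_n^2/9$, so that $r_{\eff,1:t} \leq 3(\lambda_{1:t}\,p_{\ast})^{1/2} \leq M_n p_{\ast}^{1/2}$. Combined with (\textbf{A2}) and (\textbf{S}), this yields $\Theta(\theta_0,\FullFisher[t]{\theta_0}, 4r_{\eff,1:t}) \subseteq \Theta(\theta_0, \bI_p, 1/2)$, since the displacement is bounded by $(K_{\min} nt)^{-1/2}\cdot 4 M_n p_{\ast}^{1/2} \leq 1/2$. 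Applying Lemma \ref{lemma:tech_smooth_tau_bound} together with the operator-norm bounds in (\textbf{A2}) then delivers the claimed third-order smoothness parameter $K_{\max}K_{\min}^{-3/2}(nt)^{-1/2}$.

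For the second inequality, I will apply Lemma \ref{lemma:tech_pre_bias_bound} to $f = \bbE L_{1:t}$ at $\theta = \theta_0$ with $\widetilde{\theta} = \fullthetaMLE$, perturbation $\beta = \nabla\zeta_{1:t}$, and radius $r = r_{\eff,1:t}$. The smoothness hypothesis is exactly what was just verified; the size condition $\tau_3 r \leq 1/16$ follows from (\textbf{S}) since $K_{\max}K_{\min}^{-3/2}(nt)^{-1/2}\cdot M_n p_\ast^{1/2}$ is small; and on $\scrE_{\est,2}$ the bound $\|\FullFisher[t]{\theta_0}^{-1/2}\nabla\zeta_{1:t}\|_2 \leq r_{\eff,1:t}$ is assumed by (\textbf{A1$\ast$}). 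The lemma then yields $\|\FullFisher[t]{\theta_0}^{1/2}(\fullthetaMLE - \theta_0)\|_2 \leq 4r_{\eff,1:t}\leq 4M_n p_\ast^{1/2}$.

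The hardest step is the third inequality, where the prior assumption (\textbf{P$\ast$}) is essential to get the improved rate $M_n(p_\ast^2/(nt))^{1/2}$ rather than $M_n p_\ast^{1/2}$. I will reuse the computation in \eqref{eqn:batch_posterior_BvM_eq3_2}. Since by Theorem \ref{thm:batch_posterior_LA} we have $\lambda_{\min}(\FullFisherTilde[t]{\fullthetapMLE}) \geq K_{\min}nt$ (as shown in \eqref{eqn:batch_posterior_BvM_eq1}), I can split
\[
    \FullFisherTilde[t]{\fullthetapMLE}^{-1/2}\bOmega_0(\fullthetaMLE - \mu_0)
    = \FullFisherTilde[t]{\fullthetapMLE}^{-1/2}\bOmega_0\FullFisher[t]{\theta_0}^{-1/2}\FullFisher[t]{\theta_0}^{1/2}(\fullthetaMLE-\theta_0)
    + \FullFisherTilde[t]{\fullthetapMLE}^{-1/2}\bOmega_0(\theta_0 - \mu_0)
\]
and use $\|\bOmega_0\|_2 \leq K_{\max} p_\ast$ from (\textbf{P}) together with $\|\bOmega_0^{1/2}(\theta_0 - \mu_0)\|_2 \leq K_{\max} M_n p_\ast^{1/2}$ from (\textbf{P$\ast$}) and the second inequality above to obtain $b_t := \|\FullFisherTilde[t]{\fullthetapMLE}^{-1/2}\bOmega_0(\fullthetaMLE-\mu_0)\|_2 \lesssim M_n(p_\ast^2/(nt))^{1/2}$.

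Finally, the localization $\Theta(\fullthetapMLE, \FullFisherTilde[t]{\fullthetapMLE}, 4b_t) \subseteq \Theta(\theta_0,\bI_p, 1/2)$ follows since $\fullthetapMLE \in \Theta(\theta_0,\bI_p, 1/4)$ by the eigenvalue lower bound and (\textbf{S}), so Lemma \ref{lemma:tech_smooth_tau_bound} and (\textbf{A2}) give third-order smoothness of $\widetilde{L}_{1:t}$ at $\fullthetapMLE$ with parameter $K_{\max}K_{\min}^{-3/2}(nt)^{-1/2}$. A final application of Lemma \ref{lemma:tech_pre_bias_bound}, this time to $f = \widetilde{L}_{1:t}$ at $\theta = \fullthetapMLE$ (noting $\nabla\widetilde{L}_{1:t}(\fullthetapMLE) = 0$) with $\widetilde{\theta} = \fullthetaMLE$ and perturbation $\beta = \bOmega_0(\fullthetaMLE - \mu_0)$, together with the smallness of $\tau_3 b_t$ guaranteed by (\textbf{S}), yields $\|\FullFisherTilde[t]{\fullthetapMLE}^{1/2}(\fullthetaMLE - \fullthetapMLE)\|_2 \leq 4b_t$, completing the proof.
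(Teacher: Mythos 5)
Your proposal is correct and follows essentially the same route as the paper: the paper's own proof of this corollary consists of a single pointer to Step 1 of the proof of Theorem \ref{thm:batch_posterior_BvM}, and what you have written is a faithful reconstruction of exactly that step — bounding $r_{\eff,1:t}$, establishing smoothness via Lemma \ref{lemma:tech_smooth_tau_bound}, applying Lemma \ref{lemma:tech_pre_bias_bound} to $\bbE L_{1:t}$ at $\theta_0$, bounding $b_t$ via the split in \eqref{eqn:batch_posterior_BvM_eq3_2} with (\textbf{P$\ast$}), and closing with a second application of Lemma \ref{lemma:tech_pre_bias_bound} to $\widetilde{L}_{1:t}$ at $\fullthetapMLE$.
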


\begin{proof}
    See \textbf{Step 1} in the proof of Theorem \ref{thm:batch_posterior_BvM}.
\end{proof}

\begin{corollary} \label{coro:efficient_estimator} 
    Suppose that (\textbf{A0}), (\textbf{A1$\ast$}), $(\textbf{A2})$, $(\textbf{S})$ and (\textbf{P$\ast$}) hold.
    Then, on $\scrE_{\est, 1} \cap \scrE_{\est, 2}$, the following inequalities holds uniformly for all $t \in [T]$:
    \begin{align*}
    \left\| \FullFisher[t]{\theta_0}^{1/2} \big( \fullthetaMLE - \theta_0 \big) - \FullFisher[t]{\theta_0}^{-1/2} \nabla L_{1:t}(\theta_0) \right\|_2
    &\leq 
    K M_n^2 \left( \dfrac{p_\ast^2}{n} \right)^{1/2}, \\
    \left\| \FullFisher[t]{\theta_0}^{1/2} \big( \thetaMAP - \theta_0 \big) - \FullFisher[t]{\theta_0}^{-1/2} \nabla L_{1:t}(\theta_0) \right\|_2
    &\leq 
    K M_n^2 \left( \dfrac{p_\ast^3}{n} \right)^{1/2}, \\
    \left\| \FullFisher[t]{\theta_0}^{1/2} \big( \mu_t - \theta_0 \big) - \FullFisher[t]{\theta_0}^{-1/2} \nabla L_{1:t}(\theta_0) \right\|_2
    &\leq 
    K M_n^2 \left( \dfrac{p_\ast^3}{n} \right)^{1/2}, 
    \end{align*}    
    where $K = K(K_{\min}, K_{\max}, K_{\rm low}, K_{\rm up}) > 0$.
\end{corollary}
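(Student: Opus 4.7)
The plan is to establish the three claims sequentially via the triangle inequality, reducing each one to the previous. For the batch MLE, the key observation is that stochastic linearity (from (\textbf{A1$\ast$})) forces $\nabla^{2} L_{1:t}(\theta) = -\bF_{1:t,\theta}$ to be deterministic. Combining the score equation $\nabla L_{1:t}(\fullthetaMLE) = 0$ with the fundamental theorem of calculus yields
\begin{align*}
\fullthetaMLE - \theta_{0} = \bigg( \int_{0}^{1} \bF_{1:t,\, \theta_{0} + s(\fullthetaMLE - \theta_{0})}\, \rmd s \bigg)^{-1} \nabla L_{1:t}(\theta_{0}),
\end{align*}
and $\nabla L_{1:t}(\theta_{0}) = \nabla \zeta_{1:t}$ because $\nabla \bbE L_{1:t}(\theta_{0}) = 0$. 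Corollary \ref{coro:MLE_estimation} delivers both the localization $\fullthetaMLE \in \Theta(\theta_{0}, \FullFisher[t]{\theta_{0}}, 4r_{\eff,1:t})$ and the third-order smoothness of $L_{1:t}$ at $\theta_{0}$ with parameter $\tau = K_{\max}K_{\min}^{-3/2}(nt)^{-1/2}$. Lemma \ref{lemma:tech_Fisher_smooth}, applied to every point on the segment, then gives $\| \FullFisher[t]{\theta_{0}}^{-1/2} \bF_{1:t,\theta^{\circ}} \FullFisher[t]{\theta_{0}}^{-1/2} - \bI_{p} \|_{2} \lesssim M_{n} (p_{\ast}/(nt))^{1/2}$, which is small under (\textbf{S}); inverting this perturbation and multiplying by $\| \FullFisher[t]{\theta_{0}}^{-1/2} \nabla \zeta_{1:t} \|_{2} \leq r_{\eff,1:t} \lesssim M_{n} p_{\ast}^{1/2}$ (valid on $\scrE_{\est,2}$ by (\textbf{A1$\ast$})) yields the first claim at the rate $M_{n}^{2} p_{\ast} (nt)^{-1/2} \leq M_{n}^{2}(p_{\ast}^{2}/n)^{1/2}$.

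For the online pMLE, we insert the batch quantities $\fullthetapMLE$ and $\fullthetaMLE$:
\begin{align*}
\left\| \FullFisher[t]{\theta_{0}}^{1/2}(\thetaMAP - \theta_{0}) - \FullFisher[t]{\theta_{0}}^{-1/2} \nabla L_{1:t}(\theta_{0}) \right\|_{2}
&\leq \left\| \FullFisher[t]{\theta_{0}}^{1/2}(\thetaMAP - \fullthetapMLE) \right\|_{2} \\
&\quad + \left\| \FullFisher[t]{\theta_{0}}^{1/2}(\fullthetapMLE - \fullthetaMLE) \right\|_{2} + (\text{first claim}).
\end{align*}
By Proposition \ref{prop:eigenvalue_order_main}, the matrices $\FullFisher[t]{\theta_{0}}$, $\FisherMAP$, and $\FullFisherTilde[t]{\fullthetapMLE}$ all have eigenvalues of order $nt$, hence are mutually equivalent up to constants depending only on $(K_{\min}, K_{\max})$. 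The first inner term is then bounded by a constant multiple of $\|\FisherMAP^{1/2}(\thetaMAP - \fullthetapMLE)\|_{2} \lesssim M_{n}^{2}(p_{\ast}^{3}/n)^{1/2}$ from Proposition \ref{prop:similar_MAP}, and the second by $\|\FullFisherTilde[t]{\fullthetapMLE}^{1/2}(\fullthetapMLE - \fullthetaMLE)\|_{2} \lesssim M_{n}(p_{\ast}^{2}/(nt))^{1/2}$ from Corollary \ref{coro:MLE_estimation}; since $M_{n} \wedge p_{\ast} \geq 1$, the $M_{n}^{2}(p_{\ast}^{3}/n)^{1/2}$ contribution dominates. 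The bound for $\mu_{t}$ follows from $\mu_{t} - \theta_{0} = (\mu_{t} - \thetaMAP) + (\thetaMAP - \theta_{0})$: the quantity $\|\bOmega_{t}^{1/2}(\mu_{t} - \thetaMAP)\|_{2} \leq \Delta_{t}$ is bounded by $t^{-1} n^{-1/2} p_{\ast}$ per Proposition \ref{prop:eigenvalue_order_main}, and the equivalence $\bOmega_{t} \asymp \FullFisher[t]{\theta_{0}}$ converts this into an $\FullFisher[t]{\theta_{0}}^{1/2}$-norm bound that is absorbed by the pMLE rate.

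The main obstacle is purely bookkeeping: the argument repeatedly converts between norms induced by $\FullFisher[t]{\theta_{0}}$, $\FisherMAP$, $\bOmega_{t}$, and $\FullFisherTilde[t]{\fullthetapMLE}$, and each conversion must be justified using the uniform two-sided eigenvalue bounds that hold on $\scrE_{\est,1} \cap \scrE_{\est,2}$ together with the $\Delta_{t}$-comparison of $\bOmega_{t}$ and $\FisherMAP$ from Corollary \ref{coro:VB_KL_Delta}. Once this normalization is dispatched, the proof is a clean three-way triangle inequality feeding on results already established in Sections \ref{sec:pMLE}--\ref{sec:online_variational_posterior} and Appendix \ref{sec:Appendix_full_posterior}.
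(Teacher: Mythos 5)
Your proposal is correct and follows essentially the same path as the paper's proof: a Taylor-type linearization of the score equation at $\theta_0$ combined with the perturbation bound from Lemma~\ref{lemma:tech_Fisher_smooth} and the localization in Corollary~\ref{coro:MLE_estimation} for the first claim, then triangle inequalities through $\fullthetapMLE$ and $\fullthetaMLE$ (with norm conversions justified by the uniform eigenvalue bounds on $\scrE_{\est,1}\cap\scrE_{\est,2}$) for the second and third. The only cosmetic differences are that you use the integral mean-value form and invert the averaged Hessian, whereas the paper introduces a single intermediate point $\widetilde\theta_t$ and avoids inversion by moving the small perturbation to the right and multiplying it against the a priori bound $\|\FullFisher[t]{\theta_0}^{1/2}(\fullthetaMLE-\theta_0)\|_2\leq 4M_np_\ast^{1/2}$; both yield the same rate.
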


\begin{proof}
    In this proof, we work on the event $\scrE_{\est, 1} \cap \scrE_{\est, 2}$ without explicitly referring to it. By Lemmas \ref{lemma:full_posterior_optimal_parameter}, \ref{lemma:full_posterior_parameter} and Proposition \ref{prop:eigenvalue_order}, for all $t \in [T]$, we have
    \begin{align}
    \begin{aligned} \label{eqn:efficient_estimator_regularity}
        \lambda_{\min}(\FisherMAP[t]) &\geq K_{\rm low} nt, \\
        \thetaMAP, \fullthetapMLE 
        &\in \Theta(\theta_{0}, \bI_{p}, 1/4), \\
        \Delta_{t} 
        &\leq K_{\rm up} t^{-1} n^{-1/2} p_{\ast}.
    \end{aligned}
    \end{align}
    Let $t \in [T]$. By Taylor's theorem, we have
    \begin{align*}
        - \nabla L_{1:t}(\theta_0) 
        &= \nabla L_{1:t}(\fullthetaMLE) - \nabla L_{1:t}(\theta_0)
        = - \overline \bF_{1:t} \big( \fullthetaMLE - \theta_0 \big) \\
        &= - \FullFisher[t]{\theta_0} \big( \fullthetaMLE - \theta_0 \big) 
           - \big( \overline \bF_{1:t} - \FullFisher[t]{\theta_0} \big) \big( \fullthetaMLE - \theta_0 \big).
    \end{align*}
    where
    \begin{align*}
        \overline \bF_{1:t}
         = \overline \bF_{1:t}(\fullthetaMLE, \theta_0) 
         = - \int_{0}^{1} \nabla^2 L_{1:t}\big( s \fullthetaMLE + (1-s) \theta_0 \big) \rmd s.
    \end{align*}
    It follows that
    \begin{align*}
        \FullFisher[t]{\theta_0}^{1/2} \big( \fullthetaMLE - \theta_0 \big)
        &=
        \FullFisher[t]{\theta_0}^{-1/2} \nabla L_{1:t}(\theta_0) 
        +
        \FullFisher[t]{\theta_0}^{-1/2} \big( \overline \bF_{1:t} - \FullFisher[t]{\theta_0} \big) \FullFisher[t]{\theta_0}^{-1/2} \FullFisher[t]{\theta_0}^{1/2} \big( \fullthetaMLE - \theta_0 \big) \\
        &=
        \FullFisher[t]{\theta_0}^{-1/2} \nabla L_{1:t}(\theta_0) 
        +
        \big( \FullFisher[t]{\theta_0}^{-1/2} \overline \bF_{1:t} \FullFisher[t]{\theta_0}^{-1/2} - \bI_p \big) \FullFisher[t]{\theta_0}^{1/2} \big( \fullthetaMLE - \theta_0 \big).
    \end{align*}
    By Corollary \ref{coro:MLE_estimation} and Lemma \ref{lemma:tech_Fisher_smooth}, we have
    \begin{align*}
        \left\|  \FullFisher[t]{\theta_0}^{1/2} \big( \fullthetaMLE - \theta_0 \big) \right\|_{2} 
        \leq 4M_n p_{\ast}^{1/2},
    \end{align*}
    and
    \begin{align*}
        \left\| \FullFisher[t]{\theta_0}^{-1/2} \overline \bF_{1:t} \FullFisher[t]{\theta_0}^{-1/2} - \bI_p \right\|_2
        &\leq 
        \big( K_{\max} K_{\min}^{-3/2} (nt)^{-1/2} \big)
        \big( 4M_n p_{\ast}^{1/2} \big) \\
        &= c_1 M_n p_{\ast}^{1/2} (nt)^{-1/2},
    \end{align*}
    where $c_1 = 4K_{\max} K_{\min}^{-3/2}$. Consequently, we have
    \begin{align*}
        &\left\|
        \FullFisher[t]{\theta_0}^{1/2} \big( \fullthetaMLE - \theta_0 \big)
        - 
        \FullFisher[t]{\theta_0}^{-1/2} \nabla L_{1:t}(\theta_0)
        \right\|_2 \\
        &\leq 
        \left\| \FullFisher[t]{\theta_0}^{-1/2} \overline \bF_{1:t} \FullFisher[t]{\theta_0}^{-1/2} - \bI_p \right\|_2 
        \left\| \FullFisher[t]{\theta_0}^{1/2} \big( \fullthetaMLE - \theta_0 \big) \right\|_2 
        \leq 
        4c_1 M_n^2 p_{\ast} (nt)^{-1/2}.
    \end{align*}
    Note that
    \begin{align*}
        &\left\| \FullFisher[t]{\theta_0}^{1/2} \big( \fullthetaMLE - \thetaMAP \big)  \right\|_2
        \leq 
        \left\| \FullFisher[t]{\theta_0}^{1/2} \big( \fullthetaMLE - \fullthetapMLE \big)  \right\|_2
        +
        \left\| \FullFisher[t]{\theta_0}^{1/2} \big( \fullthetapMLE - \thetaMAP \big)  \right\|_2 \\
        &\leq 
        \left\| \FullFisher[t]{\theta_0}^{1/2} \FullFisherTilde[t]{\fullthetapMLE}^{-1/2}\right\|_2
        \left\| \FullFisherTilde[t]{\fullthetapMLE}^{1/2} \big( \fullthetaMLE - \fullthetapMLE \big)  \right\|_2
        +
        \left\| \FullFisher[t]{\theta_0}^{1/2} \FisherMAP^{-1/2}\right\|_2
        \left\| \FisherMAP^{1/2} \big( \fullthetapMLE - \thetaMAP \big)  \right\|_2 \\
        \overset{ \substack{(\textbf{A2}) \\ \eqref{eqn:efficient_estimator_regularity} }  }&{\leq}
        K_{\max}^{1/2} K_{\min}^{-1/2}
        \left\| \FullFisherTilde[t]{\fullthetapMLE}^{1/2} \big( \fullthetaMLE - \fullthetapMLE \big)  \right\|_2
        +
        K_{\max}^{1/2} K_{\rm low}^{-1/2}
        \left\| \FisherMAP^{1/2} \big( \fullthetapMLE - \thetaMAP \big)  \right\|_2 \\
        \overset{ \substack{ \text{Corollary \ref{coro:MLE_estimation}} \\ \text{Proposition \ref{prop:similar_MAP}}  }  }&{\leq}
        c_2 \left( M_n \sqrt{\dfrac{p_\ast^2}{nt}} + M_n^2 \sqrt{\dfrac{p_\ast^3}{n}}  \right) 
        \leq 
        2c_2 M_n^2 \sqrt{\dfrac{p_\ast^3}{n}}
    \end{align*}
    for some constant $c_2 = c_2(K_{\min}, K_{\max}, K_{\rm low}, K_{\rm up}) > 0$.
    Also, we have
    \begin{align*}
        &\left\| \FullFisher[t]{\theta_0}^{1/2} \big( \fullthetaMLE - \mu_t \big)  \right\|_2
        \leq 
        \left\| \FullFisher[t]{\theta_0}^{1/2} \big( \fullthetaMLE - \thetaMAP \big)  \right\|_2
        +
        \left\| \FullFisher[t]{\theta_0}^{1/2} \big( \thetaMAP - \mu_t \big)  \right\|_2 \\
        &\leq 
        \left\| \FullFisher[t]{\theta_0}^{1/2} \big( \fullthetaMLE - \thetaMAP \big)  \right\|_2
        +
        \left\| \FullFisher[t]{\theta_0}^{1/2} \FisherMAP^{-1/2}\right\|_2
        \left\| \FisherMAP^{1/2} \big( \thetaMAP - \mu_t \big)  \right\|_2 \\
        \overset{ \substack{\eqref{eqn:efficient_estimator_regularity} }  }&{\leq}
        2c_2 M_n^2 \sqrt{\dfrac{p_\ast^3}{n}}
        +
        K_{\max}^{1/2} K_{\rm low}^{-1/2} K_{\rm up} t^{-1} n^{-1/2} p_{\ast} 
        \leq
        c_3 M_n^2 \sqrt{\dfrac{p_\ast^3}{n}}
    \end{align*}
    for some constant $c_3 = c_3(K_{\min}, K_{\max}, K_{\rm low}, K_{\rm up}) > 0$.
    Therefore, we have
    \begin{align*}
        &\left\| \FullFisher[t]{\theta_0}^{1/2} \big( \thetaMAP - \theta_0 \big) - \FullFisher[t]{\theta_0}^{-1/2} \nabla L_{1:t}(\theta_0) \right\|_2 \\
        &\leq 
        \left\| \FullFisher[t]{\theta_0}^{1/2} \big( \fullthetaMLE - \theta_0 \big) - \FullFisher[t]{\theta_0}^{-1/2} \nabla L_{1:t}(\theta_0) \right\|_2
        +
        \left\| \FullFisher[t]{\theta_0}^{1/2} \big( \fullthetaMLE - \thetaMAP \big)  \right\|_2 \\
        &\leq 
        4c_1 M_n^2 p_{\ast} (nt)^{-1/2} + 2c_2 M_n^2 \sqrt{\dfrac{p_\ast^3}{n}}
        \leq 
        \big( 4c_1 + 2c_2 \big) M_n^2 \sqrt{\dfrac{p_\ast^3}{n}}.
    \end{align*}
    Similarly, 
    \begin{align*}
        &\left\| \FullFisher[t]{\theta_0}^{1/2} \big( \mu_t - \theta_0 \big) - \FullFisher[t]{\theta_0}^{-1/2} \nabla L_{1:t}(\theta_0) \right\|_2 \\
        &\leq 
        \left\| \FullFisher[t]{\theta_0}^{1/2} \big( \fullthetaMLE - \theta_0 \big) - \FullFisher[t]{\theta_0}^{-1/2} \nabla L_{1:t}(\theta_0) \right\|_2
        +
        \left\| \FullFisher[t]{\theta_0}^{1/2} \big( \fullthetaMLE - \mu_t \big)  \right\|_2 \\
        &\leq 
        4c_1 M_n^2 p_{\ast} (nt)^{-1/2} + c_3 M_n^2 \sqrt{\dfrac{p_\ast^3}{n}}
        \leq 
        \big( 4c_1 + c_3 \big) M_n^2 \sqrt{\dfrac{p_\ast^3}{n}}.
    \end{align*}    
    This completes the proof.
\end{proof}

\begin{corollary} \label{coro:variance_estimation}
    Suppose that (\textbf{A0}), (\textbf{A1$\ast$}), $(\textbf{A2})$, $(\textbf{S})$ and (\textbf{P$\ast$}) hold.
    Then, on $\scrE_{\est, 1} \cap \scrE_{\est, 2}$, the following inequalities holds uniformly for all $t \in [T]$:
    \begin{align*} 
        \lambda_{\min} \big( \bOmega_{t} \big) \wedge \lambda_{\min} \big( \FullFisher[t]{\theta_0} \big) &\geq K_1 nt, \\
        \lambda_{\max} \big( \FullFisherTilde[t]{\fullthetapMLE} \big) \wedge \lambda_{\max} \big( \FullFisher[t]{\theta_0} \big) &\leq K_2 nt, \\
        \left\| \FullFisher[t]{\theta_0}^{-1/2} \FullFisherTilde[t]{\fullthetapMLE} \FullFisher[t]{\theta_0}^{-1/2} - \bI_p \right\|_{\rm F} &\leq K_2 M_n \left( \dfrac{p_{\ast}^2}{nt} \right)^{1/2}, \\
        \left\| \FullFisher[t]{\theta_0}^{1/2} \big( \fullthetaMLE - \mu_t \big)  \right\|_2
        &\leq K_2 M_n^2 \left( \dfrac{p_{\ast}^3}{n} \right)^{1/2},
    \end{align*}   
    where $K_1$ and $K_2$ are positive constants depending only on $(K_{\min}, K_{\max})$.
\end{corollary}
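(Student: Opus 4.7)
The plan is to assemble the four inequalities from results already in place, with the main work being a careful composition rather than a new estimate. As a preliminary step I would work on $\scrE_{\est, 1} \cap \scrE_{\est, 2}$ and record the facts needed throughout: from Proposition \ref{prop:eigenvalue_order_main} one has $\lambda_{\min}(\FisherMAP[t]) \wedge \lambda_{\min}(\FisherBest[t]) \geq K_{\rm low} nt$ together with $\Delta_t \vee \epsilon_{n, t, \KL} \leq K_{\rm up} t^{-1} n^{-1/2} p_{\ast}$; from Lemma \ref{lemma:full_posterior_optimal_parameter}, Lemma \ref{lemma:full_posterior_parameter} and Corollary \ref{coro:MLE_estimation} that $\thetaMAP[t]$, $\thetaBest[t]$, $\fullthetaBest[t]$, $\fullthetapMLE[t]$, $\fullthetaMLE[t]$ all lie in $\Theta(\theta_0, \bI_p, 1/2)$ under (\textbf{S}). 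Then (\textbf{A2}) yields $\lambda_{\min}(\FullFisher[t]{\theta_0}) \geq K_{\min} nt$ and $\lambda_{\max}(\FullFisher[t]{\theta_0}) \vee \lambda_{\max}(\FullFisher[t]{\fullthetapMLE[t]}) \leq K_{\max} nt$. Combined with (\textbf{P}) giving $\|\bOmega_0\|_2 \leq K_{\max} p_{\ast}$ and (\textbf{S}) absorbing $p_\ast$ into $nt$, this delivers $\lambda_{\max}(\FullFisherTilde[t]{\fullthetapMLE[t]}) \leq \lambda_{\max}(\bOmega_0) + \lambda_{\max}(\FullFisher[t]{\fullthetapMLE[t]}) \leq K_2 nt$.

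For the lower bound on $\lambda_{\min}(\bOmega_t)$, I would use that $\|\FisherMAP[t]^{-1/2} \bOmega_t \FisherMAP[t]^{-1/2} - \bI_p\|_2 \leq \Delta_t$ from Corollary \ref{coro:VB_KL_Delta}, which is $\leq 1/2$ under (\textbf{S}); hence $\lambda_{\min}(\bOmega_t) \geq (1 - \Delta_t) \lambda_{\min}(\FisherMAP[t]) \geq (K_{\rm low}/2) nt$. Taking $K_1 = (K_{\min} \wedge K_{\rm low})/2$ gives the first display.

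The third display is essentially a restatement of inequality \eqref{eqn:batch_posterior_BvM_eq6} derived inside Step 2 of the proof of Theorem \ref{thm:batch_posterior_BvM}; I would reproduce the short chain $\|\FullFisher[t]{\theta_0} - \FullFisherTilde[t]{\fullthetapMLE[t]}\|_2 \leq \|\bOmega_0\|_2 + \|\FullFisher[t]{\fullthetapMLE[t]}\|_2 \|\FullFisher[t]{\fullthetapMLE[t]}^{-1/2} \FullFisher[t]{\theta_0} \FullFisher[t]{\fullthetapMLE[t]}^{-1/2} - \bI_p\|_2$, where the second factor is controlled by Lemma \ref{lemma:tech_Fisher_smooth} (via the third order smoothness of $L_{1:t}$ on $\Theta(\theta_0, \bI_p, 1/2)$ from (\textbf{A2}) and Lemma \ref{lemma:tech_smooth_tau_bound}) together with Corollary \ref{coro:MLE_estimation}. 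Dividing by $\lambda_{\min}(\FullFisher[t]{\theta_0}) \geq K_{\min} nt$ and converting the operator norm to the Frobenius norm by an extra $\sqrt{p}$ gives the claimed rate $M_n (p_\ast^2/(nt))^{1/2}$.

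For the final display I would split $\fullthetaMLE[t] - \mu_t = (\fullthetaMLE[t] - \fullthetapMLE[t]) + (\fullthetapMLE[t] - \thetaMAP[t]) + (\thetaMAP[t] - \mu_t)$ and measure each piece in the $\FullFisher[t]{\theta_0}$ norm. The first piece is bounded by $\|\FullFisher[t]{\theta_0}^{1/2} \FullFisherTilde[t]{\fullthetapMLE[t]}^{-1/2}\|_2 \cdot \|\FullFisherTilde[t]{\fullthetapMLE[t]}^{1/2} (\fullthetaMLE[t] - \fullthetapMLE[t])\|_2$, where the norm comparison is $O(1)$ by the second display of this corollary and the factor is $O(M_n (p_\ast^2/(nt))^{1/2})$ by Corollary \ref{coro:MLE_estimation}. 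The second and third pieces are handled by $\|\FullFisher[t]{\theta_0}^{1/2} \FisherMAP[t]^{-1/2}\|_2 \leq (K_{\max}/K_{\rm low})^{1/2}$ together with Proposition \ref{prop:similar_MAP} (bounding $\|\FisherMAP[t]^{1/2}(\fullthetapMLE[t] - \thetaMAP[t])\|_2 \lesssim M_n^2 (p_\ast^3/n)^{1/2}$) and Corollary \ref{coro:VB_KL_Delta} (bounding $\|\FisherMAP[t]^{1/2}(\thetaMAP[t] - \mu_t)\|_2 \leq \Delta_t$ which is dominated by the previous term under (\textbf{S})). Summing these three bounds yields $K_2 M_n^2 (p_\ast^3/n)^{1/2}$. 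The main difficulty, minor here, is book-keeping: ensuring that the universal constants collected from each invoked result depend only on $(K_{\min}, K_{\max})$ through $(K_{\rm low}, K_{\rm up})$, so that the final $K_1, K_2$ indeed depend only on $(K_{\min}, K_{\max})$.
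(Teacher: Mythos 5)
Your proposal is correct and follows essentially the same route as the paper, which simply assembles the four bounds from Proposition \ref{prop:eigenvalue_order}, assumption (\textbf{A2}), Step 2 of Theorem \ref{thm:batch_posterior_BvM}, and the proof of Corollary \ref{coro:efficient_estimator}. One small slip worth noting: for the factor $\|\FullFisher[t]{\theta_0}^{1/2}\FullFisherTilde[t]{\fullthetapMLE}^{-1/2}\|_2 = O(1)$ in the last display you cite ``the second display of this corollary,'' but that only supplies the upper eigenvalue bound; the needed lower bound $\lambda_{\min}(\FullFisherTilde[t]{\fullthetapMLE}) \geq \lambda_{\min}(\FullFisher[t]{\fullthetapMLE}) \geq K_{\min} nt$ comes from (\textbf{A2}) together with $\fullthetapMLE \in \Theta(\theta_0,\bI_p,1/2)$, which you did record in your preliminaries.
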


\begin{proof}
    The first two assertions directly follow from Proposition \ref{prop:eigenvalue_order} and assumption (\textbf{A2}). For the proof of the third assertion, see \textbf{Step 2} in Theorem \ref{thm:batch_posterior_BvM}. The last assertion follows from the proof of Corollary \ref{coro:efficient_estimator}.
\end{proof}

\begin{corollary} \label{coro:variance_consistency}
    Suppose that (\textbf{A0}), (\textbf{A1$\ast$}), $(\textbf{A2})$, $(\textbf{S})$ and (\textbf{P$\ast$}) hold.
    Then, on $\scrE_{\est, 1} \cap \scrE_{\est, 2}$, the following inequalities holds uniformly for all $t \in [T]$:
    \begin{align*} 
        nt \left\| \bOmega_t^{-1} - \FullFisher[t]{\theta_0}^{-1} \right\|_{\rm F} 
        &\leq K M_n \left( \dfrac{p_{\ast}^2}{n} \right)^{1/2}, \\
        \big\| \FullFisher[t]{\theta_0}^{-1/2} \bOmega_t \FullFisher[t]{\theta_0}^{-1/2} - \bI_p \big\|_{\rm F} 
        &\leq K M_n \left( \dfrac{p_{\ast}^2}{n} \right)^{1/2}, \\
        \left\| \bOmega_t^{1/2} \big( \fullthetaMLE - \mu_t \big)  \right\|_2
        &\leq K M_n^2 \left( \dfrac{p_{\ast}^3}{n} \right)^{1/2}
    \end{align*}   
    where $K = K(K_{\min}, K_{\max})$.
\end{corollary}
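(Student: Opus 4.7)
The plan is to transfer the three analogous bounds already available for $\FullFisherTilde[t]{\fullthetapMLE}$ (from Proposition \ref{prop:similar_variance} and Corollary \ref{coro:variance_estimation}) over to $\FullFisher[t]{\theta_0}$, using the closeness $\FullFisherTilde[t]{\fullthetapMLE} \approx \FullFisher[t]{\theta_0}$ supplied by Corollary \ref{coro:variance_estimation}. Throughout the proof we work on $\scrE_{\est, 1} \cap \scrE_{\est, 2}$, and we abbreviate $\widetilde{\bF} = \FullFisherTilde[t]{\fullthetapMLE}$, $\bF = \FullFisher[t]{\theta_0}$.

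For the second inequality, which I tackle first, I would write
\begin{align*}
\bF^{-1/2} \bOmega_t \bF^{-1/2} - \bI_p
= \bF^{-1/2} \widetilde{\bF}^{1/2} \bigl(\widetilde{\bF}^{-1/2} \bOmega_t \widetilde{\bF}^{-1/2} - \bI_p\bigr) \widetilde{\bF}^{1/2} \bF^{-1/2} + \bigl(\bF^{-1/2} \widetilde{\bF} \bF^{-1/2} - \bI_p\bigr),
\end{align*}
and bound the two summands separately in Frobenius norm. Corollary \ref{coro:variance_estimation} gives $\|\bF^{-1/2} \widetilde{\bF} \bF^{-1/2} - \bI_p\|_2 \leq K_2 M_n (p_\ast^2/nt)^{1/2}$, which by (\textbf{S}) is $\leq 1/2$; consequently $\|\bF^{-1/2} \widetilde{\bF}^{1/2}\|_2^2 \leq 3/2$. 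Thus the first summand is controlled by $(3/2)$ times the Frobenius bound from Proposition \ref{prop:similar_variance}, i.e.\ $\lesssim M_n (p_\ast^2/n)^{1/2}$, and the second is the bound from Corollary \ref{coro:variance_estimation} directly. This gives the second assertion.

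For the first inequality, I would use the resolvent identity
\[
\bOmega_t^{-1} - \bF^{-1} = \bF^{-1} (\bF - \bOmega_t) \bOmega_t^{-1},
\]
together with $\|\bF - \bOmega_t\|_{\rm F} \leq \|\bF\|_2 \,\|\bF^{-1/2} \bOmega_t \bF^{-1/2} - \bI_p\|_{\rm F}$, the eigenvalue bounds $\lambda_{\min}(\bOmega_t) \wedge \lambda_{\min}(\bF) \geq K_1 nt$ and $\lambda_{\max}(\bF) \leq K_2 nt$ from Corollary \ref{coro:variance_estimation}, and the bound on $\|\bF^{-1/2} \bOmega_t \bF^{-1/2} - \bI_p\|_{\rm F}$ just proved. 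This yields $\|\bOmega_t^{-1} - \bF^{-1}\|_{\rm F} \lesssim (nt)^{-1} M_n (p_\ast^2/n)^{1/2}$, and multiplying by $nt$ gives the claim.

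For the third inequality, I would write $\|\bOmega_t^{1/2} (\fullthetaMLE - \mu_t)\|_2 \leq \|\bOmega_t^{1/2} \bF^{-1/2}\|_2 \cdot \|\bF^{1/2}(\fullthetaMLE - \mu_t)\|_2$. The second factor is bounded by $K_2 M_n^2 (p_\ast^3/n)^{1/2}$ by Corollary \ref{coro:variance_estimation}. For the first, $\|\bOmega_t^{1/2} \bF^{-1/2}\|_2^2 = \|\bF^{-1/2} \bOmega_t \bF^{-1/2}\|_2 \leq 1 + \|\bF^{-1/2} \bOmega_t \bF^{-1/2} - \bI_p\|_2 \leq 2$ under (\textbf{S}), using the just-established second assertion. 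Combining gives the required bound.

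There is no real obstacle here: the entire argument is a matter of correctly chaining the Frobenius-norm perturbation estimates and using the uniform eigenvalue control from Proposition \ref{prop:eigenvalue_order} and Corollary \ref{coro:variance_estimation} to keep all transition factors (like $\|\bF^{-1/2}\widetilde{\bF}^{1/2}\|_2$ and $\|\bOmega_t^{1/2} \bF^{-1/2}\|_2$) of order $1$. The only place where minor care is needed is ensuring that the sample-size condition (\textbf{S}) makes $M_n (p_\ast^2/nt)^{1/2}$ small enough to absorb these factors into the universal constant $K$.
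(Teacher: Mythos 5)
Your proof is correct and relies on the same two key inputs as the paper's argument---Proposition \ref{prop:similar_variance} for $\|\widetilde{\bF}^{-1/2}\bOmega_t\widetilde{\bF}^{-1/2}-\bI_p\|_{\rm F}$ and Corollary \ref{coro:variance_estimation} for $\|\bF^{-1/2}\widetilde{\bF}\bF^{-1/2}-\bI_p\|_{\rm F}$ together with the eigenvalue bounds---so it is essentially the same approach, differing only in the order of the three assertions and the specific algebraic identities (conjugation identity and resolvent identity versus the paper's triangle inequality on $\|\bOmega_t-\bF\|_{\rm F}$) used to chain the estimates.
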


\begin{proof}
    In this proof, we work on the event $\scrE_{\est, 1} \cap \scrE_{\est, 2}$ without explicitly referring to it. Let $t \in [T]$ and $N_t = nt$ in this proof. Note that
    \begin{align*}
        \big\| \bOmega_t^{-1} - \FullFisher[t]{\theta_0}^{-1} \big\|_{\rm F} 
        &\leq 
        \big\| \bOmega_t^{-1} \big\|_2
        \big\| \FullFisher[t]{\theta_0}^{-1/2} \bOmega_t \FullFisher[t]{\theta_0}^{-1/2} - \bI_p \big\|_{\rm F}  \\ 
        &\leq
        \big\| \bOmega_t^{-1} \big\|_2
        \big\| \FullFisher[t]{\theta_0}^{-1} \big\|_2
        \big\| \bOmega_t - \FullFisher[t]{\theta_0} \big\|_{\rm F} \\
        \overset{ \text{Corollary \ref{coro:variance_estimation}} }&{\leq}
        K_1^{-2} N_t^{-2} \big\| \bOmega_t - \FullFisher[t]{\theta_0} \big\|_{\rm F},  
    \end{align*}
    where $K_1$ is the constant specified in Corollary \ref{coro:variance_estimation}. Hence, we only need to obtain an upper bound of $\| \bOmega_t - \FullFisher[t]{\theta_0} \|_{\rm F}$.
    
    Note that
    \begin{align*}
        \left\| \bOmega_{t} - \FullFisherTilde[t]{\fullthetapMLE} \right\|_{\rm F}
        &\leq 
        \left\| \FullFisherTilde[t]{\fullthetapMLE} \right\|_2
        \left\| \FullFisherTilde[t]{\fullthetapMLE}^{-1/2} \bOmega_{t} \FullFisherTilde[t]{\fullthetapMLE}^{-1/2} - \bI_p \right\|_{\rm F} \\
        \overset{ \substack{ \text{Corollary \ref{coro:variance_estimation}} \\ \text{Proposition \ref{prop:similar_variance}} } }&{\leq}
        \big( K_2 N_t \big) K_3 M_n \left( \dfrac{p_{\ast}^2 }{n} \right)^{1/2} 
        =
        \big( K_2 K_3 \big) N_t M_n \left( \dfrac{p_{\ast}^2 }{n} \right)^{1/2}, 
    \end{align*}
    where $K_2$ is the constant specified in Corollary \ref{coro:variance_estimation}, and $K_3$ denotes the constant $K$ in Proposition \ref{prop:similar_variance}. Also,
    \begin{align*}
        \left\| \FullFisherTilde[t]{\fullthetapMLE} - \FullFisher[t]{\theta_0} \right\|_{\rm F}
        &\leq 
        \Big\| \FullFisher[t]{\theta_0} \Big\|_2
        \left\| \FullFisher[t]{\theta_0}^{-1/2} \FullFisherTilde[t]{\fullthetapMLE} \FullFisher[t]{\theta_0}^{-1/2} - \bI_p \right\|_{\rm F} \\
        \overset{ \substack{ \text{Corollary \ref{coro:variance_estimation}} } }&{\leq}
        \big( K_2 N_t \big) K_2 M_n \left( \dfrac{p_{\ast}^2 }{N_t} \right)^{1/2} 
        =
        K_2^2  M_n p_{\ast} N_t^{1/2}.
    \end{align*}
    Consequently, we have
    \begin{align*}
        \big\| \bOmega_t - \FullFisher[t]{\theta_0} \big\|_{\rm F}
        &\leq 
        \left\| \bOmega_{t} - \FullFisherTilde[t]{\fullthetapMLE} \right\|_{\rm F}
        +
        \left\| \FullFisherTilde[t]{\fullthetapMLE} - \FullFisher[t]{\theta_0} \right\|_{\rm F} \\
        &\leq 
        \big( K_2 K_3 \big) N_t M_n \left( \dfrac{p_{\ast}^2 }{n} \right)^{1/2}
        +
        K_2^2  M_n p_{\ast} N_t^{1/2} \\
        &\leq 
        \big( K_2 K_3 + K_2^2 \big) N_t M_n \left( \dfrac{p_{\ast}^2 }{n} \right)^{1/2},
    \end{align*}
    which implies that
    \begin{align*}
        \big\| \bOmega_t^{-1} - \FullFisher[t]{\theta_0}^{-1} \big\|_{\rm F} 
        &\leq 
        K_1^{-2} N_t^{-2} 
        \big( K_2 K_3 + K_2^2 \big) N_t M_n \left( \dfrac{p_{\ast}^2 }{n} \right)^{1/2} \\
        &= 
        K_1^{-2} \big( K_2 K_3 + K_2^2 \big) N_t^{-1} M_n \left( \dfrac{p_{\ast}^2 }{n} \right)^{1/2}.
    \end{align*}
    Also,
    \begin{align*}
        \big\| \FullFisher[t]{\theta_0}^{-1/2} \bOmega_t \FullFisher[t]{\theta_0}^{-1/2} - \bI_p \big\|_{\rm F}
        &\leq
        \big\| \FullFisher[t]{\theta_0}^{-1} \big\|_2
        \big\| \bOmega_t - \FullFisher[t]{\theta_0} \big\|_{\rm F} \\
        \overset{ \text{Corollary \ref{coro:variance_estimation}} }&{\leq}
        K_1^{-1} N_t^{-1} \big\| \bOmega_t - \FullFisher[t]{\theta_0} \big\|_{\rm F} \\
        &\leq 
        K_1^{-1} \big( K_2 K_3 + K_2^2 \big) M_n \left( \dfrac{p_{\ast}^2 }{n} \right)^{1/2},
    \end{align*}
    which, combining with the last assertion in Corollary \ref{coro:variance_estimation}, implies that 
    \begin{align*}
        \left\| \bOmega_t^{1/2} \big( \fullthetaMLE - \mu_t \big)  \right\|_2
        &\leq K_4 M_n^2 \left( \dfrac{p_{\ast}^3}{n} \right)^{1/2}
    \end{align*}
    for some positive constant $K_4$ depending only on $(K_{\min}, K_{\max})$.
\end{proof}

\begin{corollary} \label{coro:CI_consistency}
    Suppose that (\textbf{A0}), (\textbf{A1$\ast$}), $(\textbf{A2})$, $(\textbf{S})$ and (\textbf{P$\ast$}) hold.
    Then, on $\scrE_{\est, 1} \cap \scrE_{\est, 2}$, the following inequalities holds uniformly for all $t \in [T]$:
    \begin{align*} 
        \left| 
            \big\| \bOmega_t^{1/2} \big( \theta_0 - \mu_t \big) \big\|_2
            - 
            \big\| \FullFisher[t]{\theta_0}^{1/2} \big( \theta_0 - \fullthetaMLE \big) \big\|_2
        \right|
        \leq 
        K M_n \left( \dfrac{p_{\ast}^3}{n} \right)^{1/2},
    \end{align*}   
    where $K = K(K_{\min}, K_{\max})$.
\end{corollary}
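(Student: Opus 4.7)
The plan is to compare the two norms by inserting an intermediate quantity and using the inequality $\bigl|\|a\|_2 - \|b\|_2\bigr| \leq \|a-b\|_2$ combined with the spectral perturbation bounds from Corollaries \ref{coro:efficient_estimator} and \ref{coro:variance_consistency}. Specifically, I would bound the target quantity by the triangle inequality as
\begin{align*}
\Bigl| \|\bOmega_t^{1/2}(\theta_0 - \mu_t)\|_2 - \|\FullFisher[t]{\theta_0}^{1/2}(\theta_0 - \fullthetaMLE)\|_2 \Bigr|
&\leq \underbrace{\Bigl| \|\bOmega_t^{1/2}(\theta_0 - \mu_t)\|_2 - \|\FullFisher[t]{\theta_0}^{1/2}(\theta_0 - \mu_t)\|_2 \Bigr|}_{({\rm I})} \\
&\quad + \underbrace{\|\FullFisher[t]{\theta_0}^{1/2}(\mu_t - \fullthetaMLE)\|_2}_{({\rm II})},
\end{align*}
so the problem reduces to controlling $({\rm I})$ and $({\rm II})$.

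For $({\rm II})$, this is an immediate consequence of Corollary \ref{coro:efficient_estimator}: both $\FullFisher[t]{\theta_0}^{1/2}(\mu_t - \theta_0)$ and $\FullFisher[t]{\theta_0}^{1/2}(\fullthetaMLE - \theta_0)$ are within $K M_n^2 (p_\ast^3/n)^{1/2}$ of the common linear efficient term $\FullFisher[t]{\theta_0}^{-1/2} \nabla L_{1:t}(\theta_0)$, so their difference is bounded by $2K M_n^2 (p_\ast^3/n)^{1/2}$ via triangle inequality. For $({\rm I})$, letting $v = \FullFisher[t]{\theta_0}^{1/2}(\theta_0 - \mu_t)$ and $\bA = \FullFisher[t]{\theta_0}^{-1/2} \bOmega_t \FullFisher[t]{\theta_0}^{-1/2}$, the quadratic identity $\|\bOmega_t^{1/2}(\theta_0-\mu_t)\|_2^2 = v^\top \bA v$ gives
\begin{align*}
\bigl| \|\bOmega_t^{1/2}(\theta_0-\mu_t)\|_2^2 - \|v\|_2^2 \bigr| \leq \|\bA - \bI_p\|_2 \|v\|_2^2.
\end{align*}
Using $|\sqrt{x} - \sqrt{y}| \leq |x-y|/(\sqrt{x}+\sqrt{y})$ or equivalently $|\sqrt{1+\epsilon} - 1| \leq \epsilon$ for small $\epsilon$, this yields $({\rm I}) \leq \|\bA - \bI_p\|_2 \|v\|_2$. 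Corollary \ref{coro:variance_consistency} bounds $\|\bA - \bI_p\|_2 \leq \|\bA - \bI_p\|_{\rm F} \leq K M_n (p_\ast^2/n)^{1/2}$, and $\|v\|_2 \lesssim M_n p_\ast^{1/2}$ follows from Corollary \ref{coro:MLE_estimation} together with $({\rm II})$, so $({\rm I}) \lesssim M_n^2 (p_\ast^3/n)^{1/2}$.

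Combining $({\rm I})$ and $({\rm II})$ gives the stated bound (with exponent on $M_n$ matching the pattern of the preceding corollaries, namely $M_n^2$). The only potentially delicate step is verifying that the perturbation argument for $({\rm I})$ is quantitatively sharp: since assumption $(\textbf{S})$ guarantees $\|\bA - \bI_p\|_2 = o(1)$, the square-root linearization contributes only a constant factor, and no further refinement is needed. No genuine obstacle arises; the proof is essentially bookkeeping on top of Corollaries \ref{coro:efficient_estimator} and \ref{coro:variance_consistency}.
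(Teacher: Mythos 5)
Your proposal is correct and reaches the same bound as the paper's own argument, but by a genuinely different decomposition. The paper moves the vector first and the matrix second: it writes
$\|\bOmega_t^{1/2}(\theta_0-\mu_t)\|_2 \le \|\bOmega_t^{1/2}(\theta_0-\fullthetaMLE)\|_2 + \|\bOmega_t^{1/2}(\fullthetaMLE-\mu_t)\|_2$, controls the second term directly by Corollary \ref{coro:variance_consistency}, and then passes from $\bOmega_t$ to $\FullFisher[t]{\theta_0}$ in the first term via $(1\pm\epsilon_{n,2})^{1/2}$, treating the upper and lower bounds separately. You instead swap the matrix first (your step $({\rm I})$, via the symmetric perturbation bound $|\sqrt{x}-\sqrt{y}| \le |x-y|/\sqrt{y}$) and then change the center from $\mu_t$ to $\fullthetaMLE$ (your step $({\rm II})$, routed through Corollary \ref{coro:efficient_estimator} rather than the $\|\bOmega_t^{1/2}(\fullthetaMLE-\mu_t)\|_2$ bound of Corollary \ref{coro:variance_consistency}). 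Both are valid bookkeeping on the same underlying estimates; the paper's route needs one fewer corollary and avoids the quadratic-form perturbation lemma, while yours handles the absolute value symmetrically in a single stroke. One further point worth noting: you correctly arrive at a bound of order $M_n^2(p_\ast^3/n)^{1/2}$, and the paper's own proof likewise produces $5\epsilon_{n,3} = 5K_1 M_n^2(p_\ast^3/n)^{1/2}$, so the exponent $M_n^1$ printed in the statement of Corollary \ref{coro:CI_consistency} appears to be a typo; your observation that $M_n^2$ matches the pattern of the preceding corollaries is accurate.
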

\begin{proof}
    In this proof, we work on the event $\scrE_{\est, 1} \cap \scrE_{\est, 2}$ without explicitly referring to it. Let $t \in [T]$. By Corollaries \ref{coro:MLE_estimation} and \ref{coro:variance_consistency}, we have
    \begin{align*}
        \big\|  \FullFisher[t]{\theta_0}^{1/2} \big( \fullthetaMLE - \theta_0 \big) \big\|_{2} 
        &\leq 4M_n p_{\ast}^{1/2}, \\
        \big\| \FullFisher[t]{\theta_0}^{-1/2} \bOmega_t \FullFisher[t]{\theta_0}^{-1/2} - \bI_p \big\|_{\rm F} 
        &\leq K_1 M_n \left( \dfrac{p_{\ast}^2}{n} \right)^{1/2}, \\
        \big\| \bOmega_t^{1/2} \big( \fullthetaMLE - \mu_t \big)  \big\|_2
        &\leq K_1 M_n^2 \left( \dfrac{p_{\ast}^3}{n} \right)^{1/2},
    \end{align*}
    where $K_1 = K_1(K_{\min}, K_{\max})$ denotes the constant $K$ in Corollary \ref{coro:variance_consistency}.   
    Let $\epsilon_{n, 2} = K_1 M_n \left( p_{\ast}^2 / n \right)^{1/2}$ and $\epsilon_{n, 3} = K_1 M_n^2 \left( p_{\ast}^3 / n \right)^{1/2}$ in this proof.
    Note that
    \begin{align*}
        \big\| \bOmega_t^{1/2} \big( \theta_0 - \mu_t \big) \big\|_2
        &\leq 
        \big\| \bOmega_t^{1/2} \big( \theta_0 - \fullthetaMLE \big) \big\|_2
        +
        \big\| \bOmega_t^{1/2} \big( \fullthetaMLE - \mu_t \big) \big\|_2 \\
        &\leq 
        \big( 1 + \epsilon_{n, 2} \big)^{1/2}
        \big\| \FullFisher[t]{\theta_0}^{1/2} \big( \theta_0 - \fullthetaMLE \big) \big\|_2
        +
        \big\| \bOmega_t^{1/2} \big( \fullthetaMLE - \mu_t \big) \big\|_2 \\
        &\leq 
        \big( 1 + \epsilon_{n, 2} \big)
        \big\| \FullFisher[t]{\theta_0}^{1/2} \big( \theta_0 - \fullthetaMLE \big) \big\|_2
        +
        \big\| \bOmega_t^{1/2} \big( \fullthetaMLE - \mu_t \big) \big\|_2 \\
        &\leq 
        \big\| \FullFisher[t]{\theta_0}^{1/2} \big( \theta_0 - \fullthetaMLE \big) \big\|_2
        +
        \epsilon_{n, 2} \big( 4M_n p_{\ast}^{1/2} \big)
        +
        \epsilon_{n, 3},
    \end{align*}
    which implies that
    \begin{align*}
        \big\| \bOmega_t^{1/2} \big( \theta_0 - \mu_t \big) \big\|_2
        - 
        \big\| \FullFisher[t]{\theta_0}^{1/2} \big( \theta_0 - \fullthetaMLE \big) \big\|_2
        \leq 
        5 \epsilon_{n, 3}. 
    \end{align*}
    Also,  
    \begin{align*}
        \big\| \bOmega_t^{1/2} \big( \theta_0 - \mu_t \big) \big\|_2
        &\geq 
        \big\| \bOmega_t^{1/2} \big( \theta_0 - \fullthetaMLE \big) \big\|_2
        -
        \big\| \bOmega_t^{1/2} \big( \fullthetaMLE - \mu_t \big) \big\|_2 \\
        &\geq 
        \big( 1 - \epsilon_{n, 2} \big)^{1/2}
        \big\| \FullFisher[t]{\theta_0}^{1/2} \big( \theta_0 - \fullthetaMLE \big) \big\|_2
        -
        \big\| \bOmega_t^{1/2} \big( \fullthetaMLE - \mu_t \big) \big\|_2 \\
        &\geq 
        \big( 1 - \epsilon_{n, 2} \big)
        \big\| \FullFisher[t]{\theta_0}^{1/2} \big( \theta_0 - \fullthetaMLE \big) \big\|_2
        -
        \big\| \bOmega_t^{1/2} \big( \fullthetaMLE - \mu_t \big) \big\|_2 \\
        &\geq 
        \big\| \FullFisher[t]{\theta_0}^{1/2} \big( \theta_0 - \fullthetaMLE \big) \big\|_2
        -
        \epsilon_{n, 2} \big( 4M_n p_{\ast}^{1/2} \big)
        -
        \epsilon_{n, 3},
    \end{align*}
    which implies that
    \begin{align*}
        \big\| \bOmega_t^{1/2} \big( \theta_0 - \mu_t \big) \big\|_2
        - 
        \big\| \FullFisher[t]{\theta_0}^{1/2} \big( \theta_0 - \fullthetaMLE \big) \big\|_2
        \geq 
        -5 \epsilon_{n, 3}. 
    \end{align*}
    This completes the proof.
\end{proof}

\section{Logistic regression with Gaussian design} \label{sec:logit_example_app}

In this section, we demonstrate that the main results in Section \ref{sec:online_variational_posterior} hold under the logistic regression model.
First, we introduce some notations needed for the theoretical verifications of this model.
Let $\bY = (Y_i)_{i \in [N]} \in \bbR^{N}$ be the response vector and $\bX = (X_{ij})_{i \in [N], j \in [p]} \in \bbR^{N \times p}$ be the design matrix.
Also, for $t \in [T]$, let
\begin{align*}
    &I_{t} = \{ n(t-1) + 1, n(t-1) + 2, ..., nt \}, \quad 
    I_{1:t} = \cup_{s=1}^{t} I_{s}, \\
    &\bY_{t} = (Y_{i})_{i \in I_{t}}, \quad
    \bX_{t} = (X_{ij})_{i \in I_{t}, j \in [p]}, \quad 
    \bX_{1:t} = (X_{ij})_{i \in I_{1:t}, j \in [p]} \in \bbR^{N_t \times p}.
\end{align*}
With slight abuse of notation, we denote $\bD_{t} = (\bX_{t}, \bY_{t})$ in this section.

For the logistic regression model, the likelihood function is given by
\begin{align} \label{def:logit_likelihood}
    L_{t}(\theta) = \sum_{i \in I_{t}} \left[ Y_i X_{i}^{\top} \theta - b(X_{i}^{\top} \theta) \right],
\end{align}
where $b(\cdot) = \log (1 + \exp(\cdot))$. 
Note that $b(\cdot)$ is four times differentiable with derivatives $b', b'', b'''$ and $b''''$, respectively.

In the sections in main text, we considered several regularity conditions, as assumed in (\textbf{A1}), (\textbf{A1$\ast$}), and (\textbf{A2}). These conditions can be verified under the logistic regression model with a ``well-posed'' design $\bX$. To see this, we consider a simple random matrix setup where each entry of the design matrix $\bX$ is an i.i.d. standard normal random variable, i.e., $X_{ij} \overset{\iid}{\sim} \cN(0, 1)$. For simplicity, we take the covariance matrix to be the identity matrix $\bI_p$; this setting can be easily extended to a general covariance $\bSigma$ satisfying
\begin{align*}
    C^{-1} \leq \lambda_{\min} \big( \bSigma \big) \leq \lambda_{\max} \big( \bSigma \big) \leq C
\end{align*}
for some constant $C > 0$. With slight abuse of notation, hereafter, let $\bbP$ and $\bbE$ denote the joint probability measure and expectation corresponding to $(\bX, \bY)$, respectively.

Under the assumed random design setup, we can verify the conditions in (\textbf{A1}), (\textbf{A1$\ast$}), and (\textbf{A2}). First, one can easily check that $L_{t}(\theta)$ in \eqref{def:logit_likelihood} is \textit{stochastically linear} (with respect to the randomness in $\bY$) as follows:
\begin{align*}
    \zeta_{t}(\theta) = L_{t}(\theta) - \bbE_{t} L_{t}(\theta) = 
    \sum_{i \in I_{t}} \bigg[ \big( Y_i - \bbE_{t} (Y_i \mid \bX) \big) X_{i}^{\top} \theta \bigg], \quad \forall t \in [T],
\end{align*}
where $\bbE_{t} (Y_i \mid \bX) =  b'(X_{i}^{\top} \theta_0)$ for each $i \in I_{t}$.
To verify the remaining assumptions, we impose (\textbf{EX}).
Under the assumption (\textbf{EX}), we can prove the conditions in (\textbf{A1}), (\textbf{A1$\ast$}), and (\textbf{A2}) hold uniformly for all $t \in [T]$ with the following quantities:
\begin{align*}
    \bV_{t} = \bX_t^{\top} \bX_t/4, \quad
    \bV_{1:t} = \bX_{1:t}^{\top} \bX_{1:t}/4, \quad
    M_n = C_1, \quad 
    K_{\min} = C_2, \quad
    K_{\max} = C_3,
\end{align*}
where $C_1$ and $C_2$ are positive constants depending only on $K_1$, and $C_3$ is a universal constant.
Technical statements and proofs are deferred to Appendix \ref{sec:proof_logit_example}; see Propositions \ref{prop:eigenvalues_logit}, \ref{prop:logit_V_verification}, \ref{prop:operator_norm_logit} for precise statements.  

We now state that the online BvM theorem holds for the logistic regression model.
\begin{proposition} \label{prop:logit_fin_statement}
    Suppose that (\textbf{EX}) holds.
    Then, with $\bbP$-probability at least $1 - 5n^{-1} - 10e^{-n/72} -4(Np)^{-1}$, the following inequality holds uniformly for all $t \in [T]$:
    \begin{align*}
        d_{V} \Big( \Pi_{t}, \Pi(\cdot \mid \bD_{1:t}) \Big) 
        &\leq  
        C  \left( \dfrac{p_{\ast}^{3}}{n} \right)^{1/2}, \\
        d_{V} \bigg( \Pi_{t}, \cN\left( \fullthetaMLE, \FullFisher[t]{\theta_0}^{-1} \right) \bigg) 
        &\leq  
        C  \left( \dfrac{p_{\ast}^{3}}{n} \right)^{1/2},
    \end{align*}
    where $C = C(K_1)$.
\end{proposition} 

\subsection{Proof for Proposition \ref{prop:logit_fin_statement}} \label{sec:proof_logit_example}
Throughout this subsection, we follow the notations given in Appendix \ref{sec:logit_example_app} without explicitly referring to them. 
\begin{lemma} \label{lemma:random_matrix_eigenvalue}
Suppose that
\begin{align} \label{assume:least_eigenvalue_lemma}
    p \vee (4\log T) \leq n.
\end{align}
Then,
\begin{align} \label{eqn:least_eigenvalue_claim}
    \bbP \left\{ 
    \lambda_{\min} \left( \sum_{i \in I_{t}}  X_{i} X_{i}^{\top} \right) 
    \leq \dfrac{1}{9} n \quad
    \text{ for some } t \in [T]
    \right\}
    \leq 2e^{-n/4}
\end{align}
and 
\begin{align} \label{eqn:least_eigenvalue_claim2}
    \bbP \left\{ 
    \lambda_{\max} \left( \sum_{i \in I_{t}}  X_{i} X_{i}^{\top} \right) 
    \geq 9 n \quad
    \text{ for some } t \in [T]
    \right\}
    \leq 2e^{-n/4}.
\end{align}
\end{lemma}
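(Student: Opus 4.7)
The plan is to apply standard Gaussian concentration for the singular values of the mini-batch design matrix $\bX_t \in \bbR^{n \times p}$, whose entries are i.i.d.\ $\cN(0, 1)$. Since both $\bX \mapsto s_{\max}(\bX)$ and $\bX \mapsto s_{\min}(\bX)$ are $1$-Lipschitz in the Frobenius norm, Gaussian Lipschitz concentration together with the Davidson--Szarek bounds $\bbE s_{\max}(\bX_t) \leq \sqrt{n}+\sqrt{p}$ and $\bbE s_{\min}(\bX_t) \geq \sqrt{n}-\sqrt{p}$ yields, for every $s > 0$ and every fixed $t \in [T]$, the estimate $\bbP(s_{\max}(\bX_t) \geq \sqrt{n} + \sqrt{p} + s) \vee \bbP(s_{\min}(\bX_t) \leq \sqrt{n} - \sqrt{p} - s) \leq e^{-s^2/2}$, where $s_{\min}, s_{\max}$ denote the smallest and largest singular values. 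The two bounds \eqref{eqn:least_eigenvalue_claim} and \eqref{eqn:least_eigenvalue_claim2} are then immediate from the identity $\lambda_{\min/\max}(\bX_t^\top \bX_t) = s_{\min/\max}(\bX_t)^2$ once the deviation parameter $s$ and a union bound over $t$ are properly calibrated.

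For the upper eigenvalue bound \eqref{eqn:least_eigenvalue_claim2}, I would take $s = \sqrt{n}$ and exploit $\sqrt{p} \leq \sqrt{n}$, which gives $s_{\max}(\bX_t) \leq 3\sqrt{n}$ and hence $\lambda_{\max}(\bX_t^\top \bX_t) \leq 9n$ with per-$t$ failure probability at most $e^{-n/2}$. Union-bounding over $t \in [T]$ and invoking the hypothesis $4 \log T \leq n$ inflates this to at most $T \cdot e^{-n/2} \leq e^{n/4}\cdot e^{-n/2} = e^{-n/4}$, proving \eqref{eqn:least_eigenvalue_claim2} (up to the factor $2$ that absorbs both tails). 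The lower eigenvalue bound \eqref{eqn:least_eigenvalue_claim} is more delicate because the Davidson--Szarek estimate $s_{\min}(\bX_t) \geq \sqrt{n} - \sqrt{p} - s$ degenerates as $\sqrt{p}$ approaches $\sqrt{n}$. I would instead combine a covering argument on the unit sphere with a direct $\chi^2$ Chernoff bound: for each unit $u$, $\|\bX_t u\|_2^2 \sim \chi^2_n$ and the Chernoff calculation gives $\bbP(\chi^2_n \leq cn) \leq \exp(-\tfrac{n}{2}[\log(1/c) + c - 1])$ for $c \in (0, 1)$. Fixing an $\epsilon$-net $\cN \subset S^{p-1}$ with $|\cN| \leq (3/\epsilon)^p$, union-bounding over $\cN$, and combining with the elementary net-approximation inequality $s_{\min}(\bX_t) \geq \min_{u \in \cN}\|\bX_t u\|_2 - \epsilon\, s_{\max}(\bX_t)$ and the upper bound $s_{\max}(\bX_t) \leq 3\sqrt{n}$ already secured in the preceding step, one can tune $c$ and $\epsilon$ so that $s_{\min}(\bX_t) \geq \sqrt{n}/3$, i.e.\ $\lambda_{\min}(\bX_t^\top \bX_t) \geq n/9$, with per-$t$ failure probability $\leq e^{-n/2}$. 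A final union bound over $t \in [T]$ then yields \eqref{eqn:least_eigenvalue_claim}.

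The main obstacle will be calibrating the net radius $\epsilon$ against the covering entropy $p\log(3/\epsilon)$ and the Chernoff exponent so that, after also accounting for the $\log T$ arising from the outer union bound over batches, the total failure probability still collapses to $e^{-n/4}$. This is the step where the hypothesis $p \vee (4\log T) \leq n$ is used in its sharp form: the net-approximation slack must be kept smaller than $\sqrt{n}/\mathrm{const}$ while simultaneously forcing the Chernoff exponent to dominate $p\log(3/\epsilon) + \log T + n/2$, and the bookkeeping of these competing scales is the one nontrivial piece of the argument, whereas every other step is routine Gaussian concentration.
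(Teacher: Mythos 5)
Your treatment of the maximum-eigenvalue bound \eqref{eqn:least_eigenvalue_claim2} is fine and uses the same Davidson--Szarek ingredient that underlies the paper's citation. For the minimum-eigenvalue bound \eqref{eqn:least_eigenvalue_claim}, however, the paper does no re-derivation at all: it cites equations (59) and (60) of \citet{wainwright2009sharp} as black-box per-batch tails $\bbP\{\lambda_{\min}(\bX_t^\top\bX_t) \leq n/9\} \leq 2e^{-n/2}$ and $\bbP\{\lambda_{\max}(\bX_t^\top\bX_t) \geq 9n\} \leq 2e^{-n/2}$, and then performs only the $\log T$ union bound, which you also carry out. Your net-plus-Chernoff replacement for the lower tail is a genuinely different route, and it has a genuine gap.

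The calibration you defer as ``the one nontrivial piece'' does not in fact close under the hypothesis $p \leq n$. Following your chain, $s_{\min}(\bX_t) \geq \min_{u\in\cN}\|\bX_t u\|_2 - \epsilon\, s_{\max}(\bX_t)$ with $s_{\max}(\bX_t) \leq 3\sqrt n$ on a good event, so you need $\min_{u\in\cN}\|\bX_t u\|_2 \geq \delta\sqrt n$ with $\delta = 1/3 + 3\epsilon$. The $\chi^2$-Chernoff exponent is $\frac{n}{2}\,g(\delta)$ with $g(\delta) = -2\log\delta + \delta^2 - 1$. Union-bounding over a net of cardinality $(3/\epsilon)^p$ and requiring the result to beat $e^{-n/2}$ forces $p\log(3/\epsilon) \leq \frac{n}{2}(g(\delta)-1)$, which needs $g(\delta) > 1$, hence $\delta < 0.4$, hence $\epsilon < 0.02$; optimizing $\epsilon$ then gives room only for $p \lesssim n/50$. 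This is not a loose calculation on your side: for $p$ comparable to $n$ (say $p = n/2$), the typical value of $\lambda_{\min}(\bX_t^\top\bX_t)$ is about $n(1-1/\sqrt 2)^2 \approx 0.086\,n < n/9$, so the event in \eqref{eqn:least_eigenvalue_claim} is not small and no argument can deliver the claim under $p \leq n$ alone. A from-scratch proof therefore requires a strictly stronger aspect-ratio condition than \eqref{assume:least_eigenvalue_lemma} states; you should check the precise hypothesis attached to Wainwright's equations before attempting to re-derive them, noting that in the paper's downstream use assumption (\textbf{EX}) in fact forces $p = O(\sqrt n)$, so the discrepancy is harmless in context but fatal to your proposed route.
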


\begin{proof}
By the equation (60) in \cite{wainwright2009sharp} and $p \leq n$, we have, for $t \in [T]$,
\begin{align*}
    \bbP \left\{ 
    \lambda_{\min} \left( \sum_{i \in I_{t}}  X_{i} X_{i}^{\top} \right) 
    \leq \dfrac{1}{9} n
    \right\}
    \leq 2e^{-n/2}.
\end{align*}
It follows that
\begin{align*}
    &\bbP \left\{ 
    \lambda_{\min} \left( \sum_{i \in I_{t}}  X_{i} X_{i}^{\top} \right) 
    \leq \dfrac{1}{9} n \
    \text{ for some } t \in [T]
    \right\} \\
    &\leq  
    T \cdot
    \max_{t \in [T]}  \bbP \left\{ 
    \lambda_{\min} \left( \sum_{i \in I_{t}}  X_{i} X_{i}^{\top} \right) 
    \leq \dfrac{1}{9} n
    \right\} 
    \leq 2e^{-n/2 + \log T}
    \overset{\eqref{assume:least_eigenvalue_lemma}}{\leq} 2e^{-n/4},
\end{align*}
completing the proof of \eqref{eqn:least_eigenvalue_claim}. 

The proof of \eqref{eqn:least_eigenvalue_claim2} is similar. 
By the equation (59) in \cite{wainwright2009sharp} and $p \leq n$, we have, for $t \in [T]$,
\begin{align*}
    \bbP \left\{ 
    \lambda_{\max} \left( \sum_{i \in I_{t}}  X_{i} X_{i}^{\top} \right) 
    \geq 9n
    \right\}
    \leq 2e^{-n/2}.
\end{align*}
It follows that
\begin{align*}
    &\bbP \left\{ 
    \lambda_{\max} \left( \sum_{i \in I_{t}}  X_{i} X_{i}^{\top} \right) 
    \geq 9 n \
    \text{ for some } t \in [T]
    \right\} \\
    &\leq  
    T \cdot
    \max_{t \in [T]}  \bbP \left\{ 
    \lambda_{\max} \left( \sum_{i \in I_{t}}  X_{i} X_{i}^{\top} \right) 
    \geq 9 n
    \right\}  
    \leq 2e^{-n/2 + \log T}
    \overset{\eqref{assume:least_eigenvalue_lemma}}{\leq} 2e^{-n/4},
\end{align*}
which completes the proof of \eqref{eqn:least_eigenvalue_claim2}.
\end{proof}

\begin{lemma} \label{lemma:design_row_norm}
We have
\begin{align} \label{eqn:design_row_norm_claim}
    \bbP \biggl\{ 
    \max_{i \in [N], j \in [p]} |X_{i,j}|
    >
    2 \sqrt{\log(Np)}
    \biggr\}
    \leq 2(Np)^{-1}
\end{align}
and
\begin{align} \label{eqn:design_row_norm_claim2}
    &\bbP \biggl\{ 
    \max_{i \in [N]} \left\| X_{i} \right\|_{2}^{2}
    > 
    4p\log(Np)
    \biggr\}
    \leq 2(Np)^{-1},
\end{align}
where $X_{i} = (X_{ij})_{j \in [p]} \in \bbR^{p}$.
\end{lemma}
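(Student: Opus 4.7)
The plan is to reduce both statements to a one-dimensional Gaussian tail bound combined with a union bound. Since each entry $X_{ij}$ is a standard normal, the classical Mills-type inequality gives
\[
\bbP\bigl(|X_{ij}| > t\bigr) \leq 2 e^{-t^{2}/2}, \quad \forall t > 0.
\]
Choosing $t = 2\sqrt{\log(Np)}$ yields $2e^{-t^{2}/2} = 2(Np)^{-2}$, so a union bound across the $Np$ coordinates gives
\[
\bbP\Bigl(\max_{i \in [N], j \in [p]} |X_{ij}| > 2\sqrt{\log(Np)}\Bigr) \leq Np \cdot 2(Np)^{-2} = 2(Np)^{-1},
\]
which is exactly the first claim \eqref{eqn:design_row_norm_claim}.

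For the second claim \eqref{eqn:design_row_norm_claim2}, the idea is to avoid a fresh $\chi^{2}$-concentration argument and simply observe that the first event controls the second deterministically. On the event
\[
\cE = \Bigl\{ \max_{i \in [N], j \in [p]} |X_{ij}| \leq 2\sqrt{\log(Np)} \Bigr\},
\]
we have, for every $i \in [N]$,
\[
\|X_{i}\|_{2}^{2} = \sum_{j=1}^{p} X_{ij}^{2} \leq p \cdot \bigl(2\sqrt{\log(Np)}\bigr)^{2} = 4p\log(Np).
\]
Therefore $\cE \subseteq \{\max_{i \in [N]} \|X_{i}\|_{2}^{2} \leq 4p\log(Np)\}$, and taking complements gives
\[
\bbP\Bigl( \max_{i \in [N]} \|X_{i}\|_{2}^{2} > 4p\log(Np) \Bigr) \leq \bbP(\cE^{\rm c}) \leq 2(Np)^{-1}.
\]

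There is no real obstacle in this argument; both inequalities reduce to the sub-Gaussian tail of $\cN(0,1)$ together with a union bound, and the second follows from the first without invoking sharper $\chi^{2}$ concentration. Note that the bound $4p\log(Np)$ is intentionally loose (the expectation $\bbE \|X_{i}\|_{2}^{2} = p$), which is precisely what makes this reduction possible and keeps the proof elementary.
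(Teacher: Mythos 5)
Your proposal is correct and matches the paper's argument essentially line for line: the one-dimensional Gaussian tail bound plus a union bound over $Np$ entries gives \eqref{eqn:design_row_norm_claim}, and \eqref{eqn:design_row_norm_claim2} follows deterministically from the bound $\|X_i\|_2^2 \leq p\max_{j}|X_{ij}|^2$ on the complementary event. (Incidentally, your statement of this last inequality is cleaner than the paper's, which has a minor typographical slip where the square on $|X_{ij}|$ is dropped.)
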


\begin{proof}
Since $X_{ij} \overset{\iid}{\sim} \cN \left( 0, 1 \right)$, we have, for all $\omega \geq 0$, $i \in [N]$ and $j \in [p]$,
\begin{align*}
    \bbP \bigg( 
    |X_{ij}| > \omega
    \bigg) \leq 2 \exp \left( - \dfrac{\omega^2}{2} \right).
\end{align*}
It follows that 
\begin{align*}
    \bbP \bigg( 
    \max_{i \in [N], j \in [p]} |X_{ij}| > \omega
    \bigg) \leq  2Np \exp \left( - \dfrac{\omega^2}{2} \right).
\end{align*}
By taking $\omega = 2\sqrt{\log (Np)}$, we complete the proof of \eqref{eqn:design_row_norm_claim}.

Also, on the same event where the following inequality holds:
\begin{align*}
    \max_{i \in [N], j \in [p]} |X_{ij}| \leq 2\sqrt{\log (Np)},
\end{align*}
we have
\begin{align*}
    \max_{i \in [N]} \left\| X_{i} \right\|_{2}^{2} 
    \leq p \max_{i \in [N], j \in [p]} |X_{ij}|
    \leq p \left( 2\sqrt{\log (Np)} \right)^2.
\end{align*}
This completes the proof of \eqref{eqn:design_row_norm_claim2}.
\end{proof}

\begin{lemma} \label{lemma:GLM_b_ratio}
Let $b(\cdot) = \log(1 + \exp(\cdot))$.
Then, 
\begin{align*}
    \dfrac{b''\left( \eta_{1} \right)}{b''\left( \eta_{2} \right)}
    \leq e^{ 3\left| \eta_1 - \eta_2 \right|}, \quad \forall \eta_{1}, \eta_{2} \in \bbR.
\end{align*}
\end{lemma}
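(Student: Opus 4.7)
The plan is to work directly with the explicit form of $b''$ and reduce the problem to bounding a one-dimensional Lipschitz constant. Writing $\sigma(\eta) = e^\eta/(1+e^\eta)$ for the logistic sigmoid, one has $b''(\eta) = \sigma(\eta)(1-\sigma(\eta)) = e^\eta/(1+e^\eta)^2$, so the logarithm $\log b''(\eta) = \eta - 2\log(1+e^\eta)$ is smooth on $\bbR$, and
\[
\frac{d}{d\eta} \log b''(\eta) = 1 - 2\sigma(\eta).
\]
Since $\sigma(\eta) \in (0,1)$ for every $\eta \in \bbR$, the derivative is bounded in absolute value by $1$.

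By the fundamental theorem of calculus applied between $\eta_2$ and $\eta_1$, it then follows that
\[
\bigl| \log b''(\eta_1) - \log b''(\eta_2) \bigr|
\leq \int_{\min(\eta_1,\eta_2)}^{\max(\eta_1,\eta_2)} |1 - 2\sigma(\eta)|\, \rmd\eta
\leq |\eta_1 - \eta_2|.
\]
Exponentiating yields $b''(\eta_1)/b''(\eta_2) \leq e^{|\eta_1-\eta_2|} \leq e^{3|\eta_1-\eta_2|}$, which is the claimed bound (with plenty of slack in the constant $3$).

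This argument involves no real obstacle; it is a straightforward one-line calculus estimate. The only thing worth noting is that the stated constant $3$ is very loose for the logistic case, but having the constant as $3$ (rather than $1$) is harmless and is presumably chosen for uniformity with analogous smoothness bounds used elsewhere in the paper for other components of the link function. If a direct computation is preferred over the logarithmic-derivative route, one may instead write $b''(\eta_1)/b''(\eta_2) = e^{\eta_1-\eta_2}\bigl((1+e^{\eta_2})/(1+e^{\eta_1})\bigr)^2$ and bound the ratio $(1+e^{\eta_2})/(1+e^{\eta_1})$ by $\max(1, e^{\eta_2-\eta_1})$, which again gives the factor $e^{|\eta_1-\eta_2|}$.
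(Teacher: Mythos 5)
Your proof is correct, and your main argument (bounding the derivative of $\log b''$ and applying the fundamental theorem of calculus) is a genuinely different and cleaner route than the paper's. The paper proceeds by direct algebraic manipulation: it factors $b''(\eta_1)/b''(\eta_2) = e^{\eta_1-\eta_2}\bigl((1+e^{\eta_2})/(1+e^{\eta_1})\bigr)^2$, then bounds the inner ratio by writing it as $1 + e^{\eta_1}(e^{\eta_2-\eta_1}-1)/(1+e^{\eta_1}) \leq e^{|\eta_1-\eta_2|}$, squares, and multiplies by $e^{\eta_1-\eta_2} \leq e^{|\eta_1-\eta_2|}$ to obtain the stated $e^{3|\eta_1-\eta_2|}$. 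Your logarithmic-derivative argument avoids the factor-by-factor bookkeeping and yields the sharper constant $e^{|\eta_1-\eta_2|}$ in one step, since $|\tfrac{d}{d\eta}\log b''(\eta)| = |1-2\sigma(\eta)| < 1$. Your closing remark is also on point: the paper's looser constant $3$ arises from bounding $(1+e^{\eta_2})/(1+e^{\eta_1})$ by $e^{|\eta_1-\eta_2|}$ symmetrically in both directions (even when the ratio is at most $1$); tracking the sign as you do in your alternate route recovers $e^{|\eta_1-\eta_2|}$ via the same algebraic decomposition. Either constant suffices for the downstream use in Lemma \ref{lemma:least_eigenvalue_logit}.
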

\begin{proof}
Let $\eta_1, \eta_2 \in \bbR$. 
Since $b''(\eta) = e^{\eta} / \left( 1 + e^{\eta} \right)^2$ for $\eta \in \bbR$, note that
\begin{align*}
\dfrac{b''\left( \eta_{1} \right)}{b''\left( \eta_{2} \right)}
= 
e^{\eta_1 - \eta_{2}}
\bigg(
\dfrac{
    1 + e^{\eta_2}
}{
    1 + e^{\eta_1}
}
\bigg)^2.
\end{align*}
Also,
\begin{align*}
\dfrac{ 1 + e^{\eta_2} }{ 1 + e^{\eta_1} }
= 1 + \dfrac{ e^{\eta_2} - e^{\eta_1} }{ 1 + e^{\eta_1} }
= 1 +  \dfrac{ e^{\eta_1} \left( e^{\eta_2 - \eta_1} - 1 \right) }{ 1 + e^{\eta_1} }
\leq  1 +  e^{|\eta_2 - \eta_1|} - 1 
= e^{|\eta_1 - \eta_2|}.
\end{align*}
It follows that 
\begin{align*}
\dfrac{b''\left( \eta_{1} \right)}{b''\left( \eta_{2} \right)} 
\leq  e^{\eta_1 - \eta_{2}} \times e^{2|\eta_1 - \eta_{2}|}  
\leq  e^{3|\eta_1 - \eta_{2}|},
\end{align*}
which completes the proof.
\end{proof}

\begin{lemma} \label{lemma:least_eigenvalue_logit}
For $\tau > 0$, suppose that
\begin{align} \label{assume:least_eigenvalue_logit}
    n \geq C \bigg( \log T \vee \big[ p \log \big( \tau^{2} p \log N \big) \big] \bigg)
\end{align}
for a large enough universal constant $C > 0$.
Then, 
\begin{align} \label{eqn:least_eigenvalue_logit}
    \dfrac{n}{1080 e^{2(\tau + 1)}} 
    \leq \min_{t \in T} \inf_{\theta \in \Theta (\bI_{p}, \tau)}  \lambda_{\min} \left( \bF_{t, \theta} \right) 
    \leq \max_{t \in T} \sup_{\theta \in \Theta} \lambda_{\max} \left( \bF_{t, \theta} \right) 
    \leq \dfrac{9}{4} n
\end{align}
with $\bbP$-probability at least $1 - 6e^{-n/72} - 2(Np)^{-1}$.
\end{lemma}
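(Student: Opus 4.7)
The plan is to handle the upper and lower eigenvalue bounds separately, with the lower bound carrying the real work. For the upper bound I would simply use the pointwise inequality $b''(x) = e^x/(1+e^x)^2 \le 1/4$ for all $x \in \bbR$, giving $\bF_{t,\theta} = \sum_{i \in I_t} b''(X_i^\top \theta) X_i X_i^\top \preceq \tfrac14 \sum_{i \in I_t} X_i X_i^\top$ uniformly in $\theta$. Then \eqref{eqn:least_eigenvalue_claim2} of Lemma~\ref{lemma:random_matrix_eigenvalue} immediately delivers $\lambda_{\max}(\bF_{t,\theta}) \le 9n/4$ uniformly in $t$ and $\theta$ on an event of probability at least $1 - 2e^{-n/4}$.

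For the lower bound the plan is a covering-plus-concentration argument that propagates a pointwise estimate at net points to every $\theta$ via Lemma~\ref{lemma:GLM_b_ratio}. I would set $\epsilon = (6\sqrt{p \log(Np)})^{-1}$ and choose a minimal Euclidean $\epsilon$-net $\cN_\epsilon \subset \Theta(\bI_p, \tau)$ with $|\cN_\epsilon| \le (3\tau/\epsilon)^{p}$. On the event from Lemma~\ref{lemma:design_row_norm} that $\max_i \|X_i\|_2 \le 2\sqrt{p \log(Np)}$ (probability $\ge 1 - 2(Np)^{-1}$), any $\theta \in \Theta(\bI_p, \tau)$ has a nearest $\hat\theta \in \cN_\epsilon$ with $|X_i^\top(\theta - \hat\theta)| \le \|X_i\|_2 \epsilon \le 1/3$, so Lemma~\ref{lemma:GLM_b_ratio} yields $b''(X_i^\top \theta) \ge e^{-1} b''(X_i^\top \hat\theta)$ and hence $\bF_{t,\theta} \succeq e^{-1} \bF_{t,\hat\theta}$. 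It therefore suffices to lower-bound $\lambda_{\min}(\bF_{t,\hat\theta})$ uniformly in $(\hat\theta, t) \in \cN_\epsilon \times [T]$.

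For the pointwise bound at $\hat\theta$ and a fixed unit vector $v$, I would decompose $X_i^\top v = \alpha G_i + \beta G_i'$ with $G_i = X_i^\top \hat\theta / \|\hat\theta\|_2$, independent $G_i' \sim \cN(0,1)$, and $\alpha^2 + \beta^2 = 1$, giving
\[
 \bbE\bigl[b''(X_i^\top \hat\theta)(X_i^\top v)^2\bigr]
 = \alpha^2\, \bbE\bigl[b''(\|\hat\theta\|_2 G)\,G^2\bigr] + \beta^2\, \bbE\bigl[b''(\|\hat\theta\|_2 G)\bigr].
\]
Restricting both expectations to $\{|G| \le 1\}$, on which $|\|\hat\theta\|_2 G| \le \tau$ forces $b''(\|\hat\theta\|_2 G) \ge b''(\tau+1) \ge e^{-(\tau+1)}/4$ by Lemma~\ref{lemma:GLM_b_ratio} applied at $(\eta_1,\eta_2) = (0,\tau+1)$, yields $\lambda_{\min}(\bbE \bF_{t,\hat\theta}) \ge c_0\, n e^{-(\tau+1)}$ for a universal $c_0$. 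Because each summand is positive and sub-exponential with $\psi_1$-norm of order one, sub-exponential Bernstein gives $\bbP\bigl( v^\top \bF_{t,\hat\theta} v \le \tfrac12 \bbE\, v^\top \bF_{t,\hat\theta} v\bigr) \le 2\exp(-c_1 n e^{-2(\tau+1)})$. Taking a $(1/2)$-net of the unit sphere (size $\le 5^p$) upgrades this to a bound on $\lambda_{\min}(\bF_{t,\hat\theta})$, and a union bound over $\cN_\epsilon$ and $t \in [T]$ produces failure probability $\le 2 T (15\tau/\epsilon)^p \exp(-c_1 n e^{-2(\tau+1)})$, which is $\le 2 e^{-n/72}$ once $C$ in \eqref{assume:least_eigenvalue_logit} is chosen large enough (with $e^{2(\tau+1)}$ absorbed into $C$ since $\tau$ is effectively a constant in the intended application). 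Combining with the $e^{-1}$ transfer from the previous paragraph delivers the claimed $\lambda_{\min}(\bF_{t,\theta}) \ge n/(1080\, e^{2(\tau+1)})$ after aggregating the failure events $2e^{-n/4}$, $2(Np)^{-1}$, and the three $2e^{-n/72}$ contributions.

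The hard part will be the concentration step: sub-exponential Bernstein concentrates only at the slow rate $\exp(-c\, n e^{-2(\tau+1)})$, so $\epsilon$ must be chosen finely enough for Lemma~\ref{lemma:GLM_b_ratio} to be useful on the event from Lemma~\ref{lemma:design_row_norm}, yet coarsely enough that the covering cost $p \log(\tau/\epsilon) \asymp p \log(\tau \sqrt{p \log N})$ stays within the $p \log(\tau^2 p \log N)$ budget of \eqref{assume:least_eigenvalue_logit}. Matching the explicit numerical factor $1/1080$ is then a matter of carefully tracking the constants in the $\bbE[G^2 \mathds{1}\{|G| \le 1\}]$ integral and in the Bernstein bound.
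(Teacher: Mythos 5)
The overall structure — uniform upper bound via $b'' \le 1/4$ and Lemma~\ref{lemma:random_matrix_eigenvalue}, then a covering argument over $\Theta(\bI_p,\tau)$ with Lemmas~\ref{lemma:design_row_norm} and \ref{lemma:GLM_b_ratio} to transfer a net bound to all of $\Theta(\bI_p,\tau)$ — matches the paper. Your upper-bound argument and the net-transfer step are fine. But the pointwise lower bound at each $\hat\theta$ is done by a genuinely different mechanism, and that is where the gap is.

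You lower-bound $\lambda_{\min}(\bF_{t,\hat\theta})$ by computing the mean of $v^\top \bF_{t,\hat\theta} v$, getting $\asymp n\,e^{-(\tau+1)}$, and then concentrating around this mean with sub-exponential Bernstein. Because each summand $b''(X_i^\top\hat\theta)(X_i^\top v)^2$ has $\psi_1$-norm of order $1$ while the target deviation is of order $n\,e^{-(\tau+1)}$, the Bernstein rate you obtain is $\exp(-c\,n\,e^{-2(\tau+1)})$. After the union bound over the $5^p$-net of the sphere, the $(3\tau/\epsilon)^p$-net of $\Theta(\bI_p,\tau)$, and $t \in [T]$, matching the claimed failure probability $e^{-n/72}$ requires
\[
\log T + p\log 5 + p\log(3\tau/\epsilon) \;\le\; c\,n\,e^{-2(\tau+1)} - \dfrac{n}{72},
\]
which forces $c\,e^{-2(\tau+1)} > 1/72$, i.e.\ $\tau$ bounded by a universal constant. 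The hypothesis \eqref{assume:least_eigenvalue_logit} only scales $n$ logarithmically in $\tau$, so for larger $\tau$ no universal $C$ makes your inequality hold. You recognize this and propose absorbing $e^{2(\tau+1)}$ into $C$, but that changes the lemma: the statement declares $C$ universal and the probability bound $1 - 6e^{-n/72} - 2(Np)^{-1}$ is $\tau$-independent.

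The paper avoids this entirely by a different decomposition that moves the $\tau$-dependence from the concentration rate to a deterministic prefactor. Rather than concentrating the $b''$-weighted quadratic form, it restricts the sum to the random index set $\cI_2(\hat\theta,t) = \{i \in I_t : |X_i^\top\hat\theta| \le 2(\tau+1)\}$, writes $\bF_{t,\theta} \succeq e^{-3\epsilon\max_i\|X_i\|_2}\,b''\bigl(2(\tau+1)\bigr)\sum_{i \in \cI_2} X_iX_i^\top$, and then needs only two $\tau$-independent facts: a Chernoff bound that $|\cI_2| \ge n/6$ (each $i$ lands in $\cI_2$ with probability $\ge 1/3$ independent of $\tau$, since $\|\hat\theta\|_2 \le \tau+1$), and the Gaussian lower-eigenvalue bound from Lemma~\ref{lemma:random_matrix_eigenvalue} applied on the subset. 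The factor $b''(2(\tau+1))$ is pure arithmetic and contributes the $e^{-2(\tau+1)}$ to the final bound with no cost in probability. To salvage your approach you would need either to strengthen the hypothesis to $n \gtrsim e^{2(\tau+1)}(\cdots)$ or weaken the conclusion to a $\tau$-dependent failure probability; as written it does not prove the lemma.
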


\begin{proof}
For $t \in [T]$ and $\theta \in \Theta$, note that
\begin{align*}
    \bF_{t, \theta} = \sum_{i \in I_{t}} \left[ b''\left( X_i^{\top} \theta \right) X_{i} X_{i}^{\top} \right].
\end{align*}
For $\tau > 0$ and $\epsilon \in (0, 1)$, let $\widehat{\Theta}_{\epsilon, \tau}$ be the $\epsilon$-cover of $\Theta (\bI_{p}, \tau)$. One can choose $\widehat{\Theta}_{\epsilon, \tau}$ so that $|\widehat{\Theta}_{\epsilon, \tau}| \leq (3\tau/\epsilon)^{p}$; see Proposition 1.3 of Section 15 in \cite{lorentz1996constructive}. Let $\theta \in \Theta (\bI_{p}, \tau)$. By the definition of $\widehat{\Theta}_{\epsilon, \tau}$, there exists $\widehat{\theta} (\theta) \in \widehat{\Theta}_{\epsilon, \tau}$ such that $\| \theta - \widehat{\theta} \|_{2} \leq \epsilon$.
For $\omega \geq 0$, let 
\begin{align*}
    \cI_{\omega}(\widehat{\theta}, t) = \cI_{\omega}(\widehat{\theta}, t, \tau) = \left\{ i \in I_{t} : \big| X_i^{\top} \widehat{\theta} \big| \leq \omega (\tau + 1) \right\}.
\end{align*}
Note that
\begin{align}
\begin{aligned} \label{eqn:least_eigenvalue_logit_eq1}
    \lambda_{\min} \left( \bF_{t, \theta} \right)
    &= 
    \lambda_{\min} \left( \sum_{i \in I_{t}} b'' ( X_i^{\top} \theta ) X_{i} X_{i}^{\top} \right)
    = 
    \lambda_{\min} \left( \sum_{i \in I_{t}} \dfrac{b''( X_i^{\top} \theta )}{b''( X_i^{\top} \widehat{\theta} )} b'' ( X_i^{\top} \widehat{\theta} ) X_{i} X_{i}^{\top} \right) \\    
    &\geq 
    \Bigg[ \min_{i \in [N]} \dfrac{b''( X_i^{\top} \theta )}{b''( X_i^{\top} \widehat{\theta} )} \Bigg]
    \lambda_{\min} \left( \sum_{i \in \cI_{\omega}(\widehat{\theta}, t)} b'' ( X_i^{\top} \widehat{\theta} ) X_{i} X_{i}^{\top} \right) \\
    \overset{\text{Lemma \ref{lemma:GLM_b_ratio}}}&{\geq}
    \exp\left( -3 \big\| \theta - \widehat{\theta} \big\|_{2} \max_{i \in [N]} \left\| X_{i} \right\|_{2}  \right)
    \lambda_{\min} \left( \sum_{i \in \cI_{\omega}(\widehat{\theta}, t)} b'' ( X_i^{\top} \widehat{\theta} ) X_{i} X_{i}^{\top} \right) \\    
    &\geq
    \exp\left( -3 \epsilon \cdot \max_{i \in [N]} \left\| X_{i} \right\|_{2}  \right)
    b''\big( \omega (\tau + 1) \big)  \lambda_{\min} \left( \sum_{i \in \cI_{\omega}(\widehat{\theta}, t)} X_{i} X_{i}^{\top} \right)
\end{aligned}
\end{align}
where the last inequality holds by the symmetry and monotonicity of $b''(\cdot)$ in the logistic regression model. 

First, for $\widehat{\theta} \in \widehat{\Theta}_{\epsilon, \tau}$ and $t \in [T]$,
we will prove that $| \cI_{2}(\widehat{\theta}, t) | \geq n/6$ with high probability.
Since $X_i^{\top} \widehat{\theta} \sim \cN(0, \| \widehat{\theta} \|_{2}^2)$ and 
\begin{align*}
    \| \widehat{\theta} \|_{2} 
    \leq \left\| \theta \right\|_{2} + \| \theta - \widehat{\theta} \|_{2}   
    \leq \tau + \epsilon \leq \tau + 1,
\end{align*}
we have, for $i \in I_{t}$,
\begin{align*}
    \bbP \bigg( \big| X_i^{\top} \widehat{\theta} \: \big| > \omega' (\tau + 1) \bigg)
    \leq 
    \bbP \bigg( \big| X_i^{\top} \widehat{\theta} \: \big| > \omega' \| \widehat{\theta} \|_{2} \bigg) \leq 2e^{-(\omega')^2/2}, \quad \forall \omega' \geq 0.
\end{align*}
By taking $\omega' = 2$, we have
\begin{align*}
    \bbP \bigg( \big| X_i^{\top} \widehat{\theta} \ \big| \leq 2(\tau + 1) \bigg) \geq 1 - 2e^{-2} \geq \dfrac{1}{3}.
\end{align*}
We will utilize the Chernoff-type left tail inequality (see Section 2.3 in \cite{vershynin2018high}). Let $S_n = \sum_{i=1}^{n} Z_i$,
where $Z_i \overset{\iid}{\sim} \operatorname{Bernoulli}(\eta)$ for some $\eta \in (0, 1)$. Then, for any $\delta \in (0, 1)$,
\begin{align*}
    \bbP \biggl\{ S_n \leq (1 - \delta) \eta n \biggr\} \leq \exp \left( - \dfrac{\delta^2}{3} \eta n \right).
\end{align*}
By taking $\delta = 1/2$ and $\eta = 1/3$ in the above display, we have, for $\widehat{\theta} \in \widehat{\Theta}_{\epsilon, \tau}$ and $t \in [T]$,
\begin{align*} 
    \bbP \bigg( |\cI_{2}(\widehat{\theta}, t)| \leq \dfrac{n}{6} \bigg) \leq e^{-n/36}.
\end{align*}
By taking $\epsilon = (4 \sqrt{p \log (Np)})^{-1}$, it follows that
\begin{align} 
\begin{aligned} \label{eqn:least_eigenvalue_logit_eq2}
    \bbP \left( \min_{\widehat{\theta} \in \widehat{\Theta}_{\epsilon, \tau}} \min_{t \in [T]} 
    |\cI_{2}(\widehat{\theta}, t)| \leq \dfrac{n}{6} \right) 
    &\leq (3\tau/\epsilon)^{p} \cdot T \cdot e^{-n/36} \\
    &= \exp\bigg( \dfrac{p}{2} \log(144\tau^{2}p \log(Np)) + \log T - \dfrac{n}{36} \bigg)
    \overset{\eqref{assume:least_eigenvalue_logit}}&{\leq} e^{-n/72}. 
\end{aligned}
\end{align}
Let
\begin{align*}
    \Omega_{n, 1} &= \biggl\{ |\cI_{2}(\widehat{\theta}, t)| \geq \dfrac{1}{6} n
        \ \text{ for all } t \in [T]  \text{ and } \widehat{\theta} \in \widehat{\Theta}_{\epsilon, \tau} \biggr\}, \\
    \Omega_{n, 2} &= \Biggl\{ 
        \lambda_{\min} \left( \sum_{i \in \cI_{2}(\widehat{\theta}, t)}  X_{i} X_{i}^{\top} \right) \geq \dfrac{1}{9} \left| \cI_{2} (\widehat{\theta}, t) \right| 
        \ \text{ for all } t \in [T] \text{ and } \widehat{\theta} \in \widehat{\Theta}_{\epsilon, \tau} \Biggr\}, \\
    \Omega_{n, 3} &= \Biggl\{ 
        \max_{i \in [N]} \left\| X_{i} \right\|_{2} \leq 2 \sqrt{p \log(Np)}   
    \Biggr\}.
\end{align*}
Since $p \vee 4\log T \leq n/6 \leq |\cI_{2}(\widehat{\theta}, t)|$ for all $t \in [T]$ on $\Omega_{n, 1}$, we can apply the results of Lemma \ref{lemma:random_matrix_eigenvalue} on $\Omega_{n, 1}$.
By the equation \eqref{eqn:least_eigenvalue_logit_eq2}, Lemmas \ref{lemma:random_matrix_eigenvalue} and \ref{lemma:design_row_norm},
\begin{align*}
\bbP \bigl\{ \Omega_{n, 1}^{\rm c} \bigr\} &\leq e^{-n/72}, \\
\bbP \bigl\{ \Omega_{n, 3}^{\rm c} \bigr\} &\leq 2(Np)^{-1} \\
\bbP \bigl\{ \Omega_{n, 2}^{\rm c} \mid \Omega_{n, 1}  \bigr\} 
&\leq (3\tau/\epsilon)^{p} \times 2e^{-(n/6)/4}
= 2\exp\left( -n/24 + \dfrac{p}{2} \log(144\tau^{2}p \log(Np)) \right) \\
\overset{\eqref{assume:least_eigenvalue_logit}}&{\leq} 2e^{-n/48}.
\end{align*}
By $1 - x \geq e^{-2x}$ and $e^{-y} \geq 1 - y$ for $x \in [0, 0.795]$ and $y \in \bbR$, note that
\begin{align*}
\bbP \bigl\{ \Omega_{n} \bigr\}
&\geq
1  
- \bbP \bigl\{ \Omega_{n, 1}^{\rm c} \bigr\}
- \bbP \bigl\{ \Omega_{n, 2}^{\rm c} \bigr\}
- \bbP \bigl\{ \Omega_{n, 3}^{\rm c} \bigr\} \\
&\geq 
1  
- 2\bbP \bigl\{ \Omega_{n, 1}^{\rm c} \bigr\}
- \bbP \bigl\{ \Omega_{n, 2}^{\rm c} \mid \Omega_{n, 1} \bigr\} 
- \bbP \bigl\{ \Omega_{n, 3}^{\rm c} \bigr\} \\
&\geq
1 - 2e^{-n/72} - 2e^{-n/48} - 2(Np)^{-1}  \\
&\geq 1 - 4e^{-n/72} - 2(Np)^{-1},
\end{align*}
where $\Omega_{n} = \Omega_{n, 1} \cap \Omega_{n, 2} \cap \Omega_{n, 3}$. 
On $\Omega_n$, therefore, we have
\begin{align*}
\min_{\theta \in \Theta_{n, \tau}} \min_{t \in [T]} \lambda_{\min} \left( \bF_{t, \theta} \right) 
\overset{\eqref{eqn:least_eigenvalue_logit_eq1}}&{\geq}
    \exp\left( -3 \epsilon \cdot \max_{i \in [N]} \left\| X_{i} \right\|_{2}  \right)
    b''\big( 2 (\tau + 1) \big)  
    \left( \dfrac{1}{9} \times \dfrac{n}{6} \right) \\
&\geq 
e^{-3/2} \times
\dfrac{ \exp\left( 2 (\tau + 1) \right) }{ \big[ 1 + \exp\left( 2 (\tau + 1) \right) \big]^2 } \times
\dfrac{n}{54} \\
&\geq 
\dfrac{n}{1080 e^{2(\tau + 1)}},
\end{align*}
where the third inequality holds by $e^{-3/2} \geq 1/5$ and $e^{x}/(1 + e^{x})^2 \geq 1/(4e^{x})$ for $x \geq 0$. 

The proof of the third inequality in \eqref{eqn:least_eigenvalue_logit} is simple. Since $b''(\cdot) \leq b''(0) = 1/4$, with $\bbP$-probability at least $1 - 2e^{-n/4}$, 
\begin{align*}
    \lambda_{\max} \left( \bF_{t, \theta} \right)
    = \lambda_{\max} \left( \sum_{i = 1}^{n} \left[ b''\left( X_i^{\top} \theta \right) X_{i} X_{i}^{\top} \right] \right) 
    \leq \dfrac{1}{4} \lambda_{\max} \left( \sum_{i = 1}^{n} X_{i} X_{i}^{\top} \right)
    \leq \dfrac{9}{4} n,
\end{align*}
where the second inequality holds by Lemma \ref{lemma:random_matrix_eigenvalue}. This completes the proof of \eqref{eqn:least_eigenvalue_logit}.
\end{proof}

The following proposition verifies \eqref{assume:A1_2}, \eqref{assume:A2_1} and \eqref{assume:A1ast_2}. 

\begin{proposition} \label{prop:eigenvalues_logit}
Suppose that (\textbf{EX}) holds.
Then, the following inequalities hold with $\bbP$-probability at least $1 - 4e^{-n/72} -2(Np)^{-1}$:
\begin{align} 
\label{eqn:A2_1_logit}
    \widetilde{K}_{\min} n
    \leq \min_{t \in T} \inf_{ \theta \in \Theta (\theta_0, \bI_{p}, 1/2)} \lambda_{\min} \left( \bF_{t, \theta} \right) 
    \leq \max_{t \in T} \sup_{ \theta \in \Theta} \lambda_{\max} \left( \bF_{t, \theta} \right) 
    \leq \widetilde{K}_{\max} n, \\
    \label{eqn:A1_2_logit} 
    \max_{t \in [T]} \sup_{\theta \in \Theta \left(\theta_0, \bI_{p}, 1/2 \right)} 
    \Bigg(
    \left\| \Fisher[t]{\theta}^{-1} \left( \dfrac{\bX_{t}^{\top} \bX_{t}}{4} \right) \right\|_{2}
    \vee
    \left\| \FullFisher[t]{\theta}^{-1} \left( \dfrac{\bX_{1:t}^{\top} \bX_{1:t}}{4} \right) \right\|_{2}
    \Bigg)
    \leq \dfrac{M^2}{9},
\end{align}    
where $\widetilde{K}_{\min}$ and $M$ are constants depending only on $K_1$, and $\widetilde{K}_{\max}$ is a universal constant.
\end{proposition}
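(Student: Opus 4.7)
The proof reduces to combining Lemma~\ref{lemma:least_eigenvalue_logit} with Lemma~\ref{lemma:random_matrix_eigenvalue} and some elementary operator-norm bookkeeping, so I only sketch the plan. Under (\textbf{EX}), $\|\theta_0\|_2 \leq K_1$ gives the inclusion $\Theta(\theta_0, \bI_p, 1/2) \subseteq \Theta(\bI_p, K_1 + 1/2)$, so I apply Lemma~\ref{lemma:least_eigenvalue_logit} with $\tau = K_1 + 1/2$. Its sample-size hypothesis \eqref{assume:least_eigenvalue_logit} reads $n \gtrsim \log T \vee [p \log(\tau^2 p \log N)]$, and since $\log(\tau^2 p \log N) \lesssim \log p + \log\log N$, this is dominated by the sample-size condition in (\textbf{EX}). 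Lemma~\ref{lemma:least_eigenvalue_logit} then delivers the lower bound in \eqref{eqn:A2_1_logit} with $\widetilde{K}_{\min} = 1/(1080\, e^{2K_1+3})$ depending only on $K_1$. For the upper bound in \eqref{eqn:A2_1_logit}, the pointwise inequality $b''(\cdot)\leq 1/4$ gives $\bF_{t,\theta} \preceq \bX_t^\top \bX_t/4$ for \emph{every} $\theta \in \Theta$, and Lemma~\ref{lemma:random_matrix_eigenvalue} supplies $\lambda_{\max}(\bX_t^\top \bX_t) \leq 9n$ uniformly in $t \in [T]$, so $\widetilde{K}_{\max} = 9/4$ (universal) works.

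For \eqref{eqn:A1_2_logit}, submultiplicativity of the spectral norm yields, uniformly over $\theta \in \Theta(\theta_0,\bI_p,1/2)$ and $t \in [T]$,
\[
\bigl\|\Fisher[t]{\theta}^{-1}\bigl(\bX_t^\top \bX_t/4\bigr)\bigr\|_2
\;\leq\;
\bigl\|\Fisher[t]{\theta}^{-1}\bigr\|_2 \cdot \bigl\|\bX_t^\top \bX_t/4\bigr\|_2
\;\leq\;
\frac{9n/4}{\widetilde{K}_{\min}\, n}
\;=\;
\frac{9}{4\widetilde{K}_{\min}}.
\]
For the cumulative version I use superadditivity of $\lambda_{\min}$ and subadditivity of $\lambda_{\max}$ on sums of PSD matrices:
\[
\lambda_{\min}\bigl(\FullFisher[t]{\theta}\bigr) \;\geq\; \sum_{s=1}^{t} \lambda_{\min}\bigl(\Fisher[s]{\theta}\bigr) \;\geq\; \widetilde{K}_{\min}\, n t,
\qquad
\lambda_{\max}(\bX_{1:t}^\top \bX_{1:t}) \;\leq\; \sum_{s=1}^{t} \lambda_{\max}(\bX_s^\top \bX_s) \;\leq\; 9 n t,
\]
so the same ratio $9/(4\widetilde{K}_{\min})$ also bounds $\|\FullFisher[t]{\theta}^{-1}(\bX_{1:t}^\top \bX_{1:t}/4)\|_2$. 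Setting $M^2 = 81/(4\widetilde{K}_{\min})$ (a constant depending only on $K_1$) completes \eqref{eqn:A1_2_logit}.

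The final step is a union bound over the two events from Lemmas~\ref{lemma:least_eigenvalue_logit} and~\ref{lemma:random_matrix_eigenvalue}. Both are already stated uniformly in $t \in [T]$, so no additional $T$-factor appears, and the tail $2 e^{-n/4}$ from Lemma~\ref{lemma:random_matrix_eigenvalue} is absorbed into a multiple of $e^{-n/72}$, producing the stated probability $1 - 4 e^{-n/72} - 2(Np)^{-1}$. There is no genuine technical obstacle: the substantive work (an $\epsilon$-net over $\Theta(\bI_p,\tau)$ coupled with a Chernoff lower bound on the number of indices for which $|X_i^\top \widehat\theta|$ stays bounded, so that $b''(X_i^\top\theta)$ is uniformly bounded below) is entirely inside Lemma~\ref{lemma:least_eigenvalue_logit}; everything in this proposition is bookkeeping on top of that lemma.
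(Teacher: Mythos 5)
Your proposal is correct and follows essentially the same route as the paper: the inclusion $\Theta(\theta_0,\bI_p,1/2)\subseteq\Theta(\bI_p,K_1+1/2)$, Lemma~\ref{lemma:least_eigenvalue_logit} with $\tau=K_1+1/2$ for \eqref{eqn:A2_1_logit}, and the ratio $\lambda_{\max}(\bX_t^\top\bX_t/4)/\lambda_{\min}(\Fisher[t]{\theta})$ via Lemma~\ref{lemma:random_matrix_eigenvalue} for \eqref{eqn:A1_2_logit}. You are in fact slightly more careful than the paper in spelling out the super/subadditivity argument needed for the cumulative Fisher matrices $\FullFisher[t]{\theta}$, which the paper glosses over, and the small mismatch in the probability constant ($4e^{-n/72}$ vs.\ $8e^{-n/72}$) is an internal inconsistency of the paper's own bookkeeping, not a gap in your argument.
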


\begin{proof}
    Since $\| \theta_0 \|_2 \leq K_1$, we have
    \begin{align*}
        \Theta (\theta_0, \bI_{p}, 1/2) \subseteq \Theta(\bI_p, K_1 + 1/2).
    \end{align*}
    By Lemma \ref{lemma:least_eigenvalue_logit} with $\tau = K_1 + 1/2$, \eqref{eqn:A2_1_logit} holds with the constants
    \begin{align*}
        \widetilde{K}_{\min} = \dfrac{1}{1080 e^{ 2K_1 + 3}}, \quad  \widetilde{K}_{\max} = \dfrac{9}{4},
    \end{align*}
    with $\bbP$-probability at least $1 - 8e^{-n/72} -4(Np)^{-1}$.    

    By the last display and Lemma \ref{lemma:random_matrix_eigenvalue}, on the same event on which \eqref{eqn:A2_1_logit} holds, we have
    \begin{align*}
        &\max_{t \in [T]} \sup_{\theta \in \Theta \left(\theta_0, \bI_{p}, 1/2 \right)} \left\| \Fisher[t]{\theta}^{-1} \left( \bX_{t}^{\top} \bX_{t}/4 \right) \right\|_{2}  \\
        &\leq 
        \bigg[ \min_{t \in [T]} \inf_{\theta \in \Theta \left(\theta_0, \bI_{p}, 1/2 \right)} \lambda_{\min} \left( \Fisher[t]{\theta} \right) \bigg]^{-1} 
        \bigg[ \max_{t \in [T]} \left( \bX_{t}^{\top} \bX_{t}/4 \right)  \bigg] \\
        &\leq 
        \left( 1080 e^{ 2K_1 + 3} \right) \left( \dfrac{9}{4} \right) = 2430 e^{ 2K_1 + 3}.
    \end{align*}
    Also,
    \begin{align*}
        &\max_{t \in [T]} \sup_{\theta \in \Theta \left(\theta_0, \bI_{p}, 1/2 \right)} \left\| \FullFisher[t]{\theta}^{-1} \left( \bX_{1:t}^{\top} \bX_{1:t}/4 \right) \right\|_{2}  \\
        &\leq 
        \bigg[ \min_{t \in [T]} \inf_{\theta \in \Theta \left(\theta_0, \bI_{p}, 1/2 \right)} \lambda_{\min} \left( \FullFisher[t]{\theta} \right) \bigg]^{-1} 
        \bigg[ \max_{t \in [T]} \left( \bX_{1:t}^{\top} \bX_{1:t}/4 \right)  \bigg] \\
        &\leq 
        \left( 1080 e^{ 2K_1 + 3} \right) \left( \dfrac{9}{4} \right) = 2430 e^{ 2K_1 + 3} 
        = \dfrac{ \big(  27\sqrt{30} e^{K_1 + 3/2} \big)^2  }{9}.
    \end{align*}
    The last two displays complete the proof of \eqref{eqn:A1_2_logit} by taking $M = 27\sqrt{30} e^{K_1 + 3/2}$.     
\end{proof}

\begin{lemma} \label{lemma:logit_V_verification}
    For any $\omega \geq 0$ and $t \in [T]$, we have
    \begin{align*}
        \bbP_{0, t} \left( \left\| \FisherBest^{-1/2} \nabla \zeta_{t} \right\|_{2} \geq 
        \sqrt{\operatorname{tr}\left( \FisherBest^{-1} \dfrac{\bX_{t}^{\top} \bX_{t}}{4} \right)} + \sqrt{2 \left\| \FisherBest^{-1} \dfrac{\bX_{t}^{\top} \bX_{t}}{4} \right\|_{2} \omega} \ \middle| \ \bX
        \right) \leq e^{-\omega},
    \end{align*}
\end{lemma}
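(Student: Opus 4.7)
The plan is to express the $t$-th score as a linear functional of independent centered Bernoulli increments, establish a Gaussian-like conditional moment generating function bound, and then apply a sub-Gaussian quadratic-form tail inequality. Since $\nabla L_t(\theta_0) = \sum_{i \in I_t} (Y_i - b'(X_i^\top \theta_0)) X_i$ and $\bbE_t[Y_i \mid \bX] = b'(X_i^\top \theta_0)$, I would first write
\begin{align*}
\nabla \zeta_t = \bX_t^\top \xi, \qquad \xi_i = Y_i - b'(X_i^\top \theta_0), \quad i \in I_t,
\end{align*}
so that, conditionally on $\bX$, the entries $\xi_i$ are independent, mean-zero, and supported on an interval of length $1$.

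By Hoeffding's lemma, $\bbE[e^{\lambda \xi_i} \mid \bX] \leq e^{\lambda^2/8}$; tensorizing and then plugging in $v = \lambda \bX_t \FisherBest^{-1/2} u$ for arbitrary $u \in \bbR^p$ gives
\begin{align*}
\bbE \left[ \exp \left( \lambda u^\top \FisherBest^{-1/2} \nabla \zeta_t \right) \ \middle| \ \bX \right]
\leq \exp \left( \tfrac{\lambda^2}{2} u^\top \bSigma u \right),
\end{align*}
where $\bSigma := \FisherBest^{-1/2} \left( \bX_t^\top \bX_t / 4 \right) \FisherBest^{-1/2}$. Hence $Z := \FisherBest^{-1/2} \nabla \zeta_t$ is conditionally sub-Gaussian with covariance proxy $\bSigma$, and by the cyclic property of the trace $\operatorname{tr}(\bSigma) = \operatorname{tr}(\FisherBest^{-1} \bX_t^\top \bX_t / 4)$, while $\|\bSigma\|_2 = \|\FisherBest^{-1} \bX_t^\top \bX_t / 4\|_2$.

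For the final step I would invoke a sub-Gaussian quadratic-form concentration of Hsu--Kakade--Zhang type, namely
\begin{align*}
\bbP \left( \|Z\|_2^2 > \operatorname{tr}(\bSigma) + 2\sqrt{\operatorname{tr}(\bSigma^2) \omega} + 2\|\bSigma\|_2 \omega \ \middle| \ \bX \right) \leq e^{-\omega}.
\end{align*}
Combining $\operatorname{tr}(\bSigma^2) \leq \|\bSigma\|_2 \operatorname{tr}(\bSigma)$ with the elementary inequality $a + 2\sqrt{ab\omega} + 2b\omega \leq (\sqrt{a} + \sqrt{2b\omega})^2$ (valid for $a, b, \omega \geq 0$) lets one replace the right-hand side by $(\sqrt{\operatorname{tr}(\bSigma)} + \sqrt{2\|\bSigma\|_2 \omega})^2$, and taking square roots delivers the claimed inequality.

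The main conceptual obstacle is obtaining the Gaussian-style scaling $\sqrt{\operatorname{tr}(\bSigma)}$ rather than the cruder $\sqrt{p\|\bSigma\|_2}$ that a straightforward $\epsilon$-net union bound would produce. If the quoted quadratic-form inequality is not taken as a black box, one can derive it in a few lines via the Gaussian integral representation $e^{\lambda \|z\|_2^2/2} = (\lambda / 2\pi)^{p/2} \int_{\bbR^p} e^{\alpha^\top z - \|\alpha\|_2^2 / (2\lambda)}\, d\alpha$, which combined with the sub-Gaussian MGF bound and Fubini reduces $\bbE[e^{\lambda \|Z\|_2^2/2} \mid \bX]$ to a Gaussian integral evaluating to $\det(\bI_p - \lambda \bSigma)^{-1/2}$ (for $\lambda < \|\bSigma\|_2^{-1}$), after which the Chernoff method produces the stated tail.
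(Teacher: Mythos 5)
Your proof is correct and follows essentially the same route as the paper's: both reduce the score to $\bX_t^\top \cE_t$ with centered, independent, bounded (hence $\mathrm{SubG}(1/4)$) Bernoulli residuals, invoke the Hsu--Kakade--Zhang sub-Gaussian quadratic-form tail (the paper cites this as Theorem 1 of Hsu et al.\ via its Lemma \ref{lemma:tech_subGaussian_Chaos}), and close with the elementary bound $\operatorname{tr}(\bSigma) + 2\sqrt{\operatorname{tr}(\bSigma^2)\omega} + 2\|\bSigma\|_2\omega \le (\sqrt{\operatorname{tr}(\bSigma)} + \sqrt{2\|\bSigma\|_2\omega})^2$. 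The only cosmetic difference is that you establish the sub-Gaussianity of the residual vector directly from Hoeffding's lemma and apply the quadratic-form inequality to the $p$-dimensional transformed vector $Z = \FisherBest^{-1/2}\nabla\zeta_t$ with covariance proxy $\bSigma$, whereas the paper cites Rigollet's Lemma 6.1 for the sub-Gaussianity and applies the quadratic-form bound to the $n$-dimensional residual vector $\cE_t$ with $\bA = \bX_t\FisherBest^{-1}\bX_t^\top$; the two viewpoints are algebraically identical.
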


\begin{proof}
Let $t \in [T]$ and $\cE_{t} = (\epsilon_{i})_{i \in I_{t}} \in \bbR^{n}$, where $\epsilon_{i} = Y_i - \bbE_{t} (Y_i \mid \bX)$.
Note that
\begin{align*}
    \nabla \zeta_{t} = \sum_{i \in I_{t}} \epsilon_{i} x_{i} = \bX_{t}^{\top} \cE_{t}, \quad \sup_{\eta \in \bbR} b''(\eta) \leq b''(0) = 1/4.
\end{align*}
By Lemma 6.1 in \cite{rigollet2012kullback}, we have, for any $u = (u_i)_{i \in [n]} \in \bbR^{n}$ with $\| u \|_{2} = 1$,
\begin{align*}
    u^{\top} \cE_{t} = \sum_{i = 1}^{n} u_{i} \epsilon_{n(t-1) + i} \sim \text{subG} \left( 1/4 \right), \quad \forall t \in [T],
\end{align*}
which is equivalent to $\cE_{t} \sim \text{subG} \left( 1/4 \right)$ for all $t \in [T]$. 
By the last display and $\bbE_{t} (\cE_{t} \mid \bX) = 0$ for all $t \in [T]$, we can apply Hanson-Wright inequality. 
By Lemma \ref{lemma:tech_subGaussian_Chaos}, we have, for any $\omega \geq 0$,
\begin{align*}
    \bbP_{0, t} \left( \left\| \FisherBest^{-1/2} \bX_{t}^{\top} \cE_{t} \right\|_{2}^{2} \geq 
    \dfrac{1}{4} \bigg[  \operatorname{tr}\left( \mathbf{B}_{t} \right) 
    + 2\sqrt{\operatorname{tr}\left( \mathbf{B}_{t}^{2} \right) \omega } 
    + 2\left\| \mathbf{B}_{t} \right\|_{2} \omega \bigg] \ \middle| \ \bX
    \right) 
    \leq e^{-\omega}, 
\end{align*}
where $\mathbf{B}_{t} = \FisherBest^{-1/2} \left( \bX_{t}^{\top} \bX_{t} \right) \FisherBest^{-1/2}$.
Since 
\begin{align*}
    \operatorname{tr}\left( \mathbf{B}_{t} \right) 
    + 2\sqrt{\operatorname{tr}\left( \mathbf{B}_{t}^{2} \right) \omega }
    + 2\left\| \mathbf{B}_{t} \right\|_{2} \omega
    &=
    \operatorname{tr}\left( \mathbf{B}_{t} \right) 
    + 2\left\| \mathbf{B}_{t} \right\|_{\rm F} \omega^{1/2} 
    + 2\left\| \mathbf{B}_{t} \right\|_{2} \omega \\
    &\leq 
    \bigg[ \sqrt{\operatorname{tr}\left( \mathbf{B}_{t} \right)} + \sqrt{2 \omega \left\| \mathbf{B}_{t} \right\|_{2}} \bigg]^{2}
\end{align*}
for any $\omega \geq 0$, we have
\begin{align*}
    \bbP_{0, t} \left( \left\| \FisherBest^{-1/2} \bX_{t}^{\top} \cE_{t} \right\|_{2} \geq 
    \dfrac{1}{2} \bigg[ \sqrt{\operatorname{tr}\big( \mathbf{B}_{t} \big)} + \sqrt{2 \omega \left\| \mathbf{B}_{t} \right\|_{2}} \bigg] \ \middle| \ \bX
    \right) 
    \leq e^{-\omega}.
\end{align*}
The above display is equivalent to
\begin{align*}
    \bbP_{0, t} 
    \left( 
    \left\| \FisherBest^{-1/2} \bX_{t}^{\top} \cE_{t} \right\|_{2} \geq 
    \sqrt{\operatorname{tr}\big( \widetilde{\mathbf{B}}_{t} \big)} + \sqrt{2 \omega \big\| \widetilde{\mathbf{B}}_{t} \big\|_{2}} \ \middle| \ \bX
    \right) 
    \leq e^{-\omega},    
\end{align*}
where $\widetilde{\mathbf{B}}_{t} = \FisherBest^{-1} \left( \bX_{t}^{\top} \bX_{t} / 4 \right)$.
This completes the proof.
\end{proof}

In the following Proposition \ref{prop:logit_V_verification}, we demonstrate that \eqref{assume:A1_1} and \eqref{assume:A1ast_1} are satisfied with the specified matrices $\bV_{t} = \bX_{t}^{\top} \bX_{t}/4$ and $\bV_{1:t} = \bX_{1:t}^{\top} \bX_{1:t}/4$, respectively. 

\begin{proposition} \label{prop:logit_V_verification}
    Suppose that (\textbf{EX}) holds. Then, with $\bbP$-probability at least $1 - 3n^{-1} - 6e^{-n/72} -2(Np)^{-1}$, the following inequalities hold uniformly for all $t \in [T]$:
    \begin{align*} 
        \left\| \FisherBest^{-1/2} \nabla \zeta_{t} \right\|_{2} 
        &\leq 
        r \big( \FisherBest, \ \bX_{t}^{\top} \bX_{t}/4, \ \log n + \log T \big),
        \\
        \left\| \FullFisherTilde[t]{\fullthetaBest}^{-1/2} \nabla \zeta_{1:t} \right\|_{2}  
        &\leq 
        r \big( \FullFisherTilde[t]{\fullthetaBest}, \ \bX_{1:t}^{\top} \bX_{1:t}/4, \ \log n + \log T \big), \\
        \left\| \FullFisher[t]{\theta_{0}}^{-1/2} \nabla \zeta_{1:t} \right\|_{2} 
        &\leq 
        r \big( \FullFisher[t]{\theta_{0}}, \ \bX_{1:t}^{\top} \bX_{1:t}/4, \ \log n + \log T \big),
    \end{align*}
    where
    \begin{align*}
        r(\bF, \bV, \omega)
        = 
        \sqrt{\operatorname{tr}\left( \bF^{-1} \bV \right)} + \sqrt{2 \omega \left\| \bF^{-1} \bV \right\|_{2} }, \quad \bF, \bV \in \symmPD, \ \omega \in \bbR_{+}. 
    \end{align*}
\end{proposition}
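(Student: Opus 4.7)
The plan is to extend the single-minibatch argument of Lemma \ref{lemma:logit_V_verification} to (i) the cumulative score $\nabla \zeta_{1:t}$ with two different normalizations, and (ii) uniformly over $t \in [T]$, via a union bound. The crucial observation is that, conditional on the design matrix $\bX$, the matrices $\FisherBest$, $\FullFisherTilde[t]{\fullthetaBest[t]}$, and $\FullFisher[t]{\theta_0}$ are all deterministic: this holds because $\thetaBest$ and $\fullthetaBest$ are defined as maximizers of expectations of penalized log-likelihoods, whose Hessians $\bF_{t,\theta} = \sum_{i \in I_t} b''(X_i^\top\theta) X_i X_i^\top$ depend only on $\bX$. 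Hence conditioning on $\bX$ reduces each score-norm bound to a quadratic form in a sub-Gaussian noise vector.

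First, I would handle the cumulative-batch case. Writing $\nabla \zeta_{1:t} = \bX_{1:t}^\top \cE_{1:t}$ where $\cE_{1:t} = (Y_i - b'(X_i^\top\theta_0))_{i \in I_{1:t}}$ is a vector of independent, mean-zero, $\operatorname{subG}(1/4)$ coordinates (by Lemma 6.1 of \citet{rigollet2012kullback}, as invoked in Lemma \ref{lemma:logit_V_verification}), I would apply the Hanson--Wright inequality (Lemma \ref{lemma:tech_subGaussian_Chaos}) conditionally on $\bX$ with
\[
\mathbf{B} \;=\; \tfrac14\,\bF^{-1/2}\bigl(\bX_{1:t}^\top \bX_{1:t}\bigr)\bF^{-1/2}, \qquad \bF \in \bigl\{\FullFisherTilde[t]{\fullthetaBest[t]},\; \FullFisher[t]{\theta_{0}}\bigr\}.
\]
Repeating the algebraic simplification used in Lemma \ref{lemma:logit_V_verification},
\[
\operatorname{tr}(\mathbf{B}) + 2\|\mathbf{B}\|_{\mathrm F}\sqrt{\omega} + 2\|\mathbf{B}\|_2\,\omega \;\leq\; \bigl(\sqrt{\operatorname{tr}(\mathbf{B})} + \sqrt{2\omega\|\mathbf{B}\|_2}\bigr)^2,
\]
yields the conditional tail bound
\[
\bbP\!\left(\bigl\|\bF^{-1/2}\nabla \zeta_{1:t}\bigr\|_2 \geq r\bigl(\bF,\,\bX_{1:t}^\top\bX_{1:t}/4,\,\omega\bigr) \,\Big|\, \bX\right) \leq e^{-\omega}.
\]
Lemma \ref{lemma:logit_V_verification} already supplies the analogous bound for the minibatch score.

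Next, setting $\omega = \log n + \log T$ and taking a union bound over $t \in [T]$ and over the three choices of normalization, the conditional failure probability is bounded by $3T \cdot e^{-\log n - \log T} = 3n^{-1}$; integrating out $\bX$ preserves this rate. Finally, to absorb the exceptional event on which the design matrix is ill-behaved (needed so that the bounds $r(\bF,\bV,\omega)$ appearing in the conclusion are meaningful and so that the Fisher information quantities fit the framework of Proposition \ref{prop:eigenvalues_logit}), I would intersect with the high-probability event from Lemma \ref{lemma:least_eigenvalue_logit} and \ref{lemma:design_row_norm}, contributing the additional $6e^{-n/72} + 2(Np)^{-1}$ term.

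The main obstacle is minor but worth flagging: it is the justification that conditioning on $\bX$ is legitimate, namely that $\thetaBest[t]$ and $\fullthetaBest[t]$ are $\sigma(\bX)$-measurable rather than data-dependent through $\bY$. This follows immediately from the stochastic linearity of the logistic model (since $\bbE_{t} L_t(\theta)$ depends on $\bY$ only through $\bbE_t(Y_i\mid\bX) = b'(X_i^\top\theta_0)$), but it must be verified carefully so that the Hanson--Wright application is valid. Once this is settled, the rest is bookkeeping.
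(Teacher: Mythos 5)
Your proposal is correct and takes essentially the same route as the paper: Hanson--Wright (Lemma~\ref{lemma:tech_subGaussian_Chaos}) applied conditionally to the sub-Gaussian noise vector $\cE_{1:t}$, the algebraic completion $\operatorname{tr}(\mathbf{B}) + 2\|\mathbf{B}\|_{\rm F}\sqrt{\omega} + 2\|\mathbf{B}\|_2\omega \leq (\sqrt{\operatorname{tr}(\mathbf{B})} + \sqrt{2\omega\|\mathbf{B}\|_2})^2$, a union bound over $t \in [T]$ with $\omega = \log n + \log T$, and intersection with the good-design event to handle the unpenalized Fisher information $\FullFisher[t]{\theta_0}$. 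The probability bookkeeping also lines up: three union bounds give $3n^{-1}$ and the design event gives $6e^{-n/72} + 2(Np)^{-1}$, matching the paper.

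One imprecision worth correcting in your ``crucial observation'': $\thetaBest$ and hence $\FisherBest$ are \emph{not} $\sigma(\bX)$-measurable, because $\thetaBest$ maximizes $\bbE_t \widetilde{L}_t(\theta)$, whose penalty $-\tfrac12\|\bOmega_{t-1}^{1/2}(\theta-\mu_{t-1})\|_2^2$ depends on $\bD_{1:t-1}$ (so on $\bY_{1:t-1}$) through the variational parameters $(\mu_{t-1},\bOmega_{t-1})$. Stochastic linearity only tells you that $\bbE_t L_t$ does not depend on $\bY_t$; it says nothing about the penalty's dependence on earlier batches. The correct $\sigma$-field is $\sigma(\bX, \bD_{1:t-1})$, with respect to which $\FisherBest$ is measurable and $\nabla\zeta_t$ is still a centered sub-Gaussian quadratic form (since $\cE_t \perp \bD_{1:t-1}$ given $\bX$). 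This slip is inconsequential in your proposal --- you cite Lemma~\ref{lemma:logit_V_verification} for the first assertion, and for the second and third assertions the relevant matrices $\FullFisherTilde[t]{\fullthetaBest}$ and $\FullFisher[t]{\theta_0}$ involve only the fixed initial prior $(\mu_0,\bOmega_0)$ and $\theta_0$, so there the $\sigma(\bX)$-measurability claim genuinely holds --- but the justification as stated is wrong and would mislead a reader attempting to reprove the first assertion from scratch.
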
 

\begin{proof}
The proof of the first assertion directly follows from Lemma \ref{lemma:logit_V_verification}. Note that
\begin{align*}
    &\bbP \left(
    \left\| \FisherBest^{-1/2} \nabla \zeta_{t} \right\|_{2} 
    \geq 
    \sqrt{\operatorname{tr}\left( \FisherBest^{-1} \dfrac{\bX_{t}^{\top} \bX_{t}}{4} \right)} + \sqrt{2 \left\| \FisherBest^{-1} \dfrac{\bX_{t}^{\top} \bX_{t}}{4} \right\|_{2} (\log n + \log T)} \
    \text{ for some } t \in [T]
    \ \middle| \ \bX
    \right) \\
    &\leq 
    T \cdot \max_{t \in [T]} \bbP_{t} \left(
    \left\| \FisherBest^{-1/2} \nabla \zeta_{t} \right\|_{2} 
    \geq 
    \sqrt{\operatorname{tr}\left( \FisherBest^{-1} \dfrac{\bX_{t}^{\top} \bX_{t}}{4} \right)} + \sqrt{2 \left\| \FisherBest^{-1} \dfrac{\bX_{t}^{\top} \bX_{t}}{4} \right\|_{2} (\log n + \log T)} 
    \ \middle| \ \bX
    \right) \\
    &\leq 
    T \cdot e^{-\log n - \log T} = n^{-1}.
\end{align*}
By integrating over the values of $\bX$, we have
\begin{align*}
    &\bbP \left(
    \left\| \FisherBest^{-1/2} \nabla \zeta_{t} \right\|_{2} 
    \geq 
    \sqrt{\operatorname{tr}\left( \FisherBest^{-1} \dfrac{\bX_{t}^{\top} \bX_{t}}{4} \right)} + \sqrt{2 \left\| \FisherBest^{-1} \dfrac{\bX_{t}^{\top} \bX_{t}}{4} \right\|_{2} (\log n + \log T)} \
    \text{ for some } t \in [T]
    \right) \\
    &\leq n^{-1},
\end{align*}
which completes the proof of the first assertion.

The proof of the second assertion is similar to that of Lemma  \ref{lemma:logit_V_verification}. Hence, we will provide the sketch of the proof.
Let $t \in [T]$ and $\cE_{1:t} = (\epsilon_{i})_{i \in I_{1:t}} \in \bbR^{nt}$.
Note that $\nabla \zeta_{1:t} = \bX_{1:t}^{\top} \cE_{1:t}$.
By Lemma \ref{lemma:tech_subGaussian_Chaos}, we have, for any $\omega \geq 0$,
\begin{align*}
    \bbP \left( \left\| \FullFisherTilde[t]{\fullthetaBest[t]}^{-1/2} \bX_{1:t}^{\top} \cE_{1:t} \right\|_{2}^{2} 
    \geq 
    \dfrac{1}{4} 
    \bigg[  
    \operatorname{tr}\big( \widetilde{\mathbf{B}}_{1:t} \big) + 2 \sqrt{\operatorname{tr}\big( \widetilde{\mathbf{B}}_{1:t}^{2} \big) \omega}
    + \big\| \widetilde{\mathbf{B}}_{1:t} \big\|_{2} \omega 
    \bigg] \ \middle| \ \bX
    \right) 
    \leq e^{-\omega}, 
\end{align*}
where $\widetilde{\mathbf{B}}_{1:t} = \FullFisherTilde[t]{\fullthetaBest[t]}^{-1/2} \left( \bX_{1:t}^{\top} \bX_{1:t} \right) \FullFisherTilde[t]{\fullthetaBest[t]}^{-1/2}$.
Since 
\begin{align*}
    \operatorname{tr}\big( \widetilde{\mathbf{B}}_{1:t} \big) 
    + 2 \sqrt{\operatorname{tr}\big( \widetilde{\mathbf{B}}_{1:t}^{2} \big) \omega}
    + \left\| \widetilde{\mathbf{B}}_{1:t} \right\|_{2} \omega
    \leq 
    \bigg[ \sqrt{\operatorname{tr}\big( \widetilde{\mathbf{B}}_{1:t} \big)} + 
    \sqrt{2 \omega \big\| \widetilde{\mathbf{B}}_{1:t} \big\|_{2}} \bigg]^{2},
\end{align*}
for any $\omega \geq 0$, we have
\begin{align*}
    \bbP \left( \left\| \FullFisherTilde[t]{\fullthetaBest[t]}^{-1/2} \bX_{1:t}^{\top} \cE_{1:t} \right\|_{2} \geq 
    \dfrac{1}{2} 
    \bigg[ \sqrt{\operatorname{tr}\big( \widetilde{\mathbf{B}}_{1:t} \big)} + 
    \sqrt{2 \omega \big\| \widetilde{\mathbf{B}}_{1:t} \big\|_{2}} \bigg]
    \ \middle| \ \bX
    \right) 
    \leq e^{-\omega}.
\end{align*}
It follows that
\begin{align*}
    &\bbP \left( \left\| \FullFisherTilde[t]{\fullthetaBest[t]}^{-1/2} \bX_{1:t}^{\top} \cE_{1:t} \right\|_{2} \geq 
    \dfrac{1}{2} 
    \bigg[ \sqrt{\operatorname{tr}\big( \widetilde{\mathbf{B}}_{1:t} \big)} + 
    \sqrt{2 \big\| \widetilde{\mathbf{B}}_{1:t} \big\|_{2} (\log n + \log T)}  \bigg]
    \text{ for some } t \in [T]
    \ \middle| \ \bX
    \right) \\
    &\leq 
    T \cdot e^{- \log n - \log T} = n^{-1}.
\end{align*}
By integrating over the values of $\bX$, we have
\begin{align*}
    &\bbP \left( \left\| \FullFisherTilde[t]{\fullthetaBest[t]}^{-1/2} \bX_{1:t}^{\top} \cE_{1:t} \right\|_{2} \geq 
    \dfrac{1}{2} 
    \bigg[ \sqrt{\operatorname{tr}\big( \widetilde{\mathbf{B}}_{1:t} \big)} + 
    \sqrt{2 \big\| \widetilde{\mathbf{B}}_{1:t} \big\|_{2} (\log n + \log T)}  \bigg]
    \text{ for some } t \in [T]
    \right) \\
    &\leq 
    n^{-1},
\end{align*}
which completes the proof of the second assertion.

Next, we will prove the third assertion, which is similar to the second assertion. 
Let $\widetilde{\scrE}$ be an event where $\bF_{1:t, \theta_0}$ is nonsingular.
By Lemma \ref{lemma:logit_V_verification}, there exists an event $\scrE$ such that on $\scrE$, $\bF_{1:t, \theta_0}$ is nonsingular and 
\begin{align*}
    \bbP \big( \scrE \big) \geq 1 - 6e^{-n/72} - 2(Np)^{-1}.
\end{align*}
Hence, we have
\begin{align*}
    \bbP \big( \widetilde{\scrE} \big) 
    \geq
    \bbP \big( \scrE \big) 
    \geq 1 - 6e^{-n/72} - 2(Np)^{-1}.
\end{align*}
In this proof, we denote
\begin{align*}
    R_{n, T}(\mathbf{B}_{1:t}) = 
    \dfrac{1}{2} 
    \bigg( \sqrt{\operatorname{tr}\left( \mathbf{B}_{1:t} \right)} + 
    \sqrt{2 \left\| \mathbf{B}_{1:t} \right\|_{2} (\log n + \log T)}  \bigg),
\end{align*}
where $\mathbf{B}_{1:t} = \FullFisher[t]{\theta_{0}}^{-1/2} \left( \bX_{1:t}^{\top} \bX_{1:t} \right) \FullFisher[t]{\theta_{0}}^{-1/2}$. 
It follows from Lemma \ref{lemma:tech_subGaussian_Chaos} that
\begin{align*}
    \bbP \left( \left\| \FullFisher[t]{\theta_{0}}^{-1/2} \bX_{1:t}^{\top} \cE_{1:t} \right\|_{2} \geq 
    R_{n, T}(\mathbf{B}_{1:t}) \
    \text{ for some } t \in [T]
    \ \middle| \ \bX, \ \widetilde{\scrE}
    \right) 
   \leq n^{-1},
\end{align*}
Consequently, we have
\begin{align*}
    &\bbP \left( \left\| \FullFisher[t]{\theta_{0}}^{-1/2} \bX_{1:t}^{\top} \cE_{1:t} \right\|_{2} \geq R_{n, T}(\mathbf{B}_{1:t})  \
    \text{ for some } t \in [T]
    \right) \\
    &\leq
    \bbE_{\bX} \bigg[ \bbP \left( \left\| \FullFisher[t]{\theta_{0}}^{-1/2} \bX_{1:t}^{\top} \cE_{1:t} \right\|_{2} \geq 
    R_{n, T}(\mathbf{B}_{1:t}) \
    \text{ for some } t \in [T]
    \ \middle| \ \bX, \ \widetilde{\scrE}
    \right) \mathds{1}_{\widetilde{\scrE}}
    \bigg] + \bbP \big( \widetilde{\scrE}^{\rm c} \big) \\   
    &\leq
    n^{-1} + \bbP \big( \scrE^{\rm c} \big) 
    \leq  
    n^{-1} + 6e^{-n/72} + 2(Np)^{-1}.
\end{align*}
This completes the proof of the third assertion.
Therefore, with $\bbP$-probability at least $1 - 3n^{-1} - 6e^{-n/72} - 2(Np)^{-1}$, the three assertions hold uniformly for all $t \in [T]$. 
\end{proof}

The following proposition verifies \eqref{assume:A2_2} and \eqref{assume:A2_3}.

\begin{proposition} \label{prop:operator_norm_logit}
Suppose that (\textbf{EX}) holds.
Then, 
\begin{align} \label{eqn:A2_23_logit}
\bbP \Bigg(
    \bigg[
    \max_{t \in [T]} \sup_{\theta \in \Theta} \left\| \nabla^{3} L_{t}(\theta) \right\|_{\rm op} \bigg] \vee
    \bigg[ \max_{t \in [T]} \sup_{\theta \in \Theta} \left\| \nabla^{4} L_{t}(\theta) \right\|_{\rm op} \bigg] \leq \widetilde{K}_{\max}' n
    \Bigg) \geq 1 - 2n^{-1},
\end{align} 
where $\widetilde{K}_{\max}' > 0$ is a universal constant.
\end{proposition}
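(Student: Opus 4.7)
The plan is to reduce the problem to a sup over the unit sphere of sums of powers of $X_i^\top u$ that is independent of $\theta$, and then to control this sup by concentration together with an $\epsilon$-net argument.

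\textbf{Reduction to an expression independent of $\theta$.} For the logistic link $b(\eta) = \log(1 + e^\eta)$, an elementary computation shows that $|b^{(k)}(\eta)| \leq c_k$ for all $\eta \in \bbR$, where $c_k$ is a universal constant (for $k = 2, 3, 4$). Combined with $\nabla^k L_t(\theta) = -\sum_{i \in I_t} b^{(k)}(X_i^\top \theta) X_i^{\otimes k}$ and the symmetric-tensor identity \eqref{def:3_4_smooth_equiv}, this gives
\[
\sup_{\theta \in \Theta} \| \nabla^k L_t(\theta) \|_{\rm op} \leq c_k \sup_{\|u\|_2 = 1} \sum_{i \in I_t} |X_i^\top u|^k, \qquad k = 3, 4,
\]
and the right-hand side no longer depends on $\theta$. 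Hence the task reduces to showing $\sup_{u \in S^{p-1},\, t \in [T]} \sum_{i \in I_t} |X_i^\top u|^k \leq C n$ with probability at least $1 - 2n^{-1}$.

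\textbf{Concentration on a net plus Lipschitz extension.} For fixed $u \in S^{p-1}$ and $t \in [T]$, the variables $Z_i := X_i^\top u \sim N(0,1)$ are iid, so $|Z_i|^k$ are iid sub-exponential with mean $\mu_k = \bbE|Z|^k$ and sub-exponential norm of order $1$. Bernstein's inequality for sub-exponential sums yields
\[
\bbP\Bigl( \sum_{i \in I_t} |X_i^\top u|^k \geq n \mu_k + c\sqrt{n \omega} + c \omega \Bigr) \leq e^{-\omega}
\]
for a universal $c$. I then take a standard $\epsilon$-net $\cN_\epsilon \subset S^{p-1}$ with $|\cN_\epsilon| \leq (3/\epsilon)^p$ and union-bound over $\cN_\epsilon \times [T]$ with $\omega \asymp p \log(3/\epsilon) + \log(nT)$. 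On the event from Lemma \ref{lemma:design_row_norm} where $\max_i \|X_i\|_2 \leq 2\sqrt{p \log(Np)}$, the map $u \mapsto \sum_{i \in I_t} |X_i^\top u|^k$ is Lipschitz with constant $O\bigl(n (p \log Np)^{k/2}\bigr)$, so choosing $\epsilon$ polynomially small in $n, p$ keeps both the Lipschitz slack $L\epsilon$ and the covering contribution $p\log(3/\epsilon)$ to $\omega$ in the $o(n)$ regime.

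\textbf{Combining and the main obstacle.} Putting the pieces together and a final union bound over $k \in \{3, 4\}$ gives $\sup_{\theta, t, k}\| \nabla^k L_t(\theta) \|_{\rm op} \leq \widetilde{K}_{\max}' n$ with probability at least $1 - 2 n^{-1}$, for the universal constant $\widetilde{K}_{\max}' = \max_{k \in \{3,4\}} c_k (\mu_k + 1)$. The main obstacle will be the bookkeeping in the middle step — showing that the combined deviation $\sqrt{n\omega} + \omega$ plus the Lipschitz error $L\epsilon$ is genuinely $o(n)$ once $\omega$ is inflated to accommodate the net cardinality and the $t$-wise union bound. This is precisely where the slack in assumption (\textbf{EX}), namely $n \gtrsim p \log^6(T \vee n) \log^6(2n/p) \vee p^2 \log^4 T$, is used: it absorbs the polynomial and logarithmic overhead from covering and truncation, leaving $n \mu_k$ as the dominant term and yielding the universal-constant bound.
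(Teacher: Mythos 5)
Your reduction to $\sup_{\|u\|_2=1}\sum_{i\in I_t}|X_i^\top u|^k$ via the universal bound on $b^{(k)}$ matches the paper, but the concentration step contains a genuine error. You assert that $|Z_i|^k = |X_i^\top u|^k$ are \emph{sub-exponential} for $k=3,4$; this is false. If $Z\sim\cN(0,1)$ then $\bbP(|Z|^k>t)=2\bbP(|Z|>t^{1/k})\asymp\exp(-t^{2/k}/2)$, so $|Z|^k$ is sub-Weibull with parameter $2/k$, i.e.\ $2/3$ for $k=3$ and $1/2$ for $k=4$ — heavier-tailed than sub-exponential. Consequently the Bernstein deviation term is not $\sqrt{n\omega}+\omega$ but rather $\sqrt{n\omega}+\omega^{k/2}$ (the sub-Weibull$(\alpha)$ tail contributes $\omega^{1/\alpha}$). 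This matters precisely because you are forced to take $\omega\gtrsim p\log(3/\epsilon)$ to union-bound over a $(3/\epsilon)^p$-point net: for $k=4$ the tail piece becomes $\omega^2\gtrsim p^2\log^2(1/\epsilon)$, and once $\epsilon$ is chosen small enough to tame the Lipschitz slack (which forces $\log(1/\epsilon)\asymp\log(Np)$), you need $n\gtrsim p^2\log^2(Np)$, a condition that (\textbf{EX}) does \emph{not} supply uniformly (in particular when $T$ is small and only the $p\log^6(T\vee n)\log^6(2n/p)$ branch of (\textbf{EX}) is active).

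The paper avoids both problems by invoking Lemma~\ref{lemma:tech_logconcave_abs_moments} (Proposition 4.4 of Adamczak et al.), a specialized concentration result for empirical $k$-th absolute moments of log-concave isotropic vectors. That result bounds the full supremum over the sphere directly — no $\epsilon$-net, hence no factor of $p$ inflating $\omega$ — and already accounts for the heavy tails of $|Z|^k$; its deviation has a leading term of order $\omega^{k-1}\log^{k-1}(2n/p)\sqrt{p/n}$ with $\omega\asymp\log N$, which is tailor-made for the assumption (\textbf{EX}). Replacing this lemma with a bare Bernstein-plus-net argument is not a cosmetic change: it silently upgrades the tail regularity from sub-Weibull to sub-exponential, and even after repairing that, the covering cost pushes you into a strictly stronger sample-size regime than the proposition actually assumes. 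To salvage your route you would need either a truncation argument (split $|Z_i|^k$ at a level calibrated to $\omega$, handle the bounded and tail parts separately) or an explicit sub-Weibull Bernstein inequality together with a sharper accounting of the net cardinality, and you would still have to verify that the resulting sample-size requirement is implied by (\textbf{EX}), which is not automatic.
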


\begin{proof}
    For $b(\cdot) = \log(1 + \exp(\cdot))$, note that
    \begin{align*}
        \sup_{\eta \in \bbR}  \bigg[ b'''\left( \eta \right) \vee b''''\left( \eta \right) \bigg] \leq 1.
    \end{align*}
    For $t \in [T]$, we have
    \begin{align*}
        &\sup_{\theta \in \Theta} \left\| \nabla^{3} L_{t}(\theta) \right\|_{\rm op}
        =
        \sup_{\theta \in \Theta} \sup_{u \in \bbR^{p} : \| u \|_{2} = 1} \sum_{i \in I_{t}} b'''\left( X_{i}^{\top} \theta \right) \left( X_i^{\top}u \right)^{3}
        \leq 
        \sup_{u \in \bbR^{p} : \| u \|_{2} = 1} \sum_{i \in I_{t}} \left| X_i^{\top}u \right|^{3}, \\
        &\sup_{\theta \in \Theta} \left\| \nabla^{4} L_{t}(\theta) \right\|_{\rm op}
        =
        \sup_{\theta \in \Theta} \sup_{u \in \bbR^{p} : \| u \|_{2} = 1} \sum_{i \in I_{t}} b''''\left( X_{i}^{\top} \theta \right) \left( X_i^{\top}u \right)^{4}
        \leq 
        \sup_{u \in \bbR^{p} : \| u \|_{2} = 1} \sum_{i \in I_{t}} \left| X_i^{\top}u \right|^{4}.
    \end{align*}
    By Lemma \ref{lemma:tech_logconcave_abs_moments}, for $t \in [T]$, $k \geq 2$ and $\omega, \tau \geq 1$, there exists some constants $D_1 = D_1(k) > 1$, $D_2 > 0$ and $D_3 = D_3(k) > 0$ such that
    \begin{align*}
       &\sup_{u \in \bbR^{p} : \| u \|_{2} = 1} \left| \dfrac{1}{n} \sum_{i \in I_{t}} \bigg( \left| \langle X_i, u \rangle \right|^{k} - \bbE \left| \langle X_i, u \rangle \right|^{k} \bigg) \right| \\
       &\leq 
       D_1 \tau \omega^{k-1} \log^{k-1} \left( \dfrac{2n}{p} \right) \sqrt{\dfrac{p}{n}} + 
       D_1\dfrac{ \omega^{k} p^{k/2} }{n} + 
       D_1 \left( \dfrac{p}{2n} \right)^{\omega}
    \end{align*}
    with a probability at least 
    \begin{align*}
        1 
        - \exp \left( - D_2 \omega \sqrt{p} \right) 
        - \exp \left( - D_3 \Bigg\{ \bigg[ \tau^{2} \omega^{2k-2} p \log^{(2k-2)}\left(\dfrac{2n}{p} \right) \bigg] \wedge \bigg[ \tau \omega^{-1} \sqrt{np} \log^{-1}\left( \dfrac{2n}{p} \right) \bigg] \Bigg\} \right).
    \end{align*}
    By (\textbf{EX}), we have
    \begin{align*}
        \left( \log T + \log n \right)^{k-1} \log^{(k-1)}\left(\dfrac{2n}{p} \right) \sqrt{\dfrac{p}{n}} 
        \leq \delta, \quad
        \left( \log T + \log n \right)^{k} p^{k/2} n^{-1}
        \leq \delta, \quad
        \dfrac{p}{2n} \leq \delta
    \end{align*}
    for a small enough constant $\delta > 0$ depending only $D_1$. Also,
    \begin{align*}
        (\log T + \log n)^{4} \log^{4} (2n/p) &\geq C' (\log n + \log T), \\
        \sqrt{np} \log^{-1}(2n/p)(\log T + \log n)^{-1} &\geq C' (\log n + \log T)
    \end{align*}
    for a large enough constant $C' > 0$ depending only $D_3$.    
    By taking 
    \begin{align*}
        \omega = \log T + \log n, \quad 
        \tau = 1,
    \end{align*}
    after some algebra, for any $k \in \{3, 4\}$, there exist some positive constants $c_1 = c_1(D_1)$ and $c_2 = c_{2}(D_3)$ such that
    \begin{align*}
        &\max_{t \in [T]} \sup_{u \in \bbR^{p} : \| u \|_{2} = 1} 
        \left| \dfrac{1}{n} \sum_{i \in I_{t}} \bigg( \left| \langle X_i, u \rangle \right|^{k} - \bbE \left| \langle X_i, u \rangle \right|^{k} \bigg) \right| \\
        &\leq 
        c_1 \bigg[ 
        \left( \log T + \log n \right)^{k-1} \log^{(k-1)}\left(\dfrac{2n}{p} \right) \sqrt{\dfrac{p}{n}}
        +
        \left( \log T + \log n \right)^{k} p^{k/2} n^{-1}
        + \dfrac{p}{2n}
        \bigg]
        \leq
        3
    \end{align*}
    with $\bbP$-probability at least 
    \begin{align*}
        &1 - e^{-\sqrt{p} (\log T + \log n) + \log T} \\
        &\quad - \exp \left( - c_2 \bigg[ \big\{ (\log T + \log n)^{4} \log^{4} (2n/p) \big\} \wedge \big\{ \sqrt{np} \log^{-1}(2n/p)(\log T + \log n)^{-1} \big\} \bigg] + \log T \right) \\
        &\geq
        1 - e^{-\log n -\log T + \log T} - e^{-\log n -\log T + \log T}
        =
        1 - 2n^{-1}.
    \end{align*}
    Also, for any $i \in I_{t}$ and $u \in \bbR^{p}$ with $\| u \|_{2} =  1$,
    \begin{align*}
        \bbE \left| \langle X_i, u \rangle \right|^{3} = 2\sqrt{\dfrac{2}{\pi}}, \quad 
        \bbE \left| \langle X_i, u \rangle \right|^{4} = 15.
    \end{align*}
    Therefore, we have
    \begin{align*}
        \max_{t \in [T]} \sup_{\theta \in \Theta} \left\| \nabla^{3} L_{t}(\theta) \right\|_{\rm op} \leq 
        \max_{t \in [T]} \sup_{u \in \bbR^{p} : \| u \|_{2} = 1} \sum_{i \in I_{t}} \left| X_i^{\top}u \right|^{3} 
        &\leq \left( 3 + 2\sqrt{\dfrac{2}{\pi}} \right) n, \\
        \max_{t \in [T]} \sup_{\theta \in \Theta} \left\| \nabla^{4} L_{t}(\theta) \right\|_{\rm op} \leq 
        \max_{t \in [T]} \sup_{u \in \bbR^{p} : \| u \|_{2} = 1} \sum_{i \in I_{t}} \left| X_i^{\top}u \right|^{4} 
        &\leq \left( 3 + 15 \right) n = 18n
    \end{align*}
    with $\bbP$-probability at least $1 - 2n^{-1}$. By taking $\widetilde{K}_{\max}' = (3 + 2\sqrt{2/\pi}) \vee 18 = 18$, we complete the proof.
\end{proof}

\begin{proof}[Proof of Proposition \ref{prop:logit_fin_statement}]
    Let 
    \begin{align*}
        \Omega_{1} &= \big\{ \text{ The three assertions in Proposition \ref{prop:logit_V_verification} hold uniformly for all } t \in [T] \big\}, \\
        \Omega_{2} &= \big\{ \eqref{eqn:A2_1_logit} \text{ and } \eqref{eqn:A1_2_logit} \text{ hold with the constants } \widetilde{K}_{\min}, \widetilde{K}_{\max} \text{ and } M \big\}, \\
        \Omega_{3} &= \big\{ \eqref{eqn:A2_23_logit} \text{ holds with the constant } \widetilde{K}_{\max}' \big\}.
    \end{align*}
    By Propositions \ref{prop:eigenvalues_logit}, \ref{prop:logit_V_verification} and \ref{prop:operator_norm_logit}, we have
    \begin{align*}
        \bbP (\Omega_{1}) &\geq 1 - 3n^{-1} - 6e^{-n/72} - 2(Np)^{-1}, \\
        \bbP (\Omega_{2}) &\geq 1 - 4e^{-n/72} - 2(Np)^{-1}, \\
        \bbP (\Omega_{3}) &\geq 1 - 2n^{-1}. 
    \end{align*}
    It follows that 
    \begin{align*}
        \bbP (\Omega_{1} \cap \Omega_{2} \cap \Omega_{3}) 
        \geq 1 - 5n^{-1} - 10e^{-n/72} - 4(Np)^{-1}.
    \end{align*}
    Let $\Omega = \Omega_{1} \cap \Omega_{2} \cap \Omega_{3}$. On $\Omega$, the assumptions in (\textbf{A1}), (\textbf{A1$\ast$}) and (\textbf{A2}) are satisfied when $K_{\min}, K_{\max}$ and $M_n$ are replaced by $\widetilde{K}_{\min}$, $(\widetilde{K}_{\max} \vee \widetilde{K}_{\max}' \vee K_2 \vee K_3)$ and $M$, respectively.
    Recall that 
    \begin{align*}
        \widetilde{K}_{\min} = (1080 e^{2K_1 + 3})^{-1}, \quad 
        \widetilde{K}_{\max} \vee \widetilde{K}_{\max}' = 18, \quad 
        M = 27\sqrt{30} e^{K_1 + 3/2}.
    \end{align*}
    By (\textbf{EX}), all conditions specified in Theorem \ref{thm:onlineBvM}, Propositions \ref{prop:eigenvalues_logit}, \ref{prop:logit_V_verification} and \ref{prop:operator_norm_logit}  hold. Therefore, the result of Proposition \ref{prop:logit_fin_statement} follows from Theorem \ref{thm:onlineBvM}.
\end{proof}

\section{Technical lemmas}

\subsection{General technical lemmas}
In this subsection, assume we are given a function $f : \Theta \rightarrow \bbR$ that is four times continuously differentiable. For $\theta \in \Theta$, let $\bF_{\theta} = - \nabla^2 f(\theta)$, and assume $\bF_{\theta}$ is nonsingular in this subsection.

\begin{lemma} \label{lemma:tech_Fisher_smooth}
    For a given $\theta \in \Theta$, suppose that $f$ satisfies the third order smoothness at $\theta$ with parameters $(\tau_3, \bF_{\theta}, r)$.
    Then,
    \begin{align*}
        \sup_{\theta' \in \Theta (\theta, \bF_{\theta}, r)}  \left\|  \bF_{\theta}^{-1/2} \bF_{\theta'} \bF_{\theta}^{-1/2} - \bI_{p} \right\|_{2} \leq \tau_{3} r.
    \end{align*}
    Consequently, for every $u \in \bbR^p$ and $\theta' \in \Theta (\theta, \bF_{\theta}, r)$, we have
    \bean
        (1 - \tau_{3} r) \left\| \bF_{\theta}^{1/2} u \right\|_{2}^{2}
        \leq \left\| \bF_{\theta'}^{1/2} u \right\|_{2}^{2} 
        \leq (1 + \tau_{3} r) \left\| \bF_{\theta}^{1/2} u \right\|_{2}^{2}.
    \eean
\end{lemma}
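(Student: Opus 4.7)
The plan is to express the difference $\bF_{\theta'}-\bF_\theta$ as an integral of $\nabla^3 f$ along the segment from $\theta$ to $\theta'$, then contract with $z^{\otimes 2}$ and invoke the third order smoothness via its equivalent symmetric form in \eqref{def:3_4_smooth_equiv}. Specifically, fix $\theta' \in \Theta(\theta,\bF_\theta,r)$ and set $v = \theta'-\theta$, so that $\|\bF_\theta^{1/2} v\|_2 \le r$. Since $\bF_\theta = -\nabla^2 f(\theta)$ and $f$ is $C^4$, the fundamental theorem of calculus yields
\begin{align*}
\bF_{\theta'}-\bF_\theta \;=\; -\bigl(\nabla^2 f(\theta')-\nabla^2 f(\theta)\bigr) \;=\; -\int_0^1 \bigl\langle \nabla^3 f(\theta + sv),\, v\bigr\rangle\, \rmd s,
\end{align*}
where $\langle \nabla^3 f,\, v\rangle$ denotes the $p\times p$ matrix obtained by contracting the third-order tensor with $v$ in its last slot.

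For arbitrary $z \in \bbR^p$, contracting the above identity with $z\otimes z$ gives
\begin{align*}
z^\top\bigl(\bF_{\theta'}-\bF_\theta\bigr)z \;=\; -\int_0^1 \bigl\langle \nabla^3 f(\theta+sv),\, v\otimes z\otimes z\bigr\rangle\, \rmd s.
\end{align*}
For each $s\in[0,1]$ the point $sv$ satisfies $\|\bF_\theta^{1/2}(sv)\|_2 \le r$, hence $sv \in \Theta(\bF_\theta,r)$, so the equivalent symmetric expression of third order smoothness in \eqref{def:3_4_smooth_equiv} applies and gives
\begin{align*}
\bigl|\langle \nabla^3 f(\theta+sv),\, v\otimes z\otimes z\rangle\bigr| \;\le\; \tau_3 \,\|\bF_\theta^{1/2} v\|_2\,\|\bF_\theta^{1/2} z\|_2^2 \;\le\; \tau_3\, r\,\|\bF_\theta^{1/2} z\|_2^2.
\end{align*}
Integrating in $s$ and substituting $z \mapsto \bF_\theta^{-1/2}z$ yields $\bigl|z^\top\bigl(\bF_\theta^{-1/2}\bF_{\theta'}\bF_\theta^{-1/2}-\bI_p\bigr)z\bigr| \le \tau_3 r\,\|z\|_2^2$ for every $z$; taking the supremum over unit vectors gives the claimed operator-norm bound. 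The pointwise inequalities $(1-\tau_3 r)\|\bF_\theta^{1/2}u\|_2^2 \le \|\bF_{\theta'}^{1/2}u\|_2^2 \le (1+\tau_3 r)\|\bF_\theta^{1/2}u\|_2^2$ then follow immediately by writing $\|\bF_{\theta'}^{1/2}u\|_2^2 = (\bF_\theta^{1/2}u)^\top \bF_\theta^{-1/2}\bF_{\theta'}\bF_\theta^{-1/2}(\bF_\theta^{1/2}u)$ and using the eigenvalue bound on $\bF_\theta^{-1/2}\bF_{\theta'}\bF_\theta^{-1/2}-\bI_p$.

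There is no real obstacle here: the only point requiring any care is the verification that the entire segment $\{sv : s\in[0,1]\}$ lies in the local set $\Theta(\bF_\theta,r)$ over which the smoothness hypothesis is stated, which follows trivially from the definition of the elliptical neighborhood. Everything else is a direct computation using the equivalence of the operator norms in \eqref{def:3_4_smooth_equiv}, so the argument is essentially a one-line calculus-of-tensors identity combined with a single application of the definition of $\tau_3$.
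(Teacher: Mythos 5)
Your proof is correct and takes essentially the same route as the paper: both express $\bF_{\theta'}-\bF_\theta$ via the change in $\nabla^2 f$ along the segment from $\theta$ to $\theta'$, bound the resulting quadratic form by the third-order smoothness in the equivalent multi-argument form of \eqref{def:3_4_smooth_equiv}, and pass to the operator norm; the only cosmetic difference is that you use the integral form of the fundamental theorem of calculus, while the paper packages the same step as a supremum over intermediate points $h'$ in $\Theta(\bF_\theta,r)$.
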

\begin{proof}
For $\theta' \in \Theta (\theta, \bF_{\theta}, r)$, we have
\begin{align*}
    &\left\|  \bF_{\theta}^{-1/2} \bF_{\theta'} \bF_{\theta}^{-1/2} - \bI_{p} \right\|_{2}
    = 
    \sup_{u \in \bbR^{p} : \| u \|_{2} = 1} \left| \langle \bF_{\theta}^{-1/2} \bF_{\theta'} \bF_{\theta}^{-1/2} - \bI_{p}, u^{\otimes 2} \rangle \right| \\ 
    &\leq
    \sup_{h \in \Theta (\bF_{\theta}, r)}
    \sup_{u \in \bbR^{p} : \| u \|_{2} = 1} \left| \langle \bF_{\theta}^{-1/2} \bF_{\theta + h} \bF_{\theta}^{-1/2} - \bI_{p}, u^{\otimes 2} \rangle \right| \\
    &=
    \sup_{h \in \Theta (\bF_{\theta}, r)}
    \sup_{u \in \bbR^{p} : \| u \|_{2} = 1} \left| \left \langle \bF_{\theta + h} - \bF_{\theta}, \left(\bF_{\theta}^{-1/2}u\right)^{\otimes 2}  \right \rangle \right| \\
    &\leq 
    \sup_{h, h' \in \Theta (\bF_{\theta}, r)}
    \sup_{u \in \bbR^{p} : \| u \|_{2} = 1} \left| \langle \nabla^{3} f(\theta + h') , \left(\bF_{\theta}^{-1/2} u \right)^{\otimes 2} \otimes h \rangle \right| \\
    \overset{\eqref{def:3_4_smooth_equiv}}&{\leq}
    \sup_{h \in \Theta (\bF_{\theta}, r)}
    \sup_{u \in \bbR^{p} : \| u \|_{2} = 1} \tau_{3} \left\| \bF_{\theta}^{1/2} \bF_{\theta}^{-1/2} u  \right\|_{2}^{2} \left\| \bF_{\theta}^{1/2} h \right\|_{2}
    \leq \tau_{3} r.
\end{align*}
This completes the proof.
\end{proof}

\begin{lemma} \label{lemma:tech_pre_bias_bound}
Let $\theta, \widetilde{\theta}, \beta \in \Theta$ and $r = \| \bF_{\theta}^{-1/2} \beta \|_{2}$.
Suppose that $f$ is concave and satisfies the third order smoothness at $\theta$ with parameters $(\tau_3, \bF_{\theta}, 4r)$.
Assume further that 
\begin{align*}
    \tau_{3} r \leq 1/16, \quad \nabla f(\theta) = 0, \quad \nabla f(\widetilde{\theta}) + \beta = 0.
\end{align*}
Then,
\begin{align*}
    \left\| \bF_{\theta}^{1/2} \big( \widetilde{\theta} - \theta \big) \right\|_{2} \leq 4r.
\end{align*}
\end{lemma}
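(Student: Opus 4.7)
The plan is to run a contradiction argument of exactly the same shape as the one used to prove Theorem \ref{thm:penalized_estimation}. Define the perturbed function $g(\theta') = f(\theta') + \langle \beta, \theta'\rangle$, so that $\nabla g(\theta) = \beta$, $\nabla g(\widetilde\theta) = 0$, and $\nabla^2 g(\theta') = -\bF_{\theta'}$. Since $f$ is concave, so is $g$, and hence any critical point is a global maximizer of $g$ on $\Theta$; in particular, $g(\widetilde\theta) \geq g(\theta)$. Set $\Theta_{n} = \Theta(\theta, \bF_\theta, 4r)$ and let $\partial \Theta_{n} = \{ \theta' : \|\bF_\theta^{1/2}(\theta' - \theta)\|_2 = 4r\}$. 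The goal is to show that $g$ strictly decreases on $\partial \Theta_n$ relative to $g(\theta)$; concavity then forces $\widetilde\theta \in \Theta_{n}$.

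For any $\theta^\circ \in \partial\Theta_{n}$, write $u = \theta^\circ - \theta$. By Taylor's theorem there exists $\widetilde u$ with $\|\bF_\theta^{1/2}\widetilde u\|_2 \leq 4r$ such that
\begin{align*}
g(\theta^\circ) - g(\theta)
= \nabla g(\theta)^{\top} u - \tfrac{1}{2}\langle \bF_{\theta + \widetilde u}, u^{\otimes 2}\rangle
= \beta^{\top} u - \tfrac{1}{2}\langle \bF_{\theta + \widetilde u}, u^{\otimes 2}\rangle.
\end{align*}
By Cauchy--Schwarz the first term is bounded by $\|\bF_\theta^{-1/2}\beta\|_2 \cdot \|\bF_\theta^{1/2} u\|_2 = r\cdot 4r = 4r^2$. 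For the quadratic term, the third-order smoothness assumption together with Lemma \ref{lemma:tech_Fisher_smooth} applied on $\Theta_{n}$ yields $\langle \bF_{\theta + \widetilde u}, u^{\otimes 2}\rangle \geq (1 - 4\tau_3 r)\|\bF_\theta^{1/2} u\|_2^2 = (1 - 4\tau_3 r)(4r)^2$. Combining and using $\tau_3 r \leq 1/16$,
\begin{align*}
\sup_{\theta^\circ \in \partial \Theta_{n}} \bigl[ g(\theta^\circ) - g(\theta) \bigr]
\leq 4r^2 - 8r^2(1 - 4\tau_3 r)
= -4r^2 + 32 \tau_3 r^3
\leq -2r^2 < 0.
\end{align*}

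Finally, suppose for contradiction that $\widetilde\theta \notin \Theta_{n}$. Set $\omega = 4r / \|\bF_\theta^{1/2}(\widetilde\theta - \theta)\|_2 \in (0,1)$, so that $\theta^\circ = \omega\widetilde\theta + (1-\omega)\theta \in \partial \Theta_{n}$. Concavity of $g$ and $g(\widetilde\theta) \geq g(\theta)$ give $g(\theta^\circ) \geq \omega g(\widetilde\theta) + (1-\omega)g(\theta) \geq g(\theta)$, contradicting the strict upper bound just derived. The main thing to be careful about is that the line segment $[\theta, \theta^\circ]$ lies entirely in $\Theta_n$, which is automatic from convexity of elliptical balls, so Lemma \ref{lemma:tech_Fisher_smooth} applies uniformly along the segment and the Taylor remainder estimate is valid; otherwise the argument is essentially the same localization step used in Theorem \ref{thm:penalized_estimation}.
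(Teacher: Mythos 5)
Your proof is correct and follows exactly the paper's own argument: the same linearly perturbed function $g(\theta') = f(\theta') + \langle \beta, \theta'\rangle$, the same boundary estimate $\sup_{\theta^\circ \in \partial\Theta_n}[g(\theta^\circ)-g(\theta)] \leq -2r^2$ via Taylor's theorem, Cauchy--Schwarz, and Lemma \ref{lemma:tech_Fisher_smooth}, and the same concavity-based contradiction. The only cosmetic difference is that you invoke $g(\widetilde\theta)\geq g(\theta)$ directly (since $\widetilde\theta$ is the global maximizer of the concave $g$), whereas the paper routes through a short chain of inequalities involving $\omega$; both are valid and of equal substance.
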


\begin{proof}
Let $\Theta_{\theta, r} = \Theta (\theta, \bF_{\theta}, 4r)$, $\overline{\Theta}_{\theta, r} = \Theta (\bF_{\theta}, 4r)$ and 
\begin{align*}
    \partial \Theta_{\theta, r} = \left\{ \theta' \in \Theta : \left\| \bF_{\theta}^{1/2} \left( \theta' - \theta \right) \right\|_{2} = 4 r \right\}.
\end{align*}
Then, it suffices to prove that $\widetilde{\theta} \in \Theta_{\theta, r}$.

For $\theta' \in \Theta$, let $g(\theta') = f(\theta') + \langle \beta, \theta' \rangle$. Then, $\nabla g(\widetilde{\theta}) = 0$ and the map $\theta' \mapsto g(\theta')$ is concave.
By the concavity of $g(\cdot)$, for any $\theta' \in \Theta_{\theta, r}^{\rm c}$, we have
\begin{align} \label{eqn:pre_bias_bound_eq1}
    g(\overline{\theta}) \geq \omega g(\theta') + (1 - \omega)g(\theta),
\end{align}
where $\overline{\theta} = \omega \theta' + (1- \omega)\theta$ and $\omega = 4 r \| \bF_{x}^{1/2}(\theta' - \theta) \|_{2}^{-1} \in (0, 1)$.
One can easily check that $\overline{\theta} \in \partial \Theta_{\theta, r}$.
At the end of this proof, we will show that 
\begin{align} \label{eqn:pre_bias_bound_eq2}
    \sup_{\theta^{\circ} \in \partial \Theta_{\theta, r}} g(\theta^{\circ}) - g(\theta) \leq -2 r^{2} < 0.
\end{align}
It follows that, for any $\theta' \in \Theta_{\theta, r}^{\rm c}$,
\begin{align*}
    0 > -2 r^{2} 
    \geq  \sup_{\theta^{\circ} \in \partial \Theta_{x, r}} g(\theta^{\circ}) - g(x)
    \geq g(\overline{\theta}) - g(x)
    \overset{\eqref{eqn:pre_bias_bound_eq1}}{\geq} \omega \left[ g(\theta') - g(\theta) \right]
    \geq g(\theta') - g(\theta),
\end{align*}
which implies that $\widetilde{\theta} \in \Theta_{\theta, r}$. 

To complete the proof, we only need to prove \eqref{eqn:pre_bias_bound_eq2}. 
Let $\theta^{\circ} \in \partial \Theta_{\theta, r}$ and $u = \theta^{\circ} - \theta$.  
By Taylor's theorem, there exists some $\widetilde{u} \in \overline{\Theta}_{\theta, r}$ such that
\begin{align*}
    &g(\theta^{\circ}) - g(\theta) 
    = \nabla g(\theta)^{\top} u + \dfrac{1}{2} \langle \nabla^{2} g(\theta + \widetilde{u}), u^{\otimes 2} \rangle \\
    &= \big[ \nabla f(\theta) + \beta \big]^{\top} u + \dfrac{1}{2} \langle \nabla^{2} f(\theta + \widetilde{u}), u^{\otimes 2}  \rangle
    = \beta^{\top} u + \dfrac{1}{2} \langle \nabla^{2} f(\theta + \widetilde{u}), u^{\otimes 2}  \rangle \\
    &= \left[\bF_{\theta}^{-1/2} \beta \right]^{\top} \bF_{\theta}^{1/2} u - \dfrac{1}{2} \langle \bF_{\theta + \widetilde{u}}, u^{\otimes 2} \rangle \\
    \overset{\text{Lemma \ref{lemma:tech_Fisher_smooth}}}&{\leq}
    \bigg( \left\| \bF_{\theta}^{-1/2} \beta \right\|_{2}  - \dfrac{1}{2} \left( 1 -  4 \tau_{3} r \right) \left\| \bF_{\theta}^{1/2} u \right\|_{2}  \bigg) 
    \left\| \bF_{\theta}^{1/2} u \right\|_{2} \\
    &\leq  
    \bigg[ r  - 2\left( 1 -  4 \tau_{3} r \right) r \bigg] \times 4r, \quad \left( \because \left\| \bF_{\theta}^{1/2} u \right\|_{2} = 4 r \right)  \\
    &\leq - 2 r^{2},
\end{align*}
where the last inequality holds by $\tau_{3} r \leq 1/16$. This completes the proof.
\end{proof}

\begin{lemma} \label{lemma:tech_smooth_tau_bound}
    Let $\theta_{\rm c} \in \Theta$, $\bF \in \symmPD$ and $r \geq 0$ be given.
    Then, $f$ satisfies the third and fourth order smoothness at $\theta_{\rm c}$ with parameters $(\tau_3, \bF, r)$ and $(\tau_4, \bF, r)$, respectively,
    where
    \begin{align*}
        \tau_{3} &= \lambda_{\min}^{-3/2} ( \bF )  \sup_{\theta \in \Theta ( \theta_{\rm c}, \bF, r )} \left\| \nabla^{3} f(\theta) \right\|_{\rm op}, \\
        \tau_{4} &= \lambda_{\min}^{-2} ( \bF )  \sup_{\theta \in \Theta ( \theta_{\rm c}, \bF, r )} \left\| \nabla^{4} f(\theta) \right\|_{\rm op}.
    \end{align*}
\end{lemma}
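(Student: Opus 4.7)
The statement is a direct scaling computation, so the plan is simply to unpack the definitions in \eqref{def:3_smooth}, \eqref{def:4_smooth} and combine two elementary bounds. I will treat the third-order case; the fourth-order case is identical with the exponent $3$ replaced by $4$.

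Fix $\theta_{\rm c}\in\Theta$, $\bF\in\symmPD$, $r\geq 0$, and arbitrary $u\in\Theta(\bF,r)$ and $z\in\bbR^{p}\setminus\{0\}$. Set $\tilde z=z/\|z\|_{2}$. By homogeneity the ratio in \eqref{def:3_smooth} equals
\[
\frac{|\langle\nabla^{3}f(\theta_{\rm c}+u),z^{\otimes 3}\rangle|}{\|\bF^{1/2}z\|_{2}^{3}}
=\frac{\|z\|_{2}^{3}}{\|\bF^{1/2}z\|_{2}^{3}}\,|\langle\nabla^{3}f(\theta_{\rm c}+u),\tilde z^{\otimes 3}\rangle|.
\]
The first step is the lower bound $\|\bF^{1/2}z\|_{2}\ge \lambda_{\min}^{1/2}(\bF)\,\|z\|_{2}$, which gives $\|z\|_{2}^{3}/\|\bF^{1/2}z\|_{2}^{3}\le\lambda_{\min}^{-3/2}(\bF)$. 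The second step is the definition of the operator norm of a $3$-tensor, which applied to the unit vector $\tilde z$ yields
\[
|\langle\nabla^{3}f(\theta_{\rm c}+u),\tilde z^{\otimes 3}\rangle|\le\|\nabla^{3}f(\theta_{\rm c}+u)\|_{\rm op}.
\]
Combining these two bounds,
\[
\frac{|\langle\nabla^{3}f(\theta_{\rm c}+u),z^{\otimes 3}\rangle|}{\|\bF^{1/2}z\|_{2}^{3}}
\le\lambda_{\min}^{-3/2}(\bF)\,\|\nabla^{3}f(\theta_{\rm c}+u)\|_{\rm op}.
\]

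Finally, since $u$ ranges over $\Theta(\bF,r)$, the point $\theta=\theta_{\rm c}+u$ ranges over $\Theta(\theta_{\rm c},\bF,r)$, so taking the supremum on both sides over $u$ and $z$ gives the bound by $\tau_{3}$. The fourth-order case is proved line-for-line with $3$ replaced by $4$, using $\|z\|_{2}^{4}/\|\bF^{1/2}z\|_{2}^{4}\le\lambda_{\min}^{-2}(\bF)$. There is no real obstacle here: the lemma is essentially the observation that the $\bF$-weighted norm in the denominator of \eqref{def:3_smooth}--\eqref{def:4_smooth} can always be traded for an unweighted Euclidean norm at the cost of a factor $\lambda_{\min}^{-k/2}(\bF)$, after which the definition of the tensor operator norm does the rest.
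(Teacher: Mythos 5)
Your proof is correct and follows essentially the same route as the paper: normalize $z$, use $\|\bF^{1/2}z\|_2\ge\lambda_{\min}^{1/2}(\bF)\|z\|_2$ to replace the $\bF$-weighted norm by the Euclidean norm at a cost of $\lambda_{\min}^{-k/2}(\bF)$, and then invoke the tensor operator norm. The only cosmetic difference is that you make the normalization $\tilde z=z/\|z\|_2$ and the homogeneity step explicit, whereas the paper folds the rescaling directly into a single supremum inequality.
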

\begin{proof}
Note that
\begin{align*}
&\sup_{u \in \Theta ( \bF, r)} \sup_{z \in \bbR^{p}} 
\dfrac{
\left| \langle \nabla^3 f(\theta_{\rm c} + u), z^{\otimes 3} \rangle \right|}{\left\| \bF^{1/2} z \right\|_{2}^{3}}
=
\sup_{u \in \Theta (\bF, r)} \sup_{z \in \bbR^{p}} 
\left| \left \langle \nabla^3 f(\theta_{\rm c} + u), \dfrac{z^{\otimes 3}}{\left\| \bF^{1/2} z \right\|_{2}^{3}} \right \rangle \right| \\
&\leq 
\lambda_{\min}^{-3/2} ( \bF )
\sup_{u \in \Theta (\bF, r)} \sup_{\substack{z \in \bbR^{p} : \| z \|_{2} = 1}} 
\left| \langle \nabla^3 f(\theta_{\rm c} + u), z^{\otimes 3} \rangle \right| \\
&\leq 
\lambda_{\min}^{-3/2} ( \bF) \sup_{\theta \in \Theta ( \theta_{\rm c}, \bF, r )} \left\| \nabla^{3} f(\theta) \right\|_{\rm op}.
\end{align*}
Also,
\begin{align*}
&\sup_{u \in \Theta ( \bF, r)} \sup_{z \in \bbR^{p}} 
\dfrac{
\left| \langle \nabla^4 f(\theta_{\rm c} + u), z^{\otimes 4} \rangle \right|}{\left\| \bF^{1/2} z \right\|_{2}^{4}}
=
\sup_{u \in \Theta (\bF, r)} \sup_{z \in \bbR^{p}} 
\left| \left \langle \nabla^4 f(\theta_{\rm c} + u), \dfrac{z^{\otimes 4}}{\left\| \bF^{1/2} z \right\|_{2}^{4}} \right\rangle \right| \\
&\leq 
\lambda_{\min}^{-2} ( \bF )
\sup_{u \in \Theta (\bF, r)} \sup_{\substack{z \in \bbR^{p} : \| z \|_{2} = 1}} 
\left| \langle \nabla^4 f(\theta_{\rm c} + u), z^{\otimes 4} \rangle \right| \\
&\leq 
\lambda_{\min}^{-2} ( \bF ) \sup_{\theta \in \Theta ( \theta_{\rm c}, \bF, r )} \left\| \nabla^{4} f(\theta) \right\|_{\rm op},
\end{align*}
which completes the proof.
\end{proof}

\subsection{Technical lemmas for TV distance}

\begin{lemma} \label{lemma:tech_Gaussian_comparison}
    For $\mu_{1}, \mu_{2} \in \bbR^{p}$ and $\bOmega_{1}, \bOmega_{2} \in \symmPD$, let $Q_1 = \cN(\mu_{1}, \bOmega_{1}^{-1}), Q_2 = \cN(\mu_{2}, \bOmega_{2}^{-1})$.
    Suppose that 
    \begin{align*}
        \left\| \bOmega_{2}^{-1/2} \bOmega_{1} \bOmega_{2}^{-1/2} - \bI_{p} \right\|_{2} \leq 0.684.
    \end{align*}
    Then,
    \begin{align*}
        d_{V} \left( Q_1, Q_2 \right) 
        \leq 
        \dfrac{1}{2} \Bigg( \left\| \bOmega_{1}^{1/2} \left( \mu_1 - \mu_2  \right)  \right\|_{2}^{2}  
        +  
        \left\| \bOmega_{2}^{-1/2} \bOmega_{1} \bOmega_{2}^{-1/2} - \bI_{p} \right\|_{\rm F}^{2}
        \Bigg)^{1/2}.
    \end{align*}
\end{lemma}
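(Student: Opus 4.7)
The plan is to bound the total variation by Pinsker's inequality and then compute the Gaussian KL divergence in closed form, reducing the matter to a scalar inequality of the form $\delta-\log(1+\delta)\le \delta^2$ valid on the interval $[-0.684,0.684]$.

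More precisely, I would first observe that by Pinsker's inequality and the symmetry of $d_V$,
\begin{equation*}
d_V(Q_1,Q_2)=d_V(Q_2,Q_1)\le \sqrt{\tfrac12\,K(Q_2;Q_1)}.
\end{equation*}
The classical formula for the KL divergence between Gaussians (written in terms of precision matrices) gives
\begin{equation*}
K(Q_2;Q_1)=\tfrac12\Bigl[\operatorname{tr}\bigl(\bOmega_1\bOmega_2^{-1}\bigr)-p-\log\det\bigl(\bOmega_2^{-1/2}\bOmega_1\bOmega_2^{-1/2}\bigr)+\bigl\|\bOmega_1^{1/2}(\mu_1-\mu_2)\bigr\|_2^{2}\Bigr].
\end{equation*}
Setting $\bDelta=\bOmega_2^{-1/2}\bOmega_1\bOmega_2^{-1/2}-\bI_p$ and letting $\delta_1,\dots,\delta_p$ be its eigenvalues, this simplifies to
\begin{equation*}
K(Q_2;Q_1)=\tfrac12\sum_{i=1}^{p}\bigl(\delta_i-\log(1+\delta_i)\bigr)+\tfrac12\bigl\|\bOmega_1^{1/2}(\mu_1-\mu_2)\bigr\|_2^{2}.
\end{equation*}
By hypothesis $|\delta_i|\le 0.684$ for all $i$.

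The crux of the argument is then the scalar estimate
\begin{equation*}
\delta-\log(1+\delta)\le \delta^{2}\qquad\text{for all }|\delta|\le 0.684.
\end{equation*}
To verify this, I would introduce $h(t)=t^{2}+t+\log(1-t)$ on $[0,1)$ and check that $h(0)=0$, that $h'(t)=t(1-2t)/(1-t)$ so $h$ increases on $(0,\tfrac12)$ and decreases on $(\tfrac12,1)$, and that the unique positive root of $h$ lies at $t\approx 0.6838\ldots$ (the precise value $0.684$ in the hypothesis is chosen so that the bound is barely attained); this handles the harder side $\delta<0$, while for $\delta\ge 0$ the inequality $\delta-\log(1+\delta)\le \delta^{2}/2\le \delta^{2}$ is immediate from the Taylor expansion with remainder $g''(\xi)=(1+\xi)^{-2}\le 1$. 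Summing over $i$ yields $\sum_i(\delta_i-\log(1+\delta_i))\le \sum_i\delta_i^{2}=\|\bDelta\|_{\rm F}^{2}$, and combining with Pinsker's inequality delivers the claim.

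The main (and essentially only) obstacle is the sharp scalar inequality: one must verify that the numerical constant $0.684$ is compatible with the clean bound by $\delta^{2}$ (rather than $C\delta^{2}$ for some $C>1$), which forces the one-sided root analysis above. Everything else is a direct computation with the Gaussian KL formula, and no further nontrivial step is needed.
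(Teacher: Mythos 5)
Your proposal is correct and matches the paper's proof essentially step for step: Pinsker's inequality, the closed-form Gaussian KL divergence oriented so the mean term carries the weight $\bOmega_1$ (i.e., $K(Q_2;Q_1)$, which is what the paper actually computes despite writing ``$K(Q_1,Q_2)$''), an eigenvalue reduction of the trace-minus-logdet term, and the scalar bound $\delta - \log(1+\delta) \le \delta^2$ on $|\delta|\le 0.684$, summed and combined with Pinsker. Your analysis of $h(t)=t^2+t+\log(1-t)$ simply supplies the verification of the scalar inequality that the paper asserts without proof (and correctly identifies that the true threshold sits just below $0.684$, a minor imprecision shared with the paper's constant).
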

\begin{proof}
By Pinsker's inequality, we have
\begin{align*}
d_{V} \left( Q_1, Q_2 \right) \leq \bigg( \dfrac{1}{2} K \left( Q_1, Q_2 \right) \bigg)^{1/2}.
\end{align*}
By the definition of KL divergence, $K \left( Q_1, Q_2 \right)$ is equal to
\begin{align*}
\dfrac{1}{2}  \bigg[  \left\| \bOmega_{1}^{1/2} \left( \mu_1 - \mu_2  \right) \right\|_{2}^{2} + \operatorname{tr} \left( \bOmega_{2}^{-1/2} \bOmega_{1} \bOmega_{2}^{-1/2} - \bI_{p}  \right) - \operatorname{logdet} \left( \bOmega_{2}^{-1/2} \bOmega_{1} \bOmega_{2}^{-1/2} \right) \bigg].
\end{align*}
Let $(\lambda_{j})_{j \in [p]}$ be eigenvalues of $\mathbf{B} = \bOmega_{2}^{-1/2} \bOmega_{1} \bOmega_{2}^{-1/2} - \bI_{p}$. Then, the last display is represented by
\begin{align*}
&\dfrac{1}{2}  \bigg[  \left\| \bOmega_{1}^{1/2} \left( \mu_1 - \mu_2  \right) \right\|_{2}^{2} 
+ \sum_{j=1}^{p} \lambda_{j} - \sum_{j=1}^{p} \log (1 + \lambda_{j}) \bigg] \\   
&\leq  
\dfrac{1}{2}  \bigg[  \left\| \bOmega_{1}^{1/2} \left( \mu_1 - \mu_2  \right) \right\|_{2}^{2} 
+ \sum_{j=1}^{p} \lambda_{j} - \sum_{j=1}^{p} (\lambda_{j} - \lambda_{j}^{2}) \bigg] 
=
\dfrac{1}{2}  \bigg[  \left\| \bOmega_{1}^{1/2} \left( \mu_1 - \mu_2  \right) \right\|_{2}^{2} 
+ \sum_{j=1}^{p} \lambda_{j}^{2} \bigg] \\   
&=
\dfrac{1}{2}  \bigg[  \left\| \bOmega_{1}^{1/2} \left( \mu_1 - \mu_2  \right) \right\|_{2}^{2} 
+ \operatorname{tr}(\mathbf{B}^{2}) \bigg] 
=
\dfrac{1}{2}  \bigg[  \left\| \bOmega_{1}^{1/2} \left( \mu_1 - \mu_2  \right) \right\|_{2}^{2} 
+ \left\| \mathbf{B} \right\|_{\rm F}^{2} \bigg]. 
\end{align*}
where the first inequality holds by $\max_{j \in [p]}|\lambda_{j}| \leq 0.684$.
It follows that
\begin{align*}
d_{V} \left( Q_1, Q_2 \right) 
\leq \bigg( \dfrac{1}{2} K \left( Q_1, Q_2 \right) \bigg)^{1/2}    
\leq \dfrac{1}{2}  \bigg[  \left\| \bOmega_{1}^{1/2} \left( \mu_1 - \mu_2  \right) \right\|_{2}^{2} 
+ \left\| \mathbf{B} \right\|_{\rm F}^{2} \bigg]^{1/2}, 
\end{align*}
which completes the proof.
\end{proof}

\begin{lemma} \label{lemma:tech_Gaussian_comparison_lower}
    For $\mu_{1}, \mu_{2} \in \bbR^{p}$ and $\bOmega_{1}, \bOmega_{2} \in \symmPD$, let $Q_1 = \cN(\mu_{1}, \bOmega_{1}^{-1}), Q_2 = \cN(\mu_{2}, \bOmega_{2}^{-1})$.
    Suppose that 
    \begin{align*}
        d_{V} \left( Q_1, Q_2 \right) \leq  \dfrac{1}{600}.
    \end{align*}
    Let
    \begin{align*}
        \Delta = \left\| \bOmega_{1}^{1/2} \left( \mu_1 - \mu_2  \right)  \right\|_{2}  \vee \left\| \bOmega_{2}^{-1/2} \bOmega_{1} \bOmega_{2}^{-1/2} - \bI_{p} \right\|_{\rm F}.
    \end{align*}
    Then,
    \begin{align*}
        \dfrac{\Delta}{200} 
        \leq 
        d_{V} \left( Q_1, Q_2 \right) 
        \leq 
        \dfrac{\Delta}{\sqrt{2}}.
    \end{align*}
\end{lemma}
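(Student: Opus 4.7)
The lemma has two inequalities, and the upper bound $d_V(Q_1, Q_2) \leq \Delta/\sqrt{2}$ is short once we can invoke Lemma \ref{lemma:tech_Gaussian_comparison}, so the substantive content is the lower bound $\Delta/200 \leq d_V(Q_1, Q_2)$. My plan is therefore to first establish the lower bound, which then forces $\Delta \leq 200\, d_V(Q_1, Q_2) \leq 200/600 = 1/3$; in particular $\|\bOmega_2^{-1/2}\bOmega_1\bOmega_2^{-1/2} - \bI_p\|_2 \leq \|\bOmega_2^{-1/2}\bOmega_1\bOmega_2^{-1/2} - \bI_p\|_{\rm F} \leq \Delta \leq 1/3 < 0.684$, so Lemma \ref{lemma:tech_Gaussian_comparison} applies and yields
\[
d_V(Q_1, Q_2) \leq \tfrac{1}{2}\sqrt{\|\bOmega_1^{1/2}(\mu_1 - \mu_2)\|_2^2 + \|\bOmega_2^{-1/2}\bOmega_1\bOmega_2^{-1/2} - \bI_p\|_{\rm F}^2} \leq \Delta/\sqrt{2},
\]
using the elementary bound $\sqrt{a^2 + b^2} \leq \sqrt{2}\,(a \vee b)$.

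For the lower bound, I would follow the quantitative reverse Gaussian total variation inequality referenced in the excerpt from \citet{arbas2023polynomial}. After the affine change of coordinates $x \mapsto \bOmega_2^{1/2}(x - \mu_2)$, which preserves $d_V$, we may assume $Q_2 = \cN(0, \bI_p)$ and $Q_1 = \cN(m, \bA^{-1})$ with $m = \bOmega_2^{1/2}(\mu_1 - \mu_2)$ and $\bA = \bOmega_2^{-1/2}\bOmega_1\bOmega_2^{-1/2}$. Writing $\Delta_m = \|\bOmega_1^{1/2}(\mu_1 - \mu_2)\|_2 = \|\bA^{1/2} m\|_2$ and $\Delta_\Sigma = \|\bA - \bI_p\|_{\rm F}$ so that $\Delta = \Delta_m \vee \Delta_\Sigma$, it suffices to prove separately that $d_V \geq c_1 \Delta_m$ and $d_V \geq c_2 \Delta_\Sigma$ with $c_1 \wedge c_2 \geq 1/200$.

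The $\Delta_m$-estimate comes from projecting onto the one-dimensional direction $\bar v = m/\|m\|_2$: the marginals are $\cN(\|m\|_2, \bar v^\top \bA^{-1} \bar v)$ under $Q_1$ and $\cN(0, 1)$ under $Q_2$, and the known one-dimensional Gaussian total variation estimate produces a lower bound of order $\|m\|_2 \asymp \Delta_m$ once the ratio of variances $\bar v^\top \bA^{-1}\bar v$ is within a bounded factor of $1$; this factor is in turn controlled by $\|\bA - \bI_p\|_{\rm F}$, which is handled by a bootstrap initialized from the upper bound $\Delta_\Sigma \leq 1/3$. The $\Delta_\Sigma$-estimate is the genuinely multidimensional part: because the Frobenius norm aggregates contributions from all eigendirections, a single one-dimensional projection can only pick up the spectral norm. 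Instead I would use a quadratic test of the form $\phi(x) = \mathds{1}\{(x - \mu_2)^\top \bOmega_2^{1/2} M \bOmega_2^{1/2}(x - \mu_2) > c\}$ with $M$ aligned with the eigendecomposition of $\bA - \bI_p$, and evaluate $|\bbE_{Q_1}\phi - \bbE_{Q_2}\phi|$ via sharp concentration of Gaussian quadratic forms (Hanson--Wright).

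The hard part will be tracking constants so that one ends up with $c_1 \wedge c_2 \geq 1/200$. In particular, the quadratic-form route for the covariance estimate requires an essentially optimal choice of the threshold $c$ and of $M$, together with two-sided Gaussian anti-concentration of quadratic forms near their mean, and a clean way to absorb the cross-interaction with the mean shift $m$ whenever both $\Delta_m$ and $\Delta_\Sigma$ are non-negligible. The bootstrap needed to close the circular dependence between the lower bound and the spectral hypothesis of Lemma \ref{lemma:tech_Gaussian_comparison} is secondary once the constants are in place.
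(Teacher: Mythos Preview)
The paper does not prove this lemma: its entire proof is the single line ``See Theorem 1.8 in \cite{arbas2023polynomial}.'' Your proposal is therefore a genuine attempt to reconstruct the cited result rather than a comparison with an in-paper argument. Your strategy---reduce to $Q_2=\cN(0,\bI_p)$, $Q_1=\cN(m,\bA^{-1})$, then lower-bound the mean contribution via a one-dimensional projection and the covariance contribution via a quadratic test analyzed by Hanson--Wright---is the standard and correct route, and is essentially what \citet{arbas2023polynomial} do. The one structural point worth tightening is the bootstrap you flag: you should establish the covariance lower bound $d_V \gtrsim \Delta_\Sigma$ \emph{first} (this step can be arranged to be insensitive to the mean by working with the centered quadratic $x\mapsto (x-m)^\top M(x-m)$ under $Q_1$ versus $x\mapsto x^\top M x$ under $Q_2$, or by a simple case split on whether $\|m\|_2$ is large), conclude $\Delta_\Sigma\le 1/3$, and only then run the mean projection argument with a controlled variance ratio. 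For the purposes of this paper, simply citing Theorem~1.8 of \citet{arbas2023polynomial} is all that is required.
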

\begin{proof}
See Theorem 1.8 in \cite{arbas2023polynomial}.    
\end{proof}

\subsection{Technical lemmas for eigenvalue analysis}

\begin{lemma} \label{lemma:pre_theta_consistency}
    Suppose that (\textbf{A0})-(\textbf{A2}) hold. Also, assume that
    \begin{align} \label{assume:pre_theta_consistency_sample}
        n \geq C \left\| \bOmega_{t}^{1/2} \left( \theta_0 - \mu_{t} \right) \right\|_{2}^{2}
    \end{align}    
    on an event $\scrE$, where $C = C(K_{\min}, K_{\max})$ is a large enough constant.
    Then, on $\scrE$,
    \begin{align} \label{eqn:pre_theta_consistency_claim}
        \left\| \bF_{t+1, \theta_0}^{1/2} \left( \theta_0 - \thetaBest[t+1] \right) \right\|_{2} 
        \leq 
        \sqrt{2} \left\| \bOmega_{t}^{1/2} \left( \theta_0 - \mu_{t} \right) \right\|_{2}.
    \end{align}
\end{lemma}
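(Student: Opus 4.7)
The plan is to combine a rough localization of $\thetaBest[t+1]$ around $\theta_0$ (obtained from Lemma~\ref{lemma:tech_pre_bias_bound}) with a sharp quadratic form identity that exploits the first-order conditions. Throughout let $B_t := \|\bOmega_t^{1/2}(\theta_0-\mu_t)\|_2$ and $\bF := \bF_{t+1,\theta_0}$, so that the target bound reads $\|\bF^{1/2}(\thetaBest[t+1]-\theta_0)\|_2 \leq \sqrt{2}\,B_t$.

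\textit{Rough localization.} Define $g(\theta) := \bbE_{t+1}\widetilde{L}_{t+1}(\theta) + \langle\bOmega_t(\theta_0-\mu_t),\theta\rangle$. A direct calculation gives $\nabla g(\theta_0)=0$, $\nabla g(\thetaBest[t+1]) = \bOmega_t(\theta_0-\mu_t)$, and $-\nabla^2 g(\theta)=\FisherTilde[t+1]{\theta}$, so $g$ is concave. Since $\FisherTilde[t+1]{\theta_0} \succeq \bOmega_t$, the relevant radius $r := \|\FisherTilde[t+1]{\theta_0}^{-1/2}\bOmega_t(\theta_0-\mu_t)\|_2$ satisfies $r \leq B_t$. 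Using (A2) and Lemma~\ref{lemma:tech_smooth_tau_bound}, $g$ is third-order smooth at $\theta_0$ with parameters $(\tau_3,\FisherTilde[t+1]{\theta_0},4r)$ where $\tau_3 \leq K_{\max}K_{\min}^{-3/2}n^{-1/2}$. For $C=C(K_{\min},K_{\max})$ sufficiently large in \eqref{assume:pre_theta_consistency_sample} one checks both $\tau_3 r \leq 1/16$ and $\Theta(\theta_0,\FisherTilde[t+1]{\theta_0},4r) \subseteq \Theta(\theta_0,\bI_p,1/2)$. Lemma~\ref{lemma:tech_pre_bias_bound} then yields $\|\FisherTilde[t+1]{\theta_0}^{1/2}(\thetaBest[t+1]-\theta_0)\|_2 \leq 4r \leq 4B_t$, and a fortiori $\|\bF^{1/2}(\thetaBest[t+1]-\theta_0)\|_2 \leq 4B_t$, so that the entire segment $\theta_s := \theta_0 + s(\thetaBest[t+1]-\theta_0)$, $s\in[0,1]$, lies inside both $\Theta(\theta_0,\bF,4B_t)$ and $\Theta(\theta_0,\bI_p,1/2)$.

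\textit{Sharp identity and conclusion.} By the fundamental theorem of calculus and stochastic linearity of $L_{t+1}$ (A1), which makes $\nabla^2\bbE L_{t+1}=\nabla^2 L_{t+1}$ deterministic, we have $\nabla\bbE L_{t+1}(\thetaBest[t+1]) - \nabla\bbE L_{t+1}(\theta_0) = -\widetilde{\bF}_{t+1}(\thetaBest[t+1]-\theta_0)$, where $\widetilde{\bF}_{t+1} := \int_0^1 \bF_{t+1,\theta_s}\,ds$. Since $\nabla\bbE L_{t+1}(\theta_0)=0$ and the optimality of $\thetaBest[t+1]$ gives $\nabla\bbE L_{t+1}(\thetaBest[t+1]) = \bOmega_t(\thetaBest[t+1]-\mu_t)$, rearranging produces the identity $(\bOmega_t+\widetilde{\bF}_{t+1})(\thetaBest[t+1]-\theta_0) = \bOmega_t(\mu_t-\theta_0)$. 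Pairing with $\thetaBest[t+1]-\theta_0$, applying Cauchy--Schwarz on the right, and using $\bOmega_t \preceq \bOmega_t+\widetilde{\bF}_{t+1}$ yields
\[
\|(\bOmega_t+\widetilde{\bF}_{t+1})^{1/2}(\thetaBest[t+1]-\theta_0)\|_2 \;\leq\; B_t.
\]
Lemma~\ref{lemma:tech_Fisher_smooth} applied at each $\theta_s$ (with the rough localization ensuring $\theta_s \in \Theta(\theta_0,\bF,4B_t)$) and then integrated in $s$ gives $\widetilde{\bF}_{t+1} \succeq (1-4\tau_3 B_t)\bF$. Choosing $C$ large enough so that $4\tau_3 B_t \leq 1/2$ forces $\bOmega_t+\widetilde{\bF}_{t+1} \succeq \tfrac{1}{2}\bF$, and combining with the last display gives $\|\bF^{1/2}(\thetaBest[t+1]-\theta_0)\|_2 \leq \sqrt{2}\,B_t$.

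\textit{Main obstacle.} The argument is essentially algebraic once the rough localization is in hand; the only real care is bookkeeping on $C=C(K_{\min},K_{\max})$. The two hypotheses invoked---namely $\tau_3 r \leq 1/16$ for Lemma~\ref{lemma:tech_pre_bias_bound} and the contraction $4\tau_3 B_t \leq 1/2$ at the end---both reduce to inequalities of the form $n \geq c(K_{\min},K_{\max})B_t^2$ and so are absorbed by a single sufficiently large choice of $C$. A minor subtlety worth noting is that the sharp constant $\sqrt{2}$ (rather than the crude $4$ that falls out of Lemma~\ref{lemma:tech_pre_bias_bound} alone) is essential: it is the Cauchy--Schwarz step, not the localization step, that ultimately delivers the optimal dependence on the prior bias $B_t$.
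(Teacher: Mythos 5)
Your proof is correct, but it takes a genuinely different route from the paper's. The paper argues directly by contradiction with the radius $\sqrt{2}B_t$ built in from the start: it derives the value inequality $\bbE_{t+1}L_{t+1}(\thetaBest[t+1]) - \bbE_{t+1}L_{t+1}(\theta_0) \geq -\tfrac{1}{2}B_t^2$ from optimality of $\thetaBest[t+1]$ for the penalized objective, then shows via second-order Taylor expansion, Lemma~\ref{lemma:tech_Fisher_smooth}, and concavity that any boundary point of $\Theta(\theta_0, \bF_{t+1,\theta_0}, \sqrt{2}B_t)$ would violate that inequality. In contrast, you run a two-stage scheme: first Lemma~\ref{lemma:tech_pre_bias_bound} applied to the linearly perturbed objective $g$ to get a crude $4B_t$ localization, then the exact first-order identity $(\bOmega_t + \widetilde{\bF}_{t+1})(\thetaBest[t+1]-\theta_0) = \bOmega_t(\mu_t-\theta_0)$, which you post-process with Cauchy--Schwarz, the comparison $\bOmega_t \preceq \bOmega_t + \widetilde{\bF}_{t+1}$, and a Hessian contraction $\widetilde{\bF}_{t+1} \succeq (1-4\tau_3 B_t)\bF_{t+1,\theta_0} \succeq \tfrac{1}{2}\bF_{t+1,\theta_0}$. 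Your route is more modular (it factors the argument into a reusable localization plus a short algebraic identity) and makes the origin of the $\sqrt{2}$ more legible---it is Cauchy--Schwarz times the factor-$2$ loss in the integrated Hessian bound. The paper's route is more compact and avoids the need to first invoke the $4r$-radius lemma. Both arguments lean on the same ingredients from (\textbf{A2}) and Lemma~\ref{lemma:tech_smooth_tau_bound}, and the constant bookkeeping you flag (the inclusion $\Theta(\theta_0,\bF_{t+1,\theta_0},4B_t) \subseteq \Theta(\theta_0,\bI_p,1/2)$ and the bounds $\tau_3 r \le 1/16$ and $4\tau_3 B_t \le 1/2$) does indeed reduce to a single sufficiently large $C(K_{\min},K_{\max})$ as you claim.
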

\begin{proof}
 In this proof, we will work on the event $\scrE$ without explicitly mentioning it, and assume $C = C(K_{\min}, K_{\max})$ in \eqref{assume:pre_theta_consistency_sample} is large enough.
By the definition of $\thetaBest[t+1]$, we have
\begin{align*}
    \bbE_{t+1} L_{t+1}(\theta_0) - \dfrac{1}{2} \left\| \bOmega_{t}^{1/2} \left( \theta_0 - \mu_{t} \right) \right\|_{2}^{2} 
    &= \bbE_{t+1} \widetilde{L}_{t+1}(\theta_0)  
    \leq 
    \bbE_{t+1} \widetilde{L}_{t+1}(\thetaBest[t+1]) \\
    &= \bbE_{t+1} L_{t+1}(\thetaBest[t+1]) - \dfrac{1}{2} \left\| \bOmega_{t}^{1/2} \left( \thetaBest[t+1] - \mu_{t} \right) \right\|_{2}^{2} \\
    &\leq 
    \bbE_{t+1} L_{t+1}(\thetaBest[t+1]).
\end{align*}
It follows that 
\begin{align} \label{eqn:pre_theta_consistency_eq1}
    \bbE_{t+1} L_{t+1}(\thetaBest[t+1]) - \bbE_{t+1} L_{t+1}(\theta_0) \geq - \dfrac{1}{2} \left\| \bOmega_{t}^{1/2} \left( \theta_0 - \mu_{t} \right) \right\|_{2}^{2}.
\end{align}
In this proof, we denote $\Theta_{n, t+1} = \Theta (\theta_0, \Fisher[t+1]{\theta_0}, \sqrt{2} \| \bOmega_{t}^{1/2}(\theta_0 - \mu_{t})\|_{2} )$. 
For $\theta \in \Theta_{n, t+1}$, we have
\begin{align*}
    \left\| \theta - \theta_0 \right\|_{2} 
    &\leq \lambda_{\min}^{-1/2} (\Fisher[t+1]{\theta_0}) 
    \sqrt{2}\left\| \bOmega_{t}^{1/2} \left(\theta_0 - \mu_{t} \right) \right\|_{2} \\
    \overset{(\textbf{A2})}&{\leq} 
    (K_{\min} n)^{-1/2} \sqrt{2}\left\| \bOmega_{t}^{1/2} \left(\theta_0 - \mu_{t} \right) \right\|_{2}
    \overset{ \eqref{assume:pre_theta_consistency_sample} }{\leq} \dfrac{1}{2}.
\end{align*}
It follows that
\begin{align*}
    \Theta_{n, t+1} \subseteq \left\{ \theta \in \Theta : \left\| \theta - \theta_0 \right\|_{2} \leq 1/2 \right\}.
\end{align*}
By Lemma \ref{lemma:tech_smooth_tau_bound} and (\textbf{A2}), $\bbE_{t+1} L_{t+1}(\theta)$ satisfies the third order smoothness at $\theta_0$ with parameters
\begin{align*}
    \left( K_{\max} K_{\min}^{-3/2} n^{-1/2}, \: \Fisher[t+1]{\theta_0}, \: \sqrt{2} \left\| \bOmega_{t}^{1/2}(\theta_0 - \mu_{t}) \right\|_{2}  \right).
\end{align*}

We will prove \eqref{eqn:pre_theta_consistency_claim} by contradiction. Suppose $\thetaBest[t+1] \notin \Theta_{n, t+1}$.
Let 
\begin{align*}
    \partial \Theta_{n, t+1} = \left\{ \theta \in \Theta : \left\| \Fisher[t+1]{\theta_0}^{1/2} (\theta - \theta_0) \right\|_{2}  = \sqrt{2} \left\| \bOmega_{t}^{1/2}(\theta_0 - \mu_{t}) \right\|_{2} \right\}.
\end{align*}
For $\theta^{\circ} \in \partial \Theta_{n, t+1}$, we have
\begin{align*}
&\bbE_{t+1} L_{t+1}(\theta^{\circ}) - \bbE_{t+1} L_{t+1}(\theta_0)  \\
&\leq \bigg( \nabla \bbE_{t+1} L_{t+1}(\theta_0) \bigg)^{\top} \left( \theta^{\circ} - \theta_0 \right)  - \dfrac{1}{2} \inf_{\theta \in \Theta_{n, t+1}}  \left\| \Fisher[t+1]{\theta}^{1/2} (\theta^{\circ} - \theta_0) \right\|_{2}^{2} \\
&= - \dfrac{1}{2} \inf_{\theta \in \Theta_{n, t+1}}  \left\| \Fisher[t+1]{\theta}^{1/2}(\theta^{\circ} - \theta_0) \right\|_{2}^{2} \\
\overset{ \text{Lemma \ref{lemma:tech_Fisher_smooth}} }&{\leq}
-\dfrac{1}{2} \left( 1 -  K_{\max} K_{\min}^{-3/2} n^{-1/2} \sqrt{2} \left\| \bOmega_{t}^{1/2}(\theta_0 - \mu_{t}) \right\|_{2} \right) 
\left\| \Fisher[t+1]{\theta_0}^{1/2}(\theta^{\circ} - \theta_0) \right\|_{2}^{2} \\
&= 
-\dfrac{1}{2} \left( 1 -  K_{\max} K_{\min}^{-3/2} n^{-1/2} \sqrt{2} \left\| \bOmega_{t}^{1/2}(\theta_0 - \mu_{t}) \right\|_{2} \right) 2 \left\| \bOmega_{t}^{1/2}(\theta_0 - \mu_{t}) \right\|_{2}^{2} \\
\overset{ \eqref{assume:pre_theta_consistency_sample} }&{<}
-\dfrac{1}{2} \left\| \bOmega_{t}^{1/2}(\theta_0 - \mu_{t}) \right\|_{2}^{2}.
\end{align*}
where the second equality holds by the definition of $\partial \Theta_{n, t+1}$.
Consequently, we have
\begin{align*}
\bbE_{t+1} L_{t+1}(\thetaBest[t+1]) - \bbE_{t+1} L_{t+1}(\theta_0) 
< 
-\dfrac{1}{2} \left\| \bOmega_{t}^{1/2}(\theta_0 - \mu_{t}) \right\|_{2}^{2},
\end{align*}
by the concavity of the map $\theta \mapsto \bbE_{t+1}L_{t+1}(\theta)$, which contradicts to \eqref{eqn:pre_theta_consistency_eq1}. This completes the proof of \eqref{eqn:pre_theta_consistency_claim}.
\end{proof}

\begin{remark}
    The constant $C$ in \eqref{assume:pre_theta_consistency_sample} can be chosen as 
    \begin{align*}
        C = 8K_{\min}^{-1} \vee \big( 8 K_{\max}^{2} K_{\min}^{-3} + 1 \big).
    \end{align*}
\end{remark}

\begin{lemma} \label{lemma:ignorable_update}
    Suppose that (\textbf{A0})-(\textbf{A2}) hold. Also, on an event $\scrE$, assume that
    $\left\| \thetaBest - \theta_0 \right\|_{2} \leq 1/4$, and
    \begin{align} 
    \begin{aligned} \label{assume:ignorable_update_sample}
        n \geq C \Bigg(
        \left\| \Fisher[t+1]{\thetaBest}^{-1/2} \nabla \bbE_{t+1} L_{t+1}(\thetaBest) \right\|_{2}^{2} \vee \left\| \bOmega_{t}^{1/2} \left( \thetaBest[t] - \mu_{t} \right) \right\|_{2}^{2}
        \Bigg)
    \end{aligned}    
    \end{align}    
    for a large enough constant $C = C(K_{\min}, K_{\max})$.
    Then, on $\scrE$,
    \begin{align} \label{claim:ignorable_update}
    \begin{aligned}
        &\left\| \bF_{t+1, \thetaBest}^{1/2} \left( \thetaBest[t+1] - \thetaBest \right) \right\|_{2} \\
        &\leq 
        \bigg( 4 \left\| \Fisher[t+1]{\thetaBest}^{-1/2} \nabla \bbE_{t+1} L_{t+1}(\thetaBest) \right\|_{2} \bigg) \vee \bigg( 2 \left\| \bOmega_{t}^{1/2} \left( \thetaBest[t] - \mu_{t} \right) \right\|_{2} \bigg).
    \end{aligned}        
    \end{align}
\end{lemma}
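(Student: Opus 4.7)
The plan is to adapt the geometric contradiction argument used in Lemma \ref{lemma:pre_theta_consistency} with an elliptical ball whose radius is tuned to produce the sharp $(4A) \vee (2B)$ bound, where I abbreviate $A := \| \Fisher[t+1]{\thetaBest}^{-1/2} \nabla \bbE_{t+1} L_{t+1}(\thetaBest) \|_{2}$ and $B := \| \bOmega_{t}^{1/2}(\thetaBest - \mu_t) \|_{2}$. With $g(\theta) := \bbE_{t+1}\widetilde L_{t+1}(\theta)$, the pMLE $\thetaBest[t+1]$ is the strictly concave maximizer of $g$ and satisfies $\nabla g(\thetaBest) = \nabla \bbE_{t+1} L_{t+1}(\thetaBest) - \bOmega_{t}(\thetaBest - \mu_t)$. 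Setting $R := (4A) \vee (2B)$ and $\Theta_{n, t+1} := \Theta(\thetaBest, \Fisher[t+1]{\thetaBest}, R)$, if $\thetaBest[t+1]$ were to lie outside $\Theta_{n, t+1}$ then concavity would force some boundary point $\theta^\circ$ on the segment between $\thetaBest$ and $\thetaBest[t+1]$ to satisfy $g(\theta^\circ) \geq g(\thetaBest)$; the proof goal is to contradict this by showing $g(\theta^\circ) < g(\thetaBest)$ for every $\theta^\circ \in \partial \Theta_{n, t+1}$.

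The key structural observation, which drives the asymmetric $4A \vee 2B$ bound, is the identity
\begin{align*}
\big\| \FisherTilde[t+1]{\thetaBest}^{1/2} u \big\|_{2}^{2} = \big\| \Fisher[t+1]{\thetaBest}^{1/2} u \big\|_{2}^{2} + \big\| \bOmega_{t}^{1/2} u \big\|_{2}^{2} = R^{2} + c^{2},
\end{align*}
where $u := \theta^\circ - \thetaBest$ and $c := \|\bOmega_{t}^{1/2} u\|_{2}$. A second-order Taylor expansion of $g$ at $\thetaBest$, combined with Lemma \ref{lemma:tech_Fisher_smooth} applied to the third-order smoothness parameter $\tau_{3}$ of $g$ at $\thetaBest$ on a suitably large $\FisherTilde[t+1]{\thetaBest}$-vicinity, yields
\begin{align*}
g(\theta^\circ) - g(\thetaBest) \leq A R + B c - \tfrac{1}{2}\big( 1 - \tau_{3}\sqrt{R^{2} + c^{2}}\, \big) (R^{2} + c^{2}),
\end{align*}
where the linear terms come from Cauchy--Schwarz applied separately to the two pieces of $\nabla g(\thetaBest)$.

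The sharp bound is then obtained by maximizing the right-hand side over $c \geq 0$. For a sufficiently small smoothness correction, the optimizer is near $c^{\star} \approx B$, giving a majorant of the form $AR + B^{2}/2 - R^{2}/2 + O(\tau_{3} R) R^{2}$. Using the two defining inequalities $R \geq 4A$ and $R \geq 2B$, one bounds $AR \leq R^{2}/4$ and $B^{2}/2 \leq R^{2}/8$, so the leading order is at most $-R^{2}/8 < 0$. Verifying the smallness of $\tau_{3}\sqrt{R^{2} + c^{\star 2}} = O(\tau_{3} R)$ invokes Lemma \ref{lemma:tech_smooth_tau_bound} and (\textbf{A2}) after confirming the inclusion $\Theta_{n, t+1} \subseteq \Theta(\theta_0, \bI_p, 1/2)$, which itself follows from $\|\thetaBest - \theta_0\|_{2} \leq 1/4$, the lower eigenvalue bound $\lambda_{\min}(\Fisher[t+1]{\thetaBest}) \gtrsim K_{\min}n$ from (\textbf{A2}), and the sample-size hypothesis $n \geq C(A^{2} \vee B^{2})$ with $C = C(K_{\min}, K_{\max})$ large enough.

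The main obstacle will be recovering the precise $4A \vee 2B$ form of the bound rather than the crude $4(A + B)$ that a direct application of Lemma \ref{lemma:tech_pre_bias_bound} to $g$ around $\thetaBest$ would yield. This hinges on the identity above, which lets the quadratic penalty $c^{2}$ in $\|\FisherTilde[t+1]{\thetaBest}^{1/2} u\|_{2}^{2}$ absorb the $B c$ linear term whenever $c$ grows, while the $R^{2}$ portion absorbs $AR$ on the complementary regime. A secondary bookkeeping concern is to ensure that the universal constant $C$ in the sample-size assumption simultaneously controls (i) the localization $\Theta_{n, t+1} \subseteq \Theta(\theta_0, \bI_p, 1/2)$, (ii) the upper bound $\tau_{3} R \ll 1$, and (iii) the positivity of the quadratic coefficient in the Taylor bound throughout the relevant range of $c$.
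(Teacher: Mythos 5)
Your proposal takes a genuinely different route from the paper. The paper works with the \emph{unpenalized} expected objective $f(\theta) := \bbE_{t+1}L_{t+1}(\theta)$: from the maximality of $\thetaBest[t+1]$ for $\bbE_{t+1}\widetilde L_{t+1}$ it extracts the a priori lower bound $f(\thetaBest[t+1]) - f(\thetaBest) \geq -\tfrac12 B^2$, bounds $f(\theta^\circ) - f(\thetaBest) < -\tfrac18 r_t^2$ on $\partial\Theta(\thetaBest,\Fisher[t+1]{\thetaBest},r_t)$ using only $A$ and the $\Fisher$-metric, and closes via $-\tfrac18 r_t^2 \le -\tfrac12 B^2$ (from $r_t \ge 2B$) and concavity. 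There is no $c$-maximization, and $\bOmega_t$ never enters the curvature or localization step. Your route instead works with $g = \bbE_{t+1}\widetilde L_{t+1}$ directly, carries the $\|\FisherTilde[t+1]{\thetaBest}^{1/2}u\|_2^2 = R^2 + c^2$ decomposition, and optimizes over $c$; conceptually this is sound and does recover the sharp $(4A)\vee(2B)$ form.

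However, there is a concrete technical gap in how you invoke Lemma~\ref{lemma:tech_Fisher_smooth}. You apply it with $\bF_\theta = \FisherTilde[t+1]{\thetaBest}$ on a $\FisherTilde$-vicinity of radius $\sqrt{R^2+c^2}$, which can be as large as $r_{\max} \approx \sqrt{1+\|\bOmega_t\|_2/\lambda_{\min}(\Fisher[t+1]{\thetaBest})}\,R$. To verify the third-order smoothness of $g$ on that $\FisherTilde$-ball via (\textbf{A2}) and Lemma~\ref{lemma:tech_smooth_tau_bound}, you need $\Theta(\thetaBest, \FisherTilde[t+1]{\thetaBest}, r_{\max}) \subseteq \Theta(\theta_0, \bI_p, 1/2)$; but the diameter in the $\bI_p$-metric scales like $\lambda_{\min}^{-1/2}(\FisherTilde[t+1]{\thetaBest})\, r_{\max}$, and (\textbf{A2}) gives no lower bound on $\lambda_{\min}(\bOmega_t)$, so when $\bOmega_t$ is ill-conditioned (large $\lambda_{\max}$, small $\lambda_{\min}$) this localization can fail even under the stated sample-size hypothesis $n \ge C(A^2 \vee B^2)$. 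The fix is to exploit that the penalty is exactly quadratic, so $\FisherTilde[t+1]{\theta'} - \FisherTilde[t+1]{\thetaBest} = \Fisher[t+1]{\theta'} - \Fisher[t+1]{\thetaBest}$: split the Hessian term as $-\tfrac12\langle\FisherTilde[t+1]{\widetilde\theta}, u^{\otimes 2}\rangle = -\tfrac12 c^2 - \tfrac12\|\Fisher[t+1]{\widetilde\theta}^{1/2} u\|_2^2$, keep the $\bOmega_t$-part exact, and apply Lemma~\ref{lemma:tech_Fisher_smooth} only to $\bbE_{t+1}L_{t+1}$ with the $\Fisher[t+1]{\thetaBest}$-metric on the $R$-ball where the segment actually lives. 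That gives $g(\theta^\circ) - g(\thetaBest) \le AR + Bc - \tfrac12 c^2 - \tfrac12(1 - \tau_3' R)R^2$, from which the $c$-optimization and the $R \ge 4A$, $R \ge 2B$ inequalities yield $< 0$ as you intend. With this repair your argument is correct, but it is somewhat more elaborate than the paper's, which dodges the penalized Fisher metric entirely.
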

\begin{proof}
In this proof, we work on the event $\scrE$ without explicitly referring it, and assume $C = C(K_{\min}, K_{\max})$ in \eqref{assume:ignorable_update_sample} is large enough.
By the definition of $\thetaBest[t+1]$, we have, for $t \in \{0, 1, ..., T -1\}$,
\begin{align*}
    &\bbE_{t+1} L_{t+1}(\thetaBest) - \dfrac{1}{2} \left\| \bOmega_{t}^{1/2} \left( \thetaBest - \mu_{t} \right) \right\|_{2}^{2} 
    = \bbE_{t+1} \widetilde{L}_{t+1}(\thetaBest)  
    \leq 
    \bbE_{t+1} \widetilde{L}_{t+1}(\thetaBest[t+1]) \\
    &= \bbE_{t+1} L_{t+1}(\thetaBest[t+1]) - \dfrac{1}{2} \left\| \bOmega_{t}^{1/2} \left( \thetaBest[t+1] - \mu_{t} \right) \right\|_{2}^{2}
    \leq 
    \bbE_{t+1} L_{t+1}(\thetaBest[t+1]).
\end{align*}
It follows that 
\begin{align} \label{eqn:ignorable_update_eq1}
    \bbE_{t+1} L_{t+1}(\thetaBest[t+1]) - \bbE_{t+1} L_{t+1}(\thetaBest) \geq - \dfrac{1}{2} \left\| \bOmega_{t}^{1/2} \left( \thetaBest - \mu_{t} \right) \right\|_{2}^{2}.
\end{align}
Let
\begin{align*}
    r_{t} = \bigg( 4 \left\| \Fisher[t+1]{\thetaBest}^{-1/2} \nabla \bbE_{t+1} L_{t+1}(\thetaBest) \right\|_{2} \bigg) \vee \bigg( 2 \left\| \bOmega_{t}^{1/2} \left( \thetaBest[t] - \mu_{t} \right) \right\|_{2} \bigg).
\end{align*}
In this proof, we denote $\Theta_{n, t+1} = \Theta (\thetaBest, \Fisher[t+1]{\thetaBest}, r_{t})$. 
For $\theta \in \Theta_{n, t+1}$, we have
\begin{align*}
    \left\| \theta - \thetaBest \right\|_{2} 
    \leq \lambda_{\min}^{-1/2} (\Fisher[t+1]{\thetaBest}) r_{t}
    \leq (K_{\min} n)^{-1/2} r_{t}
    \overset{\eqref{assume:ignorable_update_sample}}{\leq} \dfrac{1}{4},
\end{align*}
where the second inequality holds by $\| \thetaBest - \theta_0 \|_{2} \leq 1/4$ and (\textbf{A2}).
It follows that
\begin{align*}
    \Theta_{n, t+1} \subseteq \left\{ \theta \in \Theta : \left\| \theta - \theta_0 \right\|_{2} \leq 1/2 \right\}.
\end{align*}
By Lemma \ref{lemma:tech_smooth_tau_bound} and (\textbf{A2}), $\bbE_{t+1} L_{t+1}(\theta)$ satisfies the third order smoothness at $\thetaBest$ with parameters
\begin{align*}
    \left( K_{\max} K_{\min}^{-3/2} n^{-1/2}, \: \Fisher[t+1]{\thetaBest}, \: r_t \right).
\end{align*}

Next, we will prove \eqref{claim:ignorable_update} by contradiction. Suppose $\thetaBest[t+1] \notin \Theta_{n, t+1}$.
Let 
\begin{align} \label{eqn:ignorable_update_boundary_set}
    \partial \Theta_{n, t+1} = \left\{ \theta \in \Theta : \left\| \Fisher[t+1]{\thetaBest}^{1/2} (\theta - \thetaBest) \right\|_{2} = r_{t} \right\}.
\end{align}
For $\theta^{\circ} \in \partial \Theta_{n, t+1}$, Taylor's theorem gives
\begin{align*}
&\bbE_{t+1} L_{t+1}(\theta^{\circ}) - \bbE_{t+1} L_{t+1}(\thetaBest)  \\
&\leq \bigg( \nabla \bbE_{t+1} L_{t+1}(\thetaBest) \bigg)^{\top} \left( \theta^{\circ} - \thetaBest \right)  - \dfrac{1}{2} \inf_{\theta \in \Theta_{n, t+1}}  \left\| \Fisher[t+1]{\theta}^{1/2} (\theta^{\circ} - \thetaBest) \right\|_{2}^{2} \\
&= 
\bigg( \Fisher[t+1]{\thetaBest}^{-1/2} \nabla \bbE_{t+1} L_{t+1}(\thetaBest) \bigg)^{\top} \Fisher[t+1]{\thetaBest}^{1/2} \left( \theta^{\circ} - \thetaBest \right)  - \dfrac{1}{2} \inf_{\theta \in \Theta_{n, t+1}}  \left\| \Fisher[t+1]{\theta}^{1/2} (\theta^{\circ} - \thetaBest) \right\|_{2}^{2} \\
\overset{\text{Lemma \ref{lemma:tech_Fisher_smooth}}}&{\leq} 
\left\| \Fisher[t+1]{\thetaBest}^{-1/2} \nabla \bbE_{t+1} L_{t+1}(\thetaBest) \right\|_{2}
\left\| \Fisher[t+1]{\thetaBest}^{1/2} \left( \theta^{\circ} - \thetaBest \right) \right\|_{2} \\
&\qquad \quad  - \dfrac{1}{2} \left( 1 - K_{\max} K_{\min}^{-3/2} n^{-1/2} r_{t} \right) 
\left\| \Fisher[t+1]{\thetaBest}^{1/2} (\theta^{\circ} - \thetaBest) \right\|_{2}^{2} \\
\overset{\eqref{eqn:ignorable_update_boundary_set}}&{=} 
\bigg[ 
\left\| \Fisher[t+1]{\thetaBest}^{-1/2} \nabla \bbE_{t+1} L_{t+1}(\thetaBest) \right\|_{2}
- \dfrac{1}{2} \left( 1 - K_{\max} K_{\min}^{-3/2} n^{-1/2} r_{t} \right) r_{t}
\bigg] r_{t} \\
\overset{\eqref{assume:ignorable_update_sample}}&{<}
\bigg[ 
\dfrac{1}{4} r_{t}
- \dfrac{1}{2} \left( 1 - \dfrac{1}{4} \right) r_{t}
\bigg] r_{t} 
= -\dfrac{1}{8} r_{t}^{2}.
\end{align*}
Consequently, 
\begin{align*}
\bbE_{t+1} L_{t+1}(\thetaBest[t+1]) - \bbE_{t+1} L_{t+1}(\thetaBest) 
< -\dfrac{1}{8} r_{t}^{2} \leq -\dfrac{1}{2} \left\| \bOmega_{t}^{1/2} \left( \thetaBest[t] - \mu_{t} \right) \right\|_{2}^{2},
\end{align*}
by the concavity of the map $\theta \mapsto \bbE_{t+1} L_{t+1}(\theta)$, which contradicts to \eqref{eqn:ignorable_update_eq1}. This completes the proof of \eqref{claim:ignorable_update}.
\end{proof}

\begin{lemma} \label{lemma:ignorable_update2}
    Suppose that (\textbf{A0})-(\textbf{A2}) hold. Let $\alpha \in [1/2, 1]$.
    Also, assume that there exist some constants $D_1, D_2, D_3, D_{4} > 0$ such that
    \begin{align}
    \begin{aligned} \label{assume:ignorable_update_2}
        &\left\| \FisherBest^{1/2} \left( \thetaBest - \theta_0 \right)\right\|_{2} \leq D_1 M_n t^{\alpha} \sqrt{p_{\ast}}, \qquad 
        \left\| \FisherBest^{1/2} \left( \thetaMAP - \thetaBest \right)\right\|_{2} \leq D_2 M_n \sqrt{t^{-1} p_{\ast}}, \\
        &\left\| \bOmega_{t}^{1/2}(\thetaMAP - \mu_{t}) \right\|_{2} \leq  D_3 M_n \sqrt{t^{-1} p_{\ast}}, \qquad 
        \lambda_{\min} \left( \FisherBest \right) \geq D_4 nt
    \end{aligned}
    \end{align}    
    on an event $\scrE$.
    Assume further that, on $\scrE$,
    \begin{align} 
    \begin{aligned} \label{assume:ignorable_update_2_2}
        \left\| \FisherMAP^{-1/2} \bOmega_{t} \FisherMAP^{-1/2} - \bI_{p} \right\|_{2} \leq \dfrac{1}{4}, \quad 
        \left\| \FisherBest^{-1/2} \FisherMAP \FisherBest^{-1/2} - \bI_{p} \right\|_{2} \leq \dfrac{1}{8}, \quad 
        \left\| \thetaBest - \theta_0 \right\|_{2} \leq \dfrac{1}{4},
    \end{aligned}        
    \end{align}
    and 
    \begin{align} 
    \begin{aligned} \label{assume:ignorable_update_2_sample}
        n \geq C M_n^{2} t^{2\alpha-1} p_{\ast},
    \end{aligned}        
    \end{align}
    for a large enough constant $C = C(K_{\min}, K_{\max}, D_1, D_2, D_3, D_4)$.
    Then, on $\scrE$,
    \begin{align*}
        \left\| \bF_{t+1, \thetaBest}^{1/2} \left( \thetaBest[t+1] - \thetaBest \right) \right\|_{2} 
        \leq 
        K M_n t^{\alpha-1/2} \sqrt{p_{\ast}},
    \end{align*}
    where $K = K(D_1, D_2, D_3, D_4, K_{\min}, K_{\max})$.
\end{lemma}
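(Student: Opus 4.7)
The plan is to derive the bound by invoking Lemma \ref{lemma:ignorable_update} with the choice $(\mu_t, \thetaBest[t], \thetaBest[t+1])$. That lemma delivers the target inequality once we control the two ingredients
\[
{\rm (a)} \ = \ \left\| \Fisher[t+1]{\thetaBest}^{-1/2} \nabla \bbE_{t+1} L_{t+1}(\thetaBest) \right\|_{2},
\qquad
{\rm (b)} \ = \ \left\| \bOmega_{t}^{1/2} \left( \thetaBest - \mu_t \right) \right\|_{2},
\]
and also verifies the sample-size requirement \eqref{assume:ignorable_update_sample} of that lemma. Assumption (\textbf{A2}) together with $\| \thetaBest - \theta_0 \|_2 \leq 1/4$ places the whole line segment joining $\theta_0$ and $\thetaBest$ inside $\Theta(\theta_0, \bI_p, 1/2)$, so the Fisher information $\Fisher[t+1]{\theta}$ along this segment has operator norm bounded by $K_{\max} n$, and $\lambda_{\min}(\Fisher[t+1]{\thetaBest}) \geq K_{\min} n$. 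All constants appearing below therefore depend only on $(K_{\min}, K_{\max}, D_1, D_2, D_3, D_4)$, and the sample-size bound \eqref{assume:ignorable_update_2_sample} will absorb any residual small factors.

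For ${\rm (a)}$, I use that $\theta_0$ is the maximizer of $\bbE_{t+1} L_{t+1}$, so $\nabla \bbE_{t+1} L_{t+1}(\theta_0) = 0$. Taylor's theorem then gives
\[
\nabla \bbE_{t+1} L_{t+1}(\thetaBest)
\ = \
-\left( \int_0^{1} \Fisher[t+1]{\theta_0 + s(\thetaBest - \theta_0)} \, ds \right) \big( \thetaBest - \theta_0 \big).
\]
Taking the $\| \Fisher[t+1]{\thetaBest}^{-1/2} \cdot \|_{2}$ norm and using $\lambda_{\min}(\Fisher[t+1]{\thetaBest}) \geq K_{\min} n$ together with the uniform bound $K_{\max} n$ on $\lambda_{\max}(\Fisher[t+1]{\cdot})$ over the line segment, and finally the assumption $\lambda_{\min}(\FisherBest) \geq D_4 n t$ combined with $\| \FisherBest^{1/2}(\thetaBest - \theta_0) \|_2 \leq D_1 M_n t^{\alpha} \sqrt{p_{\ast}}$, yields
\[
{\rm (a)} \ \leq \ \frac{K_{\max}}{\sqrt{K_{\min}}} \sqrt{n} \cdot \| \thetaBest - \theta_0 \|_2 \ \lesssim \ M_n \, t^{\alpha - 1/2} \sqrt{p_{\ast}}.
\]

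For ${\rm (b)}$, I apply the triangle inequality through $\thetaMAP$: $\| \bOmega_t^{1/2}(\thetaBest - \mu_t) \|_2 \leq \| \bOmega_t^{1/2}(\thetaBest - \thetaMAP) \|_2 + \| \bOmega_t^{1/2}(\thetaMAP - \mu_t) \|_2$. The second term is $\leq D_3 M_n \sqrt{t^{-1} p_{\ast}}$ by assumption. For the first, I factor through $\FisherMAP$ and $\FisherBest$:
\[
\left\| \bOmega_t^{1/2}\big( \thetaBest - \thetaMAP \big) \right\|_2 \ \leq \ \left\| \bOmega_t^{1/2} \FisherMAP^{-1/2} \right\|_2 \left\| \FisherMAP^{1/2} \FisherBest^{-1/2} \right\|_2 \left\| \FisherBest^{1/2}\big( \thetaBest - \thetaMAP \big) \right\|_2,
\]
and the first two operator norms are bounded by constants by \eqref{assume:ignorable_update_2_2}, while the last factor is $\leq D_2 M_n \sqrt{t^{-1} p_{\ast}}$. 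Hence ${\rm (b)} \lesssim M_n \sqrt{t^{-1} p_{\ast}} \leq M_n t^{\alpha - 1/2} \sqrt{p_{\ast}}$ since $\alpha \geq 1/2$.

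Squaring these, both ${\rm (a)}^2$ and ${\rm (b)}^2$ are of order $M_n^2 t^{2\alpha - 1} p_{\ast}$, so assumption \eqref{assume:ignorable_update_2_sample} implies the sample-size condition \eqref{assume:ignorable_update_sample} required by Lemma \ref{lemma:ignorable_update}. Applying that lemma then gives the claimed bound $\| \bF_{t+1, \thetaBest}^{1/2}(\thetaBest[t+1] - \thetaBest) \|_2 \leq K M_n t^{\alpha - 1/2} \sqrt{p_{\ast}}$. The only delicate step is (a): one must be careful that the line segment between $\theta_0$ and $\thetaBest$ lies inside the local set where (\textbf{A2}) applies, so that the Fisher information can be uniformly controlled; everything else reduces to bookkeeping of the constants from \eqref{assume:ignorable_update_2} and \eqref{assume:ignorable_update_2_2}.
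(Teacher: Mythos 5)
Your proposal is correct and follows essentially the same route as the paper: bound the two ingredients needed by Lemma \ref{lemma:ignorable_update} --- the gradient norm via Taylor's theorem around $\theta_0$ combined with (\textbf{A2}) and $\lambda_{\min}(\FisherBest) \geq D_4 nt$, and the prior-penalty term via the triangle inequality through $\thetaMAP$ using the conditions in \eqref{assume:ignorable_update_2_2} --- then invoke that lemma. The only cosmetic difference is that you factor the gradient bound through the raw Euclidean norm $\|\thetaBest - \theta_0\|_2$ while the paper factors through $\FisherBest^{-1/2}\FisherBest^{1/2}$, but both yield the same constant up to ordering.
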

\begin{proof}
In this proof, we work on the event $\scrE$ without explicitly referring it, and assume that $C = C(K_{\min}, K_{\max}, D_1, D_2, D_3, D_4)$ in \eqref{assume:ignorable_update_2_sample} is sufficiently large.
Note that
\begin{align*}
&\left\| \bOmega_{t}^{1/2}(\thetaBest - \mu_{t}) \right\|_{2} 
\leq
\left\| \bOmega_{t}^{1/2}( \thetaMAP - \thetaBest) \right\|_{2} 
+
\left\| \bOmega_{t}^{1/2}(\thetaMAP - \mu_{t}) \right\|_{2}  \\
&\leq 
\left\| \bOmega_{t}^{1/2} \FisherMAP^{-1/2} \right\|_{2} 
\left\| \FisherMAP^{1/2} \FisherBest^{-1/2} \right\|_{2}
\left\| \FisherBest^{1/2}(\thetaMAP - \thetaBest) \right\|_{2} 
+
\left\| \bOmega_{t}^{1/2}(\thetaMAP - \mu_{t}) \right\|_{2}  \\
&\leq 
\left( 1 + \left\| \FisherMAP^{-1/2} \bOmega_{t} \FisherMAP^{-1/2} - \bI_{p} \right\|_{2} \right)^{1/2}
\left( 1 + \left\| \FisherBest^{-1/2} \FisherMAP \FisherBest^{-1/2} - \bI_{p} \right\|_{2} \right)^{1/2} 
\left\| \FisherBest^{1/2}(\thetaMAP - \thetaBest) \right\|_{2} \\
&\qquad + \left\| \bOmega_{t}^{1/2}(\thetaMAP - \mu_{t}) \right\|_{2} \\
\overset{\eqref{assume:ignorable_update_2_2}}&{\leq}
\dfrac{3}{2} \left\| \FisherBest^{1/2}(\thetaMAP - \thetaBest) \right\|_{2} 
+ \left\| \bOmega_{t}^{1/2}(\thetaMAP - \mu_{t}) \right\|_{2} \\
\overset{\eqref{assume:ignorable_update_2}}&{\leq}
\dfrac{3}{2}\bigg[ D_2 M_n \sqrt{t^{-1} p_{\ast}} + \dfrac{2}{3} D_3 M_n \sqrt{t^{-1} p_{\ast}} \bigg] 
\leq 
3 \left( D_2 \vee D_3 \right) M_n \sqrt{t^{-1} p_{\ast}}.
\end{align*}
Also, by Taylor's theorem and $\nabla \bbE_{t+1} L_{t+1}(\theta_0) = 0$, we have
\begin{align*}
    \nabla \bbE_{t+1} L_{t+1}(\thetaBest) 
    &= 
    \nabla \bbE_{t+1} L_{t+1}(\thetaBest)  - \nabla \bbE_{t+1} L_{t+1}(\theta_0) 
    =
    - \overline \bF_{t+1} (\thetaBest, \theta_0) \left( \thetaBest - \theta_0 \right) \\
    &=
    - \overline \bF_{t+1} (\thetaBest, \theta_0) \FisherBest^{-1/2} \FisherBest^{1/2} \left( \thetaBest - \theta_0 \right)
\end{align*}
where
\begin{align*}
    \overline \bF_{t+1} (\thetaBest, \theta_0) 
    = -\int_{0}^{1} \nabla^{2} \bbE_{t+1} L_{t+1} \big( s\thetaBest + (1-s) \theta_0 \big) \rmd s
    = -\int_{0}^{1} \nabla^{2} L_{t+1} \big( s\thetaBest + (1-s) \theta_0 \big) \rmd s.
\end{align*}
Note that $s\thetaBest + (1-s) \theta_0 \in \Theta(\theta_0, \bI_{p}, 1/2)$ for all $s \in [0, 1]$ because $\left\| \thetaBest - \theta_0 \right\|_{2} \leq 1/4$.
It follows that
\begin{align*}
    &\left\| \Fisher[t+1]{\thetaBest}^{-1/2} \nabla \bbE_{t+1} L_{t+1}(\thetaBest) \right\|_{2}
    = 
    \left\| \Fisher[t+1]{\thetaBest}^{-1/2} \overline \bF_{t+1} (\thetaBest, \theta_0) \FisherBest^{-1/2} \FisherBest^{1/2} \left( \thetaBest - \theta_0 \right) \right\|_{2} \\
    &\leq 
    \left\| \Fisher[t+1]{\thetaBest}^{-1/2} \overline \bF_{t+1} (\thetaBest, \theta_0)^{1/2} \right\|_{2}
    \left\| \overline \bF_{t+1} (\thetaBest, \theta_0)^{1/2} \FisherBest^{-1/2} \right\|_{2}
    \left\| \FisherBest^{1/2} \left( \thetaBest - \theta_0 \right) \right\|_{2}  \\
    &\leq
    \lambda_{\min}^{-1/2} \big( \Fisher[t+1]{\thetaBest} \big)
    \lambda_{\max} \big( \overline \bF_{t+1} (\thetaBest, \theta_0) \big)
    \lambda_{\min}^{-1/2} \big( \FisherBest \big)    
    \left\| \FisherBest^{1/2} \left( \thetaBest - \theta_0 \right) \right\|_{2} \\
    \overset{\eqref{assume:ignorable_update_2}, (\textbf{A2})}&{\leq}
    \big( K_{\min} n \big)^{-1/2}
    \big( K_{\max} n \big)
    \big( D_4 n t \big)^{-1/2}
    \big( D_{1} M_n t^{\alpha} \sqrt{p_{\ast}} \big) \\
    &= 
    \big( K_{\min}^{-1/2} D_{4}^{-1/2} K_{\max} D_{1} \big)
    \big( M_n t^{\alpha-1/2} \sqrt{p_{\ast}} \big).
\end{align*}
Since
\begin{align*}
    \left\| \bOmega_{t}^{1/2}(\thetaBest - \mu_{t}) \right\|_{2} 
    \leq c_1 M_n \sqrt{t^{-1} p_{\ast}}, \quad 
    \left\| \Fisher[t+1]{\thetaBest}^{-1/2} \nabla \bbE_{t+1} L_{t+1}(\thetaBest) \right\|_{2} 
    \leq c_2 M_n t^{\alpha-1/2} \sqrt{p_{\ast}}
\end{align*}
for some positive constants $c_1 = c_1(D_2, D_3)$ and $c_2 = c_2(K_{\min}, K_{\max}, D_1, D_4)$, combining \eqref{assume:ignorable_update_2_sample} and the assumption $\| \thetaBest - \theta_0 \|_{2} \leq 1/4$, we can utilize Lemma \ref{lemma:ignorable_update}.
Therefore, Lemma \ref{lemma:ignorable_update} gives
\begin{align*}
    \left\| \bF_{t+1, \thetaBest}^{1/2} \left( \thetaBest[t+1] - \thetaBest \right) \right\|_{2} 
    \leq 
    \bigg( 4 \left\| \Fisher[t+1]{\thetaBest}^{-1/2} \nabla \bbE_{t+1} L_{t+1}(\thetaBest) \right\|_{2} \bigg)
    \vee 
    \bigg( 2 \left\| \bOmega_{t}^{1/2} \left( \thetaBest[t] - \mu_{t} \right) \right\|_{2} \bigg),
\end{align*}
which further upper bounded by
\begin{align*}
    \big( 4c_2 M_n t^{\alpha-1/2} \sqrt{p_{\ast}} \big)
    \vee
    \big(  2c_1 M_n \sqrt{t^{-1} p_{\ast}} \big)
    \leq 
    \big( 2c_1 + 4c_2 \big) M_n t^{\alpha-1/2} \sqrt{p_{\ast}},
\end{align*}
which completes the proof.
\end{proof}

\subsection{Deviation bounds for (sub-) Gaussian random vectors}

\begin{lemma} \label{lemma:tech_Gaussian_Chaos}
For $\bA \in \symmPD$ and $\mathbf{B} \in \symmPD$, let $Z \sim \cN(0, \bA^{-1})$ and $\bOmega = \bA^{-1/2} \mathbf{B} \bA^{-1/2}$.
Then, for every $\omega \geq 0$,
\begin{align*}
    \bbP \bigg( \left| \langle \mathbf{B} Z, Z \rangle - \operatorname{tr}\left( \bOmega \right) \right| >  2\left\| \bOmega \right\|_{\rm F}\sqrt{\omega} +  2\left\| \bOmega \right\|_{2}\omega \bigg)
    \leq 2e^{-\omega}.
\end{align*}
Furthermore, 
\begin{align*}
    \bbP \bigg( \left\| \mathbf{B}^{1/2} Z \right\|_{2}  >  \sqrt{\operatorname{tr}\left( \bOmega \right)} + \sqrt{2\left\| \bOmega \right\|_{2}\omega} \bigg)
    \leq e^{-\omega}.
\end{align*}
\end{lemma}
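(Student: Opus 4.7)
The plan is to diagonalize the quadratic form $\langle \mathbf{B} Z, Z \rangle$ and reduce the statement to a weighted sum of independent $\chi^2_1$ random variables, at which point the classical Laurent--Massart deviation inequality applies directly. Let $\widetilde Z = \bA^{1/2} Z$, so that $\widetilde Z \sim \cN(0, \bI_p)$ and
\[
\langle \mathbf{B} Z, Z \rangle \;=\; \widetilde Z^{\top} \bA^{-1/2} \mathbf{B} \bA^{-1/2} \widetilde Z \;=\; \widetilde Z^{\top} \bOmega \widetilde Z.
\]
Writing the spectral decomposition $\bOmega = U \Lambda U^{\top}$ with $\Lambda = \mathrm{diag}(\lambda_1, \ldots, \lambda_p)$ and $U$ orthogonal, the vector $\xi = U^{\top} \widetilde Z$ is again $\cN(0, \bI_p)$, so the quadratic form becomes $\sum_{j=1}^p \lambda_j \xi_j^2$, a nonnegatively weighted sum of independent $\chi^2_1$ random variables, with $\sum_j \lambda_j = \operatorname{tr}(\bOmega)$, $\sum_j \lambda_j^2 = \|\bOmega\|_{\rm F}^2$, and $\max_j \lambda_j = \|\bOmega\|_2$.

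For the first claim, I would invoke the two-sided Laurent--Massart inequality (Lemma~1 in Laurent and Massart, 2000), which states that for any nonnegative weights $(a_j)$ and $\omega \geq 0$,
\[
\bbP\!\left( \sum_j a_j (\xi_j^2 - 1) \geq 2\|a\|_2 \sqrt{\omega} + 2\|a\|_\infty \omega \right) \leq e^{-\omega}, \qquad
\bbP\!\left( \sum_j a_j (\xi_j^2 - 1) \leq -2\|a\|_2 \sqrt{\omega} \right) \leq e^{-\omega}.
\]
Applying this with $a_j = \lambda_j$ and taking a union bound yields the desired two-sided inequality with probability bound $2e^{-\omega}$.

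For the second claim, note that $\|\mathbf{B}^{1/2} Z\|_2^2 = \langle \mathbf{B} Z, Z \rangle$, and apply only the upper-tail Laurent--Massart bound. This gives $\|\mathbf{B}^{1/2} Z\|_2^2 \leq \operatorname{tr}(\bOmega) + 2\|\bOmega\|_{\rm F}\sqrt{\omega} + 2\|\bOmega\|_2 \omega$ with probability at least $1 - e^{-\omega}$. It then suffices to verify the deterministic inequality
\[
\operatorname{tr}(\bOmega) + 2\|\bOmega\|_{\rm F}\sqrt{\omega} + 2\|\bOmega\|_2 \omega \;\leq\; \Big( \sqrt{\operatorname{tr}(\bOmega)} + \sqrt{2 \|\bOmega\|_2 \omega} \Big)^{2},
\]
which reduces, after squaring out the right-hand side, to $\|\bOmega\|_{\rm F} \leq \sqrt{\|\bOmega\|_2 \operatorname{tr}(\bOmega)}$. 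This last bound is immediate from $\sum_j \lambda_j^2 \leq (\max_j \lambda_j) \sum_j \lambda_j$, which absorbs the extra factor of $\sqrt{2}$ on the right.

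No step of the argument is truly difficult; the only subtlety is the algebraic verification in the second part that the mixed term $2\|\bOmega\|_{\rm F}\sqrt{\omega}$ produced by Laurent--Massart is dominated by the cross term $2\sqrt{2\operatorname{tr}(\bOmega)\|\bOmega\|_2 \omega}$ coming from squaring the target bound. Once that is observed, the proof is essentially a one-line reduction to Laurent--Massart.
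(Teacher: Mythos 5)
Your proof is correct and self-contained. The paper itself does not prove the lemma but simply defers to Theorem~B.4 in Spokoiny (2024); your argument supplies the elementary derivation one would expect to find behind that citation. Whitening $Z \mapsto \bA^{1/2}Z$, diagonalizing $\bOmega$, and invoking the Laurent--Massart two-sided deviation bound for weighted $\chi^2_1$ sums gives the first inequality with constant $2e^{-\omega}$ exactly as you claim, since the one-sided lower bound $\bbP(X\le -2\|\bOmega\|_{\rm F}\sqrt\omega)\le e^{-\omega}$ is trivially at least as strong as what the union bound needs. For the second inequality your algebra is sound: the Laurent--Massart upper tail gives $\langle\bB Z,Z\rangle \le \operatorname{tr}(\bOmega)+2\|\bOmega\|_{\rm F}\sqrt\omega+2\|\bOmega\|_2\omega$ with probability $\ge 1-e^{-\omega}$, and the square of the target bound is $\operatorname{tr}(\bOmega)+2\sqrt{2\operatorname{tr}(\bOmega)\|\bOmega\|_2\omega}+2\|\bOmega\|_2\omega$, so matching the cross terms requires $\|\bOmega\|_{\rm F}^2\le 2\|\bOmega\|_2\operatorname{tr}(\bOmega)$. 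One tiny wording slip: you say the comparison ``reduces ... to $\|\bOmega\|_{\rm F}\le\sqrt{\|\bOmega\|_2\operatorname{tr}(\bOmega)}$''; the reduction actually requires only the weaker $\|\bOmega\|_{\rm F}\le\sqrt{2\|\bOmega\|_2\operatorname{tr}(\bOmega)}$, and you then correctly observe that the stronger bound $\|\bOmega\|_{\rm F}^2 = \sum_j\lambda_j^2 \le (\max_j\lambda_j)\sum_j\lambda_j = \|\bOmega\|_2\operatorname{tr}(\bOmega)$ holds, which is more than enough. No gap; this is a clean and arguably preferable route since it makes the lemma self-contained rather than opaque to the reader.
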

\begin{proof}
    See Theorem B.4 in \cite{spokoiny2024estimation}.
\end{proof}

\begin{lemma} \label{lemma:tech_subGaussian_Chaos}
Let $\bA \in \bbR^{p \times p} \succeq 0$, and $Z = (Z_i)_{i=1}^{n} \in \bbR^{n} \sim \operatorname{SubG}(\sigma^{2})$ be a random vector whose components are independent with $\bbE Z = 0$
Then, for every $\omega \geq 0$,
\begin{align*}
    \bbP \bigg( \langle \bA Z, Z \rangle  >  \sigma^{2} \bigg[ \operatorname{tr}\left( \bA \right) + 2\sqrt{\operatorname{tr}\left( \bA^{2} \right) \omega} + 2\left\| \bA \right\|_{2} \omega \bigg] \bigg)
    \leq e^{-\omega}.
\end{align*}
\end{lemma}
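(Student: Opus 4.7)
The plan is to prove this Hanson--Wright-type concentration inequality via the classical moment generating function (MGF) route, combined with Chernoff's inequality. Since the inequality has a Bernstein flavor (with sub-Gaussian and sub-exponential regimes governed by $\|\bA\|_2$ and $\operatorname{tr}(\bA^2)$, respectively), the key will be to establish a suitable Bernstein-type bound on $\log \bbE \exp(\lambda Z^\top \bA Z)$ valid on an appropriate range of $\lambda$, then optimize.

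First I would reduce the sub-Gaussian quadratic form to a Gaussian one using the standard Gaussian integration trick: for $\lambda \geq 0$ and any vector $v$,
\begin{equation*}
    \exp\left(\lambda \|v\|_2^2\right) = \bbE_g \exp\left(\sqrt{2\lambda}\, g^\top v\right),
\end{equation*}
where $g \sim \cN(0, \bI)$ is independent of $Z$. Applying this to $v = \bA^{1/2} Z$, Fubini gives
\begin{equation*}
    \bbE \exp\!\left(\lambda Z^\top \bA Z\right) = \bbE_g\, \bbE_Z \exp\!\left(\sqrt{2\lambda}\, g^\top \bA^{1/2} Z\right).
\end{equation*}
The inner expectation is then controlled by the sub-Gaussian assumption on $Z$: since the components of $Z$ are independent and sub-Gaussian with parameter $\sigma^2$, for any fixed vector $u$ one has $\bbE \exp(u^\top Z) \leq \exp(\sigma^2 \|u\|_2^2 / 2)$, so taking $u = \sqrt{2\lambda}\,\bA^{1/2} g$ yields
\begin{equation*}
    \bbE \exp\!\left(\lambda Z^\top \bA Z\right) \leq \bbE_g \exp\!\left(\lambda \sigma^2 g^\top \bA g\right).
\end{equation*}

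Next I would explicitly compute the Gaussian MGF: for $2\lambda \sigma^2 \|\bA\|_2 < 1$, the standard formula gives
\begin{equation*}
    \bbE_g \exp\!\left(\lambda \sigma^2 g^\top \bA g\right) = \det\!\left(\bI - 2\lambda \sigma^2 \bA\right)^{-1/2}.
\end{equation*}
Taking logarithms and diagonalizing, $\log \bbE \exp(\lambda Z^\top \bA Z) \leq -\tfrac{1}{2} \sum_i \log(1 - 2\lambda \sigma^2 \lambda_i(\bA))$. Restricting to the range $2\lambda \sigma^2 \|\bA\|_2 \leq 1/2$ and invoking the elementary inequality $-\log(1-x) \leq x + x^2$ valid for $x \in [0, 1/2]$, I obtain the Bernstein-type bound
\begin{equation*}
    \log \bbE \exp\!\left(\lambda Z^\top \bA Z\right) \leq \lambda \sigma^2 \operatorname{tr}(\bA) + 2 \lambda^2 \sigma^4 \operatorname{tr}(\bA^2).
\end{equation*}

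Finally, Chernoff's inequality gives, for any $t > 0$ and any admissible $\lambda$,
\begin{equation*}
    \bbP\!\left(Z^\top \bA Z > \sigma^2 \operatorname{tr}(\bA) + t\right) \leq \exp\!\left(-\lambda t + 2 \lambda^2 \sigma^4 \operatorname{tr}(\bA^2)\right),
\end{equation*}
and optimizing the right-hand side over $\lambda \in (0, (4\sigma^2 \|\bA\|_2)^{-1}]$ produces the usual two-regime bound: for $t \leq 4 \sigma^4 \operatorname{tr}(\bA^2) / \|\bA\|_2$ the optimal $\lambda$ is interior and the bound is Gaussian, yielding the $2\sqrt{\operatorname{tr}(\bA^2)\omega}$ term; for larger $t$ the constraint binds, yielding the $2\|\bA\|_2 \omega$ term. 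Setting $t = \sigma^2(2\sqrt{\operatorname{tr}(\bA^2)\omega} + 2\|\bA\|_2 \omega)$ gives a probability bound of $e^{-\omega}$ via the elementary inequality $t \geq 2\sqrt{c_1 \omega} + 2 c_2 \omega \Rightarrow \min_\lambda(-\lambda t + c_1 \lambda^2 + \lambda \cdot (\text{something}) \cdot \mathds{1}\{\lambda \geq 1/c_2\}) \leq -\omega$. The main obstacle will be packaging this final optimization cleanly so that the two regimes combine into the single stated bound with the displayed constants; this is a standard but slightly finicky calculation that I would execute by casework on whether $\sqrt{\operatorname{tr}(\bA^2)\omega}$ or $\|\bA\|_2 \omega$ dominates.
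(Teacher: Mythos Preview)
The paper does not give a proof of this lemma; it simply cites Theorem~1 of \cite{hsu2012tail}. Your sketch is correct and is in fact precisely the argument used in that reference: the Gaussian integration trick to replace the sub-Gaussian quadratic form by a Gaussian one, the determinant formula for the Gaussian MGF, the inequality $-\log(1-x)\le x+x^2$ on $[0,1/2]$, and the Chernoff optimization. So you have reproduced the cited proof rather than deviated from it.
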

\begin{proof}
    See Theorem 1 in \cite{hsu2012tail}.
\end{proof}

\begin{lemma} \label{lemma:tech_Gaussian_Moments}
Let $Z \sim \cN(0, \bI_{p})$ and $\bA \in \symmPD$.
Then, 
\begin{align*}
    \bbE \langle \bA Z, Z \rangle^{4} \leq \big( \operatorname{tr}(\bA) + 3\left\| \bA \right\|_{2}  \big)^{4}.
\end{align*}
\end{lemma}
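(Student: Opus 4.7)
The plan is to reduce the claim to a direct moment identity via the spectral decomposition of $\bA$ together with the cumulant expansion for quadratic forms in independent standard Gaussians. By rotational invariance of $\cN(0,\bI_p)$, I can diagonalize and write
\[
Y := \langle \bA Z, Z\rangle \;\stackrel{d}{=}\; \sum_{i=1}^{p} \lambda_i W_i,
\]
where $\lambda_1, \ldots, \lambda_p \in [0, \|\bA\|_2]$ are the eigenvalues of $\bA$ and $W_1, \ldots, W_p$ are i.i.d.\ $\chi^2_1$ random variables. Writing $T_k = \operatorname{tr}(\bA^k) = \sum_i \lambda_i^k$ and $\lambda = \|\bA\|_2$, the basic comparison $T_k \leq \lambda^{k-1} T_1$ for every $k \geq 1$ will be used repeatedly.

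Next I would compute the cumulants of $Y$ exactly. The cumulant generating function of $\chi^2_1$ equals $-\tfrac{1}{2}\log(1-2t)$, from which the $n$-th cumulant of $W_i$ is $2^{n-1}(n-1)!$. Additivity of cumulants across independent summands gives
\[
\kappa_n(Y) \;=\; 2^{n-1}(n-1)!\, T_n, \qquad n \geq 1,
\]
so in particular $(\kappa_1, \kappa_2, \kappa_3, \kappa_4) = (T_1, 2T_2, 8T_3, 48 T_4)$. Plugging into the standard moment--cumulant identity at order four,
\[
\bbE Y^4 \;=\; \kappa_4 + 4 \kappa_1 \kappa_3 + 3 \kappa_2^2 + 6 \kappa_2 \kappa_1^2 + \kappa_1^4 \;=\; 48 T_4 + 32 T_1 T_3 + 12 T_2^2 + 12 T_1^2 T_2 + T_1^4.
\]

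Finally, I would bound each trace term using $T_k \leq \lambda^{k-1} T_1$ and $T_2^2 \leq \lambda^2 T_1^2$, which yields
\[
\bbE Y^4 \;\leq\; T_1^4 + 12 \lambda T_1^3 + 44 \lambda^2 T_1^2 + 48 \lambda^3 T_1.
\]
A termwise comparison with the binomial expansion
\[
(T_1 + 3\lambda)^4 \;=\; T_1^4 + 12\lambda T_1^3 + 54\lambda^2 T_1^2 + 108 \lambda^3 T_1 + 81 \lambda^4
\]
closes the proof. There is no genuine obstacle here; the only point that needs care is the moment--cumulant formula at order four (one must not drop the combinatorial factors $4, 3, 6$), and after that the dependence on $\operatorname{tr}(\bA)$ and $\|\bA\|_2$ is extracted by a single application of $T_k \leq \lambda^{k-1} T_1$.
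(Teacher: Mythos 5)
Your proof is correct and, since the paper simply cites Theorem B.1 of Spokoiny (2024) rather than giving an argument, your derivation serves as a self-contained verification of the lemma. The reduction to $Y\stackrel{d}{=}\sum_i\lambda_i W_i$ with $W_i$ i.i.d.\ $\chi^2_1$ is standard; the cumulants $\kappa_n(Y)=2^{n-1}(n-1)!\,T_n$ (with $T_n=\operatorname{tr}(\bA^n)$) follow from $K_{\chi^2_1}(t)=-\tfrac12\log(1-2t)$ and additivity; the fourth complete Bell polynomial gives $\bbE Y^4=\kappa_4+4\kappa_1\kappa_3+3\kappa_2^2+6\kappa_1^2\kappa_2+\kappa_1^4=48T_4+32T_1T_3+12T_2^2+12T_1^2T_2+T_1^4$; and the domination $T_k\le\lambda^{k-1}T_1$ (with $\lambda=\|\bA\|_2$, valid because all eigenvalues lie in $[0,\lambda]$) reduces this to $T_1^4+12\lambda T_1^3+44\lambda^2T_1^2+48\lambda^3T_1$, which is dominated coefficientwise by $(T_1+3\lambda)^4=T_1^4+12\lambda T_1^3+54\lambda^2T_1^2+108\lambda^3T_1+81\lambda^4$. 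All the combinatorial coefficients and the termwise comparison check out, so the argument closes as claimed.
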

\begin{proof}
    See Theorem B.1 in \cite{spokoiny2024estimation}.
\end{proof}

\begin{lemma} \label{lemma:tech_logconcave_abs_moments}
Let $(Z_{i})_{i \in [n]}$ be \iid\ copies of an isotropic and log-concave probability measure on $\bbR^{p}$. 
Suppose that $n \geq p$.
Then, for any $k \geq 2$, there exists some constants $K_1 = K_1(k) > 1$, $K_2 > 0$ and $K_3 = K_3(k) > 0$ such that
\begin{align*}
   &\sup_{u \in \bbR^{p} : \| u \|_{2} = 1} \left| \dfrac{1}{n} \sum_{i=1}^{n} \bigg( \left| \langle Z_i, u \rangle \right|^{k} - \bbE \left| \langle Z_i, u \rangle \right|^{k} \bigg) \right| \\
   &\leq 
   K_1 t s^{k-1} \log^{k-1} \left( \dfrac{2n}{p} \right) \sqrt{\dfrac{p}{n}} + K_1\dfrac{ s^{k} p^{k/2} }{n} + K_1 \left( \dfrac{p}{2n} \right)^{s}, \quad \forall s, t \geq 1
\end{align*}
with probability at least 
\begin{align*}
    1 - \exp \left( - K_2 s \sqrt{p} \right) - \exp \left( - K_3 \Bigg\{ \bigg[ t^{2} s^{2k-2} p \log^{(2k-2)}\left(\dfrac{2n}{p} \right) \bigg] \wedge \bigg[ t s^{-1} \sqrt{np} \log^{-1}\left( \dfrac{2n}{p} \right) \bigg] \Bigg\} \right).
\end{align*}
\end{lemma}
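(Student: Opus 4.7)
The plan is to decompose $\phi_u(z) := |\langle z,u\rangle|^k$ into a truncated and a tail part, and control each via a distinct concentration mechanism. Set $R := c_0 s \log(2n/p)$ with $c_0 = c_0(k)$ large enough. By Borell's lemma for isotropic log-concave measures, every linear functional $\langle Z, u\rangle$ is $\psi_1$ with an absolute constant, which supplies both the tail bound $\bbP(|\langle Z, u\rangle|>\omega)\leq 2e^{-c\omega}$ for $\omega\geq C$ and the moment bounds $\bbE|\langle Z, u\rangle|^m \leq (Cm)^m$ for every $m\geq 1$.

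For the tail part $\phi_u^{>}(z) := \phi_u(z)\mathds{1}\{|\langle z,u\rangle|>R\}$, integration by parts against the $\psi_1$-tail gives $\bbE\phi_u^{>}(Z)\leq C_k R^{k-1}e^{-cR}\leq (p/2n)^s$ up to constants, producing the third term of the stated bound. For the empirical side, I would fix a $(1/n)$-net $\cN_\varepsilon$ of the unit sphere $\cU$ with $|\cN_\varepsilon|\leq (3n)^p$ and use the union bound on $\bbP(|\langle Z_i,u\rangle|>R)\leq 2e^{-cR}$ to show that, with probability at least $1-\exp(-K_2 s\sqrt{p})$, no truncation occurs at any pair $(i,u)$ with $u\in\cN_\varepsilon$; a standard discretization step then propagates this to all $u\in\cU$.

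For the truncated class $\cF := \{\phi_u^{\leq}:u\in\cU\}$ with $\phi_u^{\leq}:=\phi_u-\phi_u^{>}$, the functions are uniformly bounded by $R^k$ and the variance envelope obeys $\sup_u\Var(\phi_u^{\leq}(Z))\lesssim (Cs\log(2n/p))^{2k-2}$ via the $\psi_1$ moment bounds. The expected supremum $\bbE\sup_{u\in\cU}|n^{-1}\sum_i(\phi_u^{\leq}(Z_i)-\bbE\phi_u^{\leq}(Z))|$ will be controlled by symmetrization followed by chaining: since $|\phi_u^{\leq}(z)-\phi_v^{\leq}(z)|\leq kR^{k-1}|\langle z,u-v\rangle|$ whenever $|\langle z,u\rangle|\vee|\langle z,v\rangle|\leq R$, the increments reduce to quadratic forms in $u-v$, and Dudley's entropy integral over $\cU$ (metric entropy $p\log(1/\varepsilon)$) yields an expected supremum of order $R^{k-1}\sqrt{p/n}$. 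Applying Bousquet's version of Talagrand's inequality then upgrades this into a high-probability bound contributing the first term $t s^{k-1}\log^{k-1}(2n/p)\sqrt{p/n}$ (variance-driven deviation, linear in $t$) and the second term $s^k p^{k/2}/n$ (the $L_\infty$-driven Bennett correction); the minimum-of-two-quantities inside the failure probability encodes the sub-Gaussian/sub-exponential Bernstein transition inherent to Talagrand's inequality.

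The main obstacle is aligning the chaining with the correct polynomial in $R$ without incurring an extra $\sqrt{p}$ factor from $\|Z_i\|_2$. The naive bound $|\phi_u^{\leq}(z)-\phi_v^{\leq}(z)|\leq kR^{k-1}\|z\|_2\|u-v\|_2$ combined with Paouris' concentration $\|Z_i\|_2\asymp\sqrt{p}$ would degrade the final rate. The key is to exploit the $\langle z,u-v\rangle$ form of the increment so that the empirical $L_2$-norm satisfies $n^{-1}\sum_i\langle Z_i,u-v\rangle^2\lesssim \|u-v\|_2^2$ with high probability, which under isotropic log-concavity is the content of the Adamczak--Litvak--Pajor--Tomczak-Jaegermann sample-covariance estimate. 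Threading this covariance-control event through the chaining at the appropriate truncation scale, and then combining the three contributions so that the failure probability matches the stated minimum structure (sub-Gaussian regime $t^2 s^{2k-2} p\log^{2k-2}(2n/p)$ versus sub-exponential regime $t s^{-1}\sqrt{np}\log^{-1}(2n/p)$), is where the technical effort concentrates.
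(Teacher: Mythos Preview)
The paper does not prove this lemma; it simply cites Proposition~4.4 of Adamczak, Litvak, Pajor, and Tomczak-Jaegermann (2010). Your truncation-plus-Talagrand outline is broadly in the spirit of that reference, and your diagnosis of the main danger in the chaining step---picking up an extraneous $\sqrt{p}$ from $\|Z_i\|_2$---together with the proposed resolution via the ALPT sample-covariance control on the increment metric, is the right instinct for the bounded part.

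There is, however, a genuine gap in your handling of the tail part. The union bound over a $(1/n)$-net of the sphere and over $i\in[n]$ gives failure probability of order $n(3n)^{p}e^{-cR}$ with $R\asymp s\log(2n/p)$; for this to be small you would need $s\log(2n/p)\gtrsim p\log n$, i.e.\ essentially $s\gtrsim p$, whereas the lemma is stated for all $s\geq 1$. The exponent $s\sqrt{p}$ in the statement is the signature of Paouris' large-deviation inequality $\bbP(\|Z\|_2>Cs\sqrt{p})\leq e^{-cs\sqrt{p}}$, not of a net argument in $u$. In the original argument one first restricts to the event $\{\max_i\|Z_i\|_2\leq Cs\sqrt{p}\}$ via Paouris (this is where $\exp(-K_2 s\sqrt{p})$ enters), and then the $u$-uniform tail sum $\sup_u n^{-1}\sum_i|\langle Z_i,u\rangle|^{k}\mathds{1}\{|\langle Z_i,u\rangle|>\lambda\}$ is controlled by a dyadic/order-statistics decomposition on that event, not by freezing $u$ on a net. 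Your net-then-discretize shortcut for the tail cannot recover the stated probability bound; this piece would have to be reworked.
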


\begin{proof}
    See Proposition 4.4 in \cite{adamczak2010quantitative}.
\end{proof}

\subsection{3-order Gaussian tensor analysis}
For $\theta, u \in \Theta$ and a three times differentiable function $f : \Theta \rightarrow \bbR$, let
\begin{align} 
\begin{aligned} \label{def:remainder_3_4_supp}
\cR_{3, f}(\theta, u) 
    &= 
    f(\theta + u) - f(\theta) 
    - \langle \nabla f(\theta), u \rangle 
    - \dfrac{1}{2} \langle \nabla^2 f(\theta), u^{\otimes 2} \rangle, \\
\cR_{4, f} (\theta, u) 
    &= 
    f(\theta + u) - f(\theta) 
    - \langle \nabla f(\theta), u \rangle 
    - \dfrac{1}{2} \langle \nabla^2 f(\theta), u^{\otimes 2} \rangle 
    - \dfrac{1}{6} \langle \nabla^3 f(\theta), u^{\otimes 3} \rangle. 
\end{aligned}
\end{align}
For a $3$-order symmetric tensor $\bT = (T_{ijk})_{i,j,k \in [p]} \in \bbR^{p \times p \times p}$, let
\begin{align*}
    \bT(u) = \langle \bT, u^{\otimes 3} \rangle, \quad \bT_{i} = (T_{ijk})_{j, k \in [p]} \bbR^{p \times p}, \quad \left\| \bT \right\|_{\rm F}^{2} = \sum_{i,j,k \in [p]} T_{ijk}^{2}.
\end{align*}

The following lemmas are from the Section B.7.1 in \cite{spokoiny2024estimation}. We reproduce them here for the sake of readability and completeness of proof.

\begin{lemma} \label{lemma:tech_Gaussian_Tensor_expectation_1}
For a symmetric $3$-order tensor $\bT \in \bbR^{p \times p \times p}$, let $Z = (Z_{i})_{i \in [p]} \sim \cN(0, \bI_{p})$ and $M = \left( M_i \right)_{i \in [p]}$, where $M_i = \operatorname{tr}(\bT_i)$.
Then, 
\begin{align*}
    \bbE \bigg( \bT(Z) - 3 \langle M, Z \rangle \bigg)^{2} = 6 \left\| \bT \right\|_{\rm F}^{2}, \quad 
    \bbE \bT^{2}(Z) = 6 \left\| \bT \right\|_{\rm F}^{2} + 9\left\| M \right\|_{2}^{2}.
\end{align*}
\end{lemma}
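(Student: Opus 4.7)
The plan is to apply Isserlis' theorem (the Gaussian Wick formula) to compute the relevant sixth and fourth moments of $Z$ in closed form, and then use the symmetry of $\bT$ to collapse the resulting sums into $\|\bT\|_{\rm F}^2$ and $\|M\|_2^2$. I would start with the second identity, since the first follows from it by a short algebraic expansion.

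First I would write
\begin{align*}
\bbE \bT^2(Z) = \sum_{i,j,k,i',j',k'} T_{ijk} T_{i'j'k'} \, \bbE[Z_i Z_j Z_k Z_{i'} Z_{j'} Z_{k'}],
\end{align*}
and invoke Isserlis' theorem to write the expectation as a sum over the $5!! = 15$ pairings of the six indices. The key organizational step is to partition these $15$ pairings into two \emph{types} based on how the two index triples $(i,j,k)$ and $(i',j',k')$ are connected. Since each triple has three positions and each within-triple pair consumes two of them, each triple has either $0$ or $1$ within-triple pairs; moreover by a degree count both triples must have the same number. This gives Type A (all three pairs cross between the triples) consisting of $3! = 6$ pairings, and Type B (one within-pair in each triple, one crossing pair) consisting of $\binom{3}{2}^2 = 9$ pairings, for a total of $6 + 9 = 15$.

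Using the symmetry $T_{ijk} = T_{\sigma(i,j,k)}$ for all permutations $\sigma$, each Type A pairing contributes
\begin{align*}
\sum_{i,j,k} T_{ijk}^2 = \|\bT\|_{\rm F}^2,
\end{align*}
and each Type B pairing contributes, e.g. for $(i,j)(i',j')(k,k')$,
\begin{align*}
\sum_{i,k,i'} T_{iik} T_{i'i'k} = \sum_k M_k M_k = \|M\|_2^2.
\end{align*}
Summing yields $\bbE \bT^2(Z) = 6\|\bT\|_{\rm F}^2 + 9\|M\|_2^2$. For the first identity I would expand
\begin{align*}
\bbE \big( \bT(Z) - 3\langle M, Z\rangle \big)^2 = \bbE \bT^2(Z) - 6\,\bbE[\bT(Z)\langle M,Z\rangle] + 9\,\bbE\langle M,Z\rangle^2,
\end{align*}
use $\bbE\langle M,Z\rangle^2 = \|M\|_2^2$, and compute the cross term via the standard four-point Isserlis formula $\bbE[Z_i Z_j Z_k Z_l] = \delta_{ij}\delta_{kl}+\delta_{ik}\delta_{jl}+\delta_{il}\delta_{jk}$; each of the three Wick terms contributes $\|M\|_2^2$ after using symmetry of $\bT$, giving $\bbE[\bT(Z)\langle M,Z\rangle] = 3\|M\|_2^2$. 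Substituting yields $6\|\bT\|_{\rm F}^2 + 9\|M\|_2^2 - 18\|M\|_2^2 + 9\|M\|_2^2 = 6\|\bT\|_{\rm F}^2$.

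There is no real obstacle here beyond bookkeeping; the only place where care is needed is verifying that the combinatorial count of Type A and Type B pairings is correct and that every Type A (resp.\ Type B) pairing really does collapse to the same expression $\|\bT\|_{\rm F}^2$ (resp.\ $\|M\|_2^2$) after applying full symmetry of $\bT$. A clean way to record this is to note that, for Type A, the three cross-pairings define a bijection between the positions of the two triples, so the Kronecker product identifies $(i,j,k)$ with a permutation of $(i',j',k')$, and symmetry of $\bT$ removes the permutation; for Type B, the within-pair forces two indices of one triple to coincide, producing a factor $M_\cdot$ from each triple and a Kronecker linking the remaining ``free'' indices.
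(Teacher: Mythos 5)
Your proof is correct and self-contained. The paper itself does not prove this lemma; it simply cites Lemma B.32 of Spokoiny (2024). Your Isserlis/Wick-calculus argument is exactly how one verifies that reference: the $15$ pairings of the six Gaussian factors split by a parity constraint (the two triples must host the same number of within-triple pairs, so either zero or one each), giving $3! = 6$ all-crossing pairings contributing $\|\bT\|_{\rm F}^2$ apiece and $3 \times 3 = 9$ mixed pairings contributing $\|M\|_2^2$ apiece after the symmetry of $\bT$ is used to absorb the index permutations. The cross-term computation $\bbE[\bT(Z)\langle M, Z\rangle] = 3\|M\|_2^2$ via the four-point Isserlis formula and the final algebra $6\|\bT\|_{\rm F}^2 + 9\|M\|_2^2 - 18\|M\|_2^2 + 9\|M\|_2^2 = 6\|\bT\|_{\rm F}^2$ are both right. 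The only thing the paper ``buys'' by citing Spokoiny is brevity; your version is more transparent and makes explicit where the symmetry of $\bT$ enters, which is the one place a reader could go wrong if $\bT$ were not fully symmetric.
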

\begin{proof}
See Lemma B.32 in \cite{spokoiny2024estimation}.
\end{proof}

\begin{lemma} \label{lemma:tech_Gaussian_Tensor_expectation}
For a symmetric $3$-order tensor $\bT \in \bbR^{p \times p \times p}$, suppose that there exist some $\bF \in \symmPD$ and $\tau_3 \geq 0$ such that
\begin{align*}
    \bT(u) = \langle \bT , u^{\otimes 3} \rangle \leq  \tau_{3} \left\| \bF u \right\|_{2}^{3}, \quad \forall u \in \bbR^{p}.
\end{align*}
Let $\widetilde{Z} \sim \cN(0, \bD^{-1})$ for some $\bD \in \symmPD$ and $\bV = \bD^{-1/2} \bF \bD^{-1/2}$.
Then,
\begin{align*}
    \bbE [\{\bT(\widetilde{Z})\}^2] \leq 15 \tau_{3}^{2} \left\| \bV \right\|_{2} \operatorname{tr}^{2}(\bV)
\end{align*}
\end{lemma}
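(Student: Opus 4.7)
The plan is to reduce to a standard Gaussian and then split $\bbE \bT(\widetilde{Z})^2$ into the two pieces supplied by Lemma \ref{lemma:tech_Gaussian_Tensor_expectation_1}, bounding each separately. Writing $\widetilde{Z} = \bD^{-1/2} Z$ with $Z \sim \cN(0, \bI_p)$, I would define the rescaled tensor $\bS$ by $\bS(u) := \bT(\bD^{-1/2} u)$; entrywise, $S_{abc} = \sum_{ijk} T_{ijk} (\bD^{-1/2})_{ia}(\bD^{-1/2})_{jb}(\bD^{-1/2})_{kc}$. Then $\bT(\widetilde{Z}) = \bS(Z)$ and, reading the hypothesis under the natural convention $|\bT(u)| \leq \tau_3 \|\bF^{1/2} u\|_2^3$ used elsewhere in the paper, the identity $\|\bF^{1/2}\bD^{-1/2} u\|_2^2 = u^\top \bV u$ transfers the bound to $|\bS(u)| \leq \tau_3 \|\bV^{1/2} u\|_2^3$. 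Applying Lemma \ref{lemma:tech_Gaussian_Tensor_expectation_1} then yields $\bbE \bS(Z)^2 = 6\|\bS\|_{\rm F}^2 + 9\|M_\bS\|_2^2$ where $(M_\bS)_a = \operatorname{tr}(\bS_a)$, and it suffices to bound each piece by $\tau_3^2 \|\bV\|_2 \operatorname{tr}^2(\bV)$.

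By Theorem 2.1 of \citet{zhang2012best} (cited around \eqref{def:3_4_smooth_equiv}), the diagonal estimate on $\bS$ polarizes to the multilinear bound $|\bS(u,v,w)| \leq \tau_3 \|\bV^{1/2}u\|_2 \|\bV^{1/2}v\|_2 \|\bV^{1/2}w\|_2$. For the linear part, I would introduce an orthonormal eigenbasis $\{w_a\}$ of $\bV$ with eigenvalues $\lambda_a$; since $\langle M_\bS, \xi\rangle = \sum_a \bS(\xi, w_a, w_a)$ holds for every $\xi$, the polarized bound gives $|\langle M_\bS, \xi\rangle| \leq \tau_3 \|\bV^{1/2}\xi\|_2 \sum_a \lambda_a = \tau_3 \|\bV^{1/2}\xi\|_2 \operatorname{tr}(\bV)$, and taking supremum over $\|\xi\|_2 = 1$ yields $\|M_\bS\|_2^2 \leq \tau_3^2 \|\bV\|_2 \operatorname{tr}^2(\bV)$.

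The hard part will be the Frobenius piece, because a naive three-fold application of polarization only yields $\|\bS\|_{\rm F}^2 \leq \tau_3^2 \operatorname{tr}^3(\bV)$, which is loose by the factor $\operatorname{tr}(\bV)/\|\bV\|_2$. My strategy is to slice $\bS$ along its third coordinate: $\|\bS\|_{\rm F}^2 = \sum_k \|\bS_k\|_{\rm F}^2$ where $\bS_k = (S_{ijk})_{ij}$ is symmetric. Polarizing only the first two arguments while fixing $e_k$ in the third slot shows $\|\bV^{-1/2} \bS_k \bV^{-1/2}\|_{\rm op} \leq \tau_3 \sqrt{V_{kk}}$, so writing $\bS_k = \bV^{1/2} \bC_k \bV^{1/2}$ with $\|\bC_k\|_{\rm op} \leq \tau_3 \sqrt{V_{kk}}$ reduces matters to the matrix inequality $\|\bV^{1/2}\bC\bV^{1/2}\|_{\rm F}^2 \leq \|\bV\|_2 \|\bC\|_{\rm op}^2 \operatorname{tr}(\bV)$. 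This inequality itself follows from $\|\bX\bY\|_{\rm F} \leq \|\bX\|_{\rm op}\|\bY\|_{\rm F}$ (applied with $\bX = \bV^{1/2}$) combined with $\|\bC\bV^{1/2}\|_{\rm F}^2 = \operatorname{tr}(\bC^\top \bC\bV) \leq \|\bC\|_{\rm op}^2 \operatorname{tr}(\bV)$ (using $\bC^\top\bC \preceq \|\bC\|_{\rm op}^2 \bI$ and $\bV \succeq 0$). Summing over $k$ gives $\|\bS\|_{\rm F}^2 \leq \tau_3^2 \|\bV\|_2 \operatorname{tr}(\bV)\sum_k V_{kk} = \tau_3^2 \|\bV\|_2 \operatorname{tr}^2(\bV)$, and combining with the bound on $\|M_\bS\|_2^2$ delivers the claimed $(6+9)\tau_3^2 \|\bV\|_2 \operatorname{tr}^2(\bV)$.
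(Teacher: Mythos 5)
Your proof is correct. The paper itself does not prove this lemma --- it simply cites Lemma B.36 of \cite{spokoiny2024estimation} --- so there is no in-paper argument to compare against; what you have supplied is a self-contained verification, and every step checks out. The structure (reduce to a standard Gaussian $Z$ via $\widetilde{Z} = \bD^{-1/2}Z$, carry the smoothness bound to the rescaled tensor $\bS$ so that $|\bS(u)| \leq \tau_3\|\bV^{1/2}u\|_2^3$, invoke Lemma \ref{lemma:tech_Gaussian_Tensor_expectation_1} to get the exact fourth-moment identity $\bbE\bS(Z)^2 = 6\|\bS\|_{\rm F}^2 + 9\|M_\bS\|_2^2$, then bound each piece by $\tau_3^2\|\bV\|_2\operatorname{tr}^2(\bV)$) is the natural route and almost certainly what the cited source does. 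You were also right to read the hypothesis as $|\bT(u)| \leq \tau_3\|\bF^{1/2}u\|_2^3$: this is how the lemma is actually invoked in \eqref{eqn:local_integral_eq4}, where $\widehat{\tau}_{3,t}$ is defined in \eqref{def:hat_tau_t_radius} via $\|\FisherMAP^{1/2}z\|_2^3$, so the $\|\bF u\|_2^3$ printed in the statement is a typo; note also that the missing absolute value in the statement is immaterial, since a cubic form is odd and the one-sided bound applied to $\pm u$ gives the two-sided one. The genuinely nontrivial piece is your treatment of $\|\bS\|_{\rm F}^2$: a triple polarization of $|\bS(u,v,w)| \leq \tau_3\|\bV^{1/2}u\|_2\|\bV^{1/2}v\|_2\|\bV^{1/2}w\|_2$ in an eigenbasis of $\bV$ would only produce $\tau_3^2\operatorname{tr}^3(\bV)$, which is off by a factor $\operatorname{tr}(\bV)/\|\bV\|_2$, whereas your slicing argument --- write $\bS_k = \bV^{1/2}\bC_k\bV^{1/2}$ with $\|\bC_k\|_{\rm op}\leq\tau_3\sqrt{V_{kk}}$, then use $\|\bV^{1/2}\bC\bV^{1/2}\|_{\rm F}^2 \leq \|\bV\|_2\|\bC\|_{\rm op}^2\operatorname{tr}(\bV)$ and sum over $k$ --- peels off exactly one factor of $\|\bV\|_2$ and recovers the stated $\tau_3^2\|\bV\|_2\operatorname{tr}^2(\bV)$, matching the constant $6+9=15$.
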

\begin{proof}
    See Lemma B.36 in \cite{spokoiny2024estimation}.
\end{proof}

\begin{lemma} \label{lemma:tech_mgf_approximation}
For a three times differentiable function $f : \Theta \rightarrow \bbR$ and $\theta \in \Theta$, suppose that $f$ satisfies the third order smoothness at $\theta$ with parameters $(\tau_3, \bF, r)$, where $\bF \in \symmPD$ and $\tau_3, r \geq 0$.
Let 
$\widetilde{Z} \sim \cN(0, \bD^{-1} )$ for some $\bD \in \symmPD$ and $\bV = \bD^{-1/2} \bF \bD^{-1/2}$. 
Then, for a random variable $G$ with $|G| \leq 1$,
\begin{align*}
\left| \bbE \left( e^{X} - 1 - X - \dfrac{X^2}{2} \right) G \right| \leq  \dfrac{5}{3} \epsilon^3 \exp(\epsilon^2),
\end{align*}
where 
\begin{align*}
    X = \cR_{3, f}(\theta, \widetilde{Z}) - \bbE \bigg[ \cR_{3, f}(\theta, \widetilde{Z}) \mathds{1}_{\Theta (\bF, r)}(\widetilde{Z}) \bigg], \quad 
    \epsilon = \tau_{3} \left\| \bV \right\|_{2} r^{2}/2
\end{align*}
\end{lemma}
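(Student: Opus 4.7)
My plan is to reduce the inequality to moment and MGF estimates on the centered cubic remainder $X$, and then to obtain those estimates from the third-order smoothness of $f$ restricted to $\Theta(\bF, r)$ together with Gaussian concentration of $\widetilde Z$. Concretely, I would start from the integral representation
$$e^X - 1 - X - \frac{X^2}{2} = \int_0^1 \frac{(1-s)^2}{2}\, X^3 e^{sX}\, \rmd s,$$
which, combined with $|G| \le 1$ and Cauchy--Schwarz, yields
$$\left|\bbE\bigl[(e^X - 1 - X - X^2/2)\, G\bigr]\right| \le \int_0^1 \frac{(1-s)^2}{2}\bigl(\bbE[X^6]\bigr)^{1/2}\bigl(\bbE[e^{2sX}]\bigr)^{1/2}\,\rmd s.$$
The problem then reduces to showing $(\bbE[X^6])^{1/2} \lesssim \epsilon^3$ and $\bbE[e^{2sX}] \le e^{C s^2 \epsilon^2}$ for some absolute constant $C$.

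For the moment and MGF bounds, the third-order smoothness and Taylor's theorem give $|\cR_{3,f}(\theta, u)| \le (\tau_3/6)\|\bF^{1/2} u\|_2^3$ for $u \in \Theta(\bF, r)$. On the truncation set $\{\widetilde Z \in \Theta(\bF, r)\}$, one can write $\|\bF^{1/2}\widetilde Z\|_2^3 \le r\,\|\bF^{1/2}\widetilde Z\|_2^2$, so $|\cR_{3,f}(\theta,\widetilde Z)|\mathds{1}_{\Theta(\bF,r)}(\widetilde Z) \le (\tau_3 r/6)\,Z^\top \bV Z$ with $Z = \bD^{1/2}\widetilde Z \sim \cN(0,\bI_p)$. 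Hanson--Wright / Gaussian chaos (Lemma~\ref{lemma:tech_Gaussian_Chaos}) then gives sub-exponential control of $Z^\top \bV Z$ around its mean, with scale $\|\bV\|_2$, from which one recovers polynomial moment bounds $\bbE[X^{2k}] \le C_k \epsilon^{2k}$ and an MGF bound of sub-Gaussian type near the origin. The contribution of $\Theta(\bF,r)^c$ to all these expectations is absorbed via Gaussian tail bounds on $\widetilde Z$, while the centering constant $\bbE[\cR_{3,f}(\theta,\widetilde Z)\,\mathds{1}_{\Theta(\bF,r)}(\widetilde Z)]$ is bounded in the same way by Jensen's inequality.

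Plugging these into the Cauchy--Schwarz display and integrating over $s$ produces a bound of the form $c_0\,\epsilon^3\, e^{c_1 \epsilon^2}$. The explicit constants $5/3$ and $1$ in the final statement come from optimizing the Cauchy--Schwarz split (or alternatively from a sharper termwise summation of $\sum_{k\ge 3}\bbE[|X|^k]/k!$) and from tightening the elementary estimates on the chi-square-type moments.

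The main obstacle is securing the correct scale $\epsilon = \tau_3\|\bV\|_2 r^2/2$ without picking up spurious dimensional factors such as $\operatorname{tr}(\bV)$ or $p$: this requires using the truncation $\|\bF^{1/2}\widetilde Z\|_2 \le r$ to absorb one factor of $\|\bF^{1/2}\widetilde Z\|_2$ into $r$, and then routing the remaining quadratic form through an operator-norm concentration rather than a trace-type bound. Handling the tail region $\Theta(\bF,r)^c$, where the smoothness hypothesis gives no direct control on $\cR_{3,f}$, is the most delicate step; one must exploit the centering in the definition of $X$ and the Gaussian decay of $\widetilde Z$ so that this contribution fits within the target $\epsilon^3 e^{\epsilon^2}$ scale.
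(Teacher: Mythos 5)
The paper itself proves this lemma only by citing Lemmas B.39 and B.42 of \citet{spokoiny2024estimation}, so there is no in-paper argument to compare against directly. Your integral representation of the second-order Taylor remainder of $e^X$ and the Cauchy--Schwarz reduction to bounds on $\bbE[X^6]$ and $\bbE[e^{2sX}]$ are sound in principle, and you correctly identify the crux: obtaining concentration of $X$ at scale $\epsilon = \tau_3\|\bV\|_2 r^2/2$ \emph{without} importing dimension-dependent quantities such as $\operatorname{tr}(\bV)$ or $\|\bV\|_{\rm F}$.

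The proposed resolution, however, does not close the gap. Bounding $|\cR_{3,f}(\theta,\widetilde Z)| \le (\tau_3 r/6)\, Z^\top\bV Z$ on the truncation set and then appealing to Hanson--Wright for $Z^\top\bV Z$ fails for two reasons. First, the relation between $\cR_{3,f}$ and $Z^\top\bV Z$ is only a one-sided pointwise inequality, not an identity, so concentration of $Z^\top\bV Z$ around $\operatorname{tr}(\bV)$ tells you nothing about the fluctuation of $X = \cR_{3,f} - m$: after the triangle inequality, all you retain is the pointwise bound $|X| \lesssim \tau_3 r^3$, which exceeds $\epsilon$ by roughly $r/\|\bV\|_2 \asymp \sqrt p$ in the regime where the lemma is used. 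Second, even if $\cR_{3,f}$ were proportional to the quadratic form, Lemma~\ref{lemma:tech_Gaussian_Chaos} gives concentration at scale $\|\bV\|_{\rm F}$ (not $\|\bV\|_2$) in the sub-Gaussian regime; in the application inside Lemma~\ref{lemma:LA_local_integral} one has $\bV = \bI_p$, so $\|\bV\|_{\rm F} = \sqrt p\,\|\bV\|_2$, and the spurious dimension factor persists.

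The mechanism that actually produces the scale $\epsilon$ is the smallness of the \emph{gradient} of $\cR_{3,f}$: $\nabla_u \cR_{3,f}(\theta, u) = \nabla f(\theta + u) - \nabla f(\theta) - \nabla^2 f(\theta) u$ is itself a second-order Taylor remainder, and the third-order smoothness hypothesis gives $\|\bF^{-1/2}\nabla_u\cR_{3,f}(\theta, u)\|_2 \le \tfrac{\tau_3}{2}\|\bF^{1/2} u\|_2^2 \le \tfrac{\tau_3 r^2}{2}$ on $\Theta(\bF, r)$. Passing to $Z = \bD^{1/2}\widetilde Z$, the map $Z \mapsto \cR_{3,f}(\theta,\bD^{-1/2}Z)$ is Lipschitz on the convex set $\{Z^\top\bV Z \le r^2\}$ with constant $\asymp \tau_3 r^2 \|\bV\|_2^{1/2}$, comparable to $\epsilon$. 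Gaussian concentration for Lipschitz functions on a convex body (the conditioned measure is log-concave) then delivers sub-Gaussian control of $X$ at scale $\asymp\epsilon$ with no trace or Frobenius factor; the resulting moment and MGF bounds, summed over the Taylor series $\sum_{k\ge3} X^k/k!$, give the target $\tfrac{5}{3}\epsilon^3 e^{\epsilon^2}$. Your plan should be rerouted through this Lipschitz/gradient argument rather than Hanson--Wright on the quadratic form.
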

\begin{proof}
    Combining with Lemma B.42 in \cite{spokoiny2024estimation}, this lemma is a special case of Lemma B.39 in \cite{spokoiny2024estimation}. The proof can be found therein.
\end{proof}

\end{document}